\documentclass{article}

\usepackage{geometry}

\newgeometry{vmargin={30mm}, hmargin={30mm,30mm}}   % set the margins
% \openright makes the following text appear on a right-hand page

\usepackage[utf8]{inputenc}
\usepackage[T1]{fontenc}
\usepackage{lmodern}
\usepackage{amsmath} 
\usepackage{amssymb} 
\usepackage{amsfonts}
\usepackage{amsthm}  
\usepackage{mathtools}
\include{macros}
\theoremstyle{definition}
\newtheorem{definition}{Definition}
\theoremstyle{theorem}
\newtheorem{theorem}{Theorem}
\newtheorem{corollary}{Corollary}
\newtheorem{lemmach}{Lemma}
\theoremstyle{theorem}
\newtheorem{claimm}{Claim}
\theoremstyle{remark}

\theoremstyle{remark}
\newtheorem{remark}{Remark}
\theoremstyle{example}

\theoremstyle{notation}
\newtheorem{notation}{Notation}

\newenvironment{mythm}[1]
  {\innercustomthm}
  {\endinnercustomthm}

\usepackage{titling}
\setlength\textwidth{145mm}
\setlength\textheight{247mm} 
\setlength\oddsidemargin{14.2mm} \setlength\evensidemargin{0mm}
 \setlength\topmargin{0mm}
 \setlength\headsep{0mm}
 \setlength\headheight{0mm}

%% Generate PDF/A-2u
\usepackage[a-2u]{pdfx}
\usepackage{pdfpages}

\newgeometry{vmargin={30mm}, hmargin={30mm,30mm}}   % set the margins
% \openright makes the following text appear on a right-hand page

\setcounter{secnumdepth}{3}

%% Character encoding: usually latin2, cp1250 or utf8:

\usepackage[utf8]{inputenc}
\usepackage[T1]{fontenc}
\usepackage{lmodern}
\usepackage{amsmath} 
\usepackage{amssymb} 
\usepackage{amsfonts}
\usepackage{amsthm}  
\usepackage{bbding} 
\usepackage{mathtools}
\usepackage[scr]{rsfso}
\usepackage{float}
\usepackage{ragged2e}

%\makeatletter
%\DeclareFontEncoding{LS1}{}{}
%%\DeclareFontEncoding{LS2}{}{\noaccents@}
%\DeclareFontSubstitution{LS1}{stix}{m}{n}
%\DeclareFontSubstitution{LS2}{stix}{m}{n}
%\makeatother

%\DeclareMathAlphabet\mathscr{LS1}{stixscr}{m}{n}
%\SetMathAlphabet\mathscr{bold}{LS1}{stixscr}{b}{n}
%\DeclareMathAlphabet\mathcal{LS2}{stixcal}{m}{n}
%\SetMathAlphabet\mathcal{bold}{LS2}{stixcal}{b}{n}

\usepackage{bbding} 

\usepackage{pdfpages}

%% Prefer Latin Modern fonts
\usepackage{lmodern}

%% Further useful packages (included in most LaTeX distributions)
\usepackage{amsmath}        % extensions for typesetting of math
\usepackage{amsfonts}       % math fonts
\usepackage{amsthm}         % theorems, definitions, etc.
\usepackage{bbding}         % various symbols (squares, asterisks, scissors, ...)
\usepackage{bm}             % boldface symbols (\bm)
\usepackage{graphicx}       % embedding of pictures
\usepackage{fancyvrb}       % improved verbatim environment

%\usepackage[sectionbib]{natbib}
%\usepackage{chapterbib}

%\usepackage{natbib}        
% citation style AUTHOR (YEAR), or AUTHOR [NUMBER]
%\usepackage[nottoc]{tocbibind} % makes sure that bibliography and the lists
			    % of figures/tables are included in the table
			    % of contents
\usepackage{dcolumn}        % improved alignment of table columns
\usepackage{booktabs}       % improved horizontal lines in tables
\usepackage{paralist}       % improved enumerate and itemize
\usepackage{xcolor}         % typesetting in color

%--------------------------------PICTURES

\usepackage{graphicx}       % embedding of pictures
\usepackage{fancyvrb}       % improved verbatim environment
\usepackage[nottoc]{tocbibind} % makes sure that bibliography and the lists
			    % of figures/tables are included in the table
			    % of contents
\usepackage{dcolumn}        % improved alignment of table columns
\usepackage{booktabs}       % improved horizontal lines in tables
\usepackage{paralist}       % improved enumerate and itemize

\usepackage{pgfplots}
\usetikzlibrary{intersections, pgfplots.fillbetween}

\usepackage{tikz} % pictures
\usepackage{tikz-3dplot} % 3dpictures
\usetikzlibrary{arrows}
\usepackage{xparse}
\usetikzlibrary{3d}
\usetikzlibrary{calc,intersections,through,backgrounds}
\usetikzlibrary{arrows}
\usepackage{tikz-3dplot} % 3dpictures

\tikzset{
    %Define standard arrow tip
    >=stealth',
    %Define style for boxes
    punkt/.style={
           rectangle,
           rounded corners,
           draw=black, thick,
           text width=6.5em,
           minimum height=2em,
           text centered},
    % Define arrow style
    pil/.style={
           ->,
           thick,
           shorten <=2pt,
           shorten >=2pt,}
}

\title{Sections and Chapters}
\title{\Huge Proof complexity of universal algebra in a CSP dichotomy proof}
\author{\huge Azza Gaysin
  }
\date{%
    Department of Algebra, Faculty of Mathematics and Physics\\%
    Charles University in Prague, Prague 186 00, Czech Republic.\\%
    E-mail: azza.gaysin@gmail.com
}

\begin{document}
\allowdisplaybreaks

\maketitle

\begin{abstract}
The constraint satisfaction problem (CSP) can be formulated as a homomorphism problem between relational structures: given a structure $\mathcal{A}$, for any structure $\mathcal{X}$, whether there exists a homomorphism from $\mathcal{X}$ to $\mathcal{A}$. For years, it has been conjectured that all problems of this type are divided into polynomial-time and NP-complete problems, and the conjecture was proved in 2017 separately by Zhuk \cite{zhuk2020proof} and Bulatov \cite{8104069}. 

Zhuk's algorithm solves tractable CSPs in polynomial time. The algorithm is partly based on universal algebra theorems, proved in \cite{zhuk2020proof}. Informally, they state that after reducing some domain of an instance to its strong subuniverses, a satisfiable instance maintains a solution. 

In this paper, we present the formalization of the proofs of these theorems in the bounded arithmetic $W^1_1$ introduced in \cite{10.1007/978-3-540-30124-0_27}. The formalization, together with our previous results in \cite{gaysin2023proof}, shows that $W_1^1$ proves the soundness of Zhuk's algorithm, where by soundness we mean that any rejection of the algorithm is correct.  From the known relation of the theory to propositional calculus $G$, it follows that tautologies, expressing the non-existence of a solution for unsatisfiable instances, have short proofs in $G$.

\end{abstract}

\section{Introduction}

Constraint satisfaction problems (CSPs) form a wide class of decision problems, first studied in 1998 by Feder and Vardi in their attempt to discover a large subclass of NP that exhibits dichotomy \cite{doi:10.1137/S0097539794266766}. The problem CSP($\Gamma$) consists of a finite set $D$ and a finite collection $\Gamma=\{R_1,...,R_n\}$ of relations on $D$, or constraint language. The question is, given as input a list of variables $V$ and a list of constraints $\mathcal{C}=\{C_1,...,C_m\}$ - pairs of tuples of distinct variables and relations $R_i$, whether there is an assignment of variables to values in $D$ satisfying the given constraints. If such an assignment exists, an instance is called satisfiable and unsatisfiable otherwise. The equivalent definition is formulated as a homomorphism problem between relational structures. The obvious example is the $\mathcal{H}$-coloring problem, where $\mathcal{H}$ is a simple undirected graph without loops, whose vertices are considered as different colors. The $\mathcal{H}$-coloring of a graph $\mathcal{G}$ is an assignment of colors to the vertices of $\mathcal{G}$ such that adjacent vertices of $\mathcal{G}$ obtain adjacent colors. The generalization of this problem is a homomorphism problem from an input directed graph to a fixed target digraph. It is known that the latter problem is universal: for any constraint language $\Gamma$, CSP($\Gamma$) is logspace equivalent to a homomorphism problem for some digraph $\mathcal{H}$ \cite{barto_et_al}.

The first dichotomy result was formulated by Schaefer in 1978 for a problem over a binary domain called Generalized Satisfiability
\cite{10.1145/800133.804350}. The second result of this form was proved by Hell and Nešetřil in 1990 for the $\mathcal{H}$-coloring problem \cite{HELL199092}. Feder and Vardi conjectured that the whole class of CSPs satisfies the general dichotomy between P and NP \cite{doi:10.1137/S0097539794266766}, i.e. for a given $\Gamma$ the problem is either in P or is NP-complete. The proof of this conjecture was presented in 2017 by Zhuk \cite{zhuk2020proof} and Bulatov \cite{8104069}.

Zhuk's algorithm solves the problem of whether there exists a homomorphism from $\mathcal{X}$ to $\mathcal{A}$ for any tractable CSP($\mathcal{A}$) in polynomial time in size of $\mathcal{X}$, for any fixed $\mathcal{A}$. If an instance is satisfiable, then the algorithm produces a solution, i.e. a polynomial-size witness of an affirmative answer that one can independently check in polynomial time. The qualification 'independent' means that we can check the validity of the witness irrespective of how it was obtained, i.e. not understanding anything about Zhuk's algorithm. That is not the case for unsatisfiable instances. The only apparent polynomial size witness is the particular computation of Zhuk's algorithm on the instance. 

We use some proof complexity methods (formalization in theories of bounded arithmetic, propositional translations, etc.) to show that
the algorithm may be appended to provide an independent proof of the correctness of the algorithm for negative answers, too. The witness in the case of the homomorphism problem between relational structures is a short propositional proof of a formula $\neg HOM(\mathcal{X},\mathcal{A})$, encoding that there is no homomorphism from $\mathcal{X}$ to $\mathcal{A}$, in a particular well-known proof system, namely the quantified propositional calculus $G$ \cite{https://doi.org/10.1002/malq.19900360106}. When relational structures are directed graphs, for example, for a given digraph $\mathcal{X}$, the non-existence of a homomorphism from $\mathcal{X}$ to $\mathcal{A}$ can be expressed by the fact that a simple and transparent set of clauses is not satisfiable: it is built from propositional atoms $p_{ij}$, one for each vertex $i$ in $\mathcal{X}$ and vertex $j$ in $\mathcal{A}$, and says that the set of pairs $(i,j)$ for
which $p_{i j}$ is true is the graph of a map from $\mathcal{X}$ to $\mathcal{A}$, which is a homomorphism. Namely, for any two digraphs $\mathcal{X}=(V_\mathcal{X},E_\mathcal{X})$, $\mathcal{A}=(V_\mathcal{A},E_\mathcal{A})$, consider the following set of clauses: 
\begin{itemize}
    \item a clause $\bigvee_{j\in V_\mathcal{A}}^{} p_{i,j}$ for each $i\in V_\mathcal{X}$ (every vertex of $\mathcal{X}$ is sent to some vertex of $\mathcal{A}$);
    \item a clause $\neg p_{i,j_1}\vee\neg p_{i,j_2}$ for each $i\in V_\mathcal{X}$ and $j_1,j_2 \in V_\mathcal{A}$ with $j_1\neq j_2$ (the map is well-defined);
    \item a clause $\neg p_{i_1,j_1}\vee \neg p_{i_2,j_2}$ for every edge $(i_1,i_2)\in E_\mathcal{X}$ and $(j_1,j_2) \notin E_\mathcal{A}$ (a map is indeed a homomorphism).
\end{itemize}
A propositional refutation of these clauses in a transparent propositional calculus, whose soundness is obvious, thus indeed serves as a simple (and simple to check) witness for a negative answer to the algorithm. 

For this, we establish that the soundness of Zhuk’s algorithm can be proved in a theory of bounded arithmetic $W^1_1$ introduced in \cite{10.1007/978-3-540-30124-0_27}. By soundness, we mean the formula $Reject_{\mathcal{A}}(\mathcal{X},W) \implies \neg HOM(\mathcal{X},\mathcal{A})$,
where $Reject_{\mathcal{A}}(\mathcal{X},W)$ formalizes naturally that $W$ is the algorithm computation on input $\mathcal{X}$ that results in rejection, and $\neg HOM(\mathcal{X},\mathcal{A})$ means that there is no homomorphism from $\mathcal{X}$ to $\mathcal{A}$.

In \cite{gaysin2023proof} we have shown that for any fixed relational structure $\mathcal{A}$ that corresponds to an algebra $\mathbb{A}$ with a WNU operation, a new theory $V^1_{\mathcal{A}}$ proves the soundness of Zhuk's algorithm. Theory $V^1_{\mathcal{A}}$ extends the theory $V^1$, corresponding to the Extended Frege EF proof system, with three universal algebra axiom schemes BA$_{\mathcal{A}}$-axioms, CR$_{\mathcal{A}}$-axioms, and PC$_{\mathcal{A}}$-axioms. These axiom schemes consist of finitely many $\forall\Sigma^{1,b}_2$-formulas and reflect the main universal algebra theorems in Zhuk's paper \cite{zhuk2020proof}. Informally, they state that by reducing a domain of an instance to its binary absorbing subuniverse, central subuniverse, or PC subuniverse (see \cite{zhuk2020strong}), the algorithm does not lose all the solutions to the instance. In the paper, we use bounded arithmetic $W^1_1$ to formalize the proofs of these three axiom schemes. We show that all notions used in the proof of the soundness of Zhuk's algorithm in \cite{zhuk2020proof} can be formalized using bounded quantifier formulas (two or three sorted) and that the statements about them can be proved in the theory.
To show the latter, we selected a number of statements whose proofs represent
all types of argument (in particular, all types of inductive argument)
in \cite{zhuk2020proof}. To show that the formalization exists, we write the formal definitions of all objects used. This is because we need to know their quantifier complexity (various bounded arithmetic theories differ mainly in the class of formulas for which they assume induction).

The formalization, together with our previous results in \cite{gaysin2023proof} and the known relation of the theory to propositional calculus $G$, completes the proof of the following theorem. 

\begin{theorem}[The main theorem]\label{mainhhhdkruf} For any relational structure $\mathcal{A}$ such that CSP($\mathcal{A}$) is in $P$:
\item [1.] Theory $W_1^1$ proves the soundness of Zhuk's algorithm. That is, the theory proves the formula $Reject_{\mathcal{A}}(\mathcal{X},W) $ $\implies \neg HOM(\mathcal{X},\mathcal{A})$.
\item [2.] There exists a $p$-time algorithm $F$ such that for any unsatisfiable instance $\mathcal{X}$, i.e. such that $\neg HOM(\mathcal{X},$ $\mathcal{A})$, the output $F(\mathcal{X})$
of $F$ on $\mathcal{X}$ is a propositional proof of the proposition translation of
formula $\neg HOM(\mathcal{X},\mathcal{A})$ in propositional calculus $G$.
\end{theorem}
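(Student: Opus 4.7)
The plan is to assemble three ingredients. First, the main technical work of this paper (developed in the subsequent sections) supplies $W^1_1$-proofs of each of the three universal algebra schemes BA$_{\mathcal{A}}$, CR$_{\mathcal{A}}$, and PC$_{\mathcal{A}}$, formulated as $\forall\Sigma^{1,b}_2$-sentences. Second, \cite{gaysin2023proof} shows that the theory $V^1_{\mathcal{A}}$, which is $V^1$ augmented precisely by these three schemes, derives the soundness formula $Reject_{\mathcal{A}}(\mathcal{X},W) \implies \neg HOM(\mathcal{X},\mathcal{A})$. Third, the translation theorem of \cite{10.1007/978-3-540-30124-0_27} sends $W^1_1$-proofs of bounded sentences to uniformly p-time constructible families of polynomial-size $G$-proofs of their propositional translations.

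For Part 1, the strategy is to observe that $W^1_1$ is at least as strong as $V^1$ over the relevant language, so the $V^1_{\mathcal{A}}$-derivation of soundness from \cite{gaysin2023proof} can be re-interpreted step-by-step inside $W^1_1$: every $V^1$-inference is already available, and every use of an instance of BA$_{\mathcal{A}}$, CR$_{\mathcal{A}}$, or PC$_{\mathcal{A}}$ is discharged by prefixing the $W^1_1$-proofs of those schemes supplied by this paper. Concatenation then yields a $W^1_1$-derivation of $Reject_{\mathcal{A}}(\mathcal{X},W) \implies \neg HOM(\mathcal{X},\mathcal{A})$.

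For Part 2, I would apply the translation theorem to the soundness formula proved in Part 1. This produces, uniformly and in polynomial time in $|\mathcal{X}|$, a $G$-proof $\pi_{\mathcal{X}}$ of the propositional translation of that formula, in which the variables encoding the computation $W$ remain free. The algorithm $F$ on input an unsatisfiable $\mathcal{X}$ then runs Zhuk's algorithm to obtain a (necessarily rejecting) computation $W_0$, substitutes the bits of $W_0$ into $\pi_{\mathcal{X}}$, constructs a short $G$-proof of the now closed and true antecedent $Reject_{\mathcal{A}}(\mathcal{X},W_0)$, and applies modus ponens to produce the required $G$-refutation of the clauses encoding $\neg HOM(\mathcal{X},\mathcal{A})$.

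The hard part sits entirely in Part 1 and is a compatibility check rather than a new mathematical obstruction: one must verify that the coding of instances, computations, subuniverses, WNU operations, and PC instances used in \cite{gaysin2023proof} either agrees with or is definably equivalent to the coding used for the present $W^1_1$-formalization, and that every induction invoked along the $V^1_{\mathcal{A}}$-derivation has quantifier complexity within the fragment that $W^1_1$ controls. Because $W^1_1$ is strictly stronger than $V^1$ and its language accommodates the extra sort required here, these checks are bookkeeping; Part 2 is then essentially automatic from the translation theorem.
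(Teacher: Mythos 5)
Your proposal is correct and follows essentially the same route as the paper: the paper likewise obtains Part 1 by combining the $W^1_1$-proofs of the BA$_{\mathcal{A}}$, CR$_{\mathcal{A}}$, PC$_{\mathcal{A}}$ schemes (the content of Section \ref{'a'tteyrghwgdfh}, culminating in Theorem \ref{'aa's;s;;dldkfjf}) with the $V^1_{\mathcal{A}}$-derivation of soundness from \cite{gaysin2023proof}, using that $W^1_1$ extends $V^1$, and Part 2 by applying Theorem \ref{Translation[[[po]]]}. Your elaboration of Part 2 (substituting the rejecting computation $W_0$ and discharging the antecedent by a short proof plus modus ponens) is the standard argument the paper leaves implicit, and is consistent with the discussion in Section \ref{SOUNDNggghESS}.
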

The paper is organized as follows. In Section \ref{akkasjdhkajdg} we briefly recall all necessary notions and facts from CSP theory, proof complexity, and bounded arithmetic, and outline the theoretical basis for Zhuk's algorithm. In Section \ref{'a'a;sldfkjfjfuryt} we
formalize all the notions used in the proof of the soundness of the algorithm in \cite{zhuk2020proof} and formulate three universal algebra axiom schemes. The formalization inherits the one carried out in \cite{gaysin2023proof} and we will not repeat some definitions already introduced there. In section \ref{'a'tteyrghwgdfh} we formalize the proof of these three axiom schemes in theory $W^1_1$ and prove the main theorem. The paper is concluded with Section \ref{akkajshddf} that points out possible continuations of this research.

\section{Preliminaries}\label{akkasjdhkajdg}

\subsection{A Third-Order Language and Correspondence to Sequent calculus $G$}

In this subsection, most definitions and results are adapted from \cite{10.5555/1734064}, \cite{krajicek_1995}, \cite{krajíček_2019}, \cite{10.1007/978-3-540-30124-0_27}.

Theories of bounded arithmetic, referred to as \emph{second-order} (or \emph{two-sorted} first-order) theories use a specific framework. There are two distinct categories of variables: variables $x,y,z,...$ of the first kind are \emph{number variables}, and they represent natural numbers. Variables $X,Y,Z,...$ of the second kind are \emph{set variables}, and they correspond to finite subsets of natural numbers, which are equivalent to binary strings. Functions and predicate symbols can involve variables of both types, and functions are classified as either \emph{number-valued} or \emph{set-valued}. Furthermore, there are two distinct types of quantifiers: those over number variables, called \emph{ number quantifiers}, and those used for set variables, called \emph{string quantifiers}. The language used in these second-order theories expands on the conventional language of Peano arithmetic $\mathcal{L}\mathcal{_{PA}}$, and is represented as:
$$\mathcal{L}^2\mathcal{_{PA}} = \{0,1,+,\cdot,\lvert\,    \rvert,=_1,=_2,\leq, \in\}.$$ 
In this notation, the symbols $0,1,+,\cdot,=_1$ and $\leq$ are functions and predicates related to the number variables. The function $\lvert X\rvert$ (called the \emph{length of $X$}) is number-valued and indicates the length or upper bound of the string $X$. The binary predicate $\in$ denotes set membership between number and set variables, and we abbreviate $t \in X$ as $X(t)$, thinking of $X(i)$ as of the $i$-th bit of binary string $X$ of length $\lvert X\rvert$. The relation $=_2$ represents the equality predicate for sets. The set of axioms that define the basic properties of symbols from $\mathcal{L}^2\mathcal{_{PA}}$ is denoted by $2$-BASIC \cite{10.5555/1734064}. 

When $x, X$ do not occur in a term $t$, then the expression $\exists x \leq t \phi$ is equivalent to $\exists x( x \leq t \wedge \phi)$, $\forall x \leq t \phi$ corresponds to $\forall x(x \leq t \to \phi)$, $\exists X \leq t\phi$ corresponds to $\exists X(|X|\leq t \wedge \phi)$ and $\forall X\leq t \phi$ stands for $\forall X(|X|\leq t \to \phi)$. Quantifiers used in this manner are called \emph{bounded}. A \emph{bounded formula} refers to any formula in which every quantifier is bounded. We denote by $\Sigma^{1,b}_0=\Pi^{1,b}_0$ the set of formulas with all number quantifiers bounded and with no string quantifiers. For $i\geq 0$, $\Sigma^{1,b}_i$ (resp. $\Pi^{1,b}_i$) is the set of formulas of the form $\exists \bar X\leq \bar t\phi$ (resp. $\forall \bar X\leq \bar t\phi$), where $\phi$ is the $\Pi^{1,b}_i$-formula (resp. $\Sigma^{1,b}_i$-formula), and $\bar t$ is a sequence of $\mathcal{L}^2\mathcal{_{PA}}$-terms that do not involve any variable in $\bar X$. 

Let $\Phi$ be a set of two-sorted formulas, and $\phi\in \Phi$. The \emph{number induction axiom scheme} for the set $\Phi$, denoted by $\Phi$-IND, is the set of formulas
\begin{equation}
  \phi(\bar{x},\bar{X},0) \wedge \forall y (\phi(\bar{x},\bar{X},y)\to \phi(\bar{x},\bar{X},y+1))\to \forall z \phi(\bar{x},\bar{X},z).
\end{equation}
The \emph{number minimization axiom scheme} (or the \emph{least number principle}) for the set $\Phi$ is denoted by $\Phi$-MIN and consists of the formulas
\begin{equation}
   \phi(\bar{x},\bar{X},y)\to \exists z\leq y (\phi(\bar{x},\bar{X},z)\wedge \neg \exists u<z \,\phi(\bar{x},\bar{X},u)).
\end{equation}
Finally, the \emph{comprehension axiom scheme} $\Phi$-COMP is the set of all formulas
\begin{equation}
    \forall y\exists Y\leq y\,\forall z< y \, \phi(\bar{x},\bar{X},z)\iff Y(z).
\end{equation}
and $Y$ does not occur free in $\phi(\bar{x},\bar{X},z)$.

\begin{definition}[The theory $V^1$]
The theory $V^1$ is a second-order theory axiomatized by $2$-BASIC, $\Sigma^{1,b}_0$-COMP, and $\Sigma^{1,b}_1$-IND.

\end{definition}
In general, for $i\geq 0$, $V^i$ is the same as $V^1$ except that $\Sigma^{1,b}_1$-IND is replaced by $\Sigma^{1,b}_i$-IND.
\begin{lemmach}
For $i>0$, the theory $V^i$ proves $\Sigma^{1,b}_i$-COMP and $\Sigma^{1,b}_i$-MIN. 
\end{lemmach}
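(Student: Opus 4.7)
Both claims follow by standard techniques of bounded arithmetic; the plan is to derive $\Sigma^{1,b}_i$-MIN first and then use it to derive $\Sigma^{1,b}_i$-COMP.

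For $\Sigma^{1,b}_i$-MIN, I would first derive $\Pi^{1,b}_i$-IND from $\Sigma^{1,b}_i$-IND (a standard exchange, using that the $\Sigma^{1,b}_i$-IND axiom applied to $\neg \psi$ is, after contraposition, equivalent to a derivation of $\Pi^{1,b}_i$-IND for $\psi$ via a contradiction with the least counterexample). Then, given $\phi \in \Sigma^{1,b}_i$ with $\phi(y)$, suppose for contradiction that $\forall z \leq y(\phi(z) \to \exists u < z\, \phi(u))$. Applying $\Pi^{1,b}_i$-IND to $\theta(w) := \forall z \leq w\, \neg \phi(z)$ yields $\theta(y)$, contradicting $\phi(y)$: the base $\theta(0)$ holds vacuously (no witness exists below $0$), and the step $\theta(w) \to \theta(w+1)$ holds because a supposed $\phi(w+1)$ would force a witness $\leq w$, contradicting $\theta(w)$.

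For $\Sigma^{1,b}_i$-COMP, given $\phi(z) \in \Sigma^{1,b}_i$ and a bound $y$, we must produce $Y \leq y$ satisfying $\forall z < y(Y(z) \leftrightarrow \phi(z))$. The plan is to build $Y$ via induction on $w$, working around the fact that the naive induction formula $\exists Y \leq w\, \forall z < w(Y(z) \leftrightarrow \phi(z))$ has complexity $\Sigma^{1,b}_{i+1}$. I would pass through $\Sigma^{1,b}_i$-Replacement, provable in $V^i$ from $\Sigma^{1,b}_i$-MIN by induction on the length $y$ (assembling the packed tuple row by row). Writing $\phi(z) \equiv \exists X \leq t(z)\, \phi_0(z, X)$ with $\phi_0 \in \Pi^{1,b}_{i-1}$, I would apply Replacement to the $\Sigma^{1,b}_i$-formula asserting the existence of a pair $(b, X)$ encoding either a least witness to $\phi_0(z,-)$ (with flag-bit $b = 1$) or a flag-bit $b = 0$ signalling absence of a witness. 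The flag-bits across $z < y$ then define $Y$ via $\Sigma^{1,b}_0$-COMP.

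The hard part is precisely the quantifier-complexity bookkeeping in the Replacement step: making the ``flag-and-witness'' formula provably $\Sigma^{1,b}_i$, so that both the existence of a least witness (through $\Sigma^{1,b}_i$-MIN) and the consistency of the flag-$0$ branch with $\neg \phi(z)$ fit inside the scope of the available induction. Once this coding is set up, the inductive construction of the packed string, the bound $y$ on $Y$, and the bitwise extraction are routine uses of $\Sigma^{1,b}_0$-COMP from the base theory.
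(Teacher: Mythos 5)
The paper states this lemma without proof, as standard background imported from the literature, so there is no in-paper argument to compare against; I assess your sketch on its own terms. The MIN half is correct: $\Pi^{1,b}_i$-IND does follow from $\Sigma^{1,b}_i$-IND (by the usual reverse-induction trick on $\neg\psi(z\,\dot-\,w)$ — not by a least-counterexample argument, which would be circular here since the least number principle is what you are proving), and inducting on $\theta(w):=\forall z\leq w\,\neg\phi(z)$ under the no-minimal-witness hypothesis yields the contradiction exactly as you describe.

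The COMP half has a genuine gap, and it sits precisely where you place ``the hard part.'' Writing $\phi(z)\equiv\exists X\leq t\,\phi_0(z,X)$ with $\phi_0\in\Pi^{1,b}_{i-1}$, the flag-$0$ branch of your flag-and-witness matrix must assert $\forall X\leq t\,\neg\phi_0(z,X)$, which is irreducibly $\Pi^{1,b}_i$: no string can certify the absence of a witness, so no rewriting makes the matrix $\Sigma^{1,b}_{i}$ in the form Replacement needs. Consequently the induction formula $\exists W\,\forall z<w\,\chi(z,W^{[z]})$ underlying that Replacement instance is $\Sigma^{1,b}_{i+1}$ and lies outside the available induction — this is not bookkeeping that can be arranged. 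The standard repair abandons witnessing the negative instances altogether: consider the $\Sigma^{1,b}_i$ formula $\eta(k)$ asserting that there exist $Y\leq y$ with at least $k$ elements and a packed $W$ such that $\forall z<y\,(Y(z)\to\phi_0(z,W^{[z]}))$; take the maximal $k\leq y$ with $\eta(k)$ (number maximization for $\Sigma^{1,b}_i$, obtained from the MIN/IND you already have); the witnessing $Y$ is then the comprehension set, because any $z<y$ with $\phi(z)\wedge\neg Y(z)$ would let you adjoin $z$ to $Y$ and its witness to $W$, contradicting maximality. Your $\Pi^{1,b}_{i-1}$-matrix Replacement remains the right tool for assembling $W$ row by row, but the negative direction of the comprehension must come from maximality, not from a flag certifying $\neg\phi(z)$.
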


To refer to exponentially large objects such as power sets, congruences on products of algebras, and so forth, we use the setting introduced in \cite{10.1007/978-3-540-30124-0_27}. In addition to free and bound variables of first and second sorts, we consider variables of a third sort that represent finite sets of finite sets, named $\mathscr{A,B,C},...$ and $\mathscr{X,Y,Z},...$. We refer to second-sort objects as 'strings', and to third-sort objects as 'classes' (note that in the original setting in \cite{10.1007/978-3-540-30124-0_27} classes were referred to as 'superstring', but this name does not reflect the type of objects we discuss). The language $\mathcal{L}^3\mathcal{_{PA}}$ contains an additional symbol for the third-order membership predicate $A\in_3 \mathscr{B}$,
$$\mathcal{L}^3\mathcal{_{PA}} = \{0,1,+,\cdot,\lvert\,  \rvert,=_1,=_2,\leq, \in_2,\in_3\}.$$ 
Classes can be also thought of as strings of bits, where each bit is indexed by a set referred to as \emph{bit-index}. There is no length-function analog for classes, so the 'length' of a class in this setting is the lexicographically maximal bit-index under consideration. Number terms are defined as in $V^1$, not including any reference to third-order variables, while formulas additionally may have third-order variables and quantifiers. Although third-order variables are unbounded because of the absence of a length function, they will be implicitly bounded, in the sense that the bounds on first-order and second-order quantifiers will limit the part of the class that affects the truth value of a given formula.

We extend the hierarchy $\Sigma^{1,b}_i$ of second-order formulas to third-order classes $\Sigma^{\mathscr{B}}_i$ that consist of those formulas with arbitrarily many bounded first-order and second-order quantifiers, and exactly $i$ alternating unbounded third-order quantifiers, the outermost
being restricted, i.e. equivalent to the existential quantifier. Let $\Phi$ be a set of third-sorted formulas. The number induction axiom scheme for the set $\Phi$, $\Phi$-IND, is the set of formulas\begin{equation}
  \phi(\bar{x},\bar{X},\bar{\mathscr{X}},0) \wedge \forall y (\phi(\bar{x},\bar{X},\bar{\mathscr{X}},y)\to \phi(\bar{x},\bar{X},\bar{\mathscr{X}},y+1))\to \forall z \phi(\bar{x},\bar{X},\bar{\mathscr{X}},z).
\end{equation}
We define the following two comprehension axiom schemes. $\Phi$-2COMP is defined as 
\begin{equation}
   \forall y\exists Y\leq y\forall z\leq y\big(\phi(\bar{x},\bar{X},\bar{\mathscr{X}},z)\iff Y(z)\big),
\end{equation}
and $\Phi$-3COMP is
\begin{equation}
    \forall y\exists \mathscr{Y}(\forall Z\leq y)\big(\phi(\bar{x},\bar{X},\bar{\mathscr{X}},Z)\iff \mathscr{Y}(Z)\big), 
\end{equation}
where in each case $\phi\in\Phi$ subjects to the restriction that neither $Y$ nor $\mathscr{Y}$ occur free in $\phi$. 

\begin{definition}[The theory $W^1_1$, \cite{10.1007/978-3-540-30124-0_27}]
Theory $W^1_1$ is a third-sorted theory axiomatized by induction axiom scheme $\Sigma^{\mathscr{B}}_1$-IND, and both comprehension axiom schemes $\Sigma^{\mathscr{B}}_0$-2COMP, $\Sigma^{\mathscr{B}}_0$-3COMP.
\end{definition}
In general, for $i\geq 0$, $W^i_1$ is the same as $W^1_1$ except that $\Sigma^{\mathscr{B}}_1$-IND is replaced by $\Sigma^{\mathscr{B}}_i$-IND. Thus, $\Sigma^{\mathscr{B}}_0$-definable functions of number and string arguments are usual $p$-time hierarchy functions. $\Sigma^{\mathscr{B}}_1$-definable functions of $W^1_1$ are exactly FPSPACE$^+$, a third-order analogue of PSPACE functions (see \cite{articledddfgr}). $W^1_1$ can $\Sigma^{\mathscr{B}}_0$-define all number and string-valued functions of number and string arguments from the polynomial-time hierarchy.

The class of quantified propositional formulas, denoted by $\Sigma^{q}_{\infty}$, is the smallest class of formulas containing atoms $0,1$, and closed under logical connectives and Boolean quantification: if $\phi(x)$ is a formula in $\Sigma^{q}_{\infty}$, then so are $\exists x\phi(x)$ and $\forall x\phi(x)$, and the meaning is $\phi(0)\vee\phi(1)$ and $\phi(0)\wedge \phi(1)$, respectively. The proof system $G$ for quantified propositional formulas extends a classical proof system, the sequent calculus LK, see \cite{krajíček_2019}.

\begin{definition}[Sequent calculus $G$] 
Quantified propositional calculus $G$ extends system LK by allowing
$\Sigma^q_{\infty}$-formulas in sequents and by accepting two quantifier rules:
\begin{enumerate}
  \item $\forall:$introduction
$$\text{left:\,\,\,}
  \frac{\phi(\psi),\Gamma\longrightarrow \Delta}{\forall x\phi(x),\Gamma \longrightarrow \Delta}\,\,\,\,\,\,\,\,\,\,\,\,\,\,\,\,\,\,\,\,\text{right:\,\,\,}\frac{\Gamma\longrightarrow\Delta,\phi(p)}{\Gamma\longrightarrow \Delta,\forall x\phi(x)}$$

  \item $\exists:$introduction $$\text{left:\,\,\,}
  \frac{\phi(p),\Gamma\longrightarrow \Delta}{\exists x\phi(x),\Gamma \longrightarrow \Delta}\,\,\,\,\,\,\,\,\,\,\,\,\,\,\,\,\,\,\,\,\text{right:\,\,\,}\frac{\Gamma\longrightarrow\Delta,\phi(\psi)}{\Gamma\longrightarrow \Delta,\exists x\phi(x)}$$
\end{enumerate}
where $\psi$ is any formula such that no variable occurrence free in $\psi$ becomes quantified in $\phi(\psi)$, and variable $p$ does not occur in lower sequences of inference rules. 
\end{definition}

In \cite{10.5555/1734064} there is presented the well-known translation of any $\phi(\bar{x},\bar{X}) \in \Sigma^{1,b}_0$ into a family of propositional formulas,
\begin{equation}
||\phi(\bar{x},\bar{X})|| = \{\phi(\bar{x},\bar{X})[\bar{m},\bar{n}]: \bar{m},\bar{n} \in \mathbb{N}\}
\end{equation}
such that the following lemma holds: 
\begin{lemmach}[\cite{10.5555/1734064}]\label{LPropositionalTranslation} For every $\Sigma_0^{1,b}(\mathcal{L}^2\mathcal{_{PA}})$ formula $\phi(\bar{x},\bar{X})$, there is a constant $d\in \mathbb{N}$ and a polynomial $p(\bar{m},\bar{n})$ such that for all $\bar{m},\bar{n} \in \mathbb{N}$, the propositional formula $\phi(\bar{x},\bar{X})[\bar{m},\bar{n}]$ has depth at most $d$ and size at most $p(\bar{m},\bar{n})$.
\end{lemmach}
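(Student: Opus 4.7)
The plan is a straightforward structural induction on the $\Sigma_0^{1,b}$-formula $\phi(\bar x, \bar X)$. First I fix the translation itself. A number term $t(\bar x)$ under the assignment $\bar x := \bar m$ evaluates to a concrete natural number bounded by a fixed polynomial $q_t(\bar m)$, so any atomic subformula involving only number variables translates to the Boolean constant $0$ or $1$, and an atomic subformula of the form $X_j(t)$ translates to a propositional atom $p_{X_j,\, t(\bar m)}$ (interpreted as $0$ whenever $t(\bar m) \geq n_j = |X_j|$). Propositional connectives translate componentwise. A bounded number quantifier $\exists y \leq s\,\psi$ becomes the disjunction $\bigvee_{i=0}^{s(\bar m)} \psi[i]$, and dually $\forall y \leq s$ becomes the conjunction $\bigwedge_{i=0}^{s(\bar m)} \psi[i]$. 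Since $\phi \in \Sigma_0^{1,b}$, there are no string quantifiers, so this inductive definition terminates on the atomic level.

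For the size bound, I argue by induction on the structure of $\phi$. Atomic translations have constant size, and propositional connectives add a constant. For a bounded quantifier block, the bound term $s$ evaluates under $\bar x := \bar m$ to at most $q_s(\bar m)$ for some fixed polynomial $q_s$ determined by $s$, and so $||\exists y \leq s\, \psi||$ has size at most $(1 + q_s(\bar m))\cdot p_\psi(\bar m, \bar n) + c \cdot q_s(\bar m)$. Iterating through the finite syntax tree of $\phi$ gives an overall polynomial bound $p(\bar m, \bar n)$ depending only on $\phi$.

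For the depth bound, the key observation is that the quantifier and connective structure of $\phi$ is fixed once and for all, and only the widths of the translated $\bigvee/\bigwedge$ gates depend on $\bar m, \bar n$. Translating a bounded quantifier introduces a single unbounded-fanin gate, consecutive quantifiers of the same type are absorbed into one such gate, and an alternation adds one level; Boolean connectives contribute a constant to the depth. Hence the translated formula has depth bounded by a constant $d$ read off the syntactic alternation structure of $\phi$, independent of $\bar m, \bar n$. The only conceptual subtlety — and the main place where the argument could go wrong — is fixing a consistent notion of depth: one must measure it as the alternation count of unbounded-fanin $\bigvee/\bigwedge$ gates (collapsing consecutive same-type connectives), rather than as a count of binary connectives, which would blow up with $\bar m, \bar n$. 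With this convention in place, the induction closes and both $d$ and $p$ can be extracted mechanically from the syntax of $\phi$.
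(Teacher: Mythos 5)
The paper does not prove this lemma at all: it is imported verbatim from Cook--Nguyen's book (the cited reference), so there is no in-paper proof to compare against. Your argument is the standard textbook proof of the propositional translation and is correct as stated, including the essential point that depth must be counted as alternation depth of unbounded-fanin gates; the only tiny slip is that the bound $q_t$ on the value of a number term should be a polynomial in both $\bar{m}$ and $\bar{n}$, since terms of $\mathcal{L}^2_{\mathcal{PA}}$ may contain $\lvert X_j\rvert$.
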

Propositional translation of formulas $\Sigma_0^{1,b}(\mathcal{L}^2\mathcal{_{PA}})$ can be extended to the translation of any bounded $\mathcal{L}^2\mathcal{_{PA}}$-formula into a quantified propositional formula, using strings of Boolean quantifiers to represent second-order quantifiers, \cite{10.5555/1734064}. Let us denote the class of all bounded $\mathcal{L}^2\mathcal{_{PA}}$ formulas by $$\Sigma^{1,b}_{\infty} = \bigcup_{i}\Sigma^{1,b}_i= \bigcup_{i}\Pi^{1,b}_i.$$ 
The following theorem establishes the correspondence between theory $W^1_1$ and quantified propositional calculus $G$. It follows from \cite{10.1007/978-3-540-30124-0_27}, specifically from Theorems $12,13$.
\begin{theorem} \label{Translation[[[po]]]} 
Suppose that $\phi(\bar{x},\bar{X})$ is a $\Sigma_{\infty}^{1,b}$ formula such that $W^1_1 \vdash \phi(\bar{x},\bar{X})$. Then the propositional family $||\phi(\bar{x},\bar{X})||$ has quantified propositional calculus proofs of polynomial size. That is, there is a polynomial $p(\bar{m},\bar{n})$ such that for all $1\leq \bar{m},\bar{n}\in \mathbb{N}$, $\phi(\bar{x},\bar{X})[\bar{m},\bar{n}]$ has a $G$-proof of size at most $p(\bar{m},\bar{n})$. Furthermore, there is an algorithm that finds a $G$-proof of $\phi(\bar{x},\bar{X})[\bar{m},\bar{n}]$ in time bounded by a polynomial in $(\bar{m},\bar{n})$.
\end{theorem}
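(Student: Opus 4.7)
The plan is to prove the theorem by induction on the structure of a $W^1_1$-proof $\pi$ of $\phi(\bar x,\bar X)$, exhibiting for each sequent in $\pi$ a polynomial-size $G$-proof of its propositional translation. First I would extend Lemma~\ref{LPropositionalTranslation} from $\Sigma^{1,b}_0$-formulas to the full $\Sigma^{1,b}_\infty$ hierarchy by replacing each bounded second-order quantifier $\exists X\leq t$ (resp.\ $\forall X\leq t$) by a block of $t$ Boolean quantifiers ranging over the bit atoms representing $X(0),\dots,X(t-1)$; the resulting formula lies in $\Sigma^q_\infty$ and has size polynomial in $\bar m,\bar n$. Since an arbitrary $W^1_1$-proof may contain intermediate sequents involving third-sorted objects, the translation must also be lifted to $\Sigma^{\mathscr{B}}_0$- and $\Sigma^{\mathscr{B}}_1$-formulas: a class $\mathscr{X}$ is encoded by a family of propositional atoms $q_{\mathscr{X},s}$, one for each bit-index $s$ falling within the polynomial bounds on the surrounding first- and second-order quantifiers. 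Only polynomially many such atoms ever appear, and an outermost restricted third-order existential quantifier translates into an existential block of Boolean quantifiers over the atoms of the corresponding class.

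With the translation fixed, the simulation proceeds line by line. Propositional and structural LK-rules translate directly; the number- and string-quantifier introduction rules become the $\exists$/$\forall$-rules of $G$ applied to the corresponding Boolean blocks, the instantiating term becoming a polynomial-size propositional formula. One then verifies that every nonlogical axiom of $W^1_1$ has short $G$-proofs of its translations: the 2-BASIC axioms are essentially quantifier-free arithmetic identities with constant-depth proofs; $\Sigma^{\mathscr{B}}_0$-2COMP translates to the tautology asserting that a length-$y$ string whose bits satisfy a polynomial-size condition exists, witnessed by the bits themselves; $\Sigma^{\mathscr{B}}_0$-3COMP is handled analogously for classes, using the Boolean-quantifier encoding of the relevant bit-indices.

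The main technical step, and the one I expect to be the chief obstacle, is the simulation of $\Sigma^{\mathscr{B}}_1$-IND. Given an induction formula $\phi(y)=\exists\mathscr{Y}\,\psi(y,\mathscr{Y})$ with $\psi\in\Sigma^{\mathscr{B}}_0$, for each fixed value $n$ of the induction variable one must produce a polynomial-size $G$-proof of
\[
||\phi||[0]\wedge \bigwedge_{i<n}\bigl(||\phi||[i]\to ||\phi||[i+1]\bigr)\to ||\phi||[n],
\]
where each $||\phi||[i]$ is an existential Boolean-quantifier block over the polynomially many atoms that encode $\mathscr{Y}$. The implicational chain has $n$ links of polynomial size, and composing them by cut on the $\Sigma^q_\infty$-formulas $||\phi||[i]$ stays within polynomial total size because $G$ admits quantified cut-formulas of unrestricted complexity. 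The delicate point is keeping the atom encoding of $\mathscr{Y}$ uniform across lines, so that $||\phi||[i]$ and $||\phi||[i+1]$ genuinely refer to the same propositional variables up to the bounds relevant to $\psi$; this uniformity is ultimately controlled by the polynomial bounds extractable from the ambient $W^1_1$-proof.

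The algorithmic clause then follows by inspection: each step in the induction on $\pi$ produces its $G$-proof fragment in time polynomial in the size of that step together with $\bar m,\bar n$, and concatenating the fragments yields a polynomial-time procedure mapping $(\bar m,\bar n)$ to a $G$-proof of $\phi(\bar x,\bar X)[\bar m,\bar n]$. In effect, the argument reuses the structure of the propositional translation for $V^1$ developed in \cite{10.5555/1734064}, suitably lifted to the third-sorted setting of \cite{10.1007/978-3-540-30124-0_27}, where the translation of third-order existential quantifiers by Boolean quantifiers is the essential new ingredient.
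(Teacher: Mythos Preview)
The paper does not supply its own proof of this theorem; it simply records that the statement follows from Theorems~12 and~13 of \cite{10.1007/978-3-540-30124-0_27}. So there is no in-paper argument to compare against beyond that citation, and your sketch is in the spirit of the standard propositional-translation machinery the reference develops.

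That said, your direct line-by-line simulation contains a genuine gap. You assert that when a class $\mathscr{X}$ is encoded by propositional atoms $q_{\mathscr{X},s}$ indexed by the bit-indices $s$ ``falling within the polynomial bounds on the surrounding first- and second-order quantifiers,'' then ``only polynomially many such atoms ever appear.'' This is false: the bit-indices of a class are \emph{strings}, not numbers, and within a polynomial length bound $t=t(\bar m,\bar n)$ there are $2^{t}$ strings, hence $2^{t}$ relevant atoms. Under your encoding, the translation of any $\Sigma^{\mathscr{B}}_1$-formula $\exists\mathscr{Y}\,\psi(\mathscr{Y})$ already carries an exponential block of Boolean quantifiers, so every intermediate sequent in the $W^1_1$-proof that mentions a third-order object has an exponential-size propositional translation. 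Your cut-chaining argument for $\Sigma^{\mathscr{B}}_1$-IND then yields an exponential, not polynomial, $G$-proof, and the ``uniformity of atom encoding'' issue you flag is not the real obstacle.

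The point of the result in \cite{10.1007/978-3-540-30124-0_27} is precisely that one does \emph{not} expand third-order data wholesale into propositional atoms. The end-formula $\phi$ is $\Sigma^{1,b}_\infty$ and contains no third-order material, and the argument there arranges matters so that the $G$-proof one extracts never has to materialise the exponentially many bits of an intermediate class; this is what Theorems~12 and~13 of that paper accomplish, and it is the missing ingredient in your outline.
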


\subsection{Universal algebra basics}

We say that a \emph{vocabulary} is a finite set of relational symbols $R_1$,..., $R_n$ of a fixed arity. A \emph{relational structure} over the vocabulary is a tuple $\mathcal{A} = (A, R^\mathcal{A}_1,...,$ $R^\mathcal{A}_n)$ such that $A \neq \emptyset$ is the \emph{universe} of $\mathcal{A}$, and each $R^\mathcal{A}_i$ is a relation on $A$ of the same arity as $R_i$. If $\mathcal{X}$, $\mathcal{A}$, are relational structures over the same vocabulary $R_1$,..., $R_n$, then a \emph{homomorphism} $h$ from $\mathcal{X}$ to $\mathcal{A}$ is a mapping $h: X \rightarrow A$ such that for every $m$-ary relation $R^\mathcal{X}$ and every tuple $(x_1,...,x_m) \in R^\mathcal{X}$ we have $(h(x_1),...,h(x_m)) \in R^\mathcal{A}$.

An \emph{algebra} $\mathbb{A}=(A,f_1,f_2,...)$ is a pair of a domain $A$ and basic operations $f_1,f_2,...$ of fixed arities from \emph{language} (or \emph{type}) $\Sigma = \{f_1,f_2,...\}$. The subset of $n$-ary function symbols in $\Sigma$ is denoted by $\Sigma_n$. If we denote by $X$ the set of all variables, then the \emph{set of terms of type} $\Sigma$ is the smallest set that contains $X\cup \Sigma_0$, and if $p_1,...,p_n$ are terms and $f\in \Sigma_n$, then the string $f(p_1,...,p_n)$ is also a term. An \emph{identity} of type $\Sigma$ over $X$ is an expression of the form $s\approx t$ where $s,t$ are terms. We call the set of all term operations of \emph{algebra} $\mathbb{A}=(A,F)$ the \emph{clone of term operations} of $\mathbb{A}$, and denote it by $Clone(\mathbb{A})$. In general, for any set of operation $O$ on $A$, we denote by $Clone(O)$ the set of all term operations generated by $O$.

An $m$-ary operation $f: A^m\rightarrow A$ is a \emph{polymorphism} of an $n$-ary relation $R\in A^n$ (or $f$ \emph{preservers} $R$, or $f$ is \emph{compatible} with $R$, or $R$ is \emph{invariant} under $f$) if $f(\bar{a_1},...,\bar{a_m}) \in R$ for all tuples $\bar{a_1},...,\bar{a_m} \in R$. For any relational structure $\mathcal{A}$ we denote by $Pol(\mathcal{A})$ the set of all operations on $A$ preserving each relation from $\mathcal{A}$. Generally, for every set of relations $\Gamma$ on $A$, by $Pol(\Gamma)$ we denote the set of all polymorphisms of $\Gamma$. The following theorem establishes the connection between algebras and relational structures. 

\begin{theorem}[\cite{bergman2011universal}]\label{fjjduh87}
For any algebra $\mathbb{A}$ there exists relation structure $\mathcal{A}$ such that $Clone(\mathbb{A})$ $ =Pol(\mathcal{A})$.
\end{theorem}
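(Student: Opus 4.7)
The plan is to build $\mathcal{A}$ directly from the invariant relations of $\mathbb{A}$ and then verify both inclusions $Clone(\mathbb{A}) \subseteq Pol(\mathcal{A})$ and $Pol(\mathcal{A}) \subseteq Clone(\mathbb{A})$. Concretely, I would take $\mathcal{A} = (A, \{R \subseteq A^k : k \geq 1,\ R \text{ is preserved by every basic operation of } \mathbb{A}\})$; in the finite setting one can prune the resulting vocabulary down to a finite family (indeed, to a single relation of sufficiently large arity), but the Galois-theoretic content of the statement does not depend on that cosmetic step, so I would not dwell on it.

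The easy direction $Clone(\mathbb{A}) \subseteq Pol(\mathcal{A})$ is a straightforward induction on the structure of terms. Projections trivially preserve every relation; and if $g, h_1, \ldots, h_k$ all preserve a given $R$, then so does the composition $g(h_1, \ldots, h_k)$. Since the basic operations of $\mathbb{A}$ preserve every $R$ in the vocabulary of $\mathcal{A}$ by construction, the induction gives that every term operation does too, i.e. $Clone(\mathbb{A}) \subseteq Pol(\mathcal{A})$.

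The reverse inclusion $Pol(\mathcal{A}) \subseteq Clone(\mathbb{A})$ is the substantive direction and uses the classical subpower construction. Given an $n$-ary $f \in Pol(\mathcal{A})$, let $R \subseteq A^{A^n}$ be the subuniverse of the direct power $\mathbb{A}^{A^n}$ generated by the $n$ projection tuples $\pi_1, \ldots, \pi_n$, where $\pi_i$ is indexed by $\bar a = (a_1, \ldots, a_n) \in A^n$ and defined by $\pi_i(\bar a) = a_i$. Viewed as an $|A|^n$-ary relation on $A$, $R$ is closed under every basic operation of $\mathbb{A}$, so $R$ belongs to the vocabulary of $\mathcal{A}$. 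Because $f \in Pol(\mathcal{A})$ and $\pi_1, \ldots, \pi_n \in R$, the tuple $f(\pi_1, \ldots, \pi_n)$ lies in $R$. By the standard characterization of the subalgebra generated by a set, every element of $R$ has the form $t(\pi_1, \ldots, \pi_n)$ for some term $t$ of type $\Sigma$; hence there is a term $t$ with $f(\pi_1, \ldots, \pi_n) = t(\pi_1, \ldots, \pi_n)$. Evaluating coordinate-wise at $\bar a$ gives $f(a_1, \ldots, a_n) = t(a_1, \ldots, a_n)$ for every $\bar a$, so $f = t^{\mathbb{A}} \in Clone(\mathbb{A})$.

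The main conceptual step is not a calculation but the recognition that the single $|A|^n$-ary relation carved out by the projections already serves as a universal gadget pinning down a term for $f$: one well-chosen invariant relation is enough to force any polymorphism of $\mathcal{A}$ to come from a term. Everything else is bookkeeping around the Galois connection between operations and relations.
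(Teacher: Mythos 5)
The paper does not prove this statement at all: it is quoted from Bergman's textbook and used as a black box (the structure $\mathcal{A}=(A,\Gamma_{\mathcal{A}})$ is simply fixed afterwards), so there is no in-paper argument to compare yours against. Your proof is the standard Inv--Pol Galois-connection argument, and both directions are correct as written for a \emph{finite} algebra: the easy inclusion by induction on terms, and the reverse inclusion via the subpower $R=Sg_{\mathbb{A}^{A^n}}(\{\pi_1,\dots,\pi_n\})$, which has finite arity $|A|^n$ precisely because $A$ is finite. That finiteness hypothesis is genuinely needed (for infinite $A$ the relation $R$ is not finitary and one only recovers the local closure of $Clone(\mathbb{A})$), but since the whole paper works over a fixed finite domain this is the intended reading of ``any algebra.''

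One concrete inaccuracy worth flagging: your parenthetical claim that the vocabulary can be pruned ``to a finite family (indeed, to a single relation of sufficiently large arity)'' is false in general. For each polymorphism arity $n$ your argument consumes the relation of arity $|A|^n$, and there exist clones on finite sets that are not of the form $Pol(\Gamma)$ for any finite $\Gamma$ (non-finitely-related clones), so no uniform finite pruning is available. This does not damage your proof, because you correctly do not rely on it and the classical statement allows $\mathcal{A}$ to carry infinitely many relations; but it does sit awkwardly with the paper's own definition of a vocabulary as a \emph{finite} set of relation symbols, a tension the paper itself glosses over when it later fixes a concrete finite $\Gamma_{\mathcal{A}}$ of bounded arity for the specific algebra at hand.
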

A subset $B\subseteq A$ is a \emph{subuniverse} of algebra $\mathbb{A}$ if it is closed under all operations of $\mathbb{A}$. Any subuniverse $B$ can be viewed as an algebra $\mathbb{B}$, and we call it \emph{subalgebra} of $\mathbb{A}$, denoted by $\mathbb{B}\leq \mathbb{A}$. For every subset $B\subseteq A$ we denote by $Sg(B)$ the subalgebra generated by $B$, i.e. the minimal subalgebra of $\mathbb{A}$ containing $B$. For any subset $B$, $Sg(B)$ can be generated by a closure operator $Cl$, defined as follows: $Cl(B)=B\cup\{f(a_1,...,a_n): f\text{ is a basic operation on }A,\,a_1,...,a_n\in B\}$, and $Cl^{t}(B)$ for $t\geq 0$ by $Cl^0(B)=B, Cl^{t+1}(B) = Cl(Cl^t(B))$. Then, $Sg(B) = B\cup Cl(B)\cup Cl^2(B)\cup...$

An equivalence relation $\sigma$ on $\mathbb{A}$ such that any term operation on $\mathbb{A}$ is compatible with it is a \emph{congruence}. Trivial congruences on $\mathbb{A}$ are the diagonal relation $\Delta_{A}=\{(a,a): a\in A\}$ and the full relation $\nabla_{A}=A^2$. A congruence is \emph{maximal} if it is inclusion maximal. For any congruence $\sigma$ on $\mathbb{A}$, one can introduce a \emph{quotient} (or \emph{factor}) algebra $\mathbb{A}/\sigma$. As the universe $\mathbb{A}/\sigma$ has the set of $\sigma$-classes $A/\sigma$ and the operations there are defined using arbitrary representatives from $A/\sigma$. Due to the compatibility property, this will indeed define a function on $A/\sigma$. A congruence $\sigma$ defines a subalgebra of $\mathbb{A}^2$: when applying term operation to elements from $\sigma$ coordinatewise, compatibility property gives an element from $\sigma$. In general, any invariant under all term operation $n$-ary relation $R$ on $\mathbb{A}$ forms a subalgebra of $\mathbb{A}^n$.

\subsection{Constraint satisfaction problems}

For this section, some definitions, examples, and results are adapted from
\cite{barto_et_al},
\cite{zhuk2020proof}, and \cite{Zivn:2012:CVC:2412078}. We define a constraint satisfaction problem as in \cite{zhuk2020proof}.  

\begin{definition}[CSP \cite{zhuk2020proof}] An instance of \emph{constraint satisfaction problem} (CSP) over finite domains is a triple $\Theta = (X,D,C)$, where
\begin{itemize}
    \item $X = \{x_0,...,x_{n-1}\}$ is a finite set of variables,
    \item $D = \{D_0,...,D_{n-1}\}$ is a set of non-empty finite domains,
    \item $C=\{C_0,...,C_{t-1}\}$ is a set of constraints.
\end{itemize}
Each variable $x_i$ can take values over the corresponding domain $D_i$. Every constraint $C_j \in C$ is a pair $(\vec{x}_j,\rho_j)$, where $\vec{x}_j$ is a tuple of variables of length $m_j$, called a \emph{constraint scope}, and $\rho_j$ is an $m_j$-ary relation on the product of the domains, called a \emph{constraint relation}. The solution to the instance $\Theta$ is an assignment to every variable $x_i$ such that for each constraint $C_j$ the image of the constraint scope is a member of the constraint relation.  
\end{definition}

In the definition above we can consider every $D_i$ as a unary constraint relation over the superdomain imposed on a variable $x_i$. In such an interpretation, the definition coincides with a classical one, given for example in \cite{barto_et_al}. We can define the CSP equivalently as a homomorphism problem between relational structures.
\begin{definition}
Let $\mathcal{A}$ be a relational structure over a vocabulary $R_1$,..., $R_n$. In the \emph{constraint satisfaction problem} associated with $\mathcal{A}$, denoted by CSP($\mathcal{A}$), the question is, given a structure $\mathcal{X}$ over the same vocabulary, whether there exists a homomorphism from $\mathcal{X}$ to $\mathcal{A}$. We will call $\mathcal{A}$ the \emph{target structure} and $\mathcal{X}$ the \emph{instance (or input) one}. 
\end{definition}

We call a set of relations $\Gamma$ over a finite domain a \emph{constraint language}. A CSP associated with $\Gamma$, denoted by CSP($\Gamma$), is a subclass of CSP such that any constraint relation in any instance of CSP($\Gamma$) belongs to $\Gamma$. For years, it has been conjectured that for any constraint language $\Gamma$, CSP($\Gamma$) is either polynomial time or NP-complete. To formulate the CSP dichotomy theorem, we need the following notion. We call an operation $\Omega$ on a set $A$ a \emph{weak-near unanimity operation} (WNU) if it satisfies the identities 
$$
\Omega(y,x,x,...,x)=\Omega(x,y,x,...,x)=...=\Omega(x,x,...,x,y)
$$
for all $x,y\in A$. An operation $\Omega$ is called \emph{idempotent} if $\Omega(x,...,x)=x$ for every $x\in A$, and is called \emph{special} if for all $x,y\in A$,
$
\Omega(x,...,x,\Omega(x,...,x,y))=\Omega(x,...,x,y).$ It is known \cite{MAROTI} that for any idempotent WNU operation $\Omega$ on a finite set, there exists a special WNU operation $\Omega' \in Clone(\Omega)$.

\begin{theorem}[CSP dichotomy theorem \cite{zhuk2020proof}] Suppose $\Gamma$ is a finite set of relations on a set $A$. Then CSP($\Gamma$) can be solved in polynomial time if there exists a WNU operation $\Omega$ on $A$ preserving $\Gamma$; CSP($\Gamma$) is NP-complete otherwise.
\end{theorem}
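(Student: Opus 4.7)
The plan is to prove the two directions separately, both through the algebraic theory of polymorphisms. For the hardness direction, I would assume that no WNU operation on $A$ preserves $\Gamma$. The Galois correspondence between constraint languages and clones tells us that the relations pp-definable from $\Gamma$ are exactly $\mathrm{Inv}(\mathrm{Pol}(\Gamma))$, and a polynomial-time reduction $\mathrm{CSP}(\Gamma') \leq_p \mathrm{CSP}(\Gamma)$ exists whenever $\Gamma'$ is pp-interpretable from $\Gamma$. By the Taylor theorem, as reproved by Maróti and McKenzie, a finite idempotent algebra admits a WNU term operation if and only if its clone admits no homomorphism onto the projection clone on a two-element set. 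Absence of a WNU therefore yields such a homomorphism, which lifts to a pp-interpretation of a Boolean relation such as $\{0,1\}^3\setminus\{(0,0,0)\}$ inside $\Gamma$. Schaefer's dichotomy then certifies NP-hardness of $\mathrm{CSP}(\Gamma)$.

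For the tractability direction, I would follow Zhuk's algorithm. First reduce to the idempotent case by adding singleton unary relations for each element of $A$, and invoke Maróti's theorem to assume the given WNU $\Omega$ is idempotent and special. The algorithm then maintains an instance $(X,D,C)$ and alternates constraint propagation with a search, in some $D_i$, for a proper \emph{strong} subuniverse $B \subsetneq D_i$ of one of three kinds: a binary absorbing, a central, or a PC subuniverse. The three universal algebra theorems proved by Zhuk — which the present paper formalizes as the axiom schemes $\mathrm{BA}_\mathcal{A}$, $\mathrm{CR}_\mathcal{A}$, $\mathrm{PC}_\mathcal{A}$ inside $V^1_\mathcal{A}$ — guarantee that if the instance is satisfiable, then replacing $D_i$ by $B$ yields a still-satisfiable instance. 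Recursion then decreases the total domain size. When no such proper strong subuniverse is available in any domain, a further dichotomy forces the surviving algebras to be essentially affine over finite modules, so the instance becomes a system of linear equations over a product of finite fields and is solved in polynomial time by Gaussian elimination.

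The principal obstacle is clearly on the tractable side, namely proving that the reduction of a domain to its binary absorbing, central, or PC subuniverse preserves satisfiability. The hardness argument is comparatively routine once one accepts the Taylor/Maróti--McKenzie theorem, but the three universal algebra theorems driving the reductions occupy most of Zhuk's paper and involve delicate case analyses, induction on algebra size, and repeated inspection of the binary projections of subalgebras of powers. Controlling the interaction of reductions across several variables, keeping propagation sound, and handling the affine base case are the three subsidiary difficulties inside that step. It is exactly these proofs that the remainder of the present paper reformulates within bounded arithmetic $W^1_1$.
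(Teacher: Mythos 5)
The paper does not prove this theorem; it is imported verbatim from Zhuk's work, and your outline correctly reproduces the standard two-sided argument (Taylor/Mar\'oti--McKenzie plus pp-interpretations for hardness, Zhuk's one-of-four reductions and the linear case for tractability), which is exactly the approach the paper summarizes and then partially formalizes. One small slip: the single relation $\{0,1\}^3\setminus\{(0,0,0)\}$ does not by itself certify NP-hardness (a CSP over just that relation is trivially satisfiable by the all-ones assignment); the correct conclusion from the clone homomorphism onto projections is that $\Gamma$ pp-interprets \emph{every} Boolean relation, hence 3-SAT or NAE-3-SAT, from which hardness follows.
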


Instead of a constraint language $\Gamma$, it is reasonable to consider larger classes of languages containing $\Gamma$, whenever there is a polynomial time reduction. The so-called primitive-positive (abbreviated by $pp$) constructions provide such a reduction between the corresponding CSPs, i.e. if $\Gamma$ $pp$-constructs $\Gamma'$, then CSP($\Gamma'$) is log-space reducible to CSP($\Gamma$). $Pp$-construction is the most general technique for simulation between CSPs, which contains $pp$-interpretations and $pp$-definitions. We say that a language $\Gamma$ \emph{$pp$-defines} $\Gamma'$ if every relation in $\Gamma'$ can be defined by a first-order formula which only uses relations in $\Gamma$, the equality relation, conjunction, and
existential quantification. The following theorem explains the importance of this special case of constructions. 
\begin{theorem}\label{skskjdi}
Let $\mathcal{A}=(A,\Gamma)$ be a finite relational structure, and let $R\subseteq A^n$ be a non-empty relation. Then $R$ is preserved by all polymorphisms of $\Gamma$ if and only if $R$ is $pp$-definable from $\Gamma$.
\end{theorem}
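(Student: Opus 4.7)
The plan is to establish the two directions separately. The forward direction—every $pp$-definable relation is preserved by $Pol(\Gamma)$—will follow by a routine induction on the structure of the defining formula: each relation in $\Gamma$ and the equality relation are preserved by their polymorphisms by definition, conjunction corresponds to intersection of invariant relations, and existential quantification corresponds to projection, both of which commute with the coordinate-wise action of any operation on $A$. I would just record these three closure properties and then induct on formula complexity.

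The backward direction is the substantive content of the theorem, and I would prove it by the classical indicator-problem construction. Enumerate $R = \{t_1,\ldots,t_m\}$ with $t_i = (t_{i,1},\ldots,t_{i,n})$, and form the column tuples $c_j = (t_{1,j},\ldots,t_{m,j}) \in A^m$ for $j=1,\ldots,n$. Introduce a variable $y_b$ for every $b \in A^m$, and identify $x_j$ with $y_{c_j}$ for $j=1,\ldots,n$. For every $k$-ary $R_i \in \Gamma$ and every $k$-tuple $(b_1,\ldots,b_k) \in (A^m)^k$ such that $((b_1)_\ell,\ldots,(b_k)_\ell) \in R_i$ holds for every $\ell = 1,\ldots,m$, add the conjunct $R_i(y_{b_1},\ldots,y_{b_k})$. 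The candidate defining formula $\phi(x_1,\ldots,x_n)$ is then the conjunction of all such atomic formulas, existentially quantified over every variable other than $y_{c_1},\ldots,y_{c_n}$.

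The verification that $\phi$ defines $R$ will have two parts. For $R \subseteq \phi(\mathcal{A})$, I would fix an arbitrary row index $\ell$ and consider the assignment $y_b \mapsto b_\ell$; by the construction of the conjuncts this satisfies every constraint, while the distinguished variables receive $(c_{1,\ell},\ldots,c_{n,\ell}) = t_\ell$, so each tuple of $R$ is witnessed. For the converse, any satisfying assignment is a function $\psi : A^m \to A$, that is, an $m$-ary operation on $A$; the constraints were designed so that $\psi$ satisfies all of them precisely when $\psi$ preserves every relation of $\Gamma$, hence $\psi \in Pol(\Gamma)$. Applying $\psi$ coordinate-wise to $t_1,\ldots,t_m \in R$ and invoking the hypothesis that $R$ is invariant under $Pol(\Gamma)$ yields $(\psi(c_1),\ldots,\psi(c_n)) \in R$, which is exactly the tuple assigned to $(x_1,\ldots,x_n)$.

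The main obstacle is setting up the correspondence between added conjuncts and the polymorphism conditions so that ``$\psi$ satisfies every added constraint'' reads off exactly as ``$\psi$ is a polymorphism of every $R_i \in \Gamma$''; getting the indexing right between columns, rows, and arities is where the bookkeeping can obscure a short argument. I would also note that the resulting $\phi$ has size exponential in $|A|$ and $|R|$, which is irrelevant for mere $pp$-definability but would matter for any complexity-theoretic refinement.
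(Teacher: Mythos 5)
Your proof is correct; note, however, that the paper does not prove this statement at all --- it is quoted in the preliminaries as a known classical fact (the Geiger / Bodnarchuk--Kalu\v{z}nin--Kotov--Romov theorem), so there is no in-paper argument to compare against. Your argument is the standard one: the easy direction by induction on the $pp$-formula using closure of $Inv(Pol(\Gamma))$ under intersection, projection, and addition of equality atoms, and the hard direction by the indicator construction whose satisfying assignments are exactly the $m$-ary polymorphisms, with the row projections witnessing $R\subseteq\phi(\mathcal{A})$ and invariance of $R$ giving the reverse inclusion. Two small points worth making explicit: the hypothesis $R\neq\emptyset$ is exactly what guarantees $m\geq 1$, so that the projections $\pi_\ell$ exist and the construction does not degenerate (for $m=0$ the formula would define the set of constant tuples lying in every $R_i$, not the empty relation); and the identification $x_j = y_{c_j}$ for coinciding columns is harmless precisely because equal columns mean those coordinates agree on every tuple of $R$. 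Your closing remark about the exponential size of $\phi$ is apt in the context of this paper, which is why the authors handle such $pp$-definitions via the constant-size ``monster set'' for the fixed algebra $\mathbb{A}$ rather than constructing them inside the bounded-arithmetic theory.
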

Thus, all relations $pp$-definable over $\Gamma$ are invariant under all polymorphisms that preserve $\Gamma$. We will not define two other techniques, since their definitions are irrelevant to the matter of the paper, and we refer the reader to \cite{barto_et_al} for more information. We only mention two known results.

\begin{theorem}[\cite{10.1145/2677161.2677165}]
A constraint language $\Gamma$ $pp$-interpreters a constraint language $\Gamma'$ if and only if in $Pol(\Gamma')$ there exist operations satisfying all identities that are satisfied by operations in $Pol(\Gamma)$.
\end{theorem}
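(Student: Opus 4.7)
The plan is to combine the Pol--Inv Galois correspondence with a free-algebra construction, along the lines of the clone-homomorphism characterization of pp-interpretability.

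For the forward direction, suppose $\Gamma$ pp-interprets $\Gamma'$: there are an arity $n$ and a surjective partial map $f\colon D^n \to D'$ such that the domain of $f$, the relation $f^{-1}(=_{D'})$, and $f^{-1}(R')$ for every $R'\in \Gamma'$ are all pp-definable from $\Gamma$. I would associate to each $k$-ary $g \in Pol(\Gamma)$ an operation $\Phi(g) \in Pol(\Gamma')$ of the same arity by
$$\Phi(g)(b_1,\ldots,b_k) \;:=\; f\bigl(g(\bar a_1,\ldots,\bar a_k)\bigr), \qquad \bar a_i \in f^{-1}(b_i),$$
with $g$ acting coordinatewise on the $n$-tuples $\bar a_i$. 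Pp-definability of $f^{-1}(=_{D'})$ makes $\Phi(g)$ well defined, pp-definability of each $f^{-1}(R')$ makes it preserve $R'$, and a routine check shows that $\Phi$ commutes with projections and composition. Any identity holding in $Pol(\Gamma)$ is therefore inherited by the image operations in $Pol(\Gamma')$.

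For the converse, assume every identity satisfied by $Pol(\Gamma)$ is satisfied by $Pol(\Gamma')$. Let $D$ be the domain of $\Gamma$ and $D'=\{b_1,\ldots,b_m\}$ the domain of $\Gamma'$. Set $n:=|D|^m$ and index the coordinates of $D^n$ by the functions $\{1,\ldots,m\}\to D$. Let $\pi_i\in D^n$ be the tuple whose $\alpha$-th coordinate is $\alpha(i)$, and let $\mathbb{F}$ be the subalgebra of $(D,Pol(\Gamma))^n$ generated by $\pi_1,\ldots,\pi_m$. Standard universal algebra identifies $\mathbb{F}$ with the free algebra on $m$ generators in the variety of $(D,Pol(\Gamma))$. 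The identity hypothesis yields a surjective map $h\colon \mathbb{F}\twoheadrightarrow D'$ sending $\pi_i\mapsto b_i$: two terms evaluating to the same element of $\mathbb{F}$ denote the same operation on $D$ under $Pol(\Gamma)$, and hence by hypothesis the same operation on $D'$ under $Pol(\Gamma')$, so they agree on $(b_1,\ldots,b_m)$. The pair $(\mathbb{F},h)$ is the required pp-interpretation: each preimage $h^{-1}(R')$ (as well as $\mathbb{F}$ itself and $h^{-1}(=_{D'})$) is a relation on $D$ invariant under $Pol(\Gamma)$, so Theorem \ref{skskjdi} supplies its pp-definition from $\Gamma$.

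The main obstacle lies in the converse direction: establishing that the evaluation map $h$ is well defined and surjective is precisely where the full strength of the identity hypothesis is spent, since one must argue that terms which collide in $\mathbb{F}$ already collide as operations on $D'$ by transfer through $Pol(\Gamma')$. Once $h$ is in place, the $Pol(\Gamma)$-invariance of the relations $h^{-1}(R')$ is immediate because $h$ intertwines the action of $Pol(\Gamma)$ on $\mathbb{F}$ with the action on $D'$ of its image in $Pol(\Gamma')$, and Theorem \ref{skskjdi} reduces the remaining task to a direct reading-off of the pp-definitions.
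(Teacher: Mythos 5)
The paper does not prove this statement: it is quoted as a known result from the cited reference and used as background, so there is no in-paper argument to compare yours against. On its own terms, your proof is the standard one and is essentially correct. The forward direction via the induced map $\Phi(g)(b_1,\ldots,b_k)=f(g(\bar a_1,\ldots,\bar a_k))$ is right, though you should make explicit that pp-definability of $\mathrm{dom}(f)$ is what guarantees $g(\bar a_1,\ldots,\bar a_k)\in\mathrm{dom}(f)$, i.e.\ that $\Phi(g)$ is defined at all — you list this set among the pp-definable data but then only use the kernel and the preimages $f^{-1}(R')$. The converse is the usual Birkhoff-style argument: the hypothesis ``operations satisfying all identities of $Pol(\Gamma)$'' must be read as the existence of a clone homomorphism $\xi\colon Pol(\Gamma)\to Pol(\Gamma')$ (preserving arities, projections, and composition), since that is exactly what makes your evaluation map $h$ well defined, surjective (projections go to projections, so $h(\pi_i)=b_i$), and a homomorphism intertwining the action of $g$ on $\mathbb{F}$ with that of $\xi(g)$ on $D'$; from there the $Pol(\Gamma)$-invariance of $\mathbb{F}$, $h^{-1}(=_{D'})$, and $h^{-1}(R')$ follows, and Theorem~\ref{skskjdi} converts invariance into pp-definability (modulo the usual caveat that the Galois connection is stated for non-empty relations). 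These are presentational rather than substantive gaps.
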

Thus, $pp$-interpretability does not change the identities satisfied in the corresponding algebras. The next result allows one to work with at most binary constraint languages, which often simplifies the representation of the problem.
\begin{theorem}[\cite{barto_et_al}]\label{aakrrrfetsplgh}
For any constraint language $\Gamma$ there is a constraint language $\Gamma'$ such that all relations in $\Gamma'$ are at most binary and $\Gamma$ and $\Gamma'$ $pp$-constructs each other.
\end{theorem}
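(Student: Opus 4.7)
The plan is to reduce to the case of binary constraints by encoding higher-arity tuples as new domain elements, with unary and binary ``projection'' relations recording their coordinates. Concretely, let $k$ be the maximum arity of the relations in $\Gamma$ (over domain $A$) and put
\[
  A' \;=\; A \,\sqcup\, A^{2} \,\sqcup\, \cdots \,\sqcup\, A^{k},
\]
the disjoint tagged union. The binary language $\Gamma'$ will consist of: (i) a unary relation $U_{m}$ naming each sort $A^{m}\subseteq A'$; (ii) for each $R\in\Gamma$ of arity $m$, a unary relation $R'\subseteq A^{m}\subseteq A'$ containing exactly the tuples in $R$; (iii) for each $1\le i\le m\le k$, a binary ``projection'' relation $\pi_{i}^{(m)}\subseteq A^{m}\times A$ defined by $\pi_{i}^{(m)}(t,a) \iff t=(a_{1},\dots,a_{m})\in A^{m}\ \text{and}\ a=a_{i}$. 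All relations of $\Gamma'$ are unary or binary by construction.

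Next I would verify the two directions of $pp$-constructibility. For the direction $\Gamma'$ $pp$-constructs $\Gamma$, the domain $A$ is already $pp$-definable inside $A'$ as $U_{1}$, and any $m$-ary $R\in\Gamma$ is recovered by the $pp$-formula
\[
  R(x_{1},\dots,x_{m}) \;\iff\; \exists t\bigl(R'(t)\wedge \pi_{1}^{(m)}(t,x_{1})\wedge\cdots\wedge \pi_{m}^{(m)}(t,x_{m})\bigr),
\]
with the $x_{i}$ restricted to $U_{1}$. This is a genuine $pp$-definition, a fortiori a $pp$-construction. For the opposite direction $\Gamma$ $pp$-constructs $\Gamma'$, I would give a $pp$-interpretation of dimension $k$: represent each element $a\in A^{m}\subseteq A'$ by the $k$-tuple $(a_{1},\dots,a_{m},a_{m},\dots,a_{m})\in A^{k}$ (padding the last coordinates) and use Theorem~\ref{skskjdi} to obtain $pp$-definitions of the encoded relations, since they are invariant under every polymorphism of $\Gamma$ on $A^{k}$. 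Each $U_{m}$ is encoded as equality of the last $k-m+1$ coordinates, and each $\pi_{i}^{(m)}$ encodes the $i$-th coordinate projection; the unary encodings of the $R'$ are built directly from the original $R\in\Gamma$ together with the equalities defining $U_{m}$.

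The main obstacle is keeping the two directions cleanly symmetric in the $pp$-construction framework, which allows not only $pp$-interpretations but also quotients by $pp$-definable equivalences and taking cores; the subtlety is choosing a uniform representative for the ``padding'' so that the encoding function is well-defined on $A^{k}$ while the relevant tagging and coordinate-projection relations remain $pp$-definable. Once the padding convention is fixed, verification reduces to checking that for each symbol of $\Gamma'$ the corresponding relation on $A^{k}$ is preserved by every polymorphism of $\Gamma$ and hence, by Theorem~\ref{skskjdi}, is $pp$-definable from $\Gamma$; the argument is routine but bookkeeping-heavy, so in the paper I would refer the reader to \cite{barto_et_al} for the full verification and only state the construction.
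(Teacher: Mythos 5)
The paper does not prove this statement at all: it is imported from \cite{barto_et_al} as a black box, so there is no internal proof to compare against. Judged on its own, your construction is the standard ``hidden-variable'' / dual encoding, and the direction ``$\Gamma'$ $pp$-constructs $\Gamma$'' is fine (a one-dimensional $pp$-interpretation with domain formula $U_1$). The gap is in the other direction, and it is exactly the point you defer as ``choosing a uniform representative for the padding'': with $A'=A\sqcup A^2\sqcup\cdots\sqcup A^k$ a \emph{tagged} union, your padded code map is not injective across sorts. The element $a\in A^1$ and the element $(a,a)\in A^2$ both receive the code $(a,a,\dots,a)\in A^k$, so the partial surjection from a $pp$-definable subset of $A^k$ onto $A'$ that a $k$-dimensional $pp$-interpretation requires is not well-defined; the interpreted structure is a proper quotient of $\Gamma'$ in which the sort predicates $U_m$ overlap, and that quotient is not homomorphically equivalent to $\Gamma'$. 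No padding drawn from the entries of the tuple itself can separate the sorts, and separating them with genuine tag coordinates would require $pp$-definable constants, which an arbitrary $\Gamma$ need not have. So as written the second direction does not go through.

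The standard repair is to abandon the disjoint union: take the single domain $A^k$, with a unary cylinder relation $\hat R=R\times A^{k-m}$ for each $m$-ary $R\in\Gamma$ and binary coordinate-equality relations $E_{ij}=\{(s,t)\in A^k\times A^k: s_i=t_j\}$. Then $\Gamma$ $pp$-constructs this $\Gamma'$ by the identity $k$-dimensional interpretation (cylinders and coordinate equalities are directly $pp$-definable over $\Gamma\cup\{=\}$, no appeal to Theorem~\ref{skskjdi} needed), and conversely $\Gamma'$ recovers $A$ as the quotient of $A^k$ by the $pp$-definable equivalence $E_{11}$, with $R([s_1],\dots,[s_m])$ defined by $\exists t\,(\hat R(t)\wedge\bigwedge_{i\le m}E_{i1}(t,s_i))$. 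If you want to keep your sorted presentation, you must either prove the sorts can be tagged (e.g.\ after passing to a core with $pp$-definable singletons) or switch to this collision-free encoding.
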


At the end of this section, we define some properties of CSP instances that express different levels of consistency. A relation $R \subseteq D_{i_1}\times ...\times D_{i_{k}}$ is called \emph{subdirect} if for every $i$ projection of $R$ into the coordinate $i$-th is equal to $D_i$. A CSP instance is called \emph{$1$-consistent} if for every constraint $C_i$ of the instance the corresponding relation $R_i \subseteq D_{i_1}\times ...\times D_{i_{k}}$ is subdirect. Any instance can be turned into $1$-consistent with the same set of solutions using a simple algorithm \cite{barto_et_al}. Consider a CSP instance $\Theta$ on a domain $D=\{D_0,...,D_{n-1}\}$, and let $D_y$ denote the domain of the variable $y\in \{x_0,...,x_{n-1}\}$. We say that the sequence
$y_1-C_1-y_2 -...-y_{l-1} - C_{l-1} - y_l$ is a \emph{path} in $\Theta$ if $\{y_i,y_{i+1}\}$ are in the scope of $C_i$ for every $i<l$ (regardless of the order of the variables $y_i,y_{i+1}$ in $C_i$). We say that a path \emph{connects} $b$ and $c$ if for every $i$ there exists $a_i\in D_{y_i}$ such that $a_1 = b$, $a_l=c$ and the projection of $C_i$ onto $\{y_i,y_{i+1}\}$ contains the tuple $(a_i,a_{i+1})$. Then $\Theta$ is called \emph{cycle-consistent} if it is $1$-consistent and for every variable $y$ and $a\in D_y$ \emph{any} path starting and ending with $y$ connects $a$ and $a$. We say that $\Theta$ is \emph{linked} if for every variable $y$ in the scope of a constraint $C$ and for all $a,b\in D_y$ there \emph{exists} a path starting and ending with $y$ in $\Theta$ that connects $a$ and $b$. An instance is called \emph{fragmented} if the set of variables $X$ can be divided into $2$ disjoint sets $X_1$ and $X_2$ such that each of them is non-empty, and the constraint scope of any constraint of the instance has variables either only from
$X_1$, or only from $X_2$. We call an instance $\Theta =(X,D,C)$ \emph{irreducible} if any instance $\Theta' = (X',D',C')$ such that $X'\subseteq X$, $D'_x = D_x$ for every $x\in X'$, and every constraint of $\Theta'$ is a projection of a constraint from $\Theta$ on \emph{some} subset of variables from $X'$, is fragmented, or linked, or its solution set is subdirect. Finally, we say that a constraint $C_1 = ((y_1,...,y_t),$ $R_1)$ is \emph{weaker or equivalent} to a constraint $C_2 = ((z_1,...,z_s),R_2)$ if $\{y_1,...,y_t\}\subseteq \{z_1,...,z_s\}$ and $C_2$ implies $C_1$, i.e. the solution set to $\Theta_1=(\{z_1,...,z_s\},(D_{z_1},$ $...,D_{z_s}),C_1)$ contains the solution set to $\Theta_2=(\{z_1,...,z_s\},(D_{z_1},...,D_{z_s}),C_2)$. We say that $C_1$ is \emph{weaker} than $C_2$ (denoted by $C_1\leq C_2$) if $C_1$ is weaker or equivalent to $C_2$, but $C_1$ does not imply $C_2$. Thus, there are $2$ types of weaker constraints: $C_1 \leq C_2$ if either the arity of the relation $R_1$ is less than the arity of relation $R_2$ and for any tuple $(a_{z_1},...,a_{z_s})\in R_2$, $(a_{y_1},...,a_{y_t})\in R_1$, or the arities of relations $R_1$ and $R_2$ are equal, and $R_2\subsetneq R_1$.

\subsection{Zhuk's algorithm}
Zhuk's algorithm solves polynomial-time CSP cases and is based on the fact that in a CSP instance, any domain has either one of three kinds of proper strong subsets or an equivalence relation modulo which the domain is a product of prime fields. We first define these kinds of strong subsets. 
\begin{definition}[Absorbing subuniverse]
If $\mathbb{B}=(B,F_B)$ is a subalgebra of $\mathbb{A}=(A,F_A)$, then $B$ \emph{absorbs} $\mathbb{A}$ if there exists an $n$-ary term operation $f\in Clone(F_A)$ such that $f(a_1,...,a_n)\in B$ whenever the set of indices $\{i: a_i\notin B\}$ has at most one element. $B$ \emph{binary absorbs} $A$ if there exists a binary term operation $f \in Clone(F_A)$ such that $f(a,b)\in B$ and $f(b,a)\in B$ for any $a\in A$ and $b\in B$.
\end{definition}

\begin{definition}[Central subuniverse]
A subuniverse $C$ of $\mathbb{A}$ is called \emph{central} if it is an absorbing subuniverse and for every $a\in A\backslash C$ we have $(a,a)\notin Sg(\{a\}\times C\cup C\times \{a\})$.
\end{definition}
Every central subuniverse is a ternary absorbing subuniverse. In the original version of the algorithm, Zhuk uses a stronger notion of a center \cite{zhuk2020proof}. However, it is known that a central subuniverse has all the good properties of a center and can be used in the algorithm instead of the center.  Both versions of the algorithm, with the center or central universe, will correctly answer whether an instance has a solution. 

\begin{definition}[Polynomially complete algebra]
An $n$-ary operation $f$ on algebra $\mathbb{A}=(A,F_A)$ is called \emph{polynomial} if there exist some $(n+t)$-ary operation $g\in Clone(F_A)$ and constants $a_1,...,a_t\in A$ such that for all $x_1,...,x_n\in A$, 
$f(x_1,...,x_n)=g(x_1,...,x_n,a_1,...,$ $a_m)$. We call an algebra $\mathbb{A}=(A, F_A)$ \emph{polynomially complete} (PC) if its polynomial clone is the clone of all operations on $A$.
\end{definition}
In simple words, a universal algebra $\mathbb{A}$ is polynomially complete if every function on $A$ with values in $A$ is a polynomial function. The notion of linear algebra in the following form was introduced by Zhuk in \cite{zhuk2020proof}. 
\begin{definition}[Linear algebra, \cite{zhuk2020proof}]
An idempotent finite algebra $\mathbb{A}=(A,\Omega)$, where $\Omega$ is an $m$-ary idempotent special WNU operation, is called \emph{linear} if it is isomorphic to $(\mathbb{Z}_{p_1}\times ...\times \mathbb{Z}_{p_s},x_1+...+x_m)$ for prime (not necessarily distinct) numbers $p_1,...,p_s$. For every finite idempotent algebra, there exists the smallest congruence (not necessarily proper), called the \emph{minimal linear congruence}, such that the factor algebra is linear. 
\end{definition}
\begin{theorem}[\cite{zhuk2020proof}]\label{4cases}
If $\mathbb{A}$ is a non-trivial finite idempotent algebra with WNU operation, then at least one of the following is true:
\begin{itemize}
    \item $\mathbb{A}$ has a non-trivial binary absorbing subuniverse,
    \item $\mathbb{A}$ has a non-trivial central absorbing subuniverse,
    \item $\mathbb{A}$ has a non-trivial PC quotient,
    \item $\mathbb{A}$ has a non-trivial linear quotient.
\end{itemize}
\end{theorem}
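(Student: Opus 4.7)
The plan is to reduce to the case of a simple algebra and then apply a classification of finite simple idempotent algebras carrying a WNU operation. Since $\mathbb{A}$ is finite and non-trivial, it has a maximal proper congruence $\sigma$ (with $\sigma = \Delta_A$ if $\mathbb{A}$ is already simple); the quotient $\mathbb{S} := \mathbb{A}/\sigma$ is then simple, non-trivial, and inherits the idempotent WNU from $\mathbb{A}$, because WNU identities are preserved under homomorphic images. The four alternatives of the theorem will arise from four cases for $\mathbb{S}$, pulled back along the quotient map.

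The key step is the following dichotomy for the simple algebra $\mathbb{S}$: either (i) $\mathbb{S}$ has a proper binary absorbing subuniverse, or (ii) $\mathbb{S}$ has a proper central subuniverse, or (iii) $\mathbb{S}$ is polynomially complete, or (iv) $\mathbb{S}$ is isomorphic to $(\mathbb{Z}_{p_1} \times \cdots \times \mathbb{Z}_{p_s}, x_1 + \cdots + x_m)$ for primes $p_i$, i.e., $\mathbb{S}$ is linear. This is the universal-algebraic heart of the theorem. In Zhuk's treatment it is derived through a chain of auxiliary statements: one first shows that the absence of a binary absorbing subuniverse, combined with the WNU identities, forces strong structural constraints on two-generated subalgebras and on traces of term operations; one then argues that a simple algebra failing to have either a binary absorbing or central subuniverse has a clone that is either the clone of all operations on its domain (the PC case) or essentially that of an affine module over a prime field (the linear case).

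The lifting is then routine. In cases (iii) and (iv) the algebra $\mathbb{S} = \mathbb{A}/\sigma$ itself is the desired PC or linear quotient of $\mathbb{A}$. In cases (i) and (ii) let $B \subseteq A$ denote the preimage of the given proper subuniverse of $\mathbb{S}$ under the quotient map; then $B$ is a proper subuniverse of $\mathbb{A}$, and the term operation $t^{\mathbb{A}}$ corresponding to the witnessing term in $\mathbb{S}$ realises the required property in $\mathbb{A}$: for every $a \in A$ and $b \in B$ one has $t^{\mathbb{A}}(a,b)/\sigma = t^{\mathbb{S}}(a/\sigma, b/\sigma) \in B/\sigma$, hence $t^{\mathbb{A}}(a,b) \in B$, and similarly for $t^{\mathbb{A}}(b,a)$ or for the ternary absorption term in the central case. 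The non-membership clause in the definition of a central subuniverse, $(a,a) \notin Sg(\{a\} \times C \cup C \times \{a\})$, transfers by a symmetric argument: any such generation of $(a,a)$ in $\mathbb{A}^2$ would descend coordinatewise modulo $\sigma$ to a generation of $(a/\sigma, a/\sigma)$ in $\mathbb{S}^2$, contradicting centrality in $\mathbb{S}$.

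The main obstacle is the classification step in the second paragraph. The existence of a maximal congruence, the inheritance of the WNU to the quotient, and the lifting of subuniverses are all straightforward. The four-way split for simple idempotent algebras with a WNU, however, rests on substantial universal algebra, and its use here is the one place where the argument cannot be compressed into pure bookkeeping. Formalising this step inside $W^1_1$ will require expressing each auxiliary notion (binary absorption, centrality, polynomial completeness, affineness, the generation operator $Sg$) as a bounded third-order formula and verifying every intermediate lemma under the available $\Sigma^{\mathscr{B}}_0$-comprehension and $\Sigma^{\mathscr{B}}_1$-induction schemes.
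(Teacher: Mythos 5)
The paper does not prove this theorem at all: it is quoted verbatim from Zhuk's paper as background, and within the formalization it is deliberately \emph{not} re-proved in $W^1_1$ --- properties of the fixed finite algebra $\mathbb{A}$ of this kind are absorbed into the ``$\mathbb{A}$-Monster set'' and verified once by exhaustive search over the constant-size algebra, yielding a constant-size $V^0$-proof. So there is no proof in the paper to compare yours against, and your closing paragraph about needing to express absorption, centrality, $Sg$, etc.\ as bounded third-order formulas and verify every intermediate lemma under $\Sigma^{\mathscr{B}}_0$-comprehension misreads how the paper handles statements that refer only to the fixed $\mathbb{A}$.

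As a mathematical argument, your proposal has a genuine gap at exactly the point you flag as ``the universal-algebraic heart'': the four-way split for a simple finite idempotent algebra with a WNU (proper binary absorbing subuniverse, or proper central subuniverse, or polynomially complete, or term-equivalent to an affine algebra over $\mathbb{Z}_{p_1}\times\cdots\times\mathbb{Z}_{p_s}$). This is not an easier special case that the general theorem reduces to by routine bookkeeping --- it \emph{is} the theorem, in essentially full strength, and you do not prove it; you only gesture at ``a chain of auxiliary statements'' about two-generated subalgebras and traces of term operations. Reducing to a maximal congruence and lifting is the easy part (and your lifting arguments --- full preimages of absorbing/central subuniverses are absorbing/central, with the $Sg$-condition descending coordinatewise along the quotient map --- are correct), but a proof that leaves the classification of the simple case as a black box is a restatement, not a proof. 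Note also that Zhuk's own derivation does not proceed by first passing to a simple quotient; it works directly with the algebra, showing that the absence of a non-trivial binary absorbing subuniverse and of a non-trivial center forces the existence of a proper congruence with PC or linear quotient. If you want to use this theorem in the present paper's framework, the honest course is the paper's own: cite it for the fixed $\mathbb{A}$ and record the witnessing data (subuniverses, congruences, quotient isomorphisms) in the Monster set.
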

We now briefly sketch Zhuk's algorithm without any details that are not important for the matter of the paper. For further information, we refer the reader to papers \cite{zhuk2020proof}, \cite{zhuk2020strong}. Consider any finite language $\Gamma'$ preserved by an idempotent special WNU operation $\Omega'$. Before running the algorithm, produce the following modifications of the language. If $k'$ is the maximal arity of the relations in $\Gamma'$, then denote by $\Gamma$ the set of all relations preserved by $\Omega$ of arity at most $k'$. Since all $pp$-definable relations of arity at most $k'$ are in $\Gamma$, it follows that CSP$(\Gamma')$ is an instance of CSP($\Gamma$). Consider a CSP instance of CSP($\Gamma$), $\Theta = (X,D,C)$. 

The algorithm is divided into two parts, which we will call general and linear parts. In the general part, the main idea is to gradually reduce domains until the algorithm moves to the linear part. There are several types of reduction. At any step, the algorithm either produces a reduced domain, or moves to the other type of reduction, or answers that there is no solution. When the algorithm reduces domains, it uses recursion: after outputting a reduced domain, the algorithm runs from the beginning for the same instance $\Theta$ but with a smaller domain $D'$. The algorithm first reduces domains based on different types of consistency and then based on different kinds of strong subuniverses. Thus, the algorithm checks if the instance is cycle-consistent. If not, it reduces the domains until it is. Further, the algorithm checks the irreducibility. Again, if the instance is not irreducible, the algorithm produces a reduction to some domain. Finally, it checks a weaker instance that is produced from the instance by simultaneously replacing all constraints with all weaker constraints: if the solution set to such an instance is not subdirect, then the algorithm reduces a domain. It is easy to prove that the instance after these consistency reductions does not lose any solution. After these steps, the reduction will be based on strong subuniverses. First, the algorithm checks whether domains have a non-trivial binary absorbing subuniverse or a non-trivial central subuniverse. If any of them does, the algorithm reduces the domain to the subuniverses. The algorithm then checks whether there are proper congruences on domains such that their factor algebras are polynomially complete. If there is such a congruence, then the algorithm reduces the domain to some equivalence class of the congruence. The reduction to these strong subsets is justified by the following theorems, proved in \cite{zhuk2020proof}.

\begin{theorem}[\cite{zhuk2020proof}]\label{fhfhfhydj} Suppose $\Theta$ is a cycle-consistent irreducible CSP instance, and $B$ is a non-trivial binary absorbing subuniverse of $D_i$. Then $\Theta$ has a solution if and only if $\Theta$ has a solution with $x_i\in B$.
\end{theorem}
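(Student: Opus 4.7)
The reverse direction is trivial, so the work is in the forward direction: assuming $\Theta$ has a solution, produce one whose $x_i$-value lies in $B$. Let $f \in Clone(\Omega)$ be the binary term witnessing binary absorption, so $f(a,b), f(b,a) \in B$ for every $a \in D_i$ and $b \in B$. Since $f$ preserves $\Gamma$, coordinatewise application of $f$ to two solutions of $\Theta$ is again a solution, and hence the solution set $S \subseteq \prod_j D_j$ is a subalgebra of the product. In particular, its projection $\pi_i(S) \subseteq D_i$ is a subuniverse of $D_i$, and the theorem reduces to establishing $\pi_i(S) \cap B \neq \emptyset$.

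The plan is to invoke the irreducibility hypothesis on $\Theta$ itself. Since $\Theta$ is a sub-instance of itself, irreducibility forces $\Theta$ to be fragmented, linked, or to have a subdirect solution set, and I would argue each case separately. The \emph{subdirect} case is immediate: then $\pi_i(S) = D_i \supseteq B$, and any $b \in B$ is realized as the $x_i$-value of some solution. The \emph{fragmented} case reduces by induction on $|X|$: if $X$ splits as $X_1 \sqcup X_2$ with $x_i \in X_1$, the sub-instance on $X_1$ is again cycle-consistent and (as a sub-instance of an irreducible instance) fragmented, linked, or subdirect, so a solution on $X_1$ with $x_i$-value in $B$ can be combined with any solution on $X_2$. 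The \emph{linked} case is the heart of the proof: by linkedness, for any fixed $b \in B$ and any $a \in \pi_i(S)$ there is a path in $\Theta$ from $x_i$ to itself connecting $a$ and $b$, and one uses cycle-consistency together with iterated coordinatewise applications of $f$ along such a path to transport a solution with $s(x_i) = a$ into a solution landing in $B$, the binary absorption property ensuring that each step pushes the $x_i$-coordinate further into $B$ without violating any constraint.

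The main obstacle is the linked case, specifically because the naive combination $f(s,t)$ of two global solutions only uses coordinatewise data, whereas producing a $B$-valued solution via $f$ apparently requires an auxiliary $t$ with $t(x_i) \in B$ — essentially the conclusion we are trying to prove. Zhuk circumvents this by constructing the auxiliary assignments locally from cycle-consistency witnesses along the linking path, rather than from preexisting global solutions, and then lifting them to the full instance using irreducibility of intermediate sub-instances; the binary absorbing $f$ is applied iteratively to these local witnesses, and correctness of the iteration is certified by cycle-consistency. For the formalization in $W^1_1$, the solution set $S$ is handled as a third-sort object since it may be exponentially large; cycle-consistency, linkedness and fragmentation are $\Sigma^{\mathscr{B}}_0$-predicates, and the iteration of $f$ along a path of polynomial length fits within $\Sigma^{\mathscr{B}}_1$-induction, so the bulk of the formalization effort mirrors the combinatorial complexity of the linked case in Zhuk's original proof.
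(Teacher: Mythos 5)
Your setup is fine as far as it goes: the reverse direction is indeed trivial, the solution set is a subalgebra of the product since the absorbing term is a polymorphism, and applying the irreducibility trichotomy to $\Theta$ itself legitimately splits the argument into the subdirect, fragmented, and linked cases. The subdirect case is immediate (every $b\in B$ is attained at coordinate $i$), and the fragmented case does recurse correctly since the component containing $x_i$ inherits cycle-consistency and irreducibility. But the linked case is the entire content of the theorem, and there you have not given a proof — you have described an obstacle accurately and then asserted that it can be circumvented by "constructing the auxiliary assignments locally from cycle-consistency witnesses along the linking path." That mechanism does not work and is not what Zhuk does. Applying $f$ to a global solution and a cycle-consistency witness (a homomorphism from a path into the instance) produces at best another partial assignment along the path; extending or combining such local assignments into a global solution is exactly the problem that local-consistency reasoning cannot solve for general tractable templates, and no amount of iterating $f$ along a single path certified by cycle-consistency closes that gap.

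The actual proof — both in Zhuk's paper and in the formalization carried out here — takes a different and much heavier route: one first produces a full $1$-consistent absorbing reduction $D^{(1)}$ of \emph{all} domains with $D_i^{(1)}\subseteq B$ (Lemma \ref{;allskdleijf5} via tree-formulas and constraint propagation, together with the nonemptiness Lemma 7.5 for subdirect relations), and then proves that a cycle-consistent irreducible instance with a solution retains a solution in any $1$-consistent non-linear reduction (Theorem \ref{'aa's;s;;dldkfjf}). That last statement is established only by a simultaneous induction on the size of the domain sets together with Theorems \ref{'a;sdl6457ytr}, \ref{===-15524}, \ref{---098} and \ref{########6}, which in turn requires the machinery of crucial instances, expanded coverings, connectedness via bridges between irreducible congruences, and critical relations with the parallelogram property — none of which appears in your sketch. (As a minor secondary point, your complexity claims for the formalization are also off: in the paper cycle-consistency and irreducibility are $\Pi^{1,b}_2$ second-order formulas and linkedness is $\Sigma^{1,b}_1$, not $\Sigma^{\mathscr{B}}_0$; the third sort enters for solution sets, factor algebras and the classes $\mathscr{Y}$ of coverings, and the decisive induction is on $size(D)$, not on the length of a linking path.)
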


\begin{theorem}[\cite{zhuk2020proof},\cite{zhuk2020strong}]\label{llldju67yr} Suppose $\Theta$ is a cycle-consistent irreducible CSP instance, and $B$ is a non-trivial central subuniverse of $D_i$. Then $\Theta$ has a solution if and only if $\Theta$ has a solution with $x_i\in B$.
\end{theorem}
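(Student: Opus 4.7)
The direction ``a solution with $x_i\in B$ is a solution'' is trivial. For the converse I would follow the template of Theorem \ref{fhfhfhydj} for binary absorption, but use the two ingredients that characterize centrality: the underlying ternary absorbing term operation witnessing that $B$ absorbs, and the distinguishing condition $(a,a)\notin Sg(\{a\}\times B\cup B\times\{a\})$ that holds for every $a\in D_i\setminus B$.

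Assume for contradiction that $\Theta$ is satisfiable while no solution has $x_i\in B$. Form the restricted instance $\Theta_B$ by replacing $D_i$ with $B$ and keeping all other domains, and run cycle-consistency propagation on $\Theta_B$. Since $B$ is closed under $Pol(\mathcal{A})$, each projected constraint remains invariant under the polymorphisms of $\mathcal{A}$, so propagation stays inside the class of admissible relations. If propagation terminates without emptying any domain, I would use irreducibility of $\Theta$ to conclude that the resulting reduct is still satisfiable: any failure would yield a non-linked or fragmenting substructure of $\Theta$, contradicting irreducibility, so a solution with $x_i\in B$ exists, contrary to assumption.

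The remaining case is that propagation empties some domain. Tracing the propagation backwards I would extract a cyclic path $x_i-C_1-y_1-C_2-\cdots-C_{l-1}-x_i$ in $\Theta$ together with witnesses: an element $a\in D_i\setminus B$ at both endpoints and elements $b_1,\ldots,b_{l-1}\in B$ at the intermediate variables, such that each projection of $C_j$ onto $\{y_j,y_{j+1}\}$ contains the pair coming from these witnesses. Reading each constraint along the path as producing a pair in $\{a\}\times B\cup B\times\{a\}\subseteq D_i^2$ and composing them by the term operations that preserve $\Theta$ (which is how $Sg$ is built), cycle-consistency of $\Theta$ guarantees that $a$ closes up to $a$ at the second endpoint, so the pair $(a,a)$ lies in $Sg(\{a\}\times B\cup B\times\{a\})$. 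This directly contradicts the centrality clause.

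The main obstacle will be the last step: converting a propagation failure into an explicit path whose algebraic trace is literally an element of $Sg(\{a\}\times B\cup B\times\{a\})$. Irreducibility is exactly the hypothesis that enables closing the cycle at $x_i$ with the required endpoint pattern, and cycle-consistency is what forces the same element $a$ at both ends; the delicate bookkeeping is to handle constraints of arbitrary arity and to keep track of which coordinate of each generating pair lies in $B$ versus at $a$ as one composes the path inside the generated subalgebra.
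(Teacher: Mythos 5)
There is a genuine gap, and it sits at both of the load-bearing steps of your plan. First, the claim that if constraint propagation on the restricted instance $\Theta_B$ terminates with non-empty domains then irreducibility lets you conclude satisfiability is false: local consistency (even cycle-consistency) of a reduct never implies the existence of a solution except in the bounded-width case, and this is precisely the difficulty the whole dichotomy proof has to overcome. Irreducibility only says that certain derived subinstances are fragmented, linked, or have subdirect solution sets; it is nowhere near strong enough to convert a consistent reduct into a satisfiable one. Second, your mechanism for contradicting centrality conflates two different closure operators. A path $x_i - C_1 - \cdots - x_i$ composes the binary projections of the constraints (relational composition), whereas $Sg(\{a\}\times B\cup B\times\{a\})$ is the closure of that set of pairs of $D_i^2$ under coordinatewise application of the term operations $\Omega$. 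Realizing the pair $(a,a)$ along a cycle in the instance does not place $(a,a)$ in the generated subalgebra, so the contradiction you aim for does not materialize.

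For comparison, the paper does not prove this statement by any short direct argument: it is one of Zhuk's theorems, and its formalized proof runs through the $\mathrm{CR}_{\mathcal{A}}$ axiom scheme, which is obtained as a consequence of Theorem \ref{'aa's;s;;dldkfjf} (Zhuk's Theorem 9.7). That theorem is proved by a simultaneous induction on $size(D)$ together with four other statements (Theorems \ref{'a;sdl6457ytr}, \ref{===-15524}, \ref{---098}, \ref{########6}) about crucial instances, expanded coverings, connectedness, bridges, and key relations with the parallelogram property. Where centrality enters, it does so via the construction in the composed Lemma 7.6/Theorem 6.9: one doubles the variables, builds two auxiliary instances $\Theta_L$ and $\Theta_R$, and shows the subalgebra generated by their solution sets under $\Omega$ cannot contain a certain diagonal tuple, using $(a,b,a,b)\notin Sg(X^{ij}_{(a,b)})$. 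That is the correct way the condition $(a,a)\notin Sg(\{a\}\times B\cup B\times\{a\})$ gets used, and it is quite different from tracing a cycle in the instance graph.
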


\begin{theorem}[\cite{zhuk2020proof}]\label{fjfjhgd}
Suppose $\Theta$ is a cycle-consistent irreducible CSP instance, there does not exist a non-trivial binary absorbing subuniverse or a non-trivial center on $D_j$ for every $j$, $(D_i,\Omega)/\sigma_i$ is a polynomially complete algebra, and $E$ is an equivalence class of $\sigma_i$. Then $\Theta$ has a solution if and only if $\Theta$ has a solution with $x_i\in E$.
\end{theorem}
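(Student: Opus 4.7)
The backward direction is immediate, since any solution of $\Theta$ placing $x_i$ in $E$ is a solution of $\Theta$. For the forward direction, the plan is to take an arbitrary solution $s$ of $\Theta$ and deform it, using a term operation on $(D_i,\Omega)/\sigma_i$ obtained from the PC hypothesis, into a new solution whose value at $x_i$ lies in $E$. First I would pass to the quotient picture: because $\sigma_i$ is a congruence of $(D_i,\Omega)$ and every constraint relation is invariant under $\Omega$, the projection $D_i \twoheadrightarrow D_i/\sigma_i$ is compatible with the instance, and any manipulation of the $i$-th coordinate that respects $\sigma_i$-classes lifts to the instance itself.

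The key lever is polynomial completeness: every finitary operation on $D_i/\sigma_i$ is realized as $g(x_1,\dots,x_n,a_1,\dots,a_t)$ for some term $g$ in $\Omega$ and constants $a_j \in D_i/\sigma_i$. In particular there is a term $t$ and constants whose joint application to the $\sigma_i$-class of $s(x_i)$ outputs $E$. To use this, I would realize the ``constants'' as values at $x_i$ of auxiliary solutions of $\Theta$: cycle-consistency guarantees, for any $a \in D_j$, that $a$ is linked to itself along every closed path through $x_j$, which produces the local room to build solutions achieving prescribed values at $x_i$; irreducibility guarantees that $\Theta$ is fragmented, linked, or has a subdirect solution set, ruling out the degenerate case where solutions fail to fill out enough of $D_i$. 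With enough auxiliary solutions in hand, applying the term operation coordinatewise yields a new assignment that still satisfies every constraint (each constraint relation is a subalgebra of a power of the algebras on its scope) and places $x_i$ in $E$ on the $\sigma_i$-quotient, hence in $E$ itself.

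The hypothesis that no $D_j$ has a non-trivial binary absorbing subuniverse or a non-trivial central subuniverse is what makes this construction work. In their presence, the WNU-generated term operations applied coordinatewise to solutions would be pulled into the absorbing or central set on some $D_j$, potentially out of the preimage of $E$, and the PC-polynomial on $D_i/\sigma_i$ could not be realized by term operations of the whole instance; ruling out such obstructions lets the rich polynomial supply on the quotient propagate consistently across all domains.

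The main obstacle, I expect, is precisely this global-to-local passage: translating the abstract statement ``every operation on $D_i/\sigma_i$ is a polynomial'' into a concrete term operation that can be evaluated at actual solutions of $\Theta$, while ensuring both that the required constants are attainable as coordinates of solutions (via cycle-consistency plus a case analysis on irreducibility into linked and subdirect cases) and that the absence of absorbing and central subuniverses is used at each domain where the term is evaluated. A secondary difficulty will be showing that the constructed assignment lies in the class $E$ on the nose rather than in some other $\sigma_i$-class; this will depend on a careful choice of the polynomial realizing the selector onto $E$, and on verifying that the $\sigma_i$-classes of the auxiliary solutions can be prescribed, again leaning on cycle-consistency and the PC property.
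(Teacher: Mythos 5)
There is a genuine gap, and it sits at the heart of your construction. Your plan is to realize a ``selector'' polynomial $g(x,a_1,\dots,a_t)$ on $(D_i,\Omega)/\sigma_i$ by substituting, for the constants $a_j$, the $x_i$-coordinates of auxiliary solutions, and then to apply $g$ coordinatewise. But this mechanism is circular: if $\mathscr{R}$ is the solution set and $B=pr_i(\mathscr{R})$, then the only constants you can realize this way lie in $B$, and $B$ is a subuniverse closed under all term operations; closing $B/\sigma_i$ under polynomials whose parameters come from $B/\sigma_i$ returns $B/\sigma_i$ itself. So no coordinatewise application of term operations to solutions can ever produce a value at $x_i$ outside $B$. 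Polynomial completeness of $D_i/\sigma_i$ (which licenses \emph{arbitrary} constants) therefore buys you nothing unless you already know that $B$ meets $E$ --- and ``$pr_i(\mathscr{R})$ meets every class of $\sigma_i$'' is essentially the statement to be proved, not an input to it. Your appeal to cycle-consistency to ``build solutions achieving prescribed values at $x_i$'' assumes exactly this; cycle-consistency is a local propagation condition and does not by itself make the solution set subdirect.

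The proof this paper formalizes is not a local deformation argument at all. The theorem is obtained as the PC case of Theorem~\ref{'aa's;s;;dldkfjf} (Zhuk's Theorem~9.7), which is proved \emph{simultaneously} with Theorems~\ref{'a;sdl6457ytr}, \ref{===-15524}, \ref{---098}, and \ref{########6} by induction on the size of the domain sets, arguing by contradiction with a minimal counterexample that is made crucial in the reduction. The machinery that actually carries the argument --- crucial instances, expanded coverings, connectivity via bridges between irreducible congruences, key relations, and the parallelogram property of crucial constraints --- is entirely absent from your sketch. Your reading of the no-binary-absorbing/no-center hypothesis is also off: it is not that absorbing sets would ``pull'' term operations out of the preimage of $E$, but that this hypothesis feeds structural lemmas such as Lemma~\ref{fkjerfqlerfhuurfhlrf} (subdirect relations over PC algebras without such subuniverses decompose into binary relations with uniquely determined coordinates), which is what ultimately forces the projection of the solution set to respect the PC congruence classes.
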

If at some point after the consistency reductions, there are no strong subuniverses in any domain, then by Theorem \ref{4cases} there must be a non-trivial linear congruence on every domain $D_i$, and the algorithm moves to the linear part. In that part, the algorithm considers minimal linear congruences in every domain and constructs the so-called factorized instance $\Theta_L$. This instance can be viewed as a system of linear equations and solved by polynomial-time Gaussian elimination. After that, the algorithm starts to compare the solution set to the original instance, factorized by linear congruences, with the solution set to the factorized instance. If the inclusion is proper, the algorithm adds a new linear equation to the system until a solution is found. For comparison, the algorithm restricts domains to some equivalence classes and again calls the recursion. We will not elaborate further on this part of the algorithm, since its formalization was already presented in \cite{gaysin2023proof}.

\subsection{Soundness of Zhuk's algorithm in a theory of bounded arithmetic}\label{SOUNDNggghESS}
Under the soundness of Zhuk's algorithm, we mean that any rejection of the algorithm is correct. To establish the soundness of the algorithm in a theory of bounded arithmetic, it suffices to prove that after every step the algorithm preserves some of the solutions to the initial instance. 

Consider any relational structure $\mathcal{A}$ and some negative instance $\Theta=(\mathcal{X},\mathcal{A})$ of CSP($\mathcal{A}$), i.e. such that there is no homomorphism from $\mathcal{X}$ to $\mathcal{A}$. Consider the computation of the algorithm on $(\mathcal{X},\mathcal{A})$, $W = (W_1,W_2,...,W_k)$, where: 
    \begin{itemize}
        \item $W_1 = (\mathcal{X},\mathcal{A})$;
        \item $W_{i+1}=(\mathcal{X}_{i+1},\mathcal{A}_{i+1})$ is obtained from $W_i=(\mathcal{X}_{i},\mathcal{A}_{i})$ by one algorithmic step, i.e. $\mathcal{X}_{i+1}$ and $\mathcal{A}_{i+1}$ are some in-between modifications of relational structures $\mathcal{X}_{i},\mathcal{A}_{i}$;
        \item $W_k$ has no solution.
    \end{itemize}

Assume that a theory of bounded arithmetic proves that with each step of the algorithm, a modified instance $W_{i+1}$ has a solution only if $W_{i}$ does, and that the algorithm terminates without a solution. Then the theory proves - by its level of bounded induction - that $\mathcal{X}$ is unsatisfiable and hence that $\neg HOM(\mathcal{X},\ddot{\mathcal{A}})$ is a tautology. Note that proving the converse is not essential for establishing the algorithm's soundness. Additionally, there is no need to demonstrate that the algorithm is well-defined. The detailed record of the algorithm's execution can encompass all required additional details.

In \cite{gaysin2023proof} we proved the soundness of Zhuk's algorithm in a new theory of bounded arithmetic, namely $V^1$ augmented with three universal algebra axiom schemes that reflect Theorems \ref{fhfhfhydj}, \ref{llldju67yr}, and \ref{fjfjhgd}. We will formalize these axiom schemes in Section \ref{===-06476357234609}. In this paper, we formalize the proof of these theorems in theory $W^1_1$. Together with the results in \cite{gaysin2023proof}, it implies the soundness of Zhuk's algorithm in $W^1_1$.

\section{Formalization of notions}\label{'a'a;sldfkjfjfuryt}

In this section, we shall formalize the notions of universal algebra that will be used to prove Theorems \ref{fhfhfhydj}, \ref{llldju67yr}, and \ref{fjfjhgd}.

Based on Theorem \ref{aakrrrfetsplgh}, we can restrict ourselves to languages with at most binary constraints. This simplifies the formalization for the reader, but does not change the way it is performed. We want to stress that all the results in the paper can be extrapolated to any other finite constraint language. 

The formalization of part of the notions was introduced in \cite{gaysin2023proof} and we refer the reader to that paper for the relations defining a special WNU operation $SwNU_m$, a polymorphism $Pol_{m,2}, Pol_{m,1}, Pol$, a congruence and a proper congruence $Cong_m$, $pCong_m$, an undirected cycle $CYCLE$, an undirected path $PATH$, and a $1$-consistent, cycle-consistent, linked, fragmented, and irreducible instance, i.e. $1C$, $CCInst$, $LinkedInst$, $FragmInst$, and $IRDInst$ respectively. We also refer the reader to \cite{gaysin2023proof} for the formalization of all consistency reductions, and the linear part of the algorithm.

We know of no way to prove that a formalization exists other than actually doing it. This leads to a quite formal (and occasionally tedious) text with long formulas. Writing the formulas explicitly allows us to see that their bounded quantifier complexity is what is claimed. 

\begin{notation} To simplify the notation, we will denote relations on numbers using capital letters and functions using lowercase letters. We sometimes omit arguments in relations that are implied and do not affect the content of the relation. We index elements of sets starting with $0$, while all indices not related to elements of sets (for example, a sequence of relations) start from $1$.
\end{notation}

\subsection{Auxiliary relations and functions}
For any two relations $R_1,R_2$ of the same arity, we will use standard denotations for $R_1\subseteq R_2,$ $ R_1\subsetneq R_2,$ $R_1=R_2$, and $ R_1\neq \emptyset$. We introduce the following relations and functions as in \cite{10.5555/1734064}, \cite{10.1007/978-3-540-30124-0_27}.

If $x,y \in \mathbb{N}$, we define the \emph{pairing function} $\langle x,y\rangle$ to be the following term
\begin{equation}
\langle x,y\rangle = \frac{(x + y)(x + y + 1)}{2} + y.
\end{equation}
Theory $V^1$ proves that the pairing function is a bijection from $\mathbb{N}\times\mathbb{N}$ to $\mathbb{N}$, and that $x,y \leq \langle x,y\rangle < (x+y+1)^2$ for all $x,y$.
Inductively, we get 
\begin{equation}
\langle x_1,x_2,...,x_k\rangle = \langle ...\langle \langle x_1, x_2\rangle, x_3 \rangle ,..., x_k \rangle,
\end{equation}
where $x_1,x_2,...,x_k \leq \langle x_1,x_2,...,x_k\rangle < (x_1+x_2+...+x_k+1)^{2^k}$. 
For any set $Z$, $m\geq 2$, we will abbreviate $Z(\langle x_1,...,x_m\rangle)$ by $Z(x_1,...,x_m)$. We use the pairing function to code functions by sets. We express that $Z$ is a function from sets $X_1,...,X_n$ to a set $Y$ by stating
$$
\forall x_1\in X_1,...,\forall x_n\in X_n
\exists !y\in Y \, Z(x_1,...,x_n,y).
$$
We will denote it as $Z(x_1,...,x_n) = y$. Using the pairing function, we can code relations of any fixed arity. Finite functions are usually represented by their digraphs. We can add pairing functions for second-order objects, such as $\langle X,Y \rangle$ and $\langle x,Y\rangle$, using the following natural definitions. 
\begin{equation}
  \langle X,Y\rangle = Z(i,a) \iff (i=0\wedge X(a))\vee(i=1\wedge Y(a)),
\end{equation}
and
\begin{equation}
  \langle x,Y\rangle = Z(i,a) \iff i=x\wedge Y(a).
\end{equation}
The string function $row(i, Z)$, or $Z_i$, representing the row $i$ of a binary array $Z$, has a bit-defining axiom:
  \begin{equation}
    Z_i(a)=row(i,Z)(a)\iff (a<|Z|\wedge Z(i,a)).
  \end{equation}
We can use $row$ to represent a tuple $Z_1,...,Z_k$ of strings by a single string $Z$. We use a similar idea to allow $Z$ coding a sequence $y_0,y_1,...$ of numbers. Now $y_i$ is the smallest element of $Z_i$, or $|Z|$ if $Z_i$ is empty. The number function $seq(i, Z)$ (also denoted by $z_i$) has the following defining axiom:
  \begin{equation}
    \begin{gathered}
       a=seq(i,Z)\iff (a<|Z|\wedge Z(i,a)\wedge \forall b<a,\neg Z(i,b))\vee\\
       \hspace {0pt}\vee (\forall b<|Z|,\neg Z(i,b)\wedge a=|Z|).
    \end{gathered}
  \end{equation}
For a third-order variable $\mathscr{X}$ define $\mathscr{X}^{[x]}(Y)\equiv \mathscr{X}(\langle x,Y\rangle)$ and $\mathscr{X}^{[X]}(Y)\equiv\mathscr{X}(\langle X,Y\rangle)$. This notation allows one to consider $\mathscr{X}$ as an array with rows
indexed by numbers or strings, where each row is a third-order object. Note that as opposed to a string-valued function $row(i,Z)$, this notation is just an abbreviation of the formula, not a class-valued function. However, if we can bound the size of all strings in a class we are interested in by some value $s$, then we can define a string-valued function $\tilde{row}(\cdot)$ analogous to $row(\cdot)$, 
\begin{equation}
\begin{gathered}
     \tilde{row}(i,\mathscr{X},s)=Y \iff (|Y| <s\wedge\mathscr{X}^{[i]}(Y)\wedge \forall Y'<Y\,\neg \mathscr{X}^{[i]}(Y'))\vee\\
     \hspace {0pt}\vee(\forall Y' <s\,\neg \mathscr{X}^{[i]}(Y')\wedge \forall a<s,\,\neg Y(a) \wedge |Y|=s \wedge Y(s-1)), 
\end{gathered}
\end{equation}
where $Y'<Y$ is string ordering relation (\ref{laalalskksskdjdjdjfh}). Thus, the function returns the minimum string (due to string ordering) $Y$ of length less than $s$ such that $\mathscr{X}^{[i]}(Y)$ or, if such a string does not exist, the string Y of length $s$ with the only element $s-1\in Y $. An analogous function $\tilde{row}(X,\mathscr{X},s)$ can be defined for string indexing. 

Given a set $X$, the \emph{census function} $\verb|#|X(n)$ for $X$ is a number function defined for $n\leq |X|$ such that $\verb|#|X(n)$ is the number of $x<n$, $x\in X$. Thus, $\texttt{\#}X(|X|)$ is the number of elements in $X$. The following relation says that $\verb|#|X$ is the census function for $X$:
\begin{equation}
    \begin{split}
        &\hspace{10pt}Census(X,\texttt{\#}X)\iff \texttt{\#}X\leq \langle |X|,|X|\rangle\wedge \texttt{\#}X(0)=0\wedge \forall x<|X|\\
        &(x\in X \rightarrow \texttt{\#}X(x+1)=\texttt{\#}X(x)+1\wedge x\notin X \rightarrow \texttt{\#}X(x+1)=\texttt{\#}X(x)).
    \end{split}
\end{equation}
It can be easily shown that $V^1$ proves that for any set $X$ there exists its census function. To get the maximum or minimum elements of the set $R$, we define functions $max$ and $min$ naturally:
\begin{equation}\label{ala9s7d5tdhgdfr}
  \begin{gathered}
     \hspace{0pt}max(R) = |R|-1, \\
     min(R)=x\iff \forall y< |R|,\, R(y)\rightarrow x\leq y.
  \end{gathered}
\end{equation}
We define the ordering relation for strings as follows:
\begin{equation}\label{laalalskksskdjdjdjfh}
  \begin{gathered}
     X\leq Y\iff X=Y\vee\big(|X|\leq |Y|\wedge \exists z\leq |Y|(Y(z)\wedge \neg X(z) \wedge \\
     \wedge\forall u\leq |Y|, z<u \rightarrow (X(u)\rightarrow Y(u))     )\big).
  \end{gathered}
\end{equation}
That is, we compare strings based on numbers they represent as binary coding (the greater the number, the greater the string). Finally, we give $\Sigma^{1,b}_0$ bit-definitions of the string functions $\emptyset$ (constant empty string) and $S(X)$ (successor):
\begin{equation}
    \begin{gathered}
         \emptyset(z)\iff z<0,
    \end{gathered}
\end{equation}
and 
\begin{equation}
    \begin{gathered}
         S(X)(i)\iff \big( i\leq |X|\wedge ( (X(i)\wedge \exists j<i,\neg X(j))   \vee      (\neg X(i)\wedge \forall j<i,X(j)))     \big).
    \end{gathered}
\end{equation}

\subsection{$\mathbb{A}-$Monster Set: objects we have in advance}
In this section, we describe the list of objects we will further refer to as objects given in advance. 
The algorithm works for any finite algebra with a weak near unanimity (WNU) term and uses the fact that this term and all the algebra properties are known. From now on, we fix the algebra $\mathbb{A}=(A,\Omega)$, fix $l$ to be its size, and suppose that the only basic operation of $\mathbb{A}$ is an idempotent special WNU $m$-ary operation $\Omega$. To be consistent with Zhuk's paper, we do not use bold font for subuniverses of $\mathbb{A}$. We encode the algebra $\mathbb{A}$ with a pair of sets $(A,\Omega)$, where $|A|=l$, $A(i)$ for every $i$, and $\Omega$ is a set of size $((m+1)l)^{2^{m+1}}$, while all subuniverses of $\mathbb{A}$ are encoded by subsets of $A$ closed under $\Omega$. Due to Theorem \ref{fjjduh87}, there is a corresponding relational structure $\mathcal{A}$ such that $Clone(\mathbb{A}) = Pol(\mathcal{A})$, and we fix the notation $\mathcal{A}=(A,\Gamma_\mathcal{A})$, where the encoding of $\Gamma_{\mathcal{A}}$ is explained below.

Let $Sound(\mathbb{A})$ denote the soundness of Zhuk's algorithm for algebra $\mathbb{A}$, i.e. the formalization that if the algorithm rejects an instance, then the instance has no solution. In theory $T$ we can consider proving not just $Sound(\mathbb{A})$ but more generally an implication of the form 
$$Cond(\mathbb{A})\implies Sound(\mathbb{A}),
$$
where $Cond(\mathbb{A})$ is any recursively enumerable property of algebra $\mathbb{A}$. It can be written as $$\exists Y Cond_0(\mathbb{A},Y),$$ where $Y$, in general, cannot be bounded (even recursively), and $Cond_0$ can be a second-order bounded formula. In our case, $Y$ is a list of various objects such as subuniverses, binary relations preserved by $\Omega$ on $A$ or any subuniverse $D$ of $\mathbb{A}$, ternary operations on $A$, isomorphisms from subalgebras $(D,\Omega)$ of $\mathbb{A}$ to products of finite fields, etc. together with $V^0$-proofs of
their various $\Sigma^{1,b}_0$-properties. The proofs are given simply by exhaustive searching, unwinding all quantifiers. Therefore, the size
of the monster list $Y$ may be huge, in particular exponential, in the size of $\mathbb{A}$, but in general it does not
matter: whatever function of $l$ it is, it is a constant for fixed $l$. 

Note that if $W$ witnesses $Cond(\mathbb{A})$, we can prove $Cond_0(\mathbb{A},W)$ in $V^0$ 
(a constant size proof) and apply modus ponens to the implication above to deduce $Sound(\mathbb{A})$, which is what we really want to prove in $T$. This argument applies whenever $T$ contains $V^0$, which is true in our case. 

The use of this can be illustrated as follows. Assume $P(D)$ and $Q(D)$ are two bounded properties of a subuniverse $D$ of $\mathbb{A}$ and assume that in the monster set $Y$ we have two lists of all subuniverses together with proofs
that they do or do not satisfy $P$ and $Q$ respectively. A universal statement
$$\forall D, subAlgebra(D,A), P(D) \rightarrow Q(D)$$
can then be simply proved by going through $Y$ and checking that every $D$ in the list of those satisfying $P$ is also in the list of those
satisfying
$Q$; this uses a composition of proofs listed in $Y$. Another example of use is the following. The properties of $Z$ that may involve
second-order universal bounded quantifiers, as, for example, in
$$\forall D, subAlgebra(D,A),\, \phi(Z,D)$$
with $\phi \in \Sigma^{1,b}_0$, can be rewritten as $\Sigma^{1,b}_0$-formulas: replace
the universal quantifier by a large (but constant size) conjunction over all subalgebras of $\mathbb{A}$ as listed in the monster set $Y$.

This allows us to use well-known facts from universal algebra, as well as facts proved by Zhuk in \cite{zhuk2020proof}, without proving them in a theory of bounded arithmetic when it comes to the formalization of objects related exclusively to algebra $\mathbb{A}$. For example, we will not prove that any PC congruence $\sigma$ on $\mathbb{A}$ is maximal or that polynomially complete algebra $\mathbb{A}/\sigma$ is simple. Although we believe that all the properties of different objects on $\mathbb{A}$ needed in the argument can be proved in $\Sigma^{1,b}_1$-reasoning even with $\mathbb{A}$ variable, it is not necessary for our purpose. On the contrary, we shall prove any property related to an input structure since this structure is variable. 

We further list all the given in advance objects related to $\mathbb{A}$ we will use in the formalization, so-called $\mathbb{A}$-Monster set. All of them will be defined in detail in the corresponding sections. 

\begin{itemize}
  \item All subuniverses of $\mathbb{A}$ and any of its subuniverse $D$, the lists $\Gamma^1_{\mathcal{A}}$, $\Gamma^1_{\mathcal{D}}$;
  
  \item All binary relations on $A$ and any of its subuniverse $D$, compatible with $\Omega$, the lists $\Gamma^2_{\mathcal{A}}$, $\Gamma^2_{\mathcal{D}}$;  
  
  \item All congruences $\sigma$ on $A$ and any of its subuniverse $D$, the lists $\Sigma_\mathcal{A}$, $\Sigma_{\mathcal{D}}$;
  
  \item All factor sets for congruences $\sigma$ on $A$ and any of its subuniverse $D$, $A/\sigma$ and $D/\sigma$, and all operations $\Omega/\sigma$, the lists $\mathrm{A}_{\mathcal{A}}(i,A/\Sigma_{\mathcal{A},i}, \Omega/\Sigma_{\mathcal{A},i})$, $\mathrm{A}_{\mathcal{D}}(i,D/\Sigma_{\mathcal{D},i}, \Omega/\Sigma_{\mathcal{D},i})$;
  
   \item All maximal congruences on $A$ and any of its subuniverse $D$, the lists $\Sigma^{max}_\mathcal{A}$, $\Sigma^{max}_\mathcal{D}$;
   
   \item For all congruences $\sigma$ on $A$ and any of its subuniverse $D$, the lists of all unary and binary quotient relations on $A$ and $D$, compatible with $\Omega/\sigma$. We will denote the lists by $\Gamma_{\mathcal{A}}/\sigma$, $\Gamma_{\mathcal{D}}/\sigma$.

   \item The sets of all binary and ternary polymorphisms on $A$ and any of its subuniverse $D$, the lists $\Pi^2_{\mathcal{A}},\Pi^3_{\mathcal{D}}$;
   
   \item For all congruences $\sigma$ on $A$ and any of its subuniverse $D$, the sets of all binary and ternary polymorphisms on $A/\sigma$ and $D/\sigma$, the lists $\Pi^2_{\mathcal{A}/\sigma},\Pi^3_{\mathcal{D}/\sigma}$;
   
   \item For all congruences $\sigma$ on $A$ and any of its subuniverse $D$, the sets of all maps $H$ from $A/\sigma$ to $Z_{p_0}$, all maps $H$ from $A/\sigma$ to $Z_{p_0}\times Z_{p_1}$,..., all maps $H$ from $A/\sigma$ to $Z_{p_0}\times Z_{p_1}\times...\times Z_{p_{s-1}}$, for $s=log_2l$ and any prime $p_0,...,p_{s-1}$, $p_0\cdot...\cdot p_{s-1}\leq l$. We will denote these lists by $\mathrm{M}_{A,\sigma,p_0,...,p_{t-1}}$, $\mathrm{M}_{D,\sigma,p_0,...,p_{t-1}}.$
   
   \item The set of all linear congruences on $A$ and any of its subuniverse $D$, the lists $\Sigma^{lin}_{\mathcal{A}}$, $\Sigma^{lin}_{\mathcal{D}}$;
   
   \item For any subuniverse $C$ of $A$ and any of its subuniverse $D$, all sets of the form $X=\{\{a\}\times C,C\times \{a\}\}$ for all $a\in A\backslash C$. We denote the lists by $\mathrm{X}_{\mathcal{A}}$, $\mathrm{X}_{\mathcal{D}}$;
   
   \item The set of all PC congruences on $A$ and any of its subuniverse $D$, the lists $\Sigma^{PC}_{\mathcal{A}}$, $\Sigma^{PC}_{\mathcal{D}}$.

     \item For all congruences $\theta$ on $A$ and any of its subuniverse $D$, and for all PC congruences $\sigma_0,...,\sigma_{s-1}$ on $A$ and any of its subuniverse $D$, the sets of all maps $H$ from $A/\theta$ to $A/\sigma_{j_0}$, all maps $H$ from $A/\theta$ to $A/\sigma_{j_0}\times A/\sigma_{j_1}$,..., all maps $H$ from $A/\theta$ to $A/\sigma_{j_0}\times A/\sigma_{j_1}\times...\times A/\sigma_{j_{s-1}}$, for $s=log_2l$. We denote these lists by $\mathrm{M}_{A,\theta,\sigma_{j_0},\sigma_{j_1},...,\sigma_{j_{t-1}}}$, $\mathrm{M}_{D,\theta,\sigma_{j_0},\sigma_{j_1},...,\sigma_{j_{t-1}}}$.
     
   \item For all subuniverses $D_i, D_j$ of $A$, all congruences $\sigma_i,\sigma_j$ on $D_i, D_j$, the set of all bridges from $\sigma_i$ to $\sigma_j$, the set of all reflexive bridges and the set of all optimal bridges, the lists $\Xi_{\sigma_i,\sigma_j}$, $\Xi_{\sigma_i,\sigma_j}^{\leftrightarrow}$ and $\Xi_{\sigma_i,\sigma_j}^{opt}$.
\end{itemize}
Due to the definitions of all these sets (given in the corresponding sections), they may be empty. 
\begin{notation}
In formulas, we use notation $\bigwedge_{\Sigma^{max}_{\mathcal{A},i}}$ or $\bigvee_{\Sigma_{\mathcal{D},i}}$ meaning $\bigwedge_{\Sigma^{max}_{\mathcal{A},i}\neq \emptyset}$ or $\bigvee_{\Sigma_{\mathcal{D},i}\neq \emptyset}$: here we consider conjunction over all maximal congruences on $A$ or disjunction over all congruences on its subuniverse $D$. Sometimes, we also write $\bigwedge_{\sigma\in \Sigma^{max}_{\mathcal{A}}}$ or $\bigvee_{B \in \Gamma^1_{\mathcal{A}}}\bigvee_{T\in \Pi^2_{\mathcal{A}}}$ with the same meaning. When needed for better clarity, we use the explicit notation $\exists j<2^{l^2},... \Sigma^{PC}_{{\mathcal{A}},j}...$.
\end{notation}

\subsection{Encoding directed graphs and CSP instances} 
We encode a CSP instance on relational structures with at most binary relations in the following way. 

\begin{definition}
A \emph{directed input graph} is a pair $\mathcal{X} = (V_\mathcal{X},E_\mathcal{X})$ with $V_\mathcal{X}(i)$ for all $i<V_\mathcal{X}=n$ and $E_\mathcal{X}(i,j)$ being a binary relation on $V_\mathcal{X}$ (there is an edge from $i$ to $j$). A \emph{target digraph with domains} is a pair of sets $\ddot{\mathcal{A}}=( V_{\ddot{\mathcal{A}}},E_{\ddot{\mathcal{A}}})$, where:
\begin{itemize}
  \item $V_{\ddot{\mathcal{A}}}< \langle n,l\rangle$ is the set corresponding to the superdomain; we denote set $V_{\ddot{\mathcal{A}},i}$ by $D_i$ and call it domain subset for variable $x_i$; 
  \item $E_{\ddot{\mathcal{A}}}<\langle\langle n,l\rangle,\langle n,l\rangle \rangle$ is the set encoding that there is an edge $(a,b)$ between $D_i$ and $D_j$: 
\begin{equation}
  \begin{gathered}
     E_{\ddot{\mathcal{A}}}(u,v)\rightarrow \exists i,j<n\,\exists a,b<l\,\, u = \langle i,a\rangle\wedge v = \langle j,b\rangle\wedge\\
     \hspace {0pt} D_i(a)\wedge D_j(b).
  \end{gathered}
\end{equation}
\end{itemize}
Sometimes we consider set $D=\{D_0,...,D_{n-1}\}$. We use the notation $E^{ij}_{\ddot{\mathcal{A}}}(a,b)$ instead of $E_{\ddot{\mathcal{A}}}(\langle i,a \rangle, \langle j,b\rangle)$ for simplicity. We will denote a pair of sets $\Theta =(\mathcal{X},\ddot{\mathcal{A}})$, satisfying all the above conditions, by DG$(\Theta)$, and we will call $\Theta$ an instance. This representation allows us to construct a homomorphism from $\mathcal{X}$ to $\ddot{\mathcal{A}}$ with respect to different relations $E^{ij}_{\ddot{\mathcal{A}}}$ and different domains for all vertices $x_1,...,x_n$.
\end{definition}

\begin{definition}
A pair of sets $\Theta=(\mathcal{X},\ddot{\mathcal{A}})$ is a CSP instance on $n$ domains over constraint language $\Gamma_{\mathcal{A}}$ if 
\begin{equation}
  \begin{gathered}
     Inst(\Theta,\Gamma_{\mathcal{A}})\iff DG(\Theta)\wedge \forall i<n, |D_i|=l\wedge\\
     \hspace {0pt}\wedge \forall i,j<n,a,b<l, \exists s <|\Gamma_{\mathcal{A}}|, E_{\ddot{\mathcal{A}}}(\langle i,a \rangle,\langle j,b \rangle)\leftrightarrow \Gamma_{\mathcal{A}}^2(s,a,b)\wedge\\
     \hspace {0pt}\wedge\forall i<n,a<l,\exists s<|\Gamma_{\mathcal{A}}|, D_i(a)\leftrightarrow \Gamma_{\mathcal{A}}^1(s,a).
  \end{gathered}
\end{equation}

\end{definition}
When considering the direct product $D_0\times...\times D_{n-1}$, we can refer to it as a set of solutions to a CSP instance $\Theta_{null}=(\mathcal{X}_{null}, \ddot{\mathcal{A}}_{null})$, where 
\begin{itemize}
  \item $V_{\mathcal{X}_{null}}=n$ and for all $i<n, V_{\mathcal{X}_{null}}(i)$;
  \item for all $i,j<n$, $\neg E_{\mathcal{X}_{null}}(i,j)$ (i.e. the instance digraph $\mathcal{X}_{null}$ has no edges at all);
  \item for all $a<l$, $V_{\ddot{\mathcal{A}}_{null}}(i,a)\iff D_{i}(a)$;
  \item for all $a,b<l$, for all $i,j<n$, $\neg E^{ij}_{\ddot{\mathcal{A}}_{null}}(a,b)$ (i.e. the target digraph $\ddot{\mathcal{A}}_{null}$ has no edges at all).
\end{itemize}
We will denote a pair of sets $\Theta_{null}=(\mathcal{X}_{null}, \ddot{\mathcal{A}}_{null})$ satisfying all the above conditions by $DG_{null}(\Theta_{null})$. Since as domains we consider only subuniverses $D_i$ of $\mathbb{A}=(A,\Omega)$, $\Theta_{null}$ is also a CSP instance over constraint language $\Gamma_{\mathcal{A}}$. 

Sometimes, we will work with so-called factorized instances, where we factorize all domains $D_i$ by congruences $\sigma_i$.

\begin{definition}
  A pair of sets $\Theta'=(\mathcal{X}',\ddot{\mathcal{A}}')$ is a factorized CSP instance by a list of $n$ congruences $\Sigma$ on $n$ domains from a CSP instance $\Theta$ over constraint language $\Gamma_{\mathcal{A}}$ if
  \begin{equation}
\begin{gathered}
   \hspace {0pt}FInst(\Theta',\Sigma,\Theta, \Gamma_{\mathcal{A}}) \iff Inst(\Theta,\Gamma_{\mathcal{A}})\wedge \mathcal{X}=\mathcal{X'}\wedge \\
   \wedge \forall i<n, Cong_m(D_i,\Omega,\Sigma_i) \wedge\forall a,b \in D_i,\, (\Sigma_i( a,b )\wedge (a<b) \rightarrow \neg D'_i (b))\wedge\\
 \hspace{0pt}\wedge  (\forall a\in D_i(\forall a'\in D_i,\, \Sigma_i( a,a') \rightarrow a\leq a') \rightarrow D_i' (a))\wedge\\  
   \wedge \forall i,j<n,\, E^{ij}_{\ddot{\mathcal{A}}'}(a,b)\leftrightarrow D'_i(a)\wedge D_j'(b)
    \wedge (\exists c,d<l,\,\,\Sigma_{i}(a,c)\wedge \Sigma_{j}(b,d)\wedge E^{ij}_{\ddot{\mathcal{A}}}(c,d)),
\end{gathered}
  \end{equation}
where the second and the third lines in the formula ensure that every $\Sigma_i$ is a congruence on $D_i$, and $D_i'$ is the factor set $D_i/\Sigma_i$. Each block of a factor set is represented by its minimum element. 
\end{definition}
By $MAP(X,x,Y,y, H)$ we denote the relation expressing that $H$ is a map from a set $X$ of length $x$ to a set $Y$ of length $y$. Its definition is straightforward.

\begin{definition}[Homomorphism from digraph $\mathcal{X}$ to digraph with domains $\ddot{\mathcal{A}}$] 
A map $H$ is a \emph{ homomorphism between the input digraph} $\mathcal{X}=(V_{\mathcal{X}},E_{\mathcal{X}})$, $V_{\mathcal{X}}=n$ \emph{and the target digraph with domains} $\ddot{\mathcal{A}}=(V_{\ddot{\mathcal{A}}},E_{\ddot{\mathcal{A}}})$, $V_\mathcal{A}<\langle n,l\rangle$ if $H$ is a homomorphism from $\mathcal{X}$ to $\ddot{\mathcal{A}}$ sending each $i\in V_\mathcal{X}$ to domain $D_i$ in $V_{\ddot{\mathcal{A}}}$.
The statement that there exists such $H$ can be expressed by the following $\Sigma^{1,b}_{1}$-formula.
\begin{equation}\label{HOMINSTAG}
 \begin{gathered}
 \hspace {0pt}\ddot{HOM}(\mathcal{X},\ddot{\mathcal{A}}) \iff \exists H < \langle n,\langle n,l\rangle\rangle \big(MAP(V_\mathcal{X}, n,V_{\ddot{\mathcal{A}}},\langle n,l\rangle,H)\wedge \\
 \hspace{0pt}(\forall i<n,s<\langle n,l\rangle \,\, H(i)=s\rightarrow \exists a<l, s=\langle i,a\rangle\wedge D_i(a))\wedge \\
 \hspace {0pt}\forall i_1,i_2<n,\forall j_1,j_2<\langle n,l\rangle\\
 \hspace {0pt}(E_{\mathcal{X}}(i_1,i_2)\wedge H(i_1)=j_1\wedge H(i_2)=j_2\rightarrow E_{\ddot{\mathcal{A}}}(j_1,j_2)).
 \end{gathered}
\end{equation}
\end{definition}

In addition to a homomorphism between two digraphs of different types, we will also need a classical homomorphism between digraphs of the same type. The existence of such a homomorphism between digraphs $\mathcal{G}$ and $\mathcal{H}$ with $V_{\mathcal{G}}<n$, $V_{\mathcal{G}}<m$ is again a $\Sigma^{1,b}_1$-formula.
\begin{equation}
 \begin{gathered}
 HOM(\mathcal{G},\mathcal{H}) \iff \exists H < \langle n,m\rangle \big(MAP(V_{\mathcal{G}},n,V_{\mathcal{H}},m,H)\wedge \\
 \hspace {0pt} \forall i_1,i_2<n, \forall j_1,j_2 < m \\ \hspace {0pt} (E_\mathcal{G}(i_1,i_2)\wedge H(i_1)=j_1 \wedge H(i_2)=j_2 \to E_\mathcal{H}(j_1,j_2))\big).
 \end{gathered}
\end{equation}
Further, for any instance $\Theta=(\mathcal{X},\ddot{\mathcal{A}})$ and a factorized instance $\Theta'=(\mathcal{X}',\ddot{\mathcal{A}}')$ by a list of $n$ congruences $\Sigma$ we can define a canonical homomorphism $H_c$ between the target digraph $\ddot{\mathcal{A}}$ and the factorized target digraph $\ddot{\mathcal{A}}'$  as follows: for every $u\in V_{\ddot{\mathcal{A}}}$, and every $v\in V_{\ddot{\mathcal{A}}'}$
$$
H_c(u,v)\iff \exists i<n,a,b<l,\,u = \langle i,a\rangle,\,v = \langle i,b\rangle\wedge \sigma(i,b,a) \wedge D_i'(b).
$$
That is, a vertex $a$  is sent to a vertex $b$ in $\ddot{\mathcal{A}}'$ in the factorized domain $D_i'$ if and only if $b\in D_i$, $b$ and $a$ are in the same congruence class under $\Sigma_i$, and $b$ is a represent of the class $a/\Sigma_i$ (the smallest element). It is straightforward to check that $H_c$ is indeed a homomorphism, and that there is a homomorphism from $\mathcal{X}$ to $\ddot{\mathcal{A}}'$.

\begin{notation}
Sometimes, we will write $\exists(\forall) H< \langle n,m\rangle, HOM(\mathcal{G},\mathcal{H}, H)$ and $\exists(\forall) H < \langle n,\langle n,l\rangle\rangle,$ $ \ddot{HOM}(\mathcal{X},\ddot{\mathcal{A}},H)$ to omit repetitions. Note that $HOM(\mathcal{G},\mathcal{H})$ and $\ddot{HOM}(\mathcal{X},\ddot{\mathcal{A}})$ are $\Sigma^{1,b}_1$-formulas, while $HOM(\mathcal{G},\mathcal{H}, H)$ and $\ddot{HOM}(\mathcal{X},\ddot{\mathcal{A}},H)$ are $\Sigma^{1,b}_0$.
\end{notation}

\subsection{Subalgebras and Solution sets to a CSP instance}

To define the direct and subdirect products of $k$ algebras for constant $k$, we first define a universe set for the product. For any sets $D_0,...,D_{k-1}$ of size bounded by $l$ we will denote by $D_0\times...\times D_{k-1}$ a $k$-ary set of the form 
\begin{equation}
D_0\times...\times D_{k-1}(a_0,...,a_{k-1}) \iff a_0\in D_0\wedge...\wedge a_{k-1}\in D_{k-1}.
\end{equation}
We define an $m$-ary operation $F:(D_0\times...\times D_{k-1})^m\rightarrow D_0\times...\times D_{k-1}$ on a set $D_0\times...\times D_{k-1}$. Denote $\langle a_i^0,...,a_i^{k-1}\rangle$ by $\bar{a}^k_i$, then
\begin{equation}
 \begin{gathered}
 \hspace {0pt}OP_m(F, D_0\times...\times D_{k-1}) \iff \forall \bar{a}^k_1,...,\bar{a}^k_m \in D_0\times...\times D_{k-1},\\
 \exists \bar{b}^k\in D_0\times...\times D_{k-1},\,F(\bar{a}^k_1,...,\bar{a}^k_m ,\bar{b}^k)\wedge \forall \bar{b}^k_1,\bar{b}^k_2 \in A_0\times...\times D_{k-1},
\\
 \hspace {0pt}(F(\bar{a}^k_1,...,\bar{a}^k_m,\bar{b}^k_1)\wedge F(\bar{a}^k_1,...,\bar{a}^k_m,\bar{b}^k_2) \to \bar{b}^k_1=\bar{b}^k_2).
 \end{gathered}
\end{equation}
In the same fashion, we can formalize a special idempotent WNU operation $\Omega$ on the set $D_0\times...\times D_{k-1}$. Further, we define a subuniverse $R$ of algebra $(D_0\times...\times D_{k-1},\Omega)$ as follows:
\begin{equation}
\begin{gathered}
  subTA(R,D_0\times...\times D_{k-1},\Omega)\iff |R|=|D_0\times...\times D_{k-1}|\wedge \\
  \hspace {0pt}\forall i<(kl)^{2^k}, R(i)\rightarrow D_0\times...\times D_{k-1}(i)\wedge SwNU_m(\Omega,R).
 \end{gathered}
\end{equation}
We say that an algebra $\mathbb{D}=(D,\Omega)$ is a direct product of $k$ algebras $(D_0,\Omega_0)$, ..., $(D_{k-1},\Omega_{k-1})$ of the same type (with $m$-ary operations) if
\begin{equation}
  \begin{gathered}
   \hspace {0pt}DP_{m,k}(D,\Omega,D_0,\Omega_0,...,D_{k-1},\Omega_{k-1})\iff D=D_0\times...\times D_{k-1}\wedge \\ 
    \hspace{0pt}\wedge \forall a_1^{0},...,a^{0}_m\in D_0,...,\forall a_1^{k-1},...,a^{k-1}_m \in D_{k-1} \exists b^{0} \in D_0,...,\exists b^{k-1}\in D_{k-1}\\
 \hspace {0pt}\Omega(\bar{a}^k_1,...,\bar{a}^k_m,\langle b^{0},...,b^{k-1}\rangle)\wedge \Omega_0(a^{0}_{1},...,a^{0}_{m},b^{0})\wedge...\wedge \Omega_{k-1}(a^{k-1}_{1},...,a^{k-1}_{m},b^{k-1}).
  \end{gathered}
\end{equation}
A subdirect product $(R,\Omega)$ of $k$ algebras $(D_0,\Omega_0)$, ...,$(D_{k-1},\Omega_{k-1})$ is encoded as follows:
\begin{equation}
\begin{gathered}
   subDP_{m,k}(R, \Omega, D_0,\Omega_0,...,D_{k-1},\Omega_{k-1})\iff  subTA(R,D_0\times...\times D_{k-1},\Omega) \\ 
   \hspace {0pt}\wedge DP_{m,k}(D_0\times...\times D_{k-1},\Omega,D_0,\Omega_0,...,D_{k-1},\Omega_{k-1})\wedge \\
   \hspace{0pt}\wedge \bigwedge_{i<k} \forall a_{i}\in D_i, \exists a_{0}\in D_0,...,\exists a_{i}\in D_{i-1},\exists a_{i+1}\in D_{i+1},...,\exists a_{k-1}\in A_{k-1},\\
   \hspace {0pt}R(a_1,...,a_{i-1},a_i,a_{i+1},...,a_{k}).
\end{gathered}
\end{equation}

Note that the set of solutions to any instance of CSP($\Gamma$) can be viewed as a subuniverse of power of $B$. By Theorem \ref{skskjdi}, every $n$-ary relation $R$ on $B^n$ preserved by all polymorphisms of $Pol(\Gamma)$ can be $pp$-defined from $\Gamma$. Thus, it is equal to some projection of the set of solutions to some instance of CSP($\Gamma$). However, the instance itself can be exponential in $n$ (see the construction in \cite{barto_et_al}). Furthermore, we cannot define a subalgebra $R$ of $B^n$ as an $n$-ary set $R(b_1,...,b_{n})$ as it requires $(ln)^{2^n}$ length. We shall stress that since most of the theorems in the general part of Zhuk's algorithm that are proved for any subalgebras were used in the algorithm only for solution sets \cite{zhuk2020proof}, whenever possible, we restrict ourselves upward to solution sets to some CSP instances over $\Gamma_{\mathcal{A}}$.

For definitions, we use the $\Sigma^{\mathscr{B}}_0$-3COMP axiom scheme. We can consider any $n$-ary relation $R$ on $A^n$ as a third-order object – a class of maps $\mathscr{R}$ from $[n]$ to $[A,A,...,A]$. Analogously, any $R\leq D_0\times...\times D_{n-1}$ is a class of maps from $[n]$ to $[D_0,D_1,...,D_{n-1}]$. In terms of digraphs, 
\begin{equation}
\mathscr{R}(H)\implies MAP(V_\mathcal{X}, n,V_{\ddot{\mathcal{A}}},\langle n,l\rangle,H), 
\end{equation}\label{kqwhf;48r;3ihert}
which is $\Sigma^{\mathscr{B}}_0$-formula, and
\begin{equation}\label{kqwhf;48r;3ihert}
  \mathscr{D}_0\times...\times \mathscr{D}_{n-1}(H)\iff MAP(V_\mathcal{X}, n,V_{\ddot{\mathcal{A}}},\langle n,l\rangle,H),
\end{equation}
which is already $\Sigma^{1,b}_0$-formula. To make from $ \mathscr{D}_0\times...\times \mathscr{D}_{n-1}$ an algebra, we define a third-order object representing a basic $m$-ary function $\mathscr{F}_{\Omega_0,...,\Omega_{n-1}}$ on $\mathscr{D}_0\times...\times \mathscr{D}_{n-1}$ (again, $\Sigma^{1,b}_0$-formula):
\begin{equation}\label{l;alsd'jqf;hd}
\begin{gathered}
   \mathscr{F}_{\Omega_0,...,\Omega_{n-1}}(H_1,...,H_m,H) \iff \forall i<n,\exists a^i_1,...,a^i_m,a^i<l,\,\Omega_i(a^i_1,...,a^i_m)=a^i\wedge\\
       \hspace{0pt}\wedge H_1(i)=\langle i,a^i_1\rangle\wedge...\wedge H_m(i)=\langle i,a^i_m\rangle\wedge H(i)=\langle i,a^i\rangle.
\end{gathered}
\end{equation}
If for all $D_i$ there is the same operation $\Omega$, we denote this class by $\mathscr{F}_{\Omega}$. Let us consider in this section all $D_i$ being subalgebras of $\mathbb{A}=(A,\Omega).$ For subuniverses, we require $\mathscr{R}(H)$ to be closed under $\mathscr{F}_{\Omega}$ by the definition. In fact, we can express this requirement remaining in the second-order setting. For any $n$, we introduce a string function $\omega$ of $m$ maps from $[n]$ to $[A,A,...,A]$ returning a new map $H$ by its bit-definition for all $i<n, a<l$:
\begin{equation}\label{alalkdhjhfr}
  \begin{gathered}
     \omega(H_1,...,H_m)(\langle i,\langle i,a\rangle\rangle)\iff \exists a_1,...,a_m<l,\,\Omega(a_1,...,a_m)=a\wedge\\
       \hspace {0pt}\wedge H_1(i)=\langle i,a_1\rangle\wedge...\wedge H_m(i)=\langle i,a_m\rangle.
  \end{gathered}
\end{equation}
Note that $\omega$ is an actual function, not a set of sets, and it is based on the fixed set $\Omega$. In the same fashion, we can introduce a string function $usepol_{k}$ for any $k$-ary polymorphism $F$ on $\mathbb{A}$ by its bit-definition for all $i<n, a<l$:
\begin{equation}
  \begin{gathered}
     usepol_{k}(F,H_1,...,H_k)(\langle i,\langle i,a\rangle\rangle)\iff \exists a_1,...,a_k<l,\,F(a_1,...,a_k)=a\wedge\\
       \hspace {0pt}\wedge H_1(i)=\langle i,a_1\rangle\wedge...\wedge H_k(i)=\langle i,a_k\rangle.
  \end{gathered}
\end{equation}
We will denote such functions restricted by $\mathscr{R}$ by $\omega^{\mathscr{R}}$ and $usepol_{k}^{\mathscr{R}}$. Thus, for any class $\mathscr{R}$ representing subalgebra on $A^n$, and any maps $H_1,...,H_m$:
\begin{equation}
  \begin{gathered}
     \mathscr{R}(H_1)\wedge...\wedge \mathscr{R}(H_m)\implies \mathscr{R}(\omega(H_1,...,H_m)).
  \end{gathered}
\end{equation}
To consider a projection of subalgebra $\mathscr{R}$ to some subset of coordinates $i_1,...,i_s, s<n$, we introduce a partial map from $[n]$ to $(D_0,...,D_{n-1})$:
\begin{equation}\label{akllsoei657}
\begin{gathered}
   \hspace{0pt}\mathscr{R}^{i_1,...,i_s}(H)\iff \bigwedge_{i\in \{i_1,...,i_s\}} \exists ! a\in D_i,\, H(i)=\langle i,a\rangle \wedge\\
   \hspace{0pt}\bigwedge_{i\notin \{i_1,...,i_s\}} \forall a<l,\,\neg H(i) = \langle i,a \rangle \wedge\\
   \exists H'<\langle n,\langle n,l \rangle \rangle,\,\mathscr{R}(H')\wedge \bigwedge_{i\in \{i_1,...,i_s\}} H(i)=H'(i).
\end{gathered}
\end{equation}
There are $2^n=\sum^{n}_{s=0}\binom{n}{s}$ such different classes, but we do not need to define them all; we will define the required occasionally. Note that $\omega$ and $usepol_{k}$ are well-defined for such partial maps for $i\in \{i_1,...,i_s\}$.

The solution set to the instance $\Theta=(\mathcal{X},\ddot{\mathcal{A}})$ of CSP over $\Gamma_{\mathcal{A}}$, $\ddot{\mathcal{A}}=( D_0,...,D_{n-1},E_{\ddot{\mathcal{A}}})$, is a set of homomorphisms $\{\mathcal{X}\to\ddot{\mathcal{A}}\}=\{H_1,H_2,...,H_s\}$. Let us denote it by $\mathscr{R}_{\Theta}$. Note that the definition is a $\Sigma^{1,b}_0$-formula.
\begin{equation}
\mathscr{R}_{\Theta}(H)\iff \ddot{HOM}( \mathcal{X},\ddot{\mathcal{A}},H).
\end{equation}
In these terms, the product $\mathscr{D}_0\times...\times \mathscr{D}_{n-1}$ can be considered as $\mathscr{R}_{\Theta_{null}}$. The projection $ \mathscr{R}_{\Theta}^{i_1,...,i_s}$ of the solution set to some subset of coordinates is defined analogously to (\ref{akllsoei657}), we call $H\in \mathscr{R}_{\Theta}^{i_1,...,i_s}$ a partial homomorphism from $\mathcal{X}$ to $\ddot{\mathcal{A}}$. Note that while $\mathscr{R}_{\Theta}$ is a $\Sigma^{1,b}_0$-formula, $\mathscr{R}_{\Theta}^{i_1,...,i_s}$ is a $\Sigma^{1,b}_1$-formula.

\begin{lemmach}\label{lslsdkleijf}
  For any $k>0$, $V^1$ proves that for any CSP instance $\Theta$, any $k$-ary operation $F\in Pol_k(F,A,\Gamma_{\mathcal{A}})$, and any $k$ homomorphisms $H_1,...,H_k$ from $\mathcal{X}$ to $\ddot{\mathcal{A}}$ (and for any $i\in \{i_1,...,i_s\}$) a map $H=usepol_{n,k}(F,H_1,...,H_k)$ 
  is again a homomorphism from $\mathcal{X}$ to $\ddot{\mathcal{A}}$ (a partial homomorphism from $\mathcal{X}$ to $\ddot{\mathcal{A}}$). 
\end{lemmach}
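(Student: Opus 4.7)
The plan is to verify directly that the string $H = usepol_{n,k}(F, H_1, \ldots, H_k)$ satisfies each clause of $\ddot{HOM}(\mathcal{X}, \ddot{\mathcal{A}}, H)$ as spelled out in (\ref{HOMINSTAG}), by unfolding the bit-definition of $usepol_{n,k}$ (the $k$-ary analogue of $\omega$ from (\ref{alalkdhjhfr})) and applying the homomorphism hypothesis on each $H_j$ together with the polymorphism hypothesis on $F$. Once $H$ is produced as the value of this $\Sigma^{1,b}_0$-definable string function, the statement to prove is itself $\Sigma^{1,b}_0$, so no induction is needed and $V^1$, which contains $\Sigma^{1,b}_0$-COMP and hence defines $usepol_{n,k}$, is more than enough.

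First I would fix $i < n$ and extract from each $\ddot{HOM}(\mathcal{X}, \ddot{\mathcal{A}}, H_j)$ the unique $a_j^i < l$ with $H_j(i) = \langle i, a_j^i\rangle$ and $D_i(a_j^i)$. From the bit-definition of $usepol_{n,k}$, the string $H$ then satisfies $H(i) = \langle i, F(a_1^i, \ldots, a_k^i)\rangle$ and nothing else at coordinate $i$, which gives uniqueness and totality of $H$ as a map from $V_{\mathcal{X}}$ to $V_{\ddot{\mathcal{A}}}$. Since $\Theta$ is an instance over $\Gamma_{\mathcal{A}}$, the defining clause of $Inst$ supplies an $s < |\Gamma_{\mathcal{A}}|$ with $D_i(\cdot) \leftrightarrow \Gamma^1_{\mathcal{A}}(s, \cdot)$; because $F \in Pol_k(F, A, \Gamma_{\mathcal{A}})$, $F$ preserves this unary relation, so $D_i(F(a_1^i, \ldots, a_k^i))$ holds. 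This handles the first two clauses of (\ref{HOMINSTAG}).

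Next I would check edge preservation. Assume $E_{\mathcal{X}}(i_1, i_2)$. For each $j \leq k$, the homomorphism condition for $H_j$ yields $E_{\ddot{\mathcal{A}}}^{i_1, i_2}(a_j^{i_1}, a_j^{i_2})$. The instance axiom provides an $s < |\Gamma_{\mathcal{A}}|$ with $E_{\ddot{\mathcal{A}}}^{i_1, i_2}(\cdot, \cdot) \leftrightarrow \Gamma^2_{\mathcal{A}}(s, \cdot, \cdot)$, so all $k$ pairs $(a_j^{i_1}, a_j^{i_2})$ lie in this binary relation; using that $F$ preserves it (the binary clause of $Pol_k$), we get $\Gamma^2_{\mathcal{A}}(s, F(a_1^{i_1}, \ldots, a_k^{i_1}), F(a_1^{i_2}, \ldots, a_k^{i_2}))$, which rewrites as $E_{\ddot{\mathcal{A}}}(H(i_1), H(i_2))$. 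The partial-homomorphism variant is identical after restricting the outer quantifier from $i < n$ to $i \in \{i_1, \ldots, i_s\}$: $usepol_{n,k}$ acts coordinatewise, so the same local verification applies, and the extra conjuncts in (\ref{akllsoei657}) are discharged by reading them off the bit-definition.

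I expect no real obstacle here; the whole content is that the chosen encodings of instances, polymorphisms, and the string function $usepol_{n,k}$ compose in the way the universal-algebraic intuition predicts, and the verification stays within open $\Sigma^{1,b}_0$-reasoning about $F$, the $H_j$, and $H$. The only bookkeeping to watch is that the "single $s$" witnesses supplied by $Inst(\Theta, \Gamma_{\mathcal{A}})$ match exactly the relations over which $Pol_k$ asserts preservation, so that the polymorphism clause applies to the correct rows of $\Gamma^1_{\mathcal{A}}$ and $\Gamma^2_{\mathcal{A}}$; this is forced by the biconditionals in the definition of $Inst$ and causes no difficulty.
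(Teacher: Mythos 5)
Your proposal is correct and rests on exactly the same two facts as the paper's own proof: the constraint relations $E^{ij}_{\ddot{\mathcal{A}}}$ and domains $D_i$ are rows of $\Gamma_{\mathcal{A}}$ by the definition of $Inst$, and $F$ preserves every such relation, so applying $F$ coordinatewise to the $H_j$ cannot break an edge. The paper merely phrases the edge-preservation step contrapositively (assuming a violated edge and concluding $F$ fails to preserve $E^{ij}_{\ddot{\mathcal{A}}}$), while you verify the clauses of $\ddot{HOM}$ directly; this is not a substantive difference.
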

\begin{proof}
  Recall that any polymorphism preserves all relations from $\Gamma_{\mathcal{A}}$. Every relation $E_{\mathcal{A}}^{ij}$ (set of edges from $D_i$ to $D_j$) is a subalgebra of $D_i\times D_j$ (since it is compatible with $\Omega$). The proof then goes by contradiction: suppose that there is an edge $(x_i,x_j)\in E_{\mathcal{X}}$ (with $i,j\in \{i_1,...,i_s\}$) such that $H$ does not map it to an edge in $\ddot{\mathcal{A}}$. Since all homomorphisms (partial for $i\in \{i_1,...,i_s\}$) $H_1,...,H_k$ map $(x_i,x_j)$ to some edge in $E^{ij}_{\ddot{\mathcal{A}}}$, it is possible only if $F$ does not preserve the relation $E^{ij}_{\ddot{\mathcal{A}}}$. 
\end{proof}

\begin{corollary}
  $V^1$ proves that a solution set $\mathscr{R}_{\Theta}$ and a projection $ \mathscr{R}_{\Theta}^{i_1,...,i_s}$ for any subset of coordinates $\{i_1,...,i_s\}$ for a CSP instance $\Theta=(\mathcal{X},\ddot{\mathcal{A}})$ on $n$ variables are subuniverses of $A^n$ and $A^{ \{0,1,...,n\}\backslash \{i_1,...,i_s\}}$ respectively. 
\end{corollary}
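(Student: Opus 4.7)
The plan is to derive both statements directly from Lemma \ref{lslsdkleijf} by specializing its parameters to $k = m$ and $F = \Omega$. Recall that $\Omega$ is the basic operation of $\mathbb{A}$, and by Theorem \ref{fjjduh87} together with the fixed choice $Clone(\mathbb{A}) = Pol(\mathcal{A})$, $\Omega$ is a polymorphism of every relation in $\Gamma_{\mathcal{A}}$. Thus the hypothesis $Pol_k(F,A,\Gamma_{\mathcal{A}})$ of the lemma is satisfied in advance (this is part of the $\mathbb{A}$-Monster set), and we may freely instantiate $F := \Omega$ and $k := m$.

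For the solution set $\mathscr{R}_{\Theta}$: by the definition $subTA$, being a subuniverse of $A^n$ amounts to closure under $\mathscr{F}_{\Omega}$, or equivalently under $\omega$ as defined in (\ref{alalkdhjhfr}). Given any $m$ homomorphisms $H_1,\ldots,H_m$ in $\mathscr{R}_{\Theta}$, the string $\omega(H_1,\ldots,H_m)$ coincides with $usepol_{n,m}(\Omega,H_1,\ldots,H_m)$ on the coordinates of interest, so the lemma yields immediately that it is again a homomorphism from $\mathcal{X}$ to $\ddot{\mathcal{A}}$, i.e.\ belongs to $\mathscr{R}_{\Theta}$. Since all formulas in sight are $\Sigma^{1,b}_{0}$, this reasoning is available in $V^1$ (in fact in $V^0$).

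For the projection $\mathscr{R}_{\Theta}^{i_1,\ldots,i_s}$, the argument is identical but uses the partial-homomorphism clause of Lemma \ref{lslsdkleijf}. Given $m$ partial homomorphisms $H_1,\ldots,H_m$ in $\mathscr{R}_{\Theta}^{i_1,\ldots,i_s}$, by the definition (\ref{akllsoei657}) each $H_t$ extends to a full homomorphism $H'_t \in \mathscr{R}_{\Theta}$. The lemma then guarantees that $\omega(H'_1,\ldots,H'_m)$ is a full homomorphism, while $\omega(H_1,\ldots,H_m)$ agrees with it on $\{i_1,\ldots,i_s\}$ and is undefined elsewhere, witnessing $\mathscr{R}_{\Theta}^{i_1,\ldots,i_s}(\omega(H_1,\ldots,H_m))$. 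The only quantifier-complexity point to watch is that although $\mathscr{R}_{\Theta}^{i_1,\ldots,i_s}$ itself is $\Sigma^{1,b}_{1}$ (because of the existential over the extension $H'$), closure under $\omega$ is proved by \emph{exhibiting} the extension explicitly as $\omega(H'_1,\ldots,H'_m)$, so no unbounded search is required and the whole argument stays within $\Sigma^{1,b}_{1}$-reasoning that $V^1$ handles. The main (and only) obstacle is this bookkeeping: making sure that the extension witnesses combine in a way that is uniformly definable, which is immediate from the bit-definition of $\omega$.
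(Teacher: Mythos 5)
Your proposal is correct and follows exactly the route the paper intends: the corollary is stated as an immediate consequence of Lemma \ref{lslsdkleijf}, obtained by instantiating $F:=\Omega$ (which is a polymorphism of $\Gamma_{\mathcal{A}}$ by the fixed setup $Clone(\mathbb{A})=Pol(\mathcal{A})$) so that closure under $usepol$ becomes closure under $\omega$, with the partial-homomorphism clause handling projections. Your extra care about exhibiting the extension witness explicitly for the $\Sigma^{1,b}_1$ projection case is a correct and welcome elaboration of what the paper leaves implicit.
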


We say that subuniverse $\mathscr{R}$ is subdirect if
\begin{equation}\label{HOMINSTAG}
 \begin{gathered}
 subDSSInst(\mathscr{R})\iff \forall i<n\forall a\in D_i, \exists H < \langle n,\langle n,l\rangle\rangle,H\in \mathscr{R}\wedge H(i)=\langle i,a \rangle.
 \end{gathered}
\end{equation}
Note that this is a $\Sigma^{\mathscr{B}}_0$-formula. If we consider solution set $\mathscr{R}_{\Theta}$, then the definition becomes a $\Sigma^{1,b}_1$-formula:
\begin{equation}\label{HOrhrhrhrrhAG}
 \begin{gathered}
 subDSSInst(\mathscr{R}_{\Theta})\iff \forall i<n,\forall a\in D_i, \\
 \exists H < \langle n,\langle n,l\rangle\rangle,\ddot{HOM}(\mathcal{X},\ddot{\mathcal{A}},H)\wedge H(i)=\langle i,a \rangle.
 \end{gathered}
\end{equation}
Whenever possible, we refer to $\mathscr{R}_{\Theta}$ as a set of homomorphisms $\{\mathcal{X}\to\ddot{\mathcal{A}},\}=\{H_1,H_2,...,$ $H_s\}$, i.e. we use $\forall H\leq \langle n,\langle n,i\rangle\rangle,\, \ddot{HOM}(\mathcal{X},\ddot{\mathcal{A}},H)$ since this allows us to avoid third-sorted objects.

\begin{remark}
Note that no subalgebra of $A^n$ is a solution set to some CSP instance over $\Gamma_{\mathcal{A}}$. It is an instance of a larger language $\Gamma'$ containing $\Gamma$ and closed under $pp$-definitions.
\end{remark}

\subsection{Congruence and congruence on products}\label{a;kjdgh;qeirgy;gh;w}
A maximal congruence on an algebra $(D,\Omega)$ can be defined by the following $\Sigma^{1,b}_0$-formula:
\begin{equation}
  \begin{gathered}
     maxCong_m(D,\Omega,\sigma)\iff Cong_m(D,\Omega,\sigma) \wedge \exists a,b\in D,\, \neg\sigma( a,b )\wedge\\
     \hspace{0pt}\wedge[\bigwedge_{\Sigma_{\mathcal{D},i}} ( \exists a,b\in D,\, \neg\Sigma_{\mathcal{D},i}( a,b )\rightarrow \exists a,b\in D,\,\sigma(a,b)\wedge \neg \Sigma_{\mathcal{D},i}(a,b))].
  \end{gathered}
\end{equation}
The general definition of a maximal congruence $\sigma$ for any (not fixed) algebra $\mathbb{B}$ of size $n$ is $\Pi^{1,b}_1$:
\begin{equation}
  \begin{gathered}
     maxCong_m(B,\Omega,\sigma)\iff Cong_m(B,\Omega,\sigma) \wedge \exists a,b\in B,\, \neg\sigma( a,b )\wedge\\
     \hspace {0pt}\wedge[\forall \sigma'<\langle n,n\rangle,\, (Cong_m(B,\Omega,\sigma')\wedge \exists a,b\in B,\, \neg\sigma'( a,b ))\rightarrow\\
     \hspace {0pt}\rightarrow \exists a,b\in B,\,\sigma(a,b)\wedge \neg \sigma'(a,b)].
  \end{gathered}
\end{equation}
Analogously, we can define a minimal congruence $\sigma$, by relation $minCong_m(D,\Omega,\sigma)$. Recall that each block of a factor set, denoted by $D/\sigma$, is represented by its minimum element (it exists by the $\Sigma^{1,b}_0$-MIN principle). Therefore, we also think of the factorized object $D/\sigma$ as a set of numbers. When we consider any congruence $\sigma$ on $D$, we do not need to claim the existence of sets $D/\sigma$ and $\Omega/\sigma$ - there is a simple algorithm to construct them, and the construction is unique. First, we define the following string function
\begin{equation}
  \begin{gathered}
     factorset(D,\sigma)(a)=D/\sigma(a)\iff a<|D|\wedge a\in D\wedge\\
     \hspace {0pt}\wedge (\forall a'\in D, \sigma(a,a')\rightarrow a\leq a').
  \end{gathered}
\end{equation}
To represent an element we define a number function $rep(a/\sigma,D,\sigma)$
\begin{equation}
  \begin{gathered}
     a=rep(a/\sigma,D,\sigma)\iff \sigma(a,a/\sigma)\wedge factorset(D,\sigma)(a).
  \end{gathered}
\end{equation}
Finally, we can define a string function returning $\Omega/\sigma$ using a bit-defining axiom:
\begin{equation}
  \begin{gathered}
     factor\omega(D,\Omega,\sigma)(b) = \Omega/\sigma(b)\iff \exists a_1...\exists a_m\exists c \in D/\sigma,\,b=\langle a_1,...,a_m,c\rangle\wedge\\
     \hspace {0pt}\exists a_1/\sigma...\exists a_m/\sigma\exists c/\sigma \in D,\, c=rep(c/\sigma,D,\sigma)\wedge\bigwedge_{i<m} a_i=rep(a_i/\sigma,D,\sigma)\wedge \\
 \hspace {0pt}\wedge\Omega(a_1/\sigma,...,a_{m}/\sigma, c/\sigma).
  \end{gathered}
\end{equation}
The following two claims follow straightforwardly from the definitions of congruence and WNU operation.
\begin{claimm}
Consider an algebra $\mathbb{D}=(D,\Omega_D)$, its subuniverse $B$ and a congruence $\sigma$ on $D$. Then $V^0$ proves that $\sigma$ restricted to $B$ is a congruence on $B$.
\end{claimm}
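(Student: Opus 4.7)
The plan is to prove the claim by first formally defining the restriction $\sigma|_B$ via $\Sigma^{1,b}_0$-comprehension, and then verifying, one at a time, the three equivalence properties and the compatibility with $\Omega$ that are unpacked in the definition $Cong_m(B,\Omega,\sigma|_B)$. Since $V^0$ contains $\Sigma^{1,b}_0$-COMP and proves every true open (quantifier-free, or more generally $\Sigma^{1,b}_0$) universal consequence of its axioms, each of these steps will reduce to instantiating a universally quantified property of $\sigma$ on $D$ at elements that happen to lie in $B$.

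First I would introduce $\sigma|_B$ by the defining formula
\begin{equation*}
\sigma|_B(a,b)\iff B(a)\wedge B(b)\wedge \sigma(a,b),
\end{equation*}
which is $\Sigma^{1,b}_0$, so the corresponding set exists by $\Sigma^{1,b}_0$-COMP. Reflexivity, symmetry and transitivity of $\sigma|_B$ then follow immediately: for any $a,b,c\in B$ we have in particular $a,b,c\in D$ (by $B\subseteq D$, which is part of $subTA$), and the corresponding clauses of the hypothesis $Cong_m(D,\Omega_D,\sigma)$ give reflexivity, symmetry, and transitivity of $\sigma$ at these elements; combining them with the conjuncts $B(a),B(b),B(c)$ yields the desired instances for $\sigma|_B$. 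All of this is open $\Sigma^{1,b}_0$-reasoning, hence available in $V^0$.

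Next I would verify compatibility. Fix $a_1,\ldots,a_m,b_1,\ldots,b_m\in B$ with $\sigma|_B(a_i,b_i)$ for all $i<m$. Because $B$ is a subuniverse of $\mathbb{D}$, the elements $c:=\Omega_D(a_1,\ldots,a_m)$ and $c':=\Omega_D(b_1,\ldots,b_m)$ both lie in $B$; this is a direct instance of the closure clause in $subTA(B,D,\Omega_D)$. From $\sigma|_B(a_i,b_i)$ we obtain $\sigma(a_i,b_i)$ in $D$, so the compatibility clause of $Cong_m(D,\Omega_D,\sigma)$ gives $\sigma(c,c')$. Conjoining with $B(c)\wedge B(c')$ yields $\sigma|_B(c,c')$, as required.

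The main obstacle, such as it is, is purely notational rather than logical: the predicates $Cong_m$ and $subTA$ are stated with bounded universal quantifiers over tuples of length $m$ and over $D$, so one must be careful to cite the right instance of each clause and to keep track of the bookkeeping between $D$-membership and $B$-membership when expanding $\sigma|_B$. Since every step is an open instance of an axiom or hypothesis and every quantifier that appears is bounded, the whole argument stays within $\Sigma^{1,b}_0$ and is therefore provable in $V^0$.
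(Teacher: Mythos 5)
Your proposal is correct and is exactly the ``straightforward from the definitions'' argument that the paper itself invokes without spelling out: restrict $\sigma$ to $B$ by $\Sigma^{1,b}_0$-comprehension, then instantiate the equivalence and compatibility clauses of $Cong_m(D,\Omega_D,\sigma)$ at elements of $B$, using closure of $B$ under $\Omega_D$. No divergence from the paper's approach.
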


\begin{claimm}\label{alalskdu65f}
Consider an algebra $\mathbb{D}=(D,\Omega_D)$ with $\Omega$ being a special WNU operation, and a congruence $\sigma$ on $D$. Then $V^0$ proves that for all $a\in D$, a congruence block $[a]/\sigma$ is a subuniverse of $D$.
\end{claimm}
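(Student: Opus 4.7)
The plan is to verify directly that $[a]/\sigma$ satisfies the definition of a subuniverse of $\mathbb{D}$: it contains $a$ and is closed under $\Omega$. Nonemptiness is immediate from reflexivity of $\sigma$ (part of $Cong_m(D,\Omega,\sigma)$), which gives $a\in [a]/\sigma$. For closure, I would fix $a\in D$ and take arbitrary $b_1,\dots,b_m\in [a]/\sigma$, that is, $\sigma(a,b_i)$ for every $i<m$. Applying the compatibility clause of $Cong_m(D,\Omega,\sigma)$ to the $m$ pairs $(a,b_1),\dots,(a,b_m)$ yields $\sigma\bigl(\Omega(a,a,\dots,a),\,\Omega(b_1,\dots,b_m)\bigr)$. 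Since $\Omega$ is idempotent, $\Omega(a,a,\dots,a)=a$, and hence $\sigma(a,\Omega(b_1,\dots,b_m))$, i.e.\ $\Omega(b_1,\dots,b_m)\in [a]/\sigma$, which is exactly the closure requirement.

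Every quantifier in the argument is bounded by $l=|A|$ (a constant once $\mathbb{A}$ is fixed), and the entire chain of implications is built from $\Sigma^{1,b}_0$-atoms coming from $Cong_m$ and from the bit-definitions of $\Omega$. No induction is invoked, so the reasoning formalizes already in $V^0$: it amounts to a bounded case analysis that unfolds to a constant-size $V^0$-proof. Note that neither the general WNU identities nor the special-WNU identity play any role here; only idempotence of $\Omega$ and compatibility of $\sigma$ with $\Omega$ are used. Idempotence $\Omega(x,\dots,x)=x$ is part of the standing assumption on $\mathbb{A}$ and, moreover, is one of the cached $\Sigma^{1,b}_0$-facts available from the $\mathbb{A}$-Monster set.

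The main obstacle, such as it is, is purely syntactic bookkeeping: one has to unfold $Cong_m(D,\Omega,\sigma)$ so as to expose its compatibility clause in a form directly matching the $m$-tuple of pairs $(a,b_i)$, and then invoke the cached idempotence axiom for $\Omega$. Once these two ingredients are in place, the conclusion follows by a single substitution. There is no genuine mathematical obstruction, which is consistent with the preceding remark that both claims follow straightforwardly from the definitions.
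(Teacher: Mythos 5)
Your proof is correct and is exactly the argument the paper has in mind (the paper omits it, remarking only that the claim "follows straightforwardly from the definitions"): apply compatibility of $\sigma$ with $\Omega$ to the pairs $(a,b_1),\dots,(a,b_m)$ and use idempotence to identify $\Omega(a,\dots,a)$ with $a$. Your observation that only idempotence and compatibility (not the WNU or special-WNU identities) are used, and that the bounded, induction-free reasoning lands in $V^0$, is also accurate.
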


For any congruence $\sigma$ on algebra $\mathbb{D}=(D,\Omega)$, for factor algebra $\mathbb{D}/\sigma$ we will define the quotient set of relation $\Gamma_{\mathcal{D}}/\sigma$ as follows:
\begin{equation}\label{allalask75575764}
\begin{gathered}
 \hspace{0pt}\Gamma^{1}_{\mathcal{D}}/\sigma(j,a)\iff \forall a/\sigma\in D,\, Rep_m(a,a/\sigma,D/\sigma,D,\Omega,\sigma)\wedge\Gamma^{1}_{\mathcal{D}}(j,a/\sigma)\\
 \hspace {0pt}\Gamma^{2}_{\mathcal{D}}/\sigma(i,a,b)\iff \forall a/\sigma,b/\sigma\in D,\,\Gamma^{2}_{\mathcal{D}}(i,a/\sigma,b/\sigma)\wedge\\
 \hspace {0pt}\wedge Rep_m(a,a/\sigma,D/\sigma,D,\Omega,\sigma)\wedge Rep_m(b,b/\sigma,D/\sigma,D,\Omega,\sigma).
\end{gathered}
\end{equation}
Note that for some $i,j$, $\Gamma^{1}_{\mathcal{D},j}/\sigma$ and $\Gamma^{2}_{\mathcal{D},i}/\sigma$ are empty sets. We will use it in the definition of PC subuniverses. The formulas (\ref{allalask75575764}) follow from log-space reduction from CSP($\mathbb{D}/\sigma$) to CSP($\mathbb{D}$), see \cite{Brady2022NotesOC}. We want to stress it directly here, not to repeat it many times. The relation signatures of the structures corresponding to $D$ and $D/\sigma$ differ, and the relation $R\in \Gamma_{\mathcal{D}}/\sigma$ lifts to the relation $R'\in\Gamma_{\mathcal{D}}$ by the rule $\bar{a}\in R'\iff \bar{a}/\sigma\in R$. Thus, for any binary or unary relation preserved by $\Omega/\sigma$ on $D/\sigma$ its corresponding lifted relation is preserved by $\Omega$ on $D$. That is, we already have all such relations in $\Gamma_{\mathcal{A}}$. Moreover, for any binary relation $R$ on $D_i/\sigma_i\times D_j/\sigma_j$ preserved by $\Omega/\sigma=(\Omega/\sigma_i,\Omega/\sigma_j)$ its lifted relation on $D_i\times D_j$ is preserved by $\Omega$ under the same rule.

We define a binary relation $\mathscr{C_\sigma}$ on $D_0\times...\times D_{n-1}$ as a third-order object for all maps from $[n]$ to $(D_0,...,D_{n-1})$, $\mathscr{C_\sigma}(H_1,H_2)$. For $\mathscr{C_\sigma}$ being compatible with $\Omega$, we require that for any $H_1,...,H_m,$ $H_1',...,$ $H_m'$:
\begin{equation}\label{alksjdfgy7}
  \begin{gathered}
    \mathscr{C_\sigma}(H_1,H_1')\wedge...\wedge\mathscr{C_\sigma}(H_m,H_m')\implies \mathscr{C_\sigma}(\omega(H_1,...,H_m),\omega(,H_1',...,H_m')).
  \end{gathered}
\end{equation}
For $\mathscr{C_\sigma}$ being a congruence, we additionally require that for any three maps $H_1,H_2,H_3$,
\begin{equation}\label{skksidjh}
  \begin{gathered}
     \mathscr{C_\sigma}(H_1,H_1)\wedge (\mathscr{C_\sigma}(H_1,H_2)\leftrightarrow \mathscr{C_\sigma}(H_2,H_1))\wedge\\
     \hspace {0pt}\wedge (\mathscr{C_\sigma}(H_1,H_2)\wedge \mathscr{C_\sigma}(H_2,H_3)\rightarrow \mathscr{C_\sigma}(H_1,H_3)).
  \end{gathered}
\end{equation}
We can restrict $\mathscr{C_\sigma}$ to any subuniverse $\mathscr{R}$ (we will call $\mathscr{C}^{\mathscr{R}}_\sigma$ a congruence restricted to $\mathscr{R}$) by requiring for all $H,H'$,
\begin{equation}
  \mathscr{C}^{\mathscr{R}}_{\sigma}(H,H')\implies \mathscr{R}(H')\wedge \mathscr{R}(H').
\end{equation}

Now we return to second-order congruences and extend them to third-order objects. The next three relations are expressed by $\Sigma^{1,b}_0$-formulas. For any congruence $\sigma_i$ on $D_i$ we say that two maps $H_1,H_2$ are in the same equivalence block on $D_0\times...\times D_{n-1}$ if 
\begin{equation}\label{slslooeojduk}
  \begin{gathered}
 1EqClass(i,H_1,H_2,D_i,\sigma_i)\iff \forall a_{i_1},a_{i_2}<l,\\
     \hspace {0pt}H_1(i)=\langle i,a_{i_1}\rangle\wedge H_2(i)=\langle i,a_{i_2}\rangle\rightarrow \sigma_i(a_{i_1},a_{i_2}).
  \end{gathered}
\end{equation}
Then for any congruence $\sigma_i$ on $D_i$ we define an extended relation $\mathscr{C}_{\sigma_i^{ext}}$ as follows:
\begin{equation}\label{slslooeojduk}
  \begin{gathered}
     \mathscr{C}_{\sigma_i^{ext}}(H_1,H_2)\iff 1EqClass(i,H_1,H_2,D_i,\sigma_i).
  \end{gathered}
\end{equation}
Analogously, for any $\sigma_0,...,\sigma_{n-1}$ where each $\sigma_i$ is a congruence on $D_i$, we define a relation $\mathscr{C}_{\cap_{n}\sigma^{ext}_i}$ on $D_0\times...\times D_{n-1}$ as follows: 
\begin{equation}\label{laallaskskjd}
  \begin{gathered}
    \mathscr{C}_{\cap_{n}\sigma^{ext}_i}(H_1,H_2)\iff \forall i<n,\, 1EqClass(i,H_1,H_2,D_i,\sigma_i).
  \end{gathered}
\end{equation}
Notice that some congruences $\sigma_i$ can be $\nabla_{D_i}$ or $\Delta_{D_i}$. Obviously, for any three maps $H_1,H_2,H_3$ the relation $1EqClass$ is reflexive, symmetric, and transitive. Compatibility can be proved easily again. Consider $2m$ maps $H_1,...,H_m$ and $H_1',...,H_m'$ such that for every $j<m$, $1EqClass(i,H_j,H_j',D_i,\sigma_i)$. Then, due to defining equation (\ref{alalkdhjhfr}) of $\omega$, 
$$1EqClass(i,\omega(H_1,...,H_m),\omega(H_1',...,H_2'),D_i,\sigma_i).$$ 
Note that in (\ref{laallaskskjd}) we define a third-order object using only its second-order properties. Thus, we have proved the following claims.

\begin{claimm}
  Consider $D_0\times...\times D_{n-1}$, and binary relations $\sigma_0,...,\sigma_{n-1}$ where $\sigma_i$ is a congruence of $D_i$ for every $i$. Then $V^0$ proves that any $\mathscr{C}_{\sigma_i^{ext}}$ and $\mathscr{C}_{\cap_{n}\sigma^{ext}_i}$ are congruences on $\mathscr{D}_0\times...\times \mathscr{D}_{n-1}$.
\end{claimm}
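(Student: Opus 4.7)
The plan is to verify the four required properties (reflexivity, symmetry, transitivity, and $\Omega$-compatibility) directly from the definitions, reducing every claim about the third-order relation $\mathscr{C}_{\sigma_i^{ext}}$ (or $\mathscr{C}_{\cap_n\sigma_i^{ext}}$) to the corresponding second-order property of $\sigma_i$ as a congruence of $D_i$. All the relevant statements (\ref{alksjdfgy7}) and (\ref{skksidjh}) are universally quantified over maps $H,H_1,\ldots,H_m,H_1',\ldots,H_m'$, but internally they only speak about the second-order relation $1EqClass$ and the string function $\omega$ defined by the bit-definition (\ref{alalkdhjhfr}). Hence no genuine third-order reasoning is required, and $V^0$ suffices.

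For reflexivity, symmetry, and transitivity of $\mathscr{C}_{\sigma_i^{ext}}$, I would unfold $1EqClass(i,H_1,H_2,D_i,\sigma_i)$ to the assertion $\sigma_i(a_{i_1},a_{i_2})$ where $H_1(i)=\langle i,a_{i_1}\rangle$ and $H_2(i)=\langle i,a_{i_2}\rangle$. Since $\sigma_i$ already satisfies the axioms $Cong_m(D_i,\Omega,\sigma_i)$ of a (second-order) congruence, the three equivalence-relation properties transfer pointwise: for reflexivity, pick $a_{i_1}=a_{i_2}$ coming from the same $H(i)$; for symmetry and transitivity, instantiate the corresponding properties of $\sigma_i$ at the witnessing numbers $a_{i_1},a_{i_2},a_{i_3}<l$. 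All quantifiers here are bounded by $l$, so the argument lives in $\Sigma^{1,b}_0$ and $V^0$ handles it.

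For compatibility with $\omega$, suppose $1EqClass(i,H_j,H_j',D_i,\sigma_i)$ for $j=1,\ldots,m$, witnessed by $H_j(i)=\langle i,a_{i,j}\rangle$ and $H_j'(i)=\langle i,a_{i,j}'\rangle$ with $\sigma_i(a_{i,j},a_{i,j}')$. By the bit-defining axiom (\ref{alalkdhjhfr}) of $\omega$, the value of $\omega(H_1,\ldots,H_m)$ at index $i$ is $\langle i,\Omega(a_{i,1},\ldots,a_{i,m})\rangle$, and similarly for the primed maps. The compatibility of $\sigma_i$ with $\Omega$ (part of $Cong_m(D_i,\Omega,\sigma_i)$) then yields $\sigma_i(\Omega(a_{i,1},\ldots,a_{i,m}),\Omega(a_{i,1}',\ldots,a_{i,m}'))$, which is exactly $1EqClass(i,\omega(H_1,\ldots,H_m),\omega(H_1',\ldots,H_m'),D_i,\sigma_i)$. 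This is the content of (\ref{alksjdfgy7}) for $\mathscr{C}_{\sigma_i^{ext}}$.

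The case of $\mathscr{C}_{\cap_n\sigma_i^{ext}}$ reduces immediately: its defining formula (\ref{laallaskskjd}) is $\forall i<n,\,1EqClass(i,H_1,H_2,D_i,\sigma_i)$, so each of reflexivity, symmetry, transitivity, and $\Omega$-compatibility is obtained by applying the corresponding property of $\mathscr{C}_{\sigma_i^{ext}}$ under a single bounded universal quantifier over $i<n$; no induction is needed. There is no real obstacle in this proof beyond carefully tracking that every formula produced stays in $\Sigma^{1,b}_0$, which it does because $\omega$ is a string function and all witnesses $a_{i,j},a_{i,j}'$ range over $[0,l)$, $[0,n)$, all of which are bounded; hence $V^0$-reasoning (with its $\Sigma^{1,b}_0$-COMP) is enough.
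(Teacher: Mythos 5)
Your proof is correct and follows essentially the same route as the paper: both reduce reflexivity, symmetry, transitivity, and $\omega$-compatibility of the extended relations to the corresponding properties of each $\sigma_i$ via $1EqClass$ and the bit-definition of $\omega$, observing that everything stays at the level of bounded second-order formulas so $V^0$ suffices. Your write-up is merely more explicit about the witnessing elements than the paper's brief argument.
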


\begin{claimm}
  Consider $\mathscr{R}_{\Theta}\leq D_0\times...\times D_{n-1}$, and binary relations $\sigma_0,...,\sigma_{n-1}$ where $\sigma_i$ is a congruence of $D_i$ for every $i$. Then $V^0$ proves that any $\mathscr{C}_{\sigma_i^{ext}}$ and $\mathscr{C}_{\cap_{n}\sigma^{ext}_i}$ restricted to $\mathscr{R}_{\Theta}$ are congruences on $\mathscr{R}_{\Theta}$.
\end{claimm}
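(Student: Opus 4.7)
The plan is to reduce this directly to the previous claim, using only one extra fact: that $\mathscr{R}_{\Theta}$ is closed under the operation $\omega$ in the sense of (\ref{alalkdhjhfr}). First I would invoke the Corollary to Lemma \ref{lslsdkleijf}, which states that the solution set $\mathscr{R}_\Theta$ of any CSP instance is a subuniverse of $A^n$, so that for any $H_1,\dots,H_m\in\mathscr{R}_{\Theta}$ the map $\omega(H_1,\dots,H_m)$ again lies in $\mathscr{R}_{\Theta}$. This closure assertion is $\Sigma^{1,b}_0$ since $\mathscr{R}_{\Theta}(H)$ is just $\ddot{HOM}(\mathcal{X},\ddot{\mathcal{A}},H)$, and it is provable at the $V^0$-level (the Lemma's argument reduces to a bounded case analysis over the fixed, constant-size algebra $\mathbb{A}$).

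Next I would verify the equivalence axioms (\ref{skksidjh}) and the compatibility axiom (\ref{alksjdfgy7}) for the restricted relations $\mathscr{C}^{\mathscr{R}_{\Theta}}_{\sigma_i^{ext}}$ and $\mathscr{C}^{\mathscr{R}_{\Theta}}_{\cap_n \sigma_i^{ext}}$. Reflexivity, symmetry and transitivity carry over literally from the unrestricted case settled in the previous claim, because the restricted relation simply adds the side condition $\mathscr{R}_{\Theta}(H)\wedge \mathscr{R}_{\Theta}(H')$ and this condition is trivially preserved under swapping arguments or chaining through a middle map that already lies in $\mathscr{R}_{\Theta}$. For compatibility, suppose $H_1,\dots,H_m,H_1',\dots,H_m'\in\mathscr{R}_{\Theta}$ satisfy $\mathscr{C}^{\mathscr{R}_{\Theta}}_{\sigma_i^{ext}}(H_j,H_j')$ for each $j<m$. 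The previous claim already gives $\mathscr{C}_{\sigma_i^{ext}}(\omega(H_1,\dots,H_m),\omega(H_1',\dots,H_m'))$, and the closure property above places both $\omega(H_1,\dots,H_m)$ and $\omega(H_1',\dots,H_m')$ in $\mathscr{R}_{\Theta}$, which is precisely what the restricted relation requires. The same argument applies verbatim to $\mathscr{C}^{\mathscr{R}_{\Theta}}_{\cap_n \sigma_i^{ext}}$.

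I do not anticipate any real obstacle: the claim is essentially a corollary of the previous one plus the Corollary to Lemma \ref{lslsdkleijf}. The one point worth being careful about is that $\mathscr{R}_{\Theta}$ and $\mathscr{C}^{\mathscr{R}_{\Theta}}_\sigma$ are third-sorted objects while $\omega$ is a second-sort string function; this is harmless here because membership in $\mathscr{R}_{\Theta}$ is $\Sigma^{1,b}_0$-expressible, so the entire verification stays within the $\Sigma^{\mathscr{B}}_0$ fragment and no induction beyond what $V^0$ supplies is ever needed.
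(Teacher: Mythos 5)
Your proposal matches the paper's own argument: the paper likewise establishes reflexivity, symmetry, transitivity of $1EqClass$ and compatibility via the defining equation of $\omega$, and then notes that both the unrestricted and restricted claims follow, with the restriction step resting implicitly on the solution set being closed under $\omega$ (the Corollary to Lemma \ref{lslsdkleijf}), which you make explicit. The only small caveat is that the paper officially states that closure corollary at the $V^1$ level rather than $V^0$, but since the claim itself is asserted for $V^0$ this looseness is already present in the paper and does not affect your argument.
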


We get the following lemma with Claim \ref{alalskdu65f}. 
\begin{lemmach}
Consider $\mathscr{R}_{\Theta}\leq D_0\times...\times D_{n-1}$, and binary relations $\sigma_0,...,\sigma_{n-1}$ where $\sigma_i$ is a congruence of $D_i$ for every $i$. Suppose that $E_i$ is a congruence block of $\sigma_i$ for all $i$. Then $V^0$ proves that $( E_0\times...\times E_{n-1})\cap \mathscr{R}_{\Theta}$ is a subuniverse of $\mathscr{R}_{\Theta}$.
\end{lemmach}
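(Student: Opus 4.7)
The plan is to verify closure under $\omega$ directly, reducing the third-order statement to two already-available facts: closure of solution sets under polymorphisms (Lemma \ref{lslsdkleijf}) and the fact that any block of a congruence on a special WNU algebra is a subuniverse (Claim \ref{alalskdu65f}). That is, membership in $\mathscr{R}_{\Theta}$ and membership in the product $E_0\times\ldots\times E_{n-1}$ will be handled separately and then combined.

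First I would fix arbitrary $H_1,\ldots,H_m \in (E_0\times\ldots\times E_{n-1})\cap \mathscr{R}_{\Theta}$ and show that $\omega(H_1,\ldots,H_m)$ lies in the same set. Membership in $\mathscr{R}_{\Theta}$ is immediate from Lemma \ref{lslsdkleijf} applied with $k=m$ and $F=\Omega$ (noting that $\omega$ is exactly the $usepol$-function corresponding to $\Omega$). For membership in the product I argue coordinatewise: for each $i<n$ and each $j\le m$, the hypothesis gives $a_j^i\in E_i$ with $H_j(i)=\langle i,a_j^i\rangle$, so by the bit-defining axiom (\ref{alalkdhjhfr}) we get $\omega(H_1,\ldots,H_m)(i)=\langle i,\Omega(a_1^i,\ldots,a_m^i)\rangle$.

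Next I would invoke Claim \ref{alalskdu65f}: since $\Omega$ restricted to $D_i$ is still a special WNU operation and $\sigma_i$ is a congruence on $(D_i,\Omega)$, the block $E_i$ is a subuniverse of $(D_i,\Omega)$. Hence $\Omega(a_1^i,\ldots,a_m^i)\in E_i$, so $\omega(H_1,\ldots,H_m)(i)\in E_i$ for every $i<n$. Combined with the previous step this gives $\omega(H_1,\ldots,H_m)\in (E_0\times\ldots\times E_{n-1})\cap \mathscr{R}_{\Theta}$, establishing the claim.

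Finally I would check that the whole argument stays inside $V^0$. All quantifiers are bounded: the one over $i<n$ comes from the coordinatewise definition, while those over $j\le m$ and $a_j^i<l$ range over constants. The argument uses no induction, only two previously proved results plus the defining equation of $\omega$, so it is $\Sigma^{1,b}_0$-reasoning and is available in $V^0$. I do not expect a genuine obstacle; the only point requiring care is that the third-order ``intersection'' $(E_0\times\ldots\times E_{n-1})\cap \mathscr{R}_{\Theta}$ is manipulated via the second-order function $\omega$ of (\ref{alalkdhjhfr}), so the verification of closure never actually touches the third-order layer and the complexity stays low.
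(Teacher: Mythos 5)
Your proof is correct and is essentially the paper's argument: the paper obtains the lemma by combining the preceding claims (that the extended congruence $\mathscr{C}_{\cap_{n}\sigma^{ext}_i}$ restricted to $\mathscr{R}_{\Theta}$ is a congruence on $\mathscr{R}_{\Theta}$) with Claim \ref{alalskdu65f} that congruence blocks under a special WNU are subuniverses, which is exactly the coordinatewise closure check you unfold, together with closure of $\mathscr{R}_{\Theta}$ under $\omega$. The only cosmetic wrinkle is that you cite Lemma \ref{lslsdkleijf}, which the paper states for $V^1$, while claiming $V^0$ overall; since that lemma's proof is itself bounded reasoning with no induction, this does not affect correctness.
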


We need to define factor sets and factor operations for third-order objects. We first show how to define them for solution set $\mathscr{R}_{\Theta}$ and extended congruence $\mathscr{C}_{\sigma_i^{ext}}$ and the intersection of extended congruences $\mathscr{C}_{\cap_{n}\sigma^{ext}_i}$. To be compatible with the definition of factor sets for second-order objects, we need to choose not the existing map of $\mathscr{R}_{\Theta}$, but that sending $i$ to $a_i=rep(a_i/\sigma_i,D_i,\sigma_i)$ for every $\sigma_i$. 
\begin{equation}
  \begin{gathered}
 \hspace {0pt}factorset(\mathscr{R}_{\Theta}, \mathscr{C}_{\cap_{n}\sigma^{ext}_i})(H) = \mathscr{R}_{\Theta}/\mathscr{C}_{\cap_{n}\sigma^{ext}_i}(H)\iff \exists H'\leq\langle n,\langle n,l\rangle\rangle,H'\in \mathscr{R}_{\Theta}\wedge\\
     \wedge \mathscr{C}_{\cap_{n}\sigma^{ext}_i}(H,H')\wedge \\
     \wedge \forall i<n,\exists a_i\in D_i,\,H'(i)=\langle i,a_i/\sigma_i\rangle\wedge H(i)=\langle i,rep(a_i/\sigma_i,D_i,\sigma_i)\rangle.
  \end{gathered}
\end{equation}
Note that the value of the function is a third-order object, but its definition is again essentially second-order. To define factor set for $\mathscr{C}_{\sigma^{ext}_i}$, consider $\mathscr{C}_{\cap_{n}\sigma^{ext}_i}$ where for each $j\neq i$, $\sigma_j$ is $\nabla_{D_j}$. To represent an element $H'$ we define a string function $rep(H',\mathscr{R}_{\Theta},\mathscr{C}_{\cap_{n}\sigma^{ext}_i})$:
\begin{equation}
  \begin{gathered}
     H=rep(H',\mathscr{R}_{\Theta},\mathscr{C}_{\cap_{n}\sigma^{ext}_i})\iff \mathscr{C}_{\cap_{n}\sigma^{ext}_i}(H,H')\wedge factorset(\mathscr{R}_{\Theta},\mathscr{C}_{\cap_{n}\sigma^{ext}_i})(H).
  \end{gathered}
\end{equation}
Finally, we define third-order valued function $factor\omega$:
\begin{equation}
  \begin{gathered}
     factor\omega(\mathscr{R}_{\Theta},\mathscr{F}_{\Omega},\mathscr{C}_{\cap_{n}\sigma^{ext}_i})(B) = \mathscr{F}_{\Omega/\cap_{n}\sigma^{ext}_i}(B)\iff \exists H_1...\exists H_m \in \mathscr{R}_{\Theta}/\mathscr{C}_{\cap_{n}\sigma^{ext}_i},\\
     \exists H \in \mathscr{R}_{\Theta}/\mathscr{C}_{\cap_{n}\sigma^{ext}_i},\,B=\langle H_1,...,H_m,H\rangle\wedge\\
     \hspace {0pt}\wedge \exists H'_1...\exists H'_m\exists H' \in \mathscr{R}_{\Theta}\wedge H=rep(H',\mathscr{R}_{\Theta},\mathscr{C}_{\sigma^{ext}_i})\wedge\bigwedge_{i<m} H_i=rep(H'_i/\sigma,D,\sigma)\wedge \\
 \hspace {0pt}\mathscr{F}_{\Omega/\sigma_0,...,\Omega/\sigma_{n-1}}(H_1,...,H_m,H).
  \end{gathered}
\end{equation}
As a factor algebra we consider a pair of classes $(\mathscr{R}_{\Theta}/\mathscr{C}_{\cap_{n}\sigma^{ext}_i}, \mathscr{F}_{\Omega/\cap_{n}\sigma^{ext}_i})$. 

Now, to define a factor set for the general third-order subalgebra $\mathscr{R}$ and the congruence relation $\mathscr{C}_{\sigma}$, we need to choose a representative of a congruence block. It can be done by choosing the minimum string (in the sense of (\ref{laalalskksskdjdjdjfh})) that represents maps from the block. The rest are defined analogously.

\subsection{Homomorphism and isomorphism between second and third order objects}

We say that there exists a homomorphism between two subalgebras $(B,\Omega_B)$, $(C,\Omega_C)$ of algebra $\mathbb{A}$ if 
\begin{equation}
\begin{gathered}
 \hspace{0pt}HOM_{alg}(B,\Omega_B,C,\Omega_C)\iff \exists H<\langle l,l\rangle, MAP(B,l,C,l,H)\wedge \\
 \hspace {0pt}\wedge \forall b_1,...,b_m,b\in B,
\Omega_B(b_1,...,b_m,b)\leftrightarrow\Omega_C(H(b_1),...,H(b_m),H(b)).
\end{gathered}
\end{equation}
The image and kernel of $B$ under $H$ can be returned by string-valued functions defined as follows:
\begin{equation}
\begin{gathered}
   img(B,H)(i)\iff i\in C\wedge \exists j\in B,\, H(j)=i,\\
   \hspace {0pt}ker(H)(i,j)\iff i,j\in B\wedge H(i)=H(j).
\end{gathered}
\end{equation}
We can easily formalize embedding, epimorphism, and isomorphism: 
\begin{equation}
\begin{gathered}
 \hspace {0pt}ISO_{alg}(B,\Omega_B,C,\Omega_C)\iff \exists H<\langle l,n\rangle, HOM_{alg}(B,\Omega_B,C,\Omega_C)\wedge \\
 \hspace{0pt}\wedge \forall i_1,i_2\in B,\,(H(i_1)=H(i_2)\rightarrow i_1=i_2)\wedge \forall j\in C,\exists i\in B, H(i)=j.
\end{gathered}
\end{equation}
Now, we define the relation of being isomorphic between third-order objects and second-order objects. This relation assumes the existence of a third-order object, a class of maps. We say that $\mathscr{M}$ is a \emph{well-defined map} between a class $\mathscr{R}$ and a set $D$ if
\begin{equation}
 \begin{gathered}
 MAP^{3,2}(\mathscr{R},D,\mathscr{M}) \iff \forall H\in \mathscr{R} \exists a\in D,\,\mathscr{M}(H,a) \wedge \\
  \hspace {0pt}
\forall H\in \mathscr{R} \,\forall a,b \in D\, (\mathscr{M}(H,a)\wedge \mathscr{M}(H,b) \to a=b).
 \end{gathered}
\end{equation} 
We say that $\mathscr{H}$ is a homomorphism from a class $(\mathscr{R},\mathscr{F})$ to an algebra $(D,\Omega)$ if 
\begin{equation}
\begin{gathered}
 \hspace {0pt}HOM_{alg}^{3,2}(\mathscr{R},\mathscr{F}, D,\Omega,\mathscr{H})\iff MAP^{3,2}(\mathscr{R},D,\mathscr{H})\wedge \\
 \hspace{0pt}\wedge \forall H_1,...,H_m,H\in \mathscr{R},
\mathscr{F}(H_1,...,H_m,H)\leftrightarrow\Omega(\mathscr{M}(H_1),...,\mathscr{M}(H_m),\mathscr{M}(H)),
\end{gathered}
\end{equation}
and that the class $(\mathscr{R},\mathscr{F})$ is isomorphic to the set $(D,\Omega)$ if
\begin{equation}
\begin{gathered}
 \hspace{0pt}ISO_{alg}^{3,2}(\mathscr{R},\mathscr{F}, D,\Omega)\iff \exists \mathscr{H},\, HOM^{3,2}(\mathscr{R},\mathscr{F}, D,\Omega,\mathscr{H})\wedge \forall H_1,H_2\in \mathscr{R},\\
 \hspace {0pt}(\mathscr{H}(H_1)=\mathscr{H}(H_2)\rightarrow H_1=H_2)\wedge \forall a\in D,\exists H\in \mathscr{R}, \mathscr{H}(H)=a.
\end{gathered}
\end{equation}
Analogously, we can define a map $MAP^{2,3}$, a homomorphism $HOM_{alg}^{2,3}$, and an isomorphism $ISO_{alg}^{2,3}$ from a set to a class. Finally, we define an isomorphism between third-order objects. We say that $\mathscr{M}$ is a \emph{well-defined map} between a class $\mathscr{R}$ and a class $\mathscr{R}'$ if
\begin{equation}
 \begin{gathered}
 MAP^{3,3}(\mathscr{R},\mathscr{R}',\mathscr{M}) \iff \forall H\in \mathscr{R} \exists H'\in \mathscr{R},\,\mathscr{M}(H,H') \wedge \\
  \hspace {0pt}
\forall H\in \mathscr{R} \,\forall H_1,H_2 \in \mathscr{R}'\, (\mathscr{M}(H,H_1)\wedge \mathscr{M}(H,H_2) \to H_1=H_2).
 \end{gathered}
\end{equation} 
We say that $\mathscr{H}$ is a homomorphism from a class $(\mathscr{R},\mathscr{F})$ to a class $(\mathscr{R}',\mathscr{F}')$ if 
\begin{equation}
\begin{gathered}
 \hspace {0pt}HOM_{alg}^{3,3}(\mathscr{R},\mathscr{F}, \mathscr{R}',\mathscr{F}',\mathscr{H})\iff MAP^{3,3}(\mathscr{R},\mathscr{R}',\mathscr{H})\wedge \\
 \hspace{0pt}\wedge \forall H_1,...,H_m,H\in \mathscr{R},
\mathscr{F}(H_1,...,H_m,H)\leftrightarrow\mathscr{F}'(\mathscr{M}(H_1),...,\mathscr{M}(H_m),\mathscr{M}(H)),
\end{gathered}
\end{equation}
and, finally,
\begin{equation}
\begin{gathered}
 \hspace{0pt}ISO_{alg}^{3,3}(\mathscr{R},\mathscr{F}, \mathscr{R}',\mathscr{F}')\iff \exists \mathscr{H},\, HOM^{3,3}(\mathscr{R},\mathscr{F}, \mathscr{R}',\mathscr{F}',\mathscr{H})\wedge \forall H_1,H_2\in \mathscr{R},\\
 \hspace{0pt}(\mathscr{H}(H_1)=\mathscr{H}(H_2)\rightarrow H_1=H_2)\wedge \forall H'\in \mathscr{R}',\exists H\in \mathscr{R}, \mathscr{H}(H)=H'.
\end{gathered}
\end{equation}

For every domain $D_i$ and any of its subuniverse $B_i$, we define its extension $\mathscr{B}^{ext}_i$ to third-order object, as a set of maps from $[n]$ to $[D_0,...,D_{n-1}]$ such that it contains all maps sending $i$ to elements of $B_i$:
\begin{equation}
  \begin{gathered}
     \mathscr{B}^{ext}_i(H)\iff \exists a_i\in B_i,\,H(i)=\langle i,a_i\rangle.
  \end{gathered}
\end{equation}

\subsection{Auxiliary definitions from Zhuk's algorithm}

Some notions, which were used in Zhuk's algorithm mainly in relation to constraints, we give both for binary relations and $n$-ary relations. 

\subsubsection{Crucial Instance}
We say that $i$-th variable of a constraint $C_j=(y_1,...,y_k;R)$ is \emph{dummy} if $R$ does not depend on $y_i$. For an instance $\Theta$ a constraint $C$ is called \emph{crucial} in $D^{(\bot)} = (D^{(\bot)}_0,...,D^{(\bot)}_{n-1})$, where $D^{(\bot)}_i\subseteq D_i$ for each $i$, if it does not have dummy variables, $\Theta$ has no solutions in $D^{(\bot)}$, but the replacement of $C$ by all weaker constraints gives an instance with a solution in $D^{(\bot)}$. A CSP instance $\Theta$ is crucial in $D^{(\bot)}$ if every constraint of $\Theta$ is crucial in $D^{(\bot)}$. In this section, we will formalize this notion. 

\begin{definition}[Reduction of the domain set]\label{kakakjd8j74r}
For an instance $\Theta=(\mathcal{X},\ddot{\mathcal{A}})$ with domain set $D=(D_0,...,D_{n-1})$ we say that a set $D^{(\bot)}=(D^{(\bot)}_0,...,D^{(\bot)}_{n-1})$ is a reduction of $D$ if $D^{(\bot)}_i$ is a subuniverse of $D_i$ for every $i$.
\begin{equation}
  \begin{gathered}
 Red(D^{(\bot)},D)\iff \forall i\leq n,\,subTA(D^{(\bot)}_i,D_i).
  \end{gathered}
\end{equation}
In the definition, we can additionally require that equal domains be reduced to equal domains, i.e.
\begin{equation}\label{ajdkjfhuei4}
  \forall i\forall j,\, D_i=D_j \rightarrow D^{(\bot)}_i=D^{(\bot)}_j.
\end{equation}
We shall use it later considering different modifications of the instance to avoid abuse of the notation. 
\end{definition}

\begin{definition}[Instance after reduction]
For an instance $\Theta=(\mathcal{X},\ddot{\mathcal{A}})$, we need to define an instance $\Theta^{(\bot)}=(\mathcal{X}^{(\bot)},\ddot{\mathcal{A}}^{(\bot)})$ after the reduction of a domain set of a target digraph $\ddot{\mathcal{A}}=( V_{\ddot{\mathcal{A}}},E_{\ddot{\mathcal{A}}})$ from $D=(D_0,...,D_{n-1})$ to $D^{(\bot)}=(D^{(\bot)}_0,...,D^{(\bot)}_{n-1})$. Since there is a unique way to construct a reduction of an instance, we actually can define a string function (using a bit-defining axiom) that returns a reduced instance:
\begin{equation}
  \begin{gathered}
   \hspace{0pt}redinst(\Theta,D^{(\bot)})(\mathcal{X}^{(\bot)},\ddot{\mathcal{A}}^{(\bot)}) = \Theta^{(\bot)}(\mathcal{X}^{(\bot)},\ddot{\mathcal{A}}^{(\bot)})\iff Red(D^{(\bot)},D)\wedge \\
    \hspace {0pt}\wedge (\mathcal{X}^{(\bot)}=\mathcal{X})\wedge\\
   \wedge(\forall i,j<n,\forall a,b<l,E^{ij}_{\ddot{\mathcal{A}}^{(\bot)}}(a,b ) \leftrightarrow E^{ij}_{\ddot{\mathcal{A}}}(a,b ) \wedge a\in D^{(\bot)}_i\wedge b\in D^{(\bot)}_j).
  \end{gathered}
\end{equation}
We say that $D^{(\bot)}$ is a $1$-consistent reduction if the instance $\Theta^{(\bot)}$ is $1$-consistent, $1C(\Theta^{(\bot)})$.
\end{definition}
Sometimes, when we work with a nonlinked instance, we need to produce its \emph{linked component}, i.e. elements that can be connected by a path in the instance. To this end, we need to define the notion of being \emph{linked} for two elements in $a\in D_i,b\in D_j$. We say that there is a path from $i$ to $j$ in the input digraph $\mathcal{X}$ if there exists a path $\mathcal{P}_t$ of some length $t$ that can be homomorphically mapped to $\mathcal{X}$ such that $H(0)=i$ and $H(t)=j$:
\begin{equation}
 \begin{gathered}
  Path(i,j,\mathcal{X}) \iff \exists t<n, \exists V_{\mathcal{P}_t}=t, \exists E_{\mathcal{P}_t}\leq 4t^2,\,PATH(V_{\mathcal{P}_t},E_{\mathcal{P}_t})\wedge\\  \hspace{0pt}\wedge\exists H\leq \langle t,n \rangle, HOM(\mathcal{P}_t,\mathcal{X},H)\wedge(H(0,i)\wedge H(t,j)).
 \end{gathered}
\end{equation}
We say that the path $\mathcal{P}_t$ connects $i$ and $j$. Also, we can encode what it means to be linked for two elements $a\in D_i,b\in D_j$. In words, there must exist a path $\mathcal{P}_t$ of some length $t$ connecting $i,j$ with homomorphism $H$ such that there exists a homomorphism $H'$ from $\mathcal{P}_t$ to $\ddot{\mathcal{A}}$ sending $0$ to $\langle i,a\rangle$ and $t$ to $\langle j,b\rangle$, and for every element $p<t$, $H(p)=k$ implies that $H(p)=\langle k,c\rangle$ for some $c\in D_k$. We can express this by the $\Sigma^{1,b}_1$-formula.
\begin{equation}\label{JJKTRE6}
\begin{gathered}
 \hspace{0pt}Linked(a,b,i,j,\Theta) \iff \exists t<nl, \exists V_{\mathcal{P}_t}=t, \exists E_{\mathcal{P}_t}\leq 4t^2,\\
 \exists H\leq \langle t,n \rangle,\,PATH(V_{\mathcal{P}_t},E_{\mathcal{P}_t})\wedge HOM(\mathcal{P}_t,\mathcal{X},H)\wedge(H(0,i)\wedge H(t,j))\wedge\\
 \hspace{0pt}\wedge\exists H'\leq \langle t,\langle t,l\rangle\rangle,\ddot{HOM}(\mathcal{P}_t,\ddot{\mathcal{A}},H')\wedge\\
 \hspace{0pt}\wedge(\forall k<n,p<t,\,(H(p,k)\rightarrow \exists c\in D_k,\,H'(p)=\langle k,c\rangle))\\
 \hspace{0pt}\wedge H'(0)=\langle i,a\rangle \wedge H'(t)= \langle j,b\rangle.
\end{gathered}
\end{equation}
\begin{notation}
Sometimes we will write
$\exists \mathcal{P}_t< \langle n,4n^2\rangle, Path(i,j,\mathcal{X},\mathcal{P}_t)$ and $\exists \mathcal{P}_t< \langle nl,$ $4(nl)^2\rangle,$ $ Linked(a,b,i,j,\Theta,\mathcal{P}_t)$ to omit repetitions. Note that while $Path(i,j,\mathcal{X})$ and $Linked(a,b,i,j,\Theta)$ are $\Sigma^{1,b}_1$-formulas, $Path(i,j,\mathcal{X},\mathcal{P}_t)$ and $Linked(a,b,i,j,\Theta,\mathcal{P}_t)$ are $\Sigma^{1,b}_0$-formulas.
\end{notation}
We have formalized in  \cite{gaysin2023proof} that $Linked(a,b,i,i,\Theta)$ is a congruence relation on $D_i$, and that for a non-fragmented instance, this congruence provides a partition into linked components. That is, each linked component can be viewed as the same CSP instance on smaller domains.

\begin{definition}[Linked component]
 We define a string function $linkcomp(\Theta,D_i,a)$ that produces a linked reduction of the domain set based on an element $a$ in the domain $D_i$. 
\begin{equation}
  \begin{gathered}
    linkcomp(\Theta,D_i,a)(j,b) = V^{link,i,a}_{\ddot{\mathcal{A}}}(j,b)\iff \exists \mathcal{P}_t< ( nl,4(nl)^2),\\
    Linked(a,b,i,j,\Theta,\mathcal{P}_t).
  \end{gathered}
\end{equation}
Then a $\Sigma^{1,b}_1$-function 
$$redinst(\Theta, linkcomp(\Theta,D_i,a))$$
produces a linked reduction of instance $\Theta$, which contains the element $a$ in domain $D_i$.

\end{definition}

\begin{definition}[Dummy variable]

 A variable $x_i$ of an edge $(x_i,x_j)\in E_{\mathcal{X}}$ is dummy if for every $b\in D_j$ such that there exists $a\in D_i$, $E^{ij}_{\ddot{\mathcal{A}}}(a,b)$, there is an edge $(a',b)\in E^{ij}_{\ddot{\mathcal{A}}}$ for every $a'\in D_i$.
 \begin{equation}
   \begin{gathered}
 Dum_2(E^{ij}_{\ddot{\mathcal{A}}},i)\iff\forall b\in D_j, (\exists a\in D_i, E^{ij}_{\ddot{\mathcal{A}}}(a,b) \rightarrow \forall a'\in D_i, E^{ij}_{\ddot{\mathcal{A}}}(a',b)).
   \end{gathered}
 \end{equation}
 Note that for a $1$-consistent CSP instance this means that $E^{ij}_{\ddot{\mathcal{A}}}$ is a full relation:
 \begin{equation}
   FullRel(E^{ij}_{\ddot{\mathcal{A}}}) \iff \forall a\in D_i,\forall b\in D_j,E^{ij}_{\ddot{\mathcal{A}}}(a,b).
 \end{equation}
We also introduce the notion of being a dummy variable for a solution set $\mathscr{R}_{\Theta}$. We say that a variable $x_i$ is dummy if the following $\Pi^{1,b}_2$-relation holds:
\begin{equation}
  \begin{gathered}
     Dum(\mathscr{R}_{\Theta}, i)\iff \forall H \leq \langle n,\langle n,l\rangle\rangle,\, (\ddot{HOM}(\mathcal{X},\ddot{\mathcal{A}},H)\rightarrow \forall a\in D_i, \\
     \exists H' \leq \langle n,\langle n,l\rangle\rangle,
     \hspace {0pt}\ddot{HOM}(\mathcal{X},\ddot{\mathcal{A}},H')\wedge H'(i) = \langle i, a\rangle\wedge \forall j\neq i<n,\, H'(j) = H(j)  ).
   \end{gathered}
\end{equation}
 
\end{definition}

\begin{definition}[Weaker constraint]
  For a binary constraint $E^{ij}_{\ddot{\mathcal{A}}}$ there are only two types of weaker constraint: domains $D_i$, $D_j$, which are weaker constraints of less arity (which we never increase), and all binary constraints from the list $\Gamma_{\mathcal{A}}$ containing $E^{ij}_{\ddot{\mathcal{A}}}$, including the full relation on $D_i\times D_j$ (as if we remove a constraint at all). We say that $E$ is a \emph{weaker constraint} than $E^{ij}_{\ddot{\mathcal{A}}}$ if
  \begin{equation}
    \begin{gathered}
 \hspace{0pt}Weaker(E,E^{ij}_{\ddot{\mathcal{A}}})\iff Pol_{m,2}(\Omega, A,E)\wedge (\forall a,b<l, E(a,b)\rightarrow \\
 \hspace {0pt}\rightarrow a\in D_i\wedge b\in D_j) \wedge \big[FullRel(E) \vee\\
 \hspace {0pt}\vee\big((\forall a\in D_i,\forall b\in D_j,\,E^{ij}_{\ddot{\mathcal{A}}}(a,b)\rightarrow E(a,b))\wedge(\exists a\in D_i,\\
 \hspace {0pt}\exists b\in D_j,\,E(a,b)\wedge \neg E^{ij}_{\ddot{\mathcal{A}}}(a,b))\big)\big].
    \end{gathered}
  \end{equation}
  \end{definition}
Note that for any constraint $E^{ij}_{\ddot{\mathcal{A}}}$ there exists at least one weaker constraint (namely the full relation). Any time we weaken a constraint $E^{ij}_{\ddot{\mathcal{A}}}$ we replace it with all weaker constraints simultaneously. That is, we consider an intersection of all weaker constraints. But since in the list $\Gamma_{\mathcal{A}}$ we have all $pp$-definable binary relations invariant under $\Omega$, there exists $k<2^{l^2}$ such that $\Gamma^2_{\mathcal{A},k}$ is that intersection. A problem here arises when the intersection of all weaker constraints of a constraint is the constraint itself: it just means that there are several incomparable intersections of weaker constraints. In this case, we can choose one of them arbitrarily, and we will choose the one with the smallest $k<2^{l^2}$. We first define a string function that returns the list of such intersections.
\begin{equation}
  \begin{gathered}
     weakerlist(E^{ij}_{\ddot{\mathcal{A}}})(k)\iff Weaker(\Gamma^2_{\mathcal{A},k},E^{ij}_{\ddot{\mathcal{A}}})\wedge \forall g\neq k<2^{l^2},\\
     \hspace {0pt}\neg (Weaker(\Gamma^2_{\mathcal{A},g},E^{ij}_{\ddot{\mathcal{A}}})\wedge Weaker(\Gamma^2_{\mathcal{A},k},\Gamma^2_{\mathcal{A},g})).
  \end{gathered}
\end{equation}

\begin{definition}[Weakening of a constraint]
    We define a string function that returns the first intersection from the list $weakerlist$. We will call it the \emph{weakening of the constraint} $E^{ij}_{\ddot{\mathcal{A}}}$ and denote by $E^{ij}_{\ddot{\mathcal{A}},w}$:
\begin{equation}
  \begin{gathered}
     weakening(E^{ij}_{\ddot{\mathcal{A}}})(a,b)=E^{ij}_{\ddot{\mathcal{A}},w}(a,b)\iff \exists i< 2^{l^2},\,(weakerlist(E^{ij}_{\ddot{\mathcal{A}}})(i)\wedge \\
     \hspace {0pt}\wedge\Gamma^2_{\mathcal{A},i}(a,b))\wedge \forall j<i,\,\neg weakerlist(E^{ij}_{\ddot{\mathcal{A}}})(j).
  \end{gathered}
\end{equation}
\end{definition}

The function is well-defined due to the Number Minimization axiom $\Sigma^{1,b}_0$-MIN and since the list $weakerlist(E^{ij}_{\ddot{\mathcal{A}}})$ is never empty. Thus, we can uniquely define the instance after weakening a constraint $E^{ij}_{\ddot{\mathcal{A}}}$.
\begin{definition}[Instance after weakening]
For an instance $\Theta=(\mathcal{X},\ddot{\mathcal{A}})$, the instance $\Theta_{w^{ij}}=(\mathcal{X}_{w^{ij}},\ddot{\mathcal{A}}_{w^{ij}})$ after the weakening of a constraint $E^{ij}_{\ddot{\mathcal{A}}}$ is defined by the following string function:
\begin{equation}
  \begin{gathered}
 \hspace{0pt}weakinst(\Theta,E^{ij}_{\ddot{\mathcal{A}}})(\mathcal{X}_{w^{ij}},\ddot{\mathcal{A}}_{w^{ij}}) = \Theta_{w^{ij}}(\mathcal{X}_{w^{ij}},\ddot{\mathcal{A}}_{w^{ij}})\iff (D_{w^{ji}}=D)\wedge \\
 \wedge (V_{\mathcal{X}_{w^{ij}}} = V_{\mathcal{X}})\wedge(\forall t\neq i<n,\forall r\neq j<n, (E_{\mathcal{X}_{w^{ij}}}(t,r)\leftrightarrow E_{\mathcal{X}}(t,r))\wedge \\
 \wedge (E^{tr}_{\ddot{\mathcal{A}}_{w^{ij}}} = E^{tr}_{\ddot{\mathcal{A}}}))\wedge\\ \wedge E^{ij}_{\ddot{\mathcal{A}}_{w^{ij}}}=weakening(E^{ij}_{\ddot{\mathcal{A}}}) \wedge (FullRel(E^{ij}_{\ddot{\mathcal{A}}_{w^{ij}}}) \leftrightarrow \neg E_{\mathcal{X}}(i,j)).
  \end{gathered}
\end{equation}
Note that the last line ensures that if the only weaker binary relation to $E^{ij}_{\ddot{\mathcal{A}}}$ is a full relation, then we remove an edge from $\mathcal{X}$. Finally, we are ready to define a crucial instance. 
\end{definition}

\begin{definition}[Crucial instance]\label{a'a''as;s;s;dlfkjg}
Let $D_i^{(\bot)}\subseteq D_i$ for every $i$, and let $D^{(\bot)}$ be a reduction of $D$. A constraint $E^{ij}_{\ddot{\mathcal{A}}}$ of instance $\Theta$ is called crucial in $D^{(\bot)}$ if
\begin{equation}\label{dhghdhdfyg}
  \begin{gathered}
 CrucConst(E^{ij}_{\ddot{\mathcal{A}}},\Theta,D^{(\bot)})\iff\neg Dum_2(E^{ij}_{\ddot{\mathcal{A}}},i)\wedge \neg Dum_2(E^{ij}_{\ddot{\mathcal{A}}},j)\wedge \\
 \hspace{0pt}\neg \ddot{HOM}(\mathcal{X}^{(\bot)},\ddot{\mathcal{A}}^{(\bot)})\wedge \ddot{HOM}(\mathcal{X}_{w^{ij}}^{(\bot)},\ddot{\mathcal{A}}_{w^{ij}}^{(\bot)}).
  \end{gathered}
\end{equation}
We say that a CSP instance $\Theta=(\mathcal{X},\ddot{\mathcal{A}})$ is crucial in $D^{(\bot)}$ if
\begin{equation}\label{akaksijdhghf}
  \begin{gathered}
 CrucInst(\Theta,D^{(\bot)})\iff \forall j,i<n, E_{\mathcal{X}}(i,j)\rightarrow CrucConst(E^{ij}_{\ddot{\mathcal{A}}}, \Theta,D^{(\bot)}).
  \end{gathered}
\end{equation}
\end{definition}
Note that all formulas used in the definitions of this section except Definition \ref{a'a''as;s;s;dlfkjg}, are $\Sigma^{1,b}_0$. Formulas (\ref{dhghdhdfyg}) and (\ref{akaksijdhghf}) are from the class $\mathfrak{B}(\Sigma^{1,b}_1)$, Boolean combinations of $\Sigma^{1,b}_1$-formulas.

\subsubsection{Covering and expanded covering}
We can consider a CSP instance $\Theta$ on $n$ variables as a set of constraints of the form $E^{ij}_{\ddot{\mathcal{A}}}$ for all $i,j<n$ (we do not consider domains here as constraints). 

\begin{definition}[Tree-instance]
We say that an instance $\Theta$ is a \emph{tree-formula} if there is no path $z_1 - C_1 - z_2 - ... - z_{l-1} - C_l - z_l$ such that $l\leq 3$, $z_1=z_l$, and all the constraints $C_1,C_2,...,C_l$ are different. Since in our setting for any $i,j<n$ we suppose that there is only one constraint relation $E^{ij}_{\ddot{\mathcal{A}}}$ (we can do this because we have any intersection of any invariant relations in our list $\Gamma_{\mathcal{A}}$), an instance $\Theta=(\mathcal{X},\ddot{\mathcal{A}})$ is a tree-formula if it does not contain cycles. It can be expressed by the following $\Pi^{1,b}_1$-formula:
\begin{equation}\label{DEFINITIONOFCYCLECONSISTENCY}
  \begin{gathered}
     \hspace {0pt}TreeInst(\mathcal{X}, \ddot{\mathcal{A}})\iff \forall t<n^2, \forall V_{\mathcal{C}_t}=t, \forall E_{\mathcal{C}_t}\leq 4t^2, \forall H<\langle t,n \rangle,\\   
 \hspace{0pt}CYCLE(V_{\mathcal{C}_t},E_{\mathcal{C}_t})\wedge HOM (\mathcal{C}_t,\mathcal{X},H)\rightarrow \exists i_1\neq j_1<t,\exists i_2\neq j_2<t,\exists k_1,k_2<n,\\
     \hspace {0pt}E_{\mathcal{C}_t}(i_1,i_2)\wedge E_{\mathcal{C}_t}(j_1,j_2)\wedge H(i_1,k_1)\wedge H(i_2,k_2)\wedge H(j_1,k_1)\wedge H(j_2,k_2).
  \end{gathered}
\end{equation}
That is, for any cycle $\mathcal{C}_t$, any homomorphism from $\mathcal{C}_t$ to $\mathcal{X}$ must glue at least two different edges of $\mathcal{C}_t$.
\end{definition}

\begin{definition}[Subinstance]
  For instance $\Theta = (\mathcal{X}, \ddot{\mathcal{A}})$ we call $\Theta'=(\mathcal{X}',\ddot{\mathcal{A}})$ a \emph{subinstance} of $\Theta$ if $\Theta'$ is a subset of the variables together with some subset of constraints from $\Theta$ that only involve these variables, i.e.:
\begin{equation}
\begin{gathered}
   subInst(\Theta', \Theta)\iff \ddot{\mathcal{A}}'=\ddot{\mathcal{A}}\wedge V_{\mathcal{X'}}\subseteq V_{\mathcal{X}}\wedge E_{\mathcal{X'}}\subseteq E_{\mathcal{X}}\wedge\\
   \hspace {0pt}(E_{\mathcal{X'}}(x_1,x_2)\rightarrow x_1,x_2\in V_{\mathcal{X'}}).
\end{gathered}
\end{equation}
That is, the target digraph with domains $\ddot{\mathcal{A}}$ does not change, the set of vertices $V_{\mathcal{X'}}$ is a subset of $V_{\mathcal{X}}$, and the set of constraints $E_{\mathcal{X'}}$ is a subset of $E_{\mathcal{X}}$ defined only on $V_{\mathcal{X'}}$. The solution to such a subinstance is a partial homomorphism. 

Consider instance $\Theta$ as a set of constraints $\{E^{ij}_{\ddot{\mathcal{A}}}:i,j<n\}$. Then consider a subset $\Theta'$ of such constraints, a subinstance of $\Theta$. We need to define uniquely a subinstance $\Theta\backslash\Theta'$ as a string function:
\begin{equation}
  \begin{gathered}
     \hspace {0pt}dif(\Theta,\Theta')(\mathcal{X}_{\Theta\backslash\Theta'},\ddot{\mathcal{A}}_{\Theta\backslash\Theta'})=\Theta\backslash\Theta'(\mathcal{X}_{\Theta\backslash\Theta'},\ddot{\mathcal{A}}_{\Theta\backslash\Theta'})\iff \ddot{\mathcal{A}}_{\Theta\backslash\Theta'}=\ddot{\mathcal{A}}\wedge \\
     \hspace {0pt}\wedge \forall i,j<n,\,E_{\mathcal{X}\backslash\mathcal{X'}}(i,j)\leftrightarrow \,E_{\mathcal{X}}(i,j)\wedge \neg E_{\mathcal{X}'}(i,j)\wedge \\
     \wedge V_{\mathcal{X}\backslash\mathcal{X}'}\subseteq V_{\mathcal{X}}\wedge (\forall i<n,V_{\mathcal{X}\backslash\mathcal{X'}}(i)\leftrightarrow\exists j<n,\,\neg E_{\mathcal{X}'}(i,j)\wedge \neg E_{\mathcal{X}'}(i,j)\wedge \\
     \wedge( E_{\mathcal{X}}(i,j)\vee E_{\mathcal{X}}(i,j)). \\
  \end{gathered}
\end{equation}
Note that $\Theta'$ and $\Theta\backslash\Theta'$ can share common variables, so the third line in the formula places to $V_{\mathcal{X}\backslash\mathcal{X}'}$ only variables that are involved in some constraint not in $\Theta'$. We also lose all the variables that are not involved in any constraint, neither in $\Theta'$ nor in $\Theta$.

\end{definition}

For the rest part of this section and sometimes further when we talk about (expanded) covering and substitutions, we will use labels for vertex sets instead of elements. For any instance $\Theta$ with a vertex set $V_{\mathcal{X}}$ we can introduce as many labels as we want using two-dimensional strings $Y, Z, W$, and the function $seq(i,Y)=y_i$. They are bounded on the first coordinate by the number of vertices and on the second coordinate by some reasonable number of labels. Let us denote this bound for $n$ variables by $b_n$. We will use $y_i,z_j,w_k<b_n$ in the formulas when appropriate. When we write $\forall i<n,\,R(y_i)$, this is an abbreviation for
$$\forall i<n,\, R(seq(i,Y)).
$$
The representation of the entire vertex set for a digraph $\mathcal{X}$ is $V_{\mathcal{X}}(i,x_i)$, and the representation of the set of vertices of a digraph $\ddot{\mathcal{A}}$ is $V(\langle seq(i,V_{\mathcal{X}}), a\rangle)$, which does not differ much from our usual representations. 
For an instance $\Lambda$ and two sets of variables $z_1,...,z_k$ and $y_1,...,y_k$ by $\Lambda^{y_1,...,y_k}_{z_1,...,z_k}$ we denote the instance obtained from instance $\Lambda$ by replacing every variable $z_i$ by $y_i$. This can be expressed by a $\Sigma^{1,b}_0$ string function $$substitute(\Lambda,Y,Z)(\mathcal{X}_{\Lambda^{y_1,...,y_k}_{z_1,...,z_k}},\ddot{\mathcal{A}}_{\Lambda^{y_1,...,y_k}_{z_1,...,z_k}}) = \Lambda^{y_1,...,y_k}_{z_1,...,z_k}(\mathcal{X}_{\Lambda^{y_1,...,y_k}_{z_1,...,z_k}},\ddot{\mathcal{A}}_{\Lambda^{y_1,...,y_k}_{z_1,...,z_k}}),$$
which definition is rather tedious than interesting, so we do not present it here. We also need to define a union of two sets of constraints, i.e. a union of two instances $\Theta_{\mathcal{X}} = (\mathcal{X},\ddot{\mathcal{A}})$ with $x_0,...x_{n-1}$ variables, $\ddot{\mathcal{A}}=( V_{\ddot{\mathcal{A}}},E_{\ddot{\mathcal{A}}})$, and $\Theta_{\mathcal{Y}}=(\mathcal{Y},\ddot{\mathcal{B}})$ with $y_0,...,y_{m-1}$ variables, $\ddot{\mathcal{B}}=( V_{\ddot{\mathcal{B}}},E_{\ddot{\mathcal{B}}})$. The problem here is that they can share common variables (that are labeled by the same number). To be safe and to easily track the number of variables, we just copy the common variables $x_i,y_j$, and set $E^{x_iy_j}_{\ddot{\mathcal{A}}\cup \ddot{\mathcal{B}}}$ to be equality relation (we have it since in our list $\Gamma_{\mathcal{A}}$ we have all relations $pp$-definable from $\Gamma$). To copy variables without collisions, we first define a number function. 
\begin{equation}\label{ala9s7d5tdhgdfr}
  \begin{gathered}
     maxlabel(V_{\mathcal{X}}) = s \iff \forall i<n,\, seq(i,V_{\mathcal{X}})\leq s\wedge \exists i<n,\,seq(i,V_{\mathcal{X}}) = s,
  \end{gathered}
\end{equation}
and use a label $z_i=y_i+maxlabel(V_{\mathcal{X}})$ for every $i<m$ in the following definition. Then we define a string function $uni$ on two arguments by its bit-definition: 
\begin{equation}
  \begin{gathered}
     \hspace {0pt}uni(\Theta_{\mathcal{X}},\Theta_{\mathcal{Y}})(\mathcal{X}\cup \mathcal{Y},\ddot{\mathcal{A}}\cup\ddot{\mathcal{B}} ) = \Theta_{\mathcal{X}}\cup\Theta_{\mathcal{Y}}(\mathcal{X}\cup \mathcal{Y},\ddot{\mathcal{A}}\cup\ddot{\mathcal{B}} )\iff \\
     \hspace {0pt}\forall i<n,\,V_{\mathcal{X}\cup \mathcal{Y}}(i,x_i)\wedge \forall i<m,\,V_{\mathcal{X}\cup \mathcal{Y}}(n+i,z_i)\wedge\\
      \hspace {0pt}\wedge \forall i,j<n,\, E_{\mathcal{X}\cup \mathcal{Y}}(x_i,x_j)\leftrightarrow E_{\mathcal{X}}(x_i,x_j) \wedge \forall i,j<m,\,E_{\mathcal{X}\cup \mathcal{Y}}(z_i,z_j)\leftrightarrow E_{\mathcal{Y}}(y_i,y_j)\wedge\\
      \hspace {0pt}\wedge\forall i<n,\forall a<l,\,V_{\ddot{\mathcal{A}}\cup\ddot{\mathcal{B}}}(x_i,a)\leftrightarrow V_{\ddot{\mathcal{A}}}(x_i,a)\wedge \\
      \wedge \forall i<m,\forall a<l,\,V_{\ddot{\mathcal{A}}\cup\ddot{\mathcal{B}}}(z_i,a)\leftrightarrow V_{\ddot{\mathcal{B}}}(y_i,a)\wedge \\
     \wedge \forall i,j<n, \forall a,b<l,\,E^{x_ix_j}_{\ddot{\mathcal{A}}\cup\ddot{\mathcal{B}}}(a,b)\leftrightarrow E^{x_ix_j}_{\ddot{\mathcal{A}}}(a,b)\wedge \\
     \wedge \forall i,j<m, \forall a,b<l,\,E^{z_iz_j}_{\ddot{\mathcal{A}}\cup\ddot{\mathcal{B}}}(a,b)\leftrightarrow E^{y_iy_j}_{\ddot{\mathcal{B}}}(a,b)\wedge\\
     \hspace {0pt}\wedge \forall i<n\forall j<m,\,x_i=y_j\rightarrow \forall a\in D_{x_i},\, E^{x_iz_j}_{\ddot{\mathcal{A}}\cup\ddot{\mathcal{B}}}(a,a).
  \end{gathered}
\end{equation}
Due to this definition, function $uni$ is not commutative, but $\Theta_{\mathcal{X}}\cup\Theta_{\mathcal{Y}}$ and $\Theta_{\mathcal{Y}}\cup\Theta_{\mathcal{X}}$ are obviously isomorphic. We can iteratively define $\Theta_{\mathcal{X}_1}\cup\Theta_{\mathcal{X}_2}\cup...\cup \Theta_{\mathcal{X}_n} = (\Theta_{\mathcal{X}_1}\cup\Theta_{\mathcal{X}_2}\cup...\cup \Theta_{\mathcal{X}_{n-1}})\cup \Theta_{\mathcal{X}_n}$.

A $pp$-formula $\exists y_0,...,y_{k-1}\Theta'(x_0,...,x_{n-1})$, where $y_0,...,y_{k-1}$ are the only variables that occur in $\Theta'$ except $x_0,...,$ $x_{n-1}$, is called a \emph{subconstraint} of $\Theta$ if $\Theta'\subseteq \Theta$ and $\Theta'$ and $\Theta\backslash\Theta'$ do not have common variables except for $x_0,...,x_{n-1}$. We can consider $\Theta'$ as a subinstance that involves variables $x_0,...,x_{n-1},y_0,...,y_{k-1}$, and $\Theta$ as an instance on variables $x_0,...,x_{n-1},y_0,...,y_{k-1}, z_0,...,z_{s-1}$. Constraints involving variables $y_0,...,y_{k-1}$ occur only in $\Theta'$, and constraints involving $z_0,...,z_{s-1}$ occur only in $\Theta\backslash\Theta'$. We code common variables by a set $X$. Then we can define a subconstraint in the following way:
\begin{equation}
  \begin{gathered}
     subConst(\Theta,\Theta', X)\iff subInst(\Theta', \Theta)\wedge \forall i,j,k<(n+k+s),\ \\
     \hspace {0pt}E_{\mathcal{X}'}(i,j)\wedge E_{\mathcal{X}\backslash\mathcal{X}'}(j,k)\rightarrow \exists s<(n+k+s), j = X(s,x_s).
  \end{gathered}
\end{equation}
Here, for brevity's sake, we abbreviate by $E_{\mathcal{X}}(i,j)\wedge E_{\mathcal{X}\backslash\mathcal{X}'}(j,k)$ all four combinations of non-symmetric constraints. Then $\exists y_0,...,y_{k-1}\Theta'(x_0,...,x_{n-1})$ defines a projection of solution set to the CSP instance $\Theta'$ on the coordinates $x_0,...,x_{n-1}$, $\mathscr{R}^{x_0,...,x_{n-1}}_{\Theta'}$.

We further define technical notions of covering and expanded covering. We define them for languages with at most binary relations and, for the general definition, refer the reader to \cite{zhuk2020proof}. 
\begin{definition}[Covering]
For an instance $\Theta_{\mathcal{X}} = (\mathcal{X},\ddot{\mathcal{A}})$ with $x_0,...x_{n-1}$ variables, $\ddot{\mathcal{A}}=( V_{\ddot{\mathcal{A}}},E_{\ddot{\mathcal{A}}})$, we say that an instance $\Theta_{\mathcal{Y}}=(\mathcal{Y},\ddot{\mathcal{B}})$ with $y_0,...,y_{m-1}$ variables and $\ddot{\mathcal{B}}=( V_{\ddot{\mathcal{B}}},E_{\ddot{\mathcal{B}}})$ is a covering of $\Theta$ if the following $\Sigma^{1,b}_1$-relation holds:
\begin{equation}
\begin{gathered}
 \hspace {0pt}Cov(\Theta_{\mathcal{Y}},\Theta_{\mathcal{X}})\iff \exists H<\langle b_m,b_n \rangle, HOM(\mathcal{Y},\mathcal{X})\wedge \\
 \wedge\forall i<m,\, H(y_i)=x_j\rightarrow D_{y_i}=D_{x_j}\wedge\\
 \wedge \forall i,j<m,\, E_{\mathcal{Y}}(y_i,y_j) \wedge H(y_i)=x_k\wedge H(y_j)=x_p\rightarrow \forall a\in D_{y_i}, \forall b\in D_{y_j},\\
 E^{y_iy_j}_{\ddot{\mathcal{B}}}(a,b)\leftrightarrow E^{x_kx_p}_{\ddot{\mathcal{A}}}(a,b)\wedge \forall i<m,\forall j<n,\,y_i=x_j\rightarrow H(y_j)=x_i.
\end{gathered}
\end{equation}
\end{definition}
That is, for our purpose, a covering is another instance with different $\mathcal{X'}$ and $\ddot{\mathcal{A'}}$ such that
\begin{enumerate}
  \item The domain of every variable $y_i$ in $\Theta_{\mathcal{Y}}$ is equal to the domain of $H(y_i)$ in $\Theta_{\mathcal{X}}$.
  \item There is a homomorphism from $\mathcal{Y}$ to $\mathcal{X}$ (for any constraint $(y_i,y_j;E_{\ddot{\mathcal{B}}})$) of $\Theta_{\mathcal{Y}}$, $(H(y_i),H(y_j);E_{\ddot{\mathcal{A}}})$ is a constraint of $\Theta_{\mathcal{X}}$ such that $E_{\ddot{\mathcal{A}}}$ and $E_{\ddot{\mathcal{B}}}$ here are the same relation but for different variables.
  \item If a variable $y$ appears in both $\Theta_{\mathcal{X}}$ and $\Theta_{\mathcal{Y}}$, we just assume that $H(y)=y$.
\end{enumerate}

\begin{definition}[Expanded covering]
We say that $\Theta_{\mathcal{Y}}$ is an expanded covering if
\begin{equation}
\begin{gathered}
 \hspace {0pt}ExpCov(\Theta_{\mathcal{Y}},\Theta_{\mathcal{X}})\iff \exists H<\langle b_m,b_n\rangle, HOM(\mathcal{Y},\mathcal{X})\wedge\\
 \hspace {0pt}\wedge\forall i<n,\, H(y_i)=x_j\rightarrow D_{y_i}=D_{x_j}\wedge \forall i,j<m, \\
 \big((E_{\mathcal{Y}}(y_i,y_j)\wedge H(y_i)=x_k\neq H(y_j)=x_p\rightarrow \\
 \rightarrow (\forall a\in D_{x_k}, \forall b\in D_{x_p}, E^{x_kx_p}_{\ddot{\mathcal{A}}}(a,b)\rightarrow E^{y_iy_j}_{\ddot{\mathcal{B}}}(a,b)))\wedge \\
 \hspace {0pt}\wedge((E_{\mathcal{Y}}(y_i,y_j)\wedge H(y_i)=H(y_j)=x_k\rightarrow \forall a\in D_{y_i},\,E^{y_iy_j}_{\ddot{\mathcal{B}}}(a,a))\big).
\end{gathered}
\end{equation}

\end{definition}

That is, an expanded covering is another instance with different $\mathcal{X'}$ and $\ddot{\mathcal{A'}}$ such that:
\begin{enumerate}
  \item The domain of every variable $y_i$ in $\Theta_{\mathcal{Y}}$ is equal to the domain of $H(y_i)$ in $\Theta_{\mathcal{X}}$.
  \item There is a homomorphism from $\mathcal{Y}$ to $\mathcal{X}$, but in this case:
  \begin{itemize}
  \item $\mathcal{X}$ can 'have loops'. When $H(y_i)=H(y_j)$, then we need for any $a$ in $D_{y_i}=D_{y_j}$, $(a,a)\in E_{\ddot{\mathcal{B}}}$;
  \item When $H(y_i)\neq H(y_j)$, then $E_{\mathcal{X}}(H(y_i),H(y_j))$ is an edge but $E^{y_iy_j}_{\ddot{\mathcal{B}}}$ is weaker or equivalent to $E^{H(y_i),H(y_j)}_{\ddot{\mathcal{A}}}$ (in our case it is always a richer relation of the same arity, more edges between $D_{y_i},D_{y_j}$ in $\Theta_{\mathcal{Y}}$ than between $D_{H(y_i)}, D_{H(y_j)}$) in $\Theta_{\mathcal{X}}$.
  \end{itemize}
  \item If a variable $y$ appears in both $\Theta_{\mathcal{X}}$ and $\Theta_{\mathcal{Y}}$, we just assume that $H(y)=y$.
\end{enumerate}

Then it is obvious that:
\begin{enumerate}
\item Any time we replace some constraints with weaker constraints, we get an expanded covering of the original instance: we remove some edges from $\mathcal{X}$ and add some edges to $\ddot{\mathcal{A}}$.
\item Any solution to the original instance can be naturally expanded to a solution to a covering (expanded covering): consider a homomorphism $H$ from $\mathcal{X}$ to $\ddot{\mathcal{A}}$, and a homomorphism $H'$ from $\mathcal{Y}$ to $\mathcal{X}$ and then construct $H\circ H'$ (and it will be a homomorphism from $\mathcal{Y}$ to $\ddot{\mathcal{B}}$).
\item The union (union of all constraints) of two coverings (expanded coverings) is also a covering (expanded covering): consider digraphs $\mathcal{Y}_1\cup \mathcal{Y}_2$ and $\ddot{\mathcal{B}}_1\cup \ddot{\mathcal{B}}_2$.
\item A covering (expanded covering) of a covering (expanded covering) is a covering (expanded covering): consider a superposition of homomorphism.
\item Suppose $\Theta_{\mathcal{Y}}$ is a covering (expanded covering) of a $1$-consistent instance and $\Theta_{\mathcal{Y}}$ is a tree-formula. Then the solution set to $\Theta_{\mathcal{Y}}$ is subdirect (there are no cycles in $\mathcal{Y}$).
\end{enumerate}
The following lemma can be easily proved (see \cite{zhuk2020proof}).
\begin{lemmach}[Lemma 6.1, \cite{zhuk2020proof}]
  Suppose $\Theta_{\mathcal{X}}$ is a cycle-consistent irreducible CSP instance and $\Theta_{\mathcal{Y}}$ is an expanded covering. Then $\Theta_{\mathcal{Y}}$ is cycle-consistent and irreducible.
\end{lemmach}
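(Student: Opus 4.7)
The plan is to transfer both properties — cycle-consistency and irreducibility — from $\Theta_{\mathcal{X}}$ to $\Theta_{\mathcal{Y}}$ by means of the covering homomorphism $H:\mathcal{Y}\to\mathcal{X}$ whose existence is part of $ExpCov(\Theta_{\mathcal{Y}},\Theta_{\mathcal{X}})$. The key observation (a constant size $\Sigma^{1,b}_0$ fact from the definition of expanded covering) is that for every edge $E_{\mathcal{Y}}(y_i,y_j)$, the constraint $E^{y_iy_j}_{\ddot{\mathcal{B}}}$ either contains the diagonal on $D_{y_i}=D_{y_j}$ (when $H(y_i)=H(y_j)$) or contains $E^{H(y_i)H(y_j)}_{\ddot{\mathcal{A}}}$ (when $H(y_i)\neq H(y_j)$). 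This containment lets us push realisations in $\Theta_{\mathcal{X}}$ to realisations in $\Theta_{\mathcal{Y}}$ along the same edge.

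For cycle-consistency of $\Theta_{\mathcal{Y}}$ I would first argue $1$-consistency: for each edge $E_{\mathcal{Y}}(y_i,y_j)$, if $H(y_i)=H(y_j)$ then the diagonal inclusion immediately gives subdirectness; if $H(y_i)\neq H(y_j)$, then $1$-consistency of $\Theta_{\mathcal{X}}$ (which follows from its cycle-consistency) provides subdirectness of $E^{H(y_i)H(y_j)}_{\ddot{\mathcal{A}}}$, and weaker-constraint containment transfers this to $E^{y_iy_j}_{\ddot{\mathcal{B}}}$. For the path-connecting condition, given a path $y=y_{i_1}-C_1-\ldots-C_{l-1}-y_{i_l}=y$ in $\Theta_{\mathcal{Y}}$ and $a\in D_y$, the string $H(y_{i_1}),\ldots,H(y_{i_l})$ is a walk in $\mathcal{X}$ from $H(y)$ to $H(y)$. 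Using $\Sigma^{1,b}_0$-COMP one extracts the subwalk obtained by deleting stationary steps (those with $H(y_{i_k})=H(y_{i_{k+1}})$); the result is an honest path in $\mathcal{X}$ to which cycle-consistency of $\Theta_{\mathcal{X}}$ applies, yielding a witness sequence $b_1=a,\ldots,b_r=a$ realising it in $\ddot{\mathcal{A}}$. Re-inserting the deleted positions with the constant value $b_k$ across each deleted block produces a $\Sigma^{1,b}_0$-definable sequence $a_1=a,\ldots,a_l=a$ that realises the original path in $\Theta_{\mathcal{Y}}$ — on deleted steps by the diagonal inclusion, on remaining steps by the weaker-constraint inclusion.

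For irreducibility, let $\Theta'_{\mathcal{Y}}$ be any subinstance of $\Theta_{\mathcal{Y}}$ on a variable set $X'\subseteq V_{\mathcal{Y}}$ with constraints projected from $\Theta_{\mathcal{Y}}$. Define $\Theta'_{\mathcal{X}}$ as the subinstance of $\Theta_{\mathcal{X}}$ on $H(X')$ with the correspondingly projected constraints; using the above containment one checks (constant size $\Sigma^{1,b}_0$ verification) that $\Theta'_{\mathcal{Y}}$ is an expanded covering of $\Theta'_{\mathcal{X}}$ via the restricted $H$. By irreducibility of $\Theta_{\mathcal{X}}$ applied to $\Theta'_{\mathcal{X}}$, one of three disjuncts holds. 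If $\Theta'_{\mathcal{X}}$ is fragmented with partition $H(X')=X'_1\sqcup X'_2$, then $X'=H^{-1}(X'_1)\sqcup H^{-1}(X'_2)$ fragments $\Theta'_{\mathcal{Y}}$ (no edge of $\Theta'_{\mathcal{Y}}$ crosses, since every such edge maps under $H$ either to an edge of $\Theta'_{\mathcal{X}}$ between the parts, or to a loop inside a single part). If $\Theta'_{\mathcal{X}}$ is linked, one lifts each linking path by replacing each $H$-image vertex with any chosen $H$-preimage, relying again on the containment of constraints to preserve the realisability of the path — a $\Sigma^{1,b}_1$ statement provable from the $\Theta_{\mathcal{X}}$-linkedness witnesses. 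Finally, if the solution set of $\Theta'_{\mathcal{X}}$ is subdirect, then for each $y\in X'$ and $a\in D_y$ a homomorphism $G:\mathcal{X}'\to\ddot{\mathcal{A}}$ with $G(H(y))=\langle H(y),a\rangle$ exists, and the map $y'\mapsto \langle y', G(H(y'))\text{-value}\rangle$ yields a homomorphism $\mathcal{Y}'\to\ddot{\mathcal{B}}'$ hitting $\langle y,a\rangle$, establishing subdirectness of the solution set of $\Theta'_{\mathcal{Y}}$.

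The main obstacle is the irreducibility step, for two reasons. First, the definition of irreducibility universally quantifies over subinstances whose constraints are \emph{projections} of constraints of the ambient instance, which is a $\Sigma^{1,b}_1$ operation in our encoding; defining the image subinstance $\Theta'_{\mathcal{X}}$ carefully and proving that it really is a subinstance in the intended sense requires one to work with the class-level objects $\mathscr{R}^{i_1,\ldots,i_s}_{\Theta}$ from Section 3.4, and hence to rely on $\Sigma^{\mathscr{B}}_0$-3COMP rather than plain $\Sigma^{1,b}_0$-COMP. Second, the case analysis on the disjunction ``fragmented or linked or subdirect'' is $\mathfrak{B}(\Sigma^{1,b}_1)$, so chaining it into the outer universal quantifier over subinstances is exactly where $\Sigma^{\mathscr{B}}_1$-IND, the distinguishing axiom of $W^1_1$, will be needed; everything else is elementary bookkeeping over the covering homomorphism $H$ and the constraint-inclusion relations.
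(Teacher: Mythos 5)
The paper itself does not prove this lemma --- it only remarks that it ``can be easily proved'' and cites Zhuk --- so there is no in-paper argument to compare yours against; your proposal has to stand on its own. Your cycle-consistency argument does: projecting a closed path of $\Theta_{\mathcal{Y}}$ along $H$, contracting the stationary steps, invoking cycle-consistency of $\Theta_{\mathcal{X}}$, and re-inserting constant values using the diagonal inclusion is exactly right, as are the fragmented and subdirect cases of the irreducibility step.

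The linked case, however, is broken. You propose to lift a linking path of $\Theta'_{\mathcal{X}}$ by ``replacing each $H$-image vertex with any chosen $H$-preimage,'' but $H$ is a homomorphism $\mathcal{Y}\to\mathcal{X}$: it maps edges forward, and an edge of $\mathcal{X}'$ between $H(y_i)$ and some $x$ need not have any preimage edge incident to the particular vertex $y_i$ you are standing at, so the lifted sequence of preimages is in general not a path in $\Theta'_{\mathcal{Y}}$ at all. Worse, the conclusion itself can fail to transfer disjunct-for-disjunct: take $\Theta'_{\mathcal{X}}$ to be a triangle on domain $\{0,1\}$ whose three constraints are bijections composing to the transposition --- this instance is linked --- and let $\Theta'_{\mathcal{Y}}$ be a path $y_1-y_2-y_3-y_4$ unwinding it. The cover is connected (not fragmented) and is a tree whose constraints are bijections, so every closed walk retraces itself and induces the identity: it is \emph{not} linked. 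The lemma still holds there, but only because the tree-instance has a subdirect solution set --- a different disjunct than the one witnessed downstairs. So the proof cannot proceed by showing that whichever of the three properties holds for the image subinstance also holds for the covering subinstance; it must separately handle the case ``image linked, cover neither fragmented nor linked'' and establish subdirectness of the cover's solution set by other means (this is where the real content of the lemma sits, and where cycle-consistency of $\Theta_{\mathcal{X}}$ has to be used again). Until that case is supplied, the irreducibility half of the proposal is incomplete.
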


\subsubsection{Relations and properties}
A binary relation $R$ is called \emph{critical} if it cannot be represented as an intersection of other binary relations on $D_i\times D_j$ and it has no dummy variables. Since in our list $\Gamma_{\mathcal{A}}$ there is any invariant binary relation on $D_i\times D_j$, we define $Critical_2(R)$ as follows:
\begin{equation}
  \begin{gathered}
     Critical_2(R)\iff \neg Dum_2(R,i)\wedge \neg Dum_2(R,j)\wedge \exists a\in D_i,\exists b\in D_j,\,\forall k<2^{l^2},\\
     \hspace {0pt}R\subsetneq\Gamma^2_{\mathcal{A},k}\rightarrow(\Gamma^2_{\mathcal{A},k}(a,b)\wedge \neg R(a,b)).
  \end{gathered}
\end{equation}
For a critical binary relation $R$, the minimal relation $R'$ such that $R\subsetneq R'$ is called the \emph{cover of} $R$:
\begin{equation}
\begin{gathered}
  Cover_2(R',R)\iff Critical_2(R)\wedge R'=weakening(R).
\end{gathered}
\end{equation}
Further notions we will consider in connection to both binary and $n$-ary relations, so we will define them both for constraints and solution sets $\mathscr{R}_{\Theta}$. We use constant subscripts to highlight the difference between the definitions, but we do not use $n$ in subscripts for higher arity since the definitions do not depend on variable $n$. For a congruence $\sigma$ on $D_i$ we say that the $i$th variable of a unary relation $E\leq D_i$ and a binary relation $R\leq D_i\times D_j$ is \emph{stable} under $\sigma$ if
  \begin{equation}
  \begin{gathered}
   \hspace{0pt}Stable_1(E,\sigma)\iff \forall a,a'\in D_i,\,E(a)\wedge \sigma(a,a')\rightarrow E(a');\\
     Stable_2(R,i,\sigma)\iff \forall a,a'\in D_i,\forall b\in D_j, R(a,b)\wedge \sigma(a,a')\rightarrow R(a',b).
  \end{gathered}
  \end{equation}
\begin{remark}
  Note that a unary relation $E$ stable under some congruence $\sigma$ on $D$ is just a union of that congruence blocks, it does not have to be a subuniverse of $D$. A binary relation $R$ such that both its variables on $D$ are stable under $\sigma$ is a full relation between some set of congruence blocks on the first variable and some (not necessarily the same) set of congruence blocks on the second variable. A congruence $\sigma\subseteq D\times D$ is stable under itself, in the sense that all elements from one congruence block on the first coordinate are connected with all elements from the same block on the second coordinate.
\end{remark}
We say that the $i$th variable of the solution set $\mathscr{R}_{\Theta}\leq D_{0}\times ...\times D_{n-1}$ is stable under congruence $\sigma$ on $D_i$ if
\begin{equation}
  \begin{gathered}
     Stable(\mathscr{R}_{\Theta}, i,\sigma)\iff\forall H, H'\leq \langle n,\langle n,l\rangle\rangle, \forall a_i, a_i'\in D_i,\big(\sigma(a_i,a_i')\wedge(\forall j\neq i,\\
      \hspace {0pt}H(j)=H'(j))\wedge H(i)=\langle i,a_i \rangle\wedge H'(i)=\langle i,a'_i \rangle\wedge\ddot{HOM}(\mathcal{X},\ddot{\mathcal{A}},H)\big)\rightarrow\\
     \hspace {0pt}\rightarrow \ddot{HOM}(\mathcal{X},\ddot{\mathcal{A}},H').
  \end{gathered}
\end{equation}
Note that this is $\Pi^{1,b}_1$-formula. If every variable of $R$ or $\mathscr{R}_{\Theta}$ is stable under $\sigma$ we say that $R$ or $\mathscr{R}_{\Theta}$ is \emph{stable} under $\sigma$ and write $Stable(\mathscr{R}_{\Theta},\sigma)$. We say that a binary relation $R\leq D_i\times D_j$ \emph{has a parallelogram property} if
\begin{equation}
  ParlPr_2(R)\iff\forall a,c\in D_i,\forall b,d\in D_j, R(a,b)\wedge R(c,b)\wedge R(c,d)\rightarrow R(a,d).
\end{equation}
A relation has the parallelogram property if any way of grouping its coordinates into two groups gives a binary relation with the parallelogram property. That is, we say that a solution set $\mathscr{R}_{\Theta}\leq D_{0}\times ...\times D_{n-1}$ to some CSP instance $\Theta$ has a \emph{parallelogram property} if the following $\Pi^{1,b}_2$-relation holds:
\begin{equation}
  \begin{gathered}
 \hspace {0pt}ParlPr(\mathscr{R}_{\Theta})\iff \forall V_1,V_2<n,\,(\forall i<n,\, V_1(i)\leftrightarrow \neg V_2(i))\wedge \\
 \wedge \forall H_1,H_2,H_3\leq \langle n,\langle n,l\rangle\rangle,\\
     \hspace {0pt}\big( \ddot{HOM}(\mathcal{X},\ddot{\mathcal{A}},H_1)\wedge \ddot{HOM}(\mathcal{X},\ddot{\mathcal{A}},H_2)\wedge \ddot{HOM}(\mathcal{X},\ddot{\mathcal{A}},H_3)\wedge\\
     \wedge (\forall i\in V_1,\, H_3(i)=H_2(i)\wedge \forall j\in V_2,\, H_3(j)=H_1(j))\big)\rightarrow \\
     \rightarrow \exists H_4\leq \langle i,\langle i,a\rangle\rangle,\, \ddot{HOM}(\mathcal{X},\ddot{\mathcal{A}},H_4)\wedge\\
     \hspace {0pt}\wedge (\forall i\in V_1,\, H_4(i)=H_1(i)\wedge \forall j\in V_2,\, H_4(j)=H_2(j)).
  \end{gathered}
\end{equation}
For a binary relation $R\leq D_i\times D_j$ by $Con_2^{(R,i)}$ we denote the following relation:
\begin{equation}
\begin{gathered}
 Con_2^{(R,i)}(a,a')\iff \exists b\in D_j, R(a,b)\wedge R(a',b)\\
 Con_2^{(R,j)}(b,b')\iff \exists a\in D_i, R(a,b)\wedge R(a,b').
\end{gathered}
\end{equation}
For a constraint $C=(x_i,x_j;R)$ we will denote $Con_2^{(R,i)}$ by $Con_2^{(C,i)}$. For a set of constraints $\Theta$ we denote by $Con_2^{(\Theta,i)}$ the set of all $Con_2^{(C,i)}$. In the case of a CSP instance $\Theta$, for any $i<n$ this set is of the form
\begin{equation}
\begin{gathered}
   \forall j<n,\forall a,b<l,\,Con_2^{(\Theta,i)}(0,j,a,b)\iff E_{\mathcal{X}}(i,j)\wedge Con_2^{(E_{\mathcal{X}}(i,j),i)}(a,b),\\
   \forall j<n,\forall a,b<l,\,Con_2^{(\Theta,i)}(j,0,a,b) \iff E_{\mathcal{X}}(j,i)\wedge Con_2^{(E_{\mathcal{X}}(j,i),i)}(a,b).
\end{gathered}
\end{equation}
and its size is bounded by $\langle n,n,l,l\rangle $. We say that the $i$th variable of the binary relation $R$ is \emph{rectangular} if 
\begin{equation}
\begin{gathered}
   RectPr_2(R,i)\iff \forall a,a'\in D_i,\forall b\in D_j,\\
   \hspace {0pt}(Con_2^{(R,i)}(a,a')\wedge R(a,b)\rightarrow R(a',b)).
\end{gathered}
\end{equation}
For a solution set $\mathscr{R}_{\Theta}\leq D_{0}\times ...\times D_{n-1}$ to some CSP instance $\Theta$ by $Con^{([\mathscr{R}_{\Theta}],i)}$ we define the binary relation
\begin{equation}
  \begin{gathered}
 \hspace{3pt}Con^{([\mathscr{R}_{\Theta}],i)}(a,a')\iff \exists H_1,H_2\leq \langle n,\langle n,l\rangle\rangle,\, H_1(i)=a\wedge H_2(i)=a'\wedge\\
      \wedge\forall j\neq i<n,\,H_1(j)=H_2(j) \wedge\ddot{HOM}(\mathcal{X},\ddot{\mathcal{A}},H_1)\wedge \ddot{HOM}(\mathcal{X},\ddot{\mathcal{A}},H_2).
  \end{gathered}
\end{equation}
We say that the $i$th variable of the solution set $R$ is \emph{rectangular} if
\begin{equation}
\begin{gathered}
 \hspace {0pt}RectPr(\mathscr{R}_{\Theta},i)\iff \forall a,a'\in D_i,\, \forall H_1\leq \langle n,\langle n,l\rangle\rangle,\\
   \ddot{HOM}(\mathcal{X},\ddot{\mathcal{A}},H_1)\wedge H_1(i)=\langle i,a\rangle \wedge Con_n^{([\mathscr{R}_{\Theta}],i)}(a,a') \rightarrow \exists H_2\leq \langle n,\langle n,l\rangle\rangle,\\
    \hspace {0pt}\ddot{HOM}(\mathcal{X},\ddot{\mathcal{A}},H_2)\wedge H_2(i)=\langle i,a'\rangle \wedge(\forall j\neq i<n,\, H_1(j)=H_2(j)).
\end{gathered}
\end{equation}
Finally, we say that the solution set $\mathscr{R}_{\Theta}$ to $\Theta$ is \emph{rectangular} if all its variables are rectangular:
\begin{equation}
  RectInst(\mathscr{R}_{\Theta})\iff \forall i<n,\,RectPr(\mathscr{R}_{\Theta},i).
\end{equation}
Note that $RectPr(\mathscr{R}_{\Theta},i)$ is $\Sigma^{1,b}_2$-formula.
\begin{remark}
 Note that the parallelogram property implies rectangularity, and if $i$th coordinate of the relation $R$ is rectangular, then $Con^{([\mathscr{R}_{\Theta}],i)}$ is a congruence.
\end{remark}
A binary relation $R\leq D_i\times D_j$ is called \emph{essential} if it cannot be represented as a conjunction of relations with smaller arities. A pair $(a_i,a_j)\in D_i\times D_j$ is called \emph{essential for $R$} if
\begin{equation}
\begin{gathered}
 EssPair(a_i,a_j,R)\iff \neg R(a_i,a_j)\wedge \exists b_i\in D_i,\exists b_j\in D_j,\, \\
 \hspace {0pt}R(a_i,b_j)\wedge R(b_i,a_j).
\end{gathered}
\end{equation}
It is known \cite{unknown} that for a relation $R$ being an essential is equivalent to having an essential pair. Thus, we can define an essential binary relation $R$ as follows:
\begin{equation}
\begin{gathered}
 EssRel_2(R)\iff \exists a_i\in D_i,\exists a_j\in D_j,\,EssPair(a_i,a_j,R).
\end{gathered}
\end{equation}
For a solution set $\mathscr{R}_{\Theta}$ we define an essential tuple by the following $\Sigma^{1,b}_1$-formula: 
\begin{equation}
\begin{gathered}
 EssTuple(H,\mathscr{R}_{\Theta})\iff \neg \ddot{HOM}(\mathcal{X},\ddot{\mathcal{A}},H)\wedge \forall i<n,\exists b<l,\exists H'\leq \langle n,\langle n,l\rangle\rangle,\\
 \ddot{HOM}(\mathcal{X},\ddot{\mathcal{A}},H')\wedge H'(i) = \langle i, b\rangle\wedge \forall j\neq i<n,\, H'(j) = H(j). 
\end{gathered}
\end{equation}
Thus, $\mathscr{R}_{\Theta}$ is essential if there exists an essential tuple.
\begin{equation}
  \begin{gathered}
     EssRel(\mathscr{R}_{\Theta})\iff \exists H\leq \langle n,\langle n,l\rangle\rangle,\,EssTuple(H,\mathscr{R}_{\Theta}).
  \end{gathered}
\end{equation}
We say that a relation $\mathscr{R}\leq D_{0}\times ...\times D_{n-1}$ is $(C_0,...,C_{n-1})$-essential if $\mathscr{R}\cap (C_0,...,C_{n-1})=\emptyset$, but for every $i\leq k$, $\mathscr{R}\cap (C_0,...,C_{i-1},D_i,C_{i+1},...,C_{n-1})\neq \emptyset$. We can formalize the tuple $(C_0,...,C_{n-1})$ as usual, by one set $C(i,a)\iff C_i(a)$.
\begin{equation}
  \begin{gathered}
 \hspace {0pt}EssRel(\mathscr{R},C)\iff \neg (\exists H\in \mathscr{R},\,\forall i<n, \exists c_i\in C_i,\,\,H(i)=\langle i,c_i\rangle)\wedge \\
     \forall i<n,\exists H\in \mathscr{R},\exists a_i\in D_i\backslash C_i,\,H(i)=\langle i,a_i\rangle\wedge \\
     \wedge \forall j\neq i, j<n,\,\exists c_j\in C_j,\,\,H(j)=\langle j,c_j\rangle.
  \end{gathered}
\end{equation}
This is $\Sigma^{\mathscr{B}}_0$-formula, but if we restrict ourselves to solution sets, we get a Boolean combination of $\Sigma^{1,b}_1$ and $\Pi^{1,b}_1$ formulas. 

Finally, to define a \emph{key relation}, we first present a \emph{unary vector function that preserves the relation}. Suppose $R\leq D_0\times...\times D_{s-1}$ and define a tuple $\Psi=(\psi_0,...,\psi_{s-1})$, where $\psi_i:D_i\to D_i$, is called a unary-vector function. We say that $\psi$ preserves $R$ if $(\psi_0(a_0),...,\psi_{s-1}(a_{s-1}))\in R$ for every $(a_0,...,a_{s-1})\in R$. We say that $R$ is a \emph{key relation} if there exists a tuple $(b_0,...,b_{s-1})\notin R$ such that for any tuple $(c_0,...,c_{s-1})\notin R$ there exists a vector function $\Psi$ which preserves $R$ and gives $\psi_i(c_i)=b_i$ for any $i<s$. For a binary relation $R\leq D_i\times D_j$ there is a pair of unary functions $\psi_i,\psi_j$, represented by two-dimensional sets, such that:\begin{equation}
  \begin{gathered}
     VecFun_2(R,\psi_i,\psi_j)\iff MAP(D_i,l,D_i,l,\psi_i)\wedge MAP(D_j,l,D_j,l,\psi_j)\wedge\\
     \hspace{0pt}\wedge\forall a_i,b_i\in D_i,\forall a_j,b_j\in D_j,\,R(a_i,a_j)\wedge \psi_i(a_i,b_i)\wedge \psi_j(a_j,b_j)\rightarrow R(b_i,b_j). 
  \end{gathered}
\end{equation}
Obviously, both $\psi_i,\psi_j$ are polymorphisms. We say that a binary relation $R$ is a \emph{key relation} if there exists a tuple $(b_i,b_j)\notin R$ such that for every $(c_i,c_j)\notin R$ there exists a unary vector function represented by sets $\psi_i,\psi_j$ that preserves $R$ and $\psi(c_i,b_i)$ and $\psi_j(c_j,b_j)$:
\begin{equation}
  \begin{gathered}
     \hspace{0pt}KeyRel_2(R)\iff \exists b_i,b_j<l,\forall c_i,c_j<l\,\\
     \neg R(b_i,b_j)\wedge \neg R(c_i,c_j)\rightarrow \bigvee_{\psi_i,\psi_j < l^2} \,VecFun_2(R,\psi_i,\psi_j)\wedge\psi_i(c_i,b_i)\wedge \psi_j(c_j,b_j).
  \end{gathered}
\end{equation}
Note that already for binary relations it would be $\Sigma^{1,b}_1$-formula if we do not fix the algebra $\mathbb{A}$. But in our case, we can go through all possible endomorphisms on $D_i,D_j$. For a solution set $\mathscr{R}_{\Theta}\leq D_{0}\times ...\times D_{n-1}$ we can represent a unary vector function as a three-dimensional set $\Psi(i,a,b)$, where each $\Psi_i$ represents a function from $D_i$ to $D_i$. Consider the following $\Pi^{1,b}_1$-formula:
\begin{equation}\label{laalks34523}
  \begin{gathered}
 \hspace{0pt}VecFun(\mathscr{R}_{\Theta},\Psi) \iff \forall i<n,\,MAP(D_i,l,D_i,l,\Psi_i)\wedge\\
 \wedge \forall H,H'\leq \langle n,\langle n,l\rangle\rangle,\ddot{HOM}(\mathcal{X},\ddot{\mathcal{A}},H)\wedge\\
     \wedge (\forall i<n,\forall a_i,b_i\in D_i,\,H(i)=\langle i,a_i\rangle\wedge \Psi_i(a_i,b_i)\rightarrow H'(i)=\langle i,b_i\rangle  )\rightarrow \\
     \rightarrow\ddot{HOM}(\mathcal{X},\ddot{\mathcal{A}},H').
  \end{gathered}
\end{equation} 
Then for a solution set $\mathscr{R}_{\Theta}\leq D_{0}\times ...\times D_{n-1}$ we have the following $\Sigma^{1,b}_4$-formula:
\begin{equation}
  \begin{gathered}
 \hspace {0pt}KeyRel(\mathscr{R}_{\Theta})\iff \exists H\leq \langle n,\langle n,l\rangle\rangle, \forall H'\leq \langle n,\langle n,l\rangle\rangle,\,\neg \ddot{HOM}(\mathcal{X},\ddot{\mathcal{A}},H)\wedge \\
 \wedge \neg \ddot{HOM}(\mathcal{X},\ddot{\mathcal{A}},H')\rightarrow\exists \Psi\leq \langle n,l,l\rangle,\,VecFun(\mathscr{R}_{\Theta},\Psi)\wedge \\
 \wedge (\forall i<n,\forall a_i,b_i<l,\,\,H'(i)=\langle i,a_i\rangle\wedge \Psi_i(a_i,b_i) \rightarrow H(i)=\langle i,b_i\rangle).
  \end{gathered}
\end{equation}

\subsubsection{Bridges and connectivity}

\begin{definition}[Irreducible congruence] 
We say that a congruence $\sigma$ on $D_i$ is \emph{irreducible} if it is proper, and it cannot be represented as an intersection of other binary relations stable under $\sigma$.
\begin{equation}
  \begin{gathered}
  \hspace{0pt}irCong_m(\sigma,D_i)\iff pCong_m(D_i,\Omega,\sigma)\wedge\exists a,b\in D_i, \forall j<2^{l^2},\\
     \hspace {0pt}\sigma\subsetneq\Gamma^2_{\mathcal{D},j}\wedge Stable_2(\Gamma^2_{\mathcal{D},j},\sigma)\rightarrow(\Gamma^2_{\mathcal{D},j}(a,b)\wedge \neg \sigma(a,b)).
  \end{gathered}
\end{equation}
We denote the set of all irreducible congruences on $D$ by $\Sigma^{ir}_{\mathcal{D}}$. For an irreducible congruence $\sigma$ on set $D$ by $\sigma^*$ is denoted the minimal binary relation $\sigma\subsetneq\sigma^*$ stable under $\sigma$. We can define a string function 
\begin{equation}
  \begin{gathered}
     \cdot^*(\sigma)(a,b)=\sigma^*(a,b)\iff \bigvee_{\sigma'\leq \langle l,l\rangle} Stable_2(\sigma',\sigma)\wedge \sigma\subsetneq\sigma'\wedge\forall j<2^{l^2},\\
    \hspace {0pt}Stable_2 (\Gamma^2_{\mathcal{D},j},\sigma)\wedge \sigma\subsetneq \Gamma^2_{\mathcal{D},j}\rightarrow \sigma'\subseteq \Gamma^2_{\mathcal{D},j}\wedge \sigma(a,b).
  \end{gathered}
\end{equation}
\end{definition}

\begin{remark}
Any congruence $\sigma'$ containing $\sigma$ is stable under $\sigma$, but a binary relation stable under $\sigma$ does not need to be a congruence. 
\end{remark}
\begin{definition}[Bridge]
  For two domains $D_i,D_j$ and congruences on them $\sigma_i,\sigma_j$ respectively, we say that a $4$-ary relation $\rho\subseteq D_i^2\times D^2_j$ is a \emph{bridge from $\sigma_i$ to $\sigma_j$} if the first two variables of $\rho$ are stable under $\sigma_1$ and the last two variables of $\rho$ are stable under $\sigma_2$, $\sigma_1\subsetneq pr_{1,2}(\rho)$ and $\sigma_2\subsetneq pr_{3,4}(\rho)$, and $(a_1,a_2,a_3,a_4)\in \rho$ implies 
$$(a_1,a_2)\in \sigma_1\iff (a_3,a_4)\in \sigma_2.
$$
We can define it by $\Sigma^{1,b}_0$-formula:
    \begin{equation}
    \begin{gathered}
 \hspace {0pt}Bridge(\rho,\sigma_i,\sigma_j)\iff (\exists a,a'\in D_i,\exists b,b'\in D_j,\\
 \hspace{0pt}pr_{1,2}(\rho)(a,a')\wedge \neg\sigma_i(a,a')\wedge pr_{3,4}(\rho)(b,b')\wedge \neg\sigma_j(b,b'))\wedge\\
 \wedge Stable_2(\rho,1,\sigma_i)\wedge Stable_2(\rho,2,\sigma_i)\wedge Stable_2(\rho,3,\sigma_j)\wedge Stable_2(\rho,4,\sigma_j)\wedge \\
 \hspace {0pt}\wedge (\forall a,a'\in D_i,\forall b,b'\in D_j,\ \rho(a,a',b,b')\rightarrow (\sigma_i(a,a')\leftrightarrow \sigma_j(b,b'))).
 \end{gathered}
  \end{equation}
In words, the projection of $\rho$ to the first two coordinates strictly contains $\sigma_i$ and is a full relation between some set of congruence blocks on the first coordinate and some set of blocks on the second coordinate, and the same for the projection of $\rho$ to the last two coordinates, and the first two coordinates contain elements from one congruence block of $\sigma_i$ if and only if the last two coordinates also contain elements from one congruence block of $\sigma_j$. A bridge $\rho\subseteq D^4$ is called \emph{reflexive} if $(a,a,a,a)\in \rho$ for every $a\in D$. For a bridge $\rho$, denote by $\tilde{\rho}$ the binary relation defined by $\tilde{\rho}(x,y)=\rho(x,x,y,y)$, we define it as a string function:
\begin{equation}
  \tilde{\cdot}(\rho)(x,y) = \tilde{\rho}(x,y)\iff \rho(x,x,y,y).
\end{equation}
A reflexive bridge $\rho$ from an irreducible congruence $\sigma_i$ to an irreducible congruence $\sigma_j$ is
called \emph{optimal} if there is no a reflexive bridge $\rho'$ from $\sigma_i$ to $\sigma_j$ such that $\tilde{\rho}\subsetneq \tilde{\rho}'$, i.e. a bridge that contains more congruence blocks than $\rho$. 
\begin{equation}
  \begin{gathered}
     OptBridge(\rho,\sigma_i,\sigma_j)\iff irCong_m(\sigma_i,D)\wedge irCong_m(\sigma_j,D)\wedge \\
     \wedge\neg (\bigvee_{\rho'\leq (4l)^{2^4}} Bridge(\rho',\sigma_i,\sigma_j)\wedge \forall a\in D,\,\rho'(a,a,a,a)\wedge \tilde{\rho}\subsetneq \tilde{\rho}').
  \end{gathered}
\end{equation}
If $\rho$ is optimal, then $\tilde{\rho}$ is a congruence. For an irreducible congruence $\sigma$, define a string function $opt$ as 
\begin{equation}
  opt(\sigma)(x,y) \iff \bigvee_{\rho\leq (4l)^{2^4}} OptBridge(\rho,\sigma,\sigma)\wedge \tilde{\rho}(x,y).
\end{equation}
It returns the congruence $\tilde{\rho}$ for an optimal bridge $\rho$ from $\sigma$ to $\sigma$, which is well-defined since we can compose two reflexive bridges. For a set of irreducible congruences $\Sigma^{ir}_{\mathcal{D}}$, we define a string function $optset$ that returns the set of $opt(\sigma)$ for all $\sigma\in \Sigma^{ir}_{\mathcal{D}}$:
\begin{equation}
  \begin{gathered}
     optset(\Sigma^{ir}_{\mathcal{D}})(i,a,b)\iff \Sigma^{ir}_{\mathcal{D},i}\neq \emptyset\wedge opt(\Sigma^{ir}_{\mathcal{D},i})(a,b). 
  \end{gathered}
\end{equation}
\end{definition}

We say that two congruences $\sigma_i,\sigma_j$ on a set $D$ are \emph{adjacent} if there exists a reflexive bridge from $\sigma_i$ to $\sigma_j$. Since we consider only finite and fixed set of binary constraints $\Gamma^2_{\mathcal{A}}$, including the set of all congruences on $A$ and all its subuniverses, we know in advance all bridges for all congruences, the list denoted by $\Xi$:
\begin{equation}
  \begin{gathered}
     Adj(\sigma_i,\sigma_j)\iff \bigvee_{\rho\,\in \,\Xi}Bridge(\rho,\sigma_i,\sigma_j)\wedge \forall a\in D,\,\rho(a,a,a,a).
  \end{gathered}
\end{equation}
Note that $Adj(\sigma_i,\sigma_j)$ is $\Sigma^{1,b}_0$-formula. We say that two rectangular constraints $C_1,C_2$ are \emph{adjacent} in a common variable $x$ if $Con_2^{(C_1,x)}$ and $Con_2^{(C_2,x)}$ are adjacent.
A formula is called \emph{connected} if every constraint in the formula is critical and rectangular, and the graph, whose vertices are constraints and edges are adjacent constraints, is connected. To define connectivity, recall that there is a path from $i$ to $j$ in the input digraph $\mathcal{X}$ if there exists a path $\mathcal{P}_t$ of some length $t$ that can be homomorphically mapped to $\mathcal{X}$ such that $H(0)=i$ and $H(t)=j$. For an instance $\Theta$, we define the following $\Sigma^{1,b}_1$-relation of being connected:
\begin{equation}
  \begin{gathered}
     Connected(\Theta)\iff\forall i,j<n,\,E_{\mathcal{X}}(i,j)\rightarrow Critical_2(E^{ij}_{\ddot{\mathcal{A}}})\wedge \\
     \wedge RectPr_2(E^{ij}_{\ddot{\mathcal{A}}},i)\wedge RectPr_2(E^{ij}_{\ddot{\mathcal{A}}},j)\wedge \\
     \hspace {0pt}\wedge \forall i,j,k,s<n,\,E_{\mathcal{X}}(i,j)\wedge E_{\mathcal{X}}(k,s) \rightarrow \exists \mathcal{P}_t< \langle n,4n^2\rangle,\exists H\leq \langle t,n \rangle,\,PATH(V_{\mathcal{P}_t}) \wedge\\
 \hspace {0pt}\wedge HOM(\mathcal{P}_t,\mathcal{X},H)\wedge(H(0,i)\wedge H(t,s)) \wedge \\
 \hspace {0pt}\wedge \forall p\leq t-2,\, Adj(Con_2^{(E_{\mathcal{X}}(H(p),H(p+1)), H(p+1))},Con_2^{(E_{\mathcal{X}}(H(p+1),H(p+2)),H(p+1))}),
  \end{gathered}
\end{equation}
where by $E_{\mathcal{X}}(H(p),H(p+1)))$ and $E_{\mathcal{X}}(H(p+1),H(p+2))$ we abbreviate all four combinations of non-symmetric constraints.

\subsection{One-of-four subuniverses}\label{alalo98}
In this section, we will define $4$ different subuniverses for an algebra $\mathbb{D}=(D,\Omega)$. For $D$ being a subuniverse for a fixed algebra $\mathbb{A}=(A,\Omega)$, all these definitions are $\Sigma^{1,b}_0$-formulas. The part of the content of the section repeats the formalization previously introduced in \cite{gaysin2023proof}. We cannot omit it here due to the crucial importance of these concepts for the understanding of the paper.

\subsubsection{Binary absorption subuniverse}
For the fixed algebra $\mathbb{A}=(A,\Omega)$, Due to Theorem \ref{fjjduh87} $Clone(\Omega)=Pol(\Gamma_{\mathcal{A}})$. Thus, for any binary term operation $T$ over $A$ the condition $T\in Clone(\Omega)$ can be encoded as: 
\begin{equation}
T\in Clone(\Omega) \iff Pol_{2}(T,A, \Gamma_{\mathcal{A}}).
\end{equation}
For any three sets $D,B,T$ the following $\Sigma^{1,b}_0$-definable relation indicates that the subset $B$ absorbs $D$ with binary operation $T$.
\begin{equation}
  \begin{gathered}
     BAsubS(B,D,T)\iff subS(B,D)\wedge \forall a\in D, \forall b\in B, \exists c_1,c_2\in B,\\
 \hspace {0pt}T(a,b)=c_1\wedge T(b,a)=c_2.
  \end{gathered}
\end{equation}
If we want to define a subuniverse, then 
\begin{equation}
  \begin{gathered}
     BAsubU(B,D,T,\Omega)\iff SwNU(\Omega,B)\wedge Pol_2(T,D,\Gamma_{\mathcal{A}})\wedge\\
     \hspace {0pt}\wedge BAsubS(B,D,T).
  \end{gathered}
\end{equation}
Recall that a binary absorbing subuniverse can be trivial, i.e. $B=D$.

\subsubsection{Central subuniverse}
To define a central subuniverse $C$ of an algebra $\mathbb{A}=(A,\Omega)$ we need to encode a set $Sg$ for the subset $X=\{\{a\}\times C,C\times \{a\}\}$ of $A^2$ for any $a\in A$. Recall that $Sg(X)$ can be constructed by the closure operator 
\begin{equation}
\begin{gathered}
 \hspace{0pt}Cl(X)=X\cup\{\Omega(a_1,...,a_m): a_1,...,a_m\in X\}\\
 \hspace{0pt}\forall t\geq 0, Cl^0(X)=X, Cl^{t+1}(X) = Cl(Cl^t(X)).
\end{gathered}
\end{equation}
Since $\mathbb{A}$ is finite of size $l$ and $|X|=2|C|$, we do not need more than $(l^2-2|C|)$ applications of the closure operator $Cl$ because at any application we either add to the set at least one element or, after some $t$, $Cl^t(X)=Cl^{t+r}(X)$ for any $r$. Not to depend on $C$, let us choose the value $l^2$. Thus, for any set $X\leq \langle l,l\rangle$, we will iteratively define the following set $Cl^{l^2}_X$ up to $l^2$:
\begin{equation}
\begin{gathered}
 \hspace{0pt}\forall b,c<l,\,Cl^{0}_X(b,c)\iff X(b,c)\wedge\\
 \hspace{0pt}\wedge \forall 0<t<l^2, \forall b,c<l,\, Cl^{t}_X(b,c)\iff Cl^{t-1}_X(b,c)\vee\\
 \vee\exists b_1,...,b_m,c_1,...,c_m\in A,Cl^{t-1}_X(b_1,c_1)\wedge...\wedge Cl^{t-1}_X(b_m,c_m)\wedge\\
 \hspace {0pt}\wedge\Omega(b_1,...,b_m)=b\wedge \Omega(c_1,...,c_m)=c.
\end{gathered}
\end{equation}
The existence of this set follows from $\Sigma^{1,b}_1$-induction. A central subuniverse must be an absorbing subuniverse, namely, a ternary absorbing subuniverse \cite{zhuk2020strong}. For any three sets $D,C,S$ the following $\Sigma^{1,b}_0$-definable relation ($D$ and $C$ are bounded by $l$) expresses that the subset $C$ of $D$ is central under ternary term operation $S$:
\begin{equation}
  \begin{gathered}
 \hspace{0pt}CRsubS(C,D,S)\iff subS(C,D)\wedge\forall c_1,c_2\in C, \forall a\in D, \exists c'_1,c'_2,c'_3\in C,\\
 \hspace{0pt}S(c_1,c_2,a)=c'_1\wedge S(c_1,a,c_2)=c'_2\wedge S(a,c_1,c_2)=c'_3\wedge\\
 \hspace {0pt}\wedge\bigwedge_{a\in D\backslash C} \bigwedge_{X<\langle l,l\rangle} ((X(a,c)\wedge X(c,a)\leftrightarrow c\in C) \rightarrow \neg Cl^{l^2}_X(a,a)).
  \end{gathered}
\end{equation}
Note that for not fixed algebra $\mathbb{B}=(B,\Omega)$, this relation is $\Pi^{1,b}_1$ since the size of $B$ would not be bounded, and therefore we could not use large conjunction. If we want to define a subuniverse, then 
\begin{equation}
  \begin{gathered}
 \hspace{0pt}CRsubU(C,D,S,\Omega)\iff SwNU(\Omega,C)\wedge Pol_2(S,D,\Gamma_{\mathcal{A}})\wedge\\
 \hspace {0pt}\wedge CRsubS(C,D,S).
  \end{gathered}
\end{equation}
Recall that a central subuniverse can be trivial, i.e. $C=D$.

\subsubsection{PC subuniverse}\label{'a'a'as;ldfkjg}
Polynomially complete algebras are necessarily simple \cite{ISTINGER1979103}, i.e. they have no non-trivial congruence relations. The \emph{ternary discriminator function} is the function $t$ defined by the identities
\[  
t(x,y,z) = 
   \begin{cases}
     z, \,x=y, \\ 
     x, \,x\neq y.
   \end{cases}
\]
Then Theorem \ref{slsl88yhdurh} gives a necessary and sufficient condition of polynomial completeness. 
\begin{theorem}[\cite{Brady2022NotesOC}]\label{slsl88yhdurh}
A finite algebra is polynomially complete if and only if it has the ternary discriminator as a polynomial operation.
\end{theorem}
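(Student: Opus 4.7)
The plan is to prove the two directions separately. The forward implication is immediate: if $\mathbb{A}$ is polynomially complete then every function on $A$ is a polynomial operation, and in particular so is the ternary discriminator $t$, which is one such function on the fixed finite set $A$.

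For the converse, assume $t$ is a polynomial operation of $\mathbb{A}$ and let $A=\{a_1,\ldots,a_l\}$. The first step is to construct, for each $a\in A$, a ``switch'' polynomial $h_a(x,u,v)$ satisfying $h_a(a,u,v)=u$ and $h_a(x,u,v)=v$ for $x\neq a$. I claim the term
\[
h_a(x,u,v) \;:=\; t\bigl(t(x,a,u),\,t(x,a,v),\,v\bigr)
\]
does the job. Indeed, if $x\neq a$ then $t(x,a,u)=t(x,a,v)=x$, so $h_a(x,u,v)=t(x,x,v)=v$; and if $x=a$ then $t(x,a,u)=u$ and $t(x,a,v)=v$, giving $h_a(a,u,v)=t(u,v,v)$, which evaluates to $u$ in both subcases (when $u=v$ the value is $v=u$; when $u\neq v$ it is $u$ by the definition of $t$).

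With $h_{a_1},\ldots,h_{a_l}$ in hand, I would prove by induction on arity $n$ that every function $f\colon A^n\to A$ is a polynomial of $\mathbb{A}$. The base case $n=0$ is trivial since constants are admitted in polynomials. For the inductive step, split on the value of the first argument: each of the $l$ restricted functions $f_i(x_2,\ldots,x_n):=f(a_i,x_2,\ldots,x_n)$ is a polynomial by the inductive hypothesis, and these are assembled into $f$ by nesting the switches,
\[
f(x_1,\ldots,x_n)=h_{a_1}\bigl(x_1,\,f_1(x_2,\ldots,x_n),\,h_{a_2}\bigl(x_1,\,f_2,\,\ldots\,h_{a_{l-1}}(x_1,f_{l-1},f_l)\ldots\bigr)\bigr),
\]
which is a polynomial expression in $t$ and constants from $A$.

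The main obstacle is the correctness verification of the switch term; once this is confirmed, the rest is a routine finite induction. Note that the argument leans essentially on finiteness of $A$ and on the availability of all elements of $A$ as constants in polynomial operations, both of which are part of the hypotheses. In particular, no knowledge about the specific basic operations of $\mathbb{A}$ is used beyond the fact that $t$ itself is expressible as a polynomial.
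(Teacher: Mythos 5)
The paper does not prove this statement at all: it is quoted as a known result from the cited notes, and the surrounding formalization treats such facts about the fixed finite algebra $\mathbb{A}$ as part of the pre-verified ``$\mathbb{A}$-Monster set'' rather than as something to be derived. So there is no proof in the paper to compare against; what can be checked is whether your argument is sound, and it is. The forward direction is trivial as you say. For the converse, your switch term is correct: for $x\neq a$ both inner discriminators return $x$ and $t(x,x,v)=v$, while for $x=a$ you get $t(u,v,v)$, which equals $u$ whether or not $u=v$ (this is the standard identity $t(u,v,v)=u$ of the discriminator). The nested-switch induction on arity then assembles an arbitrary $f\colon A^n\to A$ from its $l$ sections, and the result is a composition of $t$ with constants and previously obtained polynomials, hence a polynomial since the polynomial operations form a clone containing all constants (which is exactly how the paper defines $Polynom(\mathbb{D})=Clone(\Omega,a_1,\ldots,a_{|D|})$). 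This is the classical textbook argument for discriminator/polynomially complete algebras, and it is complete as written; the only cosmetic point is that the base case is more naturally taken at $n=1$ (or one notes explicitly that constants are polynomials via unary projections), but nothing breaks.
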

The clone of all polynomials over $\mathbb{D}$, $Polynom(\mathbb{D})$ is defined as the clone generated by $\Omega$ and all constants on $D$, i.e. nullary operations:
\begin{equation}
Polynom(\mathbb{D}) = Clone(\Omega,a_1,...,a_{|D|}).
\end{equation}
Constants as nullary operations with constant values, composed with $0$-many $n$-ary operations are $n$-ary operations with constant values. Thus, to be preserved by all constant operations, any unary relation has to contain the entire set $D$, and any binary relation has to contain the diagonal relation $\Delta_{D}$. For the algebra $\mathbb{D}=(D,\Omega)$ denote by $\Gamma^{diag}_{\mathcal{D}} = (\Gamma^{1,diag}_{\mathcal{D}}, \Gamma^{2,diag}_{\mathcal{D}})$ the pair of sets such that
\begin{equation}\label{'''''''''''l}
\begin{gathered}
 \hspace {0pt}\Gamma^{1,diag}_{\mathcal{D}}(j,a)\iff \Gamma^{1}_{\mathcal{D}}(j,a)\wedge (\forall b\in A, \Gamma^{1}_{\mathcal{D}}(j,b))\\
 \Gamma^{2,diag}_{\mathcal{D}}(i,a,b)\iff \Gamma^{2}_{\mathcal{D}}(i,a,b)\wedge (\forall c\in A, \Gamma^{2}_{\mathcal{D}}(j,c,c)).
\end{gathered}
\end{equation}
Note that for some $i,j$, $\Gamma^{1,diag}_{\mathcal{D},j}$ and $\Gamma^{2,diag}_{\mathcal{D},i}$ are empty sets. An $n$-ary operation $P$ on algebra $\mathbb{D}$ is a polynomial operation if it is a polymorphism for relations from $\Gamma^{diag}_{\mathcal{D}}$, i.e.
\begin{equation}
P\in Polynom(\mathbb{D})\iff Pol_{n}(P,D,\Gamma^{diag}_{\mathcal{D}}).
\end{equation}
For any two sets $D$ and $P$ the following $\Sigma^{1,b}_0$-definable relation claims that $P$ is a ternary discriminator on $D$.
\begin{equation}
  \begin{gathered}
     \hspace{0pt}PCD(D,P)\iff \forall a,b,c\in D, \\
     (a=b\wedge P(a,b,c)=c)\vee(a\neq b\wedge P(a,b,c)=a).
  \end{gathered}
\end{equation}
To formalize a PC subuniverse we need the following definition.
\begin{definition}[Polynomially complete algebra] We say that an algebra $\mathbb{D}=(D,\Omega)$ is polynomially complete if
\begin{equation}
  \begin{gathered}
 PCA(D,\Omega)\iff \bigvee_{P\in \Pi^{3}_{\mathcal{D}}} Pol_{3}(P,D,\Gamma^{diag}_{\mathcal{D}})\wedge PCD(D,P).
  \end{gathered}
\end{equation}
\end{definition}

\begin{definition}[Polynomially complete algebra without a non-trivial binary absorbing or central subuniverse] We say that an algebra $\mathbb{D}=(D,\Omega)$ is an algebra \emph{without a non-trivial binary absorbing or central subuniverse} if it satisfies the following $\Sigma^{1,b}_0$-definable relation:
\begin{equation}
  \begin{gathered}
 \hspace {0pt}subTA_{\neg BACR}(D,\Omega)\iff subTA(D,A,\Omega)\wedge \\
 \wedge\bigwedge_{B<l}\bigwedge_{T<(3l)^{2^3}} PsubS(B)\wedge Pol_2(T,D, \Gamma_{\mathcal{D}})\rightarrow\neg BAsubU(B,D,T)\wedge\\
 \hspace {0pt}\wedge\bigwedge_{C<l}\bigwedge_{S<(4l)^{2^4}} PsubS(C)\wedge Pol_3(S,D, \Gamma_{\mathcal{D}})\rightarrow\neg CRsubU(C,D,S).
  \end{gathered}
\end{equation}
Note that for a not fixed algebra $\mathbb{B}=(B,\Omega)$, this relation would be $\Pi^{1,b}_2$ since the size of $B$ would not be bounded, and therefore we could not use big conjunctions for sets and $\neg CRsubU(C,D,S)$ would be $\Sigma^{1,b}_1$-formula. We say that an algebra is polynomially complete algebra without a non-trivial binary absorbing or central subuniverse if
\begin{equation}\label{aaaaaaaaa444444}
  \begin{gathered}
 \hspace{0pt}PCA_{\neg BACR}(D,\Omega)\iff PCA(D,\Omega)\wedge subTA_{\neg BACR}(D,\Omega).
  \end{gathered}
\end{equation}
\end{definition}
\begin{definition}[PC congruence]
We say that a set $\sigma<\langle l,l\rangle$ is a PC congruence on algebra $\mathbb{D}=(D,\Omega)$ of size bounded by $l$ if
\begin{equation}
  \begin{gathered}
 PCCong_m(D,\Omega,\sigma)\iff PCA_{\neg BACR}(D/\sigma,\Omega/\sigma).
  \end{gathered}
\end{equation}
\end{definition}
Note that in this definition we apply notions from (\ref{'''''''''''l}) to (\ref{aaaaaaaaa444444}) to algebra $(D/\sigma,$ $\Omega/\sigma)$ and relations from $\Gamma_{\mathcal{D}}/\sigma$, recall (\ref{allalask75575764}).

Recall that for algebra $\mathbb{A}=(A,\Omega)$ we denoted the set of all congruences on $\mathbb{A}$ and all its subuniverses by $\Sigma_{\mathcal{A}}$. Using this list we can define the list of congruence on $\mathbb{A}$ and all its subuniverses of any type, for example:
\begin{equation}
  \begin{gathered}
   \Sigma^{max}_{\mathcal{A}}(i,a,b)\iff \Sigma_{\mathcal{A}}(i,a,b)\wedge maxCong_m(A,\Omega,\Sigma_{\mathcal{A},i});\\
     \Sigma^{PC}_{\mathcal{D}}(i,a,b)\iff \Sigma_{\mathcal{D}}(i,a,b)\wedge PCCong_m(D,\Omega,\Sigma_{\mathcal{D},i}).
  \end{gathered}
\end{equation}
In these definitions we do not enumerate elements in the lists from the beginning, we thin out the existing lists $\Sigma_{\mathcal{A}}$, $\Sigma_{\mathcal{D}}$. That is, for some $i<2^{l^2}$ the new lists can be empty. Then we say that $\sigma$ is an intersection of all PC congruences on $\mathbb{D}$ if it satisfies the following $\Sigma^{1,b}_0$ relation:
\begin{equation}
  \begin{gathered}
 CongPC(D,\Omega,\sigma)\iff Cong_m(D,\Omega,\sigma)\wedge (\forall i<2^{l^2},\Sigma^{PC}_{{\mathcal{D}},i}\neq\emptyset\rightarrow \sigma\subseteq\Sigma^{PC}_{{\mathcal{D}},i})\wedge\\
 \hspace {0pt}\wedge \bigwedge_{\sigma'<\langle l,l\rangle}(Cong_m(D,\Omega,\sigma')\wedge \sigma\subsetneq \sigma')\rightarrow\exists j<2^{l^2}, \Sigma^{PC}_{{\mathcal{D}},j}\neq\emptyset ,\exists a,b\in D,\\
 \hspace {0pt}\sigma'(a,b)\wedge \neg \Sigma^{PC}_{{\mathcal{D}},j}(a,b).
  \end{gathered}
\end{equation}

A subuniverse $E\subseteq D$ is called a \emph{PC subuniverse} if $E=E_0\cap...\cap E_{s-1}$ where each $E_i$ is an equivalence class of some PC congruence.
\begin{definition}[PC subuniverse, I definition]
For an algebra $\mathbb{D}=(D,\Omega)$, $E$ is a PC subuniverse if
\begin{equation}
  \begin{gathered}
 \hspace {0pt}PCsubU(E,D,\Omega)\iff (E=\emptyset)\vee (E=D)\vee\\
 \hspace {0pt}\vee\big((\exists j<2^{l^2},\forall a,b\in E, \Sigma^{PC}_{{\mathcal{D}}}(j,a,b))\wedge\\
 \wedge (\exists i< 2^{l^2},\forall a\in E,\forall b\in D,\Sigma_{\mathcal{D}}(i,a,b)\leftrightarrow b\in E)\big).
  \end{gathered}
\end{equation}
\end{definition}
A PC subuniverse can be empty or full ($E=D$). The condition in the second line ensures that the entire $E$ is inside exactly one block of any number of PC congruences (since we do not restrict the number of different $j$'s in any way) and the condition in the third line ensures that $E$ is indeed a block of some congruence (not necessarily PC congruence since due to the maximality, intersection of any number of PC congruences is not a PC congruence).

We give a second definition of a PC subuniverse in this section straightaway. The following lemma is proved in \cite{zhuk2020proof}.
\begin{lemmach}[Lemma 7.13.1,   \cite{zhuk2020proof}]\label{alskjdiojkgfkj}
  Suppose that $\sigma_1,...,\sigma_k$ are all PC congruences on $A$. Put $A_i=A/\sigma_i$, and define $\psi:A\to A_1\times...\times A_k$ by $\psi(a)=(a/\sigma_1,...,a/\sigma_k)$. Then
\begin{enumerate}
  \item $\psi$ is surjective, hence $A/\cap_{i}\sigma_i\cong A_1\times...\times A_k$;
  \item the PC subuniverses are the sets of the form $\psi^{-1}(S)$ where $S\subseteq A_1\times...\times A_k$ is a relation definable by unary constraints of the form $x_i=a_i$;
  \item for each non-empty PC subuniverse $B$ of $A$ there is a congruence $\theta$ of $A$ such that $B$ is an equivalence class of $\theta$ and $A/\theta$ is isomorphic to a product of PC algebras having no non-trivial binary absorbing subuniverse or center. That is, $A/\theta\cong A_{j_1}\times... \times A_{j_s}$ where each $A_{j_i}$ is a PC algebra that has no non-trivial binary absorbing subuniverse or center.
\end{enumerate}
\end{lemmach}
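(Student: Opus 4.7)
The plan is to handle the three parts in sequence; part (1) carries almost all of the substantive work, while parts (2) and (3) are essentially bookkeeping once (1) is in hand.

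For part (1), the kernel of $\psi$ is $\bigcap_i \sigma_i$ by definition, so the induced map $A/\bigcap_i \sigma_i \to A_1 \times \cdots \times A_k$ is automatically injective, and the real content of (1) is surjectivity of $\psi$. Each $A/\sigma_i$ is polynomially complete and hence simple, so each $\sigma_i$ is a maximal congruence of $A$, and for distinct indices $i \neq j$ we have $\sigma_i \vee \sigma_j = \nabla_A$. I would then run a Chinese-remainder-style argument by induction on $k$: the base case is trivial, and for the inductive step I would use $\sigma_k \vee \bigcap_{i<k}\sigma_i = \nabla_A$ (which follows from maximality of $\sigma_k$ together with the observation that $\sigma_k$ cannot contain the intersection, otherwise $\sigma_k$ would be redundant) together with a Mal'tsev-like polynomial supplied by the discriminator on each PC factor to paste partial preimages into a global one. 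The main obstacle is precisely this last interpolation: permutability of congruences on $A$ itself is not guaranteed by the WNU assumption, so one must lift Mal'tsev-like behavior from the quotients $A/\sigma_i$ back to $A$ using polynomials in the discriminator together with $\Omega$.

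For part (2), the forward direction is immediate: if $S \subseteq A_1 \times \cdots \times A_k$ is cut out by constraints $x_{i_1} = a_{i_1}/\sigma_{i_1}, \dots, x_{i_s} = a_{i_s}/\sigma_{i_s}$, then $\psi^{-1}(S) = [a_{i_1}]_{\sigma_{i_1}} \cap \cdots \cap [a_{i_s}]_{\sigma_{i_s}}$, which is a PC subuniverse by the first definition. Conversely, if $B = E_0 \cap \cdots \cap E_{s-1}$ with each $E_j$ an equivalence class of a PC congruence $\sigma_{j}$, then each $E_j$ is the preimage under the $j$-th component of $\psi$ of a single element of $A/\sigma_j$, and $B$ is the preimage under $\psi$ of the corresponding box.

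For part (3), given a non-empty PC subuniverse $B$, write $B = \bigcap_{j \in J} [a_j]_{\sigma_{j}}$ using part (2), and set $\theta := \bigcap_{j \in J} \sigma_{j}$. Then $B$ is precisely one equivalence class of $\theta$, namely the one whose image under $\psi$ is the tuple $(a_j/\sigma_{j})_{j \in J}$; applying part (1) to the subfamily $\{\sigma_j\}_{j \in J}$ yields $A/\theta \cong \prod_{j \in J} A/\sigma_j$, a product of PC algebras each of which has no non-trivial binary absorbing subuniverse or center, since by construction the $\sigma_j$ were chosen among the PC congruences satisfying $PCA_{\neg BACR}$. For the formalization in $W^1_1$, since $\mathbb{A}$ is fixed, the $\mathbb{A}$-monster set can be extended to include $\psi$, its inverse map, the list of boxes $S$, and the congruences $\theta$ for each PC subuniverse, together with constant-size $V^0$-certificates of all the relevant $\Sigma^{1,b}_0$ properties; the lemma then becomes a constant-size consequence of the monster-set axioms, with the universal-algebra content of (1) imported externally rather than reproved in the theory.
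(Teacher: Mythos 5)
Your final paragraph is the operative one, and it matches the paper exactly: immediately after stating this lemma the paper declares that, since $\mathbb{A}$ is fixed and all PC congruences on $A$ and its subuniverses are listed in the $\mathbb{A}$-Monster set, the lemma is not proved in the theory at all --- ``the algorithm can just check it'' --- so the content of parts (1)--(3) enters only as constant-size precomputed data with $V^0$-certificates. Your identification of that mechanism is correct and is all the paper requires.

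The preceding mathematical sketch is therefore superfluous for the formalization, and as a standalone universal-algebra argument it is the weakest part of the proposal: the Chinese-remainder route for part (1) founders on exactly the point you flag yourself, namely that pairwise maximality plus $\sigma_i \vee \sigma_j = \nabla_A$ does not yield $A/\bigcap_i\sigma_i \cong \prod_i A_i$ without congruence permutability, and it is not clear how to ``lift'' the discriminator from the quotients back to a Mal'tsev polynomial on $A$. Zhuk's actual argument for surjectivity goes a different way: $\psi(A)$ is a subdirect product of PC algebras without non-trivial binary absorbing subuniverse or center, and for such algebras a subdirect binary product is either full or the graph of an isomorphism (the latter being excluded here because the $\sigma_i$ are distinct), after which the absence of essential relations of arity greater than two forces the whole product to be full. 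Since the paper imports this externally, your gap is harmless for the theorem being formalized, but you should not present the CRT sketch as a proof of (1). Parts (2) and (3) of your argument are fine modulo (1).
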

Since for a fixed algebra $\mathbb{A}=(A,\Omega)$ and all its subalgebras $D$ we know the list of all PC congruences, we do not need to prove this lemma, the algorithm can just check it. Then, since the algebra $\mathbb{A}$ and all its subalgebras are bounded by size $l$, the maximal possible number of quotients in the direct product $D_0\times ...\times D_{k-1}$ is $s=log_2l$.
\begin{definition}[PC subuniverse, II definition]\label{';s;sdllfjjg}
For an algebra $\mathbb{D}=(D,\Omega)$, $s=log_2l$, $E$ is a PC subuniverse if
\begin{equation}
  \begin{gathered}
 \hspace{0pt}PCsubU(E,D,\Omega)\iff (E=\emptyset)\vee (E=D)\vee\big((\exists i< 2^{l^2},\forall a\in E,\forall b\in D,\\
 \Sigma_{\mathcal{D}}(i,a,b)\leftrightarrow b\in E)\wedge\\
 \hspace {0pt}\wedge \bigvee_{\substack{ (\sigma_0\in \Sigma^{PC}_{D}})}\bigvee_{(H\in \mathrm{M}_{D,\Sigma_{D,i},\sigma_0})} ISO_{alg}(D/\Sigma_{D,i},\Omega/\Sigma_{D,i},D/\sigma_0,\Omega/\sigma_0,H) \vee \\
 \hspace {0pt}\vee \bigvee_{\substack{ (\sigma_0,\sigma_1\in \Sigma^{PC}_{D}})}\bigvee_{(H\in \mathrm{M}_{D,\Sigma_{D,i},\sigma_0,\sigma_1})} ISO_{alg}(D/\Sigma_{D,i},\Omega/\Sigma_{D,i},D/\sigma_0\times D/\sigma_1,\\
 \Omega/\sigma_0\cap \sigma_1,H) \vee ...\\
 \hspace{0pt}...\vee \bigvee_{\substack{ (\sigma_0,...,\sigma_{s-1}\in \Sigma^{PC}_{D}})}\bigvee_{(H\in \mathrm{M}_{D,\Sigma_{D,i},\sigma_0,...,\sigma_{s-1}})} ISO_{alg}(D/\Sigma_{D,i},\Omega/\Sigma_{D,i},D/\sigma_0\times...\\
 ...\times D/\sigma_{s-1},  \Omega/\cap_{i}\sigma_i ,H)\big).
  \end{gathered}
\end{equation}
\end{definition}

\subsubsection{Linear subuniverse}
Since the algebra $\mathbb{A}$ and all its subalgebras are bounded by size $l$, the maximal possible number of prime fields in the prime product $\mathbb{Z}_{p_0}\times ...\times \mathbb{Z}_{p_{k-1}}$ is $s=log_2l$. We formalize a finite abelian group $\mathbb{Z}_p=(Z_p,0,+,-)$, where $p$ is prime, as a set $Z_p$, $|Z_p|=p\wedge \forall a<p, Z_p(a)$, and $+_{(mod\,p)}$, $-_{(mod\,p)}$. For any direct product up to $k\leq log_2l$ abelian groups $\mathbb{Z}_{p_0}\times ...\times \mathbb{Z}_{p_{k-1}}$ we define a set $Z_{p_0}\times...\times Z_{p_{k-1}}\leq (p_0+...+p_{k-1}+1)^{2^k}$ such that for all $a_0<p_0,...,a_{k-1}<p_{k-1}$, $(a_0,...,a_{k-1})\in Z_{p_0}\times...\times Z_{p_{k-1}}$ and
\begin{equation}
  \begin{gathered}
     \hspace {0pt}\forall a_0,b_0<p_0,...,\forall a_{k-1},b_{k-1}<p_{k-1},\\
     +((a_0,...,a_{k-1}),(b_0,...,b_{k-1}))=(a_0+_{(mod\,p_0)} b_0,...,a_{k-1}+_{(mod\,p_{k-1})} b_{k-1}),\\
     -((a_0,...,a_{k-1}),(b_0,...,b_{k-1}))=(a_0-_{(mod\,p_0)} b_1,...,a_{k-1}-_{(mod\,p_{k-1})} b_{k-1}).
  \end{gathered}
\end{equation}
We will denote elements $(a_0,...,a_{k-1})$ of $Z_{p_0}\times...\times Z_{p_{k-1}}$ by $\bar{a}^k$, and will omit index $_{(mod\,p)}$ when it does not lead to confusion. Also, we allow the use of trivial algebras (with one element $0$) in a product, so $Prime'(p)\iff Prime(p)\vee p=1$.

\begin{definition}[Linear algebra of size at most $|A|$]\label{a'a'a's;ldkfijgt} For an algebra $\mathbb{D}=(D,\Omega)$, $s=log_2l$, we say that it is a \emph{linear algebra} if
\begin{equation}
  \begin{gathered}
       LinA(D,\Omega)\iff \bigvee_{\substack{p_0\leq l \\ Prime'(p_0)}}\bigvee_{H\in \mathrm{M}_{D,p_0}} ISO_{alg}(D,\Omega,Z_{p_0},\bar{a}^1_1+...+\bar{a}^1_m,H)\vee\\
 \hspace {0pt}\vee\bigvee_{\substack{ p_0\cdot p_1\leq l\\ Prime'(p_0),Prime'(p_1)}} \bigvee_{H\in \mathrm{M}_{D,p_{0},p_{1}}} ISO_{alg}(D,\Omega,Z_{p_0}\times Z_{p_1},\bar{a}^2_1+...+\bar{a}^2_m,H)\vee...\\
 \hspace {0pt}...\vee\bigvee_{\substack{p_0\cdot...\cdot p_{s-1}\leq l \\ Prime'(p_0),...,Prime'(p_k)\\
s=log_2l}} \bigvee_{H\in \mathrm{M}_{D,p_{0}, ...,p_{s-1}}} ISO_{alg}(D,\Omega,Z_{p_0}\times...\times Z_{p_{s-1}},\\
\bar{a}^s_1+...+\bar{a}^s_m,H).
  \end{gathered}
\end{equation}
\end{definition}

\begin{definition}[Linear congruence]
We say that a set $\sigma<\langle l,l\rangle$ is a \emph{linear congruence} on algebra $\mathbb{D}=(D,\Omega)$ if
\begin{equation}
  \begin{gathered}
 LinCong_m(D,\Omega,\sigma)\iff LinA(D/\sigma,\Omega/\sigma).
  \end{gathered}
\end{equation}
\end{definition}
We can also check that any linear congruence of algebra $\mathbb{A}$ (or its subalgebras) bounded by size $l$ is a linear congruence for any subalgebra of $\mathbb{A}$ (or their subalgebras). Let us define the set of all linear congruences on $\mathbb{D}$ as:
\begin{equation}
  \begin{gathered}
   \Sigma^{lin}_{\mathcal{D}}(i,a,b)\iff \Sigma_{\mathcal{D}}(i,a,b)\wedge LinCong_m(D,\Omega,\Sigma_{\mathcal{D},i}).
  \end{gathered}
\end{equation}
Then we say that $\sigma$ is \emph{the} minimal linear congruence (an intersection of all linear congruences) on $D$ if
\begin{equation}
  \begin{gathered}
 CongLin(D,\Omega,\sigma)\iff \exists i<2^{l^2}, \sigma = \Sigma^{lin}_{{\mathcal{D}},i}\wedge \forall j<2^{l^2},\Sigma^{lin}_{{\mathcal{D}},j}\neq\emptyset\rightarrow \sigma\subseteq\Sigma^{lin}_{{\mathcal{D}},j}.
  \end{gathered}
\end{equation}
Note that the definition of $CongLin$ differs from the definition of $CongPC$ since any intersection of linear congruences is again a linear congruence. 
A subuniverse $L\subseteq D$ is called a \emph{linear subuniverse} if it is stable under $CongLin$:
\begin{equation}
  \begin{gathered}
  LNsubU(L,D,\Omega)\iff SwNU(\Omega,L)\wedge\bigwedge_{\sigma\leq \langle l,l\rangle} CongLin(D,\Omega,\sigma) \rightarrow \\
  \rightarrow Stable_1(L,\sigma). 
  \end{gathered}
\end{equation}
\begin{remark}
A linear subuniverse is a union of classes of $CongLin$. However, not every union of such classes needs to be a subuniverse. For example, for a linear algebra $(D,\Omega)$ that is isomorphic to $(\mathbb{Z}_p,x_1+...+x_m)$, and a minimal linear congruence $\Delta$ every element of $\mathbb{Z}_p$ is a subuniverse (since $\Omega$ is idempotent), but not any other proper subset of $\mathbb{Z}_p$ is a subuniverse. From here, we get that there are no non-trivial congruences on $(D,\Omega)$ (every congruence block must be a subuniverse).
\end{remark}

\subsubsection{One-of-four and minimal subuniverse}
All the following formulas in this section are $\Sigma^{1,b}_0$ (they would not if $\mathbb{A}$ is not fixed). We say that $B$ is \emph{one-of-four} subuniverse of $D$ if
\begin{equation}
  \begin{gathered}
     1of4subU(B,D,\Omega)\iff PCsubU(B,D,\Omega)\vee LNsubU(B,D,\Omega)\vee\\
     \hspace {0pt}\vee\bigvee_{T<(3l)^8} BAsubU(B,D,T,\Omega)\vee\bigvee_{S<(4l)^{16}}CRsubU(B,D,S,\Omega).\\
  \end{gathered}
\end{equation}
We say that a subuniverse is minimal if it is non-trivial and inclusion minimal (does not contain any other subuniverse of the same type). For example,
\begin{equation}
  \begin{gathered}
     minBAsubU(B,D,T,\Omega)\iff BAsubU(B,D,T,\Omega)\wedge \bigwedge_{B'<l}\bigwedge_{T'<(3l)^8}B'\subsetneq B \rightarrow\\
     \hspace {0pt}\rightarrow\neg BAsubU(B',D,T',\Omega);\\
     \hspace {0pt}minLNsubU(B,D,\Omega)\iff LNsubU(B,D,\Omega)\wedge \bigwedge_{B'<l} B'\subsetneq B \rightarrow\\
     \hspace {0pt}\rightarrow\neg LNsubU(B',D,\Omega).
    \end{gathered}
\end{equation}
For linear and PC subuniverses we also will use the fact that a minimal linear/ PC subuniverse is a block of $CongLin$/ $CongPC$. We denote a block $B$ of a congruence $\sigma$ as
\begin{equation}
  Block(B,D,\sigma)\iff \forall a\in B,\forall b\in D, \sigma(a,b)\leftrightarrow b\in B.
\end{equation}
Then
\begin{equation}
  \begin{gathered}
     minPCsubU^B(B,D,\Omega)\iff \bigwedge_{\sigma<(2l)^4}CongPC(D,\Omega, \sigma)\rightarrow\\
     \hspace {0pt}\rightarrow Block(B,D,\sigma);\\
     minLNsubU^B(B,D,\Omega)\iff \bigwedge_{\sigma<(2l)^4}CongLin(D,\Omega, \sigma)\rightarrow \\
     \hspace {0pt}\rightarrow Block(B,D,\sigma).\\
    \end{gathered}
\end{equation}

\subsection{Reductions}
Note that all further definitions for all types of reductions and strategies are $\Sigma^{1,b}_0$-formulas.
\begin{definition}[Different types of reductions]
For an instance $\Theta=(\mathcal{X},\ddot{\mathcal{A}})$ with domain set $D=(D_0,...,D_{n-1})$ we say that a set $D^{(\bot)}=(D^{(\bot)}_0,...,D^{(\bot)}_{n-1})$ is an absorbing reduction of $D$ if there exists a term operation $T$ such that $D^{(\bot)}_i$ is a binary absorbing subuniverse of $D_i$ with the term operation $T$ for every $i$:
\begin{equation}
  \begin{gathered}
 BARed(D^{(\bot)},D)\iff Red(D^{(\bot)},D)\wedge \bigvee_{T<(3l)^{8}}Pol_2(T,D, \Gamma_{\mathcal{D}})\wedge\\
 \hspace {0pt}\wedge \forall i<n,\,BAsubU(D^{(\bot)}_i,D_i,T,\Omega).
  \end{gathered}
\end{equation}
We say that $D^{(\bot)}=(D^{(\bot)}_0,...,D^{(\bot)}_{n-1})$ is a central reduction if $D_i$ is a central subuniverse for every $i$:
\begin{equation}
  \begin{gathered}
 CRRed(D^{(\bot)},D)\iff Red(D^{(\bot)},D)\wedge \forall i<n, \\
 \hspace{0pt}\bigvee_{S_i<(4l)^{16}} Pol_3(S_i,D, \Gamma_{\mathcal{D}})\wedge CRsub(D^{(\bot)}_i,D_i,S_i).
  \end{gathered}
\end{equation}
We say that $D^{(\bot)}=(D^{(\bot)}_0,...,D^{(\bot)}_{n-1})$ is a PC reduction if 
\begin{equation}
  \begin{gathered}
 PCRed(D^{(\bot)},D)\iff Red(D^{(\bot)},D)\wedge \forall i<n,\\
 \hspace {0pt}PCsubU(D^{(\bot)}_i,D_i,\Omega)\wedge subTA_{\neg BACR}(D_i,\Omega).
  \end{gathered}
\end{equation}
We say that $D^{(\bot)}=(D^{(\bot)}_0,...,D^{(\bot)}_{n-1})$ is a linear reduction if 
\begin{equation}
  \begin{gathered}
 LNRed(D^{(\bot)},D)\iff Red(D^{(\bot)},D)\wedge \forall i<n,\\
 \hspace {0pt}LNsubU(D^{(\bot)}_i,D_i,\Omega)\wedge subTA_{\neg BACR}(D_i,\Omega).
  \end{gathered}
\end{equation}
In an obvious way, we can define a minimal absorbing/ central/ PC/ linear reduction, a non-linear reduction $nonLNRed(D^{(\bot)},D)$ and one-of-four reduction $1of4Red(D^{(\bot)},D)$.

\begin{remark}
  A CSP instance $\Theta=(\mathcal{X},\ddot{\mathcal{A}})$ is a set 
  $$\Theta(\langle\, \,\underbrace{\langle n,\langle n,n\rangle \rangle}_{\mathcal{X}},\underbrace{\langle \langle n,l\rangle, \langle\langle n,l\rangle,\langle n,l\rangle \rangle \rangle}_{\ddot{\mathcal{A}}} \, \,   \rangle)$$
 Let us denote the length of $\Theta$ as a number function $instsize(n,l)=|\Theta|$.
\end{remark}

\end{definition}

A \emph{strategy} for a CSP instance $\Theta = (\mathcal{X},\ddot{\mathcal{A}})$ with domain set $D$ is a sequence of reductions $D^{(0)},...,D^{(s)}$, where $D^{(j)}=(D^{(j)}_0,...,D^{(j)}_{n-1})$, such that $D^{(0)}=D$ and $D^{(j)}$ is a one-of-four $1$-consistent reduction of $\Theta^{(j-1)}$ for every $j\geq 1$. A strategy is called \emph{minimal} if every reduction in the sequence is minimal. Since after any reduction we decrease at least one domain by at least one element, to represent the entire strategy it is enough to consider a set (matrix) with $nl$ rows, each row representing a reduction of the CSP instance. We need to track both domain reductions (the set $V_{\ddot{\mathcal{A}}}$, or $D$) and restrictions of the constraint relations (the set $E_{\ddot{\mathcal{A}}}$). An input digraph $\mathcal{X}$ does not change, but for consistency, we will track it as well. 

\begin{definition}[A strategy for a CSP instance] A strategy for an instance $\Theta$ up to $s\leq nl$ step is a set $\Theta_{Str}< \langle nl,instsize(n,l)\rangle$ such that:
\begin{equation}
  \begin{gathered}
     Strategy(\Theta,\Theta_{Str},s)\iff \Theta_{Str}^0=\Theta\wedge \forall 1\leq j\leq s,  1C(\Theta^{(j)}_{Str})\wedge \\
     \wedge redinst(\Theta^{(j-1)},D^{(j)})=\Theta^{(j)}_{Str}\wedge \forall 1\leq j\leq s,\, 1of4Red(D^{(j)}_{Str},D).
  \end{gathered}
\end{equation}
A strategy is called minimal if
\begin{equation}
  \begin{gathered}
     minStrategy(\Theta,\Theta_{Str},s)\iff Strategy(\Theta,\Theta_{Str},s)\wedge \forall 1\leq j\leq s,\,\\  \hspace {0pt}min1of4Red(D^{(j)}_{Str},D).
  \end{gathered}
\end{equation}

\end{definition}
When we want to consider the domain strategy separately, we will refer to it as $D_{Str}<\langle nl,\langle n,l\rangle\rangle$, each $j$th row representing $D^{(j)}$.

\subsection{Three universal algebra axiom schemes}\label{===-06476357234609}
In this section, we formalize three universal algebra axiom schemes, reflecting the "only if" implications of Theorems \ref{fhfhfhydj}, \ref{llldju67yr}, and \ref{fjfjhgd} in \cite{zhuk2020proof} (for the soundness we do not need the "if" implication). These axiom schemes were already introduced in \cite{gaysin2023proof}, and we echo the formalization because of its importance for the understanding of the paper. The schemes are formulated for a constraint language $\Gamma_{\mathcal{A}}$ over the set $A$ of size $l$, fixed algebra $\mathbb{A}=(A,\Omega)$ with $\Omega$ being a special $m$-ary WNU operation. They consist of finitely many $\forall \Sigma^{1,b}_2$-formulas (for all possible subuniverses of $\mathbb{A}$). 

BA$_\mathcal{A}$-axiom scheme reflects that if $\Theta$ is a cycle-consistent irreducible CSP instance, and $B$ is a non-trivial binary absorbing subuniverse of $D_i$, then $\Theta$ has a solution only if $\Theta$ has a solution with $x_i\in B$ (Theorem \ref{fhfhfhydj} in \cite{zhuk2020proof}): 
\begin{equation}
\begin{gathered}
 \hspace {0pt}BA_{\mathcal{A},B,D}=_{def}\forall\mathcal{X}=(V_{\mathcal{X}},E_{\mathcal{X}}),\forall\ddot{\mathcal{A}}=(V_{\ddot{\mathcal{A}}},E_{\ddot{\mathcal{A}}},D_0,...,D_{n-1}),\\
 \hspace {0pt}\big(B\subsetneq D\wedge SwNU_m(\Omega,D)\wedge SwNU_m(\Omega,B)\wedge\\
 \hspace{0pt}\wedge\exists T <(3l)^{2^3}, Pol_{2}(T,D, \Gamma_{\mathcal{A}})\wedge BAsubS(B,D,T) \wedge\\
 \hspace{0pt}\wedge Inst(\Theta,\Gamma_{\mathcal{A}})\wedge CCInst(\mathcal{X},\ddot{\mathcal{A}})\wedge IRDInst(\mathcal{X},\ddot{\mathcal{A}})\wedge \\
 \hspace{0pt}\exists i<n, D_i=D\wedge\\ 
 \hspace{0pt}\ddot{HOM}(\mathcal{X},\ddot{\mathcal{A}})\big)\implies \ddot{HOM}(\mathcal{X},\ddot{\mathcal{A}}=(V_{\ddot{\mathcal{A}}},E_{\ddot{\mathcal{A}}},D_0,...,B,...,D_{n-1})).
\end{gathered}
\end{equation}
CR$_\mathcal{A}$-axiom scheme states that if $\Theta$ is a cycle-consistent irreducible CSP instance, and $C$ is a non-trivial central subuniverse of $D_i$, then $\Theta$ has a solution only if $\Theta$ has a solution with $x_i\in C$ (Theorem \ref{llldju67yr} in \cite{zhuk2020proof}):
\begin{equation}
\begin{gathered}
 \hspace{0pt}CR_{\mathcal{A},D,C}=_{def}\forall \mathcal{X}=(V_{\mathcal{X}},E_{\mathcal{X}}),\forall\ddot{\mathcal{A}}=(V_{\ddot{\mathcal{A}}},E_{\ddot{\mathcal{A}}},D_0,...,D_{n-1}),\\
 \hspace{0pt}\big(C\subsetneq D\wedge SwNU_m(\Omega,D)\wedge SwNU_m(\Omega,C)\wedge\\
   \hspace {0pt}\ \exists S<(4l)^{2^4},\,Pol_3(S, D,\Gamma_{\mathcal{A}})\wedge CRsubS(C,D,S) \wedge \\
 \hspace {0pt}\wedge Inst(\Theta,\Gamma_{\mathcal{A}})\wedge CCInst(\mathcal{X},\ddot{\mathcal{A}})\wedge IRDInst(\mathcal{X},\ddot{\mathcal{A}})\wedge \\
 \hspace{0pt}\exists i<n, D_i=D\wedge\\ 
 \hspace {0pt}\ddot{HOM}(\mathcal{X},\ddot{\mathcal{A}})\big)\implies \ddot{HOM}(\mathcal{X},\ddot{\mathcal{A}}=(V_{\ddot{\mathcal{A}}},E_{\ddot{\mathcal{A}}},D_0,...,C,...,D_{n-1})).
\end{gathered}
\end{equation}

Finally, PC$_\mathcal{A}$-axioms says that
if $\Theta$ is a cycle-consistent irreducible CSP instance, there does not exist a non-trivial binary absorbing subuniverse or a non-trivial center on $D_j$ for every $j$, $(D_i,\Omega)/\sigma_i$ is a polynomially complete algebra, and $E$ is an equivalence class of $\sigma_i$, then $\Theta$ has a solution only if $\Theta$ has a solution with $x_i\in E$ (Theorem \ref{fjfjhgd} in \cite{zhuk2020proof}):
\begin{equation}
\begin{gathered}
 \hspace{0pt}PC_{\mathcal{A},D,E}=_{def}\forall \mathcal{X}=(V_{\mathcal{X}},E_{\mathcal{X}}),\forall\ddot{\mathcal{A}}=(V_{\ddot{\mathcal{A}}},E_{\ddot{\mathcal{A}}},D_0,...,D_{n-1}),\\ 
 \hspace {0pt}\big([\forall j<n,\forall B<l,\forall T<(3l)^{2^3}, Pol_2(T,D_j, \Gamma_{\mathcal{A}})\rightarrow\neg BAsubS(B,D_j,T)\wedge\\
 \hspace {0pt}\wedge\forall j<n,\forall C<l, \forall S<(4l)^{2^4}, Pol_3(S,D_j, \Gamma_{\mathcal{A}})\rightarrow\neg CRsubS(C,D_j,S)]\\
 \hspace {0pt}\wedge \exists \sigma<\langle l,l\rangle, \exists D/\sigma<l,\exists\Omega/\sigma<(ml)^{2^{m+1}}, FA_m(D/\sigma,\Omega/\sigma,D,\Omega,\sigma)\wedge\\
 \hspace {0pt}\wedge \exists P<(4l)^{2^4},\, Pol_{3}(P,D/\sigma,\Gamma^{diag}_{\mathcal{D}}/\sigma)\wedge PCD(D/\sigma,P)\wedge \\
 \hspace{0pt}SwNU_m(\Omega,D)\wedge E\subsetneq D\wedge (\forall a\in E,\forall b\in D, \sigma(a,b)\leftrightarrow b\in E)\wedge\\
 \hspace {0pt}\wedge Inst(\Theta,\Gamma_{\mathcal{A}})\wedge CCInst(\mathcal{X},\ddot{\mathcal{A}})\wedge IRDInst(\mathcal{X},\ddot{\mathcal{A}})\wedge \\
 \hspace{0pt}\exists i<n, D_i=D\wedge\\ 
 \hspace {0pt}\ddot{HOM}(\mathcal{X},\ddot{\mathcal{A}})\big)\implies \ddot{HOM}(\mathcal{X},\ddot{\mathcal{A}}=(V_{\ddot{\mathcal{A}}},E_{\ddot{\mathcal{A}}},D_0,...,E,...,D_{n-1})).
\end{gathered}
\end{equation}

\section{Formalization of a proof of three axiom schemes}\label{'a'tteyrghwgdfh}
In this section, we consider solution sets and not their projections to subsets of coordinates. The proofs for both cases do not essentially differ, and we can repeat the same reasoning for any projection. However, since projection increases the complexity of a formula from $\Sigma^{1,b}_0$ to $\Sigma^{1,b}_1$, in any proof using the induction or comprehension axioms of level $i$ for a set of solutions, we need to exchange the level for $i+1$ for a projection to a subset of coordinates.

\subsection{Formalization of some auxiliary lemmas and theorems}
We will present the formalization of several selected statements and their proofs used in the proof of the soundness of the algorithm. We selected those that genuinely represent various types of arguments encountered in \cite{zhuk2020proof}, \cite{zhuk2020strong}. It should be sufficiently clear that other statements of a similar nature can be formalized analogously.

\subsubsection{Properties of a binary absorbing subuniverse on $\mathbb{A}^n$}

We say that a solution set to a CSP instance $\Theta$ over $\Gamma_{\mathcal{A}}$ on $n$ variables, $\mathscr{R}_{\Theta}\leq D_0\times...\times D_{n-1}$ is a binary absorbing subuniverse of $D_0\times...\times D_{n-1}$ if there exists a binary term operation $T\in Pol(\Gamma_{\mathcal{A}})$ such that for any two maps $H,H':[n]\to(D_0,...,D_{n-1})$ where $H\notin \mathscr{R}_{\Theta}$ and $H'\in \mathscr{R}_{\Theta}$, the maps $usepol_2(T,H,H')$ and $usepol_2(T,H',H)$ are in $\mathscr{R}_{\Theta}$. An analogous definition can be formulated for any projection of the set of solutions $\mathscr{R}^{i_1,...,i_s}_{\Theta}$. 

\begin{lemmach}[Lemma 7.1, \cite{zhuk2020proof}]\label{alalalur6456} Suppose that $\mathscr{R}_{\Theta}$ is defined by a $pp$-formula $\Theta(x_0,...,x_{n-1})$ and $\Theta'$ is obtained from $\Theta$ by replacing some constraint relations $\rho_1,...,\rho_s$ by constraint relations $\rho'_1,...,\rho'_s$ such that $\rho_k'$
absorbs $\rho_k$ with a term operation $T$ for every $k$. Then $V^0$ proves that the relation $\mathscr{R}_{\Theta'}$ defined by
$\Theta'(x_0,...,x_{n-1})$ absorbs $\mathscr{R}_{\Theta}$ with the term operation $T$.
\end{lemmach}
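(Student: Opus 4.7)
The plan is to show that for arbitrary $H \in \mathscr{R}_{\Theta}$ and $H' \in \mathscr{R}_{\Theta'}$, both maps $usepol_2(T,H,H')$ and $usepol_2(T,H',H)$ lie in $\mathscr{R}_{\Theta'}$; I will only write out the argument for $H'' := usepol_2(T,H,H')$, the other case being symmetric. Membership $H'' \in \mathscr{R}_{\Theta'}$ unfolds, by the definition of the solution set, to the $\Sigma^{1,b}_0$-assertion that for every edge $(i,j)\in E_{\mathcal{X}}$ the pair $(H''(i),H''(j))$ belongs to $E^{ij}_{\ddot{\mathcal{A}}'}$, and this bounded universal quantifier can be handled one edge at a time.

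Fix $(i,j)\in E_{\mathcal{X}}$ and do a case distinction on whether the constraint at $(i,j)$ is one of the replaced ones. If yes, say $\rho_k = E^{ij}_{\ddot{\mathcal{A}}}$ and $\rho'_k = E^{ij}_{\ddot{\mathcal{A}}'}$, then $H \in \mathscr{R}_{\Theta}$ gives $(H(i),H(j)) \in \rho_k$ and $H' \in \mathscr{R}_{\Theta'}$ gives $(H'(i),H'(j)) \in \rho'_k$; the hypothesis $BAsubS(\rho'_k,\rho_k,T)$ applied coordinatewise to the pairs $(H(i),H'(i))$ and $(H(j),H'(j))$ yields $(T(H(i),H'(i)),T(H(j),H'(j))) = (H''(i),H''(j)) \in \rho'_k$. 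Otherwise $E^{ij}_{\ddot{\mathcal{A}}'} = E^{ij}_{\ddot{\mathcal{A}}}$, so both $(H(i),H(j))$ and $(H'(i),H'(j))$ lie in this common relation $\rho$; since $T$ preserves every member of $\Gamma_{\mathcal{A}}$ (encoded as $Pol_2(T,A,\Gamma_{\mathcal{A}})$, a monster-set fact about the fixed $\mathbb{A}$), we conclude $(H''(i),H''(j)) \in \rho$ as well.

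Every ingredient of this case analysis --- the comparison $E^{ij}_{\ddot{\mathcal{A}}'} = E^{ij}_{\ddot{\mathcal{A}}}$, the absorption hypothesis $BAsubS$, and the polymorphism property of $T$ --- is $\Sigma^{1,b}_0$-definable over the encoded instances, and the outer quantifier over $(i,j)$ is bounded by $n^2$, so the entire derivation is formalizable in $V^0$ using only $\Sigma^{1,b}_0$-COMP, without any induction. No serious obstacle arises: the argument is essentially the textbook one, and it survives formalization precisely because we work with full solution sets, so membership remains $\Sigma^{1,b}_0$ throughout. For the analogous statement about a projection $\mathscr{R}^{i_1,\dots,i_s}_{\Theta}$, the same proof would go through verbatim, but the extraction of existential witnesses for partial homomorphisms would raise the complexity by one level, in line with the general principle noted at the beginning of Section \ref{'a'tteyrghwgdfh}.
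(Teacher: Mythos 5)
Your proposal is correct and follows essentially the same route as the paper's proof: fix $H\in\mathscr{R}_{\Theta}$ and $H'\in\mathscr{R}_{\Theta'}$, form $usepol_2(T,H,H')$ and $usepol_2(T,H',H)$, and verify edge by edge that the image lands in $E^{ij}_{\ddot{\mathcal{A}}'}$, using the absorption hypothesis for replaced constraints and the polymorphism property of $T$ for the unchanged ones (the paper folds these two cases into a single formula and phrases the verification as a contradiction, which is only a cosmetic difference).
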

\begin{proof}
  Let us consider two CSP instances $\Theta = (\mathcal{X},\ddot{\mathcal{A}})$ and $\Theta' = (\mathcal{X}',\ddot{\mathcal{A}}')$, where $\mathcal{X}'=\mathcal{X}$ (the analogous reasoning can be applied to projections). Suppose that there exists a binary term $T\in Pol(\Gamma_{\mathcal{A}})$ such that for each $i<n$, $D'_{i}\subseteq D_i$ binary absorbs $D_i$ and for all $i,j<n$ with $E_{\mathcal{X}}(i,j)$, $E^{ij}_{\ddot{\mathcal{A}}'}\subseteq E^{ij}_{\ddot{\mathcal{A}}}$ binary absorbs $E^{ij}_{\ddot{\mathcal{A}}}$:
\begin{equation}\label{djdjjduuuhs6745ed}
  \begin{gathered}
     \hspace {0pt}\forall a\in D'_i,\forall b\in D_i,\exists c_1,c_2\in D'_i,\,T(a,b)=c_1\wedge T(b,a)=c_2\wedge\\
     \hspace {0pt}\forall a_1,b_1<l,\forall a_2,b_2<l,\,(E^{ij}_{\ddot{\mathcal{A}}}(a_1,b_1)\wedge E^{ij}_{\ddot{\mathcal{A}}'}(a_2,b_2))\rightarrow\\
     \hspace {0pt}\rightarrow\exists a_3,a_4<l,\exists b_3,b_4<l,\,E^{ij}_{\ddot{\mathcal{A}}'}(a_3,b_3)\wedge E^{ij}_{\ddot{\mathcal{A}}'}(a_4,b_4)\wedge\\
     \wedge T(a_1,a_2)=a_3\wedge T(a_2,a_1)=a_4\wedge T(b_1,b_2)=b_3\wedge T(b_2,b_1)=b_4.
  \end{gathered}
\end{equation}
Note that for some $i,j<n$, $D'_i$ and $E^{ij}_{\ddot{\mathcal{A}}'}$ could be equal to $D_i$ and $E^{ij}_{\ddot{\mathcal{A}}}$. If $\mathscr{R}_{\Theta'}$ or/ and $\mathscr{R}_{\Theta}$ are empty, we are done ($\mathscr{R}_{\Theta'}$ is an empty subuniverse). Suppose that both instances have solutions. Every solution to the instance $\Theta'$ is a solution to the instance $\Theta$. Consider any two solutions to $\Theta$ and $\Theta'$, homomorphisms $H$ and $H'$, respectively. Consider two maps $H_1=usepol_2(T,H,H')$ and $H_2=usepol_2(T,H',H)$. We need to prove that these maps are homomorphisms from $\mathcal{X}'$ to $\ddot{\mathcal{A}}'$. Suppose that $H_1$ (or $ H_2$) is not a homomorphism. Then there exists an edge in $\mathcal{X}'$, $E_{\mathcal{X}'}(i,j)$ such that $H_1$ fails to map it to an edge in $\ddot{\mathcal{A}}'$. But this contradicts (\ref{djdjjduuuhs6745ed}). 
\end{proof}

\begin{corollary}[Corollary 6.1.3, \cite{zhuk2020strong}]\label{==-=-=-=-=-=-=-=-}
Suppose $\mathscr{R}_{\Theta}\leq D_0\times...\times D_{n-1}$ and $B_i$ is an absorbing subuniverse in $A_i$ with a term $T$ for every $i$. Then $V^0$ proves that $(B_0\times...\times B_{n-1})\cap \mathscr{R}_{\Theta}$ is an absorbing subuniverse of $\mathscr{R}_{\Theta}$ with the term $T$.
\end{corollary}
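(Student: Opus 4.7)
The plan is to fix the term $T$ of arity $k$ witnessing absorption for every $B_i$, and to show that the same $T$ witnesses absorption of $(B_0 \times \dots \times B_{n-1}) \cap \mathscr{R}_{\Theta}$ inside $\mathscr{R}_{\Theta}$. First I would take an arbitrary $k$-tuple of maps $H_1, \ldots, H_k \in \mathscr{R}_{\Theta}$ such that the set $\{j : H_j \notin (B_0 \times \dots \times B_{n-1}) \cap \mathscr{R}_{\Theta}\}$ has at most one element, and set $H := usepol_k(T, H_1, \ldots, H_k)$. The goal then reduces to showing that $H \in \mathscr{R}_{\Theta}$ and that for every $i < n$ the unique $a$ with $H(i) = \langle i, a \rangle$ lies in $B_i$.

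For the first conjunct, I would appeal to the $k$-ary analogue of Lemma \ref{lslsdkleijf}: since $T \in Pol(\Gamma_{\mathcal{A}})$ and each $H_j$ is a homomorphism from $\mathcal{X}$ to $\ddot{\mathcal{A}}$, the map $H$ is again a homomorphism, by the contradiction-on-a-failed-edge argument of that lemma. When applied to $\mathscr{R}_{\Theta}$ itself (rather than to a projection to a proper subset of coordinates), the reasoning is entirely $\Sigma^{1,b}_0$ and goes through in $V^0$. For the second conjunct, I would unfold the defining equation of $usepol_k$ coordinatewise to obtain $H(i) = \langle i, T(a^i_1, \ldots, a^i_k)\rangle$ where each $H_j(i) = \langle i, a^i_j \rangle$ with $a^i_j \in D_j$. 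For any fixed $i < n$, the set $\{j : a^i_j \notin B_i\}$ is contained in $\{j : H_j \notin B_0 \times \dots \times B_{n-1}\}$ and therefore has at most one element, so the absorption hypothesis for $B_i$ with respect to $T$ directly yields $T(a^i_1, \ldots, a^i_k) \in B_i$.

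All reasoning above is bounded: $\ddot{HOM}(\mathcal{X}, \ddot{\mathcal{A}}, H)$ is $\Sigma^{1,b}_0$, $usepol_k$ is a $\Sigma^{1,b}_0$-definable string function, the absorption data for each $B_i$ is part of the fixed $\mathbb{A}$-monster set (so its presence is a constant-size Boolean conjunction), and the conjunctions over $i < n$ and over the edges of $\mathcal{X}$ are bounded number quantifiers. Hence no induction beyond what $V^0$ provides is needed. The only case requiring a separate remark is when $(B_0 \times \dots \times B_{n-1}) \cap \mathscr{R}_{\Theta}$ is empty, in which case the absorption condition holds vacuously, exactly as in the proof of Lemma \ref{alalalur6456}. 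I do not anticipate a genuine obstacle; the argument is essentially a coordinatewise application of the absorption hypothesis combined with preservation of $\mathscr{R}_{\Theta}$ under polymorphisms of $\Gamma_{\mathcal{A}}$.
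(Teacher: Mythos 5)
Your proof is correct. The paper treats this statement as an immediate consequence of Lemma~\ref{alalalur6456}: one views each $B_i$ as replacing the unary constraint $D_i$ of $\Theta$, so that $(B_0\times\dots\times B_{n-1})\cap\mathscr{R}_\Theta$ is the solution set $\mathscr{R}_{\Theta'}$ of the modified instance, and the lemma yields absorption with the same term (this is exactly how Corollary~\ref{akjshdutjgjf} is argued). You instead unfold that reduction into a direct verification --- $usepol_k(T,H_1,\dots,H_k)$ stays in $\mathscr{R}_\Theta$ because $T$ is a polymorphism, and stays in $B_0\times\dots\times B_{n-1}$ by a coordinatewise application of the absorption hypothesis, since for each $i$ the set $\{j: a^i_j\notin B_i\}$ is contained in the at-most-singleton set of exceptional indices. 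The two arguments are the same computation; your version has the minor advantage of explicitly handling a $k$-ary absorbing term, whereas the written proof of Lemma~\ref{alalalur6456} only spells out the binary case $usepol_2(T,H,H')$, and your complexity accounting ($\Sigma^{1,b}_0$ throughout, hence $V^0$) matches the paper's claim.
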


\begin{corollary}[Corollary 7.1.2, \cite{zhuk2020proof}]\label{akjshdutjgjf}
Suppose that $\mathscr{R}_{\Theta}\leq D_0\times...\times D_{n-1}$ is a relation such that $pr_0(\mathscr{R}_{\Theta})=D_0$ and $B=pr_0((B_0\times...\times B_{n-1})\cap \mathscr{R}_{\Theta})$, where $B_i$ is an absorbing subuniverse in $D_i$ with a term $T$ for every $i$. Then $V^1$ proves that $B$ is an absorbing subuniverse in $D_0$ with the term $T$.
\end{corollary}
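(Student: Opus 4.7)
The plan is to reduce to Corollary \ref{==-=-=-=-=-=-=-=-} and then transport binary absorption through the projection $pr_0$. The statement we want to derive is
\[
\forall a\in D_0,\ \forall b\in B,\ \exists c_1,c_2\in B,\ T(a,b)=c_1\wedge T(b,a)=c_2,
\]
where $B=pr_0((B_0\times\ldots\times B_{n-1})\cap\mathscr{R}_{\Theta})$. Fix arbitrary such $a$ and $b$. Since $pr_0(\mathscr{R}_{\Theta})=D_0$, the definition of projection yields a map $H_a\leq\langle n,\langle n,l\rangle\rangle$ with $\mathscr{R}_{\Theta}(H_a)$ and $H_a(0)=\langle 0,a\rangle$. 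Similarly, from $b\in B$ we obtain $H_b\leq\langle n,\langle n,l\rangle\rangle$ with $(B_0\times\ldots\times B_{n-1})\cap\mathscr{R}_{\Theta}$ holding at $H_b$ and $H_b(0)=\langle 0,b\rangle$.

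Now set $H_1=usepol_{n,2}(T,H_a,H_b)$ and $H_2=usepol_{n,2}(T,H_b,H_a)$, which are string functions definable in $V^0$. By Lemma \ref{lslsdkleijf}, both $H_1$ and $H_2$ are homomorphisms from $\mathcal{X}$ to $\ddot{\mathcal{A}}$, i.e.\ elements of $\mathscr{R}_{\Theta}$. Invoking Corollary \ref{==-=-=-=-=-=-=-=-} (applied to $H_a\in\mathscr{R}_{\Theta}$ and $H_b\in(B_0\times\ldots\times B_{n-1})\cap\mathscr{R}_{\Theta}$), we conclude that $H_1,H_2\in (B_0\times\ldots\times B_{n-1})\cap\mathscr{R}_{\Theta}$; indeed, that corollary tells us the intersection is a binary absorbing subuniverse of $\mathscr{R}_{\Theta}$ with term $T$, and the defining equation (\ref{alalkdhjhfr}) of $usepol$ guarantees that $T$ is applied coordinatewise. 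Reading off the $0$-th coordinate using the bit-definition of $usepol_{n,2}$, we get $H_1(0)=\langle 0,T(a,b)\rangle$ and $H_2(0)=\langle 0,T(b,a)\rangle$, so $T(a,b)$ and $T(b,a)$ are witnesses for membership of $c_1,c_2$ in $B$, as required.

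The point where the proof strength jumps from $V^0$ (used in Corollary \ref{==-=-=-=-=-=-=-=-}) to $V^1$ is the very first move above: the premise $a\in D_0=pr_0(\mathscr{R}_{\Theta})$ and $b\in B$ are $\Sigma^{1,b}_1$-conditions, so extracting the witness homomorphisms $H_a,H_b$ requires $\Sigma^{1,b}_1$-comprehension (or equivalently, witnessing for bounded existential string quantifiers), which is available in $V^1$ but not in $V^0$. No induction is needed: once the witnesses are in hand the rest of the argument is purely a $\Sigma^{1,b}_0$-manipulation of the string functions $usepol_{n,2}$, $T$, and the bit-level projection. The only potential obstacle is bookkeeping around the fact that $T$ might not force $H_1,H_2$ to satisfy $(B_0\times\ldots\times B_{n-1})(H_1)$ unless $H_a$ also lies in $(B_0\times\ldots\times B_{n-1})$ coordinate-wise; this is handled by the asymmetric absorption of Corollary \ref{==-=-=-=-=-=-=-=-}, which only demands one of the two arguments to be in the absorbing set, matching the one-sided role of $H_b$ here.
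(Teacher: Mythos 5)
Your proof is correct and follows essentially the same route as the paper: both identify $(B_0\times\ldots\times B_{n-1})\cap\mathscr{R}_{\Theta}$ as a binary absorbing subuniverse of $\mathscr{R}_{\Theta}$ with term $T$ (via Lemma \ref{alalalur6456} / Corollary \ref{==-=-=-=-=-=-=-=-}) and then use $pr_0(\mathscr{R}_{\Theta})=D_0$ to transport the absorption to the $0$-th coordinate; you merely spell out the final element-chasing with $usepol_2$ that the paper leaves implicit. The only quibble is your explanation of why $V^1$ is needed: one does not need $\Sigma^{1,b}_1$-comprehension to instantiate an existential hypothesis, but rather to reason about the $\Sigma^{1,b}_1$-defined set $B$ itself (e.g., its closure under $\Omega$); this does not affect the correctness of the argument.
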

\begin{proof}
  Consider $\mathscr{R}_{\Theta}$ as a solution set to some CSP instance $\Theta=(\mathcal{X},\ddot{\mathcal{A}})$. Since every $B_i$ is an absorbing subuniverse of $D_i$, $(B_0\times...\times B_{n-1})\cap \mathscr{R}_{\Theta}$ is a solution set for an instance $\Theta'$ defined similarly to (\ref{djdjjduuuhs6745ed}) with a domain set $D'=\{B_0,...,B_{n-1}\}$ (for all $i,j<n$, $E^{ij}_{\ddot{\mathcal{A}}'}=E^{ij}_{\ddot{\mathcal{A}}}$). Then from $pr_0(\mathscr{R}_{\Theta})=D_0$ it follows that $B$ is an absorbing subuniverse of $D_0$.
\end{proof}

\begin{lemmach}[Lemma 7.3, \cite{zhuk2020proof}]
  Suppose that $\mathscr{R}_{\Theta}$ is a non-trivial absorbing subuniverse of $D_0\times...\times D_{n-1}$. Then $V^2$ proves that for some $i$ there exists a non-trivial absorbing subuniverse $B_i$ in $D_i$ with the same term.
\end{lemmach}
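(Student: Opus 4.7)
The plan is a case analysis on whether $\mathscr{R}_{\Theta}$ is subdirect. First I would verify that for every $i<n$ the one-coordinate projection $B_i := \{a \in D_i : \exists H \in \mathscr{R}_{\Theta},\, H(i) = \langle i,a\rangle\}$ is a binary absorbing subuniverse of $D_i$ with the term $T$: given $b \in B_i$ witnessed by some $H \in \mathscr{R}_{\Theta}$ and any $c \in D_i$, pick any map $H'$ with $H'(i) = \langle i,c\rangle$, and note that $usepol_2(T,H,H')$ and $usepol_2(T,H',H)$ both lie in $\mathscr{R}_{\Theta}$ by the absorption property in the product, so projecting onto coordinate $i$ yields $T(b,c), T(c,b) \in B_i$. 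If some $B_i \subsetneq D_i$, we are done.

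Otherwise $\mathscr{R}_{\Theta}$ is subdirect. In this case I would apply $\Sigma^{1,b}_{2}$-MIN (available in $V^2$) to the set of indices $k \leq n-1$ for which the projection $pr_{0,\dots,k}(\mathscr{R}_{\Theta})$ is a proper subset of $D_0 \times \cdots \times D_k$; non-triviality of $\mathscr{R}_{\Theta}$ provides $k = n-1$ as a witness, and subdirectness rules out $k = 0$, so the minimum $k$ satisfies $k \geq 1$. Fix a tuple $(a_0,\dots,a_k)$ lying outside $pr_{0,\dots,k}(\mathscr{R}_{\Theta})$, and use the minimality of $k$ to obtain some $c \in D_k$ and $H_0 \in \mathscr{R}_{\Theta}$ with $H_0(j) = \langle j,a_j\rangle$ for $j<k$ and $H_0(k) = \langle k,c\rangle$. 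Define
\begin{equation*}
B_k := \{b \in D_k : \exists H \in \mathscr{R}_{\Theta},\ H(j) = \langle j, a_j\rangle \text{ for all } j<k \text{ and } H(k)=\langle k,b\rangle\},
\end{equation*}
which contains $c$ and misses $a_k$, hence is non-empty and proper. To show $B_k$ absorbs $D_k$ with $T$, take $b \in B_k$ witnessed by some $H \in \mathscr{R}_{\Theta}$ and arbitrary $d \in D_k$, and form $H'$ agreeing with $H$ outside coordinate $k$ and with $H'(k) = \langle k,d\rangle$; then $usepol_2(T,H,H') \in \mathscr{R}_{\Theta}$ by absorption, and the idempotence of $T$ (inherited from the idempotent WNU operation $\Omega$) forces this map to agree with $H$ at every $j \neq k$ and to send $k$ to $T(b,d)$, so $T(b,d) \in B_k$; the symmetric computation with $usepol_2(T,H',H)$ yields $T(d,b) \in B_k$.

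The main obstacle is justifying the theory level. The condition ``$pr_{0,\dots,k}(\mathscr{R}_{\Theta}) \neq D_0 \times \cdots \times D_k$'' unfolds to the existence of a tuple $(a_0,\dots,a_k)$ together with the universal non-existence of an extending $H \in \mathscr{R}_{\Theta}$; since $H \in \mathscr{R}_{\Theta}$ is a $\Sigma^{1,b}_0$-condition, this is a $\Pi^{1,b}_1$-property of $k$, hence $\Sigma^{1,b}_2$, so minimizing over $k$ requires $\Sigma^{1,b}_{2}$-MIN, obtained from the $\Sigma^{1,b}_2$-IND axiom of $V^2$. This matches the general principle stated at the start of Section \ref{'a'tteyrghwgdfh} that handling projections of solution sets raises the relevant induction level by one. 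The remaining reasoning — the properness and non-emptiness of $B_k$, verification of the absorption identity, and the appeal to idempotence of $T$ — is $\Sigma^{1,b}_0$ manipulation that the base theory already handles.
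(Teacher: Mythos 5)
Your argument is correct, but it reaches the conclusion by a different route than the paper. The paper proves the lemma by induction on the number of variables $t$: at arity $s+1$ it either finds the last unary projection proper, or fixes the last coordinate to a value $a$ whose fiber is proper, invokes Lemma \ref{alalalur6456} to see that the resulting $s$-ary relation is again a non-trivial absorbing subuniverse, re-packages it as the solution set of a smaller CSP instance $\Theta'$, and applies an induction hypothesis that is universally quantified over all instances of size $s$ (hence the $\mathfrak{B}(\Sigma^{1,b}_1)$ formula $\phi(t)$ and $\Sigma^{1,b}_2$-IND). You instead dispose of the non-subdirect case immediately and, in the subdirect case, apply $\Sigma^{1,b}_2$-MIN once to the least prefix length $k$ at which $pr_{0,\dots,k}(\mathscr{R}_{\Theta})$ becomes proper, extracting the absorbing set $B_k$ directly as a fiber over a fixed prefix $(a_0,\dots,a_{k-1})$. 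The two arguments are mirror images combinatorially (the paper peels coordinates off the end, you pin down a prefix), and both land in $V^2$ since MIN and IND for $\Sigma^{1,b}_2$ are interchangeable there; but your version is more economical in that it never has to re-encode the restricted relation as a new CSP instance, nor quantify the induction hypothesis over all smaller instances --- everything happens inside the single given $\mathscr{R}_{\Theta}$. One small point you should add: the lemma's conclusion requires $B_k$ to be a subuniverse in the sense of $BAsubU$, i.e.\ closed under $\Omega$, not only to satisfy the absorption identity for $T$; this follows by the same computation you give for $usepol_2$, applied to $\omega(H_1,\dots,H_m)$ for witnesses $H_1,\dots,H_m$ of elements of $B_k$, using idempotence of $\Omega$ to keep the first $k$ coordinates fixed at $a_0,\dots,a_{k-1}$.
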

\begin{proof}
The lemma is proved by induction on the arity of $\mathscr{R}_{\Theta}$. $\mathscr{R}_{\Theta}$ is a non-trivial binary absorbing subuniverse of $D_0\times...\times D_{n-1}$ if there exists a solution to $\Theta_{null}$ that is not a solution to $\Theta$, and there exists a binary term $T$ such that for any two homomorphisms $H$ from $\mathcal{X}$ to $\ddot{\mathcal{A}}$ and $H_{null}$ from $\mathcal{X}_{null}$ to $\ddot{\mathcal{A}}_{null}$, both maps $usepol_2(T,H,H_{null})$ and $usepol_2(T,H_{null},H)$ are homomorphisms from $\mathcal{X}$ to $\ddot{\mathcal{A}}$.

Consider the following $\mathfrak{B}(\Sigma^{1,b}_1)$-formula $\phi(t)$. Here, as fixed parameters, we use $\Gamma_{\mathcal{A}}$ and $\mathbb{A}=(A,\Omega)$. Induction goes on the size of the instance, $V_{\mathcal{X}} = V_{\mathcal{X}_{null}}=t$. Witnesses ($\forall$ quantification) in $\Sigma^{1,b}_2$-induction corresponding to $t$ are sets $E_{\mathcal{X}}, E_{\mathcal{X}_{null}}$, $\ddot{\mathcal{A}}$ with the set of vertices of size $\langle t,l\rangle$.
\begin{equation}
  \begin{gathered}
     \hspace {0pt}\phi(t):=\hspace{0pt} \big(V_{\mathcal{X}} = V_{\mathcal{X}_{null}}=t\wedge DG(\Theta)\wedge DG_{null}(\Theta_{nul}) \wedge \\
     \wedge Inst(\Theta,\Gamma_{\mathcal{A}})\wedge Inst(\Theta_{null},\Gamma_{\mathcal{A}})\big) \wedge \\
 \wedge \big(\bigvee_{T\in \Pi^2_{\mathcal{A}}}\forall H, H_{null}\leq \langle t,\langle t,l\rangle\rangle,\,\ddot{HOM}(\mathcal{X},\ddot{\mathcal{A}},H)\wedge\ddot{HOM}(\mathcal{X}_{null},\ddot{\mathcal{A}}_{null},H_{null})\rightarrow\\
     \hspace {0pt}\rightarrow \ddot{HOM}(\mathcal{X},\ddot{\mathcal{A}},usepol_2(T,H,H_{null})) \wedge\ddot{HOM}(\mathcal{X},\ddot{\mathcal{A}},usepol_2(T,H_{null},H)\big)\\
     \wedge\big(\exists H'\leq \langle t,\langle t,l\rangle\rangle,\,\ddot{HOM}(\mathcal{X}_{null},\ddot{\mathcal{A}}_{null},H')\wedge \neg \ddot{HOM}(\mathcal{X},\ddot{\mathcal{A}},H')\big)\implies\\
     \hspace {0pt}\implies\exists i<t,\bigvee_{B<l} B_i\subsetneq A_i\wedge BAsubU(B_i,A_i,T,\Omega).
  \end{gathered}
\end{equation}
The expression in the first brackets says that $\Theta$ and $\Theta_{null}$ are CSP instances over $\Gamma_{\mathcal{A}}$ on $t$ variables, and $\Theta_{null}$ is an empty instance. This is obviously true for $t=1$. Suppose that it is true for $t=s$ and consider $t=s+1$. If the projection of $\mathscr{R}_{\Theta}$ on the $s+1$ coordinate is not $D_{s+1}$, then $\mathscr{R}_{\Theta}^{s+1}$ is a binary absorbing subuniverse due to the definition of a non-trivial absorbing subuniverse. Otherwise, choose any element $a\in D_{s+1}$ such that $\mathscr{R}_{\Theta}$ does not contain all homomorphisms sending $s+1$ to $a$ and consider the new $s$-ary relation $\mathscr{R}_{\Theta'}=\{(a_0,...,a_s)| (a_0,...,a_s,a)\in \mathscr{R}_{\Theta}\}$. Due to Lemma \ref{alalalur6456}, it is a non-trivial binary absorbing subuniverse for $D_0\times...\times D_{s}$ ($T$ is idempotent). But note that $\mathscr{R}_{\Theta'}$ is also a solution set to a specific CSP instance $\Theta'$ on $s$ variables: we just remove from $E_{\mathcal{X}}$ of $\Theta$ all edges adjacent to $x_{s+1}$ and for all $j<s+1$ restrict $E^{j(s+1)}_{\ddot{\mathcal{A}}}$ and $E^{(s+1)j}_{\ddot{\mathcal{A}}}$ to pairs ending and starting with $a$. Thus, we can apply the induction hypothesis. 
\end{proof}

The following lemma from \cite{krajicek_1995} gives us a more precise theory, which we will not define here. 
\begin{lemmach}[\cite{krajicek_1995}]
  For all $i\geq 1$, the theory $T^i_2$ proves the induction scheme IND for $\mathcal{B}(\Sigma^{b}_i)$-formulas.
\end{lemmach}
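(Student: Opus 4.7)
The plan is to reduce induction for $\mathcal{B}(\Sigma^b_i)$-formulas to the $\Sigma^b_i$-minimization principle, which is interderivable with $\Sigma^b_i$-IND over the base theory and is therefore available in $T^i_2$. The guiding observation is that any Boolean combination of $\Sigma^b_i$-formulas is determined by the profile of truth values of a fixed finite list of $\Sigma^b_i$-sub\-for\-mu\-las, so the "induction on a Boolean combination" can be located inside a bounded disjunction of cases, each of which is individually tractable by the principles $T^i_2$ already proves.

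Concretely, I would first recall (or quickly re-derive) $\Sigma^b_i$-MIN and, dually, $\Pi^b_i$-MAX in $T^i_2$; this is standard. Then fix $\phi(x)\in\mathcal{B}(\Sigma^b_i)$ and write $\phi(x)\equiv B(\alpha_1(x),\ldots,\alpha_k(x))$ for a fixed Boolean function $B:\{0,1\}^k\to\{0,1\}$ and $\Sigma^b_i$-formulas $\alpha_1,\ldots,\alpha_k$. Arguing by contradiction, assume $\phi(0)$, the induction step $\forall y(\phi(y)\to\phi(y+1))$, and $\neg\phi(z)$ for some fixed $z$. Observe that $\neg\phi(y)$ is equivalent to the constantly-sized disjunction, indexed by profiles $\bar b\in B^{-1}(0)$, of the conjunctions $\bigwedge_{b_j=1}\alpha_j(y)\wedge\bigwedge_{b_j=0}\neg\alpha_j(y)$. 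For each fixed profile $\bar b$ I would locate the least $y_{\bar b}\leq z$ satisfying the profile (if any), and then let $y^*$ be the minimum of the finitely many $y_{\bar b}$'s. By $\phi(0)$ we have $y^*>0$, by minimality $\phi(y^*-1)$ holds, and the inductive step then yields $\phi(y^*)$, contradicting $\neg\phi(y^*)$.

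The hard part is the intermediate minimization step: the formula $\neg\phi(y)$ is not in $\Sigma^b_i$ (nor $\Pi^b_i$), so $\Sigma^b_i$-MIN cannot be applied to it directly. The standard workaround, which I would carry out, is to treat one profile at a time and, within a given profile, to absorb the negated-$\alpha_j$ conjuncts by pairing their (bounded) universal witnesses into the existential block of the remaining $\Sigma^b_i$-conjuncts; this reshapes each profile formula into an honest $\Sigma^b_i$-formula, to which $\Sigma^b_i$-MIN applies. Since the number of profiles is a fixed constant (independent of $i$, $x$, $z$), taking the minimum over the resulting finitely many $y_{\bar b}$'s is purely a metatheoretic case analysis and requires no additional induction inside $T^i_2$. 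The resulting $y^*$ then produces the contradiction above, completing the derivation of IND for $\phi$.
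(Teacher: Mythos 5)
First, note that the paper does not prove this lemma at all: it is imported verbatim from Krajíček's book, so there is no proof in the paper to compare against and your argument has to stand on its own. Its outer frame is fine — deriving IND for $\phi$ from the existence of a least $y$ with $\neg\phi(y)$ is standard, the equivalence of $\Sigma^b_i$-IND with $\Sigma^b_i$-MIN (and with $\Pi^b_i$-MIN) over the base theory is correct, and the observation that the minimum of a finite union is the minimum of the componentwise minima is unobjectionable.

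The gap is in the step you yourself flag as "the hard part": the claim that each profile formula $\bigwedge_{b_j=1}\alpha_j(y)\wedge\bigwedge_{b_j=0}\neg\alpha_j(y)$ can be "reshaped into an honest $\Sigma^b_i$-formula" by absorbing the universal witnesses of the $\neg\alpha_j$ into the existential block. This conversion does not exist. Each $\neg\alpha_j$ is $\Pi^b_i$, i.e.\ begins with an ordinary (not sharply) bounded universal quantifier; prenexing a conjunction of a $\Sigma^b_i$ with a $\Pi^b_i$ formula yields a $\Sigma^b_{i+1}$ prefix, and the bounded replacement/collection scheme that would let you pull the existential block outward is available in $S^i_2$ only for sharply bounded universal quantifiers. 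Were your conversion possible, one could conjoin an arbitrary $\Pi^b_i$ formula with a trivially true $\Sigma^b_i$ formula and conclude $\Pi^b_i\subseteq\Sigma^b_i$, collapsing the bounded hierarchy at level $i$. Consequently the minimization you need — a least element for $\{y\le z:\neg\phi(y)\}$, equivalently for each mixed profile set — is precisely the $\mathcal{B}(\Sigma^b_i)$-MIN principle, and your argument offers no genuine reduction of it to $\Sigma^b_i$-MIN and $\Pi^b_i$-MIN. Your scheme does go through in the special case where $\phi$ is a single conjunction of literals over the $\alpha_j$ (then $\neg\phi$ is a disjunction of $\Sigma^b_i$ and $\Pi^b_i$ formulas, each of which can be minimized separately), but for a general Boolean combination the DNF of $\neg\phi$ has mixed conjunctive clauses and the argument breaks exactly there; the actual proof in Krajíček's book has to work around this obstacle rather than through it.
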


\begin{lemmach}[Lemma 7.5, \cite{zhuk2020proof}]
  Suppose $D^{(1)}$ is an absorbing reduction of a CSP instance $\Theta$ and a relation $\mathscr{R}_{\Theta}\leq D_{0}\times...\times D_{n-1}$ is subdirect. Then $V^0$ proves that $\mathscr{R}_{\Theta}^{(1)}$ is not empty.
\end{lemmach}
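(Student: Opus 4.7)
The plan is to construct an explicit witness in $\mathscr{R}_{\Theta}^{(1)}$ by combining a family of subdirect homomorphisms through repeated application of the binary absorbing term. Since $D^{(1)}$ is an absorbing reduction, there is a binary polymorphism $T \in Pol(\Gamma_{\mathcal{A}})$ for which each $D_i^{(1)} \subseteq D_i$ binary absorbs $D_i$ via $T$. The definition of binary absorption itself entails that each $D_i^{(1)}$ is non-empty, so I would first pick, by $\Sigma^{1,b}_0$-MIN, an element $b_i \in D_i^{(1)}$ for every $i < n$. Subdirectness of $\mathscr{R}_{\Theta}$ then supplies, for each $i$, a homomorphism $H_i$ with $\ddot{HOM}(\mathcal{X}, \ddot{\mathcal{A}}, H_i)$ and $H_i(i) = \langle i, b_i\rangle$; these can be collected into one two-dimensional string $\bar H$ of size polynomial in $n$ and $l$ by another application of $\Sigma^{1,b}_0$-MIN.

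Next I would define a sequence $G_0, G_1, \dots, G_{n-1}$ by $G_0 := H_0$ and $G_{i+1} := usepol_2(T, G_i, H_{i+1})$, and prove by induction on $i$ the conjunction that (a) $\ddot{HOM}(\mathcal{X}, \ddot{\mathcal{A}}, G_i)$ and (b) $G_i(j) \in D_j^{(1)}$ for every $j \leq i$. Clause (a) is Lemma \ref{lslsdkleijf} applied to $T$. Clause (b) is the core: for $j < i$ the induction hypothesis gives $G_{i-1}(j) \in D_j^{(1)}$, so binary absorption in its ``first argument in $B$'' form yields $T(G_{i-1}(j), H_i(j)) \in D_j^{(1)}$; for $j = i$ we use $H_i(i) \in D_i^{(1)}$ and the ``second argument in $B$'' form of absorption. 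Setting $H := G_{n-1}$, we obtain a homomorphism from $\mathcal{X}$ to $\ddot{\mathcal{A}}$ that lands in $D_j^{(1)}$ at every coordinate $j$, hence a witness of $\mathscr{R}_{\Theta}^{(1)} \neq \emptyset$.

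The main obstacle is fitting the recursive construction of $(G_i)_{i<n}$ into $V^0$, whose tools are $\Sigma^{1,b}_0$-IND, $\Sigma^{1,b}_0$-MIN and $\Sigma^{1,b}_0$-COMP rather than genuine recursion of variable depth. My strategy is to encode the whole sequence in a single two-dimensional string $G$ with $row(i, G) = G_i$, defined in one shot by $\Sigma^{1,b}_0$-COMP from $\bar H$: the bit-defining axiom expresses $G_i(v)$ as the value at $v$ of the specific $(i+1)$-fold iterated composition of $T$ applied coordinate-wise, and since $T$ and all of its iterated compositions on the fixed set $A$ are constant-size objects recorded in the $\mathbb{A}$-Monster set, the defining formula involves only bounded first-order quantification over $\bar H$ together with large but constant-size disjunctions indexed by that list. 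Once $G$ exists, clauses (a) and (b) become $\Sigma^{1,b}_0$ statements in $i$, so the induction itself falls under the $\Sigma^{1,b}_0$-IND available in $V^0$.
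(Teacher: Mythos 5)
Your construction is essentially the paper's proof: the paper likewise composes a solution, coordinate by coordinate, with solutions supplied by subdirectness that hit $D^{(1)}_i$, using $usepol_2(T,\cdot,\cdot)$ and both halves of the binary-absorption identity so that coordinates already landed in $D^{(1)}$ stay there; the only difference is that the paper phrases the same $n$-step iteration as a proof by contradiction, while you build the witness $G_{n-1}$ directly. The one point where you go beyond the paper --- the claim that the whole sequence $(G_i)_{i<n}$ is obtainable by a single application of $\Sigma^{1,b}_0$-COMP because ``all iterated compositions of $T$'' are constant-size objects --- is also the shakiest step: the coordinatewise fold of $T$ over $n$ given homomorphisms is an iterated product in a finite transformation monoid, and for variable iteration depth this is not in general $\Sigma^{1,b}_0$- (i.e.\ $\mathrm{AC}^0$-) definable, so the constant-size disjunctions over the $\mathbb{A}$-Monster set do not by themselves yield the bit-definition. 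That said, the paper's own proof offers nothing more than ``applying this procedure at most $n$ times,'' so your write-up is no less rigorous than the one it replaces, and the underlying mathematical argument is correct.
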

\begin{proof}
  Suppose the opposite. Then $\mathscr{R}_{\Theta}\cap D^{(1)}_{0}\times...\times D^{(1)}_{n-1}=\emptyset$, where for each $i<n$, $D^{(1)}_i$ is a binary absorbing subuniverse of $D_i$ with the term $T$. This means that for every homomorphism $H_{i_1}\in \mathscr{R}_{\Theta}$ there exists $i<n$ such that $H_{i_1}(i)=\langle i,a\rangle$, where $a\in D_i\backslash D^{(1)}_{i}$. Since $\mathscr{R}_{\Theta}$ is subdirect, for any $i<n$ and for any $b\in D^{(1)}_{i}$ there exists a homomorphism $H_{i_2}$ such that $H_{i_2}(i)=\langle i,b\rangle$. Composing these homomorphisms, since $T$ is a polymorphism, we again get a homomorphism $H_{i_3}=usepol_2(T,H_{i_1},H_{i_2})$ such that $H_{i_3}(i)=\langle i,c\rangle$ for some $c\in D^{(1)}_{i}$. Consider any $j\neq i <n$ such that $H_{i_3}(j)=\langle j,d\rangle$ with $d\notin D^{(1)}_{j}$. Again, since $\mathscr{R}_{\Theta}$ is subdirect, there must be a homomorphism $H_{i_4}$ such that $H_{i_4}(j)=\langle j,e\rangle$ for some $e\in D^{(1)}_{j}$. We compose these two homomorphisms and get $H_{i_5} =usepol_2(T,H_{i_3},H_{i_4})$ such that $H_{i_5}(i)=\langle i,f\rangle$ and $H_{i_5}(j)=\langle j,g\rangle$ with $f\in D^{(1)}_{i}$ and $g\in D^{(1)}_{j}$. Applying this procedure at most $n$ times, we will get a homomorphism in $\mathscr{R}_{\Theta}^{(1)}$, a contradiction.
\end{proof}

\subsubsection{Properties of a central subuniverse on $\mathbb{A}^n$}
We say that a solution set to a CSP instance $\Theta$ over $\Gamma_{\mathcal{A}}$ on $n$ variables, $\mathscr{R}_{\Theta}\leq D_0\times...\times D_{n-1}$ is a central absorbing subuniverse of $D_0\times...\times D_{n-1}$ if there exists a ternary term operation $S\in Pol(\Gamma_{\mathcal{A}})$ such that $\mathscr{R}_{\Theta}$ absorbs $D_0\times...\times D_{n-1}$ with $S$ and for any map $H\notin \mathscr{R}_{\Theta}$ such that for all $i<n$, $H(i)=\langle i,a_i\rangle$, the following conditions hold. Construct two new CSP instances, $\Theta_L$ and $\Theta_R$ as follows: 
\begin{itemize}
  \item We double the number of variables, $D_{L}=D_R=\{D_0,...,D_{2n-1}\}$ with $D_i=D_{n+i}$ for every $i<n$;
  \item For the instance $\Theta_L$ we copy digraph $\mathcal{X}$ for the first $n$ variables, and we join it with a path $\mathcal{P}_n$ of length $n$, namely $E_{\mathcal{P}_n}(x_n-1,x_n),...,E_{\mathcal{P}_n}(x_{2n-2},x_{2n-1})$. We copy $\ddot{\mathcal{A}}$ for the first $n$ variables and set the constraints for the path: $E^{x_n-1,x_n}_{\ddot{\mathcal{A}}_n}$ as a full relation and for the next edges $E^{x_n,x_{n+1}}_{\ddot{\mathcal{A}}_n}=\{(a_0,a_1)\}$, $E^{x_{n+1},x_{n+2}}_{\ddot{\mathcal{A}}_n}=\{(a_1,a_2)\}$,..., $E^{x_{2n-2},x_{2n-1}}_{\ddot{\mathcal{A}}_n}=\{(a_{n-2},a_{n-1})\}$. Note that such a CSP instance is a CSP instance over $\Gamma_{\mathcal{A}}$ since $\Omega$ is idempotent and every single element of $D_0\times...\times D_{n-1}$ is a subuniverse;
  \item For the instance $\Theta_R$ we do the same but in inversed manner (copy $\mathcal{X}$ and $\ddot{\mathcal{A}}$ for the variables $n,...,2n-1$);
\end{itemize}
Then from solutions to $\Theta_L$ and $\Theta_R$ by applying $\Omega$ we cannot generate the map $H$ such that for $i<n$, $H(i)=H(n+i)=\langle i,a_i\rangle$. Note that to define this fact, we need the third-order induction. The number of maps on $2n$ variables is $l^{2n}$, so to define the generated algebra $\mathscr{Cl}_{\mathscr{R}_{\Theta_L}\cup \mathscr{R}_{\Theta_R}}$ it is needed to consider at most $l^{2n}$ steps which we encode by strings:
\begin{equation}
  \begin{gathered}
     \hspace{0pt}\forall H\leq\langle 2n\langle 2n,l\rangle \rangle,\,\mathscr{Cl}^{[\emptyset]}_{\mathscr{R}_{\Theta_L}\cup \mathscr{R}_{\Theta_R}}(H)\iff \mathscr{R}_{\Theta_L}(H)\vee \mathscr{R}_{\Theta_R}(H)\wedge\\
     \forall \emptyset\leq T<2n\lceil log_2l\rceil, \forall H\leq\langle 2n\langle 2n,l\rangle \rangle,\\
     \mathscr{Cl}^{[S(T)]}_{\mathscr{R}_{\Theta_L}\cup \mathscr{R}_{\Theta_R}}(H)\iff\mathscr{Cl}^{[T]}_{\mathscr{R}_{\Theta_L}\cup \mathscr{R}_{\Theta_R}}(H) \vee\\
     \hspace {0pt}\vee \exists H_1,...,H_m\leq\langle 2n\langle 2n,l\rangle \rangle,\,\mathscr{Cl}^{[T]}_{\mathscr{R}_{\Theta_L}\cup \mathscr{R}_{\Theta_R}}(H_1)\wedge...\wedge \mathscr{Cl}^{[T]}_{\mathscr{R}_{\Theta_L}\cup \mathscr{R}_{\Theta_R}}(H_m)\wedge \\
     \hspace {0pt}\wedge \omega(H_1,...,H_m) = H.
  \end{gathered}
\end{equation}
The analogous definition can be formulated for any projection of the solution set $\mathscr{R}^{i_1,...,i_s}_{\Theta}$. 

\begin{lemmach}[Composed Lemma 7.6, \cite{zhuk2020proof}, and Theorem 6.9, \cite{zhuk2020strong}]
Suppose $\mathscr{R}_{\Theta}$ is defined by a $pp$-formula $\Theta(x_0,...,$ $x_{n-1})$ and $\Theta'$ is obtained from $\Theta$ by replacement of some constraint relations $\rho_1,...,\rho_s$ by constraint relations $\rho'_1,...,\rho'_s$ such that $\rho_k'$
is a central subuniverse for $\rho_k$ with a term operation $S$ for every $k$. Then $V^1$ proves that the relation $\mathscr{R}_{\Theta'}$ defined by
$\Theta'(x_0,...,x_{n-1})$ is a central subuniverse for $\mathscr{R}_{\Theta}$ with the term operation $S$.
\end{lemmach}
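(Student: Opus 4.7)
The plan is to decompose the central subuniverse property into its two defining conditions and prove each in turn, following the blueprint of Lemma 7.1 for the absorbing part and a projection argument for the center part.

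First I would establish that $\mathscr{R}_{\Theta'}$ ternary absorbs $\mathscr{R}_{\Theta}$ with the term $S$. Taking any three maps $H_1,H_2,H_3 \in \mathscr{R}_{\Theta}$ with at most one of them outside $\mathscr{R}_{\Theta'}$, let $H = usepol_3(S,H_1,H_2,H_3)$. For every constraint $E^{ij}_{\ddot{\mathcal{A}}'}$ of $\Theta'$ that is either unchanged (where $\rho_k = \rho'_k$) or the replacement of some $\rho_k$ by $\rho'_k$, the restriction $H|_{(i,j)}$ equals $S$ applied coordinatewise to the restrictions of $H_1,H_2,H_3$. Since each $\rho'_k$ is a ternary absorbing subuniverse of $\rho_k$ with $S$, and at most one of $H_1|_{(i,j)},H_2|_{(i,j)},H_3|_{(i,j)}$ lies outside $\rho'_k$, the result lies in $\rho'_k$. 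Thus $H\in \mathscr{R}_{\Theta'}$. This entire argument is a finite combination of $\Sigma^{1,b}_0$ reasoning and is provable in $V^1$ (or even weaker).

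Next I would prove the center condition. Let $H \in \mathscr{R}_{\Theta}$ with $H \notin \mathscr{R}_{\Theta'}$; by definition of $\Theta'$ there exists a constraint index $k$ such that $H$ restricted to the scope $(i,j)$ of $\rho_k$ lies in $\rho_k \setminus \rho'_k$. I would argue by contradiction. Assume the pair $(H,H)$ lies in the subuniverse generated by $(\{H\}\times \mathscr{R}_{\Theta'}) \cup (\mathscr{R}_{\Theta'}\times\{H\})$ inside $\mathscr{R}_{\Theta}\times \mathscr{R}_{\Theta}$, i.e. in the third-order closure $\mathscr{Cl}^{[t]}$ for some $t$. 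The key observation is that projection to the scope $(i,j)$ is a homomorphism commuting with $\omega$: if $\bar{H} = \omega(\bar{H}_1,\ldots,\bar{H}_m)$ then $\bar{H}|_{(i,j)} = \omega(\bar{H}_1|_{(i,j)},\ldots,\bar{H}_m|_{(i,j)})$, which follows directly from the bit-definition (\ref{alalkdhjhfr}) of $\omega$. Therefore, by induction on $t$ (formalizable using $\Sigma^{\mathscr{B}}_1$-IND applied to the definition of $\mathscr{Cl}^{[t]}$), projecting every pair of maps in the derivation sequence yields a derivation of $(H|_{(i,j)}, H|_{(i,j)})$ inside $\rho_k \times \rho_k$ starting from pairs in $(\{H|_{(i,j)}\}\times\rho'_k)\cup(\rho'_k\times \{H|_{(i,j)}\})$, i.e. $(H|_{(i,j)}, H|_{(i,j)}) \in Sg(\{H|_{(i,j)}\}\times\rho'_k \cup \rho'_k\times\{H|_{(i,j)}\})$. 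This contradicts $\rho'_k$ being a central subuniverse of $\rho_k$ (a constant-size fact about $\mathbb{A}$ available from the $\mathbb{A}$-monster set).

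The main obstacle will be the bookkeeping for the projection argument: to apply it rigorously inside the theory I must formalize the closure $\mathscr{Cl}^{[t]}$ as a third-order object (as in the definition for the $\Theta_L,\Theta_R$ construction), establish that the projection operator is a class-valued function commuting with $\omega$, and then do induction on the closure level $t \le 2n\lceil \log_2 l\rceil$. This pushes the natural theory up to $W^1_1$, matching the rest of the paper, rather than the $V^1$ mentioned in the statement; as in earlier lemmas, the $V^i$ attribution appears to refer to the second-order portion of the argument (the absorbing property and the individual steps of the projection), while the third-order closure machinery is handled by the ambient theory. Once this framework is set up, the contradiction at the $k$-th constraint is immediate from the pre-established central property of $\rho'_k$ in $\rho_k$ drawn from the $\mathbb{A}$-monster set.
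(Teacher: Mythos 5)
Your two-part decomposition and the substance of both parts match the paper's proof: the absorbing half is handled exactly as in Lemma 7.1, and the center half is a contradiction obtained by projecting the would-be generation of $(H,H)$ down to a single binary constraint where $H$ violates $\Theta'$, then invoking the constant-size fact that $(a,b,a,b)\notin Sg(\{(a,b)\}\times E^{ij}_{\ddot{\mathcal{A}}'}\cup E^{ij}_{\ddot{\mathcal{A}}'}\times\{(a,b)\})$ for $(a,b)\in E^{ij}_{\ddot{\mathcal{A}}}\setminus E^{ij}_{\ddot{\mathcal{A}}'}$. Mathematically there is nothing wrong here.

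The gap is in the last step of your formalization, and it costs you the theory named in the statement. You insist on materializing the generated subalgebra as the third-order closure $\mathscr{Cl}^{[T]}$ and running $\Sigma^{\mathscr{B}}_1$-induction on $T$, which lands you in $W^1_1$; you then rationalize the lemma's ``$V^1$ proves'' as referring only to part of the argument. That is not the paper's position: the Remark immediately following the proof states explicitly that the proof never uses the third-order object and therefore remains in $V^1$. The trick you are missing is that the entire projection argument can be carried out on second-order data. For the fixed algebra $\mathbb{A}$, each set $Cl^{l^4}_{X^{ij}_{(a,b)}}$ is a $4$-ary relation on $A$ of constant size, built by $\Sigma^{1,b}_0$-comprehension and $\Sigma^{1,b}_1$-induction up to the constant $l^4$; the predicate ``for every constraint $(i,j)$ of $\mathcal{X}$, the tuple $(H_1(i),H_1(j),H_2(i),H_2(j))$ lies in $Cl^{l^4}_{X^{ij}_{(a_i,a_j)}}$'' is a $\Sigma^{1,b}_0$ property of the pair of strings $(H_1,H_2)$ that holds of all generators and is preserved by $\omega$ coordinatewise (because each $Cl^{l^4}_{X^{ij}}$ is $\Omega$-closed, a fact from the $\mathbb{A}$-monster set). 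The failure of centrality is thus refuted by exhibiting this second-order invariant and contradicting it at the one constraint with $(a_i,a_j)\notin E^{ij}_{\ddot{\mathcal{A}}'}$; no quantification over, or induction along, a third-order closure is ever needed. Replacing your $\mathscr{Cl}^{[T]}$ bookkeeping with this invariant recovers the $V^1$ bound and completes the proof as stated.
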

\begin{proof}
  Consider two CSP instances $\Theta = (\mathcal{X},\ddot{\mathcal{A}})$ and $\Theta' = (\mathcal{X}',\ddot{\mathcal{A}}')$, where $\mathcal{X}'=\mathcal{X}$ (again, the analogous reasoning can be applied to projections). Due to the assumption, there exists a ternary term $S\in Pol(\Gamma_{\mathcal{A}})$ such that for each $i<n$, $D'_{i}\subseteq D_i$ ternary absorbs $D_i$ and for all $i,j<n$ with $E_{\mathcal{X}}(i,j)$, $E^{ij}_{\ddot{\mathcal{A}}'}\subseteq E^{ij}_{\ddot{\mathcal{A}}}$ ternary absorbs $E^{ij}_{\ddot{\mathcal{A}}}$. The defining relation is analogous to (\ref{djdjjduuuhs6745ed}). Also, for each $i<n$, and for any $a\in D_i\backslash D'_i$, $(a,a)\notin Sg(X^{i}_{(a)})$, where
 \begin{equation}
X^{i}_{(a)}=\{\{a\}\times D'_i,D'_i\times \{a\}\},
\end{equation} 
  and for all $i,j<n$ with $E_{\mathcal{X}}(i,j)$, and for every $(a,b)\in E^{ij}_{\ddot{\mathcal{A}}}\backslash E^{ij}_{\ddot{\mathcal{A}}'}$ we have $(a,b,a,b)\notin Sg(X^{ij}_{(a,b)})$, where 
\begin{equation}
  X^{ij}_{(a,b)}=\{(a,b)\}\times E^{ij}_{\ddot{\mathcal{A}}'}\cup E^{ij}_{\ddot{\mathcal{A}}'}\times \{(a,b)\}.
\end{equation}
We will show how to define $Sg(X^{ij}_{(a,b)})$ analogously to a central subuniverse we defined before, using the closure operator $Cl(X^{ij}_{(a,b)})$. Since $\mathbb{A}$ is finite of size $l$ and $|X^{ij}_{(a,b)}|=2|E^{ij}_{\ddot{\mathcal{A}}'}|$, we do not need more than $(l^4-2|E^{ij}_{\ddot{\mathcal{A}}'}|)$ steps. Not to depend on $E^{ij}_{\ddot{\mathcal{A}}'}$, choose the value $l^4$. For set $X^{ij}_{(a,b)}\leq \langle\langle l,l\rangle, \langle l,l\rangle\rangle$, iteratively define the following set $Cl^{t}_{X^{ij}_{(a,b)}}$ up to $l^4$:
\begin{equation}
\begin{gathered}
 \hspace {0pt}\forall c,d,e,f<l,\,Cl^{0}_{X^{ij}_{(a,b)}}(c,d,e,f)\iff X^{ij}_{(a,b)}(c,d,e,f)\wedge\\
 \hspace {0pt}\wedge \forall 0<t<l^4, \forall c,d,e,f<l,\, Cl^{t}_{X^{ij}_{(a,b)}}(c,d,e,f)\iff Cl^{t-1}_{X^{ij}_{(a,b)}}(c,d,e,f)\vee\\
 \hspace{0pt}\vee\exists c_1,...,c_m,d_1,...,d_m,e_1,...,e_m,f_1,...,f_m\in A,\\
 \hspace {0pt}Cl^{t-1}_{X^{ij}_{(a,b)}}(c_1,d_1,e_1,f_1)\wedge...\wedge Cl^{t-1}_{X^{ij}_{(a,b)}}(c_m,d_m,e_m,f_m)\wedge\\
 \hspace{0pt}\wedge\Omega(c_1,...,c_m)=c\wedge \Omega(d_1,...,d_m)=d\wedge \Omega(e_1,...,e_m)=e\wedge \Omega(f_1,...,f_m)=f.
\end{gathered}
\end{equation}
Therefore, 
\begin{equation}\label{akakudu467763}
\begin{gathered}
     \forall i,j<n, \forall a,b<l,\,(E_{\mathcal{X}}(i,j)\wedge E^{ij}_{\ddot{\mathcal{A}}}(a,b)\wedge \neg E^{ij}_{\ddot{\mathcal{A}}'}(a,b))\rightarrow \neg Cl^{l^4}_{X^{ij}_{(a,b)}}(a,b,a,b).
\end{gathered}
\end{equation}
That $\mathscr{R}_{\Theta'}$ absorbs $\mathscr{R}_{\Theta}$ with ternary operation $S$ can be proved as in Lemma \ref{alalalur6456}. Suppose that $\mathscr{R}_{\Theta'}$ is not a central subuniverse of $\mathscr{R}_{\Theta}$. 
Then there exists a homomorphism $H$ from $\mathcal{X}$ to $\ddot{\mathcal{A}}$, sending each $i<n$ to $\langle i,a_i\rangle$, such that it is not a homomorphism from $\mathcal{X}'$ to $\ddot{\mathcal{A}}'$, and if we construct two instances $\Theta_L'$ and $\Theta_R'$ as above, the subalgebra generated by these two instances contains a homomorphism $H'$ sending both $i$ and $n+i$ to $\langle i,a_i\rangle$ for each $i<n$. It would mean that for all $i,j<n$ such that $E_{\mathcal{X}}(i,j)$, elements of the form $(a_i,a_j,a_i,a_j)\in D_{i}\times D_{j}\times D_{n+i}\times D_{n+j}$ must belong to sets
$Cl^{l^4}_{X^{ij}_{(a_i,a_j)}}$. But at least one $(a_i,a_j)$ must be from $E^{ij}_{\ddot{\mathcal{A}}}\backslash E^{ij}_{\ddot{\mathcal{A}}'}$ (otherwise $H$ is a homomorphism from $\mathcal{X}'$ to $\ddot{\mathcal{A}}'$). That contradicts with (\ref{akakudu467763}).
\end{proof}

\begin{remark}
  Note that in the above proof, we do not even use the definition of the third-order object. We lowered requirements to second-order objects and showed the contradiction. Thus, we can remain in $V^1$. 
\end{remark}

\begin{corollary}[Corollary 6.9.2, \cite{zhuk2020strong}]\label{akjshdgfuyfgwef}
Suppose $\mathscr{R}_{\Theta}\leq D_0\times...\times D_{n-1}$ is a relation such that $pr_0(\mathscr{R}_{\Theta})=D_0$ and $C=pr_0((C_0\times...\times C_{n-1})\cap \mathscr{R}_{\Theta})$, where $C_i$ is a central subuniverse in $D_i$ for every $i$. Then $V^1$ proves that $C$ is a central subuniverse in $D_0$.
\end{corollary}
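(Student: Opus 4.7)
The plan is to follow the template of Corollary~\ref{akjshdutjgjf} (its binary-absorbing analogue), using the preceding Composed Lemma as the principal engine. I realize $\mathscr{R}_\Theta$ as the solution set of an instance $\Theta=(\mathcal{X},\ddot{\mathcal{A}})$ with domain $(D_0,\dots,D_{n-1})$, and form $\Theta'=(\mathcal{X},\ddot{\mathcal{A}}')$ by simultaneously replacing each $D_i$ with $C_i$ and each $E^{ij}_{\ddot{\mathcal{A}}}$ with its restriction to $C_i\times C_j$, which remains a central subuniverse of $E^{ij}_{\ddot{\mathcal{A}}}$ with the same ternary term (a fact available from the $\mathbb{A}$-monster set). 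The Composed Lemma then produces a ternary term $S$ such that $\mathscr{R}_{\Theta'}=(C_0\times\cdots\times C_{n-1})\cap\mathscr{R}_\Theta$ is a central subuniverse of $\mathscr{R}_\Theta$ with $S$.

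Ternary absorption of $C=pr_0(\mathscr{R}_{\Theta'})$ in $D_0$ with $S$ follows immediately from the surjectivity $pr_0(\mathscr{R}_\Theta)=D_0$: given $a\in D_0$ and $c_1,c_2\in C$, surjectivity supplies $H\in\mathscr{R}_\Theta$ with $H(0)=\langle 0,a\rangle$ and $H_1,H_2\in\mathscr{R}_{\Theta'}$ with $H_j(0)=\langle 0,c_j\rangle$; then $usepol_3(S,H,H_1,H_2)$, $usepol_3(S,H_1,H,H_2)$ and $usepol_3(S,H_1,H_2,H)$ all lie in $\mathscr{R}_{\Theta'}$ by the absorbing half of its centrality, and projecting to coordinate $0$ yields $S(a,c_1,c_2)$, $S(c_1,a,c_2)$, $S(c_1,c_2,a)\in C$. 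This step is a $\Sigma^{1,b}_0$-argument already handled by $V^0$.

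The main step is the non-generation condition $\neg Cl^{l^2}_{\{a\}\times C\cup C\times\{a\}}(a,a)$ for $a\in D_0\setminus C$. I argue by contradiction: fix $H\in\mathscr{R}_\Theta$ with $H(0)=\langle 0,a\rangle$, so that $H\in\mathscr{R}_\Theta\setminus\mathscr{R}_{\Theta'}$ because $a\notin C$. Form the set $Y=\{H\}\times\mathscr{R}_{\Theta'}\cup\mathscr{R}_{\Theta'}\times\{H\}$ of pairs in $\mathscr{R}_\Theta\times\mathscr{R}_\Theta$, whose image under the coordinate-$0$ projection is exactly $\{a\}\times C\cup C\times\{a\}$. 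Mirroring the definition of $Cl^{l^2}_X$ by $\Sigma^{1,b}_1$-induction, build the iterated componentwise $\Omega$-closure of $Y$ inside $\mathscr{R}_\Theta^2$; because projection commutes with every closure step, the $0$-image of the lifted closure coincides with $Cl^{l^2}_{\{a\}\times C\cup C\times\{a\}}$. The assumed generation of $(a,a)$ therefore lifts to some pair $(H^*,H^{**})$ in the lifted closure with $H^*(0)=H^{**}(0)=\langle 0,a\rangle$.

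The principal obstacle is that the centrality of $\mathscr{R}_{\Theta'}$ in $\mathscr{R}_\Theta$ forbids only the specific pair $(H,H)$ from the lifted closure, rather than every witness whose $0$-projection equals $(a,a)$. To close this gap I plan to exploit that since $a\notin C=pr_0(\mathscr{R}_{\Theta'})$, the map $H$ must fail to land in $C_i$ at some coordinate $i\geq 1$. At such an $i$, the centrality of $C_i$ in $D_i$ (available from the $\mathbb{A}$-monster set) constrains the $(i,i)$-projection of the lifted closure so as to preclude $(H(i),H(i))$ from appearing there. Coupling this per-coordinate constraint with the $(0,0)$-constraint forced by the assumption $(a,a)\in Cl^{l^2}_{\{a\}\times C\cup C\times\{a\}}$ should rule out every witness $(H^*,H^{**})$, producing the desired contradiction. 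This coordinate-coupling step — the interplay between the centrality of $\mathscr{R}_{\Theta'}$ at coordinate $0$ and the given centrality of each $C_i$ at coordinates $i\geq 1$ — is where I anticipate the genuine technical work, and it must be phrased carefully to remain within the $\Sigma^{1,b}_1$-induction available in $V^1$.
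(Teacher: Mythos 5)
Your overall strategy coincides with the paper's: the paper derives this corollary from the Composed Lemma (Lemma 7.6/Theorem 6.9) in exactly the way Corollary \ref{akjshdutjgjf} is derived from Lemma \ref{alalalur6456}, leaving the projection step to the parenthetical ``the analogous reasoning can be applied to projections'' inside the Composed Lemma's proof. Your treatment of the absorption half is correct and is indeed a $\Sigma^{1,b}_0$/$V^0$ argument. The problem is the non-generation condition, where you have correctly located the crux but your proposed ``coordinate coupling'' does not close it.

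Concretely, split into cases. If $a\in D_0\setminus C_0$, no lifting is needed at all: $C\subseteq C_0$, so $Sg(\{a\}\times C\cup C\times\{a\})\subseteq Sg(\{a\}\times C_0\cup C_0\times\{a\})$ by monotonicity of $Sg$, and the latter omits $(a,a)$ by the centrality of $C_0$ in $D_0$. The genuinely hard case is $a\in C_0\setminus C$, i.e.\ $a$ lies in $C_0$ but no solution through $a$ stays inside $C_0\times\dots\times C_{n-1}$. There your fix fails: for a chosen $H\in\mathscr{R}_\Theta$ with $H(0)=\langle 0,a\rangle$, the coordinate $i\geq 1$ at which $H(i)\notin C_i$ yields, via the centrality of $C_i$ in $D_i$, only the exclusion of lifted pairs $(H^*,H^{**})$ with $H^*(i)=H^{**}(i)=H(i)$; but the witness pair produced by lifting the generation of $(a,a)$ is pinned only at coordinate $0$, and nothing forces it to agree with $H$ at coordinate $i$. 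The two constraints live at different coordinates and never interact, so no contradiction is obtained. (Note also that the ``bad'' coordinate $i$ depends on $H$, while the lifted closure mixes many elements of $\mathscr{R}_{\Theta'}$ whose behaviour at coordinate $i$ is unconstrained beyond lying in $C_i$.) This missing case is precisely the existential-quantifier step of Theorem 6.9 in \cite{zhuk2020strong}; to stay faithful to the paper you should either invoke the projection form of the Composed Lemma directly (applying it to the pp-formulas $\exists x_1\dots\exists x_{n-1}\Theta(x_0,\dots,x_{n-1})$ and $\exists x_1\dots\exists x_{n-1}\Theta'(x_0,\dots,x_{n-1})$, which define $D_0$ and $C$ respectively), or import the actual argument for that step from \cite{zhuk2020strong} rather than the per-coordinate contradiction you sketch.
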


\begin{corollary}[Corollary 6.9.3, \cite{zhuk2020strong}]\label{66666666666666}
Suppose $\mathscr{R}_{\Theta}\leq D_0\times...\times D_{n-1}$ and $C_i$ is a central subuniverse for every $i$. Then $V^1$ proves that $(C_0\times...\times C_{n-1})\cap \mathscr{R}_{\Theta}$ is a central subuniverse for $\mathscr{R}_{\Theta}$.
\end{corollary}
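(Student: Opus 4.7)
The plan is to derive this corollary directly from the preceding Composed Lemma 7.6 by expressing the intersection as a CSP instance obtained through constraint replacement. Represent $\mathscr{R}_\Theta$ as the solution set of a CSP instance $\Theta=(\mathcal{X},\ddot{\mathcal{A}})$ with domains $D_0, \ldots, D_{n-1}$ and edge constraints $E^{ij}_{\ddot{\mathcal{A}}}$. Construct a modified instance $\Theta'=(\mathcal{X},\ddot{\mathcal{A}}')$ by replacing each domain $D_i$ with $C_i$ and each edge constraint $E^{ij}_{\ddot{\mathcal{A}}}$ (for $E_{\mathcal{X}}(i,j)$) by $E^{ij}_{\ddot{\mathcal{A}}'} := E^{ij}_{\ddot{\mathcal{A}}} \cap (C_i \times C_j)$. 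By construction, $\mathscr{R}_{\Theta'} = (C_0 \times \cdots \times C_{n-1}) \cap \mathscr{R}_\Theta$, so the conclusion reduces to showing that $\mathscr{R}_{\Theta'}$ is a central subuniverse of $\mathscr{R}_\Theta$.

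To invoke the composed lemma, I would verify that each replaced constraint is a central subuniverse of the original one with a common ternary term $S$. The hypothesis already gives that each $C_i$ is a central subuniverse of $D_i$. For the edge constraints, I would argue that $C_i \times C_j$ is a central subuniverse of $D_i \times D_j$ (centrality is preserved under direct products), and that its intersection with the subuniverse $E^{ij}_{\ddot{\mathcal{A}}}$ of $D_i\times D_j$ remains central in $E^{ij}_{\ddot{\mathcal{A}}}$. Both of these are properties internal to the fixed algebra $\mathbb{A}$, so they live in the $\mathbb{A}$-Monster set $Y$: the lists $\mathrm{X}_{\mathcal{A}}$ together with precomputed witnesses for centrality of all candidate subuniverses are given in advance, and consulting them yields the required ternary term.

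Once the hypotheses are in place, the preceding Composed Lemma applies directly and yields that $\mathscr{R}_{\Theta'}$ is a central subuniverse of $\mathscr{R}_\Theta$ with the term $S$. The argument is carried out in $V^1$ because the composed lemma is proved in $V^1$, and all additional verifications reduce to $\Sigma^{1,b}_0$-checks of precomputed data about $\mathbb{A}$.

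The main obstacle is the choice of a \emph{single} ternary absorbing term $S$ witnessing centrality for every $C_i$ and every $E^{ij}_{\ddot{\mathcal{A}}} \cap (C_i\times C_j)$ simultaneously, since each central subuniverse may a priori come with its own absorbing operation. This is resolved by a standard universal-algebra composition argument: from the finitely many candidate terms attached to central subuniverses of $\mathbb{A}$ one builds by composition a single term that absorbs all of them at once. Since this is a statement entirely about the fixed algebra $\mathbb{A}$, it is carried out once at the meta-level and included in $Y$, so that inside $V^1$ we simply read off the existence of such $S$ without internal proof.
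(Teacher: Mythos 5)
Your proposal is correct and matches the intended derivation: the paper states this corollary without a separate proof precisely because it follows from the composed Lemma 7.6/Theorem 6.9 by replacing constraint relations with central subuniverses, exactly as you do, with the common-term issue delegated to the precomputed $\mathbb{A}$-Monster set. One small simplification: it suffices to replace only the unary constraints $D_i$ by $C_i$ (leaving the binary constraints untouched, since the trivial replacement $\rho'_k=\rho_k$ is a central subuniverse of itself), which already gives $\mathscr{R}_{\Theta'}=(C_0\times\cdots\times C_{n-1})\cap\mathscr{R}_{\Theta}$ and spares you the verification that $E^{ij}_{\ddot{\mathcal{A}}}\cap(C_i\times C_j)$ is central in $E^{ij}_{\ddot{\mathcal{A}}}$.
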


\begin{lemmach}[Lemma 7.7, \cite{zhuk2020proof}] Suppose $\mathscr{R}_{\Theta}$ is a non-trivial center of $D_0\times...\times D_{n-1}$. Then $V^2$ proves that for some $i$ there exists a non-trivial center $C_i$ of $D_i$.
  
\end{lemmach}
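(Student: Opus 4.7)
The plan is to mirror the structure of Lemma 7.3's proof with induction on the arity $t = V_{\mathcal{X}}$, using a $\mathfrak{B}(\Sigma^{1,b}_2)$-IND formula $\psi(t)$ which is the ternary analog of the $\phi(t)$ written out for Lemma 7.3. The polymorphism witness $T \in \Pi^2_{\mathcal{A}}$ is replaced by a ternary $S \in \Pi^3_{\mathcal{A}}$, the ``absorbs'' clause is the three-position absorption of $\mathscr{R}_\Theta$ inside $\mathscr{R}_{\Theta_{null}}$ by $S$, and the center clause asserts that for every $H \in \mathscr{R}_{\Theta_{null}} \setminus \mathscr{R}_\Theta$ the diagonal pair $(H,H)$ fails to lie in the third-order closure class $\mathscr{Cl}^{[2t\lceil\log_2 l\rceil]}_{\mathscr{R}_{\Theta_L(H)} \cup \mathscr{R}_{\Theta_R(H)}}$ built from the two ``path-fattened'' instances $\Theta_L(H), \Theta_R(H)$ introduced just before the statement. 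The conclusion of $\psi(t)$ is $\exists i < t,\ \bigvee_{C<l} C_i \subsetneq D_i \wedge CRsubU(C_i, D_i, S, \Omega)$. The base case $t=1$ is immediate with $C_0 := \mathscr{R}_\Theta \subseteq D_0$.

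For the inductive step from $s$ to $s+1$ I split on whether the projection $pr_{s+1}(\mathscr{R}_\Theta)$ is all of $D_{s+1}$. If it is not, say $pr_{s+1}(\mathscr{R}_\Theta) = C \subsetneq D_{s+1}$, I claim $C$ is itself a non-trivial center of $D_{s+1}$ with the same $S$: ternary absorption of $D_{s+1}$ by $C$ with $S$ is obtained by coordinatewise projection through preimages in $\mathscr{R}_\Theta$ and $\mathscr{R}_{\Theta_{null}}$, and the center condition is obtained by lifting a hypothetical derivation $(a,a) \in Sg(\{a\}\times C \cup C \times \{a\})$ for $a \in D_{s+1} \setminus C$ to a derivation in the product squared, using the Composed Lemma 7.6 replacement scheme to substitute a single fixed $H \notin \mathscr{R}_\Theta$ with $H(s+1) = \langle s+1,a\rangle$ for every occurrence of the $a$-side; the resulting element of $Sg(\{H\}\times \mathscr{R}_\Theta \cup \mathscr{R}_\Theta \times \{H\})$ then contradicts the center property of $\mathscr{R}_\Theta$ in the product. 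If instead $pr_{s+1}(\mathscr{R}_\Theta) = D_{s+1}$, I pick any $a \in D_{s+1}$ together with $H \notin \mathscr{R}_\Theta$ satisfying $H(s+1) = \langle s+1, a\rangle$ (such $H$ exists by non-triviality of $\mathscr{R}_\Theta$ and surjectivity of the projection), and form the $s$-variable CSP instance $\Theta'$ obtained from $\Theta$ by deleting all edges adjacent to $x_{s+1}$ and restricting each $E^{j,s+1}_{\ddot{\mathcal{A}}}$, $E^{s+1,j}_{\ddot{\mathcal{A}}}$ to pairs ending, respectively starting, with $a$. Since $\{a\}$ is a central absorbing subuniverse of $D_{s+1}$ under the idempotent $\Omega$, Composed Lemma 7.6 produces $\mathscr{R}_{\Theta'}$ as a non-trivial center of $D_0 \times \ldots \times D_s$, and the induction hypothesis $\psi(s)$ delivers the required $i < s$ and $C_i$.

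The main obstacle is Case~(a): unlike the binary absorbing setting, where the projection of an absorbing subuniverse of a product is immediately an absorbing subuniverse of each factor, the projection of a center need not automatically remain a center, so the lift of the putative $(a,a)$-derivation must consistently replace every occurrence of $a$ by the \emph{same} fixed $H$. This consistency is precisely what the ternary analog of the replacement lemma guarantees, and its use here is the one step where we exploit something strictly stronger than what appeared in the binary absorbing proof. A secondary complication is calibrating quantifier complexity: the center clause of $\psi(t)$ mentions the third-order generated subalgebra $\mathscr{Cl}^{[T]}$ iterated through parameters $T$ of length $O(t\log l)$, and after existentially quantifying the witnessing trace the clause sits at $\Sigma^{1,b}_2$; hence $\psi(t)$ is $\mathfrak{B}(\Sigma^{1,b}_2)$ and the induction step requires the full strength of $V^2$, just as Lemma 7.3 used $T^2_2$-style $\mathfrak{B}(\Sigma^{1,b}_1)$-IND one level lower.
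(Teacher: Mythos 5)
The paper itself states Lemma~7.7 with no proof at all, so there is no ``paper argument'' to match; your plan of redoing the Lemma~7.3 induction with $T$ replaced by $S$ and an added center clause is the natural thing to try, and your case~(b) (full projection onto the last coordinate, pass to the fiber over a suitable $a$) is sound in substance. Two remarks on case~(b), though: a singleton $\{a\}$ is \emph{not} in general a central (or even ternary absorbing) subuniverse of $D_{s+1}$ under an idempotent $\Omega$, so the appeal to Composed Lemma~7.6 is not the right justification; what actually works is the direct computation parallel to the paper's Lemma~7.3 proof --- appending $a$ to both components is an isomorphism onto the subalgebra of pairs whose last coordinates both equal $a$ (idempotency), so both the absorption and the $Sg$-condition for the fiber lift to $\mathscr{R}_{\Theta}$ and contradict its centrality.

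The genuine gap is in case~(a). You claim $C=pr_{s+1}(\mathscr{R}_{\Theta})\subsetneq D_{s+1}$ is a center of $D_{s+1}$ and justify the $Sg$-condition by lifting a derivation of $(a,a)\in Sg(\{a\}\times C\cup C\times\{a\})$ to $Sg(\{H\}\times\mathscr{R}_{\Theta}\cup\mathscr{R}_{\Theta}\times\{H\})$ for a fixed $H$ with $H(s+1)=\langle s+1,a\rangle$. Replacing each generator $(a,c)$ by $(H,H_c)$ for some preimage $H_c\in\mathscr{R}_{\Theta}$ of $c$ and re-running the term does produce an element $(G_1,G_2)$ of the lifted subalgebra with $G_1(s+1)=G_2(s+1)=a$; but the coordinates other than $s+1$ of $G_1$ and $G_2$ are whatever the term does to the chosen preimages, so there is no reason to have $G_1=G_2$, let alone $G_1=G_2=H$. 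The center condition on $\mathscr{R}_{\Theta}$ only forbids the single diagonal pair $(H,H)$ from lying in $Sg(\{H\}\times\mathscr{R}_{\Theta}\cup\mathscr{R}_{\Theta}\times\{H\})$, so no contradiction follows. This is exactly the point where the central case is harder than the binary absorbing one: that the projection of a central subuniverse of a product onto a coordinate is again central is a real theorem about central subuniverses (it is proved in Zhuk's strong-subalgebras paper via the equivalent ``center'' characterization), not something that falls out of the naive lift, and none of the results formalized in this paper (Composed Lemma~7.6, Corollaries~6.9.2--6.9.3, which concern replacement of constraints and projections of intersections with boxes of centrals under a surjectivity hypothesis) covers it. You would need to either formalize that projection lemma separately or route case~(a) through the center characterization; as written, the induction step does not go through.
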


\subsubsection{Properties of a PC subuniverse on $\mathbb{A}^n$}
While considering the properties of a PC subuniverse on $D_0\times...\times D_{n-1}$, we keep in mind that we further use all auxiliary lemmas in the proof of the main statements about the next reduction, and all reductions are constructed based on separate domains upward to subuniverses on their product. That is, there is no need to consider arbitrary PC subuniverses on $D_0\times...\times D_{n-1}$ or on the solution set $\mathscr{R}_{\Theta}$. The problem here is that we have no definition of a PC algebra for an arbitrary product or a PC congruence on that product (recall that in the definition of PC algebra, we use fixed constraint language $\Gamma_{\mathcal{A}}$). Thus, any time in the proofs we consider an arbitrary algebra $\mathbb{D}$ and its arbitrary PC congruence $\sigma$, we may assume that $\mathbb{D}$ is a subuniverse $\mathscr{R}\leq D_0\times...\times D_{n-1}$ and $\sigma$ is an extended congruence for some PC congruence $\sigma_i$ on a domain $D_i$.

\begin{lemmach}[Lemma 6.20, \cite{zhuk2020strong}]\label{lalaksjdf}
  Suppose $\mathscr{R}_{\Theta}$ is a subdirect relation on $D_0\times...\times D_{n-1}$ and $E_i$ is a PC subuniverse of $D_i$ for every $i$. Then $W^1_1$ proves that $(E_0\times...\times E_{n-1})\cap \mathscr{R}_{\Theta}$ is a PC subuniverse of $\mathscr{R}_{\Theta}$.
\end{lemmach}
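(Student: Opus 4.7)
The plan is to reduce the statement to the second characterisation of a PC subuniverse (Definition \ref{';s;sdllfjjg}), under which each $E_i$ is, in the non-trivial case, an equivalence class of a congruence $\theta_i := \Sigma_{D_i,k_i}$ such that $D_i/\theta_i$ is isomorphic to a product $D_i/\sigma_{i,0}\times\cdots\times D_i/\sigma_{i,s_i-1}$ of simple PC quotients. Both the congruences $\theta_i$ and the witnessing isomorphisms $H_i$ are elements of the $\mathbb{A}$-Monster set, so their selection is just a disjunction over finitely many constants and requires no separate existence proof. The cases where some $E_i = \emptyset$ or where every $E_i = D_i$ reduce immediately to the first two clauses of the definition applied to the intersection, so I would henceforth assume the non-trivial situation.

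Using $\Sigma^{\mathscr{B}}_0$-3COMP, I would introduce the intersection extended congruence $\mathscr{C}_{\cap_n \theta_i^{ext}}$ on $\mathscr{R}_{\Theta}$ as in (\ref{laallaskskjd}), together with the factor class $\mathscr{R}_{\Theta}/\mathscr{C}_{\cap_n \theta_i^{ext}}$ and the factor operation $\mathscr{F}_{\Omega/\cap_n \theta_i^{ext}}$ built as in Section \ref{a;kjdgh;qeirgy;gh;w}. Earlier claims on $\mathscr{C}_{\cap_n \sigma_i^{ext}}$ already supply that this is a congruence on the restricted third-order algebra, so no further reasoning is needed here. Now set $I := (E_0\times\cdots\times E_{n-1})\cap \mathscr{R}_{\Theta}$. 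If $I = \emptyset$, the first clause of the third-order analogue of Definition \ref{';s;sdllfjjg} is satisfied and we are done. Otherwise, fix any $H^* \in I$: unwinding the definition of $1EqClass$ in (\ref{slslooeojduk}) and using that each $E_i$ is a single $\theta_i$-block, one checks that $I$ coincides exactly with the $\mathscr{C}_{\cap_n \theta_i^{ext}}$-block of $H^*$. This is a $\Sigma^{1,b}_0$ equivalence provable already in $V^0$.

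The main obstacle is to exhibit a third-order isomorphism $ISO_{alg}^{3,3}$ between $(\mathscr{R}_{\Theta}/\mathscr{C}_{\cap_n \theta_i^{ext}}, \mathscr{F}_{\Omega/\cap_n \theta_i^{ext}})$ and an algebra of the shape demanded by Definition \ref{';s;sdllfjjg}, namely a product of simple PC algebras without non-trivial binary absorbing subuniverse or centre. Composing the coordinatewise $\theta_i$-quotient map with the monster-set isomorphisms $H_i$ produces a candidate homomorphism $\mathscr{H}$ from the quotient into $\prod_{i,j} D_i/\sigma_{i,j}$, and the subdirectness of $\mathscr{R}_{\Theta}$ transfers directly to subdirectness of the image of $\mathscr{H}$. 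What remains is the structural fact that a subdirect subalgebra of a product of simple PC algebras (each without non-trivial binary absorbing subuniverse or centre) is itself isomorphic to such a product after identifying isomorphic factors. Rather than prove this inside $W^1_1$, I would extract it from the $\mathbb{A}$-Monster set: all simple PC quotients of subalgebras of $\mathbb{A}$ and all isomorphisms between them are already listed, so the existence of the required decomposition becomes a big disjunction over constants, and its correctness reduces to verifying a constant-size system of compatibility equations between monster-set witnesses. That verification is a $\Sigma^{1,b}_0$ statement provable in $V^0$, hence in $W^1_1$.

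Combining the block identification from the previous paragraph with the product decomposition of the quotient yields the third-order analogue of Definition \ref{';s;sdllfjjg} for $I$ with the congruence $\mathscr{C}_{\cap_n \theta_i^{ext}}$ and the witnessing product isomorphism assembled from the monster-set data; this certifies in $W^1_1$ that $(E_0\times\cdots\times E_{n-1})\cap \mathscr{R}_{\Theta}$ is a PC subuniverse of $\mathscr{R}_{\Theta}$.
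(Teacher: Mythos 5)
Your reduction of the trivial cases, the construction of the extended congruences via $\Sigma^{\mathscr{B}}_0$-3COMP, and the identification of $(E_0\times\cdots\times E_{n-1})\cap\mathscr{R}_\Theta$ with a single block of $\mathscr{C}_{\cap_n\theta_i^{ext}}$ restricted to $\mathscr{R}_\Theta$ are all fine. The gap is in your third paragraph. The structural fact you need --- that a subdirect subalgebra of $\prod_{i,j} D_i/\sigma_{i,j}$, a product of simple PC algebras, is again isomorphic to such a product --- concerns an object on $n\cdot s$ coordinates, where $n$ is the number of variables of the instance. This is not a constant-size object: it is a subalgebra of $A^{ns}$, of size exponential in the input. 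The $\mathbb{A}$-Monster set, by the paper's own ground rules, only stores data about objects built from the fixed algebra $\mathbb{A}$ (subuniverses, congruences, quotients, maps between quotients of subalgebras of $\mathbb{A}$), each of constant size in $l$. So the decomposition you want cannot be read off as ``a big disjunction over constants'' with ``constant-size compatibility equations''; it would have to be proved inside the theory, essentially by redoing Lemma~\ref{fkjerfqlerfhuurfhlrf} (Lemma 6.19 of Zhuk) for a variable number of factors and then assembling a genuine $ISO_{alg}^{3,3}$ witness. That is exactly the machinery the paper reserves for Lemma~\ref{alalsokd7ju} (Lemma 6.21), and it is nontrivial there.

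The paper's own proof avoids this entirely by working with the first definition of a PC subuniverse rather than the second. Each $E_i$ is a block of $\delta_i=\cap_j\sigma_{i_j}$ with each $\sigma_{i_j}$ a PC congruence on $D_i$; one extends each \emph{single} $\sigma_{i_j}$ to the congruence $\mathscr{C}_{\theta_i}=\cap_{k\neq i}\mathscr{C}_{\nabla^{ext}_{D_k}}\cap\mathscr{C}_{\sigma_{i_j}^{ext}}$ and observes that, by subdirectness, projection onto the $i$-th coordinate gives an isomorphism from $(\mathscr{R}_\Theta/\mathscr{C}_{\theta_i},\mathscr{F}_{\Omega/\theta_i})$ onto the \emph{single} constant-size PC algebra $(D_i/\sigma_{i_j},\Omega/\sigma_{i_j})$. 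Each restricted $\mathscr{C}^{\mathscr{R}_\Theta}_{\sigma_{i_j}^{ext}}$ is thereby an extended PC congruence, and $(E_0\times\cdots\times E_{n-1})\cap\mathscr{R}_\Theta$ is an intersection of blocks of these, which is Definition I verbatim. No product decomposition of the full quotient, and hence no appeal to a non-constant structural theorem, is ever needed. I would recommend you either switch to this route or accept that your route requires importing and proving the Lemma 6.19-type decomposition in the theory, which is a substantially larger undertaking than your sketch suggests.
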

\begin{proof}

Due to the assumption, there is a PC subuniverse $E_i$ of $D_i$ for every $i$. If some $E_i$ is empty, then we are done. Otherwise, each $E_i$ is a block of some congruence $\delta_i=\cap_j\sigma_{i_j}$ on $D_i$, where $\sigma_{i_1},...,\sigma_{i_s}$ are PC congruences on $D_i$. We can extend every $\sigma_{i_j}$ to the product $D_0\times...\times D_{n-1}$, and consider the congruence $\mathscr{C}_{\theta_i}=\cap_{k\neq i}\mathscr{C}_{\nabla^{ext}_{D_k}}\cap \mathscr{C}_{\sigma_{i_j}^{ext}}$. Since $\mathscr{R}_{\Theta}$ is subdirect, it is easy to check that a third-order map $\mathscr{H}$ that sends every $H\in \mathscr{R}_{\Theta}/\mathscr{C}_{\theta_i}$ to $a\in D_i/\sigma_{ij}$ such that $H(i) = \langle i,a\rangle$, is an isomorphism from $(\mathscr{R}_{\Theta}/\mathscr{C}_{\theta_i}, \mathscr{F}_{\Omega/\theta_i})$ to $(D_i/\sigma_{ij},\Omega/\sigma_{ij})$. So $(\mathscr{R}_{\Theta}/\mathscr{C}_{\theta_i}, \mathscr{F}_{\Omega/\theta_i})$ acts like a PC algebra, and we will call $\mathscr{C}^{\mathscr{R}_{\Theta}}_{\sigma_{i_j}^{ext}}$ restricted to $\mathscr{R}_{\Theta}$ an extended PC congruence for $(\mathscr{R}_{\Theta}, \mathscr{F}_{\Omega})$. 

It follows that for each $E_i$, $\mathscr{E}_i^{ext}\cap \mathscr{R}_{\Theta}$ is an intersection of blocks of extended PC congruences restricted to $(\mathscr{R}_{\Theta}, \mathscr{F}_{\Omega})$, as well as $\mathscr{E}_0^{ext}\cap...\cap \mathscr{E}_{n-1}^{ext}\cap \mathscr{R}_{\Theta}$, and we can call it an extended PC subuniverse. By the definition of every $\mathscr{E}_i^{ext}$, it is just $E_0\times...\times E_{n-1}\cap \mathscr{R}_{\Theta}$.
\end{proof}
Note that from the same reasoning, it follows that $E_0\times...\times E_{n-1}$ is an extended PC subuniverse of $D_0\times...\times D_{n-1}$.

\begin{lemmach}[Lemma 6.18, \cite{zhuk2020strong}]\label{a;lsdkfjyr647ryv}
  Suppose that $D$ is a PC algebra and $\mathscr{R}_{\Theta}\leq D^n$ contains all constant tuples $(a,a,...,a)$. Then $V^0$ proves that $\mathscr{R}_{\Theta}$ can be represented as a conjunction of binary relations of the form $x_i=x_j$. 
\end{lemmach}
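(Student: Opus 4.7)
The plan is to exploit polynomial completeness of $D$: by Theorem \ref{slsl88yhdurh} the ternary discriminator $t$ is a polynomial of $D$, so $t(x,y,z)=g(x,y,z,c_1,\ldots,c_r)$ with $g\in Clone(\Omega)$ and $c_1,\ldots,c_r\in D$; all of this data is in the $\mathbb{A}$-Monster set and thus $V^0$-accessible. The hypothesis that every constant tuple $\bar c_k=(c_k,\ldots,c_k)$ lies in $\mathscr{R}_{\Theta}$ then gives, together with closure under $\Omega$ coordinate-wise, that $\mathscr{R}_{\Theta}$ is closed under the coordinate-wise discriminator $\bar t$. The same recipe---absorbing the constants of any polynomial of $D$ into $\mathscr{R}_{\Theta}$ via its diagonal tuples---shows that $\mathscr{R}_{\Theta}$ is closed under \emph{every} coordinate-wise operation $f\colon D^m\to D$, since every such $f$ is a polynomial of $D$.

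The second stage is to define the $\Pi^{1,b}_1$-set $S=\{(i,j)<n^2:\ \forall H\in\mathscr{R}_{\Theta},\ H(i)\text{ and }H(j)\text{ agree on the }D\text{-coordinate}\}$ and the conjunction $R^*=\{H:(i,j)\in S\Rightarrow H\text{ agrees at }i\text{ and }j\}$, which is by construction a conjunction of $x_i=x_j$ clauses. The inclusion $\mathscr{R}_{\Theta}\subseteq R^*$ is immediate. For the converse, fix $H\in R^*$ with $H(p)=\langle p,a_p\rangle$ and, for each pair $(i,j)\notin S$, select a separating witness $H^{(i,j)}\in\mathscr{R}_{\Theta}$; listing them as $H^{(1)},\ldots,H^{(m)}$ with $m\leq\binom{n}{2}$, form the profile $\vec r_p=(H^{(1)}(p),\ldots,H^{(m)}(p))\in D^m$ at each coordinate $p<n$. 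By the definition of $S$ one has $\vec r_p=\vec r_q$ iff $(p,q)\in S$ iff $a_p=a_q$, so the rule $\vec r_p\mapsto a_p$ extends to a well-defined $f\colon D^m\to D$. By the first stage $f$ preserves $\mathscr{R}_{\Theta}$ coordinate-wise, so $f(H^{(1)},\ldots,H^{(m)})\in\mathscr{R}_{\Theta}$, and its $p$-th coordinate is $f(\vec r_p)=a_p$, giving $H\in\mathscr{R}_{\Theta}$.

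The hard part is the selection of the separating witnesses $H^{(i,j)}$: the statement ``$\exists H'\in\mathscr{R}_{\Theta}$ with $H'(i)\neq H'(j)$'' is $\Sigma^{1,b}_1$, while $V^0$ provides $\Sigma^{1,b}_0$-COMP only. My expectation is that one reads this lemma parametrically---the collection $\{H^{(i,j)}\}_{(i,j)\notin S}$ is supplied together with $\mathscr{R}_{\Theta}$, or equivalently the lemma is understood for $\mathscr{R}_{\Theta}$ already accompanied by such choice data---since once these witnesses are in hand, the remainder (profile formation, definition of $f$ as a concrete $\Sigma^{1,b}_0$-term in $\bar t$ and the diagonal tuples via the standard discriminator interpolation of depth $O_D(m)$, coordinate-wise application, and verification that the result equals $H$) is all $\Sigma^{1,b}_0$ and so within reach of $V^0$.
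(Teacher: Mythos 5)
Your proposal takes a genuinely different route from the paper, and the difference is not cosmetic: the paper's argument is what makes the claimed theory $V^0$ attainable, while yours does not go through at that level.

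The paper exploits the fact that $\mathscr{R}_{\Theta}$ is the solution set of an instance with \emph{binary} constraints over the fixed algebra. Since all constant tuples are solutions, every constraint relation $E^{ij}_{\ddot{\mathcal{D}}}$ contains the diagonal $\Delta$, hence the ternary discriminator $P$ (a polymorphism of $\Gamma^{diag}_{\mathcal{D}}$) preserves each $E^{ij}_{\ddot{\mathcal{D}}}$. A three-line coordinate computation with $P$ then shows each $E^{ij}_{\ddot{\mathcal{D}}}$ is either the full relation or $\Delta$, so the instance itself already is the required conjunction of clauses $x_i=x_j$. Everything is constant-size reasoning per constraint, which is why $V^0$ suffices.

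Your argument has two genuine gaps relative to the claimed theory. First, your ``first stage'' asserts that $\mathscr{R}_{\Theta}$ is closed under \emph{every} coordinate-wise operation $f\colon D^m\to D$, where in the second stage $m$ is as large as $\binom{n}{2}$. Polynomial completeness gives such an $f$ as a polynomial, but the interpolating term has size on the order of $|D|^m$, i.e.\ exponential in $n$; proving that $\mathscr{R}_{\Theta}$ is closed under the coordinate-wise evaluation of a term of that length requires induction along the term, far beyond $\Sigma^{1,b}_0$ reasoning (note that even the paper's constant-bounded closure constructions $Cl^{l^2}_X$ invoke $\Sigma^{1,b}_1$-induction). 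The $\mathbb{A}$-Monster set does not help here, because $f$ depends on the instance, not only on $\mathbb{A}$. Second, as you yourself note, selecting the separating witnesses $H^{(i,j)}$ is a $\Sigma^{1,b}_1$ choice that $V^0$ cannot make; your fix of reading the lemma ``parametrically'' changes the statement, and is unnecessary: the paper's proof never needs such witnesses. (A minor additional slip: your chain $\vec r_p=\vec r_q\iff(p,q)\in S\iff a_p=a_q$ fails in the direction $a_p=a_q\Rightarrow(p,q)\in S$, since $H\in R^*$ may accidentally agree at a pair not in $S$; fortunately only the implication $\vec r_p=\vec r_q\Rightarrow a_p=a_q$ is needed for well-definedness of $f$, so this does not break the classical argument, only the stated equivalence.)
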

\begin{proof}
Since $\mathscr{R}_{\Theta}\leq D^n$ contains all constant tuples $(a,a,...,a)$, every domain $D_i$ of a CSP instance $\Theta$ is equal to $D$, and every binary constraint $E^{ij}_{\ddot{\mathcal{D}}}$ contains a diagonal relation $\Delta_{ij}$. Recall that an algebra is PC if there exists a ternary discriminator $P$ such that $Pol_{3}(P,D,\Gamma^{diag}_{\mathcal{D}})$. Therefore, $P$ must preserve $\mathscr{R}_{\Theta}$. We want to show that every $E^{ij}_{\ddot{\mathcal{D}}}$ that is not a full binary relation is equal to relation $\Delta_{ij}$. Suppose that for some $i,j<n$ $E^{ij}_{\ddot{\mathcal{D}}}$ is neither a full nor diagonal relation. Then there must exist some $a\neq b$ such that $E^{ij}_{\ddot{\mathcal{D}}}(a,b)$. But since $P$ preserves $E^{ij}_{\ddot{\mathcal{D}}}$, $(P(a,a,b),P(a,b,b))=(b,a)\in E^{ij}_{\ddot{\mathcal{D}}}$, and for every $c\neq b$, $(P(a,b,c),P(a,a,c))=(a,c)\in E^{ij}_{\ddot{\mathcal{D}}}$. Thus, $E^{ij}_{\ddot{\mathcal{D}}}$ is a full relation. \end{proof}

The following lemma follows from the previous lemma and some additional reasoning.
\begin{lemmach}[Lemma 6.19, \cite{zhuk2020strong}]\label{fkjerfqlerfhuurfhlrf}
  Suppose that $\mathscr{R}_{\Theta}\leq D_0\times...\times D_{n-1}$ is subdirect, $D_i$ is a PC algebra without non-trivial binary absorbing and central subuniverses for every $i\in\{1,...,n-1\}$, and $D_0$ has no non-trivial central subuniverse. Then $V^1$ proves that $\mathscr{R}_{\Theta}$ can be represented as a conjunction of binary relations $\delta_1(x_{i_1},x_{j_1}),...,\delta_s(x_{i_s},x_{j_s})$, where for every $l\leq s$ the first variable of $\delta_l$ is uniquely determined whenever $i_l\neq 1$ and the
second variable of $\delta_l$ is uniquely determined whenever $j_l\neq 1$.
\end{lemmach}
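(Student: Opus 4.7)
My approach is to reduce the lemma to a pairwise analysis exploiting the simplicity of PC algebras. The plan is to first show that each binary projection $R_{ij}:=\mathscr{R}_{\Theta}^{i,j}$ has the restricted form claimed in the lemma, and then to show that $\mathscr{R}_{\Theta}$ equals the conjunction of these binary projections.

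For the first step I distinguish two cases. When $i,j\geq 1$, both $D_i$ and $D_j$ are PC and hence simple. The relations $Con_2^{(R_{ij},i)}$ on $D_i$ and $Con_2^{(R_{ij},j)}$ on $D_j$ are congruences (rectangularity on PC coordinates follows from a short chase using the ternary discriminators $p_i,p_j$ supplied by the $\mathbb{A}$-Monster set via $\Pi^3_{\mathcal{A}}$), so each equals $\Delta$ or $\nabla$ by simplicity. If both are $\nabla$ the projection $R_{ij}$ is full and can be dropped from the conjunction; otherwise $R_{ij}$ is the graph of a bijection between $D_i$ and $D_j$, so both variables are uniquely determined from each other. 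When $i=0$ and $j\geq 1$ only the discriminator on $D_j$ is available; the same chase yields $Con_2^{(R_{0j},j)}\in\{\Delta,\nabla\}$, and in the nontrivial case $x_j$ is uniquely determined by $x_0$, matching the asymmetric condition of the lemma.

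For the second step I would show by iterated closure that any map $H$ satisfying every $R_{ij}$ lies in $\mathscr{R}_{\Theta}$. Fixing a witness $H_0\in\mathscr{R}_{\Theta}$ from subdirectness, one inductively modifies $H_0$ so as to agree with $H$ on an increasing set of coordinates, using the coordinatewise ternary polymorphism that acts as the discriminator on each PC coordinate and as an idempotent term on $D_0$. This mirrors the closure argument in the central-subuniverse lemma above and is naturally a $\Sigma^{1,b}_1$-induction on the number of coordinates already matched, which $V^1$ supports.

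The main obstacle is the availability of the needed polymorphisms: the discriminator $p_i$ is \emph{a priori} only a polynomial on $D_i$, whereas the argument calls for a genuine polymorphism of $\Gamma_{\mathcal{A}}$ agreeing with $p_i$ on coordinate $i$. This is resolved by the $\mathbb{A}$-Monster set, which lists all ternary polymorphisms in $\Pi^3_{\mathcal{A}}$ together with their action on every subuniverse and factor; the existence of a ternary polymorphism acting as the discriminator on each PC factor follows from the PC-product structure provided by Lemma~\ref{alskjdiojkgfkj}, and is verified constructively in $V^0$. With the polymorphisms pre-listed as constant-size input, the entire pairwise and closure analysis reduces to $\Sigma^{1,b}_1$-reasoning, and is therefore provable in $V^1$.
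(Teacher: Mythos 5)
The paper does not actually spell out a proof of this lemma: it states only that it ``follows from the previous lemma and some additional reasoning,'' i.e.\ the intended route is to reduce to Lemma~\ref{a;lsdkfjyr647ryv} (the case where $\mathscr{R}_{\Theta}$ contains all constant tuples) and then supply the reduction. Your proposal instead tries to run the discriminator argument directly on $\mathscr{R}_{\Theta}$, and this is where it breaks.

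The genuine gap is your use of the ternary discriminator as if it were a polymorphism of $\Gamma_{\mathcal{A}}$. On a PC algebra $D_i$ the discriminator is only a \emph{polynomial} operation, i.e.\ a term operation with constants substituted; it preserves a relation $R$ only when $R$ is invariant under those constants, which in the binary case means $R$ contains the relevant diagonal tuples. That is precisely the hypothesis of Lemma~\ref{a;lsdkfjyr647ryv} (``contains all constant tuples'') and precisely what is \emph{not} assumed here --- indeed the domains $D_0,\dots,D_{n-1}$ need not even coincide, so ``constant tuples'' do not make sense for $\mathscr{R}_{\Theta}$ itself. The $\mathbb{A}$-Monster set cannot rescue this: it lists the ternary polymorphisms that exist, but a genuine term operation of $Clone(\Omega)$ agreeing with the discriminator on a PC factor generally does not exist (PC means polynomially complete, not functionally complete as a term algebra), and Lemma~\ref{alskjdiojkgfkj} says nothing of the sort. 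Consequently both your ``short chase'' establishing rectangularity of the binary projections and your closure argument in step two rest on a polymorphism you do not have. A symptom of the problem is that your argument never uses the hypotheses that the $D_i$ have no non-trivial binary absorbing or central subuniverses and that $D_0$ has no non-trivial central subuniverse; these are essential (without them there exist $(C_0,\dots,C_{n-1})$-essential subdirect relations, which are not conjunctions of binary constraints), and in Zhuk's actual argument they enter through the essential-relation and linkedness lemmas (cf.\ Lemma~6.11 and Lemma~7.19 in the surrounding text) that substitute for the unavailable discriminator. The correct strategy is to use those lemmas to establish rectangularity and to reduce, via the linking congruences and simplicity, to a relation containing the diagonal, at which point Lemma~\ref{a;lsdkfjyr647ryv} applies.
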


\begin{lemmach}[Lemma 6.21, \cite{zhuk2020strong}]\label{alalsokd7ju}
Suppose that $\mathscr{R}_{\Theta}$ is a subdirect relation on $D_0\times...\times D_{n-1}$, $E_i$ is a PC subuniverse of $D_i$ for all $i$ and $E=pr_0((E_0\times...\times E_{n-1})\cap \mathscr{R}_{\Theta})$. 
  Then $V^1$ proves that $E_0$ is a PC subuniverse of $D_0$.
\end{lemmach}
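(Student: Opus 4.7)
The plan is to apply Lemma \ref{lalaksjdf} to lift the PC structure from the domain product to $\mathscr{R}_{\Theta}$, and then push it back down to $D_0$ along the first projection, using the structural description of subdirect products over PC factors given by Lemma \ref{fkjerfqlerfhuurfhlrf}. All congruences, isomorphisms, and quotient algebras on the side of $\mathbb{A}$ and its subalgebras are supplied by the $\mathbb{A}$-Monster set, so only the constructions that genuinely depend on the variable $\mathscr{R}_{\Theta}$ need to be verified inside the theory.

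First, Lemma \ref{lalaksjdf} gives that $B := (E_0 \times \ldots \times E_{n-1}) \cap \mathscr{R}_{\Theta}$ is an extended PC subuniverse of $\mathscr{R}_{\Theta}$. Unwinding its proof, $B$ is a single block of the extended congruence $\theta := \bigcap_t \mathscr{C}^{\mathscr{R}_{\Theta}}_{\sigma_{i_t}^{ext}}$ for some PC congruences $\sigma_{i_1}, \ldots, \sigma_{i_s}$ on the corresponding $D_{i_t}$, taken from the Monster list $\Sigma^{PC}_{\mathcal{D}}$. Subdirectness of $\mathscr{R}_{\Theta}$ makes the third-order quotient $(\mathscr{R}_{\Theta}/\theta, \mathscr{F}_{\Omega/\theta})$ isomorphic to the product $\prod_t D_{i_t}/\sigma_{i_t}$, under which the block $B$ corresponds to a single tuple $\bar b = (b_1, \ldots, b_s)$. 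By the very definition of PC congruence, each factor $D_{i_t}/\sigma_{i_t}$ is polynomially complete without a non-trivial binary absorbing subuniverse or center.

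Next, I combine the first projection $pr_0 : \mathscr{R}_{\Theta} \to D_0$ with the quotient map into a single homomorphism $\pi : \mathscr{R}_{\Theta} \to D_0 \times \prod_t D_{i_t}/\sigma_{i_t}$, whose image is subdirect. Applying Lemma \ref{fkjerfqlerfhuurfhlrf} to $\pi(\mathscr{R}_{\Theta})$, since every factor besides $D_0$ is PC without bad subuniverses, the lemma produces, for each PC factor actually linked to $D_0$, a congruence $\alpha_t$ on $D_0$ together with a surjective homomorphism $h_t : D_0/\alpha_t \to D_{i_t}/\sigma_{i_t}$ witnessing the linkage. Because $D_{i_t}/\sigma_{i_t}$ is PC without bad subuniverses, $\alpha_t$ is itself a PC congruence on $D_0$, and both $\alpha_t$ and $h_t$ can be found as entries of the corresponding Monster lists. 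The set $E = pr_0(B)$ then consists of precisely those $a \in D_0$ with $h_t(a/\alpha_t) = b_t$ for each linked $t$, i.e.\ $E$ is the intersection of one block of each such $\alpha_t$; by Lemma \ref{alskjdiojkgfkj} this identifies $E$ as a PC subuniverse of $D_0$ in the sense of Definition \ref{';s;sdllfjjg}.

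The main obstacle is that $\mathscr{R}_{\Theta}$, the congruence $\theta$, and the quotient $\mathscr{R}_{\Theta}/\theta$ are naturally third-order classes of size possibly exponential in $n$, so both the isomorphism with $\prod_t D_{i_t}/\sigma_{i_t}$ and the invocation of Lemma \ref{fkjerfqlerfhuurfhlrf} have to be stated and checked in the third-order framework for congruences on products of algebras developed in Section \ref{a;kjdgh;qeirgy;gh;w}. Re-reading the argument of Lemma \ref{fkjerfqlerfhuurfhlrf}, it is essentially a direct verification that uses PC simplicity of the non-distinguished factors, and the same verification survives when those factors appear as images of a third-order congruence; the price is that the induction involved may have to be strengthened to the $\Sigma^{\mathscr{B}}_1$ level. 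If $D_0$ itself carries a non-trivial center or binary absorbing subuniverse, an initial application of Corollary \ref{akjshdgfuyfgwef} (or Corollary \ref{==-=-=-=-=-=-=-=-}) refines $D_0$ to the corresponding subuniverse while preserving the PC hypothesis on the $E_i$'s, reducing to the situation in which Lemma \ref{fkjerfqlerfhuurfhlrf} applies directly. All remaining algebraic facts, such as the simplicity of PC factors, the congruence lattices of products of simples, and the identification of each $\alpha_t$ as a PC congruence, reduce to constant-size checks already cached in the Monster set.
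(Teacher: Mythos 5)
Your algebraic skeleton matches the paper's: decompose each $E_i$ into blocks of PC congruences via Lemma \ref{alskjdiojkgfkj}, pass to a subdirect product whose non-distinguished factors are PC algebras, invoke Lemma \ref{fkjerfqlerfhuurfhlrf} to get the "graph of a map" description, and read off $E$ as an intersection of blocks of PC congruences on $D_0$. The difference, and it is a substantive one, is in how you formalize the quotient step — and your route does not deliver the theory the lemma claims.

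The statement asserts $V^1$ provability. You factor the \emph{solution set} $\mathscr{R}_{\Theta}$ by the extended congruence $\theta$, work with the third-order quotient $(\mathscr{R}_{\Theta}/\theta,\mathscr{F}_{\Omega/\theta})$ and a third-order isomorphism onto $\prod_t D_{i_t}/\sigma_{i_t}$, and you concede that "the induction involved may have to be strengthened to the $\Sigma^{\mathscr{B}}_1$ level." That lands you in $W^1_1$, not $V^1$. The paper's proof is engineered precisely to avoid this: it never factorizes $\mathscr{R}_{\Theta}$ itself. Instead it factorizes only the \emph{domains} (constant-size second-order objects, with the isomorphisms $M_i$ pulled from the Monster lists $\mathrm{M}_{D_i,\delta_i,\sigma_{i_0},\ldots}$), builds an explicit second-order factorized instance $\Theta_{PC}$ via a canonical homomorphism of target digraphs, observes that $pr_0((E_0\times\cdots\times E_{n-1})\cap\mathscr{R}_{\Theta})=pr_0((E_0\times e_1\times\cdots\times e_{n-1})\cap\mathscr{R}_{\Theta_{PC}})$ with $e_i$ the block representatives, and then splits each factorized domain into its $s=\log_2 l$ PC factors to obtain a second instance $\Theta'_{PC}$ on $ns$ variables with $\Sigma^{1,b}_0$-definable constraint relations. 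Lemma \ref{fkjerfqlerfhuurfhlrf} is then applied to the ordinary (second-order) solution set $\mathscr{R}_{\Theta'_{PC}}$, and everything stays in $V^1$. To repair your proof you would need to replace the quotient of $\mathscr{R}_{\Theta}$ by this instance-level construction; as written, the third-order detour is exactly the step the paper's argument is designed to eliminate.

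A secondary, smaller point: your appeal to Corollary \ref{akjshdgfuyfgwef} to "refine $D_0$" when it has a non-trivial center does not obviously preserve the quantity $E=pr_0((E_0\times\cdots\times E_{n-1})\cap\mathscr{R}_{\Theta})$ that you are trying to describe — shrinking $D_0$ changes the relation whose projection you are computing. The hypothesis of Lemma \ref{fkjerfqlerfhuurfhlrf} concerning the distinguished coordinate has to be discharged from the context in which the lemma is used (as in Zhuk's source), not by modifying $D_0$ mid-proof.
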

\begin{proof}
We consider $\mathscr{R}_{\Theta}$ to be a solution set to some CSP instance $\Theta$ on $n$ domains such that for all $i,j<n$ there is a constraint $E_{\mathcal{X}}(i,j)$, but for some of them $E^{ij}_{\ddot{\mathcal{A}}}$ are full relations. By Lemma \ref{alskjdiojkgfkj} (or by Definition \ref{';s;sdllfjjg}) each $E_i$ is a block of congruence $\delta_i$ such that there are PC congruences $\sigma_{i_0},...,\sigma_{i_{k-1}}$ with $k\leq log_2l$ and
  $$D_i/\delta_i\cong D_i/\sigma_{i_0}\times...\times D_i/\sigma_{i_{k-1}}.
  $$
Then we can consider the factorized instance $\Theta_{PC}$, constructed as follows. The instance digraph $\mathcal{X}_{PC}=\mathcal{X}$ does not change. For a target digraph $\ddot{\mathcal{A}}_{PC}$, the domain set is $D_{PC}=\{D_0,D_1/\delta_1,...,D_{n-1}/\delta_{n-1}\}$, i.e., we factorize every domain except the first one (or equivalently factorize it by $\Delta_0$). Constraint relations are defined by a set
$E_{\ddot{\mathcal{A}}_{PC}}$ such that
\begin{equation}
  \begin{gathered}
     \hspace{0pt}\forall 0<i,j<n,\, E^{ij}_{\ddot{\mathcal{A}}_{PC}}(a,b)\iff D_i/\delta_i(a)\wedge D_j/\delta_j(b)
    \wedge \\
    (\exists c,d<l,\,\,\delta_i(a,c)\wedge \delta_j(b,d)\wedge E^{ij}_{\ddot{\mathcal{A}}}(c,d)),\\
   \hspace {0pt}\forall i<n,\,E^{1i}_{\ddot{\mathcal{A}}_{PC}}(a,b)\iff D_0(a)\wedge D_i/\delta_i(b)
    \wedge (\exists c<l,\,\,\delta_i(b,c)\wedge E^{1i}_{\ddot{\mathcal{A}}}(a,c)),\\
   \hspace {0pt}\forall i<n,\,E^{i1}_{\ddot{\mathcal{A}}_{PC}}(a,b)\iff D_0(a)\wedge D_i/\delta_i(b)
    \wedge (\exists c<l,\,\,\delta_i(b,c)\wedge E^{i1}_{\ddot{\mathcal{A}}}(c,a)).\\ 
  \end{gathered}
\end{equation}
Then there is a canonical homomorphism $H_c\leq \langle \langle l,n\rangle,\langle l,n\rangle\rangle$ from the target digraph $\ddot{\mathcal{A}}$ to the target digraph $\ddot{\mathcal{A}}_{PC}$ such that for any homomorphism $H$ from $\mathcal{X}$ to $\ddot{\mathcal{A}}$, the map $H_{PC}$ defined as
$$
H_{PC}(i)=\langle i,a\rangle \iff \exists b<l, (H(i)= \langle i,b\rangle\wedge H_c(\langle i,b\rangle)=\langle i,a\rangle),
$$
is a homomorphism from $\mathcal{X}$ to $\ddot{\mathcal{A}}_{PC}$. It is easy to see that $pr_0((E_0\times E_1\times...\times E_{n-1})\cap \mathscr{R}_{\Theta}) = pr_0((E_0\times e_1\times...\times e_{n-1})\cap \mathscr{R}_{\Theta_{PC}})$ where for each $0<i<n$, $e_i$ is the representative of the class $E_i$ and that $\mathscr{R}_{\Theta_{PC}}$ is subdirect. 

For any $0<i<n$, we can find $M_i\in \mathrm{M}_{D_i,\delta_i\sigma_{i_0},...,\sigma_{i_{k-1}}}$ such that 
$$ISO_{alg}(D_i/\delta_i,\Omega/\delta_i,D_i/\sigma_{i_0}\times...\times D_i/\sigma_{i_{k-1}},\Omega/\cap_{j}\sigma_{i_j} ,M_i).
$$
Let us combine all of these maps into one set $M$. For every $i$ such that $M_i\in \mathrm{M}_{D_i,\delta_i\sigma_{i_0}},...,$ $_{\sigma_{i_{k-1}}}$ for $k<s=log_2l$ we add to the end $s-k$ trivial algebras $D_{i_{k}},...,D_{i_{s-1}}$ (containing one element $0$), set $\sigma_{i_{k}},...,\sigma_{i_{s-1}}$ to be trivial congruences and extend $M_i$ to the $s+1$ -ary set. Then for all $a_0,...,a_{s-1}<l$, 
\begin{equation}
  \begin{gathered}
     \forall 0<i<n,\,\forall a\in D_i/\delta_i,\,M(a)=(a_0,...,a_{s-1})\iff M_i(a)=(a_0,...,a_{s-1})\\
     \hspace{0pt}i=0,\,\forall a\in D_0,\,M(a)=(a,0,...,0).
  \end{gathered}
\end{equation}
Consider a CSP instance $\Theta'_{PC}$ on $ns$ variables, with domain set 
$$D'_{PC}=\{D_0,0,...,0, D_1/\sigma_{1_0},..., D_1/\sigma_{1_{s-1}},..., D_{n-1}/\sigma_{{n-1}_0},...,D_{n-1}/\sigma_{{n-1}_{s-1}}\}$$ 
such that $H_{PC}'\in \mathscr{R}_{\Theta'_{PC}}$ if and only if $H_{PC}\in \mathscr{R}_{\Theta_{PC}}$ where $H_{PC}'$ is a map from $[ns]$ to $[D'_{PC}]$ defined from a homomorphism $H_{PC}$ as follows: 
\begin{equation}\label{alskjdyhrf5564}
  \begin{gathered}
     \hspace {0pt}i=0,\,H'_{PC}(0) = \langle 0,a\rangle\iff H_{PC}(0) = \langle 0,a\rangle,\\
     \hspace {0pt}\forall 0<i<s,\,H'_{PC}(i) = \langle i,0\rangle,\\
     \hspace{0pt}\forall 0<j<n,\forall k<s,\,\forall a\in D_j/\sigma_{j_k},\, H'_{PC}(js+k) = \langle js+k,a\rangle\iff\\
     \iff \exists b\in D_j/\delta_j,\exists b_0\in D_{j}/\sigma_{j_0},...,\exists b_{k-1}\in D_{j}/\sigma_{j_{k-1}},\\
     \exists b_{k+1}\in D_{j}/\sigma_{j_{k+1}},...,\exists b_{s-1}\in D_{j}/\sigma_{j_{s-1}},\\
     \hspace {0pt}H_{PC}(j)=\langle j,b\rangle\wedge M(b)=(b_0,...,b_{k-1},a,b_{k+1},...,b_{s-1})
  \end{gathered}
\end{equation}
Strictly speaking, the instance $\Theta'$ is not an instance over language $\Gamma_{\mathcal{A}}$, but we still can define every $2s$-ary relation $R^{ij}_{\ddot{\mathcal{A}}'_{PC}}$,
\begin{equation}\label{a00a09s88))(}
\begin{gathered}
 R^{ij}_{\ddot{\mathcal{A}}'_{PC}}(a_0,...,a_{s-1}, b_0,...,b_{s-1})\iff \exists a\in D/\delta_i\exists b\in D_j/\delta_j,\,E^{ij}_{\ddot{\mathcal{A}}_{PC}}(a,b)\wedge\\
 \hspace {0pt}\wedge M(a)=(a_0,...,a_{s-1})\wedge M(b)=(b_0,...,b_{s-1}).
\end{gathered}
\end{equation}
Moreover, we can apply to $R^{ij}_{\ddot{\mathcal{A}}'_{PC}}$ a similar reasoning as in Lemma \ref{a;lsdkfjyr647ryv}. The solution set $\mathscr{R}_{\Theta'_{PC}}$ is subdirect since $\mathscr{R}_{\Theta_{PC}}$ is subdirect. Since every $D_j/\sigma_{j_k}$ is a PC algebra, it follows that by Lemma \ref{fkjerfqlerfhuurfhlrf}, $\mathscr{R}_{\Theta'_{PC}}$ can be represented as a conjunction of binary relations $\delta_1(x_{i_1},x_{j_1}),...,\delta_s(x_{i_s},x_{j_s})$, where for every $l\leq s$ the first variable of $\delta_l$ is uniquely-determined whenever $i_l\neq 1$ and the second variable of $\delta_l$ is uniquely-determined whenever $j_l\neq 1$. Thus, for all $0<j<n, k<s$, for every $a\in D_0$ there exists a unique $b\in D_j/\delta_{j_{k}}$ such that $E^{1(js+k)}_{\ddot{\mathcal{A}}_{PC}'}(a,b)$ (and analogously for the relation $E^{(js+k)1}_{\ddot{\mathcal{A}}_{PC}'}$). It follows that any such relation divides $D_0$ into $|D_j/\delta_{j_{k}}|$ classes, and we can check that this is a PC congruence on $D_0$. Thus, $pr_0((E_0\times e_1\times...\times e_{n-1})\cap \mathscr{R}_{\Theta_{PC}})$ can be represented as an intersection of blocks of $ns$ PC congruences (some of them can be trivial).
\end{proof}

\subsubsection{Properties of a linear subuniverse on $\mathbb{A}^n$}

It is known that the ability to simulate an affine CSP (or historically the ability to count) adds substantial complexity to the problem. Structures that cannot count are all tractable and can even be solved by a simple constraint propagation algorithm. 
\begin{notation}
  For linear algebras, we shall adhere to the following notation. We will continue to denote elements of different domains $D_i,D_j$ by $a_i,a_j$. If we consider several elements of the same domain, we add an index after the index of the domain, for example, $a_{i1},...,a_{im}$. To represent an element $a_{ij}$ as an element of the product of $k\leq log_2l$ prime fields $Z_{p_0},...,Z_{p_{k-1}}$, we add the superscript $\bar{a}^k_{ij} = (a_{ij}^0,...,a_{ij}^{k-1})$.
\end{notation}

The proof complexity of linear algebra (in the sense of a branch of mathematics) was well studied in \cite{SOLTYS2004277}, or in \cite{Thapen2005-THAWTO}. In particular, Gaussian elimination was considered and was shown to be formalizable in theory $V^1$. The detailed proof of the following lemma can be found, for example, in \cite{gaysin2023proof}. We show how to formalize it in $V^1$.

\begin{lemmach} [Lemma 7.20, \cite{zhuk2020proof}]\label{AffineSubspaces}
Suppose that the relation $\mathscr{R}_{\Theta}\leq (\mathbb{Z}_{p_1})^{n_1}\times ... \times (\mathbb{Z}_{p_k})^{n_k}$ is preserved by $x_1+...+x_m$, where $p_1,...,p_k$ are distinct prime numbers dividing $m-1$ and $\mathbb{Z}_{p_i} = (\mathbb{Z}_{p_i},x_1+...+x_m)$ for every $i$. Then $V^1$ proves that $\mathscr{R}_{\Theta} = L_1\times...\times L_k$, where each $L_i$ is an affine subspace of $(\mathbb{Z}_{p_i})^{n_i}$.
\end{lemmach}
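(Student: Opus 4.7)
The plan is to reduce $\mathscr{R}_\Theta$ to a linear relation by translation and then decouple its prime-field components using unary operations in $Clone(\Omega)$ obtained from the Chinese Remainder Theorem. Set $\Omega(x_1,\ldots,x_m) := x_1+\cdots+x_m$. The hypothesis $p_j \mid m-1$ for every $j$ means $m \equiv 1 \pmod{p_j}$, so $\Omega$ is idempotent on each $\mathbb{Z}_{p_j}$ and hence on the whole product.

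First I would dispose of the empty case by taking each $L_i = \emptyset$. Otherwise, let $a \in \mathscr{R}_\Theta$ be the $\Sigma^{1,b}_0$-minimal element and define the translate $\mathscr{R}' := \mathscr{R}_\Theta - a$ using $\Sigma^{1,b}_0$-comprehension. A short $V^0$-computation, using $m \cdot a = a$ componentwise, gives $\Omega(r_1+a,\ldots,r_m+a) = \Omega(r_1,\ldots,r_m) + a$, so $\mathscr{R}'$ is $\Omega$-closed and contains $0$. Combining $0 \in \mathscr{R}'$ with $\Omega(x,y,0,\ldots,0) = x+y$ yields closure of $\mathscr{R}'$ under binary sum, and a fixed (constant-bounded) number of iterations gives closure under every $c \cdot x$ with $0 \le c \le p_1\cdots p_k$.

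Next I would construct the decoupling operations. By the Chinese Remainder Theorem applied to the pairwise-coprime primes $p_1,\ldots,p_k$, there exist integers $c_i$ with $0 \le c_i < p_1\cdots p_k \le m-1$ satisfying $c_i \equiv 1 \pmod{p_i}$ and $c_i \equiv 0 \pmod{p_j}$ for $j \neq i$. These constants depend only on the fixed primes, so I would include them in the $\mathbb{A}$-Monster set and avoid any in-theory number-theoretic reasoning. The unary operation $\pi_i(x) := c_i \cdot x$ lies in $Clone(\Omega)$, acts as the identity on the $\mathbb{Z}_{p_i}^{n_i}$-factor, and annihilates every other factor; hence each $\pi_i$ preserves $\mathscr{R}'$. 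Because $c_1+\cdots+c_k \equiv 1 \pmod{p_j}$ for every $j$, one has $\pi_1(r')+\cdots+\pi_k(r') = r'$ componentwise for every $r' \in \mathscr{R}'$. Defining $L'_i$ as the projection of $\pi_i(\mathscr{R}')$ onto $\mathbb{Z}_{p_i}^{n_i}$, this identity yields $\mathscr{R}' \subseteq L'_1 \times \cdots \times L'_k$; conversely, for $(l'_1,\ldots,l'_k) \in L'_1 \times \cdots \times L'_k$ one picks witnesses $r^{(i)} \in \mathscr{R}'$ with $\pi_i(r^{(i)})_i = l'_i$ and uses closure of $\mathscr{R}'$ under $+$ to conclude $\pi_1(r^{(1)})+\cdots+\pi_k(r^{(k)}) = (l'_1,\ldots,l'_k) \in \mathscr{R}'$, giving $\mathscr{R}' = L'_1 \times \cdots \times L'_k$.

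Each $L'_i$ inherits from $\mathscr{R}'$ closure under $+$ and under scalar multiplication by every $c \in \mathbb{Z}_{p_i}$, and contains $0$, so it is a linear subspace of $\mathbb{Z}_{p_i}^{n_i}$ by a purely $\Sigma^{1,b}_0$-verifiable check. Translating back by the $\mathbb{Z}_{p_i}^{n_i}$-component $a^{(i)}$ of $a$ gives the affine subspaces $L_i := L'_i + a^{(i)}$, and $\mathscr{R}_\Theta = L_1 \times \cdots \times L_k$ follows. The main obstacle I expect is the bookkeeping around the projection-lift identifications between $\mathbb{Z}_{p_i}^{n_i}$-factors and their embedding in the full product, together with verifying that the existential witnesses $r^{(i)}$ in the reverse inclusion can be exhibited within the $\Sigma^{1,b}_1$-bounds of $V^1$ (recall that $\mathscr{R}_\Theta$ is $\Sigma^{1,b}_0$-presented as a set of homomorphisms, but projections and preimages under $\pi_i$ rise to $\Sigma^{1,b}_1$). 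The CRT-coefficients $c_i$, being fixed constants of $\mathbb{A}$, require no dynamic reasoning, and the iteration for closure under $c \cdot x$ is of constant length, so $\Sigma^{1,b}_1$-IND is amply sufficient.
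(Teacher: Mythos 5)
Your argument is mathematically sound and proves the full statement, but it takes a genuinely different route from the paper. The paper's proof is almost entirely about the encoding: it fixes the isomorphisms $M_i$ from each domain to a product of prime fields, pads them to a uniform length $s=\log_2 l$, builds the translated instance $\Theta_L$ on $ns$ prime-field domains, gives the second-order definition $AffSubS$ of an affine subspace (for each $H\in\mathscr{R}_{\Theta_L}$ the set of $H'$ with $H+H'\in\mathscr{R}_{\Theta_L}$ contains $0$ and is closed under $+$), and then closes by observing that the Maltsev term $x-y+z=\Omega(x,z,y,\dots,y)$ is available, leaving the rest as "easy to show." In particular the paper never spells out the decomposition $\mathscr{R}_\Theta=L_1\times\cdots\times L_k$ across the distinct primes. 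You instead translate by a base point $a$ so that $0\in\mathscr{R}'$, derive closure under binary sum from $\Omega(x,y,0,\dots,0)=x+y$ and hence under all scalar multiples, and then use the Chinese-Remainder coefficients $c_i$ to decouple the prime blocks; this makes the product decomposition -- the actual content of Zhuk's Lemma 7.20 -- explicit, at the price of carrying a distinguished base point and the (fixed, Monster-set) constants $c_i$. Both routes are formalizable in $V^1$ for the reasons you give.

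One inaccuracy to fix: $\pi_i(x)=c_i\cdot x$ does \emph{not} lie in $Clone(\Omega)$. Every unary term operation of $(\mathbb{Z}_{p_1}\times\cdots\times\mathbb{Z}_{p_k},x_1+\cdots+x_m)$ has coefficient $\equiv 1 \pmod{m-1}$, hence $\equiv 1$ modulo every $p_j$, so the only unary term is the identity; your $c_i$ is $\equiv 0$ modulo $p_j$ for $j\neq i$ and is only a unary \emph{polynomial} (it uses the constant $0$). This does not break the proof, because you apply $\pi_i$ only to the translate $\mathscr{R}'$, and closure of $\mathscr{R}'$ under $x\mapsto c\cdot x$ was already established directly from $0\in\mathscr{R}'$ and closure under $\Omega$; but the justification "lies in $Clone(\Omega)$, hence preserves $\mathscr{R}'$" should be replaced by a reference to that earlier closure step (and note that $\pi_i$ need not preserve $\mathscr{R}_\Theta$ itself).
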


\begin{proof}
Consider any CSP instance $\Theta$ on $n$ variables such that each $D_i$ is a linear algebra, i.e. $LinA(D_i,\Omega)$. That is, for every $i<n$ there are some $k\leq log_2l$, primes $p_0,...,p_{k-1}<l$, and an isomorphism $M_i\in \mathrm{M}_{A,\Delta_i,p_0,...,p_{k-1}}$ from $(D_i,\Omega)$ to $(Z_{p_0}\times...\times Z_{p_{k-1}},\bar{a}^k_{i1}+...+\bar{a}^k_{im})$ such that
$$M_i(a_{ij}) = \bar{a}^k_{ij}=(a_{ij}^0,...,a_{ij}^{k-1}).
$$
As for the PC subuniverses, to unify all $M_i$, for every $k<s=log_2l$, we add $s-k$ trivial algebras $\mathbb{Z}_{p_{k}},...,\mathbb{Z}_{p_{s-1}}$, representing their elements as $0$'s. Thus, $M_i(a_{ij})=\bar{a}^s_{ij}$. Then we can construct $M$. For all $a_i^0,...,a_i^{s-1}<l$,
\begin{equation}
  \begin{gathered}
     \forall 0<i<n,\,\forall a_i\in D_i,\,M(a_i)=(a_i^0,...,a_i^{s-1})\iff M_i(a_i)=(a_i^0,...,a_i^{s-1}).  \end{gathered}
\end{equation}
We consider CSP instance $\Theta_L$ on $ns$ domains such that every solution to $\Theta$, translated naturally (analogously to (\ref{alskjdyhrf5564})) is a solution to $\Theta_L$ and vice versa. Again, it is not a CSP instance over language $\Gamma_{\mathcal{A}}$, but all $2s$-ary relations $R^{ij}_{\ddot{\mathcal{A}}_{L}}$ can be easily defined; see
(\ref{a00a09s88))(}). The most important thing is that these relations are preserved by $m$-ary sum.

For this proof, we do not need to collect equal $Z_{p_j}$ from different domains to a group. We define a vector space on $Z_{p_0}\times...\times Z_{p_{ns-1}}$ as any subset of maps from $[ns]$ to $[D_L] = [Z_{p_0},...,Z_{p_{ns-1}}]$ such that it contains 'zero map' $H$ sending all $i<ns$ to $0$, and is closed under $+$, i.e. for any two maps $H_1,H_2$, the map $H_3=H_1+H_2$ such that
\begin{equation}
  \begin{gathered}
    (H_1+H_2)(i)= H_3(i)=\langle i,a\rangle\iff \exists b,c\in D_i,\\
    H_1(i)=\langle i,b\rangle\wedge H_2(i)=\langle i,c\rangle\wedge a=b+c,
  \end{gathered}
\end{equation}
is also in that set. We define an affine subspace $\mathscr{R}_{\Theta_L}$ of $Z_{p_0}\times...\times Z_{p_{ns-1}}$ as a shift of some linear subspace, i.e. such a set that for any map $H\in \mathscr{R}_{\Theta_L}$, the set of all maps $H'$ such that $H'+H\in\mathscr{R}_{\Theta_L}$ contains zero map and is closed under $+$. Note that when we are talking about solution sets, it is a second-order definition:
\begin{equation}
  \begin{gathered}
 \hspace {0pt}AffSubS(\mathscr{R}_{\Theta_L})\iff \forall H\leq \langle n\langle n,l\rangle\rangle,\forall H_1,H_2\leq \langle n\langle n,l\rangle\rangle,\,\ddot{HOM}(\mathcal{X},\ddot{\mathcal{A}},H)\wedge \\
 \wedge \ddot{HOM}(\mathcal{X},\ddot{\mathcal{A}},H+H_1)\wedge \ddot{HOM}(\mathcal{X},\ddot{\mathcal{A}},H+H_2)\rightarrow \\
 \rightarrow\ddot{HOM}(\mathcal{X},\ddot{\mathcal{A}},H+(H_1+H_2)).
  \end{gathered}
\end{equation}
Then it is easy to show that the solution to $\Theta_L$ is an affine subspace defining $x-y+z (mod \,p_j) = \Omega(x,z,y,...,y)$. 
\end{proof}

To prove some auxiliary lemmas and theorems, in addition to Definition \ref{a'a'a's;ldkfijgt}, we need to define a linear algebra on a product $D_0\times...\times D_{n-1}$ of algebras of size at most $|A|$. We could define a linear algebra $\mathscr{R}_{\Theta}$ on $n$ domains, each of which is isomorphic to a product of $k\leq log_2l$ prime fields, as a solution set to some CSP instance $\Theta$ or as a set of maps closed under $x_1+...+x_m$. This defining relation is second-sorted and it would give us that $V^1$ proves that the set of linear algebras is closed under taking subalgebras. The problem here is that the resulting direct product isomorphic to relation $\mathscr{R}_{\Theta}$ will not be related to the initial CSP instance $\Theta$, it is considered exclusively as an algebra. We know that any relation preserved by $x_1+...+x_m$ can be represented as a system of linear equations over at most $ns$ variables, where $s=log_2l$, and this system can be solved by Gaussian elimination. Different domains $\mathbb{Z}_{p_i}$ cannot be mixed in one subsystem of equations, but variables over the same $\mathbb{Z}_{p_i}$ representing different $x_i,x_j$ can. For example, suppose that domains $D_i,D_j$ are both isomorphic to $\mathbb{Z}_3$, and that the solution to CSP instance $\Theta$ for these two variables contains the affine subspace (coset) $\{(1,1),(2,0),(0,2)\}$. This is isomorphic to $(\mathbb{Z}_3,\Omega)$, but we lose a domain. The same would happen while taking the quotient and in the proofs of some results about linear algebras on $n$ domains, we need the closeness of a set of linear algebras under taking quotients, especially extended ones. So we cannot avoid an isomorphism between third-order objects. Recall that we allow trivial algebras $\mathbb{Z}_1$ and that solution set to any CSP instance $\Theta_{null}$ on $n$ domains is a full relation.

\begin{definition}[Linear algebra on a product of algebras of size at most $l$]\label{aklsdry564}For an algebra $(\mathscr{R},\mathscr{F})$ defined on a set of maps from $[k]$ to $[D'_0,...,D'_{k-1}]$, where $k\leq n$, we say that $\mathscr{R}$ is a linear algebra on $n$ domains if there exists a set of $n$ domains $D$ and the set of $n$ $m$-ary operations $F$ on these domains such that every algebra $(D_i,F_i)$ is linear and there is an isomorphism from $(\mathscr{R},\mathscr{F})$ to an algebra $(\mathscr{D}_0\times...\times \mathscr{D}_{n-1},\mathscr{F}_{F_0,...,F_{n-1}})$, where $\mathscr{D}_0\times...\times \mathscr{D}_{n-1}$ and $\mathscr{F}_{F_0,...,F_{n-1}}$ are defined as in (\ref{kqwhf;48r;3ihert})-(\ref{l;alsd'jqf;hd}):
\begin{equation}
  \begin{gathered}
     LinA(\mathscr{R},\mathscr{F})\iff \exists D\leq \langle n,l \rangle ,\exists F\leq \langle n,\underbrace{l,l,...,l}_{m\text{ times}}\rangle,\forall i<n,\ LinA(D_i,F_i) \wedge \\
     \hspace{0pt}\wedge ISO_{alg}^{3,3}(\mathscr{R},\mathscr{F}, \mathscr{D}_0\times...\times \mathscr{D}_{n-1},\mathscr{F}_{F_0,...,F_{n-1}}).
  \end{gathered}
\end{equation}
Note that the defining relation is third-order because of the relation $ISO_{alg}^{3,3}$. In an obvious way, we can define a linear and a minimal linear congruence $\mathscr{C}_{\theta}$ for any algebra on $n$ domains $(\mathscr{R},\mathscr{F})$.
\end{definition}

From Lemma \ref{AffineSubspaces}, Definition \ref{a'a'a's;ldkfijgt}, and Definition \ref{aklsdry564}, we conclude the following corollary.

\begin{corollary}
  $V^1$ proves that the set of linear algebras is closed under taking subalgebras and quotients. $W^1_1$ proves that the set of linear algebras on a product of algebras of size at most $|A|$ is closed under taking subalgebras and quotients.
\end{corollary}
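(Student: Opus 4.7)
The plan is to reduce both assertions to Lemma~\ref{AffineSubspaces}, leveraging the explicit isomorphism witnesses provided by Definitions~\ref{a'a'a's;ldkfijgt} and~\ref{aklsdry564}.

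For the first assertion, let $\mathbb{D}=(D,\Omega)$ be a linear algebra with isomorphism $H\in\mathrm{M}_{D,p_0,\ldots,p_{k-1}}$ onto $(\mathbb{Z}_{p_0}\times\cdots\times\mathbb{Z}_{p_{k-1}},\bar{a}^k_1+\cdots+\bar{a}^k_m)$. A subalgebra $B\leq D$ is mapped by $H$ to a nonempty subset of the prime-field product closed under $x_1+\cdots+x_m$; Lemma~\ref{AffineSubspaces} decomposes this image as $L_0\times\cdots\times L_{k-1}$ where each $L_i$ is an affine subspace of $\mathbb{Z}_{p_i}$. Since $p_i$ is prime, $L_i$ is either a singleton or the whole field, so after shifting by a chosen representative we obtain an explicit isomorphism between $B$ and a subproduct of prime fields of total size dividing $p_0\cdots p_{k-1}\leq l$; this isomorphism sits in the monster list $\mathrm{M}_{B,\ldots}$ provided in advance. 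The quotient case is analogous: a linear congruence $\sigma$ on $\mathbb{D}$ transports via $H$ to a subgroup--coset structure on the prime-field product, yielding a factor that is again a product of a subfamily of the prime fields. Since Lemma~\ref{AffineSubspaces} is available in $V^1$ and the remaining steps reduce to bounded search over the monster lists, $V^1$ suffices.

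For the second assertion, let $(\mathscr{R},\mathscr{F})$ be a linear algebra on $n$ domains with third-order isomorphism $\mathscr{H}$ onto $(\mathscr{D}_0\times\cdots\times\mathscr{D}_{n-1},\mathscr{F}_{F_0,\ldots,F_{n-1}})$, each $(D_i,F_i)$ linear via some $M_i$ from the first part. I would first use $\Sigma^{\mathscr{B}}_0$-3COMP to form the third-order product $\mathscr{Z}$ of all prime fields appearing in the $M_i$ (at most $ns$ coordinates) together with a composite third-order isomorphism $\widetilde{\mathscr{H}}$ from $(\mathscr{R},\mathscr{F})$ onto $\mathscr{Z}$ equipped with coordinate-wise $m$-ary sum. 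A subalgebra $(\mathscr{R}',\mathscr{F}')\leq(\mathscr{R},\mathscr{F})$ is then sent to a subalgebra of $\mathscr{Z}$, which by the proof of Lemma~\ref{AffineSubspaces} applied to this larger prime-field product decomposes coordinate-wise into affine subspaces. Shifting by a representative and collecting the non-collapsed coordinates produces, via a second application of $\Sigma^{\mathscr{B}}_0$-3COMP, a third-order isomorphism witnessing that $(\mathscr{R}',\mathscr{F}')$ is linear in the sense of Definition~\ref{aklsdry564}. The quotient case is symmetric, using the representative maps of a linear congruence in place of a subalgebra.

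The hard part will be the third-order bookkeeping: the target product $\mathscr{Z}$, the witness $\widetilde{\mathscr{H}}$, and the restriction or factorisation are all classes that must be produced by $\Sigma^{\mathscr{B}}_0$-3COMP from second-order data extracted by bounded search over the monster lists, and one must verify that every defining clause of Definition~\ref{aklsdry564} survives the decomposition. The substantive algebraic content, however, is already contained in Lemma~\ref{AffineSubspaces}, so no further combinatorial argument is required.
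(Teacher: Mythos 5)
Your overall route is the paper's: the corollary is presented there as an immediate consequence of Lemma~\ref{AffineSubspaces} together with Definitions~\ref{a'a'a's;ldkfijgt} and~\ref{aklsdry564}, and your expansion follows exactly that reduction, with the constant-size case discharged through the $\mathbb{A}$-Monster set. One step, however, is stated too strongly. Lemma~\ref{AffineSubspaces} groups repeated primes: the image of a subalgebra decomposes as $L_1\times\cdots\times L_k$ with each $L_i$ an affine subspace of $(\mathbb{Z}_{p_i})^{n_i}$, not of a single copy of $\mathbb{Z}_{p_i}$. So the claim that ``$L_i$ is either a singleton or the whole field'' fails already for the diagonal subalgebra of $\mathbb{Z}_2\times\mathbb{Z}_2$, and likewise ``collecting the non-collapsed coordinates'' in the third-order case presumes a coordinatewise product structure that affine subspaces do not in general have. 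What is true, and what the argument actually needs, is that after translating by a representative each $L_i$ is a \emph{linear} subspace of $(\mathbb{Z}_{p_i})^{n_i}$ and hence isomorphic, as an algebra under $x_1+\cdots+x_m$, to $(\mathbb{Z}_{p_i})^{d_i}$ for some $d_i\leq n_i$; extracting such an isomorphism amounts to finding a basis, i.e.\ Gaussian elimination, which the paper notes is formalizable in $V^1$ (and which, for the constant-size first assertion, is anyway a finite lookup in the lists $\mathrm{M}_{D,\ldots}$). With that substitution --- basis extraction in place of coordinatewise collapse --- your proof goes through in both the $V^1$ and the $W^1_1$ case and coincides with the paper's intended argument.
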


In the presence of the third-order definition of linear algebra on $n$ domains, the following lemmas can be proved almost exactly as in \cite{zhuk2020proof}.

\begin{lemmach}[Lemma 7.21, \cite{zhuk2020proof}]
  $W^1_1$ proves that a linear algebra has no non-trivial absorbing subuniverse, non-trivial central subuniverse, or non-trivial PC subuniverse. 
\end{lemmach}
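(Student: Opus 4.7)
The plan is to reduce the lemma via the isomorphism guaranteed by $LinA(D,\Omega)$ to an elementary arithmetic computation on a product of prime fields, and then apply Lemma \ref{AffineSubspaces} to classify all subuniverses as products of affine subspaces in each factor. Since each factor $\mathbb{Z}_{p_j}$ is one-dimensional, its only affine subspaces are singletons and the whole field, which makes the subsequent case analysis entirely finitary and bounded in $l$.

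First, unwrapping the definition $LinA(D,\Omega)$ yields, through the monster set, some primes $p_0,\ldots,p_{s-1}\le l$ and an isomorphism $H$ between $(D,\Omega)$ and $(\mathbb{Z}_{p_0}\times\cdots\times \mathbb{Z}_{p_{s-1}},\bar{x}_1+\cdots+\bar{x}_m)$; after merging equal primes we may assume the $p_j$ are distinct. The isomorphism transports subuniverses, absorbing subuniverses, central subuniverses, and PC subuniverses bijectively, a routine verification using $ISO_{alg}$ (itself $\Sigma^{1,b}_1$); so it suffices to prove the lemma for the product. By Lemma \ref{AffineSubspaces} applied to any subuniverse $B$ viewed as a unary relation preserved by the sum, $B=L_0\times\cdots\times L_{s-1}$ with each $L_j$ an affine subspace of $\mathbb{Z}_{p_j}$, and being one-dimensional each $L_j$ is either a singleton $\{e_j\}$ or all of $\mathbb{Z}_{p_j}$. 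Non-triviality of $B$ means some coordinate $j_0$ has $L_{j_0}=\{e_{j_0}\}$ with $p_{j_0}>1$.

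For the absorbing case: every $n$-ary term $T\in Clone(\Omega)$ is coordinatewise affine, $T(x_1,\ldots,x_n)=c_1 x_1+\cdots+c_n x_n\pmod{p_j}$ with $\sum_i c_i\equiv 1\pmod{p_j}$ by idempotency. The $n$-ary absorption condition at coordinate $j_0$, applied with all but one argument equal to $e_{j_0}$, forces $c_i\equiv 0\pmod{p_{j_0}}$ for each $i$, contradicting $\sum c_i\equiv 1\pmod{p_{j_0}}$. Since a central subuniverse is in particular a ternary absorbing subuniverse, the central case follows immediately. For the PC case: by Definition \ref{';s;sdllfjjg}, a non-trivial PC subuniverse requires some $\sigma\in\Sigma^{PC}_{\mathcal{D}}$ so that $D/\sigma$ is polynomially complete without non-trivial binary absorbing or central subuniverse. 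Any congruence on $\mathbb{Z}_{p_0}\times\cdots\times\mathbb{Z}_{p_{s-1}}$ is determined coordinatewise, and each $\mathbb{Z}_{p_j}$ (prime order) has only $\Delta$ and $\nabla$ as congruences, so every quotient is again a linear algebra. A non-trivial linear algebra is not polynomially complete: its polynomial clone consists of affine operations $c_0+\sum c_i x_i$, which cannot realise the ternary discriminator (a short input-output check in $\mathbb{Z}_{p_{j_0}}$). Since $\mathbb{A}$ is fixed, this negative fact is part of the bounded monster-set data, so $\Sigma^{PC}_{\mathcal{D}}$ contributes only trivial entries, ruling out any non-trivial PC subuniverse.

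The main obstacle is coordinating the quantifier complexity: the isomorphism $H$ is a $\Sigma^{1,b}_1$ witness, so transporting properties across it and combining with Lemma \ref{AffineSubspaces} (already proved in $V^1$) has to be done inside $\Sigma^{1,b}_1$-reasoning. The arithmetic in each $\mathbb{Z}_{p_j}$ is bounded by $l$ and $\Sigma^{1,b}_0$, and the non-polynomial-completeness of linear algebras of size $>1$ is delegated to the monster set rather than proved in the theory. Consequently, the whole argument sits inside $V^1$ and hence inside $W^1_1$.
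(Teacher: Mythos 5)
There is a genuine gap in your reduction. The definition of a linear algebra explicitly allows the primes $p_0,\dots,p_{s-1}$ to repeat, and you cannot ``merge equal primes'': $\mathbb{Z}_p\times\mathbb{Z}_p$ is not $\mathbb{Z}_{p^2}$, and Lemma \ref{AffineSubspaces} only decomposes a subuniverse as $L_1\times\dots\times L_k$ with $L_i$ an affine subspace of the whole $p_i$-primary block $(\mathbb{Z}_{p_i})^{n_i}$, not of each one-dimensional coordinate. The diagonal $\{(x,x)\}\subseteq\mathbb{Z}_p\times\mathbb{Z}_p$ is a non-trivial subuniverse whose projection to every coordinate is full, so your claim that non-triviality forces some coordinate $L_{j_0}$ to be a singleton is false, and the coordinatewise computation ``$c_i\equiv 0\pmod{p_{j_0}}$ for all $i$'' never gets off the ground for such $B$. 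The same slip infects the PC case (``any congruence is determined coordinatewise'' fails for repeated primes). The standard repair, which is what Zhuk's proof actually does, avoids coordinates entirely: an affine subspace $B$ is closed under combinations $\sum\lambda_i b_i$ with $\sum\lambda_i=1$, every term $t$ is $c_1x_1+\dots+c_nx_n$ with $\sum c_i=1$, and from the absorbed elements $t(a,b,\dots,b),t(b,a,b,\dots,b),\dots\in B$ one recovers $a=\sum_i t(b,\dots,a,\dots,b)-(n-1)b$ as such a combination, forcing $B=A$.

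Separately, your conclusion that ``the whole argument sits inside $V^1$'' misses why the paper states the lemma in $W^1_1$. For a linear algebra that is a subuniverse of the fixed $\mathbb{A}$ everything is bounded by the constant $l$ and the statement is simply part of the $\mathbb{A}$-Monster set; nothing needs to be proved in the theory. The lemma is placed immediately after Definition \ref{aklsdry564} and the remark that, ``in the presence of the third-order definition of linear algebra on $n$ domains,'' Zhuk's proofs transfer: its intended scope is linear algebras on products $D_0\times\dots\times D_{n-1}$ (e.g.\ solution sets $\mathscr{R}_\Theta$), where the algebra, the isomorphism ($ISO^{3,3}_{alg}$), and the candidate absorbing/central/PC subuniverses are all third-order objects and the affine-combination argument must be carried out on classes of maps. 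Your proposal does not address this case at all, which is precisely the case for which the lemma is invoked later (e.g.\ in Lemmas \ref{''';dlkykugkdegfef} and \ref{alksjdlkjlahskdhlk}).
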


\begin{lemmach}[Lemma 7.24.1, \cite{zhuk2020proof}]\label{''';dlkykugkdegfef}
Suppose that $\mathscr{R}_{\Theta}\leq D_0\times...\times D_{n-1}$ is a relation such that $pr_0(\mathscr{R}_{\Theta})=D_0$, there are no non-trivial binary absorbing subuniverses on $D_0$, and $L = pr_0((L_1\times...\times L_{n-1})\cap \mathscr{R}_{\Theta})$ where $L_i$ is a linear subuniverse of $D_i$ for every $i<n$. Then $W_1^1$ proves that $L$ is a linear subuniverse of $D_0$. 
\end{lemmach}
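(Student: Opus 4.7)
The plan is to verify the two clauses of the predicate $LNsubU(L, D_0, \Omega)$: (i) $L$ is closed under $\Omega$ (a subuniverse of $D_0$), and (ii) $L$ is stable under the minimal linear congruence $\sigma_0 := CongLin(D_0)$. Together these give that $L$ is a linear subuniverse of $D_0$.

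For (i), I would follow the pattern of Corollary \ref{akjshdutjgjf}. Since each $L_i$ is in particular a subuniverse of $D_i$, and $\mathscr{R}_\Theta$ is closed under $\Omega$ coordinatewise by Lemma \ref{lslsdkleijf}, the intersection $\mathscr{S} := (L_1 \times \dots \times L_{n-1}) \cap \mathscr{R}_\Theta$ is a subuniverse of $D_0 \times \dots \times D_{n-1}$, hence $L = pr_0(\mathscr{S})$ is closed under $\Omega$ on $D_0$. This is a routine $V^1$-argument.

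For (ii), the approach is to pass to the coordinatewise linear quotients and apply Lemma \ref{AffineSubspaces}. Let $\sigma_i := CongLin(D_i)$; since $L_i$ is linear, $L_i$ is a union of $\sigma_i$-blocks. Using $\Sigma^{\mathscr{B}}_0$-3COMP I would form the third-order quotient $\bar{\mathscr{R}}_\Theta := \mathscr{R}_\Theta / \mathscr{C}_{\cap_n \sigma_i^{ext}}$, whose base set lives in $\bar{D}_0 \times \dots \times \bar{D}_{n-1}$ with each $\bar{D}_i := D_i/\sigma_i$ linear. Gluing the isomorphisms $M_i$ supplied by the $\mathbb{A}$-monster set (exactly as in the proof of Lemma \ref{AffineSubspaces}) realizes $\bar{\mathscr{R}}_\Theta$ as a subuniverse of a product of prime fields closed under $x_1 + \dots + x_m$, so by Lemma \ref{AffineSubspaces} it is an affine subspace. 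Intersecting with the product of cosets $\bar{L}_i$ and projecting to the first coordinate yields an affine subspace $\bar{L}^* \subseteq \bar{D}_0$, i.e.\ a linear subuniverse of $\bar{D}_0$.

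The main obstacle is the lifting step, namely $L = \pi_0^{-1}(\bar{L}^*)$ for the canonical $\pi_0 : D_0 \to \bar{D}_0$. The inclusion $L \subseteq \pi_0^{-1}(\bar{L}^*)$ is immediate from the definitions. For the reverse, given $b \in D_0$ with $\bar{b} \in \bar{L}^*$, a representative of the corresponding class in $\bar{\mathscr{R}}_\Theta$ produces some $(b', c_1, \dots, c_{n-1}) \in \mathscr{R}_\Theta$ with $\sigma_0(b, b')$ and $c_i \in L_i$ (using $\sigma_i$-stability of $L_i$), while $pr_0(\mathscr{R}_\Theta) = D_0$ supplies $(b, d_1, \dots, d_{n-1}) \in \mathscr{R}_\Theta$. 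Combining these two tuples coordinatewise by $\Omega$, and iterating the construction, one shows that the set of $b \in D_0$ admitting a lift inside $L$ is a subuniverse of $D_0$ that binary-absorbs $D_0$; the hypothesis that $D_0$ has no non-trivial binary absorbing subuniverse then forces this subuniverse to be all of $D_0$, in the spirit of Lemma 7.5 in \cite{zhuk2020proof}. Hence every $b$ with $\bar{b} \in \bar{L}^*$ lies in $L$, so $L = \pi_0^{-1}(\bar{L}^*)$ is stable under $\sigma_0$. The delicate part of the formalization is bookkeeping the third-order quantifier alternations so that the induction on this absorbing-closure argument stays within $\Sigma^{\mathscr{B}}_1$-IND, which is what $W^1_1$ affords.
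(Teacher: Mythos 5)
First, for calibration: the paper does not actually prove this lemma. It appears under the sentence ``In the presence of the third-order definition of linear algebra on $n$ domains, the following lemmas can be proved almost exactly as in \cite{zhuk2020proof}'', i.e.\ the paper's only contribution here is that Definition \ref{aklsdry564} makes Zhuk's original argument transcribable; no steps are given. Judged on its own, much of your proposal is sound: part (i) is routine; passing to the quotients by $\sigma_i=CongLin(D_i)$, gluing the monster-set isomorphisms, and invoking Lemma \ref{AffineSubspaces} to make $\bar L^{*}$ an affine subspace of $\bar D_0$ is the right use of the machinery; and reducing the whole lemma to the single inclusion $\pi_0^{-1}(\bar L^{*})\subseteq L$ is correct, since that inclusion plus part (i) gives stability under $CongLin(D_0)$.

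The gap is in how you close that inclusion. The set of $b\in D_0$ ``admitting a lift inside $L$'' is, by the definition of $L$ as a projection, exactly $L$ itself; if it binary-absorbed $D_0$, the no-absorption hypothesis would force $L\in\{\emptyset,D_0\}$, which is strictly stronger than the conclusion and false in general (e.g.\ $L$ can be a single block of $CongLin(D_0)$), so the absorption claim cannot be the right invariant. The proposed $\Omega$-combination also fails coordinatewise: every prime of a linear quotient divides $m-1$, so $\Omega(c_i,\dots,c_i,d_i)$ lands in the $\sigma_i$-block of $d_i$, not of $c_i$; the combined tuple therefore escapes $L_1\times\dots\times L_{n-1}$ in precisely the coordinates you need to control (and the first coordinate is only $\sigma_0$-equivalent to $b$, not equal to it). Iterating only cycles through blocks determined by the second tuple. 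Note also that the analogy with Lemma 7.5 does not transfer: there one has an absorbing term $T$ with $T(a,b)\in B$ for $b\in B$ to push a tuple into the target, whereas here no such term exists and the hypothesis is the \emph{absence} of one on $D_0$, which Zhuk exploits in a genuinely different way (an induction on the arity of the relation with auxiliary lemmas on the binary projections). As written, the lifting step is not a proof, and this is exactly the part the paper leaves implicit.
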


The following lemma can be proved similarly to Lemma \ref{lalaksjdf}.
\begin{lemmach}[Lemma 7.25, \cite{zhuk2020proof}]\label{alksjdlkjlahskdhlk}
  Suppose $\mathscr{R}_{\Theta}$ is a subdirect relation on $D_0\times...\times D_{n-1}$ and $L_i$ is a linear subuniverse of $D_i$ for every $i$. Then $W^1_1$ proves that $(L_0\times...\times L_{n-1})\cap \mathscr{R}_{\Theta}$ is a linear subuniverse of $\mathscr{R}_{\Theta}$.
\end{lemmach}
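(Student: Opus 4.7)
The plan is to mirror the argument of Lemma \ref{lalaksjdf} (the PC case), with the minimal linear congruence playing the role of the PC congruences and the fact that the class of linear algebras is closed under subalgebras playing the role of the isomorphism with products of PC factors. If some $L_i$ is empty the claim is immediate (the empty set is trivially a linear subuniverse), so assume every $L_i$ is non-empty. By the definition of $LNsubU$, each $L_i$ is stable under any linear congruence on $D_i$, in particular under the minimal linear congruence $\sigma_i$ witnessing $CongLin(D_i,\Omega,\sigma_i)$; hence each $L_i$ is a union of $\sigma_i$-classes.

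First, extend each $\sigma_i$ to the product via the third-order relation $\mathscr{C}_{\sigma_i^{ext}}$ and form the intersection $\mathscr{C}_{\cap_n\sigma_i^{ext}}$ from (\ref{laallaskskjd}). By the claim at the end of Section \ref{a;kjdgh;qeirgy;gh;w}, its restriction to $\mathscr{R}_{\Theta}$ is a congruence on the third-order algebra $(\mathscr{R}_{\Theta},\mathscr{F}_{\Omega})$. The factor set and factor operation are then given by the functions $factorset$ and $factor\omega$ defined in that section, and all these definitions fit within $\Sigma^{\mathscr{B}}_{0}$-3COMP.

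The key step is to verify that $(\mathscr{R}_{\Theta}/\mathscr{C}_{\cap_n\sigma_i^{ext}},\mathscr{F}_{\Omega/\cap_n\sigma_i^{ext}})$ is a linear algebra on $n$ domains in the sense of Definition \ref{aklsdry564}. The natural third-order map sending a class representative $H\in \mathscr{R}_{\Theta}/\mathscr{C}_{\cap_n\sigma_i^{ext}}$ with $H(i)=\langle i,a_i\rangle$ to the tuple $(a_0/\sigma_0,\dots,a_{n-1}/\sigma_{n-1})$ is an injective homomorphism into $(\mathscr{D}_0/\sigma_0\times\dots\times \mathscr{D}_{n-1}/\sigma_{n-1},\mathscr{F}_{\Omega/\sigma_0,\dots,\Omega/\sigma_{n-1}})$. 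The target is the direct product of the linear algebras $D_i/\sigma_i$, so it is itself a linear algebra on $n$ domains. By the corollary following Definition \ref{aklsdry564}, the class of linear algebras on products of algebras of size at most $l$ is closed under taking subalgebras, hence the factor is linear. Therefore $\mathscr{C}_{\cap_n\sigma_i^{ext}}$ restricted to $\mathscr{R}_{\Theta}$ is a linear congruence on $(\mathscr{R}_{\Theta},\mathscr{F}_{\Omega})$.

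To conclude, note that $(L_0\times\dots\times L_{n-1})\cap \mathscr{R}_{\Theta}$ is a subuniverse of $\mathscr{R}_{\Theta}$ by the lemma preceding Section 3.2 (products of domain-wise subuniverses intersected with a subuniverse are subuniverses), and it is a union of $\mathscr{C}_{\cap_n\sigma_i^{ext}}$-classes restricted to $\mathscr{R}_{\Theta}$ because each $L_i$ is a union of $\sigma_i$-classes. Since the minimal linear congruence on $\mathscr{R}_{\Theta}$ is contained in every linear congruence on $\mathscr{R}_{\Theta}$, in particular in the one just produced, the intersection is also stable under the minimal one, and is therefore a linear subuniverse of $\mathscr{R}_{\Theta}$. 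The main obstacle will be keeping the formalization inside $W^1_1$: the predicate $LinA$ is genuinely third-order, the witnessing isomorphism $\mathscr{H}$ must be extracted via $\Sigma^{\mathscr{B}}_0$-3COMP, and showing that the restricted congruence really is linear uses the third-order closure properties recorded after Definition \ref{aklsdry564}, which themselves are proved with $\Sigma^{\mathscr{B}}_1$-IND on the ambient product.
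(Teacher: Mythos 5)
Your proposal is correct and follows exactly the route the paper intends: the paper gives no detailed proof of this lemma, stating only that it "can be proved similarly to Lemma \ref{lalaksjdf}" (the PC case), and your argument is a faithful adaptation of that proof with the minimal linear congruences in place of the PC congruences and the closure of linear algebras on products under subalgebras doing the work that the isomorphism with PC factors did there. The only remark worth adding is that your final stability step is sound precisely because being a union of classes of the (coarser) congruence $\mathscr{C}_{\cap_n\sigma_i^{ext}}$ restricted to $\mathscr{R}_{\Theta}$ automatically gives being a union of classes of the finer minimal linear congruence contained in it.
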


\subsubsection{Common properties and Interaction between subuniverses}

The common property for subuniverses $C_0,...,C_{n-1}$ of a fixed type (any but linear) is that there does not exist $(C_0,...,C_{n-1})$-essential relation $\mathscr{R}$ of any arity greater than $2$. For PC subuniverses we additionally require the relation to be subdirect. 

\begin{lemmach}[Lemma 7.27,   \cite{zhuk2020proof}]
  Suppose $C_i$ is a non-trivial binary absorbing subuniverse of $D_i$ with a term $T$ for all $i\in \{0,1,2,...,n\}$ and $n>1$. Then $V^0$ proves that there does not exist a $(C_0,...,C_{n-1})$-essential solution set $\mathscr{R}_{\Theta}\leq D_0\times...\times D_{n-1}$.
\end{lemmach}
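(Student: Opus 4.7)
The plan is a short direct argument that requires no induction at all, hence is formalizable in $V^0$. I would proceed by contradiction: assume $\mathscr{R}_{\Theta}$ is $(C_0,\ldots,C_{n-1})$-essential. Instantiating the essentiality clause at the two coordinates $i=0$ and $i=1$ (using $n>1$) extracts, by $\Sigma^{1,b}_0$-comprehension, two homomorphisms $H_0,H_1\leq \langle n,\langle n,l\rangle\rangle$ such that $H_0\in\mathscr{R}_{\Theta}$, $H_0(0)=\langle 0,a_0\rangle$ for some $a_0\in D_0\setminus C_0$, and $H_0(j)=\langle j,c_j^{(0)}\rangle$ with $c_j^{(0)}\in C_j$ for every $j\neq 0$; symmetrically $H_1$ witnesses essentiality at coordinate $1$.

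Next I would form the string $H:=usepol_2(T,H_0,H_1)$ using the binary absorbing term $T$ that is common to all the $C_i$ by hypothesis. Since $T\in Clone(\Omega)=Pol(\Gamma_{\mathcal{A}})$, the $k=2$ case of Lemma \ref{lslsdkleijf} (whose verification edge-by-edge is $\Sigma^{1,b}_0$ and does not rely on the induction of $V^1$ in any essential way) gives $H\in\mathscr{R}_{\Theta}$. Then I would check coordinatewise that $H$ lies in $C_0\times\cdots\times C_{n-1}$: at coordinate $0$ the second input $c_0^{(1)}$ lies in $C_0$ while the first lies in $D_0$, so binary absorption of $D_0$ by $C_0$ with $T$ gives $T(a_0,c_0^{(1)})\in C_0$; coordinate $1$ is symmetric; and at every coordinate $j\geq 2$ both inputs lie in $C_j$, which is closed under $T$ since $C_j$ is a subuniverse. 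This places $H$ in $\mathscr{R}_{\Theta}\cap(C_0\times\cdots\times C_{n-1})$, contradicting the first conjunct of $EssRel(\mathscr{R}_{\Theta},C)$.

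Everything in this argument is $\Sigma^{1,b}_0$: two witness extractions, one application of the string-valued function $usepol_2$, one bounded-quantifier verification that the resulting map respects each edge $E^{ij}_{\ddot{\mathcal{A}}}$, and a bounded case split over the $n$ coordinates. No unbounded quantifier and no non-trivial induction appear, which is precisely why the claim is a theorem of $V^0$ rather than of $V^1$. There is no real obstacle; the only point to be careful about is that the formal definition of $(C_0,\ldots,C_{n-1})$-essential already states $\mathscr{R}_{\Theta}\cap(C_0\times\cdots\times C_{n-1})=\emptyset$ on the nose, so exhibiting a single element of that intersection suffices, and we must encode $H$ as an actual string of length bounded by $\langle n,\langle n,l\rangle\rangle$ so that the contradiction is itself a $\Sigma^{1,b}_0$-disprovable statement.
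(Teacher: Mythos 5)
Your proposal is correct and follows essentially the same route as the paper's proof: extract two essentiality witnesses (the paper uses coordinates $0$ and $n-1$ where you use $0$ and $1$, an inessential difference), apply $usepol_2(T,\cdot,\cdot)$ to obtain a new solution, and verify coordinatewise via binary absorption that it lands in $C_0\times\cdots\times C_{n-1}$, contradicting $\mathscr{R}_{\Theta}\cap(C_0\times\cdots\times C_{n-1})=\emptyset$. Your additional remarks on why each step is $\Sigma^{1,b}_0$ and hence available in $V^0$ are consistent with the paper's claim.
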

\begin{proof}
  Suppose that such solution set $\mathscr{R}_{\Theta}$ to some CSP instance over $\Gamma_{\mathcal{A}}$ exists. Consider two solutions, $H_1\in (D_0\times C_1\times...\times C_{n-1})\cap \mathscr{R}_{\Theta}$ and $H_2\in (C_0\times C_1\times...\times D_{n-1})\cap \mathscr{R}_{\Theta}$. Then $usepol_{2}(T,H_1,H_2)$ is a new solution to $\Theta$ and it is in $C_0\times...\times C_{n-1}$.
\end{proof}
\begin{lemmach}[Lemma 6.11, \cite{zhuk2020strong}]
  Suppose $C_i$ is a central subuniverse of $D_i$ for all $i\in \{0,1,2,...,n\}$ and $n>2$. Then $V^1$ proves that there does not exist a $(C_0,...,C_{n-1})$-essential solution set $\mathscr{R}_{\Theta}\leq D_0\times...\times D_{n-1}$.
\end{lemmach}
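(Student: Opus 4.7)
The plan is to derive a contradiction by assembling a solution lying in $C_0\times\cdots\times C_{n-1}$ from three witnesses to the $(C_0,\ldots,C_{n-1})$-essentiality, using ternary absorption. Suppose for contradiction that such an essential $\mathscr{R}_{\Theta}$ exists. By the two halves of the defining property, for each $i<n$ there is $H_i\in \mathscr{R}_{\Theta}$ with $H_i(j)=\langle j,c_j\rangle$ for some $c_j\in C_j$ whenever $j\neq i$, and $H_i(i)=\langle i,a_i\rangle$ with $a_i\in D_i\setminus C_i$ (the latter because $\mathscr{R}_{\Theta}\cap (C_0\times\cdots\times C_{n-1})=\emptyset$). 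Since $n>2$, we single out $H_0,H_1,H_2$; their existence is a $\Sigma^{1,b}_1$-consequence of the hypotheses and is therefore available in $V^1$.

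Next I fix a ternary term $S\in Clone(\Omega)$, drawn from the monster list $\Pi^3_{\mathcal{A}}$, that witnesses ternary absorption of each $C_i$ into $D_i$ simultaneously. For every $C_i$ there is, by the definition of a central subuniverse, a ternary term $S_i$ making $C_i$ ternary absorbing in $D_i$; a single common $S$ is then produced from $S_0,\ldots,S_{n-1}$ by finite composition (and, since $\mathbb{A}$ is fixed, the common $S$ can be read off from the $\mathbb{A}$-Monster set as a constant). The statement ``such an $S$ is a polymorphism and absorbs every $C_i$ into $D_i$'' is $\Sigma^{1,b}_0$ in the fixed-algebra setting and has a constant-size $V^0$-proof.

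Form $H:=usepol_3(S,H_0,H_1,H_2)$. By Lemma \ref{lslsdkleijf}, $H$ is again a homomorphism from $\mathcal{X}$ to $\ddot{\mathcal{A}}$, so $H\in \mathscr{R}_{\Theta}$. I now verify coordinatewise that $H\in C_0\times\cdots\times C_{n-1}$. For $j\in\{0,1,2\}$, exactly one of the three values $H_0(j),H_1(j),H_2(j)$ lies in $D_j\setminus C_j$ (namely $H_j(j)$) and the other two lie in $C_j$; by the ternary absorption property of $S$, the output $H(j)$ lies in $C_j$. For $j\geq 3$, all three values lie in $C_j$, and since $C_j$ is a subuniverse (hence closed under $S$), again $H(j)\in C_j$. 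Thus $H\in (C_0\times\cdots\times C_{n-1})\cap \mathscr{R}_{\Theta}$, contradicting the first clause of $(C_0,\ldots,C_{n-1})$-essentiality.

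The main obstacle is choosing a single ternary term $S$ that absorbs all $C_i$ uniformly: in contrast to Lemma 7.27 for binary absorption, the hypothesis here does not supply a common witness. This is resolved by invoking the fixed-algebra framework, where the $\mathbb{A}$-Monster set already contains witnessing terms for every central subuniverse of every subalgebra of $\mathbb{A}$, and a common $S$ is extracted by a finite composition whose correctness is a $\Sigma^{1,b}_0$-fact about $\mathbb{A}$ with a constant-size proof. Once $S$ is in hand, the remainder of the argument is routine in $V^1$: Lemma \ref{lslsdkleijf} gives closure of $\mathscr{R}_{\Theta}$ under the polymorphism $S$, and the ternary absorption inequality is checked by a bounded computation at each coordinate.
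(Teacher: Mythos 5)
Your overall strategy --- combining the three witnesses $H_0,H_1,H_2$ of essentiality with a ternary absorbing term and reading off a solution in $C_0\times\cdots\times C_{n-1}$ --- is the natural analogue of the binary-absorption case (Lemma 7.27), but it has a genuine gap at exactly the point you yourself flag as ``the main obstacle'': the existence of a single ternary term $S$ that absorbs $C_0$ into $D_0$, $C_1$ into $D_1$ and $C_2$ into $D_2$ simultaneously. The hypothesis of the present lemma, unlike that of Lemma 7.27 (where the common binary term $T$ is explicitly part of the assumption), supplies only one witnessing term $S_i$ per pair $(C_i,D_i)$. Your proposed repair by ``finite composition'' does not work: the standard composition of absorbing terms produces a term of arity $3\cdot 3=9$, not $3$, while the arity-preserving composition $S_1(S_2(x,y,z),S_2(y,z,x),S_2(z,x,y))$ absorbs for $(D_2,C_2)$ but fails for $(D_1,C_1)$, because $S_2$ need not map a triple with one coordinate outside $C_1$ back into $C_1$. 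A common absorbing term of arity $k>n$ is useless here, since feeding it the $n$ witnesses forces two ``bad'' arguments into the same coordinate. Appealing to the $\mathbb{A}$-Monster set does not close the gap either: the monster set can only record facts that are actually true of $\mathbb{A}$, and it is not a theorem that three central subuniverses of three subalgebras of a finite idempotent algebra admit a common ternary absorbing term. For a triple lacking one, your construction proves nothing, even though the lemma still holds --- Zhuk's proof of Lemma 6.11 uses the second, $Sg$-based clause of centrality ($(a,a)\notin Sg(\{a\}\times C\cup C\times\{a\})$), not ternary absorption alone, which is precisely why the central case requires $n>2$ and is genuinely harder than binary absorption.

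The paper sidesteps the issue entirely. It first reduces to arity three by passing to $\mathscr{R}'=pr_{1,2,3}(\mathscr{R}_{\Theta}\cap (D_0\times D_1\times D_2\times C_3\times\cdots\times C_{n-1}))$, which is $(C_0,C_1,C_2)$-essential whenever $\mathscr{R}_{\Theta}$ is $(C_0,\ldots,C_{n-1})$-essential, and then observes that the ternary statement is a fixed finite assertion about $\mathbb{A}$ (finitely many subalgebras $D_0,D_1,D_2$, finitely many central subuniverses, finitely many subalgebras $R\leq D_0\times D_1\times D_2$), verifiable by exhaustive search and hence available from the monster set; only the projection step needs $\Sigma^{1,b}_1$-COMP, which is what places the lemma in $V^1$. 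If you want a uniform argument rather than brute force, you would have to formalize Zhuk's actual proof, built on the $Sg$ condition, rather than on a common absorbing term.
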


\begin{lemmach}[Corollary 7.13.3, \cite{zhuk2020proof}]
  Suppose $C_i$ is a PC subuniverse of $D_i$ for all $i\in \{0,1,2,...,n\}$ and $n>2$. Then $V^1$ proves that there does not exist a $(C_0,...,C_{n-1})$-essential subdirect solution set $\mathscr{R}_{\Theta}\leq D_0\times...\times D_{n-1}$.
\end{lemmach}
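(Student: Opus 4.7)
The plan is to assume for contradiction that such a subdirect $(C_0,\ldots,C_{n-1})$-essential relation $\mathscr{R}_{\Theta}$ exists, and then to factorize each domain along the PC-structure of $C_i$ so that Lemma \ref{fkjerfqlerfhuurfhlrf} becomes applicable; the hypothesis $n>2$ will be exploited only in the very last step. The whole argument will stay within $V^1$ because the PC-structure data for $\mathbb{A}$ and its subuniverses are supplied by the $\mathbb{A}$-Monster set (so no induction is needed to extract them) and Lemma \ref{fkjerfqlerfhuurfhlrf} is already $V^1$-provable.

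First I would invoke the structural description in Definition \ref{';s;sdllfjjg} (equivalently Lemma \ref{alskjdiojkgfkj}): for each $i<n$ the PC subuniverse $C_i$ is a single block of some congruence $\theta_i$ on $D_i$, and there are PC congruences $\sigma_{i,0},\ldots,\sigma_{i,k_i-1}$ with $D_i/\theta_i\cong \prod_{j<k_i} D_i/\sigma_{i,j}$, where every quotient $D_i/\sigma_{i,j}$ is a PC algebra without non-trivial binary absorbing subuniverse or center. Using the string functions $factorset$, $rep$ and $factor\omega$ from section \ref{a;kjdgh;qeirgy;gh;w}, together with the isomorphisms from the monster-set lists $\mathrm{M}_{D_i,\theta_i,\sigma_{i,0},\ldots,\sigma_{i,k_i-1}}$, I would build a factorized instance $\Theta'$ on $N=\sum_{i<n}k_i$ coordinates whose solution set $\mathscr{R}'$ is the canonical image of $\mathscr{R}_{\Theta}$. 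Since $\mathscr{R}_{\Theta}$ is subdirect, $\mathscr{R}'$ is subdirect as well. Let $c=(c_\alpha)_{\alpha<N}$ be the tuple whose coordinate in $D_i/\sigma_{i,j}$ is the image of $C_i$; since $C_i$ is a whole $\theta_i$-class, each $c_\alpha$ is a single element. Essentiality then reads: $c\notin\mathscr{R}'$, but for every $i<n$ the witness $H_i\in\mathscr{R}_{\Theta}\cap(C_0\times\cdots\times D_i\times\cdots\times C_{n-1})$ produces $H_i'\in\mathscr{R}'$ that agrees with $c$ on every factored coordinate $\alpha$ whose original index $i_\alpha$ is different from $i$.

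Next I would apply Lemma \ref{fkjerfqlerfhuurfhlrf} to $\mathscr{R}'$: its hypotheses hold because every coordinate of $\Theta'$ is a PC algebra without non-trivial binary absorbing or central subuniverse. This produces a decomposition $\mathscr{R}'=\bigwedge_{\ell<s}\delta_\ell(y_{\alpha_\ell},y_{\beta_\ell})$ as a conjunction of binary relations. For each conjunct, the two factored coordinates $y_{\alpha_\ell},y_{\beta_\ell}$ come from at most two original indices $i_{\alpha_\ell},i_{\beta_\ell}\in\{0,\ldots,n-1\}$; since $n>2$ one can pick $i''\in\{0,\ldots,n-1\}\setminus\{i_{\alpha_\ell},i_{\beta_\ell}\}$. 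The essentiality witness $H_{i''}'$ lies in $\mathscr{R}'$ and agrees with $c$ at both $\alpha_\ell$ and $\beta_\ell$ (because $i_{\alpha_\ell},i_{\beta_\ell}\neq i''$), so $\delta_\ell(c_{\alpha_\ell},c_{\beta_\ell})$ is satisfied. Running this through every $\ell<s$ (the number of binary conjuncts is bounded by a constant depending only on $l$, so the conjunction needs no new induction beyond the one already inside Lemma \ref{fkjerfqlerfhuurfhlrf}), one concludes $c\in\mathscr{R}'$, contradicting $c\notin\mathscr{R}'$.

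The main obstacle I foresee is the careful definition of the factorized instance and the verification that $H_i$ really does descend to the claimed $H_i'$ and that $\mathscr{R}'$ remains subdirect: this requires juggling the third-sort factor-set class $\mathscr{R}_{\Theta}/\mathscr{C}_{\cap_{i<n}\sigma_i^{ext}}$ from section \ref{a;kjdgh;qeirgy;gh;w} and checking that its bit-definition behaves correctly under composition with the product isomorphism $D_i/\theta_i\cong\prod_j D_i/\sigma_{i,j}$. Strictly speaking $\Theta'$ is not a CSP instance over $\Gamma_{\mathcal{A}}$ itself, but, exactly as in the proof of Lemma \ref{alalsokd7ju}, each of its binary constraints $R^{ij}_{\ddot{\mathcal{A}}'}$ can be defined as the lifted image of $E^{ij}_{\ddot{\mathcal{A}}}$, and this weaker form of instance is already sufficient for both Lemma \ref{fkjerfqlerfhuurfhlrf} and the combinatorial step above.
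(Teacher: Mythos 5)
Your proof is correct, but it is not the route the paper takes. You reconstruct Zhuk's original universal-algebra argument: factor each $D_i$ by the congruence $\theta_i$ witnessing that $C_i$ is a PC subuniverse, pass to a subdirect relation over a product of PC algebras with no non-trivial binary absorbing or central subuniverses, invoke the binary-decomposition Lemma~\ref{fkjerfqlerfhuurfhlrf}, and use $n>2$ to satisfy every binary conjunct at the tuple $c$ via an essentiality witness. The paper instead disposes of this lemma (and the central-subuniverse analogue) by a reduction to arity $3$: from an $n$-ary essential relation it forms $\mathscr{R}'=pr_{1,2,3}(\mathscr{R}\cap D_0\times D_1\times D_2\times C_3\times\cdots\times C_{n-1})$ -- whose definition is the only step needing $\Sigma^{1,b}_1$-COMP, hence $V^1$ -- and the resulting ternary statement concerns only the finitely many subalgebras of products of subuniverses of the fixed $\mathbb{A}$, so it is verified by exhaustive search, i.e.\ absorbed into the $\mathbb{A}$-Monster set. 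The paper's route is shorter and keeps the formalization burden minimal, but is available only because $\mathbb{A}$ is a constant; your route is self-contained, closer to the source, and would survive making $\mathbb{A}$ a parameter, at the cost of setting up the factorized instance and the machinery of Lemma~\ref{fkjerfqlerfhuurfhlrf}. Two small corrections: the number of binary conjuncts in the decomposition is bounded by the square of the number of factored coordinates, i.e.\ $O((n\log_2 l)^2)$, not by a constant in $l$ -- this is harmless, since checking all conjuncts is a bounded quantification over pairs of coordinates; and you should note explicitly that $c\notin\mathscr{R}'$ because the preimage of $c$ under the quotient map is exactly $C_0\times\cdots\times C_{n-1}$ (each $C_i$ being a full $\theta_i$-block), which is where essentiality enters.
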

For our purposes, the last two lemmas can be proved by an exhaustive search. For any subuniverses $D_0,D_1,D_2$ of the fixed algebra $\mathbb{A}=(A,\Omega)$, for any of their central /PC subuniverses $C_0,C_1,C_2$ and for any subalgebras $R$ of $D_0\times D_1\times D_2$ check that $R$ is not $(C_0,C_1,C_2)$-essential or $(C_0,C_1,C_2)$-essential and subdirect. For a relation $\mathscr{R}\leq D_0\times...\times D_{n-1}$ of higher arity note that from $(C_0,...,C_{n-1})$-essential relation we can get $(C_0,C_1,C_{2})$-essential relation $\mathscr{R}'$ by setting
$$\mathscr{R}'=pr_{1,2,3}(\mathscr{R}\cap D_0\times D_1\times D_2\times C_3\times... \times C_{n-1}).
$$
In the case of the solution set $\mathscr{R}_{\Theta}$ to the instance $\Theta$, we consider those homomorphisms that send variables $x_3,...,x_{n-1}$ to sets $C_3,...,C_{n-1}$. The definition of $\mathscr{R}'_{\Theta}$ requires $\Sigma^{1,b}_1$-COMP, this gives us theory $V^1$. 

The following three lemmas about the interaction of subuniverses of different types are formulated for an arbitrary algebra $D$, i.e. for example, they can be used for $D=D_0\times...\times D_{n-1}$ and its subuniverses $\mathscr{R}_1$ and $\mathscr{R}_2$. For these cases, we can think about $D$ as of a domain set, and about $B_1$ and $B_2$ as of reductions $D^{(i)}_0\times...\times D^{(i)}_{n-1}$ or solution sets $\mathscr{R}_{\Theta}$. Sometimes we consider $D=\mathscr{R}_{\Theta}\leq D_0\times...\times D_{n-1}$ and $B_1=\mathscr{R}_{\Theta}\cap D^{(\bot)}_0\times...\times D^{(\bot)}_{n-1}$, $B_2=\mathscr{R}_{\Theta}\cap D^{(\top)}_0\times...\times D^{(\top)}_{n-1}$, where $D^{(\bot)}, D^{(\top)}$ are reductions of some (different) types. The proof of these lemmas is based on simple universal algebra reasoning, and in the presence of all third-order objects, their formalization in $W^1_1$ does not differ much from \cite{zhuk2020proof}, \cite{zhuk2020strong}. 

\begin{lemmach}[Lemma $7.28$,   \cite{zhuk2020proof}, Lemma $6.25$]\label{a'a'65777yutd}
  Suppose $B_1$ is a binary absorbing, central, or linear subuniverse of $D$, $B_2$ is a subuniverse of $D$. Then $W^1_1$ proves that $B_1\cap B_2$ is a binary absorbing, central, or linear subuniverse of $B_2$, respectively. 
\end{lemmach}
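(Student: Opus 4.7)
The plan is to treat the three cases separately, since the defining predicates $BAsubU$, $CRsubU$, and $LNsubU$ are of quite different character; only the observation that $B_1 \cap B_2$ is already a subuniverse of $B_2$ (intersection of two subuniverses closed under $\Omega$, giving $SwNU(\Omega, B_1 \cap B_2)$) is common. In each case one only needs to verify the remaining defining clauses of the appropriate subuniverse type.

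For the binary absorbing case, let $T$ with $Pol_2(T, D, \Gamma_{\mathcal{A}})$ witness that $B_1$ binary absorbs $D$. For any $a \in B_2$ and $b \in B_1 \cap B_2$, closure of $B_2$ under $T$ gives $T(a,b), T(b,a) \in B_2$, while the absorbing property of $B_1$ in $D$ gives $T(a,b), T(b,a) \in B_1$. Hence $T(a,b), T(b,a) \in B_1 \cap B_2$, yielding $BAsubU(B_1 \cap B_2, B_2, T, \Omega)$. This is a $\Sigma^{1,b}_0$-argument already provable in $V^0$.

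For the central case, the ternary absorbing clause is handled identically using the witness $S$. The remaining clause requires, for every $a \in B_2 \setminus (B_1 \cap B_2)$, that $(a,a) \notin Sg(X'_{(a)})$, where $X'_{(a)} = \{a\} \times (B_1 \cap B_2) \cup (B_1 \cap B_2) \times \{a\}$. I would first establish the monotonicity statement $X' \subseteq X \rightarrow \forall t \leq l^2,\, Cl^{t}_{X'} \subseteq Cl^{t}_{X}$ by $\Sigma^{1,b}_1$-induction on $t$: the base case is the inclusion of generators, and the step case is immediate from the recursive bit-defining axiom of $Cl^{t+1}$. Since $a \in B_2 \setminus (B_1 \cap B_2)$ implies $a \in D \setminus B_1$, centrality of $B_1$ in $D$ gives $\neg Cl^{l^2}_{X_{(a)}}(a,a)$ for the larger generating set $X_{(a)} = \{a\} \times B_1 \cup B_1 \times \{a\} \supseteq X'_{(a)}$, and monotonicity then delivers $\neg Cl^{l^2}_{X'_{(a)}}(a,a)$, as required for $CRsubU$.

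The linear case is the main obstacle, because linearity is defined via the minimal linear congruence of the ambient algebra, which is not automatically inherited by subuniverses. Let $\sigma_D$ satisfy $CongLin(D,\Omega,\sigma_D)$ and let $\sigma_{B_2}$ satisfy $CongLin(B_2,\Omega,\sigma_{B_2})$. The plan is to show $\sigma_{B_2} \subseteq \sigma_D|_{B_2}$ and then invoke stability of $B_1$. The restriction $\sigma_D|_{B_2}$ is a congruence on $B_2$ (by the claim asserting that congruences restrict to congruences on subuniverses), and the canonical map $B_2/\sigma_D|_{B_2} \hookrightarrow D/\sigma_D$ realises $B_2/\sigma_D|_{B_2}$ as a subalgebra of the linear algebra $D/\sigma_D$; by the corollary to Lemma \ref{AffineSubspaces} (closure of the class of linear algebras under subalgebras), $\sigma_D|_{B_2}$ is itself a linear congruence on $B_2$, and minimality of $\sigma_{B_2}$ forces $\sigma_{B_2} \subseteq \sigma_D|_{B_2}$. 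Stability of $B_1$ under $\sigma_D$ then gives $Stable_1(B_1 \cap B_2, \sigma_D|_{B_2})$, and a fortiori $Stable_1(B_1 \cap B_2, \sigma_{B_2})$, which is the missing clause of $LNsubU(B_1 \cap B_2, B_2, \Omega)$. When $D$ itself is of the form $\mathscr{R} \leq D_0 \times \ldots \times D_{n-1}$, the same scheme is carried out in the third-order setting with extended congruences $\mathscr{C}_{\sigma^{ext}_i}$, third-order factor algebras, and the isomorphism predicate $ISO_{alg}^{3,3}$ from Definition \ref{aklsdry564}; the induction on the third-order parameters uses $\Sigma^{\mathscr{B}}_1$-IND, which is exactly the strength of $W^1_1$.
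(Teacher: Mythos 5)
Your proposal is correct, and it is worth noting that the paper itself does not actually write out a proof of this lemma: it only remarks that the argument "is based on simple universal algebra reasoning" and that its formalization "does not differ much" from Zhuk's original papers. What you supply is exactly the standard argument the paper is deferring to. The binary absorbing and central cases are handled the right way (the monotonicity of $Cl^{t}$ in the generating set, proved by induction on $t$, is precisely the point one needs to make explicit for the centrality clause, since $a\in B_2\setminus B_1$ implies $a\in D\setminus B_1$ and the smaller generating set can only generate fewer pairs). For the linear case, your chain --- $B_2/(\sigma_D|_{B_2})$ embeds into the linear algebra $D/\sigma_D$, closure of linear algebras under subalgebras makes $\sigma_D|_{B_2}$ a linear congruence on $B_2$, minimality gives $\sigma_{B_2}\subseteq\sigma_D|_{B_2}$, and stability under the larger congruence implies stability under the smaller one --- is the correct and intended route, and your appeal to the subalgebra-closure corollary of Lemma \ref{AffineSubspaces} is the right ingredient. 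The only point I would flag as needing a word in a fully formal write-up is that closure of the subuniverse $B_2$ under the term operation $T$ (rather than just under $\Omega$) is used in the absorbing cases; for the fixed algebra $\mathbb{A}$ this is part of the precomputed data in the $\mathbb{A}$-Monster set, so it costs nothing, but it is not literally contained in $SwNU(\Omega,B_2)$.
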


\begin{lemmach}[Lemma 7.29,   \cite{zhuk2020proof}]
Suppose $B_1$ and $B_2$ are non-empty one-of-four subuniverses of $D$, $B_1\cap B_2=\emptyset$.
Then $W^1_1$ proves that $B_1$ and $B_2$ are subuniverses of the same type.
\end{lemmach}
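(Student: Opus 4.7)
The plan is to argue by contradiction and case analysis on the types of $B_1$ and $B_2$ in $\{BA, C, PC, L\}$. If the types agree, the conclusion is immediate, so suppose they differ. Since both $B_1$ and $B_2$ are non-empty and disjoint, neither equals the ambient $D$.

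First, I would consider the case where at least one of the subuniverses, say $B_1$, has type $X_1 \in \{BA, C, L\}$. Applying Lemma \ref{a'a'65777yutd} to $B_1$ (of type $X_1$) and $B_2$ (which is a subuniverse of $D$) yields that $B_1 \cap B_2$ is a subuniverse of $B_2$ of the same type $X_1$. Together with the disjointness hypothesis $B_1 \cap B_2 = \emptyset$, this imposes a strong structural constraint on $B_2$. The next step uses this constraint together with the non-emptiness of $B_2$: for each combination of $X_1$ and the type of $B_2$, I would show that a common element must exist, contradicting disjointness. Concretely, for $X_1 = BA$ with term $T$, one iterates $T$ on pairs with one coordinate in $B_1$ and the other in $B_2$; the resulting elements lie in $B_1$ by binary absorption, while the closure of $B_2$ under $T$ (since $T \in Clone(\Omega)$) forces them eventually to lie in $B_2$ as well, yielding a witness to $B_1 \cap B_2 \neq \emptyset$. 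The central and linear cases proceed analogously, using the three-variable absorbing term $S$ for $C$ and stability under the minimal linear congruence $CongLin$ for $L$. In the remaining case, both $B_1$ and $B_2$ are of type $PC$, which matches the conclusion directly.

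The main obstacle lies in making the interaction arguments rigorous in the abstract setting where $D$ may be a product $D_0 \times \dots \times D_{n-1}$ and the subuniverses may be solution sets $\mathscr{R}_\Theta$. I would handle this by lifting the algebraic closure arguments to third-order objects via the machinery of Section \ref{a;kjdgh;qeirgy;gh;w} (extended congruences, the function $\omega$, and $usepol_k$), and by exploiting the exhaustive enumeration of subuniverses of $\mathbb{A}$ together with their associated terms in the $\mathbb{A}$-Monster set. All reasoning uses only bounded comprehension together with the conclusion of Lemma \ref{a'a'65777yutd}, so the entire argument is formalizable in $W^1_1$.
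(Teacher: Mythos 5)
There is a genuine gap, and it sits exactly at the load-bearing step. Your concrete mechanism for the case $X_1=BA$ is not sound: binary absorption of $B_1$ with $T$ gives $T(b_2,b_1),T(b_1,b_2)\in B_1$ for $b_1\in B_1$, $b_2\in B_2$, but the fact that $B_2$ is closed under $T$ only constrains $T$ applied to pairs \emph{both} of whose coordinates lie in $B_2$; nothing forces the iterates $T(b_2,b_1),T(b_2,T(b_2,b_1)),\dots$ ever to enter $B_2$. Indeed, if your mechanism worked it would show that a non-empty binary absorbing subuniverse meets every non-empty subuniverse, which is false (in the two-element lattice, $\{0\}$ binary absorbs via $\wedge$ and $\{1\}$ is a disjoint subuniverse; the lemma's conclusion survives there only because $\{1\}$ happens to absorb via $\vee$, i.e.\ is of the \emph{same} type). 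The same objection applies to your "analogous" central and linear cases, which are not spelled out. The point of the lemma is precisely that disjointness is ruled out by the structure of the \emph{other} type, not by iterating the absorbing term: when $B_2$ is linear it is a union of blocks of $CongLin$, the quotient by $CongLin$ is a linear algebra with no non-trivial absorbing, central, or PC subuniverse (Lemma 7.21 of \cite{zhuk2020proof}, stated in this paper as well), so the image of $B_1$ in that quotient must be everything and $B_1$ meets every block, hence meets $B_2$; when $B_2$ is PC one argues the same way through the congruence $\theta$ of Lemma \ref{alskjdiojkgfkj} whose quotient is a product of PC algebras without non-trivial binary absorbing subuniverse or center; and the binary-absorbing-versus-central case uses the $Sg(\{a\}\times C\cup C\times\{a\})$ condition in the definition of centrality. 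None of these ingredients appears in your proposal, so the contradiction is never actually derived.

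For comparison: the paper itself gives no proof of this lemma. It groups it with Lemmas 7.28 and Theorem 7.30 and states that the universal-algebra reasoning of \cite{zhuk2020proof}, \cite{zhuk2020strong} carries over directly once the third-order objects of Section \ref{a;kjdgh;qeirgy;gh;w} are available, since for the fixed algebra $\mathbb{A}$ the relevant quotients, congruences, and terms are all enumerated in the $\mathbb{A}$-Monster set. Your framing (case analysis on types, appeal to Lemma \ref{a'a'65777yutd}, formalizability via the Monster set and bounded comprehension) is consistent with that, but the derivation of $B_1\cap B_2\neq\emptyset$ in each mixed-type case --- the actual content --- needs to be replaced by the quotient-based arguments above.
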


\begin{lemmach}[Theorem 7.30,   \cite{zhuk2020proof}]\label{alskdjhfyr654}
  Suppose $B_1$ and $B_2$ are one-of-four subuniverses of $D$ of types $\mathcal{T}_1$ and $\mathcal{T}_2$. Then $W^1_1$ proves that $B_1\cap B_2$ is a one-of-four subuniverse of $B_2$ of type $\mathcal{T}_1$.
\end{lemmach}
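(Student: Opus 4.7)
The plan is to proceed by case analysis on the type $\mathcal{T}_1$ of $B_1$. If $\mathcal{T}_1$ is binary absorbing, central, or linear, the conclusion is exactly the content of Lemma~\ref{a'a'65777yutd}, so all that remains is the PC case, to which I would devote the body of the proof.

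Assume then that $B_1$ is a PC subuniverse of $D$. If $B_1\cap B_2=\emptyset$ the statement holds trivially, since the empty set is a PC subuniverse by definition. Otherwise, Definition~\ref{';s;sdllfjjg} together with Lemma~\ref{alskjdiojkgfkj} (drawn from the $\mathbb{A}$-monster set) furnishes a congruence $\theta$ on $D$ such that $B_1$ is a block of $\theta$ and, via a map $M\in \mathrm{M}_{D,\theta,\sigma_{j_0},\dots,\sigma_{j_{s-1}}}$ already present in the monster set, $D/\theta$ is isomorphic to $D/\sigma_{j_0}\times\cdots\times D/\sigma_{j_{s-1}}$ for some PC congruences $\sigma_{j_0},\dots,\sigma_{j_{s-1}}$ of $D$, each quotient being a PC algebra without a non-trivial binary absorbing subuniverse or centre.

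I would then restrict $\theta$ to $B_2$: by the first claim in Subsection~\ref{a;kjdgh;qeirgy;gh;w} (and its third-order analogue), $\theta|_{B_2}$ is a congruence on $B_2$ and $B_1\cap B_2$ is a block of $\theta|_{B_2}$. The natural inclusion exhibits $B_2/\theta|_{B_2}$ as a subalgebra of the product $D/\sigma_{j_0}\times\cdots\times D/\sigma_{j_{s-1}}$. After discarding the coordinates on which the projection is not surjective and identifying pairs of coordinates on which the projection is a graph of an isomorphism, I would apply Lemma~\ref{fkjerfqlerfhuurfhlrf} to the resulting subdirect relation to conclude that $B_2/\theta|_{B_2}$ is itself isomorphic to a product $D/\sigma_{j_{k_0}}\times\cdots\times D/\sigma_{j_{k_{t-1}}}$ of the original PC quotients (the remaining coordinates being uniquely determined). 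Pulling this decomposition back along $B_2\twoheadrightarrow B_2/\theta|_{B_2}$ realises $B_1\cap B_2$ as a block of the intersection of the restrictions of $\sigma_{j_{k_0}},\dots,\sigma_{j_{k_{t-1}}}$ to $B_2$, which by Definition~\ref{';s;sdllfjjg} makes $B_1\cap B_2$ a PC subuniverse of $B_2$.

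The main obstacle will be the formalisation in $W^1_1$ of this structural step when $D$ is a third-order object such as $\mathscr{D}_0\times\cdots\times\mathscr{D}_{n-1}$ or a solution set $\mathscr{R}_{\Theta}$: the congruences $\theta$, $\sigma_{j_i}$ and $\theta|_{B_2}$ become third-order $\mathscr{C}_\theta$, $\mathscr{C}_{\sigma_{j_i}^{ext}}$, and $\mathscr{C}_\theta^{B_2}$, and both the factor class $(\mathscr{R}_\Theta/\mathscr{C}_\theta,\mathscr{F}_{\Omega/\theta})$ and the transferred isomorphism $ISO^{3,3}_{alg}$ must be introduced via $\Sigma^{\mathscr{B}}_0$-$3$COMP as in Subsection~\ref{a;kjdgh;qeirgy;gh;w}. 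The fact that $\mathbb{A}$ is fixed (hence the monster set supplies $\theta$, the $\sigma_{j_i}$, and the isomorphism $M$ as constant-size disjunctions) keeps the purely algebraic part first-order; the genuinely arithmetic content reduces to applying Lemma~\ref{fkjerfqlerfhuurfhlrf} uniformly in the number $n$ of coordinates of the ambient product, which is handled by the $\Sigma^{\mathscr{B}}_1$-IND of $W^1_1$ already used there.
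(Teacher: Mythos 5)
Your proposal matches the intended argument: the paper gives no proof of this lemma at all (it is dispatched, together with the two neighbouring lemmas on interaction of subuniverses, by the single remark that the reasoning is ``simple universal algebra'' whose formalization ``does not differ much from'' Zhuk's papers), and the reconstruction you give --- the binary absorbing, central and linear cases from Lemma~\ref{a'a'65777yutd}, and the PC case via the product decomposition of Lemma~\ref{alskjdiojkgfkj} followed by the subdirect-product analysis of Lemma~\ref{fkjerfqlerfhuurfhlrf} --- is exactly the route of Zhuk's original proof to which the paper defers. The one step to tighten is ``discarding the coordinates on which the projection is not surjective'': the projection of $B_2/\theta|_{B_2}$ onto the $i$-th factor is a subalgebra of $D/\sigma_{j_i}$, and applying Lemma~\ref{fkjerfqlerfhuurfhlrf} requires that this subalgebra is again a PC algebra without a non-trivial binary absorbing or central subuniverse --- for the fixed $\mathbb{A}$ this is a constant-size fact that belongs in the $\mathbb{A}$-monster set, and your closing paragraph's reduction of every PC congruence on a third-order $D$ to an extended congruence on a single domain already presupposes exactly this kind of monster-set support.
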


The following lemma is proved by induction and is used for third-order objects, namely for reductions in strategies.

\begin{lemmach}[Lemma 7.31,   \cite{zhuk2020proof}]
Suppose $\mathscr{A}_0=\mathscr{B}_0$, $s\geq 1$, $t\geq 0$, $\mathscr{A}_{i}$ is a one-of-four subuniverse of $\mathscr{A}_{i-1}$ for every $i\in\{1,...,s\}$, and $\mathscr{B}_{i}$ is a one-of-four subuniverse of $\mathscr{B}_{i-1}$ for every $i\in\{1,...,t\}$. Then $W_1^1$ proves that $\mathscr{A}_{s}\cap \mathscr{B}_t$ is a one-of-four subuniverse of $\mathscr{A}_{s-1}\cap \mathscr{B}_t$ of the same type as $\mathscr{A}_{s}$.
\end{lemmach}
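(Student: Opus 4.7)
The plan is to induct on the number parameter $n = s+t$, treating the two chains $\mathscr{A}_0 \supseteq \dots \supseteq \mathscr{A}_s$ and $\mathscr{B}_0 \supseteq \dots \supseteq \mathscr{B}_t$ as fixed third-order objects indexed via $\mathscr{A}^{[i]}$ and $\mathscr{B}^{[j]}$ for $i \leq s$ and $j \leq t$, in the style of Section~\ref{a;kjdgh;qeirgy;gh;w}. The base case $s+t = 1$, i.e.\ $s=1,\,t=0$, is immediate: $\mathscr{A}_1 \cap \mathscr{B}_0 = \mathscr{A}_1$ and $\mathscr{A}_0 \cap \mathscr{B}_0 = \mathscr{A}_0$, so the conclusion is exactly the hypothesis on $\mathscr{A}_1$. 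The case $t=0$ of the inductive step reduces the same way, so I focus on $t \geq 1$.

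In the inductive step with $t \geq 1$, the key move is to apply the IH twice. First I would apply it to the pair consisting of the full $\mathscr{A}$-chain and the truncation $(\mathscr{B}_0,\dots,\mathscr{B}_{t-1})$, of total length $s+(t-1) < s+t$, to obtain that $\mathscr{A}_s \cap \mathscr{B}_{t-1}$ is a one-of-four subuniverse of $\mathscr{A}_{s-1} \cap \mathscr{B}_{t-1}$ of type $\mathcal{T}(\mathscr{A}_s)$. Second, swapping the roles of the two chains (legitimate because the only link between them is $\mathscr{A}_0 = \mathscr{B}_0$), I would apply the IH to $(\mathscr{B}_0,\dots,\mathscr{B}_t)$ in the $\mathscr{A}$-role and $(\mathscr{A}_0,\dots,\mathscr{A}_{s-1})$ in the $\mathscr{B}$-role, of total length $t+(s-1)<s+t$, yielding that $\mathscr{A}_{s-1} \cap \mathscr{B}_t$ is a one-of-four subuniverse of $\mathscr{A}_{s-1} \cap \mathscr{B}_{t-1}$ of type $\mathcal{T}(\mathscr{B}_t)$. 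With $D := \mathscr{A}_{s-1} \cap \mathscr{B}_{t-1}$, $B_1 := \mathscr{A}_s \cap \mathscr{B}_{t-1}$, $B_2 := \mathscr{A}_{s-1} \cap \mathscr{B}_t$, Theorem~\ref{alskdjhfyr654} then gives that $B_1 \cap B_2$ is a one-of-four subuniverse of $B_2$ of type $\mathcal{T}(\mathscr{A}_s)$; the inclusions $\mathscr{A}_s \subseteq \mathscr{A}_{s-1}$ and $\mathscr{B}_t \subseteq \mathscr{B}_{t-1}$ simplify $B_1 \cap B_2$ to $\mathscr{A}_s \cap \mathscr{B}_t$, which is exactly what is required.

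The main obstacle is checking that the inductive predicate falls within the class for which $W^1_1$ has induction. The assertion ``$\mathscr{X}$ is a one-of-four subuniverse of $\mathscr{Y}$ of type $\mathcal{T}$'' for third-order $\mathscr{X},\mathscr{Y}$ is a $\Sigma^{\mathscr{B}}_1$-formula: the binary absorbing and central cases require only a second-order polymorphism witness from the $\mathbb{A}$-monster set, while the PC and linear cases require a single third-order existential quantifier witnessing the relevant $ISO^{3,3}_{alg}$-isomorphism that lifts Definitions~\ref{';s;sdllfjjg} and~\ref{a'a'a's;ldkfijgt} to the third-order setting. The chain operations used to invoke the IH (truncation of the last entry, swap of the two chains) are $\Sigma^{\mathscr{B}}_0$-definable rearrangements of bit-indices of $\mathscr{A},\mathscr{B}$, so the overall induction on $s+t$ is a $\Sigma^{\mathscr{B}}_1$-IND, available in $W^1_1$ by definition. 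No essentially new universal-algebra content is needed beyond Theorem~\ref{alskdjhfyr654}; the whole proof is a clean double-chain induction combining it with itself.
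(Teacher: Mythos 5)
Your induction is mathematically the same as the paper's: induct on $s+t$, use the inductive hypothesis twice (once with the two chains swapped) to get that $\mathscr{A}_{s}\cap \mathscr{B}_{t-1}$ and $\mathscr{A}_{s-1}\cap \mathscr{B}_{t}$ are both one-of-four subuniverses of $\mathscr{A}_{s-1}\cap \mathscr{B}_{t-1}$, the first of the type of $\mathscr{A}_s$, and then close with Theorem~\ref{alskdjhfyr654}; you merely spell out the chain-swap that the paper leaves implicit. The divergence is in how you place the induction inside $W^1_1$. The paper deliberately does \emph{not} formalize the general third-order statement: it restricts to the concrete case actually used later, where $\mathscr{A}_i$ and $\mathscr{B}_i$ are intersections of a solution set $\mathscr{R}_{\Theta}$ with reductions $D^{(i)}_0\times\dots\times D^{(i)}_{n-1}$ coming from two strategies, so the whole double chain is coded by second-order strings, $s+t<2nl$, and the inductive formula is $\Sigma^{1,b}_0$ (with the $W^1_1$-proved auxiliary lemmas invoked only in the proof of the induction step). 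Your version instead runs induction over arbitrary third-order chains and asserts this is $\Sigma^{\mathscr{B}}_1$-IND. That claim is not quite right as stated: the inductive predicate is an implication whose antecedent and consequent both contain ``one-of-four subuniverse of type $\mathcal{T}$'' assertions, which in the linear/PC cases are $\Sigma^{\mathscr{B}}_1$ via $ISO^{3,3}_{alg}$, so the predicate is a Boolean combination of $\Sigma^{\mathscr{B}}_1$-formulas rather than a $\Sigma^{\mathscr{B}}_1$-formula, and the paper never establishes induction for that class in $W^1_1$. The paper's restriction to the strategy case is precisely what sidesteps this; if you want the general statement, you need to either argue the type assignment can be carried as explicit data along the chains (turning the predicate back into $\Sigma^{\mathscr{B}}_1$) or justify induction for the larger class.
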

\begin{proof}
The proof of the claim goes by induction on $s+t$. If $t=0$, then the claim follows from the statement. If $t\leq 1$, then by the inductive assumption, $\mathscr{A}_{s-1}\cap \mathscr{B}_{t}$ and $\mathscr{A}_{s}\cap \mathscr{B}_{t-1}$ are both one-of-four subuniverses of $\mathscr{A}_{s-1}\cap \mathscr{B}_{t-1}$, and the second one is of type $\mathcal{T}$. Then by Theorem \ref{alskdjhfyr654} their intersection $\mathscr{A}_{s}\cap \mathscr{B}_{t}$ is a one-of-four subuniverses of $\mathscr{A}_{s-1}\cap \mathscr{B}_{t}$ of type $\mathcal{T}$. 

We will formalize the proof for the specific case that we further need in the proofs of auxiliary lemmas about strategies. Suppose that $\mathscr{A}_0=\mathscr{B}_0=\mathscr{R}_{\Theta}\leq D_0\times...\times D_{n-1}$ for some CSP instance $\Theta$ with domain set $D$, where $\mathscr{R}_{\Theta}$ is its solution set, and for each $i\in\{1,...,s\}$, 
$$\mathscr{A}_{i} = \mathscr{R}_{\Theta}\cap D^{(i)}_0\times...\times D^{(i)}_{n-1}
$$
where $D=D^{(0)}, D^{(1)},...,D^{(s)}$ is some strategy for $\Theta$, and analogously, for each $i\in\{1,...,t\}$, 
$$\mathscr{B}_{i} = \mathscr{R}_{\Theta}\cap D^{(i)'}_0\times...\times D^{(i)'}_{n-1}
$$
for some (other) strategy $D=D^{(0)'}, D^{(1)'},...,D^{(t)'}$ for $\Theta$. Recall that we can formalize any strategy by one set $\Theta_{Str}< \langle nl,instsize(n,l)\rangle$. By Corollaries \ref{==-=-=-=-=-=-=-=-}, \ref{66666666666666} and Lemmas \ref{lalaksjdf}, \ref{alksjdlkjlahskdhlk} we know that $W_1^1$ proves that $\mathscr{A}_{i}$ is a one-of-four subuniverse of $\mathscr{A}_{i-1}$ and $\mathscr{B}_{i}$ is a one-of-four subuniverse of $\mathscr{B}_{i-1}$. Since in any step we reduce at least one domain, the number of steps $t,s$ cannot be greater than $nl$ and $t+s< 2nl$. This induction is available in $W_1^1$: the formula itself is $\Sigma^{1,b}_0$, but in the proof of the induction step we use the results proved in $W^1_1$.
\end{proof}

\begin{lemmach}[Lemma 7.32,   \cite{zhuk2020proof}]\label{a;lsdkkjef}
Suppose $\mathscr{R}_{\Theta}\subseteq \mathscr{A}_0\times \mathscr{B}_0$ is a subdirect relation, $\mathscr{B}_i$ is a one-of-four subuniverse of $\mathscr{B}_{i-1}$ for every $i\in\{1,2,...,s\}$, $\mathscr{A}_1$ is a one-of-four subuniverse of $\mathscr{A}_0$. Then $W_1^1$ proves that $pr_1(\mathscr{R}_{\Theta}\cap (\mathscr{A}_1\times \mathscr{B}_s))$ is a one-of-four subuniverse of $pr_1(\mathscr{R}_{\Theta}\cap (\mathscr{A}_1\times \mathscr{B}_{s-1}))$ of the same type as $\mathscr{B}_s$.
\end{lemmach}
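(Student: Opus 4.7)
My plan is to reduce the statement to a single application of the appropriate push-down result (Corollary \ref{akjshdutjgjf}, Corollary \ref{akjshdgfuyfgwef}, Lemma \ref{alalsokd7ju}, or Lemma \ref{''';dlkykugkdegfef}, depending on the type $\mathcal{T}$ of $\mathscr{B}_s$), combined with the intersection Lemma \ref{a'a'65777yutd}. First I would form the auxiliary relation $\mathscr{R}' := \mathscr{R}_{\Theta}\cap(\mathscr{A}_1\times\mathscr{B}_{s-1})$, which is a subuniverse of $\mathscr{A}_1\times\mathscr{B}_{s-1}$ since $\mathscr{R}_{\Theta}$ is a subuniverse of $\mathscr{A}_0\times\mathscr{B}_0$ and the intersection of a subuniverse with a product of subuniverses of the factors is a subuniverse. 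Let $\mathscr{A}' := pr_1(\mathscr{R}')$ and $\mathscr{B}' := pr_2(\mathscr{R}')$; by construction $\mathscr{R}'$ is subdirect in $\mathscr{A}'\times\mathscr{B}'$, with $\mathscr{A}'\subseteq\mathscr{A}_1$ and $\mathscr{B}'\subseteq\mathscr{B}_{s-1}$. These projections are manipulable in $W^1_1$ via $\Sigma^{\mathscr{B}}_0$-3COMP (passing to a $\Sigma^{1,b}_1$-definition when $\mathscr{R}$ is a solution set).

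Next, applying Lemma \ref{a'a'65777yutd} to the one-of-four subuniverse $\mathscr{B}_s\subseteq\mathscr{B}_{s-1}$ and the subuniverse $\mathscr{B}'\subseteq\mathscr{B}_{s-1}$ gives that $\mathscr{B}_s\cap\mathscr{B}'$ is a one-of-four subuniverse of $\mathscr{B}'$ of the same type $\mathcal{T}$. The appropriate push-down corollary for type $\mathcal{T}$, applied to the subdirect relation $\mathscr{R}'$ inside $\mathscr{A}'\times\mathscr{B}'$ together with the subuniverse $\mathscr{B}_s\cap\mathscr{B}'$ of $\mathscr{B}'$, then yields that
\begin{equation*}
pr_1\!\left(\mathscr{R}'\cap\bigl(\mathscr{A}'\times(\mathscr{B}_s\cap\mathscr{B}')\bigr)\right)
\end{equation*}
is a one-of-four subuniverse of $\mathscr{A}'$ of type $\mathcal{T}$. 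A direct set-theoretic check, using $\mathscr{B}_s\subseteq\mathscr{B}_{s-1}$ and the subdirectness of $\mathscr{R}'$, simplifies the inner set to $\mathscr{R}_{\Theta}\cap(\mathscr{A}_1\times\mathscr{B}_s)$, while by definition $\mathscr{A}' = pr_1(\mathscr{R}_{\Theta}\cap(\mathscr{A}_1\times\mathscr{B}_{s-1}))$. This is exactly the statement.

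The main obstacle I anticipate is the linear case: the push-down Lemma \ref{''';dlkykugkdegfef} carries the extra hypothesis that the target of the projection (here $\mathscr{A}'$) has no non-trivial binary absorbing subuniverse, which is not assumed in the present lemma. In the concrete use of this lemma inside Zhuk's algorithm, one enters the linear branch only after all binary-absorbing and central reductions have been exhausted, so the side-condition is automatic; for the formalization I would split into four sub-cases on $\mathcal{T}$ and, in the linear sub-case, add the missing hypothesis as an assumption inherited from the strategy context in which the lemma is invoked. The alternative is to bypass Lemma \ref{''';dlkykugkdegfef} by working directly with the third-order isomorphism of Definition \ref{aklsdry564} and applying Lemma \ref{AffineSubspaces} coordinate-wise, which shows that any subdirect subalgebra of a product of linear algebras projects onto a linear subuniverse. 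Once the case analysis is in place, the remaining formal work — checking that $\mathscr{A}'$, $\mathscr{B}'$, and the several intersections sit at the right quantifier level to be reasoned about in $W^1_1$ — is routine bookkeeping of the kind already carried out in the preceding subsections.
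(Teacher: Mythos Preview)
Your route is genuinely different from the paper's, and it has a gap in the PC case that you did not flag.

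The paper does \emph{not} go through the projection push-down results. Instead it first applies the previous lemma (Lemma~7.31, the ``chain'' lemma on $\mathscr{A}_s\cap\mathscr{B}_t$) to conclude that the \emph{unprojected} intersection $\mathscr{R}_{\Theta}\cap(\mathscr{A}_1\times\mathscr{B}_s)$ is a one-of-four subuniverse of $\mathscr{R}_{\Theta}\cap(\mathscr{A}_1\times\mathscr{B}_{s-1})$ of the same type as $\mathscr{B}_s$. Only then does it pass to the projection, by introducing the congruence $\mathscr{C}_\sigma$ that identifies tuples with equal projection and invoking (as a Claim) Zhuk's Lemma~7.26: a one-of-four subuniverse stable under a congruence has one-of-four image in the quotient, of the same type. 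Since both Lemma~7.31 and Lemma~7.26 treat all four types uniformly, no case split and no side-condition juggling is needed.

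Your approach, by contrast, relies on Lemma~\ref{a'a'65777yutd} to show that $\mathscr{B}_s\cap\mathscr{B}'$ is one-of-four in $\mathscr{B}'$. But that lemma is stated only for binary absorbing, central, and linear subuniverses; it explicitly omits PC. So when $\mathscr{B}_s$ is a PC subuniverse of $\mathscr{B}_{s-1}$, you have no tool to conclude that $\mathscr{B}_s\cap\mathscr{B}'$ is PC in the arbitrary subuniverse $\mathscr{B}'=pr_2(\mathscr{R}')$, and hence no way to feed Lemma~\ref{alalsokd7ju} its hypothesis. Theorem~\ref{alskdjhfyr654} does not help either, since it requires \emph{both} intersectands to be one-of-four, and $\mathscr{B}'$ is merely a subuniverse. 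You caught the analogous obstruction for the linear case, but the PC obstruction is at least as serious and arises one step earlier. The cleanest repair is to abandon the push-down route and follow the paper: work on the full product via Lemma~7.31, then quotient by the kernel of $pr_1$ using the claim corresponding to Zhuk's Lemma~7.26.
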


\begin{proof}
  The statement of this lemma will eventually be used in the proof of Lemma 8.28 \cite{zhuk2020proof}, which is used further for constraints and subconstraints in proofs of Theorem \ref{---098} and Theorem \ref{########6}. So we will formalize the proof of the lemma for one specific case of Lemma $8.28$ \cite{zhuk2020proof}. Consider some subdirect solution set $\mathscr{R}_{\Theta}\leq D_0\times...\times D_{n-1}$ (for constraints and projections the reasoning is similar). Let $D=D^{(0)}, D^{(1)},...,D^{(s)}$ be some strategy for $\Theta$, and set for $i=0,1$
$$\mathscr{A}_i = pr_{0,1,...t-1}(D^{(i)}_0\times...\times D^{(i)}_{t-1}\times D_t\times...\times D_{n-1}),$$
and for $i=0,...,t$ 
$$\mathscr{B}_i=pr_{t,t+1,...n-1}(D_0\times...\times D_{t-1}\times D^{(i)}_t\times...\times D^{(i)}_{n-1}).$$
Then by Corollaries \ref{==-=-=-=-=-=-=-=-}, \ref{66666666666666} and Lemmas \ref{lalaksjdf}, \ref{alksjdlkjlahskdhlk}, $W_1^1$ proves that $\mathscr{R}_{\Theta}\cap(\mathscr{A}_0\times \mathscr{B}_i)$ is a one-of-four subuniverse of $\mathscr{R}_{\Theta}\cap(\mathscr{A}_0\times \mathscr{B}_{i-1})$ of the same type as $\mathscr{B}_i$, and $\mathscr{R}_{\Theta}\cap(\mathscr{A}_1\times \mathscr{B}_{0})$ is a one-of-four subuniverse of $\mathscr{R}_{\Theta}$. By Lemma \ref{a;lsdkkjef}, $\mathscr{R}_{\Theta}\cap(\mathscr{A}_1\times \mathscr{B}_{s})$ is a one-of-four subuniverse of $\mathscr{R}_{\Theta}\cap(\mathscr{A}_1\times \mathscr{B}_{s-1})$ of the same type as $\mathscr{B}_{s}$. Consider a congruence $\mathscr{C}_{\sigma}$ on $\mathscr{R}_{\Theta}\cap(\mathscr{A}_1\times \mathscr{B}_{0})$ such that two elements are equivalent whenever their projections on the second coordinate are equal, 
$$\mathscr{C}_{\sigma}(H_1,H_2)\iff \forall t\leq i<n,\, H_1(i)=H_2(i).
$$
Note that this is $\Sigma^{1,b}_0$ definition. Then for every coordinate $i=0,1$, $Stable_1(\mathscr{R}_{\Theta}\cap(\mathscr{A}_1\times \mathscr{B}_{t}),\mathscr{C}_{\sigma})$, which means that if $\mathscr{R}_{\Theta}\cap(\mathscr{A}_1\times \mathscr{B}_{t})$ contains one element of the block of $\mathscr{C}_{\sigma}$, then it contains the entire block. We now need Lemma $7.26$ from \cite{zhuk2020proof}, which is used just once in this proof. Therefore, we will formalize it only for this specific case as a claim.

\begin{claimm}
  Suppose $\mathscr{C}_{\sigma}$ is a congruence on $\mathscr{R}_{\Theta}\cap(\mathscr{A}_1\times \mathscr{B}_{0})$, $\mathscr{R}_{\Theta}\cap(\mathscr{A}_1\times \mathscr{B}_{t})$ is a one-of-four subuniverse of $\mathscr{R}_{\Theta}\cap(\mathscr{A}_1\times \mathscr{B}_{0})$ stable under $\mathscr{C}_{\sigma}$. Then 
 $W^1_1$ proves that $\{H/\mathscr{C}_{\sigma}|H\in \mathscr{R}_{\Theta}\cap(\mathscr{A}_1\times \mathscr{B}_{t})\}$ is a one-of-four subuniverse of $\mathscr{R}_{\Theta}\cap(\mathscr{A}_1\times \mathscr{B}_{0})/\mathscr{C}_{\sigma}$ of the same type as $\mathscr{R}_{\Theta}\cap(\mathscr{A}_1\times \mathscr{B}_{t})$.
\end{claimm}
The proof of the claim is as in \cite{zhuk2020proof}. The statement follows immediately from the claim. 
\end{proof}

\subsubsection{Some technical lemmas}
In the following two lemmas, $\Theta(z)$ is the set of all $a\in D_z$ such that there is a solution to $\Theta$ with $z=a$. Analogously, $\Theta^{(1)}(z)$ is the set of all $a\in D^{(1)}_z$ such that there is a solution to $\Theta^{(1)}$ with $z=a$.

\begin{lemmach}[Lemma 8.1,   \cite{zhuk2020proof}]\label{a''a;sdlekrygh}
  Suppose $D^{(1)}$ is a one-of-four reduction for an instance $\Theta$ of type $\mathcal{T}$, which is not of the PC type. Then $W^1_1$ proves that $\Theta^{(1)}(z)$ is a one-of-four subuniverse of $\Theta(z)$ of type $\mathcal{T}$ for every variable $z$.
\end{lemmach}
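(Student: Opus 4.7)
The plan is a case split on the type $\mathcal{T}$ of the reduction, which by hypothesis is one of BA, CR, or LN. In every case the strategy is two-step: first realize $\Theta^{(1)}(z)$ as a projection of a carefully chosen intersection inside $\mathscr{R}_{\Theta}$, and then invoke the appropriate ``projection'' lemma from the preceding auxiliary section. I would begin by introducing $E_z := \Theta(z) = pr_z(\mathscr{R}_\Theta)$, so that $\mathscr{R}_\Theta$ is automatically subdirect on coordinate $z$ when we take $E_z$ rather than $D_z$ as the $z$-th factor. For each $i$ set $B_i := D^{(1)}_i$ if $i\neq z$ and $B_z := D^{(1)}_z\cap E_z$; by Lemma \ref{a'a'65777yutd}, $B_z$ is a subuniverse of $E_z$ of the same type $\mathcal{T}$ as $D^{(1)}_z$ inside $D_z$. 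With this relabelling,
\[
 \Theta^{(1)}(z) \;=\; pr_z\bigl((B_0\times\cdots\times B_{n-1})\cap \mathscr{R}_{\Theta}\bigr).
\]

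For $\mathcal{T}=\text{BA}$ I would apply Corollary \ref{akjshdutjgjf} (with coordinate $z$ in the role of coordinate $0$, and $E_z$ in the role of $D_0$), which gives directly that $\Theta^{(1)}(z)$ is a binary absorbing subuniverse of $\Theta(z)$ with the same witnessing term $T$. For $\mathcal{T}=\text{CR}$ I would apply Corollary \ref{akjshdgfuyfgwef} in exactly the same way. For $\mathcal{T}=\text{LN}$ I would apply Lemma \ref{''';dlkykugkdegfef}: its subdirectness assumption is built in by construction, each $B_i$ is a linear subuniverse of the corresponding factor by Lemma \ref{a'a'65777yutd}, and the remaining ``no non-trivial binary absorbing subuniverse'' hypothesis on $E_z$ needs a short separate argument outlined below.

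The one real obstacle is verifying, in the LN case, that $E_z = \Theta(z)$ has no non-trivial binary absorbing subuniverse, while the definition of a linear reduction only supplies this for $D_z$ (through the predicate $subTA_{\neg BACR}(D_z,\Omega)$). I would handle it by contradiction: suppose $B\subsetneq E_z$ is a non-trivial BA subuniverse of $E_z$ with term $T\in Pol_2(T,A,\Gamma_{\mathcal{A}})$; using that $T$ preserves $\mathscr{R}_\Theta$ (Lemma \ref{lslsdkleijf}), the relation $\mathscr{R}_B := \mathscr{R}_\Theta\cap(D_0\times\cdots\times B\times\cdots\times D_{n-1})$ (with $B$ in coordinate $z$) is itself a binary absorbing subuniverse of $\mathscr{R}_\Theta$ with term $T$, and then Corollary \ref{akjshdutjgjf}, now applied with $D_z$ in the role of the target domain, produces a non-trivial BA subuniverse of $D_z$, contradicting $subTA_{\neg BACR}(D_z,\Omega)$. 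Regarding the theory: every ingredient (Lemma \ref{a'a'65777yutd}, Corollaries \ref{akjshdutjgjf} and \ref{akjshdgfuyfgwef}, Lemma \ref{''';dlkykugkdegfef}, Lemma \ref{lslsdkleijf}) is already proved in $V^1$ or $W^1_1$, and the projections $\Theta(z)$, $\Theta^{(1)}(z)$ as well as the subuniverse-type predicates have complexity at most $\Sigma^{1,b}_1$ for the fixed algebra $\mathbb{A}$, so the whole argument is a finite composition of previously established statements and fits inside $W^1_1$ without invoking any new induction or comprehension principles.
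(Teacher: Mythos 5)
Your overall route is the paper's route: realize $\Theta^{(1)}(z)$ as the projection of an intersection of $\mathscr{R}_\Theta$ with a product of subuniverses, use Lemma \ref{a'a'65777yutd} to transfer the type-$\mathcal{T}$ subuniverse from $D_z$ to $\Theta(z)$, and then invoke Corollary \ref{akjshdutjgjf}, Corollary \ref{akjshdgfuyfgwef}, or Lemma \ref{''';dlkykugkdegfef} according to the type. The only structural difference is that the paper first passes to the reduction of $\Theta$ on the domain set $[\Theta(0),\dots,\Theta(n-1)]$, so that the relation becomes subdirect in every coordinate, whereas you shrink only the coordinate $z$; since the projection corollaries only demand subdirectness in the distinguished coordinate and type-$\mathcal{T}$ subuniverses in each factor, this difference is immaterial. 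The BA and CR cases of your argument are therefore fine, and you deserve credit for noticing that the LN case of Lemma \ref{''';dlkykugkdegfef} carries the extra hypothesis that the distinguished factor $\Theta(z)$ has no non-trivial binary absorbing subuniverse --- a hypothesis the paper's one-line proof passes over in silence, since the definition of a linear reduction only supplies $subTA_{\neg BACR}(D_z,\Omega)$ for the ambient domain $D_z$, not for its subuniverse $\Theta(z)$.

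However, your proposed discharge of that hypothesis is circular. Corollary \ref{akjshdutjgjf} applied ``with $D_z$ in the role of the target domain'' requires both that $pr_z(\mathscr{R}_\Theta)=D_z$ and that the $z$-th set being intersected absorbs $D_z$; in your situation $pr_z(\mathscr{R}_\Theta)=E_z=\Theta(z)$, which may be a proper subuniverse of $D_z$, and $B$ is only assumed to absorb $E_z$, not $D_z$. The only legitimate instantiation of the corollary here takes $E_z$ as the distinguished factor, and its conclusion is then that $pr_z(\mathscr{R}_B)=B$ absorbs $E_z$ --- exactly the assumption you started from, so no contradiction with $subTA_{\neg BACR}(D_z,\Omega)$ is produced. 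The underlying obstruction is real: a binary absorbing subuniverse of a subalgebra does not in general lift to a binary absorbing subuniverse of the whole algebra, so ``$D_z$ has no non-trivial BA subuniverse'' does not formally transfer to $\Theta(z)$ by this kind of projection argument. In the framework of this paper the clean way out is to note that $\Theta(z)$ is one of the constantly many subuniverses of $\mathbb{A}$, so the required property is a $\Sigma^{1,b}_0$ fact about $\mathbb{A}$ recorded (with its $V^0$-proof) in the $\mathbb{A}$-Monster set, or to import the corresponding auxiliary statement of \cite{zhuk2020proof} directly; either way, the step needs a different justification than the one you gave.
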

\begin{proof}
  Consider a CSP instance $\Theta$ on $n$ domains and its solution set $\mathscr{R}_{\Theta}$. Since $\mathscr{R}_{\Theta}$ is preserved by $\omega$, $\Theta(i)$ is a subuniverse of $D_i$ for every $i$, and by the definition of reductions, $D^{(1)}_i$ is a subuniverse of type $\mathcal{T}$. Thus, by Lemma \ref{a'a'65777yutd}, $\Theta(i)\cap D^{(1)}_i$ is a subuniverse of $\Theta(i)$ of type $\mathcal{T}$ for every $i<n$. Consider the reduction $\Theta'$ of $\Theta$ to the domain set $[\Theta(0),...,\Theta(n-1)]$, $\mathscr{R}_{\Theta'}$ is a subdirect relation. Then, by Corollaries \ref{akjshdutjgjf}, \ref{akjshdgfuyfgwef} and Lemma \ref{''';dlkykugkdegfef}, $\Theta^{(1)}(z)$ is a one-of-four subuniverse of $\Theta(z)$ of type $\mathcal{T}$.
\end{proof}

\begin{lemmach}[Lemma 8.2,   \cite{zhuk2020proof}]
  Suppose $D^{(1)}$ is a PC reduction for a $1$-consistent instance $\Theta$, for every variable $y$ appearing at least twice in $\Theta$ the $pp$-formula $\Theta(y)$ defines $D_y$ and $\Theta(z)$ defines $D_z$ for a variable $z$. Then $V^1$ proves that $\Theta^{(1)}(z)$ is a PC subuniverse of $D_z$.
\end{lemmach}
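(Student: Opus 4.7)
The plan is to reduce the statement to Lemma~\ref{alalsokd7ju} via an elimination of the variables of $\Theta$ appearing only once. Let $V_{\geq 2}$ denote the set of variables appearing in at least two constraints of $\Theta$, and set $V_1 := V \setminus (V_{\geq 2} \cup \{z\})$. By hypothesis, $\Theta(y) = D_y$ for every $y \in V_{\geq 2} \cup \{z\}$, so the projection $\mathscr{S}$ of $\mathscr{R}_\Theta$ onto the coordinates in $V_{\geq 2} \cup \{z\}$ is a subdirect subuniverse of $\prod_{i \in V_{\geq 2} \cup \{z\}} D_i$.

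Next I would absorb the PC restrictions on the $V_1$-variables into PC restrictions on $V_{\geq 2} \cup \{z\}$. In the binary case, each $y \in V_1$ sits in a unique constraint $C_{j(y)} = (y, y'; R_{j(y)})$. When $y' \in V_{\geq 2} \cup \{z\}$, since $R_{j(y)}$ is subdirect on $D_y \times D_{y'}$ by $1$-consistency and $D^{(1)}_y$ is a PC subuniverse of $D_y$, Lemma~\ref{alalsokd7ju} applied to the binary relation $R_{j(y)}$ shows that
\[
S_{j(y)} \;:=\; pr_{y'}\bigl(R_{j(y)} \cap (D^{(1)}_y \times D_{y'})\bigr)
\]
is a PC subuniverse of $D_{y'}$. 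For each $y' \in V_{\geq 2} \cup \{z\}$ I then set
\[
E_{y'} \;:=\; D^{(1)}_{y'} \;\cap\; \bigcap_{y \in V_1,\ y' \in \mathrm{scope}(C_{j(y)})} S_{j(y)};
\]
this is again a PC subuniverse of $D_{y'}$, as finite intersections of PC subuniverses are PC subuniverses (a block of an intersection of PC congruences is a PC subuniverse). Constraints with both endpoints in $V_1$ contribute only a global emptiness check on $R \cap (D^{(1)}_y \times D^{(1)}_{y'})$: if any fails, $\Theta^{(1)}(z) = \emptyset$, trivially a PC subuniverse.

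By construction, $\Theta^{(1)}(z) = pr_z(\mathscr{S} \cap \prod_{i \in V_{\geq 2} \cup \{z\}} E_i)$. The inclusion $\subseteq$ is immediate, and for $\supseteq$ any tuple in $\mathscr{S} \cap \prod E_i$ extends to a full homomorphism in $\mathscr{R}_\Theta$ by the definition of $\mathscr{S}$ as a projection, after which each $V_1$-component can be re-picked inside $D^{(1)}_y$ thanks to the witness guaranteed by $E_{y'} \subseteq S_{j(y)}$. A single application of Lemma~\ref{alalsokd7ju} to the subdirect relation $\mathscr{S}$ and the PC subuniverses $E_i$ then yields that $\Theta^{(1)}(z)$ is a PC subuniverse of $D_z$.

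The main obstacle I foresee is purely formal: showing that the projection $\mathscr{S}$ and the sets $S_{j(y)}$ can be defined and manipulated within $V^1$. Projections raise quantifier complexity from $\Sigma^{1,b}_0$ to $\Sigma^{1,b}_1$, so the $\Sigma^{1,b}_1$-COMP axiom available in $V^1$ is precisely what is needed to treat $\mathscr{S}$ as a first-class object on which Lemma~\ref{alalsokd7ju} can be invoked. The intersections of PC subuniverses, the membership claims in the $\mathbb{A}$-Monster set for the resulting PC subuniverses, and the emptiness checks for $V_1$-internal constraints are all $\Sigma^{1,b}_0$-definable with $\mathbb{A}$ fixed, so the full argument formalizes in $V^1$ as stated.
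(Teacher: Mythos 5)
Your proof is correct in substance, but it takes a genuinely different route from the paper. The paper follows Zhuk's original argument: rename all variables of $\Theta$ apart so that each occurs exactly once, then re-identify them one pair at a time through a sequence $\Theta_0,\Theta_1,\dots,\Theta_s=\Theta$, proving the claim by induction on $s$; essentially all of the paper's effort goes into encoding that sequence (the sets $S$, $S^{in}$, $S^{out}$, the relabelling bookkeeping) in $V^1$, with the mathematical content of the inductive step deferred to Zhuk. You instead give a direct, induction-free argument that exploits the binary restriction: a once-occurring variable is a pendant vertex, so its PC restriction $D^{(1)}_y$ can be pushed onto its unique neighbour by the binary case of Lemma~\ref{alalsokd7ju}, after which a single further application of Lemma~\ref{alalsokd7ju} to the subdirect relation $\mathscr{S}$ and the intersected PC subuniverses $E_i$ finishes the proof. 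What your route buys is a self-contained argument with no induction on variable identifications; what it costs is that it leans on the degree-one structure and would not extend verbatim to higher-arity constraints, which is exactly the situation Zhuk's re-identification scheme is built to handle. One technical point you should tighten: you invoke Lemma~\ref{alalsokd7ju} for the \emph{projection} $\mathscr{S}$, and the paper warns that passing from solution sets to projections bumps the required induction/comprehension level by one, which would push you out of $V^1$. This is avoidable: since each $V_1$-variable occurs in a single subdirect binary constraint, projecting it out leaves a full unary constraint on its neighbour, so $\mathscr{S}$ is literally the solution set of the instance obtained from $\Theta$ by deleting all constraints touching $V_1$ and restricting to $V_{\geq 2}\cup\{z\}$; stating this identification lets you apply Lemma~\ref{alalsokd7ju} to an honest solution set and keeps the whole argument in $V^1$ as claimed.
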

\begin{proof}
  For the proof, we first rename all variables in $\Theta$ so that every variable occurs just once. This instance is denoted as $\Theta_0$. Then, step by step, we identify each two variables back to obtain the original instance, by the sequence $\Theta_0,\Theta_1,...\Theta_s=\Theta$. We show that these transformations can be held in $V^1$.

Recall that we are allowed to have only one constraint relation for any two variables $x,y$ (in that order). That is, for the instance $\Theta$ with $n$ variables, the number of possible constraints that involve one variable is at most $(2n-1)$ (and the number of all possible constraints is at most $n^2$). First, define the set of all variables that occur in $\Theta$ more than once:
\begin{equation}
  \begin{gathered}
     \forall x<n,\, S(x)\iff \exists y\neq z<n, \, (E_{\mathcal{X}}(x,y)\vee E_{\mathcal{X}}(y,x))\wedge (E_{\mathcal{X}}(x,z)\vee E_{\mathcal{X}}(z,x)).
  \end{gathered}
\end{equation}
Note that if we have two edges of the form $E_{\mathcal{X}}(x,y), E_{\mathcal{X}}(y,x)$, we need to rename one $x$ to $x'$ and one $y$ to $y'$, and do it at different steps. To perform this, we further define two sets of variables for any such $x$, $S^{in}, S^{out}$: 
\begin{equation}
  \begin{gathered}
     \forall x,y<n,\, S^{out}(x,y)\iff S(x)\wedge E_{\mathcal{X}}(x,y),\\
     \forall x,z<n,\, S^{in}(x,z)\iff S(x)\wedge E_{\mathcal{X}}(z,x).
  \end{gathered}
\end{equation}
When we rename every occurrence of a variable $x$, we can get at most $2n$ new variables (there are at most $(2n-1)$ constraints with $x$, and one of them could be a loop $E_{\mathcal{X}}(x,x)$). It works for every of $n$ variables, so the maximal number of steps is $2n^2$. We now set $s=2n^2$, $\Theta_s=\Theta$, $S_s=S, S^{out}_{s}=S^{out}, S^{in}_{s}=S^{in}$, and then for any $t=1,...,2n^2$ we will define a new CSP instance $\Theta_{s-t}$ based on the following rules. If the set $S_{s-(t-1)}$ is empty (neither of the variables occurs at least twice) we just replicate the instance $\Theta_{s-(t-1)}$. Otherwise, for odd $t$, we consider the set $S^{in}_{(s-(t-1))}$ (and for even the set $S^{out}_{(s-(t-1))}$). If it is empty, replicate the instance and move on. If not, choose elements $x,y$ such that $\langle x,y\rangle$ is the minimum element of $S^{in}_{(s-(t-1))}(x,y)$. Rename a variable $x$ in that constraint by the next number after the maximum element in $V_{\mathcal{X}_{s-(t-1)}}$. For this construction, 
for each odd step $t$ we consider additional sets $L_{(s-(t-1))}$ and $R_{(s-(t-1))}$, defined as follows: 
  \begin{equation}
    \begin{gathered}
       L_{(s-(t-1))}(x)\iff \exists y<max(V_{\mathcal{X}_{s-(t-1)}})+1,\, min(S^{in}_{(s-(t-1))})=\langle x,y\rangle,\\
       R_{(s-(t-1))}(y)\iff \exists x<max(V_{\mathcal{X}_{s-(t-1)}})+1,\, min(S^{in}_{(s-(t-1))})=\langle x,y\rangle.  
    \end{gathered}
  \end{equation}
Now we are ready to define an instance digraph for step $t$:
\begin{equation}
  \begin{gathered}
     \forall x<max(V_{\mathcal{X}_{s-(t-1)}})+1,\,V_{\mathcal{X}_{s-t}}(x)\iff V_{\mathcal{X}_{s-(t-1)}}(x),\\
     V_{\mathcal{X}_{s-t}}(max(V_{\mathcal{X}_{s-(t-1)}})+1)\\
     \forall x,y<max(V_{\mathcal{X}_{s-(t-1)}})+1,\,E_{\mathcal{X}_{s-t}}(x,y)\iff E_{\mathcal{X}_{s-(t-1)}}(y,x)\wedge \\
     \wedge (\neg L_{(s-(t-1))}(x)\vee \neg R_{(s-(t-1))}(y))\\
     E_{\mathcal{X}_{s-t}}(y,max(V_{\mathcal{X}_{s-(t-1)}})+1)\iff L_{(s-(t-1))}(x)\wedge R_{(s-(t-1))}(y).
   \end{gathered}
\end{equation}
In parallel, we define a target digraph $\ddot{\mathcal{A}}_{s-t}$ by adding there a new domain $D_{max(V_{\mathcal{X}_{s-(t-1)}})}$ $_{+1}$ equal to $ D_x$ for a new variable $max(V_{\mathcal{X}_{s-(t-1)}})+1$ and $E^{yx}_{\ddot{\mathcal{A}}}$ as a constraint for a new edge $E_{\mathcal{X}_{s-t}}(y,max(V_{\mathcal{X}_{s-(t-1)}})+1)$. Eventually, we will get an instance $\Theta_0$. Since we consider all sets in a particular order and address only sets from the previous step $t-1$, all of them exist by $\Sigma^{1,b}_1$ induction. 

The proof of the statement then goes by induction on $s$, and the implication $s\rightarrow s+1$ follows from the reasoning that can be easily formalized in $V^1$. We refer the reader to the source \cite{zhuk2020proof}. 
\end{proof}

For a relation $\mathscr{R}$ of arity $n$ denote by $UnPol^{\mathscr{R}}$ the set of all unary vector functions preserving the relation $\mathscr{R}$. For a solution set $\mathscr{R}_{\Theta}$ for some CSP instance $\Theta$, due to (\ref{laalks34523})
\begin{equation}
    \Psi\in UnPol^{\mathscr{R}_{\Theta}}\iff VecFun(\mathscr{R}_{\Theta},\Psi),
\end{equation}
which is a $\Pi^{1,b}_1$-formula. For every map $H$ from $[n]$ to $[D_0,...,D_{n-1}]$, and every unary vector function $\Psi$, we can define a map $\Psi(H)$ using bit-definition:
\begin{equation}
  \begin{gathered}
     \Psi(H)(\langle i,\langle i,a\rangle\rangle)=H^{\Psi}(\langle i,\langle i,a\rangle\rangle)\iff \exists b\in D_i,\, H(\langle i,\langle i,b\rangle\rangle)\wedge \Psi(i,b,a).
  \end{gathered}
\end{equation}
\begin{lemmach}[Lemma 8.12,   \cite{zhuk2020proof}]\label{()()()()(  *^ }
  Suppose a $pp$-formula $\Lambda(x_0,...,x_{n-1})$ defines a relation $\mathscr{R}_{\Lambda}$, $H\in D_{x_0}\times...\times D_{x_{n-1}}$, and $\mathscr{R}'=\{H^{\Psi}:\Psi\in UnPol^{\mathscr{R}_{\Lambda}}\}$. Then $W^1_1$ proves that there exists $\Upsilon\in Covering(\Lambda)$ such that $\Upsilon(x_0,...,x_{n-1})$ defines $\mathscr{R}'$.
\end{lemmach}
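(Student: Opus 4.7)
The plan is to construct a covering $\Upsilon$ of $\Lambda$ explicitly and verify that its solution set projected on $(x_0,\ldots,x_{n-1})$ equals $\mathscr{R}'$. Write $\Lambda = \exists \bar z\, \bigwedge_{t} E^{v_t v_t'}(v_t,v_t')$ with original variables $\bar x$ and existentially quantified variables $\bar z$, let $V = \bar x \cup \bar z$, and let $\mathscr{S}$ denote the (third-order) full solution set of the conjunction, so that $\mathscr{R}_\Lambda$ is the projection of $\mathscr{S}$ onto $\bar x$.

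I would define $\Upsilon$ as follows. For each $v \in V$ and each $a \in D_v$, introduce a fresh variable $y_{v,a}$ with domain $D_{y_{v,a}} := D_v$, and identify $y_{x_i, H(i)}$ with $x_i$ for each $i<n$ (so that clause $3$ of the covering definition applies to the shared variables $x_i$). For each atomic constraint $E^{vv'}$ of $\Lambda$ and each assignment $\bar\alpha \in \mathscr{S}$, add to $\Upsilon$ a constraint between $y_{v,\bar\alpha(v)}$ and $y_{v',\bar\alpha(v')}$ given by the same relation $E^{vv'}$. The candidate covering homomorphism $H_c(y_{v,a}) := v$ has matching domains by construction and maps each $\Upsilon$-constraint to the corresponding $\Lambda$-constraint verbatim, so $\Upsilon$ is indeed a covering of $\Lambda$.

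Next, I would establish $\mathscr{R}' = pr_{\bar x}(\mathscr{R}_\Upsilon)$ in two directions. Given $\Psi \in UnPol^{\mathscr{R}_\Lambda}$, use $\Sigma^{\mathscr{B}}_0$-3COMP to pick for each $\bar a \in \mathscr{R}_\Lambda$ a witness $\bar d_{\bar a}$ with $(\psi_0(a_0),\ldots,\psi_{n-1}(a_{n-1}),\bar d_{\bar a}) \in \mathscr{S}$; then define $\phi(y_{x_i,a}) := \psi_i(a)$ and $\phi(y_{z_j,c}) := (\bar d_{pr_{\bar x}\bar\alpha})_j$ for any canonical $\bar\alpha \in \mathscr{S}$ with $\bar\alpha(z_j) = c$. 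This $\phi$ satisfies every constraint of $\Upsilon$ (each is a copy of a constraint of $\Lambda$ over some $\bar\alpha \in \mathscr{S}$) and projects on $\bar x$ to $H^\Psi$. Conversely, any solution $\phi$ of $\Upsilon$ yields $\psi_i(a) := \phi(y_{x_i,a})$; for each $\bar a \in \mathscr{R}_\Lambda$, any $\bar\alpha \in \mathscr{S}$ extending $\bar a$ gives (through the copy of $\Lambda$ indexed by $\bar\alpha$ in $\Upsilon$) that $(\psi_0(a_0),\ldots,\psi_{n-1}(a_{n-1}))$ extends to a tuple of $\mathscr{S}$, hence lies in $\mathscr{R}_\Lambda$; so $\Psi \in UnPol^{\mathscr{R}_\Lambda}$ and the projection of $\phi$ on $\bar x$ is precisely $H^\Psi$.

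The main obstacle is size: $|\mathscr{S}|$ can be exponential in $n$, so $\Upsilon$ has exponentially many variables and constraints and must be encoded as a third-order class, via $\Sigma^{\mathscr{B}}_0$-3COMP indexing variables by pairs $\langle v,a\rangle$ and constraints by triples $\langle \bar\alpha, v, v'\rangle$. The forward direction additionally requires the choice function $\bar a \mapsto \bar d_{\bar a}$ as a class, again produced by $\Sigma^{\mathscr{B}}_0$-3COMP from the defining property of $UnPol^{\mathscr{R}_\Lambda}$. Once $\Upsilon$ and this choice class are in hand, the covering axioms and both directions of the equivalence are $\Sigma^{\mathscr{B}}_0$ over third-order objects, and follow in $W^1_1$ from the comprehension and induction principles already in use throughout the section.
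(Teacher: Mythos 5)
Your construction differs from the paper's in one crucial respect, and that difference breaks the forward inclusion $\mathscr{R}'\subseteq pr_{\bar x}(\mathscr{R}_\Upsilon)$. You introduce one track $y_{v,a}$ for \emph{every} variable $v$ of $\Lambda$, including the existentially quantified ones, and you share these existential tracks across all the copies of $\Lambda$ indexed by $\bar\alpha\in\mathscr{S}$. A solution of your $\Upsilon$ is then a vector function $\Phi=(\phi_v)_{v\in V}$ that must act coherently on the quantified coordinates as well, which is a strictly stronger requirement than $\Psi=(\phi_{x_i})_i$ preserving $\mathscr{R}_\Lambda$. Your proposed extension of a given $\Psi\in UnPol^{\mathscr{R}_\Lambda}$ does not meet this requirement: you set $\phi(y_{z_j,c}):=(\bar d_{pr_{\bar x}\bar\alpha})_j$ using a witness $\bar d$ chosen for the \emph{canonical} $\bar\alpha$ with $\bar\alpha(z_j)=c$, but a constraint of $\Upsilon$ indexed by some other $\bar\beta\in\mathscr{S}$ (say an atomic constraint $E^{z_jz_k}$ with $\bar\beta(z_j)=c$, $\bar\beta(z_k)=c'$) compares $(\bar d_{\bar a})_j$ and $(\bar d_{\bar a'})_k$ for two witnesses chosen independently for two different tuples $\bar a, \bar a'$; nothing forces this pair to lie in $E^{z_jz_k}$. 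So the claim ``this $\phi$ satisfies every constraint of $\Upsilon$'' is unjustified, and in general false; only the reverse inclusion $pr_{\bar x}(\mathscr{R}_\Upsilon)\subseteq\mathscr{R}'$ actually goes through for your $\Upsilon$.

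The paper avoids this by sharing only the tracks for the \emph{free} variables: it introduces $nl$ variables $x^a_i$ ($i<n$, $a<l$) and, for each tuple $H'\in\mathscr{R}_\Lambda$ with $H'(i)=\langle i,b_i\rangle$, attaches a fresh copy of the whole pp-formula $\Lambda$ (with its own new existential variables) to the variables $x^{b_0}_0,\dots,x^{b_{n-1}}_{n-1}$. Because the existential witnesses of distinct copies live on disjoint variables, each tuple of $\mathscr{R}_\Lambda$ can be witnessed independently, and a solution of $\Upsilon$ is exactly a vector function preserving $\mathscr{R}_\Lambda$ together with independent witnesses; projecting onto $x^{a_0}_0,\dots,x^{a_{n-1}}_{n-1}$ then gives precisely $\mathscr{R}'$. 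The price is that the object is assembled over up to $l^n$ steps, which the paper handles by encoding the step counter as a string and invoking $\Sigma^{\mathscr{B}}_1$-induction; your appeal to $\Sigma^{\mathscr{B}}_0$-3COMP for the witness-choice class is in the right spirit but is deployed on the wrong construction. (A side remark: with your shared tracks the instance is actually only polynomial-size, since distinct $\bar\alpha$'s with the same values at $v,v'$ impose the same constraint on the same pair of variables; the genuinely exponential object is the paper's, with fresh existential variables per copy.)
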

\begin{proof}
The idea of the universal algebra proof is the following. Consider any relation $\mathscr{R}$ on $n$ variables. Suppose that there are $l$ elements in each domain, $d_0,...,d_{l-1}$. Then the formula 
$$\mathscr{S}(x^{d_0}_0,...,x^{d_{l-1}}_0,...,x^{d_0}_{n-1},...,x^{d_{l-1}}_{n-1}) = \bigwedge_{(b_0,...,b_{n-1})\in \mathscr{R}}\mathscr{R}(x^{b_0}_0,...,x^{b_{n-1}}_{n-1})
$$
expresses that the vector-function preserves $\mathscr{R}$ (we think about $x^{b_i}_i$ as about $x_i$ being sending to $b_i$). Then, if we consider any tuple $\alpha = (a_0,...,a_{n-1})$, the projection of $\mathscr{S}$ to $x^{a_0}_0,...,x^{a_{n-1}}_{n-1}$ defines the relation $\{f(\alpha):f\in UnPol^{\mathscr{R}}\}$.

  We will consider $\Lambda$ as a CSP instance on $n$ variables, $|V_{\mathcal{X}_{\Lambda}}|=n$ (for projections the reasoning is analogous). Suppose that for some $a_0,...,a_{n-1}$, for all $i<n$, $H(i)=\langle i,a_i\rangle$. We need to define a new CSP instance $\Upsilon$ such that the projection of its solution set to some subset of vertices is exactly $\mathscr{R}'$. Consider a CSP instance $\Upsilon_{null}$ on $nl$ variables, where for $i<n,a<l$ we think about vertex $il+a$ as about vertex $i$ that was sent to $\langle i,a\rangle$ (or if we use labels, $x_i\to a\in D_i$). Then for every $H'\in \mathscr{R}_{\Lambda}$ such that for $i<n,b_i<l,$ $H'(i)=\langle i,b_i\rangle$ we copy instance $\Lambda$ to domains $D_{b_0},D_{l+b_1},D_{2l+b_2},...,D_{(n-1)l+b_{n-1}}$. Denote the resulting instance by $\Upsilon$. It is clear that $\Upsilon\in Covering(\Lambda)$. Then the projection $\mathscr{R}_{\Upsilon}^{a_0,l+a_1,...,(n-1)l+a_{n-1}}$ defines $\mathscr{R}'$. 

The algorithm of the construction is clear, but to perform it we need the number of steps that is bounded only by $l^n$ (the number of possible homomorphisms from $[n]$ to $[D_0,...,D_{n-1}]$). Since every homomorphism $H$ is expressed by a string of length $\langle n,\langle n,l\rangle \rangle<n^4$, we encode the number of steps by strings $\emptyset<T<n^4$ of length up to $n^4$, run the algorithm (if $T$ represents some homomorphism to the instance $\Lambda$, copy $\Lambda$ to corresponding domains), and then use $\Sigma^{\mathscr{B}}_1$-induction to show that such instance exists. 
\end{proof}

\begin{corollary}[Corollary 8.12.1,   \cite{zhuk2020proof}]\label{9899889((())))}
  Suppose a $pp$-formula $\Lambda(x_0,...,x_{n-1})$ defines a relation $\mathscr{R}_{\Lambda}$ without a tuple $H\in D_{x_0}\times...\times D_{x_{n-1}}$, $\Sigma$ is the set of all relations defined by $\Upsilon(x_0,...,x_{n-1})$ where $\Upsilon\in Covering(\Lambda)$, and $\mathscr{R}_{\Lambda}$ is an inclusion-maximal relation in $\Sigma$ without the tuple $H$. Then $W^1_1$ proves that $H$ is a key tuple for $\mathscr{R}_{\Lambda}$. 
\end{corollary}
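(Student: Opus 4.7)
To verify that $H$ is a key tuple for $\mathscr{R}_\Lambda$, I will fix an arbitrary $H' \in D_{x_0}\times\cdots\times D_{x_{n-1}}$ with $H' \notin \mathscr{R}_\Lambda$ and exhibit $\Psi \in UnPol^{\mathscr{R}_\Lambda}$ such that $\Psi(H')=H$. The plan is to use inclusion-maximality to force $H$ into the pp-closure of $\mathscr{R}_\Lambda\cup\{H'\}$ and then to extract $\Psi$ from the polymorphism presentation of this closure.

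Let $\widetilde R$ denote the intersection of all relations in $\Sigma$ that contain $\mathscr{R}_\Lambda\cup\{H'\}$. Since the set of coverings of $\Lambda$ is closed under the $uni$-union (which yields the intersection of the corresponding projected solution sets, as recorded after the definition of expanded covering) and $D_{x_0}\times\cdots\times D_{x_{n-1}}$ is finite, $\widetilde R$ itself lies in $\Sigma$. Moreover $\widetilde R \supsetneq \mathscr{R}_\Lambda$, because $H' \in \widetilde R \setminus \mathscr{R}_\Lambda$, so the inclusion-maximality of $\mathscr{R}_\Lambda$ in $\Sigma$ without $H$ forces $H \in \widetilde R$. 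By Theorem~\ref{skskjdi}, $\widetilde R$ coincides with the subuniverse of the algebra $(D_{x_0}\times\cdots\times D_{x_{n-1}}, Pol(\Gamma_\mathcal{A}))$ generated by $\mathscr{R}_\Lambda\cup\{H'\}$; hence there exist a polymorphism $F\in Pol(\Gamma_\mathcal{A})$ and tuples $r_1,\dots,r_k\in\mathscr{R}_\Lambda$ with $F(r_1,\dots,r_k,H',\dots,H')=H$, the $H'$-arguments having been grouped together via clone closure. Set $\Psi(x):=F(r_1,\dots,r_k,x,\dots,x)$; then $\Psi(H')=H$, and for any $r\in\mathscr{R}_\Lambda$ the preservation of $\mathscr{R}_\Lambda$ by $F$ yields $\Psi(r)\in\mathscr{R}_\Lambda$, so $\Psi\in UnPol^{\mathscr{R}_\Lambda}$ as required.

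The main obstacle for the $W^1_1$-formalization is witnessing $\widetilde R\in\Sigma$ by an actual covering. This is handled by iterating the $uni$-closure procedure, indexed by third-order strings of length bounded polynomially in $n$ and $l$, and invoking $\Sigma^{\mathscr{B}}_1$-IND in the style of Lemma~8.12 to confirm that the closure stabilizes within at most $l^n$ steps. The remaining extraction of $F$ from the list of polymorphisms available in the $\mathbb{A}$-monster set (these polymorphisms are recorded as finite data depending only on $l$), and the verification that $\Psi\in UnPol^{\mathscr{R}_\Lambda}$, reduce to routine $\Sigma^{\mathscr{B}}_0$-manipulation over finite data.
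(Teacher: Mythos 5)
Your reduction of the problem to membership in $\widetilde R$ is fine up to the point where you identify $\widetilde R$ with a generated subuniverse: $\Sigma$ is closed under the $uni$-union of coverings, so $\widetilde R\in\Sigma$; it properly contains $\mathscr{R}_{\Lambda}$ because of $H'$; and inclusion-maximality then forces $H\in\widetilde R$. The gap is the sentence invoking Theorem~\ref{skskjdi}. That theorem identifies $Sg(\mathscr{R}_{\Lambda}\cup\{H'\})$, the closure under $Pol(\Gamma_{\mathcal{A}})$, with the smallest relation containing $\mathscr{R}_{\Lambda}\cup\{H'\}$ that is definable by an \emph{arbitrary} pp-formula over $\Gamma_{\mathcal{A}}$. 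But $\widetilde R$ is the smallest such relation definable by a \emph{covering of} $\Lambda$, and coverings are a strictly narrower class of pp-definitions: they may only reuse the constraint relations of $\Lambda$ along a homomorphism onto $\mathcal{X}_{\Lambda}$. Hence you only get $Sg(\mathscr{R}_{\Lambda}\cup\{H'\})\subseteq\widetilde R$, possibly strictly, and from $H\in\widetilde R$ you cannot extract the representation $H=F(r_1,\dots,r_k,H',\dots,H')$ with $F\in Pol(\Gamma_{\mathcal{A}})$. Everything after that point (the polynomial $\Psi$ and its preservation of $\mathscr{R}_{\Lambda}$) rests on this unproved membership.

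The paper closes exactly this gap using the statement, not merely the proof technique, of Lemma 8.12: the orbit $\mathscr{R}'=\{H'^{\Psi}:\Psi\in UnPol^{\mathscr{R}_{\Lambda}}\}$ is itself defined by a covering of $\Lambda$, hence lies in $\Sigma$; it contains $\mathscr{R}_{\Lambda}$ properly (take for $\Psi$ the identity and the constant vector functions onto tuples of $\mathscr{R}_{\Lambda}$), so maximality gives $H\in\mathscr{R}'$; and membership in $\mathscr{R}'$ literally \emph{is} the existence of the required vector function $\Psi$ with $\Psi(H')=H$. No passage through $Pol(\Gamma_{\mathcal{A}})$ is needed, and indeed the witnessing $\Psi$ need not be a polynomial of the clone, only a unary vector function preserving $\mathscr{R}_{\Lambda}$. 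You could repair your argument minimally by observing $\widetilde R\subseteq\mathscr{R}'$ (since $\mathscr{R}'\in\Sigma$ contains the generators), but that observation is precisely the content of Lemma 8.12, which your proposal cites only for the $\Sigma^{\mathscr{B}}_1$-IND bookkeeping.
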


\begin{proof}
Consider $\Lambda$ as a CSP instance on $n$ variables, let $S$ be any map from $[n]$ to $[D_0,...,D_{n-1}]$ that is not in $\mathscr{R}_{\Lambda}$. Then by Lemma \ref{()()()()(  *^ } the set of maps $\mathscr{R}'=\{S^{\Psi}:\Psi\in UnPol^{\mathscr{R}_{\Lambda}}\}$ is a projection of the solution set to some $\Upsilon\in Covering(\Lambda)$. Since $\Psi$ can be constant mapping to a homomorphism of $\Lambda$ and identity mapping, $\mathscr{R}_{\Lambda}\subsetneq\mathscr{R}'$, and since $\mathscr{R}_{\Lambda}$ is inclusion-maximal, $H\in \mathscr{R}'$. By the definition, $H$ is a key tuple for $\mathscr{R}_{\Lambda}$.
\end{proof}

Lemmas we consider next in this section are
\begin{enumerate}
\item either related exclusively to binary relations since we consider languages with at most binary constraint relations,
\item or are used in the further proofs only for constant arity relations,
\item or related to constant arity relations and constant sizes over algebra $\mathbb{A}$.
\end{enumerate}
In the first and the second cases, they can be formalized and proved in $V^1$ exactly as they are proved in \cite{zhuk2020proof}. In the third case, such properties must be listed in the $\mathbb{A}$-Monster set. For these reasons, we will mention a few examples, but for the proofs and the rest, we refer the reader to the source \cite{zhuk2020proof}.

\begin{lemmach}[Lemma 7.19, \cite{zhuk2020proof}]
  Suppose $R\subseteq D\times B\times B$ is a subdirect relation, $D$ is a PC algebra without a non-trivial binary absorbing or central subuniverse, and for every $b\in B$ there exists $a\in A$ such that $(a,b,b)\in R$. Then $V^1$ proves that for every $a\in A$ there exists $b\in B$ such that $(a,b,b)\in R$.
\end{lemmach}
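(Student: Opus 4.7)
The natural plan is to define
\begin{equation*}
E:=\{a\in D:\exists b\in B,\ (a,b,b)\in R\}
\end{equation*}
and to prove $E=D$. First I would verify that $E$ is a subuniverse of $D$. Indeed, the relation $R':=\{(a,b)\in D\times B:(a,b,b)\in R\}$ is the intersection of $R$ with $D\times\Delta_B$; since $\Omega$ is idempotent, $\Delta_B$ is a subuniverse of $B^{2}$, so $R'$ is a subuniverse of $D\times B$, and $E=pr_{1}(R')$ is therefore a subuniverse of $D$. All of this is straightforward $\Sigma^{1,b}_{0}$-reasoning already available in $V^{0}$. The hypothesis that for every $b\in B$ some $a\in D$ satisfies $(a,b,b)\in R$ translates to $pr_{2}(R')=B$, so $R'$ is subdirect in $E\times B$.

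The core of the argument is to exclude the possibility $E\subsetneq D$. Here I would exploit that $D$ is PC without a non-trivial binary absorbing or central subuniverse. The cleanest route is to use the structural Lemma \ref{a;lsdkfjyr647ryv}: any subalgebra of $D^{n}$ that contains every constant tuple is a conjunction of equalities $x_i=x_j$. Applying this to a suitable $pp$-construction built from $R$ (along the lines of Lemma \ref{()()()()(  *^ } and Corollary \ref{9899889((())))} for unary vector-functions preserving $R$), one can write down a covering instance whose solution set, restricted to the diagonal of the last two coordinates, projects onto $E$. The PC structure then forces this projection to be either all of $D$, or a proper subuniverse that can only arise as a non-trivial binary absorbing or central subuniverse of $D$; the latter option is ruled out by assumption, whence $E=D$.

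The main obstacle is isolating the correct structural ingredient in the second step: selecting which combination of earlier lemmas in fact delivers the contradiction for a proper $E$. Because $D$, $B$ and $R$ are all of size bounded by a constant depending only on the fixed algebra $\mathbb{A}$, once the mathematical argument is pinned down its formalization requires no induction beyond $\Sigma^{1,b}_{1}$-reasoning; in fact the $\mathbb{A}$-Monster-set strategy reduces the whole statement to a constant-size exhaustive check, so the real work lies in picking the right auxiliary lemma rather than in managing quantifier complexity.
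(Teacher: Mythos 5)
Your opening step --- passing to $R'=\{(a,b):(a,b,b)\in R\}$, observing that it is a subalgebra of $D\times B$ because the diagonal of $B^2$ is invariant under the idempotent $\Omega$, and reducing the claim to $pr_1(R')=D$ given that the hypothesis yields $pr_2(R')=B$ --- is sound and is the natural way to begin. The difficulty is that your second paragraph does not close the argument, and the two ingredients you reach for do not fit: Lemma \ref{a;lsdkfjyr647ryv} requires a relation on a power $D^n$ of a single PC algebra containing \emph{all} constant tuples, which $R'\leq D\times B$ has no reason to satisfy (and $B$ need not be PC at all), while the unary-vector-function machinery of Lemma 8.12 and Corollary 8.12.1 of \cite{zhuk2020proof} concerns key relations and coverings and says nothing about why a proper projection $E=pr_1(R')$ is impossible. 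The assertion that a proper $E$ ``can only arise as a non-trivial binary absorbing or central subuniverse of $D$'' is precisely the missing step: it is not a consequence of anything you have established, and it is where the actual content of Zhuk's Lemma 7.19 lies. As a free-standing mathematical argument the proposal therefore has a genuine gap.

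That said, your closing observation is the one that matters for this paper, and it coincides with how the paper in fact treats the statement. The lemma concerns a ternary relation over subuniverses of the fixed algebra $\mathbb{A}$, so it falls under the paper's blanket remark preceding this block of lemmas: such constant-arity, constant-size facts are either provable in $V^1$ by transcribing Zhuk's original argument essentially verbatim, or are recorded together with their exhaustive-search $V^0$-proofs in the $\mathbb{A}$-Monster set. The paper gives no independent proof and refers the reader to \cite{zhuk2020proof}. So as a claim about formalizability your proposal is acceptable via the Monster-set route you name at the end; as a proof of the underlying universal-algebra fact it is incomplete for the reason above.
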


The following two lemmas are formulated in \cite{zhuk2020proof} for $t$ variables, but then they are only used for relations on one and two variables, so instead of using induction on $t$ we can consider just cases $\Theta(x_0)$ and $\Theta(x_0,x_1)$.
\begin{lemmach}[Lemma 8.3,   \cite{zhuk2020proof}]
  Suppose $D^{(1)}$ is a minimal absorbing, central or linear reduction for an instance $\Theta$, and $\Theta(x_0,x_1)$ defines a full relation. Then $V^1$ proves that $\Theta^{(1)}(x_0,x_1)$ defines a full or empty relation. 
\end{lemmach}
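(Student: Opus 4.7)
The plan is to show that the nonempty case of $\Theta^{(1)}(x_0,x_1)$ inherits from $\Theta(x_0,x_1)=D_{x_0}\times D_{x_1}$ the status of a subuniverse of type $\mathcal{T}\in\{BA,CR,LN\}$, and then leverage the minimality of $D^{(1)}_{x_0}$ and $D^{(1)}_{x_1}$ to force equality with the whole product.

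First I would replace $\Theta$ by its restriction $\Theta'$ to the domain tuple $(\Theta(0),\ldots,\Theta(n-1))$, so that $\mathscr{R}_{\Theta'}$ is subdirect and has the same solution set as $\mathscr{R}_{\Theta}$. The assumption $\Theta(x_0,x_1)=D_{x_0}\times D_{x_1}$ gives $\Theta(x_i)=D_{x_i}$ for $i=0,1$. A two-coordinate analogue of Corollary \ref{akjshdutjgjf} (for $\mathcal{T}=BA$), Corollary \ref{akjshdgfuyfgwef} (for $\mathcal{T}=CR$), or Lemma \ref{''';dlkykugkdegfef} (for $\mathcal{T}=LN$) --- proved identically to the one-coordinate case and available in $V^1$ --- yields that $\Theta^{(1)}(x_0,x_1)$ is a subuniverse of $D_{x_0}\times D_{x_1}$ of type $\mathcal{T}$ contained in $D^{(1)}_{x_0}\times D^{(1)}_{x_1}$. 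If $\Theta^{(1)}(x_0,x_1)=\emptyset$ we are done; otherwise, projecting to each coordinate and invoking the one-coordinate versions of the same results together with the minimality of each $D^{(1)}_{x_i}$ as a subuniverse of $D_{x_i}$ of type $\mathcal{T}$ gives $pr_{x_i}(\Theta^{(1)}(x_0,x_1))=D^{(1)}_{x_i}$, so $\Theta^{(1)}(x_0,x_1)$ is subdirect on $D^{(1)}_{x_0}\times D^{(1)}_{x_1}$.

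The remaining step --- upgrading subdirectness to equality --- is split by type. For $\mathcal{T}=BA$ (term $T$) and $\mathcal{T}=CR$ (term $S$) I would slice: for any fixed $a\in D^{(1)}_{x_0}$ the set $R[a]=\{b:(a,b)\in\Theta^{(1)}(x_0,x_1)\}$ is a nonempty subuniverse of $D^{(1)}_{x_1}$ of type $\mathcal{T}$, the key computation being $T((a,c),(a,b))=(a,T(c,b))$ (using idempotency of $\Omega$ and of $T$), which transfers absorption from $\Theta^{(1)}(x_0,x_1)$ down to $R[a]$; minimality of $D^{(1)}_{x_1}$ then gives $R[a]=D^{(1)}_{x_1}$. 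For $\mathcal{T}=LN$ the argument is structural: $D^{(1)}_{x_0}\times D^{(1)}_{x_1}$ is a single block of the minimal linear congruence on the product algebra $D_{x_0}\times D_{x_1}$, while $\Theta^{(1)}(x_0,x_1)$, being a linear subuniverse, is stable under that congruence, hence a union of its blocks --- and a nonempty one inside a single block must coincide with it. I expect the main obstacle to be the central case, where one must verify that the slicing truly yields a \emph{central} and not merely ternary-absorbing subuniverse of $D^{(1)}_{x_1}$: this reduces to propagating the $Sg$-non-membership clause through the iterated closure $Cl^{l^2}$ formalized in Section \ref{alalo98}, which is uniformly available from the $\mathbb{A}$-monster set and hence stays inside $V^1$.
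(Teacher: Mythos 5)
The paper does not actually prove this lemma: it appears in the block of ``technical lemmas'' for which the text only asserts that, being about relations in at most two variables, they ``can be formalized and proved in $V^1$ exactly as they are proved in [Zhuk]'', and the reader is referred to the source. So there is no in-paper proof to match; your proposal supplies an argument where the paper supplies none, and the argument you give is in substance the $t=2$ specialization of Zhuk's original proof of Lemma 8.3. Your chain is sound: the two-coordinate analogues of Corollary \ref{akjshdutjgjf}, Corollary \ref{akjshdgfuyfgwef} and Lemma \ref{''';dlkykugkdegfef} make $\Theta^{(1)}(x_0,x_1)$ a type-$\mathcal{T}$ subuniverse of the full product, Lemma \ref{a''a;sdlekrygh} plus minimality gives subdirectness onto $D^{(1)}_{x_0}\times D^{(1)}_{x_1}$, and the slice $R[a]$ computation $T((a,c),(a,b))=(a,T(c,b))$ (legitimate because $\{a\}$ is a subuniverse by idempotency) transfers the absorption down to a nonempty type-$\mathcal{T}$ subuniverse of $D_{x_1}$ sitting inside the minimal one. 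You correctly flag the only delicate point of the central case; note that the cleanest way to get centrality of $R[a]$ is to identify $R[a]$ with $\Theta^{(1)}(x_0,x_1)\cap(\{a\}\times D_{x_1})$ and invoke Lemma \ref{a'a'65777yutd} for the subuniverse $\{a\}\times D_{x_1}\cong D_{x_1}$, which keeps everything inside the fixed-algebra $\mathbb{A}$-Monster data and hence in $V^1$.

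One step in the linear case deserves an explicit justification that you currently elide: you need that $D^{(1)}_{x_0}\times D^{(1)}_{x_1}$ is a single block of the minimal linear congruence of the \emph{product} $D_{x_0}\times D_{x_1}$, i.e.\ that this congruence equals $\theta_{x_0}\times\theta_{x_1}$ where $\theta_{x_i}=CongLin(D_{x_i})$. If the product's minimal linear congruence were strictly finer, stability under it would not make $\Theta^{(1)}(x_0,x_1)$ a union of blocks of $\theta_{x_0}\times\theta_{x_1}$ and the ``nonempty union of blocks inside one block'' conclusion would fail. The equality does hold --- restricting any congruence with linear quotient to the subalgebras $\{a\}\times D_{x_1}$ and $D_{x_0}\times\{b\}$ and using closure of linear algebras under subalgebras and quotients (the corollary the paper draws from Lemma \ref{AffineSubspaces}) shows every such congruence contains $\theta_{x_0}\times\theta_{x_1}$, while the intersection direction is immediate --- but this is a fixed-algebra fact that should be named and, in the paper's framework, deposited in the $\mathbb{A}$-Monster set rather than left implicit.
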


\begin{lemmach}[Lemma 8.4,   \cite{zhuk2020proof}]
  Suppose $D^{(1)}$ is a minimal PC reduction for a $1$-consistent instance $\Theta$. For every variable $y$ appearing at least twice in $\Theta$ the $pp$-formula $\Theta(y)$ defines $D_y$ and $\Theta(x_0,x_1)$ defines a full relation. Then $V^1$ proves that $\Theta^{(1)}(x_0,x_1)$ defines a full or empty relation
\end{lemmach}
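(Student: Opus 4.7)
The argument parallels the proof of Lemma 8.3 (for the non-PC reduction types), but uses Lemma 8.2 in place of Lemma 8.1, supplemented by structural information about PC subuniverses. First, if $\Theta^{(1)}(x_0,x_1)$ is empty there is nothing to prove, so suppose it is non-empty. Applying Lemma 8.2 with the single variable $z$ instantiated successively to $x_0$ and to $x_1$ (its hypotheses hold because the hypotheses of the present lemma give $1$-consistency and $\Theta(x_i)\supseteq pr_{x_i}\Theta(x_0,x_1)=D_{x_i}$ for each $i$, and because every variable appearing at least twice still satisfies $\Theta(y)=D_y$), the projections $\Theta^{(1)}(x_0)$ and $\Theta^{(1)}(x_1)$ are non-empty PC subuniverses of $D_{x_0}$ and $D_{x_1}$. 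Since $D^{(1)}$ is a minimal PC reduction, each $D^{(1)}_{x_i}$ contains no proper non-empty PC subuniverse of $D_{x_i}$, so each projection equals $D^{(1)}_{x_i}$; hence $\Theta^{(1)}(x_0,x_1)$ is a subdirect subuniverse of $D^{(1)}_{x_0}\times D^{(1)}_{x_1}$.

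The remaining task is to upgrade subdirectness to fullness, which I will handle by mimicking the rename-then-identify scheme used in the proof of Lemma 8.2, keeping $x_0$ and $x_1$ as two designated variables. In the fully renamed instance $\Theta_0$, every variable occurs once, and the only constraint relating $x_0$ and $x_1$ (if any) is a subuniverse of $D_{x_0}\times D_{x_1}$ containing $\Theta_0(x_0,x_1)=D_{x_0}\times D_{x_1}$; its intersection with $D^{(1)}_{x_0}\times D^{(1)}_{x_1}$ is therefore $D^{(1)}_{x_0}\times D^{(1)}_{x_1}$, so $\Theta_0^{(1)}(x_0,x_1)$ is full. Then identify variables back in the sequence $\Theta_0,\Theta_1,\ldots,\Theta_s=\Theta$ as in Lemma 8.2, maintaining the inductive invariant that $\Theta_t^{(1)}(x_0,x_1)$ is either empty or equal to $D^{(1)}_{x_0}\times D^{(1)}_{x_1}$. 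At each identification step the projections onto $x_0$ and $x_1$ remain non-empty PC subuniverses of $D_{x_0}$ and $D_{x_1}$ (by Lemma 8.2), so by minimality either $\Theta_t^{(1)}(x_0,x_1)$ becomes empty or it is subdirect in $D^{(1)}_{x_0}\times D^{(1)}_{x_1}$. Together with Lemmas \ref{lalaksjdf}, \ref{a;lsdkfjyr647ryv}, \ref{fkjerfqlerfhuurfhlrf}, and \ref{alalsokd7ju}, applied after factoring by the congruences $\theta_0,\theta_1$ that exhibit $D^{(1)}_{x_i}$ as an equivalence class with PC-algebra quotients (Lemma \ref{alskjdiojkgfkj}), the subdirect case forces fullness. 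The induction is $\mathfrak{B}(\Sigma^{1,b}_1)$, so it fits within $V^1$.

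The main obstacle will be the identification step: when two variables merge, $\Theta^{(1)}(x_0,x_1)$ can a priori shrink to a proper subuniverse of $D^{(1)}_{x_0}\times D^{(1)}_{x_1}$, and one has to exclude this. The route is to pass to the product of PC quotients $D_{x_0}/\theta_0\times D_{x_1}/\theta_1$, a product of PC algebras with no non-trivial binary absorbing or central subuniverse; by Lemma \ref{fkjerfqlerfhuurfhlrf}, the projection of $\Theta^{(1)}(x_0,x_1)$ into this quotient is a conjunction of functional binary constraints. But $D^{(1)}_{x_i}$ is a single $\theta_i$-class, so the quotient collapses to a single pair, and the only subdirect relation extending to a single pair of classes under full projections is the full product on $D^{(1)}_{x_0}\times D^{(1)}_{x_1}$. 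All of the algebraic ingredients involved are relations on the fixed algebra $\mathbb{A}$ and can be invoked from the $\mathbb{A}$-Monster set, so the reasoning does not require induction beyond that already used for Lemma 8.2.
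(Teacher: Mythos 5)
The paper itself gives no proof of this lemma: it only remarks that the $t$-ary statement from \cite{zhuk2020proof} is needed for $t\le 2$ and then defers to Zhuk's original argument, so the comparison here is purely about whether your argument is sound. Your first half is fine: applying Lemma 8.2 to $z=x_0$ and $z=x_1$ (its hypotheses do follow from those of Lemma 8.4, since fullness of $\Theta(x_0,x_1)$ gives $\Theta(x_i)=D_{x_i}$), and then using minimality of the PC reduction to conclude that a non-empty $\Theta^{(1)}(x_i)$, being a PC subuniverse of $D_{x_i}$ contained in the $CongPC$-block $D^{(1)}_{x_i}$, must equal $D^{(1)}_{x_i}$, correctly yields that $\Theta^{(1)}(x_0,x_1)$ is subdirect in $D^{(1)}_{x_0}\times D^{(1)}_{x_1}$.

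The gap is in the upgrade from subdirect to full. Subdirectness in $D^{(1)}_{x_0}\times D^{(1)}_{x_1}$ does not imply fullness: if $D^{(1)}_{x_0}=D^{(1)}_{x_1}=B$ with $|B|>1$, the graph of any automorphism of $B$ is a subdirect proper subuniverse, and nothing you have said excludes $\Theta^{(1)}(x_0,x_1)$ from being such a graph. Your proposed fix --- passing to the quotients $D_{x_i}/\theta_i$ and invoking Lemma \ref{fkjerfqlerfhuurfhlrf} --- cannot work, for two reasons. First, the image of $\Theta^{(1)}(x_0,x_1)$ in $D_{x_0}/\theta_0\times D_{x_1}/\theta_1$ is a single pair of classes (each $D^{(1)}_{x_i}$ is one $\theta_i$-block), which is not subdirect in the quotient, so the hypotheses of Lemma \ref{fkjerfqlerfhuurfhlrf} fail. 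Second, and more fundamentally, the congruences $\theta_i$ collapse the blocks $D^{(1)}_{x_i}$ to points, so no statement about the quotient can distinguish the full product $D^{(1)}_{x_0}\times D^{(1)}_{x_1}$ from the automorphism graph above; the claim that ``the only subdirect relation extending to a single pair of classes under full projections is the full product'' is false. What is actually needed (and what Zhuk's proof supplies) is the stronger fact that $\Theta^{(1)}(x_0,x_1)$ is itself a PC subuniverse of $D_{x_0}\times D_{x_1}$ --- i.e.\ an intersection of blocks of PC congruences of the product --- together with the structural fact that a simple PC quotient without binary absorbing or central subuniverses of a product factors through a single coordinate, so that PC subuniverses of $D_{x_0}\times D_{x_1}$ are exactly products $E_0\times E_1$ of PC subuniverses of the factors; minimality then forces $E_i=D^{(1)}_{x_i}$. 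Your argument never establishes the PC-subuniverse property for the binary projection, only for the unary ones, and this is precisely the content that makes the lemma non-trivial.
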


The next examples of technical lemmas are the following. 

\begin{lemmach}[Lemma 8.10,   \cite{zhuk2020proof}]
  Suppose $R\leq D_i\times D_j$ is a critical rectangular binary relation, and $R'$ is a cover of $R$. Then $V^1$ proves that $Con_2^{(R,i)}\subsetneq Con_2^{(R,i)}$.
\end{lemmach}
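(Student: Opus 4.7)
The plan is to split the statement into the easy inclusion $Con_2^{(R,i)}\subseteq Con_2^{(R',i)}$ and the strict part $Con_2^{(R,i)}\neq Con_2^{(R',i)}$. The easy inclusion is immediate: if $(a,a')\in Con_2^{(R,i)}$ then by definition there is some $b\in D_j$ with $R(a,b)\wedge R(a',b)$, and since $R\subseteq R'$ (recall $R'=weakening(R)$ and $Weaker(R',R)$ forces $R\subseteq R'$), we get $R'(a,b)\wedge R'(a',b)$, i.e.\ $(a,a')\in Con_2^{(R',i)}$. This is a purely $\Sigma^{1,b}_0$ verification with the witness $b$ exhibited directly from the hypothesis.

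For the strict part I would argue by contradiction: assume $Con_2^{(R,i)}=Con_2^{(R',i)}=:\sigma$ and construct a binary relation $S$ with $R\subsetneq S\subsetneq R'$ that is still invariant under $\Omega$, contradicting the fact that $R'$ is the \emph{cover} of $R$, i.e.\ the inclusion-minimal $\Omega$-invariant binary relation strictly containing $R$. The candidate $S$ is produced using rectangularity of $R$ on coordinate $i$: the remark after $RectPr_2$ says that $\sigma$ is a congruence on $D_i$, and rectangularity implies that $R$ decomposes as a disjoint union $\bigsqcup_{[a]\in pr_i(R)/\sigma} [a]\times S_{[a]}$ for uniquely determined sets $S_{[a]}\subseteq D_j$. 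Since $R\subsetneq R'$, pick $(c,d)\in R'\setminus R$ and close its $\sigma$-class $[c]$ together with the $Con_2^{(R,j)}$-class of $d$ inside $R'$; the resulting $S:=R\cup ([c]\times[d]_{Con_2^{(R,j)}})\cap R'$ lies strictly between $R$ and $R'$. Invariance of $S$ under $\Omega$ uses that both $\sigma$ and $Con_2^{(R,j)}$ are congruences (by rectangularity on both coordinates) and that $R'\in \Gamma^2_{\mathcal{A}}$. The main obstacle is verifying that $S$ is properly contained in $R'$ and properly contains $R$, which boils down to checking, in the assumed equality $Con_2^{(R,i)}=Con_2^{(R',i)}$, that $(c,d)$ genuinely witnesses a new pair in $Con_2^{(R',i)}$ that is not already in $Con_2^{(R,i)}$; this is where criticality of $R$ (no dummy variables, cannot be further intersected) is used to rule out the degenerate possibility that $S$ collapses to $R$ or $R'$.

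For the formalization in $V^1$, observe that $R,R'\subseteq D_i\times D_j$ live inside a set of size bounded by $l^2$, where $l=|A|$ is a constant; consequently all of $Critical_2$, $Cover_2$, $RectPr_2$, $Stable_2$, $Con_2^{(R,i)}$ are $\Sigma^{1,b}_0$-formulas, and the whole statement is $\Sigma^{1,b}_0$. The constructed $S$ is itself a bounded string definable by a $\Sigma^{1,b}_0$-formula from $R$, $R'$, $\sigma$, and $(c,d)$, so its existence requires only $\Sigma^{1,b}_0$-COMP, which is available in $V^1$. Equivalently, since every binary $\Omega$-invariant relation on $A$ is listed in $\Gamma^2_{\mathcal{A}}$ inside the $\mathbb{A}$-Monster set, the lemma can be discharged as a large but constant-size Boolean combination, checked case-by-case over the monster set and combined with modus ponens. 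In either presentation the induction and comprehension used are at most $\Sigma^{1,b}_0$-level, so $V^1$ suffices with considerable room to spare.
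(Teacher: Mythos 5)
The paper does not actually prove this lemma: it is listed among the ``technical lemmas'' for which the text only records that, being statements about binary relations over the fixed constant-size algebra $\mathbb{A}$, they ``can be formalized and proved in $V^1$ exactly as they are proved in \cite{zhuk2020proof}'' or discharged by lookup in the $\mathbb{A}$-Monster set. Your final paragraph says precisely this --- all of $Critical_2$, $Cover_2$, $Con_2^{(R,i)}$ are $\Sigma^{1,b}_0$ over a universe of size $l^2$, so the whole statement is a constant-size Boolean combination checkable over $\Gamma^2_{\mathcal{A}}$ --- and that is exactly the justification the paper itself relies on. (You also correctly read the statement's typo as $Con_2^{(R,i)}\subsetneq Con_2^{(R',i)}$.) So at the level the paper operates, your proposal matches it.

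Where you go further, by sketching an actual mathematical argument, there is a gap in the strictness step. Your candidate $S:=R\cup\big(([c]_{\sigma}\times[d]_{Con_2^{(R,j)}})\cap R'\big)$ is a \emph{union} of two $\Omega$-invariant sets, and unions of subuniverses are not in general subuniverses: applying the $m$-ary $\Omega$ to some tuples from $R$ and some from the second piece can land outside both, so invariance of $S$ does not follow from the invariance of the parts. You would need a closure argument (e.g.\ showing $S$ equals the subalgebra generated by $R\cup\{(c,d)\}$ and that this stays inside $R'$ yet misses some pair of $R'$), and you have also not excluded $S=R'$, which happens exactly when $R'\setminus R\subseteq[c]_{\sigma}\times[d]_{Con_2^{(R,j)}}$; appealing to ``criticality'' here is a placeholder, not an argument. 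None of this affects the formalizability claim, since the exhaustive Monster-set check is available and is what the paper uses, but as a freestanding proof of the combinatorial fact the strict-inclusion step would have to be replaced by Zhuk's actual argument.
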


\begin{lemmach}[Theorem 8.15,   \cite{zhuk2020proof}]
  Suppose $R\leq D^4$ is a strongly rich relation preserved by an idempotent
WNU. Then $V^1$ proves that there exists an abelian group $(D,+)$ and bijective mappings $\phi_0,\phi_1,\phi_2,\phi_3:D\to D$ such that 
$$R=\{(x_0,x_1,x_2,x_3): \phi_0(x_0)+ \phi_1(x_1)+\phi_2(x_2)+\phi_3(x_3)=0\}.
$$
\end{lemmach}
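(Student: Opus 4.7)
The statement fits squarely into the $\mathbb{A}$-Monster Set paradigm already set up earlier in the paper. Both the subuniverse $D$ (of size at most $l$) and the $4$-ary relation $R \subseteq D^4$ (of size at most $l^4$) are bounded in advance by constants depending only on the fixed algebra $\mathbb{A}$. The desired abelian group operation $+: D \times D \to D$ and the four bijections $\phi_0,\phi_1,\phi_2,\phi_3: D \to D$ are likewise second-sort objects of bounded size. My plan is therefore to precompute the witnesses externally and supply them as part of the data fixed in advance, avoiding any need to construct the group inside the theory.

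Concretely, I would augment the $\mathbb{A}$-Monster set with the following: for every subuniverse $D$ of $\mathbb{A}$ and every strongly rich $4$-ary relation $R$ on $D$ preserved by the fixed idempotent WNU $\Omega$, include a tuple $(+,\phi_0,\phi_1,\phi_2,\phi_3)$ witnessing the conclusion, together with a $V^0$-proof of its required properties: that $(D,+)$ satisfies the abelian group axioms, that each $\phi_i$ is a bijection, and that
\[
    \forall x_0,x_1,x_2,x_3 \in D,\quad R(x_0,x_1,x_2,x_3) \iff \phi_0(x_0)+\phi_1(x_1)+\phi_2(x_2)+\phi_3(x_3) = 0.
\]
Each such $V^0$-proof is of constant size in $l$, since all the quantifiers in the listed properties are bounded by $l$ and can be unwound by exhaustive enumeration into a large propositional tautology. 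The external correctness of these witnesses is guaranteed by Zhuk's original Theorem 8.15.

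With this data available, the theorem to be proved in $V^1$ reads: for every candidate pair $(D, R)$ satisfying the $\Sigma_0^{1,b}$ premises (namely that $R$ is preserved by $\Omega$ and is strongly rich), there exists a tuple $(+,\phi_0,\dots,\phi_3)$ realizing the conclusion. Following the illustrative argument given in the section on the Monster set, the outer universal quantifier over $(D,R)$ is replaced by a finite conjunction indexed by the Monster list, and for each entry we instantiate the witness from the list and cite its precomputed $V^0$-proof. The derivation uses only $\Sigma_0^{1,b}$-COMP and propositional reasoning; no induction beyond what is already available in $V^1$ is required.

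The main obstacle is the bookkeeping verification that strong richness is itself a bounded $\Sigma_0^{1,b}$-property of $(D,R)$, so that the premise can indeed be checked by $V^0$ for each candidate in the Monster list. Once that is confirmed --- and it will be, since strong richness is a first-order condition on finitely many tuples of $R$ --- the rest is routine. The one slightly delicate point is that $(D,+)$ is a genuinely new algebraic structure on $D$, not a congruence, polymorphism, or quotient appearing elsewhere in the paper; but since we are free to list arbitrary finite data of fixed size in the Monster set, this causes no difficulty beyond extending the list accordingly.
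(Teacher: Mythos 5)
Your proposal is correct and matches the paper's intent: the paper gives no separate proof of this lemma, instead grouping it with the ``technical lemmas'' that concern constant-arity relations of constant size over the fixed algebra $\mathbb{A}$, for which it explicitly sanctions either repeating Zhuk's proof verbatim in $V^1$ or listing the precomputed witnesses (here $(D,+)$ and the $\phi_i$) with their $V^0$-verifications in the $\mathbb{A}$-Monster set. Your Monster-set route, including the observation that strong richness is a bounded first-order condition on the finitely many tuples of $R$ and hence $\Sigma^{1,b}_0$-checkable, is exactly the second of these two sanctioned options.
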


\begin{lemmach}[Theorem 8.17,   \cite{zhuk2020proof}]
  Suppose $\sigma\subseteq D^2$ is a congruence, $\rho$ is a bridge from $\sigma$ to $\sigma$ such that $\tilde{\rho}$ is a full relation, $pr_{1,2}(\rho)=\omega$, $\omega$ is a minimal relation stable under $\sigma$ such that $\sigma\subsetneq \omega$. Then $V^1$ proves that there exists a prime number $p$ and a relation $\zeta\subseteq D\times D\times \mathbb{Z}_p$ such that $pr_{1,2}\zeta = \omega$ and $(a_1,a_2,b)\in \zeta$ implies that $(a_1,a_2)\in \sigma\iff (b=0).$
\end{lemmach}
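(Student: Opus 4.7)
The plan is to invoke the $\mathbb{A}$-Monster set strategy described in Section~3.2, since the statement concerns only the fixed algebra $\mathbb{A}$: all parameters $D, \sigma, \rho, \omega$ are bounded relations on a subuniverse of $\mathbb{A}$, and $|D| \leq l$ is constant. The quantifier $\exists p$ in the conclusion ranges over primes bounded by $|D|$, and the relation $\zeta \subseteq D \times D \times \mathbb{Z}_p$ has size bounded by $l^3$, so the entire implication is a constant-size statement in the parameters of $\mathbb{A}$.

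First, I would extend the $\mathbb{A}$-Monster set to include, for every tuple $(D, \sigma, \rho, \omega)$ of (fixed) relations satisfying the hypotheses of the lemma, an explicit witness $p_{D,\sigma,\rho,\omega}$ and $\zeta_{D,\sigma,\rho,\omega}$, together with a $V^0$-proof that these witnesses verify the conclusion. The witnesses are computed by the classical universal algebra argument: the factor $\omega/\sigma$ inherits an abelian group operation via the bridge $\rho$ (obtained by setting $a_1 + a_2 = a_3$ whenever $\rho(a_1, a_2, a_3, \cdot)$ relates them appropriately modulo $\sigma$), and the minimality of $\omega$ forces this group to be cyclic of prime order $p$; the map $\zeta(a_1, a_2, b)$ then records the element of $\mathbb{Z}_p$ assigned to the coset $(a_1, a_2)/\sigma$. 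Because there are only finitely many candidate tuples $(D, \sigma, \rho, \omega)$ on the fixed algebra $\mathbb{A}$, this extension adds only finitely many objects and proofs to the Monster set.

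Next, I would carry out the $V^1$-proof of the lemma itself. The hypotheses are all $\Sigma^{1,b}_0$ relations referring to fixed objects in the Monster set (congruence, bridge, minimality, stability under $\sigma$), so the implication can be rewritten as a large (but constant-size) conjunction indexed by tuples $(D, \sigma, \rho, \omega)$ in the Monster set. For each such tuple, the conclusion is witnessed by the cached $(p, \zeta)$, and the properties $pr_{1,2}(\zeta) = \omega$ and $(a_1, a_2, b) \in \zeta \Rightarrow ((a_1, a_2) \in \sigma \iff b = 0)$ are $\Sigma^{1,b}_0$-formulas that can be verified directly against the cached data. Since $V^0 \subseteq V^1$, applying modus ponens to the cached $V^0$-proofs yields the conclusion in $V^1$.

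The main obstacle will be to confirm that the extension of the Monster set remains of constant size in $l$ and that the universal algebra construction of the cyclic group is correctly encoded. In particular, one must verify that the operation induced by $\rho$ on $\omega/\sigma$ is well-defined and associative, and that the resulting group has prime order — but all of these checks are exhaustive computations over finite structures bounded by $l^4$, and hence are absorbed into the Monster set precomputation rather than being proved by induction in $V^1$. The only genuinely arithmetic step is verifying primality of the resulting $p \leq l$, which is trivially $\Sigma^{1,b}_0$-decidable and can be cached with its $V^0$-proof.
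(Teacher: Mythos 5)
Your proposal is correct and matches the paper's own treatment: the paper does not give a detailed proof of this lemma but explicitly classifies such statements (constant-arity relations over the fixed, constant-size algebra $\mathbb{A}$) as either directly formalizable following Zhuk or to be cached in the $\mathbb{A}$-Monster set together with their $V^0$-proofs, and the Monster set already contains the lists of all bridges $\Xi_{\sigma_i,\sigma_j}$. Your exhaustive precomputation of the witnesses $(p,\zeta)$ for each tuple $(D,\sigma,\rho,\omega)$ followed by a constant-size conjunction and modus ponens in $V^0\subseteq V^1$ is exactly the methodology the paper prescribes for this class of lemmas.
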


\begin{lemmach}[Lemma 8.18,   \cite{zhuk2020proof}]
  Suppose that $\rho\subseteq D^4$ is an optimal bridge from $\sigma_1$ to $\sigma_2$, and $\sigma_1$ and $\sigma_2$ are different irreducible congruences. Then $V^1$ proves that $\sigma_2\subseteq \tilde{\rho}$.
\end{lemmach}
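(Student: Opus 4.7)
The plan is to prove $\sigma_2 \subseteq \tilde{\rho}$ directly from reflexivity of $\rho$ together with the stability of its last two coordinates under $\sigma_2$; the optimality hypothesis and the assumption $\sigma_1 \neq \sigma_2$ turn out not to enter the argument, although they are of course needed in the surrounding context to make ``optimal bridge'' meaningful. I fix an arbitrary pair $(b_1,b_2) \in \sigma_2$ and derive $\rho(b_1,b_1,b_2,b_2)$ in three steps.

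First, reflexivity of $\rho$ gives $\rho(b_1,b_1,b_1,b_1)$. Next, the stability clause $Stable_2(\rho,4,\sigma_2)$, which is a conjunct of $Bridge(\rho,\sigma_1,\sigma_2)$, applied to $\rho(b_1,b_1,b_1,b_1)$ and $\sigma_2(b_1,b_2)$, yields $\rho(b_1,b_1,b_1,b_2)$. Applying $Stable_2(\rho,3,\sigma_2)$ in the same way gives $\rho(b_1,b_1,b_2,b_2)$, which by the bit-defining axiom $\tilde{\rho}(x,y) \iff \rho(x,x,y,y)$ is exactly $(b_1,b_2) \in \tilde{\rho}$. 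Since $(b_1,b_2)$ was arbitrary, $\sigma_2 \subseteq \tilde{\rho}$.

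For the formalization in $V^1$: the conclusion unwinds to the bounded formula $\forall b_1,b_2 \in D,\ \sigma_2(b_1,b_2) \to \rho(b_1,b_1,b_2,b_2)$, which is $\Pi^{1,b}_0$. The derivation above is a three-step chain of modus ponens against the $\Sigma^{1,b}_0$ axioms $Bridge(\rho,\sigma_1,\sigma_2)$ (unfolded into its $Stable_2$ conjuncts), the reflexivity clause $\forall a \in D,\ \rho(a,a,a,a)$, and the hypothesis $\sigma_2(b_1,b_2)$; no induction or comprehension is needed, so the proof is available already in the base fragment of $V^1$.

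There is no real obstacle. The only care required is the bureaucratic one of faithfully unfolding the macro $Bridge(\rho,\sigma_1,\sigma_2)$ into its four $Stable_2$ clauses and applying stability at the correct coordinate positions (first in coordinate $4$, then in coordinate $3$, each with the same witness $\sigma_2(b_1,b_2)$). The irreducibility of $\sigma_1,\sigma_2$, the assumption $\sigma_1 \neq \sigma_2$, and the optimality of $\rho$ are contextual prerequisites for the notion of optimal bridge but do not participate in this particular argument; they are instead used to extract the stronger structural properties of $\tilde{\rho}$ (such as being a congruence) that the surrounding lemmas exploit.
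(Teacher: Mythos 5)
Your three-step derivation is formally valid for the statement as printed: an optimal bridge is by definition reflexive, so $\rho(b_1,b_1,b_1,b_1)$ holds, and applying $Stable_2(\rho,4,\sigma_2)$ and then $Stable_2(\rho,3,\sigma_2)$ with the witness $\sigma_2(b_1,b_2)$ gives $\rho(b_1,b_1,b_2,b_2)$, i.e.\ $(b_1,b_2)\in\tilde{\rho}$; this is a $\Sigma^{1,b}_0$ chain needing no induction. However, the fact you note in passing --- that optimality, irreducibility, and $\sigma_1\neq\sigma_2$ never enter --- is not incidental: it signals that you have proved a degenerate reading of the lemma. The containment $\sigma_2\subseteq\tilde{\rho}$ (and symmetrically $\sigma_1\subseteq\tilde{\rho}$) holds for \emph{every} reflexive bridge and carries no information beyond reflexivity and stability. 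The lemma being transcribed is Zhuk's Lemma 8.18, whose conclusion is $\sigma_2^{*}\subseteq\tilde{\rho}$, where $\sigma_2^{*}$ is the minimal binary relation properly containing $\sigma_2$ and stable under it --- precisely the operation $\cdot^{*}$ the paper defines in the ``Irreducible congruence'' paragraph and uses throughout this section (e.g.\ the constraints $\sigma^{*}(x_j,x_j')$ added by the transformations $T_2$ and $T_3$, and the bridge-chaining in Lemma 8.22). The printed statement has evidently dropped the star.

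Relative to the intended statement your argument has a genuine gap: nothing in it produces a pair of $\tilde{\rho}$ lying outside $\sigma_2$. That is exactly where all three hypotheses are needed. Zhuk's proof, roughly, composes $\rho$ with itself to obtain a reflexive bridge from $\sigma_2$ to $\sigma_2$, invokes irreducibility of $\sigma_2$ so that $\sigma_2^{*}$ exists as the least stable relation strictly above $\sigma_2$, and uses the optimality of $\rho$ (together with $\sigma_1\neq\sigma_2$) to force $\sigma_2^{*}$ into $\tilde{\rho}$. Note also that the paper supplies no proof of this lemma --- it sits in the list of technical lemmas whose formalizations are asserted to go through in $V^1$ ``exactly as they are proved'' in the source --- so the correct target for a reconstruction is Zhuk's nontrivial argument, not the literal starless containment.
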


\begin{lemmach}[Lemma 8.20,   \cite{zhuk2020proof}]
  Suppose $R\leq D_i\times D_j$ is a subdirect rectangular relation and there exists $(b_i,a_j),(a_i,b_j)\in R$ such that $(a_i,a_j)\notin R$. Then $V^1$ proves that there exists a bridge $\delta$ from $Con_2^{(R,i)}$ to $Con_2^{(R,j)}$ such that $\tilde{\delta} = R$.
\end{lemmach}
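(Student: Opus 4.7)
The plan is to give an explicit construction of the bridge and then verify each clause of the $Bridge$ relation by elementary manipulations that use only rectangularity and subdirectness. Concretely, I would try
\[
  \delta(a_1,a_2,b_1,b_2) \;:\iff\; R(a_1,b_1)\wedge R(a_2,b_2),
\]
writing $\sigma_i$ for $Con_2^{(R,i)}$ and $\sigma_j$ for $Con_2^{(R,j)}$. The first and easiest observation is that $\tilde\delta(x,y)\iff R(x,y)\wedge R(x,y)\iff R(x,y)$, which is the conclusion $\tilde\delta=R$. All subsequent verifications are purely second-order assertions about sets of size at most $l^2$, so they stay inside $\Sigma^{1,b}_0$-reasoning and are certainly available in $V^1$.

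Next I would verify the four stability clauses $Stable_2(\delta,k,\sigma)$ for $k\in\{1,2,3,4\}$. For $k=1$: assume $\delta(a_1,a_2,b_1,b_2)$ and $\sigma_i(a_1,a_1')$; then $R(a_1,b_1)$ together with $\sigma_i(a_1,a_1')$ gives $R(a_1',b_1)$ by the rectangularity of the first coordinate of $R$, and $R(a_2,b_2)$ is untouched, so $\delta(a_1',a_2,b_1,b_2)$. The other three cases are symmetric, each invoking rectangularity in the relevant coordinate. For the biconditional clause, suppose $\delta(a_1,a_2,b_1,b_2)$ and $\sigma_i(a_1,a_2)$; pick by definition of $\sigma_i$ a witness $c\in D_j$ with $R(a_1,c)\wedge R(a_2,c)$. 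Then $R(a_1,b_1)\wedge R(a_1,c)$ forces $\sigma_j(b_1,c)$, and $R(a_2,c)\wedge R(a_2,b_2)$ forces $\sigma_j(c,b_2)$; transitivity of $\sigma_j$ gives $\sigma_j(b_1,b_2)$. The converse direction is completely analogous using a common witness in $D_i$.

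It remains to secure strict containment of the projections. Subdirectness of $R$ immediately yields $pr_{1,2}(\delta)=D_i\times D_i$ and $pr_{3,4}(\delta)=D_j\times D_j$, because one can choose $b_1,b_2$ (respectively $a_1,a_2$) for any prescribed first (third) coordinate pair. So the strict-containment requirement reduces to exhibiting one pair in $D_i^2$ outside $\sigma_i$ and one pair in $D_j^2$ outside $\sigma_j$. This is exactly where the hypothesis $(b_i,a_j),(a_i,b_j)\in R$ with $(a_i,a_j)\notin R$ is used: if $\sigma_i(a_i,b_i)$ held, rectangularity in the first coordinate applied to $R(b_i,a_j)$ would produce $R(a_i,a_j)$, contradicting the hypothesis, so $(a_i,b_i)\in pr_{1,2}(\delta)\setminus \sigma_i$; symmetrically $(a_j,b_j)\in pr_{3,4}(\delta)\setminus \sigma_j$.

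The main point is simply spotting the right candidate $\delta$; once that is written down, every verification is a few lines of bounded reasoning, and the hypotheses line up so cleanly that no inductive argument is needed. The only place where something could go wrong is in the biconditional clause, because a naive $\delta$ such as $R(a_1,b_1)\wedge R(a_1,b_2)\wedge R(a_2,b_1)\wedge R(a_2,b_2)$ would collapse the first projection onto $\sigma_i$ itself and fail strict containment; I would anticipate this pitfall and therefore stick with the pure product-of-two-copies definition, paying the price that the biconditional must be checked via the common-witness argument rather than built in directly. Since every quantifier in sight is bounded by $l$ or $l^2$, the whole derivation lives inside $\Sigma^{1,b}_0$ and is routinely provable in $V^1$.
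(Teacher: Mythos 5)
Your proposal is correct and takes essentially the same route as the intended proof: the paper does not spell out an argument for this lemma (it is listed among the binary, fixed-domain-size statements that ``can be formalized and proved in $V^1$ exactly as they are proved in'' Zhuk's paper), and your candidate $\delta(a_1,a_2,b_1,b_2)\iff R(a_1,b_1)\wedge R(a_2,b_2)$, with the common-witness verification of the biconditional (using that rectangularity makes $Con_2^{(R,i)}$ and $Con_2^{(R,j)}$ transitive) and the hypothesis tuple supplying strict containment, is precisely Zhuk's construction. All verifications are indeed bounded, quantifier-free over constant-size sets, so the $V^1$ claim is fine.
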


These and some other lemmas imply the following result about CSP instances.  

\begin{lemmach}[Lemma 8.22,   \cite{zhuk2020proof}]\label{'a'a;slddkfkjg}
  Suppose $\Theta$ is a cycle-consistent connected instance. Then $V^1$ proves that for any constraints $C,C'$ with variables $x,x'$ there exists a bridge $\delta$ from $Con^{(C,x)}$ to $Con^{(C',x')}$ such
that $\tilde{\delta}$ contains all pairs of elements linked in $\Theta$. Moreover, if $Con^{(C'',x'')}\neq Linked_{[x'',x'',\Theta]}$
for some constraint $C''\in\Theta$ and a variable $x''$, then $\delta$ can be chosen so that $\tilde{\delta}$ contains all pairs
of elements linked in $\Theta'$, where $\Theta'$ is obtained from $\Theta$ by replacing every constraint relation
by its cover. 
\end{lemmach}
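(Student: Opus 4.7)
My plan is to build $\delta$ by composing bridges along a path in the dual graph of $\Theta$ whose existence is guaranteed by connectedness. First I would formalize the composition of bridges as a $\Sigma^{1,b}_0$ string function
$$comp(\rho_1,\rho_2)(a,b,c,d)\iff \exists e,f<l,\,\rho_1(a,b,e,f)\wedge \rho_2(e,f,c,d),$$
and verify in $V^0$ that if $\rho_1$ is a bridge from $\sigma_1$ to $\sigma_2$ and $\rho_2$ is a bridge from $\sigma_2$ to $\sigma_3$, then $comp(\rho_1,\rho_2)$ is a bridge from $\sigma_1$ to $\sigma_3$ whose $\tilde{\cdot}$ contains the relational composition of $\tilde\rho_1$ and $\tilde\rho_2$. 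This is a direct check against the $\Sigma^{1,b}_0$ clauses in $Bridge(\cdot,\cdot,\cdot)$ and needs only elementary quantifier manipulation.

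Next I would identify two sources of elementary bridges. Lemma 8.20 (already in the excerpt) supplies, for each critical rectangular constraint $C=(x,x';R)$ of $\Theta$, an \emph{intra-constraint} bridge $\delta_C$ from $Con^{(C,x)}$ to $Con^{(C,x')}$ with $\widetilde{\delta_C}=R$. The relation $Adj$ supplies, at each adjacency between $C_i$ and $C_{i+1}$ at a shared variable $y$, a reflexive bridge $\rho_i$ from $Con^{(C_i,y)}$ to $Con^{(C_{i+1},y)}$. By $Connected(\Theta)$, for any two constraints $C,C'$ there is a homomorphic image of a path $\mathcal{P}_t$ (with $t<n^2$) in $\mathcal{X}$ realising a sequence $C=C_0,C_1,\dots,C_s=C'$ of pairwise adjacent constraints; this sequence and all its bridges can be stored in a single string of polynomial length. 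A $\Sigma^{1,b}_1$-induction on $i\leq s$ now yields a bridge $\delta^{(i)}$ from $Con^{(C,x)}$ to $Con^{(C_i,y_i)}$ obtained by iteratively applying $comp(\cdot,\cdot)$ to the previous bridge $\delta^{(i-1)}$, the adjacency bridge $\rho_{i-1}$, and the intra-constraint bridge $\delta_{C_i}$. Since the witnessing bridges are bounded strings of size $(4l)^{16}$, the induction formula is $\Sigma^{1,b}_1$, which is exactly the strength of $V^1$.

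For the linked-pairs claim, I would inspect the composition to see that a pair $(a,b)$ connected by an instance-path of length $\leq nl$ is actually captured by the intra-constraint layers of the composed bridge: cycle-consistency ensures that at each step of the instance-path the witness $(a_i,a_{i+1})$ lies in $R_{C_j}=\widetilde{\delta_{C_j}}$, and the composition accumulates these witnesses into $\tilde\delta$. For the moreover clause, I would invoke Lemma 8.10, by which the cover $R'$ of a critical rectangular $R$ strictly enlarges $Con^{(R,x)}$; hence, assuming $Con^{(C'',x'')}\neq Linked_{[x'',x'',\Theta]}$ for some $C'',x''$, one can replace the intra-constraint bridges $\delta_{C_i}$ by those built from the covers $R'_{C_i}$ at every step of the composition, and argue that the resulting $\tilde\delta$ contains every pair linked in $\Theta'$. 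The substitution is a $\Sigma^{1,b}_0$-defined operation, propagated by the same outer $\Sigma^{1,b}_1$-IND along the path.

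The main obstacle is neither the formalisation of composition nor the outer induction, but the careful bookkeeping in the moreover clause: one must argue that the strict inequality at a single constraint $C''$ propagates through every subsequent composition so that the final $\tilde\delta$ covers the full linked-relation of $\Theta'$ and not merely a proper part of it. The propagation relies on the rectangularity hypothesis (part of $Connected(\Theta)$) and on the fact that covers of critical relations introduce extra pairs only along the congruence $\sigma^*$ produced by $\cdot^*$; once this is packaged as an inner $\Sigma^{1,b}_1$-induction along the same constraint path, the whole argument stays within $V^1$.
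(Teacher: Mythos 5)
Your reconstruction is essentially Zhuk's own argument for Lemma 8.22, but you should be aware that the paper does not actually formalize a proof of this statement: it files Lemma 8.22 among the technical lemmas that are ``either related exclusively to binary relations \dots or related to constant arity relations and constant sizes over algebra $\mathbb{A}$'' and explicitly refers the reader to the source for the proof, with the lists of all bridges $\Xi_{\sigma_i,\sigma_j}$, $\Xi^{\leftrightarrow}_{\sigma_i,\sigma_j}$, $\Xi^{opt}_{\sigma_i,\sigma_j}$ precomputed in the $\mathbb{A}$-Monster set. Your sketch is therefore a legitimate filling-in of deferred content rather than an alternative route, and its skeleton (intra-constraint bridges from Lemma 8.20, reflexive adjacency bridges at shared variables, and a $\Sigma^{1,b}_1$-induction along the constraint path supplied by $Connected(\Theta)$, with each bridge a constant-size string bounded by $(4l)^{2^4}$) is exactly what the paper's conventions would produce. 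One caveat: your $V^0$ claim that $comp(\rho_1,\rho_2)$ is automatically a bridge is too quick. The definition of $Bridge$ requires $\sigma_1\subsetneq pr_{1,2}(comp(\rho_1,\rho_2))$, and for the naive relational composition this needs $pr_{3,4}(\rho_1)\cap pr_{1,2}(\rho_2)\not\subseteq\sigma_2$, which does not hold for arbitrary bridges sharing the middle congruence; Zhuk's composition lemma carries extra hypotheses (reflexivity/optimality) to guarantee this. In the paper's framework this step is meant to be discharged by lookup in the finite Monster-set lists (a constant-size $V^0$-certified fact for the fixed algebra $\mathbb{A}$) rather than by a general composition lemma, so if you keep your explicit $comp$ you must either restrict to reflexive bridges or add the overlap condition to the induction invariant. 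With that repaired, the rest of your argument, including the moreover clause via Lemma 8.10 and the substitution of covers, stays within $V^1$ as claimed.
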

The next lemma is easily proved by the application of the definition of a crucial instance and expanded covering. If we replace any constraint in a crucial instance $\Theta$ with all weaker constraints, we get a solution. All relations in the expanded covering $\Theta'$ are diagonal relations or weaker or equivalent to the relations in $\Theta$. 

\begin{lemmach}[Lemma 8.24,   \cite{zhuk2020proof}]\label{alksjdhf}
Suppose $\Theta_{\mathcal{X}}=(\mathcal{X},\ddot{\mathcal{A}})$ is a crucial instance in $D^{(1)}$, $\Theta_{\mathcal{Y}}=(\mathcal{Y},\ddot{\mathcal{B}})\in ExpCov(\Theta_{\mathcal{X}})$ through the homomorphism $H$ from $\mathcal{Y}$ to $\mathcal{X}$, and $\Theta_{\mathcal{Y}}$ has no solution in $D^{(1)}$. Then $V^1$ proves that for every constraint $E^{x_ix_j}_{\ddot{\mathcal{A}}}$ in $\Theta_{\mathcal{X}}$ there exists a constraint $E^{y_ky_p}_{\ddot{\mathcal{B}}}$ such that $H(y_k)=x_i$, $H(y_p)=x_j$ and $E^{x_ix_j}_{\ddot{\mathcal{A}}} = E^{y_ky_p}_{\ddot{\mathcal{B}}}$.
\end{lemmach}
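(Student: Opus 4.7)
The plan is to argue by contradiction, appealing only to the definitions of $CrucConst$, $Weaker$, $weakening$ and $ExpCov$, which is why the result lives comfortably in $V^1$. Fix a particular constraint $E^{x_ix_j}_{\ddot{\mathcal{A}}}$ in $\Theta_{\mathcal{X}}$ and assume for contradiction that for every edge $E_{\mathcal{Y}}(y_k,y_p)$ satisfying $H(y_k)=x_i$ and $H(y_p)=x_j$ we have $E^{y_ky_p}_{\ddot{\mathcal{B}}}\neq E^{x_ix_j}_{\ddot{\mathcal{A}}}$. By $CrucConst(E^{x_ix_j}_{\ddot{\mathcal{A}}},\Theta_{\mathcal{X}},D^{(1)})$ the instance $\Theta_{\mathcal{X},w^{ij}}^{(1)}$ admits a homomorphism, and $\Sigma^{1,b}_1$-comprehension in $V^1$ extracts a specific witness $H^*$.

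Next I would introduce the map $H'$ on $V_{\mathcal{Y}}$ by the bit-definition $H'(y_k)=H^*(H(y_k))$, a $\Sigma^{1,b}_0$ string-function. Since $\Theta_{\mathcal{Y}}\in ExpCov(\Theta_{\mathcal{X}})$ forces $D_{y_k}=D_{H(y_k)}$, and condition (\ref{ajdkjfhuei4}) in the definition of reduction forces $D^{(1)}_{y_k}=D^{(1)}_{H(y_k)}$, $H'$ sends each $y_k$ into the correct reduced domain. Thus the only task is to verify the edge clause of $\ddot{HOM}(\mathcal{Y}^{(1)},\ddot{\mathcal{B}}^{(1)},H')$, which reduces to a bounded conjunction over $E_{\mathcal{Y}}(y_k,y_p)$ with three sub-cases of the pair $(H(y_k),H(y_p))$.

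The case analysis is straightforward in the first two sub-cases: if $H(y_k)=H(y_p)$, the definition of $ExpCov$ forces $E^{y_ky_p}_{\ddot{\mathcal{B}}}$ to contain the diagonal on $D_{y_k}$, and if $H(y_k)\neq H(y_p)$ with $(H(y_k),H(y_p))\neq(x_i,x_j)$, then $E^{H(y_k)H(y_p)}_{\ddot{\mathcal{A}}}\subseteq E^{y_ky_p}_{\ddot{\mathcal{B}}}$ and $H^*$ already respects $E^{H(y_k)H(y_p)}_{\ddot{\mathcal{A}}}$ because the instance $\Theta_{\mathcal{X},w^{ij}}$ modifies only the $(i,j)$ constraint. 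The resulting contradiction with $\neg\ddot{HOM}(\mathcal{Y}^{(1)},\ddot{\mathcal{B}}^{(1)})$ finishes the proof; each sub-case reasoning is uniformly bounded, so no induction beyond the comprehension used to extract $H^*$ is needed.

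The main obstacle is the third sub-case $(H(y_k),H(y_p))=(x_i,x_j)$. From $ExpCov$ we know $E^{x_ix_j}_{\ddot{\mathcal{A}}}\subseteq E^{y_ky_p}_{\ddot{\mathcal{B}}}$ and the contradiction hypothesis gives strict inclusion, so $Weaker(E^{y_ky_p}_{\ddot{\mathcal{B}}},E^{x_ix_j}_{\ddot{\mathcal{A}}})$ holds; we must nevertheless show $(H^*(x_i),H^*(x_j))\in E^{y_ky_p}_{\ddot{\mathcal{B}}}$ from the mere fact that $(H^*(x_i),H^*(x_j))\in E^{ij}_{\ddot{\mathcal{A}},w}$. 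The subtlety is that $E^{ij}_{\ddot{\mathcal{A}},w}$ is a single, specific minimal direct successor of $E^{x_ix_j}_{\ddot{\mathcal{A}}}$ in the inclusion poset of $\Gamma_{\mathcal{A}}$, whereas $E^{y_ky_p}_{\ddot{\mathcal{B}}}$ may be a different strict superset. To bridge this, I would strengthen the verification by invoking the fact that $E^{y_ky_p}_{\ddot{\mathcal{B}}}$, being a strict superset of $E^{x_ix_j}_{\ddot{\mathcal{A}}}$ preserved by $\Omega$, must itself lie above some element of $weakerlist(E^{x_ix_j}_{\ddot{\mathcal{A}}})$, and then apply the crucial property relative to that successor rather than only to the arbitrarily chosen $E^{ij}_{\ddot{\mathcal{A}},w}$; this essentially amounts to observing that the crucial hypothesis can be applied along any minimal weakening, a fact that the monster set encodes uniformly over $\mathbb{A}$, keeping the argument inside $V^1$.
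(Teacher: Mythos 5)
Your overall strategy is the one the paper intends: argue by contradiction, use $CrucConst$ to extract a solution $H^*$ of the weakened instance, push it forward along the covering homomorphism to $H'=H^*\circ H$, and verify the edge clause of $\ddot{HOM}$ by a bounded case analysis. The first two sub-cases are handled exactly as the paper's two-sentence sketch suggests, and you are right that the only delicate point is the sub-case $(H(y_k),H(y_p))=(x_i,x_j)$.

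Your resolution of that sub-case, however, does not go through as stated. $CrucConst(E^{x_ix_j}_{\ddot{\mathcal{A}}},\Theta_{\mathcal{X}},D^{(1)})$ asserts the existence of a solution only for the one instance $\Theta_{\mathcal{X},w^{ij}}$ obtained by substituting the particular relation $weakening(E^{x_ix_j}_{\ddot{\mathcal{A}}})$; it says nothing about the instances obtained from the other, incomparable elements of $weakerlist(E^{x_ix_j}_{\ddot{\mathcal{A}}})$, so ``applying the crucial property relative to that successor'' is not licensed by the hypothesis. Nor can the $\mathbb{A}$-Monster set supply it: the monster set encodes properties of the fixed algebra $\mathbb{A}$ and its subuniverses, whereas whether a weakened instance has a solution depends on the variable input $\Theta$. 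There is also a second difficulty your patch ignores: several edges $(y_k,y_p)$ and $(y_{k'},y_{p'})$ of $\mathcal{Y}$ may map onto the same pair $(x_i,x_j)$ and carry pairwise incomparable strict supersets of $E^{x_ix_j}_{\ddot{\mathcal{A}}}$, so you need a single $H^*$ with $(H^*(x_i),H^*(x_j))$ in the intersection of all of them, not a separate witness per successor. The bridge the paper (following Zhuk) actually relies on is that cruciality gives a solution landing in the intersection of \emph{all} strict weakenings of $E^{x_ix_j}_{\ddot{\mathcal{A}}}$ in $\Gamma^2_{\mathcal{A}}$; since each $E^{y_ky_p}_{\ddot{\mathcal{B}}}$ is itself such a strict weakening, it automatically contains that intersection and hence the pair $(H^*(x_i),H^*(x_j))$. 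To close your proof you should derive the third sub-case from this intersection property (i.e., read the cruciality hypothesis through Zhuk's original ``replacement by all weaker constraints'') rather than from the single, arbitrarily chosen $E^{ij}_{\ddot{\mathcal{A}},w}$.
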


Lemmas \ref{alksjdhf} and \ref{'a'a;slddkfkjg} imply Lemma \ref{;asldkljf}. 
\begin{lemmach}[Lemma 8.25,   \cite{zhuk2020proof}]\label{;asldkljf}
  Suppose $\Theta_{\mathcal{X}}=(\mathcal{X},\ddot{\mathcal{A}})$ is a crucial instance in $D^{(1)}$, $\Theta_{\mathcal{Y}}=(\mathcal{Y},\ddot{\mathcal{B}})\in ExpCov(\Theta_{\mathcal{X}})$ has no solution in $D^{(1)}$, every constraint relation of $\Theta_{\mathcal{X}}$ is a critical rectangular relation, and $\Theta_{\mathcal{Y}}$ is connected.
Then $V^1$ proves that $\Theta_{\mathcal{X}}$ is connected.
\end{lemmach}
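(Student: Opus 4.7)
The plan is to verify the definition of \emph{connected} for $\Theta_{\mathcal{X}}$. Since every constraint relation of $\Theta_{\mathcal{X}}$ is critical and rectangular by hypothesis, it remains to establish connectivity of the constraint-adjacency graph of $\Theta_{\mathcal{X}}$. I would take arbitrary constraints $C_1 = E^{x_1 x_1'}_{\ddot{\mathcal{A}}}$ and $C_2 = E^{x_2 x_2'}_{\ddot{\mathcal{A}}}$ of $\Theta_{\mathcal{X}}$ and construct a path of adjacent constraints joining them.

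First I would invoke Lemma \ref{alksjdhf}, which under the stated hypotheses (crucial $\Theta_{\mathcal{X}}$ and $\Theta_{\mathcal{Y}} \in ExpCov(\Theta_{\mathcal{X}})$ with no solution in $D^{(1)}$) yields, for each $C_i$, a constraint $\tilde{C}_i = E^{y_i y_i'}_{\ddot{\mathcal{B}}}$ of $\Theta_{\mathcal{Y}}$ with $H(y_i) = x_i$, $H(y_i') = x_i'$, and $E^{y_i y_i'}_{\ddot{\mathcal{B}}} = E^{x_i x_i'}_{\ddot{\mathcal{A}}}$ as relations. Consequently the shared $Con$ relations coincide: $Con^{(\tilde{C}_i, y_i)} = Con^{(C_i, x_i)}$, and likewise at $y_i'$. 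This is the key transfer mechanism from $\Theta_{\mathcal{Y}}$ back to $\Theta_{\mathcal{X}}$.

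Since $\Theta_{\mathcal{Y}}$ is connected by hypothesis, its constraint graph contains a path $\tilde{C}_1 = \tilde{D}_0, \tilde{D}_1, \dots, \tilde{D}_k = \tilde{C}_2$ of adjacent constraints. I would project this path through $H$: each $\tilde{D}_j$ either maps to a genuine constraint of $\Theta_{\mathcal{X}}$ (when $H$ sends its two variables to distinct vertices of $\mathcal{X}$) or maps to a loop at a single variable of $\mathcal{X}$. Loops contribute no movement in the constraint graph of $\Theta_{\mathcal{X}}$ and are skipped; consecutive projected constraints share a variable, inherited from the shared variable of consecutive $\tilde{D}_j$'s in $\Theta_{\mathcal{Y}}$. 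To witness adjacency of consecutive projected constraints in $\Theta_{\mathcal{X}}$, I would invoke Lemma \ref{'a'a;slddkfkjg} applied to $\Theta_{\mathcal{Y}}$ --- which is cycle-consistent as an expanded covering of the cycle-consistent irreducible $\Theta_{\mathcal{X}}$ (by Lemma 6.1 of \cite{zhuk2020proof}) and connected by hypothesis --- to produce reflexive bridges between the $Con$ relations attached to consecutive constraints at their shared variable; then the equality of $Con$ relations from Lemma \ref{alksjdhf} transfers each such bridge down to $\Theta_{\mathcal{X}}$, giving adjacency of the projected constraints. The argument is formalized by $\Sigma^{1,b}_1$-induction on the length of the path, which is bounded by the number of constraints of $\Theta_{\mathcal{Y}}$, hence polynomial in the size of the instance.

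The main obstacle, and the place where the formalization most needs care, is handling intermediate constraints $\tilde{D}_j$ in the $\Theta_{\mathcal{Y}}$-path whose relation is strictly richer than the corresponding constraint of $\Theta_{\mathcal{X}}$: the $Con$ relation is then larger, and a reflexive bridge on the larger $Con$ does not automatically restrict to a reflexive bridge on the smaller one. The resolution is to apply Lemma \ref{alksjdhf} again to the underlying $\Theta_{\mathcal{X}}$-edge to obtain a \emph{different} preimage $\tilde{D}_j^\ast$ in $\Theta_{\mathcal{Y}}$ with the \emph{equal} relation, and to re-route the path through $\tilde{D}_j^\ast$ --- using connectedness of $\Theta_{\mathcal{Y}}$ again to splice --- at the cost of lengthening the path but preserving adjacency. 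Since the $\mathbb{A}$-Monster set provides the full catalogue of bridges and $Con$ relations on the fixed algebra $\mathbb{A}$, the required verifications become bounded disjunctions, and the entire argument is carried out within $V^1$ as asserted.
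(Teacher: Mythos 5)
Your proposal follows the same route as the paper, which justifies this lemma in a single line by citing exactly the two ingredients you use: Lemma \ref{alksjdhf} to lift each constraint of $\Theta_{\mathcal{X}}$ to an equal-relation constraint of $\Theta_{\mathcal{Y}}$, and Lemma \ref{'a'a;slddkfkjg} applied to the connected (and cycle-consistent) $\Theta_{\mathcal{Y}}$ to produce the bridges witnessing adjacency, which then descend to $\Theta_{\mathcal{X}}$. Your additional elaboration --- projecting the adjacency path through $H$, re-routing around intermediate constraints with strictly richer relations, and using the $\mathbb{A}$-Monster set to keep the bridge verifications bounded within $V^1$ --- fills in details the paper leaves implicit but is consistent with its argument.
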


\subsection{Formalization of the main theorems}

\subsubsection{The existence of the next reduction}

\begin{lemmach}[Lemma 9.1, \cite{zhuk2020proof}]\label{;allskdleijf5}
  Suppose $D^{(0)},D^{(1)},...,D^{(s)}$ is a strategy for a $1$-consistent CSP instance $\Theta$, and $D^{(\bot)}$ is a reduction of $\Theta^{(s)}$. Then $V^1$ proves that:
  \begin{enumerate}
    \item if there exists a $1$-consistent reduction contained in $D^{(\bot)}$ and $D^{(s+1)}$ is maximal among such reductions, then for every variable $x$ of $\Theta$ there exists a tree-formula $\Upsilon_x\in Coverings(\Theta)$ such that $\Upsilon^{(\bot)}_x(x)$ defines $D^{(s+1)}_x$;
    \item otherwise, there exists a tree-formula $\Upsilon\in Coverings(\Theta)$ such that $\Upsilon^{(\bot)}$ has no solutions.
  \end{enumerate}
\end{lemmach}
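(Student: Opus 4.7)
The plan is to construct the tree-formulas $\Upsilon_x$ (respectively $\Upsilon$) by formalizing the standard constraint-propagation procedure for $1$-consistency, tracking at every step a tree-formula that witnesses each removal. Start from $\Theta^{(s)}$ with the candidate reduction $D^{(\bot)}$, and iteratively look for a ``bad'' pair $(x,a)$: an element $a$ that still belongs to the current partial reduction $\tilde{D}^{(t)}_x$ but such that there is some neighboring constraint $E^{xy}_{\ddot{\mathcal{A}}}$ (or $E^{yx}_{\ddot{\mathcal{A}}}$) all of whose $\Theta$-compatible partners for $a$ have already been deleted from $\tilde{D}^{(t)}_y$. Remove $a$ from $\tilde{D}^{(t)}_x$ and attach to it a witness tree-formula obtained by taking the union (with fresh variable copies) of the already-stored tree-formulas witnessing the removal of each such partner at $y$, joined by a single new edge labeled by $E^{xy}_{\ddot{\mathcal{A}}}$. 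Since each component is a tree and they are glued along distinct fresh variables, the result stays acyclic and belongs to $Coverings(\Theta)$ by the closure properties listed in Section~3.

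Formally, maintain a state $W^{(t)}=(\tilde{D}^{(t)}, \mathscr{U}^{(t)})$ where $\tilde{D}^{(t)}$ is the current partial reduction and $\mathscr{U}^{(t)}$ is a set, indexed by already-removed pairs $(x,a)$, of tree-formulas in $Coverings(\Theta)$ such that $\Upsilon_{x,a}^{(\bot),(t)}(x)$ does not contain $a$. The transition $W^{(t)}\to W^{(t+1)}$ is a $\Sigma^{1,b}_0$-definable string function. Since $|\tilde{D}^{(t+1)}|<|\tilde{D}^{(t)}|$ whenever a removal happens, the process stabilizes in at most $nl$ steps; the entire trace can therefore be collected in a single string of length polynomial in $|\Theta|$. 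Using $\Sigma^{1,b}_1$-COMP of $V^1$ we produce this trace, and using $\Sigma^{1,b}_0$-MIN we pick the first step at which no further removal is possible.

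At that stabilization step, for case~(1) the remaining $\tilde{D}^{(s+1)}$ is visibly $1$-consistent inside $D^{(\bot)}$, and by a comparison argument (any strictly larger $1$-consistent reduction in $D^{(\bot)}$ would survive every removal, contradicting the chosen removal) coincides with $D^{(s+1)}_x$. Define $\Upsilon_x$ by taking the union of $\{\Upsilon^{(\bot)}_{x,a}\}_{a\notin D^{(s+1)}_x}$, each glued to the variable $x$; then $\Upsilon^{(\bot)}_x(x)$ is exactly $D^{(s+1)}_x$ because the witnesses forbid precisely the removed elements and $\tilde{D}^{(s+1)}$ is $1$-consistent, so every remaining element survives. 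For case~(2), some domain becomes empty at stabilization; take $\Upsilon$ to be the tree-formula witnessing that emptiness, and observe that a solution of $\Upsilon^{(\bot)}$ would project to an element of the emptied domain, a contradiction.

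The main obstacle will be the combinatorial bookkeeping needed to keep the incrementally built formulas \emph{acyclic}: each time a witness tree at $y$ is spliced into a new witness tree at $x$, the variables of that sub-tree must be renamed to fresh labels via the $substitute$ function so that no cycle is introduced, and we must check formally that the result still lies in $Coverings(\Theta)$. This requires a careful choice of the labeling scheme (for instance, encoding into the label of each new variable the step at which it was created together with the identifier of the removed pair it witnesses) so that the whole trace $\mathscr{U}^{(t)}$ remains $\Sigma^{1,b}_0$-describable by strings of length polynomial in $|\Theta|$, which is precisely what is needed to stay inside $V^1$ and apply the induction and comprehension schemes available there.
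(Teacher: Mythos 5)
Your proposal follows essentially the same route as the paper: formalize the $1$-consistency propagation on $\Theta^{(s)}$ restricted to $D^{(\bot)}$, record at each removal a tree-formula witness built by splicing (with fresh variables) the witnesses of the partners whose earlier removal caused it, bound the number of propagation steps by $nl$, and use the $\Sigma^{1,b}_1$ induction/comprehension available in $V^1$ to produce the whole trace; the terminal state gives either the maximal $1$-consistent reduction (case~1) or an emptied domain (case~2). The only organizational difference is that you index witnesses by removed pairs $(x,a)$ and union them at the end, whereas the paper accumulates a single $\Upsilon_x$ per variable.

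There is, however, one point where your argument has a genuine gap that the paper takes care to address: the \emph{size} of the witness trees. You justify polynomiality of the trace only by the bound of $nl$ on the number of steps, but that does not bound the objects: the witness attached to a removal at step $t$ contains fresh copies of witnesses from earlier steps, which themselves contain copies of still earlier ones, so without a separate argument the nesting can multiply the number of variables at every splice and the final trees need not fit into strings of polynomial length — and then $\Sigma^{1,b}_1$-COMP cannot produce the trace and the whole formalization leaves $V^1$. The paper controls this by observing that each $\Upsilon_x$ needs to absorb at most $l$ joins before $D_x$ is emptied (at which point one is already in case~2), each join at most doubling the instance, yielding the explicit bound of $2^l\cdot 2n$ variables per tree-formula, which is linear in $n$ because $l$ is a constant of the fixed algebra $\mathbb{A}$; it also pre-allocates strings of exactly this length before running the recursive construction. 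You should add an analogous quantitative argument (and note that your per-pair bookkeeping makes it slightly worse, since a single removal can splice in up to $l$ distinct partner witnesses rather than one accumulated $\Upsilon_y$), or switch to the per-variable accumulation to inherit the paper's bound.
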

\begin{proof}
  The proof of this theorem is based on constraint propagation. At the beginning for every variable $x$ we consider an empty tree-formula $\Upsilon_x$. Then $\Upsilon^{(\bot)}_x$ defines the reduction $D^{(\bot)}$. Then the recursive algorithm works as follows: if at some step the reduction defined by these tree-formulas is $1$-consistent, it stops. Otherwise, it considers any constraint $C=R(x_1,...,x_t)$ that breaks $1$-consistency. The current restrictions of variables $x_1,...,x_t$ in $C$ imply stronger restriction of some variable $x_i$, and the algorithm changes the formula $\Upsilon_{x_i}$ as follows:
  $$\Upsilon_{x_i} =_{def} C\wedge \Upsilon_{x_1}\wedge...\wedge \Upsilon_{x_t}.$$
  To keep the formula $\Upsilon_{x_i}$ tree, any time the algorithm joins $\Upsilon_{x_j}$ and $\Upsilon_{x_k}$ it renames the variables so that they do not have common variables. Finally, for each $\Upsilon_{x_j}$ we consider the reduction of this instance on the domain set $D^{(\bot)}$. Projection of the solution set to $\Upsilon^{(\bot)}_{x_j}$ on variable $x_j$, $\Upsilon^{(\bot)}_{x_j}(x_j)$ defines $D^{(s+1)}_{x_j}$. That will be a maximal $1$-consistent reduction since it is defined by
tree-formulas.

  Let us formalize this algorithm. The formalization is based on a $1$-consistency algorithm (see \cite{barto_et_al}). Recall that any CSP instance can be converted in polynomial time to a $1$-consistent one with the same set of solutions. Moreover, any implementation of a $1$-consistency algorithm derives the same unary constraints. Thus, we can first define recursive sets of edges and vertices, based on which we can construct our tree-formulas. 

For the steps $t=0,t=1$ for any $i,j<n$ we set 
\begin{equation}\label{88djeu6453}
    \begin{gathered}
     E^{ij}_{\ddot{\mathcal{A}},0}(a,b) \iff E^{ij}_{\ddot{\mathcal{A}}}(a,b),\\
 E^{ij}_{\ddot{\mathcal{A}},1}(a,b) \iff a\in D_i^{(\bot)}\wedge b\in D_j^{(\bot)}\wedge E^{ij}_{\ddot{\mathcal{A}}}(a,b).
    \end{gathered}
  \end{equation}

For any further step $t>1$ we will propagate constraints recursively until we cannot change any domain further (i.e. until the instance is $1$-consistent) or some domain is empty. For any $i,j<n$ we set: 
\begin{equation}\label{dhhjsyu65d}
\begin{gathered}
 \hspace {0pt}E^{ij}_{\ddot{\mathcal{A}},t}(a,b) \iff \big[(\forall p,r<n,\exists e,f<l,\, E_{\mathcal{X}}(p,r)\rightarrow E^{pr}_{\ddot{\mathcal{A}},t-1}(e,f)) \wedge \\
 \hspace {0pt}\wedge E^{ij}_{\ddot{\mathcal{A}},t-1}(a,b)\wedge \forall q<n,\,E_{\mathcal{X}}(i,q) \rightarrow \exists d \in D^{(\bot)}_{q},\,E^{iq}_{\ddot{\mathcal{A}},t-1}(a,d)\wedge\\
 \hspace {0pt}\hspace {0pt}\wedge\forall k<n, \, E_{\mathcal{X}}(k,i) \rightarrow \exists c \in D^{(\bot)}_{k},\,\, E^{ki}_{\ddot{\mathcal{A}},t-1}(c,a)\wedge\\
 \hspace{0pt}\wedge\forall q<n,\,E_{\mathcal{X}}(j,q) \rightarrow \exists d \in D^{(\bot)}_{q},\,E^{jq}_{\ddot{\mathcal{A}},t-1}(b,d)\wedge\\
 \hspace {0pt}\hspace {0pt}\wedge\forall k<n, \, E_{\mathcal{X}}(k,j) \rightarrow \exists c \in D^{(\bot)}_{k},\,\, E^{kj}_{\ddot{\mathcal{A}},t-1}(c,b)\big] \vee\\
 \hspace {0pt}\vee \big[(\exists p,r<n,\forall e,f<l,\,E_{\mathcal{X}}(p,r)\wedge \neg E^{pr}_{\ddot{\mathcal{A}},t-1}(e,f)) \wedge E^{ij}_{\ddot{\mathcal{A}},t-1}(a,b)\big].
\end{gathered}
\end{equation}
The expression in the first square brackets holds when neither of the relations at the previous step is empty, and the expression in the second square brackets holds otherwise (it would mean that some domain of the instance at this step is already empty). In both cases after some step $t$ the relation set $E^{ij}_{\ddot{\mathcal{A}},t-1}$ stops changing. The maximal number of edges in a directed graph with loops on $n$ vertices is $n^2$. Therefore, the maximal number of edges in the instance $\Theta$ is $n^2l^2$ and since at each step we reduce some relation at least by one edge, it is enough to consider at most $n^2l^2$ steps. Moreover, since we remove an edge if at least one of its endpoints $a\in D^{(\bot)}_i$ violates $1$-consistency (so within one step we remove all edges in $E^{ij}_{\ddot{\mathcal{A}},t-1}$ connected with $a$ for all $j<n$), the actual number of steps is $nl$ (the number of elements). The existence of this set is ensured by $\Sigma^{1,b}_1$-induction: consider the formula 
\begin{equation}
  \begin{gathered}
     \phi(t)=\exists E_{\ddot{\mathcal{A}}}<\langle t, \langle\langle n,l \rangle, \langle n,l \rangle\rangle\rangle,\, \forall i,j<n,\forall a,b<l,\,E^{ij}_{\ddot{\mathcal{A}},1}(a,b)\leftrightarrow (\ref{88djeu6453})\wedge\\
     \hspace {0pt}\wedge \forall 1<p<t,\forall i,j<n,\forall a,b<l,\,E^{ij}_{\ddot{\mathcal{A}},p}(a,b)\leftrightarrow (\ref{dhhjsyu65d}).
  \end{gathered}
\end{equation}
Since (\ref{dhhjsyu65d}) is a $\Sigma^{1,b}_0$-formula, to provide the implication $\phi(t)\rightarrow \phi(t+1)$ we can use comprehension axiom $\Sigma^{1,b}_0$-COMP.

Note that in (\ref{dhhjsyu65d}) we do not need to track the domain's changes separately (they are all recorded in the relations $E^{ij}_{\ddot{\mathcal{A}},t}$). We will proceed with recursive propagation of the domain set $V_{\ddot{\mathcal{A}}}$ after this procedure based on the resulting relation set. For any $i<n$, for steps $t=0,t=1$ we set 
\begin{equation}
  \begin{gathered}
   V_{\ddot{\mathcal{A}},0}(i,a)\iff D_i(a), \\
     V_{\ddot{\mathcal{A}},1}(i,a)\iff D_i^{(\bot)}(a),
  \end{gathered}
\end{equation}
and for all $1<t<nl$
\begin{equation}\label{ssskiia8a6yu}
    \begin{gathered}
     V_{\ddot{\mathcal{A}},t}(i,a)\iff V_{\ddot{\mathcal{A}}^,t-1}(i,a) \wedge (\forall j<n,\,E_{\mathcal{X}}(i,j) \rightarrow (\exists b<l,\, V_{\ddot{\mathcal{A}}^,t-1}(j,b)\wedge\\
 \hspace {0pt}\wedge E^{ij}_{\ddot{\mathcal{A}},t}(a,b)))\wedge(\forall k<n, \, E_{\mathcal{X}}(k,i) \rightarrow (\exists c<l,\,V_{\ddot{\mathcal{A}},t-1}(k,c)\wedge \\
 \hspace {0pt}\wedge E^{ki}_{\ddot{\mathcal{A}},t}(c,a))).
    \end{gathered}
\end{equation}
Again, this set exists due to $\Sigma^{1,b}_1$-induction. Note that in (\ref{ssskiia8a6yu}) for the step $t<nl$ we use $E_{\ddot{\mathcal{A}},t-1}$ and not $E_{\ddot{\mathcal{A}},nl}$: we need recursive changing of the domains for further reconstruction of the tree-formulas $\Upsilon_i$. We also define a set $C^{list}_i$ that for any step $0<t<nl$ collect elements that were deleted from $V_{\ddot{\mathcal{A}},t,i}$: 
\begin{equation}
  \begin{gathered}
     \forall 0<t<nl,\forall a<l,\, C^{list}_i(t,a)\iff V_{\ddot{\mathcal{A}},t-1}(i,a)\wedge \neg V_{\ddot{\mathcal{A}},t}(i,a).
  \end{gathered}
\end{equation}

Further, we need to construct tree-instances $\Upsilon_i$. We want them to be coverings, so for each $i$ we only need to define an instance graph $\mathcal{X}_i$ and remember parents for renamed variables. To do it, we again use recursion. For each $i<n$, we start with an instance $\Upsilon_{i,0}$ with a domain set $D$ and with an empty set of constraints. Then for further steps $u$ we:
\begin{itemize}
  \item either do nothing with instance $\Upsilon_{i,u}$ - if for any $k<n$ and for some $t$ constraints $E^{ik}_{\ddot{\mathcal{A}},t}$ and $E^{ki}_{\ddot{\mathcal{A}},t}$ that violate $1$-consistency do not imply stronger restriction of a domain $D_i$, 
  \item or we need to consider a union of two CSP instances (that corresponds to the intersection of their constraints) for every constraint $E^{ij}_{\ddot{\mathcal{A}},t}$ (or $E^{ji}_{\ddot{\mathcal{A}}^,t}$) restricting domain $D_i$, namely 
$$\Upsilon_{i,u}:= E^{ij}_{\ddot{\mathcal{A}}}\wedge \Upsilon_{i,u-1}\wedge \Upsilon_{j,u-1}.
$$
\end{itemize}
Note that while our next move depends on reduced by the step constraint $E^{ik}_{\ddot{\mathcal{A}},t}$, to the instance $\Upsilon_{i,u}$ we add original constraint $E^{ik}_{\ddot{\mathcal{A}}}$. After we add enough such constraints and previous instances, the reduction of the resulted instance $\Upsilon_{i,u}$ to $D^{(\bot)}$ will give us the same projection to the coordinate $D_{i}$ as it gives (\ref{ssskiia8a6yu}). 

Also note that if two constraints at step $t$ reduce domain $D_i$ by the same values, we do not need to and we will not construct both intersections. Recall that when joining any two $\Upsilon_{i,t-1}, \Upsilon_{j,t-1}$ we have to rename all variables to retain the instances tree. Since we have at most binary relations and for any two variables there can be only two constraints containing them, namely $E^{ij}_{\ddot{\mathcal{A}}}$ and $E^{ji}_{\ddot{\mathcal{A}}}$, at the first step of the recursion process, we can add to each $\Upsilon_{i,1}$ at most $2n$ new vertices (if for every $j<n$ there are both $E_{\mathcal{X}}(i,j)$ and $E_{\mathcal{X}}(j,i)$). Then for every new constraint restricting at step $t$, we can at most double the number of variables of the largest instance of step $(t-1)$. Still, it will not make sense after the first $l$ intersections for every $\Upsilon_i$ since in this case we will get an empty domain set $D_i$ and thus justify the case $2$ of the theorem. Thus, even if we start with instances $\Upsilon_{i,1}$ on $2n$ variables, after $l$ intersections we will not need more than $2^l2n$ variables. 

First, for every $\Upsilon_{i,0}$, define $V_{\mathcal{X}_i,0}$ as a set of length $2^l2n$ that contains only one element $i$, and $E_{\mathcal{X}_i,0}$ as an empty set of length $\langle 2^l2n,2^l2n\rangle$. By $V_{\ddot{\mathcal{A}}_i,0}$ denote the set of length $\langle 2^l2n,l\rangle$, with only one non-empty domain $V_{{\ddot{\mathcal{A}}_i},0} = D_i$. By $E_{{\ddot{\mathcal{A}}_i},0}$ denote an empty set of length $\langle\langle 2^l2n,l\rangle,\langle 2^l2n,l\rangle \rangle$. 
\begin{remark}
  Strictly speaking, we are not allowed to use empty sets of some length. But we can bypass it by choosing a set, for example, for $V_{\mathcal{X}_i,0}$ with two elements, $i$, and that we will never properly use $2^l2n+1$. Further, we also consider the number function $max'(V_{\mathcal{X}_i,u})$ with the following value:
  \begin{equation}
    \begin{gathered}
       max'(V_{\mathcal{X}_i,u}) = m \iff m\neq 2^l2n+1\wedge \forall u\in V_{\mathcal{X}_i,u}, (u\neq 2^l2n+1\rightarrow u\leq m).
    \end{gathered}
  \end{equation}
\end{remark}
We construct $\Upsilon_0$,...,$\Upsilon_{n-1}$ simultaneously. The entire construction takes $0<u<2n(nl)$ steps. Each $\Upsilon_{i,u}$ consists of 
\begin{itemize}
\item set $V_{\mathcal{X}_i,u}$, representing the current number of vertices,
  \item set $E_{\mathcal{X}_i,u}$, representing the current set of edges,
  \item set $V_{{\ddot{\mathcal{A}}_i},u}$, representing domains for current variables,
  \item set $E_{{\ddot{\mathcal{A}}_i},u}$, representing constraint relations for current variables,
  \item and set $C^{erase}_{i,u}$ that keeps track of elements that must be deleted from the domain $D_i$ during each outer step $t$ and erases them one by one if we need to change the instance $\Upsilon_{i,u}$. 
\end{itemize}
We consider these sets in the above order. The description of the algorithm is as follows. For every $\Upsilon_i$, within any step $1<t<nl$ we run $2n$ internal steps. At the beginning of each new internal iteration, for some $2nt$ step, we check the list $C^{list}_{i,t+1}$ for elements that we will exclude from $D_i$ by adding new constraints to the instance during this internal iteration. We write them down to $C^{erase}_{i,2nt}$. For each step $u=2nt+j$ we consider the constraint $E^{ij}_{{\ddot{\mathcal{A}}},t+1}$ (for step $2nt+j+1$ we consider the opposite constraint $E^{ji}_{{\ddot{\mathcal{A}}},t+1}$) and decide whether it kills any of the elements from $C^{erase}_{i,2nt+j}$. 
For any $a\in C^{erase}_{i,2nt+j}$ it happens when there exists at least one edge $(a,b)\in E^{ij}_{{\ddot{\mathcal{A}}},t}$ and no edges connected with $a$ in $E^{ij}_{{\ddot{\mathcal{A}}},t+1}$:
\begin{equation}
  \exists a\in C^{erase}_{i,2nt+j} (\exists b<l,\,E^{ij}_{\ddot{\mathcal{A}},t}(a,b) \wedge \forall b<l,\,\neg E^{ij}_{\ddot{\mathcal{A}},t+1}(a,b)) .\,\,\text{[formula (\ref{dhhjsyu65d})]}
\end{equation}
If it is the case, we define our instance $\Upsilon_{i,u+1}$ as follows: we first replicate instance $\Upsilon_{i,u}$ and then add all vertices of $\Upsilon_{j,u}$ to part from $max(V_{\mathcal{X}_i,u})+1$ as well as all edges of $\Upsilon_{j,u}$, and add an edge $E_{\mathcal{X}_i,u+1}(i,max(V_{\mathcal{X}_i,u})+1+j)$.
\begin{equation}
  \begin{gathered}
     \forall k<max'(V_{\mathcal{X}_i,u})+1,\,V_{\mathcal{X}_i,u+1}(k)\iff V_{\mathcal{X}_i,u}(k),\\
     \forall k_1,k_2<max'(V_{\mathcal{X}_i,u})+1,\,E_{\mathcal{X}_i,u+1}(k_1,k_2)\iff E_{\mathcal{X}_i,u}(k_1,k_2),\\
     \forall max'(V_{\mathcal{X}_i,u})<k< 2^l2n+1,\,V_{\mathcal{X}_i,u+1}(max'(V_{\mathcal{X}_i,u})+1+k)\iff V_{\mathcal{X}_j,u}(k), \\
      \forall max'(V_{\mathcal{X}_i,u})<k_1,k_2< 2^l2n+1,\\
     \hspace {0pt}E_{\mathcal{X}_i,u+1}(max'(V_{\mathcal{X}_i,u})+1+k_1,max'(V_{\mathcal{X}_i,u})+1+k_2)\iff E_{\mathcal{X}_j,u}(k_1,k_2),\\
     E_{\mathcal{X}_i,u+1}(i,max'(V_{\mathcal{X}_i,u})+1+j).
  \end{gathered}
\end{equation}
In the same way, we define a target digraph $\ddot{\mathcal{A}}_{i,u+1}$ by adding new domains for new variables (from the list $\{D_0,...,D_{n-1}\}$) and $E^{ij}_{\ddot{\mathcal{A}}}$ as a constraint for the new edge $E_{\mathcal{X}_i,u+1}(i,$ $max(V_{\mathcal{X}_i,u})+1+j)$. 

If this is not the case, we just replicate instance $\Upsilon_{i,u}$ to $\Upsilon_{i,u+1}$. Finally, we either replicate the set $C^{erase}_{i,u}$ or change it to $C^{erase}_{i,u+1}$ as follows: 
\begin{equation}
  \begin{gathered}
     \forall a<l,\,C^{erase}_{i,(2nt+j)+1}(a)\iff C^{erase}_{i,2nt+j}(a)\wedge (\exists b<l,\, E^{ij}_{\ddot{\mathcal{A}},t+1}(a,b)).
  \end{gathered}
\end{equation}
Thus, after we pass constraint $E^{ji}_{{\ddot{\mathcal{A}}},t+1}$ we leave in $C^{erase}_{i,(2nt+j)+1}$ those elements that will be deleted in $V_{{\ddot{\mathcal{A}}},t+1}$ but because of another constraint that will lose all edges adjacent to them. We keep track of already deleted elements for the outer step $t$ not to intersect the instances with constraints that kill the same set of vertices – because we want to stop after intersection $l$ with an empty domain. 
 
At each step, sets $V_{\mathcal{X}_i,u}$, $V_{{\ddot{\mathcal{A}}_i},u}$,
$E_{\mathcal{X}_i,u}$, $E_{{\ddot{\mathcal{A}}_i},u}$ and $C^{erase}_{i,u}$ address themselves and each other in previous steps. They also address different levels of the already defined set $C^{list}_i(t,a)$ based on $V_{\ddot{\mathcal{A}},t}$ and $E^{ij}_{\ddot{\mathcal{A}},t}$. The existence of them is given by $\Sigma^{1,b}_1$-induction. At some point, we stop with tree-instances $\Upsilon_i$, each of them defining $D^{(s+1)}_i$ on $D^{(\bot)}$.
\end{proof}

The next three theorems follow from Lemma \ref{;allskdleijf5} and some previous results, formalized in $W^1_1$.

\begin{theorem}[Theorem 9.2, \cite{zhuk2020proof}]
  Suppose $D^{(0)},D^{(1)},...,D^{(s)}$ is a strategy for a cycle-consistent CSP instance $\Theta$. Then $W^1_1$ proves that:
  \begin{enumerate}
    \item if $D^{(s)}_x$ has a non-trivial binary absorbing subuniverse $B$ then there exists a $1$-consistent absorbing reduction $D^{(s+1)}$ of $\Theta^{(s)}$ with $D_x^{(s+1)}\subseteq B$;
    \item if $D^{(s)}_x$ has a non-trivial central subuniverse $C$ then there exists a $1$-consistent central reduction $D^{(s+1)}$ of $\Theta^{(s)}$ with $D_x^{(s+1)}\subseteq B$;
    \item if $D^{(s)}$ has no non-trivial binary absorbing or central subuniverse for every $y$ but there exists a non-trivial PC subuniverse $B$ in $D^{(s)}_x$ for some $x$,
    then there exists a $1$-consistent PC reduction $D^{(s+1)}$ of $\Theta^{(s)}$ with $D_x^{(s+1)}\subseteq B$.
  \end{enumerate}
\end{theorem}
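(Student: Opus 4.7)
The plan is to derive all three conclusions from a single application scheme of Lemma \ref{;allskdleijf5}. In each case I would build an auxiliary reduction $D^{(\bot)}$ of $\Theta^{(s)}$ by setting $D^{(\bot)}_x$ equal to $B$ (case 1 and 3) or $C$ (case 2), and $D^{(\bot)}_y=D^{(s)}_y$ for all $y\neq x$. By idempotency of $\Omega$ and the fact that every subuniverse is a trivial binary absorbing, central and PC subuniverse of itself, this $D^{(\bot)}$ is an absorbing, respectively central, respectively PC reduction of $\Theta^{(s)}$. Applying Lemma \ref{;allskdleijf5} to $\Theta^{(s)}$ and $D^{(\bot)}$ yields a dichotomy: either a maximal $1$-consistent reduction $D^{(s+1)}\subseteq D^{(\bot)}$ described by tree-formulas $\Upsilon_y\in Coverings(\Theta^{(s)})$ with $\Upsilon_y^{(\bot)}(y)=D_y^{(s+1)}$, or else a tree-formula $\Upsilon\in Coverings(\Theta^{(s)})$ whose reduction $\Upsilon^{(\bot)}$ has no solutions.

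I would rule out the second horn in each case by exploiting subdirectness of tree coverings. Since $\Theta^{(s)}$ is cycle-consistent and hence $1$-consistent, and tree-formulas inherit global consistency from $1$-consistency, $\mathscr{R}_{\Upsilon}$ is subdirect on the domain set of $\Theta^{(s)}$. For case 1, Lemma 7.5 (stated in the excerpt right after the closure-operator material) then yields $\mathscr{R}_{\Upsilon^{(\bot)}}\neq\emptyset$, a contradiction. For case 2 I would apply the central analogue: by Corollary \ref{66666666666666}, $(C_0\times\cdots\times C_{n-1})\cap\mathscr{R}_{\Upsilon}$ is a central subuniverse of $\mathscr{R}_{\Upsilon}$, and by an argument parallel to the proof of Lemma 7.5 (iterated composition using the center term), this intersection is non-empty, contradicting $\mathscr{R}_{\Upsilon^{(\bot)}}=\emptyset$. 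For case 3 I would combine Lemma \ref{lalaksjdf} (the PC analogue) with the hypothesis that no non-trivial binary absorbing or central subuniverses exist on any $D^{(s)}_y$, which puts us in the setting of Lemma 8.2 and hence forces the PC reduction inside the subdirect $\mathscr{R}_{\Upsilon}$ to be non-empty.

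Having dismissed the second horn, I would verify in the first horn that $D^{(s+1)}$ has the required type. Since $\Upsilon_y$ covers $\Theta^{(s)}$, the solution set $\mathscr{R}_{\Upsilon_y}$ is subdirect on the domains of $\Theta^{(s)}$, and $D_y^{(s+1)}=pr_y((D^{(\bot)}_0\times\cdots\times D^{(\bot)}_{n-1})\cap\mathscr{R}_{\Upsilon_y})$. Corollary \ref{akjshdutjgjf} then gives that $D_y^{(s+1)}$ is a binary absorbing subuniverse of $D_y^{(s)}$ with the same term $T$ as the original absorption of $B$; Corollary \ref{akjshdgfuyfgwef} gives the analogous conclusion for central subuniverses in case 2; Lemma \ref{alalsokd7ju} gives the PC case. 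The inclusion $D_x^{(s+1)}\subseteq D_x^{(\bot)}=B$ (resp. $C$) is immediate from $D^{(s+1)}\subseteq D^{(\bot)}$, and $1$-consistency of $D^{(s+1)}$ is built into the statement of Lemma \ref{;allskdleijf5}.

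The hardest part will be the PC case, for two reasons. First, to apply Lemma 8.2 one must secure its hypothesis that $\Theta^{(s)}(y)$ defines $D_y^{(s)}$ for every variable $y$ occurring at least twice; this is not automatic from $1$-consistency and requires invoking Lemma \ref{a''a;sdlekrygh} together with an inductive argument along the strategy $D^{(0)},\ldots,D^{(s)}$ to preserve the projection property. Second, Lemma \ref{alalsokd7ju} lives genuinely in $W^1_1$ because it manipulates extended PC congruences on a third-order product, so the projection computation $pr_y((D^{(\bot)}_0\times\cdots\times D^{(\bot)}_{n-1})\cap\mathscr{R}_{\Upsilon_y})$ must be performed using $\Sigma^{\mathscr{B}}_0$-3COMP rather than by second-order comprehension. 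Cases 1 and 2 should in contrast go through in $V^1$ extended by the corollaries above, and the uniformity of Lemma \ref{;allskdleijf5} (itself proved by $\Sigma^{1,b}_1$-induction) ensures the three cases can be packaged as a single $W^1_1$-proof.
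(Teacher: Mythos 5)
Your overall strategy is the one the paper intends: the paper gives no proof of Theorem 9.2 beyond the remark that Theorems 9.2--9.4 ``follow from Lemma \ref{;allskdleijf5} and some previous results,'' and your cases (1) and (2) fill this in correctly --- build $D^{(\bot)}$ from the given subuniverse, rule out the second horn of Lemma \ref{;allskdleijf5} because a tree-formula covering of a cycle-consistent instance has a subdirect solution set and Lemma 7.5 (resp.\ its ternary-absorption analogue, since a central subuniverse is a ternary absorbing subuniverse) forbids the reduced solution set from being empty, and then read off the type of each $D^{(s+1)}_y$ from Corollaries \ref{akjshdutjgjf} and \ref{akjshdgfuyfgwef}.

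The PC case, however, has a concrete gap in the step that rules out the second horn. Lemma \ref{lalaksjdf} only concludes that $(E_0\times\cdots\times E_{n-1})\cap\mathscr{R}_{\Upsilon}$ \emph{is a PC subuniverse} of $\mathscr{R}_{\Upsilon}$, and by the paper's own definition a PC subuniverse is allowed to be empty; likewise Lemma 8.2 concludes only that $\Theta^{(1)}(z)$ is a PC subuniverse of $D_z$, again possibly empty (and its hypothesis that $\Theta(y)$ defines $D_y$ for every repeated variable is an extra obligation you would still have to discharge, not a conclusion you get for free). So neither lemma ``forces the PC reduction inside the subdirect $\mathscr{R}_{\Upsilon}$ to be non-empty.'' What is actually needed is a genuine PC analogue of Lemma 7.5: under the case-(3) hypothesis that no $D^{(s)}_y$ has a non-trivial binary absorbing or central subuniverse, a subdirect relation meets the product of non-empty PC subuniverses. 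In the development the paper is formalizing this comes from the structure theory of PC subuniverses --- Lemma \ref{alskjdiojkgfkj} together with the non-existence of $(C_0,\ldots,C_{n-1})$-essential subdirect relations for PC subuniverses (Corollary 7.13.3) and Lemma 7.19 --- not from Lemma 8.2. Once that ingredient is substituted, the rest of your case (3) (type verification via Lemma \ref{alalsokd7ju} and the third-order bookkeeping you correctly flag) goes through.
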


\begin{theorem}[Theorem 9.3, \cite{zhuk2020proof}]
  Suppose that $D^{(0)},D^{(1)},...,D^{(s)}$ is a strategy for a $1$-consistent CSP instance $\Theta$, and $D^{(\bot)}$ is a non-linear $1$-consistent reduction of $\Theta^{(s)}$. Then $W^1_1$ proves that there exists a $1$-consistent minimal reduction $D^{(s+1)}$ of $\Theta^{(s)}$ of the same type such that $D_i^{(s+1)}\subseteq D_i^{(\bot)}$ for every variable $i$.
\end{theorem}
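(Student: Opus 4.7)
The plan is to derive $D^{(s+1)}$ in two stages: first shrink $D^{(\bot)}$ to a minimal reduction of the appropriate type inside each domain, then restore $1$-consistency via Lemma~\ref{;allskdleijf5}. Let $\mathcal{T} \in \{\mathrm{BA}, \mathrm{CR}, \mathrm{PC}\}$ be the type of $D^{(\bot)}$; exactly one of these applies because $D^{(\bot)}$ is a non-linear reduction. For each variable $i$, I would use the $\mathbb{A}$-Monster set to select a minimal subuniverse $M_i \subseteq D^{(\bot)}_i$ of type $\mathcal{T}$. This is a $\Sigma^{1,b}_0$ definable choice because all subuniverses of $\mathbb{A}$ (and of each subalgebra) together with their types are tabulated in the monster set, and minimality reduces to a finite conjunction over that list. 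Let $D^{(\sharp)}$ denote this reduction; by construction it is of type $\mathcal{T}$, although possibly not $1$-consistent.

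Next, I would invoke Lemma~\ref{;allskdleijf5} applied to the $1$-consistent instance $\Theta^{(s)}$ with target reduction $D^{(\sharp)}$. By case~1 of that lemma, we obtain a maximal $1$-consistent reduction $D^{(s+1)} \subseteq D^{(\sharp)}$ whose domains are given by $D^{(s+1)}_x = \Upsilon^{(\sharp)}_x(x)$ for tree-formula coverings $\Upsilon_x \in Coverings(\Theta^{(s)})$. If instead case~2 of Lemma~\ref{;allskdleijf5} occurred, producing a covering $\Upsilon$ with $\Upsilon^{(\sharp)}$ unsolvable, we would set $D^{(s+1)}_i = \emptyset$ uniformly, which is still a (trivially minimal) $1$-consistent reduction of type $\mathcal{T}$.

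To verify that $D^{(s+1)}$ is of type $\mathcal{T}$, the argument would go as follows. Each $\Upsilon^{(\sharp)}_x$ is a tree-covering of the $1$-consistent $\Theta^{(s)}$ whose domain set is $D^{(\sharp)}$, so its solution set is subdirect on $D^{(\sharp)}$, and every $D^{(\sharp)}_i$ is a minimal subuniverse of type $\mathcal{T}$ of $D^{(s)}_i$. By Corollary~\ref{akjshdutjgjf} when $\mathcal{T}=\mathrm{BA}$, Corollary~\ref{akjshdgfuyfgwef} when $\mathcal{T}=\mathrm{CR}$, and Lemma~\ref{alalsokd7ju} together with Lemma~\ref{lalaksjdf} when $\mathcal{T}=\mathrm{PC}$, the projection $\Upsilon^{(\sharp)}_x(x)$ of this subdirect solution set onto the single coordinate $x$ is again a subuniverse of $D^{(s)}_x$ of type $\mathcal{T}$. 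Since $D^{(s+1)}_x \subseteq M_x$ and $M_x$ was chosen to be minimal of type $\mathcal{T}$, it follows that $D^{(s+1)}_x$ is either empty or equals $M_x$, so $D^{(s+1)}$ is a minimal reduction of type $\mathcal{T}$ contained in $D^{(\bot)}$, as required.

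The main obstacle will be the type-preservation step in the PC case: Lemma~\ref{alalsokd7ju} requires subdirectness and proceeds by constructing an auxiliary factorized instance $\Theta'_{PC}$ whose analysis via Lemmas~\ref{a;lsdkfjyr647ryv} and~\ref{fkjerfqlerfhuurfhlrf} needs the ambient quotient domains of the covering to be PC algebras with no non-trivial binary absorbing or central subuniverses. One must carry this hypothesis uniformly through the tree-formula construction of Lemma~\ref{;allskdleijf5}, which relabels and duplicates variables along the propagation; the fact that $D^{(\bot)}$ is by assumption a PC reduction supplies the condition on the original domains, and the one-of-four interaction results (Lemmas~\ref{a'a'65777yutd}--\ref{alskdjhfyr654}) propagate it to every subuniverse arising in the covering. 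All these pieces are provable in $W^1_1$ by the excerpt, so the full argument remains inside $W^1_1$; the inductive shrinkage (if any further iteration is needed after one application) is available via $\Sigma^{\mathscr{B}}_1$-IND since each step strictly decreases the total size $\sum_i |D^{(\cdot)}_i| \leq nl$.
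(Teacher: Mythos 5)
The paper itself gives no detailed argument for Theorem 9.3 --- it only asserts that the statement ``follows from Lemma~\ref{;allskdleijf5} and some previous results'' --- and your two-stage plan (shrink to minimal type-$\mathcal{T}$ subuniverses via the $\mathbb{A}$-Monster set, then restore $1$-consistency by the tree-formula propagation of Lemma~\ref{;allskdleijf5}, then re-identify the type of each propagated domain via the projection results) is the route the paper is gesturing at. The type-preservation step you describe, including the PC subtlety, is essentially Lemmas~\ref{a''a;sdlekrygh} and 8.2 of the paper, so that part is sound.

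The genuine gap is your treatment of case~2 of Lemma~\ref{;allskdleijf5}. You write that if propagation inside $D^{(\sharp)}$ fails you ``set $D^{(s+1)}_i=\emptyset$ uniformly, which is still a (trivially minimal) $1$-consistent reduction of type $\mathcal{T}$.'' This cannot be right: the paper defines a minimal subuniverse as \emph{non-trivial} and inclusion-minimal, and, more importantly, Theorem~9.3 feeds into Theorem~\ref{'aa's;s;;dldkfjf} (``if $\Theta$ has a solution, it has one in $D^{(1)}$''), which is false for an empty reduction of a satisfiable instance; the entire soundness chain for the strategy $D^{(0)},\dots,D^{(s+1)}$ would collapse. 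So the substantive content of Theorem~9.3 is precisely that the emptiness case cannot occur, and your proof must establish this rather than absorb it. Two ways to close the gap: (i) show that with $M_i$ chosen minimal of type $\mathcal{T}$, each propagation step of Lemma~\ref{;allskdleijf5} projects a (subdirect, for the tree-formula covering) solution set to a \emph{non-empty} type-$\mathcal{T}$ subuniverse of $D^{(s)}_i$ contained in $M_i$ --- non-emptiness coming from Lemma~7.5 for absorption and its central/PC analogues (the ``no $(C_0,\dots,C_{n-1})$-essential relation'' lemmas) --- whence by minimality it equals $M_i$ and $D^{(\sharp)}$ is already $1$-consistent; or (ii) follow Zhuk's original route: take an inclusion-minimal $1$-consistent type-$\mathcal{T}$ reduction contained in $D^{(\bot)}$ (the family is non-empty since $D^{(\bot)}$ belongs to it, and finite), and show each of its domains is a minimal type-$\mathcal{T}$ subuniverse by contradiction, using Theorem~9.2 and the transitivity of one-of-four subuniverses (Theorem~\ref{alskdjhfyr654}) to produce a strictly smaller member of the family otherwise. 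Either repair stays within $W^1_1$, but without one of them the proposal does not prove the theorem.
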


\begin{theorem}[Theorem 9.4, \cite{zhuk2020proof}]
  Suppose $D^{(\bot)}$ is a $1$-consistent PC reduction for a cycle-consistent irreducible
CSP instance $\Theta$, $\Theta$ is not linked and not fragmented. Then $W^1_1$ proves that there exist a reduction $D^{(1)}$ of $\Theta$ and a minimal strategy $D^{(1)},...,D^{(s)}$ for $\Theta^{(1)}$ such that the solution set to $\Theta^{(1)}$ is subdirect, the reductions $D^{(2)},...,D^{(s)}$ are non-linear, $D^{(s)}_x\subseteq D^{(\bot)}_x$ for every variable $x$. 
\end{theorem}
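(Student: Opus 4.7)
The plan is to build $D^{(1)}$ as a linked component of $\Theta$ that intersects $D^{(\bot)}$ in every coordinate, show its solution set is subdirect, intersect with $D^{(\bot)}$ to obtain a non-linear (in fact PC) sub-reduction by Lemma~\ref{a'a'65777yutd}, and then invoke Theorem~9.3 to extend this to a full minimal non-linear strategy landing inside $D^{(\bot)}$.

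First I would extract the $Linked$ congruence explicitly. The defining formula (\ref{JJKTRE6}) is $\Sigma^{1,b}_1$ and $Linked(a,b,i,i,\Theta)$ was shown in \cite{gaysin2023proof} to be a congruence on $D_i$; applying $\Sigma^{1,b}_1$-COMP in $V^1 \subseteq W^1_1$ yields an explicit set $\delta_i \subseteq D_i \times D_i$ for each $i$. Non-fragmentation of $\Theta$ forces the $\delta_i$ to be compatible across path-connected variables, while the failure of linkedness makes at least one $\delta_i$ proper. I would then pick a witness $a \in D^{(\bot)}_0$, which exists because $D^{(\bot)}$ is a $1$-consistent reduction of $\Theta$ with each $D^{(\bot)}_j \neq \emptyset$, and set $D^{(1)} := linkcomp(\Theta,D_0,a)$. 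This is a $\Sigma^{1,b}_1$-definable reduction; the corresponding instance $\Theta^{(1)}$ is $1$-consistent (paths closing on $a$ give projection witnesses) and is linked by construction. The subdirectness of the solution set to $\Theta^{(1)}$ I would derive from the irreducibility of $\Theta$: the sub-reduction $\Theta^{(1)}$ is a candidate in the sense of the definition $IRDInst$, and since it is neither fragmented (it inherits the connectivity from $\Theta$) nor "non-linked" (by the choice of $a$), irreducibility forces it to have subdirect solution set.

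Having $\Theta^{(1)}$ with subdirect solution set, I would apply Lemma~\ref{a'a'65777yutd} coordinatewise to conclude that $D^{(1)} \cap D^{(\bot)}$ is a PC, hence non-linear, subuniverse of $D^{(1)}$ in every coordinate. Because $a \in D^{(1)}_0 \cap D^{(\bot)}_0$ and the linked component was chosen through $a$, the intersection remains non-empty in every coordinate and is $1$-consistent. Applying Theorem~9.3 to $\Theta^{(1)}$ with the target reduction $D^{(1)} \cap D^{(\bot)}$ then produces a minimal strategy $D^{(2)},\ldots,D^{(s)}$ of non-linear reductions satisfying $D^{(s)}_x \subseteq D^{(1)}_x \cap D^{(\bot)}_x \subseteq D^{(\bot)}_x$ for every variable $x$, which is exactly the conclusion.

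The main obstacle I expect is the subdirectness step. The natural argument via $IRDInst$ compares a $\Sigma^{1,b}_1$ property (existence of solutions with prescribed values) against a $\Pi^{1,b}_2$ one (the definition of $IRDInst$ quantifies universally over sub-reductions and existentially over paths). To make this fit inside $W^1_1$ I would reformulate subdirectness of $\Theta^{(1)}$ as a statement about the class $\mathscr{R}_{\Theta^{(1)}}$ using $\Sigma^{\mathscr{B}}_0$-3COMP, and translate "$b$ is linked to $a$ through $\Theta$" into an explicit global homomorphism by a $\Sigma^{\mathscr{B}}_1$-induction on path length, gluing partial assignments along the path using $\omega$ and the compatibility of $Linked$ with $\Omega$ already proved in \cite{gaysin2023proof}. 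A secondary concern is the string-valued definition of $linkcomp$ (which depends on the $\Sigma^{1,b}_1$ formula $Linked$) and the propagation from Lemma~\ref{;allskdleijf5} being invoked iteratively inside the minimal strategy construction of Theorem~9.3; both are already formalized in $W^1_1$, so this should be routine once the subdirectness is in hand.
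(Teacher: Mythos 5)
Your overall architecture (linked component through a point of $D^{(\bot)}$, intersect with $D^{(\bot)}$, then iterate the one-step extension theorem) is the right shape, and the paper itself gives essentially no proof here — it only remarks that Theorems 9.2--9.4 "follow from Lemma \ref{;allskdleijf5} and some previous results". But your subdirectness step contains a genuine logical error. The definition of $IRDInst$ quantifies over sub-instances $\Theta'$ with $X'\subseteq X$ and $D'_x=D_x$ — the \emph{same} domains — so the reduced instance $\Theta^{(1)}$, whose domains are proper subsets $D^{(1)}_x\subsetneq D_x$, is not a candidate in that definition at all. Worse, even if it were, the disjunction reads "fragmented, \emph{or} linked, \emph{or} subdirect": showing that $\Theta^{(1)}$ \emph{is} linked satisfies the disjunction outright and yields no information about subdirectness. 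You have inverted the direction of the argument — linkedness is an \emph{alternative} to subdirectness in the definition, not a route to it.

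The correct argument applies irreducibility to $\Theta$ itself: $\Theta$ (with $X'=X$, all constraints, unreduced domains) is a legitimate instance of the quantifier, and by hypothesis it is neither fragmented nor linked, so its solution set must be subdirect. Then observe that any solution $H$ with $H(x)=a$ for $a\in D^{(1)}_x$ maps every other variable into the linked component of $a$, i.e.\ into $D^{(1)}$; hence every witness to subdirectness of $\mathscr{R}_\Theta$ at a point of $D^{(1)}$ is already a solution of $\Theta^{(1)}$, which gives subdirectness of $\mathscr{R}_{\Theta^{(1)}}$. Two smaller points: Lemma \ref{a'a'65777yutd} covers binary absorbing, central, and linear subuniverses but \emph{not} PC — for the PC case you need Theorem \ref{alskdjhfyr654} (or Lemma \ref{lalaksjdf}); and Theorem 9.3 produces a single next minimal reduction, so building the full strategy $D^{(2)},\dots,D^{(s)}$ requires re-establishing at each step that the remaining intersection with $D^{(\bot)}$ is still a non-linear $1$-consistent reduction of the current instance, which is where Lemma 7.31 and the interaction lemmas enter.
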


\subsubsection{Main theorems proved by induction}
In this section, we consider the main five theorems, proved simultaneously by induction on the size of the domain set (to be defined further). We will not consider the formalization of their proofs, since it is based on the formalization of previous results. However, some reasoning from the proofs is used for the formalization of the theorems.

\begin{remark}
We will use the same notation $D^{(s)}$ for the reductions of the initial instance, its subinstances, subconstraints, differences, unions, and both coverings and expanded coverings to avoid unnecessary indices. These, of course, cannot be the same sets of domains, but once given $D^{(s)}$ for an instance $\Theta_{\mathcal{X}}$ we can easily construct a similar reduction for any of these objects, denoted by $\Theta_{\mathcal{Y}}$, under the simple rule
$$\forall x_i\forall y_j,\,\, D_{x_i}=D_{y_j}\implies D^{(s)}_{x_i}=D^{(s)}_{y_j}.
$$
This is well-defined since we can additionally require in the reduction $D^{(s)}$ of instance $\Theta_{\mathcal{X}}$ that equal domains be reduced to equal domains (see \ref{ajdkjfhuei4}). In a minimal $1$-consistent one-of-four reduction, every $D^{(s)}_{x_i}$ must be minimal by inclusion.
\end{remark}

\begin{theorem}[Theorem 9.5, \cite{zhuk2020proof}]\label{'a;sdl6457ytr}
Suppose $D^{(1)}$ is a minimal $1$-consistent one-of-four reduction of a cycle-consistent irreducible CSP instance $\Theta$, $\Lambda(x_0,...,x_{n-1})$ is a subconstraint of $\Theta$, the solution set to $\Lambda^{(1)}$ is subdirect, $\Theta\backslash\Lambda$ has a solution in $D^{(1)}$, and $\Theta$ has no solutions in $D^{(1)}$. Then $W_1^1$ proves that there exist instances $\Upsilon_1,...,\Upsilon_t\in Coverings(\Lambda)$ such that $\Phi = (\Theta\backslash\Lambda)\cup\Upsilon_1\cup...\cup \Upsilon_{t}$ has no solutions in $D^{(1)}$, each $\Upsilon_i(x_0,...,x_{n-1})$ is a subconstraint of $\Phi$, and $\Upsilon^{(1)}_i(x_0,...,x_{n-1})$ defines a subdirect key relation with the parallelogram property for every $i$.
\end{theorem}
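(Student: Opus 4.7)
The plan is to construct the coverings $\Upsilon_1,\ldots,\Upsilon_t$ greedily, each $\Upsilon_i$ eliminating at least one ``bad'' assignment on the common variables $x_0,\ldots,x_{n-1}$. Set $\Phi_0 = \Theta\backslash\Lambda$: by hypothesis $\Phi_0^{(1)}$ has a solution but $\Theta^{(1)} = \Phi_0^{(1)} \cup \Lambda^{(1)}$ has none, so every solution $H$ of $\Phi_0^{(1)}$ in $D^{(1)}$ must project to a tuple $\bar{a} = (H(x_0),\ldots,H(x_{n-1}))$ outside $\mathscr{R}_{\Lambda^{(1)}}(x_0,\ldots,x_{n-1})$. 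Given any such $\bar{a}$, I would apply Corollary~\ref{9899889((())))} to obtain $\Upsilon \in Coverings(\Lambda)$ such that $\Upsilon^{(1)}(x_0,\ldots,x_{n-1})$ is an inclusion-maximal member of $\Sigma_\Lambda := \{\Upsilon'{}^{(1)}(x_0,\ldots,x_{n-1}) : \Upsilon' \in Coverings(\Lambda)\}$ omitting $\bar{a}$; by the corollary, $\bar{a}$ is then a key tuple for $\Upsilon^{(1)}$, so $\Upsilon^{(1)}(x_0,\ldots,x_{n-1})$ is a key relation. Subdirectness of $\Upsilon^{(1)}(x_0,\ldots,x_{n-1})$ is inherited from $\mathscr{R}_{\Lambda^{(1)}}(x_0,\ldots,x_{n-1}) \subseteq \Upsilon^{(1)}(x_0,\ldots,x_{n-1})$: the inclusion holds because every constant unary-vector function taking value in $\mathscr{R}_{\Lambda^{(1)}}$ lies in $UnPol^{\mathscr{R}_{\Lambda^{(1)}}}$, and Lemma~\ref{()()()()(  *^ } realizes $\{H^{\Psi} : \Psi \in UnPol^{\mathscr{R}_{\Lambda^{(1)}}}\}$ as a projection of a covering.

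The main obstacle is ensuring the parallelogram property. My plan is to strengthen the selection step: rather than merely inclusion-maximal, take $\Upsilon^{(1)}(x_0,\ldots,x_{n-1})$ maximal among \emph{parallelogram-closed} members of $\Sigma_\Lambda$ avoiding $\bar{a}$. The technical core is to verify separately that $\Sigma_\Lambda$ is closed under the parallelogram-closure operator $\mathscr{R} \mapsto \mathscr{R}^{\Box}$. Concretely, given $\Upsilon' \in Coverings(\Lambda)$ realizing $\mathscr{R}$, the closure $\mathscr{R}^{\Box}$ is realized by a new covering $\Upsilon''$ assembled by gluing three renamed copies of $\Upsilon'$ along the parallelogram pattern for each pair of coordinates — this corresponds to adding a $pp$-definable constraint, so the covering homomorphism to $\Lambda$ survives. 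Each closure round strictly enlarges $\mathscr{R}$, so stabilization takes at most a constant number of rounds depending only on $l$ and $n$; the existence of the resulting maximal $\mathscr{R}$ missing $\bar{a}$ then follows in $W_1^1$ by $\Sigma^{\mathscr{B}}_1$-MIN applied to the third-order family $\Sigma_\Lambda$ (the complement of a maximal element has minimal size).

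Finally, the outer iteration — at step $i$ choose any solution $H_{i-1}$ of $\Phi_{i-1}^{(1)}$, extract $\bar{a}_{i-1}$, select $\Upsilon_i$ as above, and set $\Phi_i = \Phi_{i-1}\cup\Upsilon_i$ — is formalized by $\Sigma^{\mathscr{B}}_1$-IND on $i$. The whole sequence $(\Phi_i)_{i \leq t}$ is stored as a single third-order object indexed by step; the loop invariant ``$\Phi_i^{(1)}$ still has a solution or $i = t$'' is a Boolean combination of $\Sigma^{1,b}_1$-formulas. Termination in at most $t$ steps, with $t$ bounded by the number of tuples in $D^{(1)}_{x_0}\times\cdots\times D^{(1)}_{x_{n-1}}$, is witnessed by the strictly decreasing projection $\mathscr{R}^{x_0,\ldots,x_{n-1}}_{\Phi_i^{(1)}}$ (since $\bar{a}_{i-1}$ is excluded from $\Upsilon_i^{(1)}(x_0,\ldots,x_{n-1})$ and hence from $\Phi_i^{(1)}$'s projection). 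Combining this bounded induction with the per-step constructions yields the three required properties of each $\Upsilon_i^{(1)}(x_0,\ldots,x_{n-1})$ and the unsatisfiability of $\Phi$ in $D^{(1)}$, entirely inside $W_1^1$.
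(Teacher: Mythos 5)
Your skeleton --- one covering per excluded tuple of $\mathscr{R}^{x_0,\ldots,x_{n-1}}_{\Theta\backslash\Lambda^{(1)}}$, inclusion-maximality, Corollary~\ref{9899889((())))} for the key-relation property, and subdirectness inherited from $\mathscr{R}^{x_0,\ldots,x_{n-1}}_{\Lambda^{(1)}}$ --- is essentially the paper's (the paper simply indexes all tuples at once by a class $\mathscr{H}$ rather than iterating greedily). The genuine gap is your treatment of the parallelogram property. First, Corollary~\ref{9899889((())))} yields the key-tuple property only for a relation that is inclusion-maximal among \emph{all} members of $\Sigma_\Lambda$ omitting $\bar a$; once you restrict the maximization to parallelogram-closed members, that corollary no longer applies and you lose the key-relation conclusion. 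Second, and more fundamentally, you never argue that the parallelogram closure of a covering-definable relation omitting $\bar a$ still omits $\bar a$ (equivalently, that a parallelogram-closed member of $\Sigma_\Lambda$ containing $\mathscr{R}^{x_0,\ldots,x_{n-1}}_{\Lambda^{(1)}}$ and avoiding $\bar a$ exists at all): a closure step that ``strictly enlarges $\mathscr{R}$'' may well swallow $\bar a$. This is exactly the hard step. The closure is realized by the triple substitution $\Upsilon_1\wedge\Upsilon_2\wedge\Upsilon_3$ of Theorem~\ref{########6}, and the fact that replacing $\Upsilon_i$ by it preserves unsatisfiability in $D^{(1)}$ (hence, by maximality, that $\Upsilon_i^{(1)}(x_0,\ldots,x_{n-1})$ was already parallelogram-closed) is supplied by Theorem~\ref{########6} applied through the simultaneous induction on $size(D)$ shared by Theorems 9.5--9.9. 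A self-contained ``closure operator with stabilization'' argument for Theorem 9.5 alone is not available, and your proposal never invokes the simultaneous induction that the paper's formalization is built around.

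A secondary but real issue is the representation of $\Phi_i$ and $\Phi$. With up to $l^n$ coverings, each carrying fresh private variables, $\Phi$ is an exponential-size instance; a solution of $\Phi_i^{(1)}$ assigns exponentially many variables and is itself a third-order object, so your loop invariant ``$\Phi_i^{(1)}$ still has a solution'' is not a Boolean combination of $\Sigma^{1,b}_1$-formulas. The paper explicitly confronts this: it never defines $\Phi$, but instead encodes only the family $\mathscr{Y}$ of coverings together with the per-tuple conditions $H_{[X]}\notin\mathscr{R}^{x_0,\ldots,x_{n-1}}_{\Upsilon^{(1)}_{[X]}}$ and the maximality clauses, which jointly entail the unsatisfiability of $\Phi$ in $D^{(1)}$ while keeping the whole statement $\Sigma^{\mathscr{B}}_1$. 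Your sequential construction would need to be recast in that form to live inside $W^1_1$.
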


The formalization of the theorem will be based on its proof. Since $\Lambda(x_0,...,x_{n-1})$ is a subconstraint of $\Theta$, it follows that $\Lambda$ is a subinstance of $\Theta$ that involves variables $x_0,...,$ $x_{n-1},$ $y_0,...,y_{k-1}$, and $\Theta$ as an instance on variables $x_0,...,x_{n-1},y_0,...,y_{k-1}, z_0,...,z_{s-1}$ such that $\Theta\backslash\Lambda$ involves variables $x_0,...,x_{n-1},z_0,...,z_{s-1}$. $\Upsilon_i(x_0,...,x_{n-1})$ here denotes all tuples $(a_0,...,a_{n-1})$ such that instance $\Upsilon_i$ has a solution with $x_0=a_0$,...,$x_{n-1}=a_{n-1}$. That is, it is a projection of the solution set to $\Upsilon_i$ onto coordinates $x_0,...,x_{n-1}$, which can be expressed by the formula 
  $$\exists y^i_0...\exists y_{m_i-1}^i\Upsilon_i(x_0,...,x_{n-1},y_0^i,...,y_{m_i-1}^i).
  $$
$\Upsilon^{(1)}_i(x_0,...,x_{n-1})$ thus expressed the projection of the solution set to the instance $\Upsilon_i^{(1)}$ after the reduction $D^{(1)}$. We can denote this projection using a third-order object $\mathscr{R}^{x_0,...,x_{n-1}}_{\Upsilon_i^{(1)}}$. Note that when we talk not about a solution set to an instance but about projection to the solution set, we add to the formula an additional second-sorted existential quantifier, see (\ref{akllsoei657}).

  Since both $\Lambda$ and $\Theta\backslash\Lambda$ have solutions in $D^{(1)}$, but $\Theta$ does not, it follows that $\Lambda^{(1)}(x_0,...,$ $x_{n-1})$ and $\Theta\backslash\Lambda^{(1)}(x_0,$ $...,x_{n-1})$ define relations $\mathscr{R}^{x_0,...,x_{n-1}}_{\Lambda^{(1)}}$ and $\mathscr{R}^{x_0,...,x_{n-1}}_{\Theta\backslash\Lambda^{(1)}}$ that do not intersect. Every solution to $\Lambda^{(1)}$ is a solution to any $\Upsilon^{(1)}$ from $Coverings(\Lambda)$. According to the proof of the theorem, for every tuple $H_i$ of the relation $\mathscr{R}^{x_0,...,x_{n-1}}_{\Theta\backslash\Lambda^{(1)}}$, we find an instance $\Upsilon^{(1)}_i$ such that the relation $\mathscr{R}^{x_0,...,x_{n-1}}_{\Upsilon^{(1)}_i}$ defined by $\Upsilon^{(1)}_i(x_0,...,x_{n-1})$ is an inclusion-maximal relation that contains $\mathscr{R}^{x_0,...,x_{n-1}}_{\Lambda^{(1)}}$ and does not contain $H_i$. Thus, $\Phi = (\Theta\backslash\Lambda)\cup\Upsilon_1\cup...\cup \Upsilon_{t}$ does not have solutions in $D^{(1)}$, but if we replace any $\Upsilon_i$ by a weaker instance $\Upsilon$ that produces a greater relation $\mathscr{R}^{x_0,...,x_{n-1}}_{\Upsilon^{(1)}}$, we get an instance with solution $H_i$. That is, the number of coverings $\Upsilon_1,...,\Upsilon_t$ is bounded by the number of tuples in $\mathscr{R}^{x_0,...,x_{n-1}}_{\Theta\backslash\Lambda^{(1)}}$, which is bounded by $l^n/2 - |\mathscr{R}^{x_0,...,x_{n-1}}_{\Lambda^{(1)}}|$. Note that we do not need to know the precise number of $\Upsilon_i$ to write the formula; some of them can be repeated as many times as necessary. So, we stick to the bound $l^n$, since it can be conveniently rewritten as $(2^n)^{log_2l}$. Then, following the reasoning of Lemma \ref{()()()()(  *^ }, we can roughly bound the number of variables in each $\Upsilon_i$ by $(n+k)+nl$ (we introduce a new variable $x^{a}_i$ for all $i\in\{0,...,n-1\}$ and $a<l$). Thus, every instance $\Upsilon_i$ can be bound by a unique number $b_{\Lambda} = instsize((n+k)+nl,l)$. It follows that we can encode the set of all $\Upsilon_0,...,\Upsilon_{(2^n)^{\ log_2l}}$ by one class $\mathscr{Y}$, where each $\Upsilon_i$ is encoded by a string $X$ of length at most $nv$, with $v=\lceil log_2l\rceil$, $\mathscr{Y}(X,\Upsilon)$. Then $\tilde{row}(X,\mathscr{Y},b_{\Lambda}) = \Upsilon$, which we denote as $\Upsilon_{[X]}$.

Due to the assumption, each $\Upsilon_{i}$ is a covering for $\Lambda$ on some set of variables $x^i_0,...,x^i_{n-1},$ $ y^i_0,...,y^i_{m_i-1}$ such that for all $j<n$, $x^i_j = x_j$. Therefore, there is a homomorphism $H$ from $\mathcal{X}_{\Upsilon_i}$ to $\mathcal{X}_{\Lambda}$ that sends $x^i_j$ to $x_j$. Each $\Upsilon_i$ is a subconstraint of $\Phi$, hence it has no common variables with $\Theta\backslash\Lambda$ and any $\Upsilon_j$ except for $x_0,...,x_{n-1}$. Recall that for the union of instances we have the function $uni$ well-defined by $\Sigma^{1,b}_0$-formula, as well as the function $dif$ for the difference. In the union of two instances, we add all variables of the second instance after all variables of the first instance, shift their labels, and add equality constraints between vertices with labels that were the same. The problem here is that when we join $\Upsilon_i$ to $(\Theta\backslash\Lambda)\cup\Upsilon_1\cup...\cup \Upsilon_{i-1}$ we rename all the vertices and since the number of $\Upsilon_i$ can be exponential, we have no space to represent $\Phi$ as a second-order object. We cannot represent $\Phi$ as a third-sorted object either (with vertices labeled by strings) since the statement that there is no solution to $\Phi$ in $D^{(1)}$ would be the $\Pi^{\mathscr{B}}_1$-formula. To avoid this problem, we will not define $\Phi$, but define the preconditions that lead to the situation where $\Phi$ does not have a solution in $D^{(1)}$. By Corollary \ref{9899889((())))} these preconditions also lead to each $\mathscr{R}^{x_0,...,x_{n-1}}_{\Upsilon_i^{(1)}}$ being a key relation (so we do not need to write it down explicitly in the formula). We order all projections to $x_0,...,x_{n-1}$ of solutions to an instance $\Theta\backslash\Lambda^{(1)}$ in one class $\mathscr{H}$, where each $H$ is encoded by a string $X$ of length at most $nv$, denoted $H_{[X]}$. That $H_{[X]}$ will correspond to $\Upsilon_{[X]}$ in the sense that $\mathscr{R}_{\Upsilon^{(1)}_{[X]}}$ is an inclusion-maximal relation that does not contain $H_{[X]}$. This is reflected in square brackets in the formula. 

  The function $redinst$ is definable by the $\Sigma^{1,b}_0$-formula and returns the reduction of an instance on $D^{(1)}$. Thus, 
  \begin{gather*}
\Theta^{(1)}=redinst(\Theta,D^{(1)})\\
    \Theta\backslash\Lambda^{(1)}=redinst(\Theta\backslash\Lambda,D^{(1)})\\
 \Upsilon_i^{(1)}=redinst(\Upsilon_i, D^{(1)}).
  \end{gather*}
We also cannot use the relation $subConst_n(\Phi,\Upsilon_i, X)$ since we cannot define $\Phi$ and technically $\Upsilon_i$ is not a subinstance of $\Phi$. But we can explicitly write this condition for each $\Upsilon_i$. In the $7$th line of the formula (\ref{dfkdkdjfelrifu}) we require that the first $n$ variables in each $\Upsilon_i$ be labeled exactly by $x_0,...,x_{n-1}$ (we are talking about the existence), in the $8-9$th lines we ensure that the common variables of any $\Upsilon_i$ and $\Theta\backslash\Lambda$ are only $x_0,...,x_{n-1}$, and in the last two lines we require the same for each pair $\Upsilon_i,\Upsilon_j$. 

The relation $subD(\mathscr{R}^{x_0,...,x_{n-1}}_{\Upsilon_i^{(1)}})$ is $\Sigma^{1,b}_1$, the relation $ParlPr(\mathscr{R}^{x_0,...,x_{n-1}}_{\Upsilon_i^{(1)}})$ remains $\Pi^{1,b}_2$. Relations $min1of4Red(D^{(1)},D)$, $1C(\Theta^{(1)})$ and $subConst_n(\Theta,\Lambda, x_0,...,x_{n-1})$ are described by $\Sigma^{1,b}_0$ formulas, relations $subDSSInst(\Lambda^{(1)})$, $Cov(\Upsilon_i,\Lambda)$ and $\ddot{HOM}(\mathcal{X}_{\Theta\backslash\Lambda^{(1)}}$, $\ddot{\mathcal{A}}_{\Theta\backslash\Lambda^{(1)}})$ are $\Sigma^{1,b}_1$, relation $\neg \ddot{HOM}(\mathcal{X}_{\Theta^{(1)}},$ $\ddot{\mathcal{A}}_{\Theta^{(1)}})$ becomes $\Pi^{1,b}_1$, and $ CCInst(\Theta)$ and $IRDInst(\Theta)$ are $\Pi^{1,b}_2$-formulas (see \cite{gaysin2023proof}). This gives us the $\Sigma^{\mathscr{B}}_1$-formula.

\begin{equation}\label{dfkdkdjfelrifu}
  \begin{gathered}
     \hspace {0pt}T9.5(\Theta , D^{(1)}, \Lambda, X):=\big(CCInst(\Theta)\wedge IRDInst(\Theta) \wedge min1of4Red(D^{(1)},D)\wedge \\
     \wedge 1C(\Theta^{(1)})\wedge subConst(\Theta,\Lambda, X)\wedge subDSSInst(\Lambda^{(1)})\wedge \\
     \hspace{0pt}\wedge\ddot{HOM}(\mathcal{X}_{\Theta\backslash\Lambda^{(1)}}, \ddot{\mathcal{A}}_{\Theta\backslash\Lambda^{(1)}})\wedge \neg \ddot{HOM}(\mathcal{X}_{\Theta^{(1)}},\ddot{\mathcal{A}}_{\Theta^{(1)}})\big) \implies \exists \mathscr{H}\exists \mathscr{Y} \forall X<nv,\, \\
     \hspace {0pt}\big[Cov(\Upsilon_{[X]},\Lambda)\wedge H_{[X]}\in \mathscr{R}^{x_0,...,x_{n-1}}_{\Theta\backslash\Lambda^{(1)}}\wedge H_{[X]}\notin \mathscr{R}^{x_0,...,x_{n-1}}_{\Upsilon_{[X]}^{(1)}} \wedge \forall \Upsilon <b_{\Lambda}\\
     (Cov(\Upsilon,\Lambda)\wedge  \mathscr{R}^{x_0,...,x_{n-1}}_{\Upsilon_{[X]}^{(1)}}\subsetneq \mathscr{R}^{x_0,...,x_{n-1}}_{\Upsilon^{(1)}}))\rightarrow H_{[X]}\in \mathscr{R}^{x_0,...,x_{n-1}}_{\Upsilon^{(1)}})\big]\wedge\\
         \hspace {0pt}\wedge \forall X<nv,\, subDSSInst(\mathscr{R}^{x_0,...,x_{n-1}}_{\Upsilon_{[X]}^{(1)}}) \wedge ParlPr(\mathscr{R}^{x_0,...,x_{n-1}}_{\Upsilon_{[X]}^{(1)}})\wedge \\
     \hspace {0pt}\wedge \forall X<nv, \forall j<n,\,V_{\mathcal{X}_{\Upsilon_{[X]}}}(j,x_j) \wedge \\
     \hspace {0pt}\wedge\forall X<nv, \forall r<(s+n),\forall g<b_{n+s},\forall p< (n+k)+nl,\\
     V_{X_{\Theta\backslash\Lambda}}(r,g) \wedge V_{\mathcal{X}_{\Upsilon_{[X]}}}(p,g)\rightarrow (g=x_0\vee...\vee g=x_{n-1})\wedge \\
     \hspace {0pt}\wedge \forall X<nv, \forall X'<nv, \forall r,p< (n+k)+nl,\forall g<b_{(n+k)+nl},\\ V_{\mathcal{X}_{\Upsilon_{[X]}}}(r,g)\wedge V_{\mathcal{X}_{\Upsilon_{[X']}}}(p,g)\rightarrow (g=x_0\vee...\vee g=x_{n-1})\wedge \\
  \end{gathered}
\end{equation}

\begin{theorem}[Theorem 9.6, \cite{zhuk2020proof}]\label{===-15524}
Suppose $D^{(1)}$ is a minimal $1$-consistent one-of-four reduction of a cycle-consistent irreducible CSP instance $\Theta$, $\Theta$ is crucial in $D^{(1)}$ and is not connected. Then $W_1^1$ proves that there exists an instance $\Theta'\in ExpCov(\Theta)$ that is crucial in $D^{(1)}$ and contains a linked connected component whose solution set is not subdirect. 
\end{theorem}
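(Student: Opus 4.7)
The plan is to prove the theorem by an iterative construction that converts $\Theta$ into an expanded covering $\Theta'$ whose constraints are all critical rectangular relations, and then to extract a linked connected component of $\Theta'$ whose solution set fails to be subdirect. Since $\Theta$ is not connected, the formula $Connected(\Theta)$ fails, witnessed by either a non-critical constraint, a non-rectangular constraint, or a pair of constraints with no adjacent path between them in the constraint-adjacency graph; this disjunction is expressible in $\Sigma^{1,b}_1$ and can be analyzed by cases in $W^1_1$.

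First I would handle non-critical and non-rectangular constraints by iterated replacement with their covers. Using the functions $weakerlist$ and $weakening$ from the preliminaries, each non-critical $E^{ij}_{\ddot{\mathcal{A}}}$ can be rewritten as a conjunction of strictly contained critical relations, and the replacement yields an expanded covering (by the third observation listed after the $ExpCov$ definition: diagonal relations together with weaker relations of the same arity produce expanded coverings). Non-rectangular binary constraints are replaced by their rectangular refinements by the same device. Both operations strictly decrease a potential function bounded by $\langle n,n,l,l\rangle$, so the process terminates in a polynomially bounded number of steps. I would encode the entire sequence of intermediate instances as a third-order class $\mathscr{T}$ with $\tilde{row}(t,\mathscr{T},instsize(n,l))$ producing the $t$-th instance, and prove by $\Sigma^{\mathscr{B}}_1$-induction on $t$ that $\tilde{row}(t,\mathscr{T},instsize(n,l))$ is always an expanded covering of $\Theta$ which is crucial in $D^{(1)}$. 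The preservation of being crucial in $D^{(1)}$ uses Lemma \ref{alksjdhf}: in any expanded covering of a crucial instance without a solution in $D^{(1)}$, each original constraint of $\Theta$ must appear verbatim, so further weakening of any single constraint of the intermediate instance pulls back through the covering homomorphism to a weakening of $\Theta$, which has a solution in $D^{(1)}$ by the cruciality of $\Theta$.

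Next I would decompose the final instance $\Theta'$ into connected components with respect to the adjacency relation $Adj$ on $Con_2$ congruences. Since $\Theta$ is not connected and the covering operations above do not merge previously disjoint components (they act locally on single constraints), $\Theta'$ has at least two components. If the solution set of each connected component of $\Theta'$ were subdirect, then independent solutions on each component could be glued into a solution of $\Theta'$ in $D^{(1)}$, contradicting that $\Theta'$ has no solution in $D^{(1)}$; hence some component $\Theta''$ has a non-subdirect solution set. Applying the $IRDInst$ property inherited from $\Theta$ (every irreducible instance has every subinstance fragmented, linked, or subdirect), and noting that $\Theta''$ is not fragmented (it is a single adjacency-component) and not subdirect, we conclude that $\Theta''$ is linked. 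This is the required witness.

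The main obstacle is formalizing the invariant that each intermediate instance remains crucial in $D^{(1)}$, because the definition $CrucInst$ is a $\mathfrak{B}(\Sigma^{1,b}_1)$-formula (it negates $\ddot{HOM}$ on the instance while asserting $\ddot{HOM}$ on every weakening). Running induction on the covering sequence with this invariant therefore requires induction over a $\Sigma^{\mathscr{B}}_1$ formula, which is precisely what $\Sigma^{\mathscr{B}}_1$-IND provides. A secondary difficulty is that the adjacency-graph decomposition into components must be realized as a function of $\Theta'$ computable from the class $\mathscr{T}$; this requires $\Sigma^{\mathscr{B}}_0$-3COMP to define the equivalence classes of the transitive closure of $Adj$ on constraints, where the transitive closure itself is obtained by bounded iteration of length at most $n^2$.
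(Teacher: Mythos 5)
Your proposal diverges substantially from the paper's argument, and two of its steps do not work. The decisive gap is the component-gluing argument: you claim that if every connected component of $\Theta'$ (with respect to the adjacency relation $Adj$ on the congruences $Con_2^{(C,x)}$) had a subdirect solution set, then independent solutions on the components could be glued into a solution of $\Theta'$ in $D^{(1)}$. But connectedness here is a property of the graph whose \emph{vertices are constraints}, with edges given by adjacency of congruences in a \emph{common variable}; two constraints lying in different adjacency-components can perfectly well share a variable, so their partial solutions cannot be combined independently. (This is also why "not fragmented because it is a single adjacency-component" does not follow: fragmentedness is a partition of the \emph{variable} set.) The paper's proof avoids this by never gluing: it applies the transformation $T_3$, which \emph{duplicates} the variables $x_1,\dots,x_s$ of a chosen component $\Lambda$ into fresh copies $x_1',\dots,x_s'$, adds the covers of the constraints of $\Lambda$ on the copies, and ties original and copy together by the bridge constraints $\sigma^*(x_j,x_j')$ and by $optset(Con^{(\Lambda,x_j)})$. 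The witness $\Theta'$ is reached only after an iterated descent in which each step is a composition $T_1T_2$ or $T_1T_3$, controlled by the characteristic $\xi^{\Theta,x}=(\Sigma^{\Theta,1}_{\mathcal{D}_x},\Sigma^{\Theta,2}_{\mathcal{D}_x})$ and the order $\lesssim$; none of this machinery appears in your construction, and without it the required expanded covering simply is not produced.

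The first half of your plan also misstates how criticality and rectangularity are obtained. There is no "rectangular refinement" operation: rectangularity of constraints crucial in $D^{(s)}$ is the content of Theorem \ref{---098}, which is proved \emph{simultaneously} with Theorem \ref{===-15524} by induction on $size(D)$, not supplied beforehand by a rewriting step. Likewise, decomposing a non-critical relation into the critical relations whose intersection it is cannot be done in place in this encoding (one relation per ordered pair of variables); it forces the variable-splitting transformation $T_2$, which introduces children $x_1,x_2$ of $x$ and the constraints $\sigma_1^*(x_1,x_2)\wedge\sigma_2^*(x_1,x_2)$ — again absent from your argument. Finally, your termination bound $\langle n,n,l,l\rangle$ is too optimistic: the paper's sequence $\Theta_1,\dots,\Theta_s$ can be exponentially long (each variable can spawn up to $2^{2^{l^2}}$ descendants and the instances grow), which is exactly why the sequence is packaged as a third-order class $\mathscr{Y}$ indexed by strings of length $v$ and handled by $\Sigma^{\mathscr{B}}_1$-induction over string indices rather than by number induction over a polynomial potential. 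Your instinct to encode the intermediate instances as a third-order class and to invoke Lemma \ref{alksjdhf} for preservation of cruciality is correct as far as it goes, but the core combinatorial content of the proof — the characteristics, the order $\lesssim$, and the transformations $T_2,T_3$ — is missing.
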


To formalize this theorem, we first have to formalize some additional notions used in its proof since we need a bound on the instance $\Theta'$. For every variable $x$ of instance $\Theta$, all constraints of which are critical and rectangular, we assign the pair of sets $\xi^{\Theta,x}=(\Sigma^{\Theta,1}_{\mathcal{D}_x},\Sigma^{\Theta,2}_{\mathcal{D}_x})$ such that for all $i<2^{l^2}$ and $a,b<l$
\begin{equation}
    \begin{gathered}
 \Sigma^{\Theta,1}_{\mathcal{D}_x}(i,a,b)\iff \Sigma_{\mathcal{D}_x}(i,a,b)\wedge \big((\exists y<n, \forall a',b'<l,\\
 Con_2^{(\Theta,x)}(0,y,a',b')\leftrightarrow \Sigma_{\mathcal{D}_x}(i,a',b'))\wedge \\
 \wedge (\forall z\neq y<n, (E_{\mathcal{X}}(x,z)\rightarrow Con_2^{(E_{\mathcal{X}}(x,z),x)}\not\subseteq \Sigma_{\mathcal{D}_x,i}\wedge \\
 \wedge E_{\mathcal{X}}(z,x)\rightarrow Con_2^{(E_{\mathcal{X}}(z,x),x)}\not\subseteq \Sigma_{\mathcal{D}_x,i}))\big)\vee\\
 \hspace {0pt}\big((\exists y<n, \forall a',b'<l,\,Con_2^{(\Theta,x)}(y,0,a',b')\leftrightarrow \Sigma_{\mathcal{D}_x}(i,a',b'))\wedge \\
 \wedge (\forall z\neq y<n, (E_{\mathcal{X}}(x,z)\rightarrow Con_2^{(E_{\mathcal{X}}(x,z),x)}\not\subseteq \Sigma_{\mathcal{D}_x,i}\wedge \\
 E_{\mathcal{X}}(z,x)\rightarrow Con_2^{(E_{\mathcal{X}}(z,x),x)}\not\subseteq \Sigma_{\mathcal{D}_x,i}))\big).
    \end{gathered}
\end{equation}
and 
\begin{equation}
    \begin{gathered}
 \Sigma^{\Theta,2}_{\mathcal{D}_x}(i,a,b)\iff \Sigma^{\Theta,1}_{\mathcal{D}_x}(i,a,b)\wedge \forall j<2^{l^2}\\
 (j\neq i \wedge \Sigma^{\Theta,1}_{\mathcal{D}_x,j}\neq \emptyset\rightarrow \neg Adj(\Sigma^{\Theta,1}_{\mathcal{D}_x,i},\Sigma^{\Theta,1}_{\mathcal{D}_x,j} )).
    \end{gathered}
\end{equation}
Thus, $\Sigma^{\Theta,1}_{\mathcal{D}_x}$ is the set of all minimal congruences among the set $Con_2^{(\Theta,x)}$, and $\Sigma^{\Theta,2}_{\mathcal{D}_x}$ is the set of all minimal congruences among the congruences of $Con_2^{(\Theta,x)}$ that are not adjacent to any other congruence from $\Sigma^{\Theta,1}_{\mathcal{D}_x}$. That is, both sets contain mutually non-inclusive congruences among $Con_2^{(\Theta,x)}$ (note that a minimal congruence among $Con_2^{(\Theta,x)}$ is not necessarily a minimal congruence on $D_x$). The lists can be empty for some $i$. We call $\xi^{\Theta,x}$ a characteristic of $x$. Next, we define a partial order of such characteristics. We further consider only sets of irreducible congruences. For two sets $\Sigma$ and $\Sigma'$, define the relations $\leq$ and $<$ as follows:
\begin{equation}
\begin{gathered}
        \Sigma\leq \Sigma'\iff \forall i<2^{l^2},\, (\Sigma_i\neq\emptyset\rightarrow \exists j<2^{l^2},\, \Sigma'_j\subseteq \Sigma_i),\\
    \hspace {0pt}\Sigma< \Sigma'\iff \Sigma\leq \Sigma'\wedge \neg \Sigma'\leq \Sigma.
\end{gathered}
\end{equation}
Relations $\Sigma\backslash\Sigma'$ and $\Sigma=\Sigma'$ are defined in the usual way. Also, for any set of irreducible congruences $\Sigma'_{\mathcal{D}_x}$ define a function $\uparrow optset$ that returns the set of all congruences $\sigma$ on $D_x$ such that $\delta\subseteq\sigma$ for some $\delta\in optset(\Sigma'_{\mathcal{D}_x})$: 
\begin{equation}
\begin{gathered}
      \uparrow optset(\Sigma'_{\mathcal{D}_x})(i,a,b)\iff \Sigma_{\mathcal{D}_x}(i,a,b)\wedge \\
    \wedge (\exists j<2^{l^2},\,optset(\Sigma'_{\mathcal{D}_x,j})\neq \emptyset \wedge optset(\Sigma'_{\mathcal{D}_x,j})\subseteq \Sigma_{\mathcal{D}_x,i}).  
\end{gathered}
\end{equation}
Finally, if $(\Sigma_1,\Sigma_2)$ and $(\Sigma_1',\Sigma_2')$ are two characteristics, then define a relation $\lesssim$ as
\begin{equation}
    \begin{gathered}
        \hspace{0pt}(\Sigma_1,\Sigma_2)\lesssim (\Sigma_1',\Sigma_2')\iff (\Sigma_1<\Sigma'_1)\vee(\Sigma_1=\Sigma'_1\wedge \Sigma_2\leq\Sigma'_2) \vee\\
        \vee (\Sigma_1=\Sigma'_1\wedge \neg \Sigma_2\leq\Sigma'_2\wedge \neg \Sigma'_2\leq\Sigma_2\wedge \Sigma_2\backslash(\uparrow optset(\Sigma_1))< \Sigma'_2\backslash(\uparrow optset(\Sigma_1))).
    \end{gathered}
\end{equation}
That is, we say that $(\Sigma_1,\Sigma_2)\lesssim (\Sigma_1',\Sigma_2')$ if \begin{enumerate}
    \item either every congruence in $\Sigma_1$ contains some congruence of $\Sigma_1'$ and these sets are not equal;
    \item or $\Sigma_1$ is equal to $\Sigma_1'$ and every congruence in $\Sigma_2$ contains some congruence of $\Sigma_2'$;
    \item or $\Sigma_1$ is equal to $\Sigma_1'$, sets $\Sigma_2$ and $\Sigma_2'$ are incomparable (there exists a congruence in $\Sigma_2$ that does not contain any congruence of $\Sigma_2$ and vise versa) and every congruence in $\Sigma_2\backslash(\uparrow optset(\Sigma_1))$ contains some congruence of $\Sigma_2\backslash(\uparrow optset(\Sigma_1))$ and these sets are not equal.
\end{enumerate}
When we decrease a characteristic of a variable, we can decrease the number of congruences in either of sets $\Sigma_1,\Sigma_2,$ $\Sigma_2\backslash(\uparrow optset(\Sigma_1))$ or enlarge congruences. Since for an algebra $\mathbb{A}$ and all its subuniverses we have at most $2^{l^2}$ congruences, we can decrease a characteristic of a variable at most $2\cdot 2^{l^2}$ times, which is a constant.  

We then define three transformations of an instance,
giving an expanded covering of the original instance. These transformations do not increase the characteristics of related variables. The first transformation $T_1$ makes an instance crucial in some reduction $D^{(1)}$: it replaces constraints by all weaker constraints until the instance is crucial in $D^{(1)}$. The second transformation $T_2$ splits a variable $x$ based on two congruences on $D_x$. Finally, the third transformation $T_3$ makes some changes for a connected component of an instance. Transformations $T_1,T_2,T_3$ are not unique, but we do not need them to be unique and therefore can formalize them as relations. Thus, for two instances $\Theta$ and $\Theta'$ we say that $\Theta'$ is a $T_1$ transformation of $\Theta$ if
\begin{equation}
    \begin{gathered}
         T_1(\Theta',\Theta)\iff V_{\mathcal{X}}=V_{\mathcal{X'}}\wedge \forall i,j<n,\, E_{\mathcal{X'}}(i,j) \rightarrow E_{\mathcal{X}}(i,j) \wedge\forall i,j<n,\\
         \hspace {0pt}Weaker(E^{ij}_{\ddot{\mathcal{A}}'}, E^{ij}_{\ddot{\mathcal{A}}})\wedge CrucInst(\Theta',D^{(1)}).
    \end{gathered}
\end{equation}
For the second transformation $T_2$, we choose a variable $x$, choose two congruences $\sigma_1,\sigma_2$ on $D_x$, and define two subsets of constraints in $\Theta$ containing $x$, $\Lambda_1=\{C_1^1,C_1^2,...,C_1^k\}$ and $\Lambda_2=\{C_2^1,C_2^2,...,C_2^s\}$ such that $Con_2^{(C_1^i,x)}=\sigma_1$ and $Con_2^{(C_2^i,x)}=\sigma_2$. Denote by $\Lambda_0$ all constraints in $\Theta\backslash \Lambda_1\cup\Lambda_2$ containing $x$. Then the instance $\Theta$ is transformed as follows. We choose two new variables $x_1,x_2$ and
\begin{enumerate}
    \item rename $x$ by $x_1$ in all constraints from $\Lambda_0$ and $\Lambda_1$;
    \item rename $x$ by $x_2$ in all constraints from $\Lambda_2$;
    \item add the constraints $\sigma^*_1(x_1,x_2)$ and $\sigma^*_2(x_1,x_2)$;
    \item for every $\sigma\in Con_2^{(\Lambda_0,x)}$ add the constraint $\sigma(x_1,x_2)$.
\end{enumerate}
Both $x_1,x_2$ are \emph{children} for $x$. To formalize this transformation, we will use labels for variables. We choose new labels $x_1 = max(V_{\mathcal{X}})+1,x_2=max(V_{\mathcal{X}})+2$. To simplify the following formula, we abbreviate by $E_{\mathcal{X}}(x,y)$ both $E_{\mathcal{X}}(x,y)$ and $E_{\mathcal{X}}(y,x)$.
\begin{equation}\label{alla764745ruygt34r4t}
    \begin{gathered}
         \hspace {0pt}T_2(\Theta',\Theta, \sigma_1,\sigma_2,x)\iff irCong_m(\sigma_1,D_x)\wedge irCong_m(\sigma_2,D_x)\wedge \\
         \wedge ExpCov(\Theta',\Theta)\wedge \forall t,s<b_n,\, t\neq x\wedge s\neq x \rightarrow \\
         \rightarrow (E_{\mathcal{X}'}(t,s)\leftrightarrow E_{\mathcal{X}}(t,s)\wedge E^{ts}_{\ddot{\mathcal{A}}'}(a,b) \leftrightarrow E^{ts}_{\ddot{\mathcal{A}}}(a,b))\wedge\\
         \hspace {0pt}\wedge (\forall y< b_n,\,E_{\mathcal{X}}(x,y)\wedge Con_2^{(E_{\mathcal{X}}(x,y),x)}=\sigma_2 \rightarrow E_{\mathcal{X'}}(x_2,y))\wedge (\forall y< b_n,\,E_{\mathcal{X}}(x,y)\wedge \\
 \hspace{7pt}\wedge(Con_2^{(E_{\mathcal{X}}(x,y),x)}=\sigma_1\vee  (Con_2^{(E_{\mathcal{X}}(x,y),x)}\neq\sigma_1\wedge Con_2^{(E_{\mathcal{X}}(x,y),x)}\neq\sigma_2) )\rightarrow \\
 \rightarrow E_{\mathcal{X'}}(x_1,y)) \wedge \\
         \hspace {0pt}\wedge E_{\mathcal{X'}}(x_1,x_2)\wedge \forall a,b<l,\,E^{x_1x_2}_{\mathcal{A'}}(a,b)\leftrightarrow \sigma^*_1(a,b)\wedge \sigma^*_2(a,b)\wedge \\
         \wedge (\forall y<b_n,\,E_{\mathcal{X}}(x,y)\wedge  (Con_2^{(E_{\mathcal{X}}(x,y),x)}\neq\sigma_1\wedge Con_2^{(E_{\mathcal{X}}(x,y),x)}\neq\sigma_2)\rightarrow\\
         \rightarrow Con_2^{(E_{\mathcal{X}}(x,y),x)}(a,b)).
    \end{gathered}
\end{equation}
The second and third lines of the formula (\ref{alla764745ruygt34r4t}) reflect the fact that we do not change any constraint not containing $x$. The last three lines add to the instance new constraints from items $3,4$ (recall that we allowed one to have only one constraint relation for any two variables $x_1,x_2$ and instead of the set of constraints consider its intersection). 

Finally, the third transformation $T_3$ uses as an argument a connected component $\Lambda\subseteq\Theta$. By MinVar$(\Lambda,\Theta)=\{x_1,...,x_s\}$, where $s\geq 1$, we denote the set of all variables $x_i$ such that
there exists $\sigma\in Con_2^{(\Lambda,x_i)}$ that is minimal among $Con_2^{(\Theta,x_i)}$. Then the new instance $\Theta'$ is defined as follows. We choose new variables $x_1',...,x_s'$ and
\begin{enumerate}
    \item rename the variables $x_1,...,x_s$ by $x_1',...,x_s'$ in $\Theta\backslash\Lambda$;
    \item add the covers of all constraints from $\Lambda$ with $x_1',...,x_s'$ instead of $x_1,...,x_s$;
    \item for every $j$ and every $\sigma\in Con_2^{(\Theta\backslash\Lambda,x_j)}$ add the constraint $\sigma^*(x_j,x_j')$;
    \item for every $j$ and $\sigma\in Con_2^{(\Theta\backslash\Lambda,x_j)}$ such that $Linked(a,b,x_j,x_j,\Lambda)\nsubseteq \sigma$, add the constraint $\delta_j(x_j,x_j'),$ where $\{\delta_j\}=optset(Con^{(\Lambda,x_j)}).$
\end{enumerate}
We call each $x_i$ a \emph{parent} for $x_i'$.  We can formalize the transformation $T_3$ as a relation $T_3(\Theta',\Theta,\Lambda)$ in $V^1$ in the same way as the previous two, and we do not perform it here. The complexity of all these formulas does not exceed $\Sigma^{1,b}_2$. All transformations $T_1,T_2,T_3$ produce expanded coverings. The important thing is that transformation $T_1$ does not change the number of variables, transformation $T_2$ increases the number of variables by $1$, and transformation $T_3$ increases the number of variables by $s\leq n$.

In the proof of Theorem \ref{===-15524}, we consider a sequence of instances $\Theta_1,\Theta_2,...,\Theta_k,\Theta_{k+1}...$ such that every $\Theta_{i+1}$ is produced from $\Theta_i$ either by composition of transformations $T_1T_2$, or by composition $T_1T_3$. Due to some auxiliary lemmas in \cite{zhuk2020proof}, the compositions $T_1T_2$ and $T_1T_3$ do not increase the characteristic of any variable. The composition $T_1T_2$, splitting a variable $x$ into $x_1$ and $x_2$, decreases the number of minimal congruences between $Con_2^{(\Theta,x)}$ by one for both $x_1,x_2$. Since the number of all congruences on $A$ (and any of its subuniverse $D$) is bounded by $2^{l^2}$, the number of total new variables that we can produce from $x$ by applying $T_1T_2$ to it and all its children is bounded by $2^{2^{l^2}}$. The composition $T_1T_3$ also decreases the characteristic of $x$ by increasing all the congruences of $x'$, thus the number of descendants in one chain is also bounded by $2^{l^2}$.

Let us call a variable $x$ in instance $\Theta$ \emph{stable} if all congruences in $Con_2^{(\Theta,x)}$ are adjacent. Also, two variables $x_1,x_2$ are \emph{friends} if there is $E_{\mathcal{X}}(x_1,x_2)$ or $E_{\mathcal{X}}(x_2,x_1)$. Applying $T_1T_3$, we also decrease the characteristics of all non-stable $y$'s in MinVar$(\Lambda_i,\Theta_i)$, and we can reuse every non-stable variable at most $2^{l^2}$ times. Thus, after at most $2^{l^2}$ steps, every variable in instance $\Theta_i$ for some $i$ is stable. A stable variable $y$ cannot be a friend with both a variable $z'$ and its parent $z$. Taking into account the set of friends of $y$ in $\Theta_j$ for $j > i$, we thus see that going from $\Theta_j$ to $\Theta_{j+1}$ we can replace an old friend of $y$ with at most $2$ new weaker friends and cannot add a new friend keeping its parent. Therefore, after $\Theta_i$ with $n_i$ variables, at any step $j>i$ any variable $y$ will have at most $(n_i-1)2^{2^{l^2}}$ friends. Since any instance in the sequence $\Theta_1,\Theta_2,...,\Theta_k,\Theta_{k+1}...$ is not fragmented, from some auxiliary axioms in \cite{zhuk2020proof} it follows that there is an instance $\Theta_s$ for some $s$ that satisfies all conditions posed on instance $\Theta'$ in Theorem \ref{===-15524}.

Taking into account all the above, we can conclude that the number of instances in a sequence $\Theta_1,\Theta_2,...,\Theta_s$ cannot exceed the exponential bound, and the size of any instance $\Theta_i$ has some bound $b_{\Theta}$ polynomial in $n$ which we will not calculate precisely. The important thing is that we can formalize the sequence by a third-order class $\mathscr{Y}$, where each instance $\Theta_i$ for $1\leq i\leq s$ is encoded by a string $X$ of length at most $v$, $\mathscr{Y}(X,\Theta)$. We denote such an instance by $\Theta_{[X]}$.

In the formula (\ref{a;a;lsdkj7fr}), $redinst(\Theta', linkcomp(\Theta',D_i,a))$ is a composed function, where $linkcomp$ is expressed by the $\Sigma^{1,b}_1$-formula and returns the reduction of the domain set. The complexity of the relations $min1of4Red(D^{(1)},D)$ and $1C(\Theta^{(1)})$ is $\Sigma^{1,b}_0$. The complexity of the relations $$\neg subDSSInst(redinst(\Theta',  linkcomp(\Theta',D_i,a)))$$ and $\neg Connected(\Theta)$ is $\Pi^{1,b}_1$. $CrucInst(\Theta,D^{(1)})$ and $CrucInst(\Theta',D^{(1)})$ are expressed by formulas from $\mathfrak{B}(\Sigma^{1,b}_1)$. The complexity of $ExpCov(\Theta',\Theta)$ is $\Sigma^{1,b}_1$. Finally, the complexity of relations $ CCInst(\Theta)$ and $ IRDInst(\Theta)$ is $\Pi^{1,b}_2$. This gives us the $\Sigma^{\mathscr{B}}_1$-formula.
    \begin{equation}\label{a;a;lsdkj7fr}
    \begin{gathered}
        T9.6(\Theta , D^{(1)}):=\big[CCInst(\Theta)\wedge  IRDInst(\Theta)  \wedge min1of4Red(D^{(1)},D)\wedge\\  
        \wedge 1C(\Theta^{(1)})\wedge CrucInst(\Theta,D^{(1)})\wedge \neg Connected(\Theta)\big]\implies \exists \mathscr{Y}, \Theta_{[\emptyset]}=\Theta \wedge\\
        \forall X<v, \big[\Theta_{[S(X)]}=\Theta_{[X]} \vee (\exists x<v,\exists \sigma_1<\langle l,l\rangle ,\exists \sigma_2<\langle l,l\rangle,\exists \Theta<b_{\Theta}\\
        (T_2(\Theta,\Theta_{[X]},\sigma_1,\sigma_2,x) \wedge T_1(\Theta_{[S(X)]},\Theta))) \vee\\
        \vee (\exists \Lambda <b_{\Theta}, \exists \Theta<b_{\Theta} (T_3(\Theta,\Theta_{[X]},\Lambda) \wedge T_1(\Theta_{[S(X)]},\Theta))) \wedge \\
        \wedge ExpCov(\Theta_{[S(X)]},\Theta_{[X]})\wedge CrucInst(\Theta_{[S(X)]},D^{(1)})\big]\wedge \forall X<v,\,S(X)=v\rightarrow \\
        \rightarrow \exists a\in D_0,\neg subDSSInst(redinst(\Theta_{[X]}, linkcomp(\Theta_{[X]},D_i,a))).
    \end{gathered}
\end{equation}

\begin{theorem}[Theorem 9.7, \cite{zhuk2020proof}]\label{'aa's;s;;dldkfjf}
Suppose $D^{(1)}$ is a $1$-consistent non-linear reduction of a cycle-consistent irreducible instance $\Theta$. If $\Theta$ has a solution, then $W_1^1$ proves that it has a solution in $D^{(1)}$.
\end{theorem}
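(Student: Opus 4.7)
The plan is to prove Theorem \ref{'aa's;s;;dldkfjf} by induction on the total domain size $\sum_{i<n}|D_i|$, carried out simultaneously with Theorems \ref{'a;sdl6457ytr} and \ref{===-15524} so that each can invoke the others on strictly smaller instances. The induction is of $\Sigma^{\mathscr{B}}_1$ complexity and hence available in $W^1_1$. The base case is trivial: if every $D^{(1)}_i=D_i$, the hypothesised solution is already in $D^{(1)}$. For the inductive step, assume for contradiction that $\Theta$ has a solution but no solution lives in $D^{(1)}$. By Theorem 9.3 (the existence theorem for minimal non-linear reductions) we may replace $D^{(1)}$ by a minimal $1$-consistent one-of-four reduction $D^{(2)}\subseteq D^{(1)}$ of the same type $\mathcal{T}\in\{\mathrm{BA},\mathrm{CR},\mathrm{PC}\}$, since a solution in $D^{(2)}$ yields one in $D^{(1)}$. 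Using transformation $T_1$ (weakening constraints until no further weakening is possible without gaining a solution in $D^{(2)}$), I would pass to an expanded covering $\Theta'$ that is still cycle-consistent and irreducible (by Lemma 6.1), remains solvable outside $D^{(2)}$, and is crucial in $D^{(2)}$.

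Next, I would split on whether $\Theta'$ is connected. In the non-connected case, apply Theorem \ref{===-15524} to pass to a further expanded covering $\Theta''$ crucial in $D^{(2)}$ that contains a linked connected component $\Lambda$ whose solution set $\mathscr{R}_\Lambda$ is not subdirect. Then some variable $z$ of $\Lambda$ satisfies $\Lambda(z)\subsetneq D_z$, and by Lemma \ref{a''a;sdlekrygh} the set $\Lambda(z)$ is itself a one-of-four subuniverse of $D_z$ of type $\mathcal{T}$; reducing to it produces a strictly smaller instance to which the inductive hypothesis applies, supplying the contradiction. In the connected case, take any subconstraint $\Lambda(x_0,\ldots,x_{n-1})$ whose reduction $\Lambda^{(2)}$ is subdirect (such a $\Lambda$ exists by cycle-consistency and crucialness), and apply Theorem \ref{'a;sdl6457ytr} to obtain coverings $\Upsilon_1,\ldots,\Upsilon_t$ such that each $\Upsilon^{(2)}_i$ defines a subdirect key relation with the parallelogram property. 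Using Lemma \ref{;asldkljf}, the resulting instance $\Phi$ remains connected and crucial, so Lemma \ref{'a'a;slddkfkjg} provides a bridge connecting the characteristic congruences of every pair of constraints; together with Lemma \ref{a'a'65777yutd} and Corollaries \ref{akjshdutjgjf}, \ref{akjshdgfuyfgwef}, \ref{66666666666666}, this forces some domain of $\Phi$ to carry a non-trivial subuniverse of type $\mathcal{T}$ strictly inside the corresponding $D^{(2)}_z$, contradicting minimality of $D^{(2)}$.

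The principal obstacle is the formalization of this simultaneous induction inside $W^1_1$. The sequences of expanded coverings produced by $T_2,T_3$ in Theorem \ref{===-15524}, and the coverings $\Upsilon_i$ produced by Theorem \ref{'a;sdl6457ytr}, can be exponentially many but each of polynomial individual size; they are therefore encoded by third-order classes $\mathscr{Y}$ indexed by strings of length $O(\log l)$, and all the properties needed of them (criticalness, rectangularity, key relation, parallelogram) must be expressible by $\Sigma^{\mathscr{B}}_1$ formulas so that the global induction statement remains $\Sigma^{\mathscr{B}}_1$. The second delicate point is the PC case, where Lemma \ref{alalsokd7ju} is needed to transfer PC subuniverses from a product back to a single coordinate; this requires the extended PC-congruence machinery developed in Section \ref{a;kjdgh;qeirgy;gh;w} and the third-order isomorphism $ISO_{alg}^{3,3}$, so one must verify that the projection argument stays inside the second-sort part of $W^1_1$ whenever possible and resorts to $\Sigma^{\mathscr{B}}_0$-3COMP only at the few points where an extended congruence must be materialised.
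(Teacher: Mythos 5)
Your overall strategy---simultaneous induction with the other main theorems, passing to a crucial expanded covering, and splitting on connectedness---matches the architecture of Zhuk's proof that the paper relies on. Note, though, that the paper does not reproduce the mathematical content of the induction step at all: its contribution for Theorem \ref{'aa's;s;;dldkfjf} is (i) the formalization of the statement as the $\Sigma^{1,b}_2$-formula $T9.7$, and (ii) the construction of an induction measure and the verification that the simultaneous induction over all \emph{five} theorems \ref{'a;sdl6457ytr}--\ref{########6} (not just \ref{'a;sdl6457ytr} and \ref{===-15524}) is available in $W^1_1$ via the Strong Induction Scheme on $size(D)$.

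The genuine gap in your proposal is the induction measure. You induct on $\sum_{i<n}|D_i|$, but this quantity does not behave correctly under the operations your own argument uses. Passing to coverings and expanded coverings --- which you do via $T_1$, Theorem \ref{===-15524} (whose output $\Theta''$ may have exponentially many more variables than $\Theta$), and Theorem \ref{'a;sdl6457ytr} (which introduces up to $l^n$ coverings $\Upsilon_i$, each on up to $(n+k)+nl$ variables) --- can only increase $\sum_i|D_i|$, so you cannot apply the inductive hypothesis, or the other theorems at the "same level," to these derived instances. This is exactly why the paper defines $Size(D)$ as the tuple of cardinalities of the \emph{distinct} domains ordered decreasingly, with the lexicographic order, and codes it by the single integer $\langle k_l,\dots,k_0\rangle$: duplicating domains (which is all that coverings do, thanks to requirement (\ref{ajdkjfhuei4}) that equal domains reduce to equal domains) leaves this measure unchanged, while a minimal proper one-of-four reduction strictly decreases it because equal domains are reduced simultaneously. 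Your base case ("every $D^{(1)}_i=D_i$") is also not a base case of an induction on domain size but the degenerate case of a trivial reduction; the induction bottoms out at the minimal elements of the $Size$ order. To repair the proposal, replace $\sum_i|D_i|$ by the paper's $size(D)$ (or an equivalent duplication-invariant measure), check that every instance to which you apply an inductive hypothesis has strictly smaller $size$, and include $T9.8$ and $T9.9$ in the simultaneous induction formula $\phi(t)$, since the connected-case argument you sketch ultimately leans on them through the crucial-constraint and subconstraint machinery.
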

The complexity of relations $nonLNRed(D^{(1)},D)$, $1C(\Theta^{(1)})$, $\ddot{HOM}(\mathcal{X}^{(1)},\ddot{\mathcal{A}}^{(1)},H')$ and $\ddot{HOM}(\mathcal{X},\ddot{\mathcal{A}},H))$ is $\Sigma^{1,b}_0$, and the complexity of relations $ CCInst(\Theta)$ and $ IRDInst(\Theta)$ is $\Pi^{1,b}_2$. This gives us the $\Sigma^{1,b}_2$-formula.
  \begin{equation}
  \begin{gathered}
     T9.7(\Theta,D^{(1)}):= \big(CCInst(\Theta)\wedge IRDInst(\Theta)\wedge \\\wedge nonLNRed(D^{(1)},D)\wedge 1C(\Theta^{(1)})\wedge \\
     \hspace {0pt}\wedge\exists H< \langle n,\langle n,l\rangle\rangle,\ddot{HOM}(\mathcal{X},\ddot{\mathcal{A}},H)\big)\implies \exists H'< \langle n,\langle n,l\rangle\rangle,\ddot{HOM}(\mathcal{X}^{(1)},\ddot{\mathcal{A}}^{(1)},H').
  \end{gathered}
\end{equation}

\begin{theorem}[Theorem 9.8, \cite{zhuk2020proof}]\label{---098}
Suppose $D^{(0)},...,D^{(s)}$ is a minimal strategy for a cycle-consistent irreducible CSP instance $\Theta$, and a constraint $\rho(x_0,...,x_{n-1})$ of $\Theta$ is crucial in $D^{(s)}$. Then $W_1^1$ proves that $\rho$ is a critical relation with the parallelogram property.
\end{theorem}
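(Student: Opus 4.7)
My plan is to combine the previously formalized Theorems \ref{'a;sdl6457ytr}, \ref{===-15524}, \ref{'aa's;s;;dldkfjf} with the auxiliary Lemmas \ref{'a'a;slddkfkjg}, \ref{alksjdhf}, \ref{;asldkljf} about bridges and expanded coverings in crucial instances. First I would reduce to the case where the constraint $\rho$ is not already trivially critical with the parallelogram property: suppose for contradiction that $\rho$ fails one of the two properties. Since the strategy is minimal and $\Theta$ is cycle-consistent and irreducible, Corollary of Lemma \ref{a''a;sdlekrygh} ensures that each $\Theta^{(j)}$ inherits cycle-consistency and irreducibility along the strategy, so we may work inside $\Theta^{(s)}$ and use the fact that $\rho$ is crucial there.

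Next, I would apply Theorem \ref{'a;sdl6457ytr} to the subconstraint picked out by $\rho$: since $\rho$ is crucial in $D^{(s)}$, the instance $\Theta\backslash\{\rho\}$ has a solution in $D^{(s)}$ while $\Theta$ does not. This lets me produce coverings $\Upsilon_1,\ldots,\Upsilon_t$ of the subconstraint carved out of $\rho$ such that each $\Upsilon_i^{(s)}(x_0,\ldots,x_{n-1})$ defines a subdirect key relation with the parallelogram property, and the union $\Phi=(\Theta\backslash\{\rho\})\cup\Upsilon_1\cup\cdots\cup\Upsilon_t$ still has no solution in $D^{(s)}$. By Lemma \ref{alksjdhf} applied to $\Phi$ as an expanded covering of $\Theta$, the constraint $\rho$ itself must appear as one of the $\Upsilon_i$, i.e.\ coincide with the corresponding key relation; this gives the parallelogram property for $\rho$. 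For criticality, if $\rho$ could be written as a proper intersection of strictly weaker invariant relations on the same variables, then replacing $\rho$ in $\Theta$ by this intersection would give an expanded covering satisfying all hypotheses of Lemma \ref{;asldkljf}, and one of the component relations would still contain no solution in $D^{(s)}$ while $\Theta\backslash\{\rho\}$ does, contradicting the crucial-ness of $\rho$ after applying Lemma \ref{'a'a;slddkfkjg} to obtain the required bridges across the linked connected component.

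To handle the non-connected case, I would first apply Theorem \ref{===-15524} to pass to an expanded covering $\Theta'\in ExpCov(\Theta)$ that is still crucial in $D^{(s)}$ and contains a linked connected component whose solution set is not subdirect; Lemma \ref{;asldkljf} then transports the critical/parallelogram conclusion from $\Theta'$ back to $\Theta$. Theorem \ref{'aa's;s;;dldkfjf} is used implicitly to guarantee that any non-linear one-of-four reduction encountered along the strategy preserves solvability, so the induction on the characteristic from the proof of Theorem \ref{===-15524} terminates with an instance to which Theorem \ref{'a;sdl6457ytr} applies.

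The main obstacle will be keeping the quantifier complexity of the formalization inside $\Sigma^{\mathscr{B}}_1$. The parallelogram property itself is $\Pi^{1,b}_2$ and criticality quantifies over all potential decompositions $\rho=\rho_1\cap\rho_2$ of invariant binary relations; however, since $\Gamma_{\mathcal{A}}$ is fixed and finite, this outer universal quantifier becomes a big conjunction drawn from the $\mathbb{A}$-Monster set, keeping the statement $\Sigma^{\mathscr{B}}_1$ once the third-order class $\mathscr{Y}$ coding the coverings $\Upsilon_i$ from Theorem \ref{'a;sdl6457ytr} and, if needed, the sequence $\Theta_{[X]}$ from Theorem \ref{===-15524} are existentially bound. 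I expect the technical heart of the argument to be verifying that the $\Sigma^{\mathscr{B}}_1$-IND and $\Sigma^{\mathscr{B}}_0$-3COMP axioms available in $W^1_1$ suffice to paste together the two induction witnesses produced by Theorems \ref{===-15524} and \ref{'a;sdl6457ytr}, since this is where nested third-order existentials meet the bounded combinatorics of bridges supplied by Lemma \ref{'a'a;slddkfkjg}.
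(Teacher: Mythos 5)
Your proposal diverges substantially from what the paper actually does, and it contains a genuine gap.

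First, on the approach: the paper does not give a self-contained derivation of Theorem \ref{---098} at all. It is one of five statements (Theorems \ref{'a;sdl6457ytr}--\ref{########6}) that are proved \emph{simultaneously} by induction on $size(D)$, with the induction-step reasoning explicitly deferred to Zhuk's original paper. The paper's contribution for this theorem is (i) the formalization of the statement as the formula $T9.8$, with a careful accounting of quantifier complexity, and (ii) the reduction of the lexicographic order on domain-set sizes to a single number $\langle k_l,\dots,k_0\rangle$ so that ordinary number induction available in $W^1_1$ suffices. You instead attempt a direct proof from Theorems \ref{'a;sdl6457ytr}, \ref{===-15524}, \ref{'aa's;s;;dldkfjf} and Lemmas \ref{'a'a;slddkfkjg}, \ref{alksjdhf}, \ref{;asldkljf}. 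That is closer to the structure of Zhuk's actual argument than to anything in this paper, but as written it ignores that Theorem \ref{'a;sdl6457ytr} is only available (in the simultaneous induction) for instances of strictly smaller or equal domain-set size, so the dependency structure has to be tracked through $size(D)$ rather than invoked freely.

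Second, the concrete gap: your derivation of the parallelogram property does not go through. Lemma \ref{alksjdhf} says that every constraint of the crucial instance $\Theta_{\mathcal{X}}$ reappears, with the \emph{same relation}, as a constraint of the expanded covering; it does not say that $\rho$ coincides with one of the relations $\Upsilon_i^{(s)}(x_0,\dots,x_{n-1})$ produced by Theorem \ref{'a;sdl6457ytr}. Those are projections of solution sets of whole covering instances, not individual constraints, so identifying $\rho$ with a "key relation with the parallelogram property" requires the additional argument from Zhuk's proof that you have skipped. Your criticality argument, by contrast, is essentially fine (if $\rho=\rho_1\cap\rho_2$ non-trivially then the weakening of $\rho$ equals $\rho$ and cruciality fails), though the detour through Lemma \ref{;asldkljf} and bridges is unnecessary there. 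Finally, your complexity analysis is off: in the paper's binary setting the conclusion uses $Critical_2(E^{ij}_{\ddot{\mathcal{A}}})$ and $ParlPr_2(E^{ij}_{\ddot{\mathcal{A}}})$, which are $\Sigma^{1,b}_0$, and the whole formula $T9.8$ is a second-order $\Sigma^{1,b}_2$-formula driven by $CCInst$ and $IRDInst$; no third-order existential over a class $\mathscr{Y}$ is needed to state it, unlike $T9.5$ and $T9.6$.
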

Since we consider only binary constraints of an instance $\Theta$, the relations $Critical_2(E^{ij}_{\ddot{\mathcal{A}}})$, $ParlPr_2(E^{ij}_{\ddot{\mathcal{A}}})$, and $minStrategy(\Theta,\Theta_{Str},s)$ are $\Sigma^{1,b}_0$, the relation $CrucConst(E^{ij}_{\ddot{\mathcal{A}}},\Theta,D_Str^{(s)})$ is a Boolean combination of formulas $\Sigma^{1,b}_1$ and $\Pi^{1,b}_1$, and the relations $CCInst(\Theta)$, $ IRDInst(\Theta)$ are $\Pi^{1,b}_2$. This gives us the $\Sigma^{1,b}_2$-formula.  
\begin{equation}
  \begin{gathered}
     T9.8(\Theta, \Theta_{Str}):=\big(CCInst(\Theta)\wedge IRDInst(\Theta)\wedge minStrategy(\Theta,\Theta_{Str},s)\wedge\\
     \hspace {0pt}\exists i,j<n,\,E_{\mathcal{X}}(i,j)\wedge CrucConst(E^{ij}_{\ddot{\mathcal{A}}},\Theta,D_Str^{(s)})\big)\implies \\
     \implies Critical_2(E^{ij}_{\ddot{\mathcal{A}}})\wedge ParlPr_2(E^{ij}_{\ddot{\mathcal{A}}}).
  \end{gathered}
\end{equation}

\begin{theorem}[Theorem 9.9, \cite{zhuk2020proof}]\label{########6}
Suppose $D^{(0)},...,D^{(s)}$ is a minimal strategy for a cycle-consistent irreducible CSP instance $\Theta$, $\Upsilon(x_0,...,x_{n-1})$ is a subconstraint if $\Theta$, the solution set to $\Upsilon^{(s)}$ is subdirect, $k\in\{0,2,...,n-2\}$, $Var(\Upsilon)=\{x_0,...,x_{n-1},u_0,...,u_{t-1}\}$,
$$\Lambda = \Upsilon^{y_0,...,y_{k-1},v_0,...,v_{t-1}}_{x_0,...,x_{k-1},u_1,...,u_{t-1}}\wedge\Upsilon^{y_{k},...,y_{n-1},v_{t},...,v_{2t-1}}_{x_{k},...,x_{n-1},u_0,...,u_{t-1}}\wedge \Upsilon^{y_0,...,y_{n-1},v_{2t},...,v_{3t-1}}_{x_{0},...,x_{n-1},u_0,...,u_{t-1}}=\Upsilon_1\wedge \Upsilon_2\wedge \Upsilon_3
$$
and $\Theta^{(s)}$ has no solutions. Then $W_1^1$ proves that $(\Theta\backslash\Upsilon)\cup\Lambda$ does not have solutions in $D^{(s)}$.
\end{theorem}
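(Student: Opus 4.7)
The plan is to argue by contradiction and reduce the statement to an application of the parallelogram property of $\mathscr{R}_{\Upsilon^{(s)}}^{x_0,\ldots,x_{n-1}}$. Suppose, towards contradiction, that $(\Theta\backslash\Upsilon)\cup\Lambda$ has a solution $H$ in $D^{(s)}$. This $H$ assigns values $a_0,\ldots,a_{n-1}$ to $x_0,\ldots,x_{n-1}$, values $b_0,\ldots,b_{n-1}$ to $y_0,\ldots,y_{n-1}$, and values to the auxiliary variables $v_0,\ldots,v_{3t-1}$. Because $\Lambda=\Upsilon_1\wedge\Upsilon_2\wedge\Upsilon_3$ is a conjunction of three renamed copies of $\Upsilon$ whose auxiliary variables $v_i$ are disjoint and shared with neither $\Theta\backslash\Upsilon$ nor with each other, the restriction of $H$ to the variables of each $\Upsilon_i$ is a solution of that copy. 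Existentially quantifying each copy's $v$-variables, this shows that the three tuples $(a_{<k},b_{\geq k})$, $(b_{<k},a_{\geq k})$, and $(b_{<k},b_{\geq k})$ all belong to $\mathscr{R}_{\Upsilon^{(s)}}^{x_0,\ldots,x_{n-1}}$.

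Next I would apply the parallelogram property of $\mathscr{R}_{\Upsilon^{(s)}}^{x_0,\ldots,x_{n-1}}$ with respect to the partition $V_1=\{0,\ldots,k-1\}$, $V_2=\{k,\ldots,n-1\}$. Taking $H_1$, $H_2$, $H_3$ to be the three tuples above, the parallelogram property delivers a tuple $H_4=(a_{<k},a_{\geq k})\in \mathscr{R}_{\Upsilon^{(s)}}^{x_0,\ldots,x_{n-1}}$. Since $\Theta\backslash\Upsilon$ and $\Upsilon$ share only the variables $x_0,\ldots,x_{n-1}$, the $x$-part of $H$ already satisfies $(\Theta\backslash\Upsilon)^{(s)}$, and pairing it with a witness for $H_4$ in $\Upsilon^{(s)}$ (obtained by taking the $v_{2t},\ldots,v_{3t-1}$ part of $H$ for the $u$-variables of the third copy, after a trivial relabeling between $x$-copies) yields a full solution to $\Theta^{(s)}$, contradicting the hypothesis. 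This relabeling step is straightforward because $\mathscr{R}_{\Upsilon^{(s)}}^{x_0,\ldots,x_{n-1}}$ is the projection of $\mathscr{R}_{\Upsilon^{(s)}}$, so membership of $H_4$ in the projection is equivalent to the existence of a compatible assignment to the $u$-variables.

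The main obstacle is establishing the parallelogram property of $\mathscr{R}_{\Upsilon^{(s)}}^{x_0,\ldots,x_{n-1}}$ inside $W^1_1$. The assumption that $D^{(0)},\ldots,D^{(s)}$ is a minimal strategy and that $\Upsilon^{(s)}$ has subdirect solution set should be combined with Theorem \ref{'a;sdl6457ytr}, which guarantees coverings of subconstraints whose projections define subdirect key relations with the parallelogram property; the subconstraint $\Upsilon$ itself then inherits the property after existential quantification of the $u$-variables, via the bridge-based reasoning of Lemma $8.20$ and Lemma \ref{'a'a;slddkfkjg}. A second delicate point is complexity: the predicate $ParlPr(\mathscr{R}_{\Upsilon^{(s)}})$ is $\Pi^{1,b}_2$ already for solution sets of CSP instances, and the projection onto the $x$-coordinates raises the quantifier depth one further level, so the extraction of $H_4$ from $H_1,H_2,H_3$ must be done either by universal instantiation at the projected level or by first constructing the witness tuple at the un-projected level via $\Sigma^{\mathscr{B}}_0$-3COMP and then projecting, carefully staying within $W^1_1$'s $\Sigma^{\mathscr{B}}_1$-induction.

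Finally, bookkeeping: I would write out the substitution $\Lambda=\Upsilon_1\wedge\Upsilon_2\wedge\Upsilon_3$ via the formalized $\Sigma^{1,b}_0$ string function $substitute$, and define $(\Theta\backslash\Upsilon)\cup\Lambda$ via $uni$ and $dif$ so that the non-emptiness of the solution set is a $\Sigma^{1,b}_1$ statement. The final contradiction then is a direct combination of three $\Sigma^{1,b}_1$ existential witnesses and one $\Pi^{1,b}_2$ parallelogram instance, whose negation produces the required refutation of the hypothesis $\neg\ddot{HOM}(\mathcal{X}_{\Theta^{(s)}},\ddot{\mathcal{A}}_{\Theta^{(s)}})$.
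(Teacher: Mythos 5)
Your reduction of the conclusion to the parallelogram property of $\mathscr{R}_{\Upsilon^{(s)}}^{x_0,\ldots,x_{n-1}}$ is where the argument breaks. That property is not among the hypotheses of Theorem 9.9 (only subdirectness of the solution set to $\Upsilon^{(s)}$ is assumed), and it does not follow from Theorem \ref{'a;sdl6457ytr} in the way you suggest: that theorem produces \emph{coverings} $\Upsilon_1,\ldots,\Upsilon_t$ of a subconstraint whose reduced projections define key relations with the parallelogram property, under the additional hypotheses that $\Theta\backslash\Lambda$ has a solution in the reduction while $\Theta$ does not, and for a single minimal one-of-four reduction $D^{(1)}$ rather than an arbitrary minimal strategy $D^{(0)},\ldots,D^{(s)}$; it says nothing about the projection of the original subconstraint $\Upsilon$ itself, and "inheriting the property after existential quantification of the $u$-variables" is not a step that Lemma 8.20 or Lemma \ref{'a'a;slddkfkjg} provides. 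If $ParlPr$ with respect to the partition $\{0,\ldots,k-1\},\{k,\ldots,n-1\}$ did hold for $\mathscr{R}_{\Upsilon^{(s)}}^{x_0,\ldots,x_{n-1}}$, your first two paragraphs would finish the proof in a few lines --- which is exactly why the statement cannot be proved that way: the theorem is needed precisely for the case in which this property can fail, and that is why it sits inside the simultaneous induction instead of being a corollary of Theorem \ref{'a;sdl6457ytr}.

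For comparison, the paper does not give a self-contained proof of Theorem 9.9 at all; its contribution there is the formalization. It writes the statement as the $\Sigma^{1,b}_2$-formula $T9.9(\Theta,\Theta_{Str},\Upsilon,X)$ using the $\Sigma^{1,b}_0$ functions $substitute$, $uni$ and $dif$ and the quantifier-complexity bookkeeping (this part of your proposal is consistent with what the paper does), then folds it into the conjunction $\phi(t)$ together with $T9.5$--$T9.8$ and proves all five theorems simultaneously by strong induction on the integer $size(D)$ coding the multiset of domain sizes. The induction step follows Zhuk's original argument, which invokes the other four statements at smaller domain sets (in particular Theorem \ref{---098} on crucial constraints) rather than a direct parallelogram argument on $\Upsilon^{(s)}$. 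To repair your proof you would have to either carry out that induction step or supply an independent derivation of the parallelogram property of $\mathscr{R}_{\Upsilon^{(s)}}^{x_0,\ldots,x_{n-1}}$; neither is present in the proposal.
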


  To get $\Upsilon_1, \Upsilon_2$ and $\Upsilon_3$ we use function $substitute_k$, 
$substitute_{n-k}$ and $substitute_n$ that have $\Sigma^{1,b}_0$ definition. 
After the substitution, 
\begin{gather*}
    Var(\Upsilon_1)=\{y_0,...,y_{k-1},x_k,...,x_{n-1}, v_0,...,v_{t-1}\},\\
    Var(\Upsilon_2)=\{x_0,...,x_{k-1},y_k,...,y_{n-1}, v_t,...,v_{2t-1}\},
\end{gather*}
and $$Var(\Upsilon_3)=\{y_0,...,y_{k-1},y_k,...,y_{n-1}, v_{2t},...,v_{3t-1}\}.$$
The instance $\Lambda$ here is just an intersection of all constraints of three new instances, i.e. the union $\Upsilon_1\cup \Upsilon_2\cup \Upsilon_3$. The relations $subConst_n(\Theta,\Upsilon, x_0,...,x_{n-1})$ and $minStrategy(\Theta,\Theta_{Str},s)$ are expressed by the $\Sigma^{1,b}_0$-formulas, the relation $subDSSInst(\Upsilon^{(s)})$ is the $\Sigma^{1,b}_1$-formula. The relations $\neg \ddot{HOM}(\mathcal{X}_{\Theta^{(s)}},$ $\ddot{\mathcal{A}}_{\Theta^{(s)}})$ and $\neg \ddot{HOM}(\mathcal{X}_{(\Theta\backslash\Upsilon)\cup\Lambda^{(s)}},\ddot{\mathcal{A}}_{(\Theta\backslash\Upsilon)\cup\Lambda^{(s)}})$ are $\Pi^{1,b}_1$. Finally, the relations $CCInst(\Theta)$ and $ IRDInst(\Theta)$ are $\Pi^{1,b}_2$. This gives us the $\Sigma^{1,b}_2$-formula:
\begin{equation}
  \begin{gathered}
     \hspace{0pt}T9.9(\Theta, \Theta_{Str},\Upsilon,X): = \big(CCInst(\Theta)\wedge IRDInst(\Theta)\wedge minStrategy(\Theta,\Theta_{Str},s)\wedge\\
     \wedge \forall i<n,\,V_{\mathcal{X}_{\Upsilon}}(i,x_i)\wedge \forall i<t,\,V_{\mathcal{X}_{\Upsilon}}(n+i,u_i)\wedge subConst(\Theta,\Upsilon, X)\wedge \\\wedge subDSSInst(\Upsilon^{(s)})\wedge \neg \ddot{HOM}(\mathcal{X}_{\Theta^{(s)}},\ddot{\mathcal{A}}_{\Theta^{(s)}})\big)\implies \\
     \implies \neg \ddot{HOM}(\mathcal{X}_{(\Theta\backslash\Upsilon)\cup\Lambda^{(s)}},\ddot{\mathcal{A}}_{(\Theta\backslash\Upsilon)\cup\Lambda^{(s)}}).
  \end{gathered}
\end{equation}

The proof of the above $5$ theorems goes by induction simultaneously on the size of domain sets. For this, the partial order on domain sets is introduced. For every domain set $D$, we assign a tuple of integers $Size(D)=(|D_{i_1}|,|D_{i_2}|,...,|D_{i_t}|)$, where $D_{i_1},...,D_{i_t}$ is the set of all \emph{different domains} of $D$ ordered by their size starting from the largest. That is, if for two variables $x_i,x_j$ we have $D_i=D_j$, these domains will be represented by one integer in $Size(D)$, but for different domains $D_i\neq D_j$ such that $|D_i|= |D_j|$ there will be two equal integers. Then the lexicographic order on tuples of integers induces a partial order on domain sets, i.e. we say that $(a_1,...,a_k)<(b_1,...,b_l)$ if there exists $j\in\{1,2,...,min(k+1,l)\}$ such that $a_i=b_i$ for all $i<j$, and $a_j<b_j$ or $j=k+1$. That is, $(a_1,...,a_k)<(b_1,...,b_l)$ in two cases:
\begin{itemize}
  \item these tuples are of any lengths and there is the first $j<min(k+1,l)$ such that $a_j<b_j$;
  \item $k<l$ and for all $i\leq k$, $a_i=b_i$.
\end{itemize}
It follows from the definition that $\leq$ is transitive and there does not exist an infinite descending chain of reductions. Also, duplicating domains does not affect this partial order, so the size of a domain set of any covering of the instance is not larger than the size of a domain set of the instance. If we consider any minimal (proper) one-of-four reduction $D^{(1)}$ of the instance with a domain set $D^{(0)}$, then $Size(D^{(1)}) < Size(D^{(0)})$ since we reduce equal domains simultaneously. Further, we will use the induction on the size of domain sets exclusively either for reductions of an instance or for instance and its (expanded) coverings. We never compare domain sets of totally unrelated instances.

String induction can be formalized as follows. For every CSP instance $\Theta$ with domain set $D=V_{\ddot{\mathcal{A}}}$ we define two new sets $D_{dif, D}$, $D_{ size, D}$ (here we suppose that none of the domains $D_0,...,D_{n-1}$ is empty). First, we need to remove duplicated domains: 
\begin{equation}
  \begin{gathered}
     \hspace{0pt}\forall a<l,\,D_{dif, D}(0,a)\iff D_0(a)\wedge\\
     \hspace {0pt}\wedge \forall 0<i<n,\forall a<l,\,D_{dif, D}(i,a)\iff (D_{i}(a) \wedge (\forall j<i\exists a<l,\\
     \hspace {0pt}(D_{i}(a)\wedge \neg D_{dif, D}(j,a))\vee(\neg D_{i}(a)\wedge D_{dif, D}(j,a))).
  \end{gathered}
\end{equation}
That is, if for any $i<n$ such that there is $j<i$, $D_i=D_j$, we define $D_{dif, D,i}$ to be an empty set. $D_{dif, D}$ exists due to $\Sigma^{1,b}_1$-induction up to $n$. Based on this set, we define $D_{size, D}$ using the census function:
\begin{equation}
  \begin{gathered}
     \hspace{0pt}\forall i<n,\forall s<l,\,D_{size, D}(i,s)\iff \texttt{\#}D_{dif, D,i} = s.
  \end{gathered}
\end{equation}
Then we can sort a given sequence of natural numbers $D_{size, D}$ using a number function $rank(i,n-1,D_{size, D})$, where $s=rank(i,n-1,D_{size, D})$ is the number
that appears at the $i$th position when $D_{size, D}$ is sorted in non-increasing order (see \cite{10.5555/1734064}): 
\begin{equation}
  \begin{gathered}
     \hspace{0pt}\forall i<n,\forall s<l,\,D_{\geq size, D}(i,s)\iff s=rank(i,n-1,D_{size, D}).
  \end{gathered}
\end{equation}
The formalization of the order on domain sets is straightforward (at the end of the string $D_{\geq size, D}$ there could be $0$s, but this does not affect the order). We will denote this order between strings by $\leq_{size}$. It is easy to see that if we view a string $X$ as a number $\sum_{i}X(i)2^i$, then for any two domain sets $D,D'$ such that either $\Theta'$ is a minimal reduction of the CSP instance $\Theta$ or $\Theta'\in ExpCov(\Theta)$,
$$ D_{\geq size, D'}\leq_{size}D_{\geq size, D}\implies \sum_{i}D_{\geq size, D'}(i)2^i\leq \sum_{i}D_{\geq size, D}(i)2^i.
$$
The problem can arise only if we compare the domain sets of two completely unrelated instances (for example, $D_{\geq size, D}(x)\iff x=\langle 0,l\rangle$ (all domains are $A$, $|A|=l$) and $D_{\geq size, D'}(i,l-1)$ for all $i<n$), but we never do. Thus, here we can use the order on strings (viewed as the binary representation of numbers), successor function, and string minimization axiom (see \cite{10.5555/1734064}).

It turns out that one can reduce string induction in this case to number induction. We again consider sets $D_{dif, D}$ and $D_{ size, D}$. Since we work in a fixed algebra $\mathbb{A}=(A,\Omega)$ of size $l$, there are $k_0\leq t$ domains of size $0$, $k_1\leq t$ domains of size $1$, $k_2\leq t$ domains of size $2$,...,$k_l\leq t$ domains of size $l$, with $k_0+...+k_l=t\leq n$, where $t$ is the number of different domains of the instance, 
$$
t = \texttt{\#}D_{dif, D}.$$
Then let us define $l$ sets, $K_1,K_2,...,K_l$ in the following way:
\begin{equation}
    \begin{gathered}
     K_s(0,0)\wedge \forall 0<i<n,\forall r< t,\, (K_s(i-1,r)\wedge D_{\geq size, D}(i,s)\rightarrow K_s(i,r+1)) \wedge \\
     \hspace{0pt}\wedge (K_s(i-1,r)\wedge \neg D_{\geq size, D}(i,s) \rightarrow K_s(i,r)).
  \end{gathered} 
\end{equation}
Such sets exist due to $\Sigma^{1,b}_1$-induction. Define $k_0:=0$ and $k_s=seq(n-1,K_s)$ for every $0<s\leq l$. Then the tuple
\begin{equation}
 size(D)=\langle k_l,k_{l-1},...,k_0\rangle= \langle ...\langle \langle k_l, k_{l-1}\rangle, k_{l-2} \rangle ,..., k_0 \rangle<(n(l+1)+1)^{2^{l+1}} 
\end{equation}
codes the size of the domain set $D$ by one integer. 

It is easy to see that for any $\Theta'\in ExpCov(\Theta)$, $\langle k'_l,k_{l-1}',...,k'_0\rangle\leq \langle k_l,k_{l-1},...,k_0\rangle$. Consider a minimal reduction $D'$ of $D$, suppose that we reduced one domain $D_{i_j}$ from the list $D_{i_1},...,D_{i_t}$ of the size $q$ to the size $p$. Thus, the tuple coding the size of $D'$ is 
$$ \langle k_l,...,k_{q}-1,...,k_{p}+1,...,k_0\rangle.$$
Recall the ordering property of the pairing function
\begin{equation}
  \langle x_1,x_2\rangle <\langle y_1,y_2\rangle\iff x_1+x_2<y_1+y_2\vee x_1+x_2=y_1+y_2\wedge x_2<y_2.
\end{equation}
Since we never consider the trivial case with domains of size $1$, there must be at least three integers, $\langle k_2,k_1,k_0\rangle$, and we never decrease $k_1$. 
Thus, for each reduction $D'$ we have the following situation:
$$\langle...\langle\langle...\langle a,k_{q}-1\rangle,...\rangle, k_{p}+1,...,\rangle k_0\rangle
$$
for some $a\geq 0, k_q>1$. Since 
$$\langle a,k_{q}\rangle - \langle a,k_{q}-1\rangle = a+1+k_{q},$$
for any $b>\langle 0,1\rangle$ and any $b>c\geq 0$ we have $\langle c,k_{p}+1\rangle< \langle b,k_{p}\rangle$. It follows that for any two domain sets $D,D'$ such that either $\Theta'$ is a minimal reduction of CSP instance $\Theta$ or $\Theta'\in ExpCov(\Theta)$ 
$$\langle k'_l,k_{l-1}',...,k'_0\rangle<\langle k_l,k_{l-1},...,k_0\rangle.$$
Thus, we can use the standard number induction axiom available in $W_1^1$. 

\begin{lemmach}
  For any CSP instance $\Theta$, the induction in $size(D)$ follows in $W^1_1$.
\end{lemmach}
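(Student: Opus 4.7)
The plan is to reduce induction on $size(D)$ to the standard number induction $\Sigma^{\mathscr{B}}_1$-IND already available in $W^1_1$. First I would verify that for any CSP instance $\Theta$ with domain set $D$, the function $size(D)$ is a number function whose value is bounded by $(n(l+1)+1)^{2^{l+1}}$ and which is $\Sigma^{\mathscr{B}}_0$-definable given $\Theta$: the auxiliary sets $D_{dif,D}$, $D_{\geq size,D}$, and $K_1,\ldots,K_l$ are introduced above the lemma by iterated $\Sigma^{1,b}_1$-comprehension and $\Sigma^{1,b}_1$-induction (which are theorems of $W^1_1$), and the nested pairing function then collapses the tuple $\langle k_l,\ldots,k_0\rangle$ into a single natural number. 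Hence $size(\cdot)$ is a legitimate number-valued term inside the theory.

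Next I would invoke the strict monotonicity established in the discussion preceding the lemma: whenever $\Theta'$ is either a minimal one-of-four reduction of $\Theta$ or an expanded covering of $\Theta$, one has $size(D')<size(D)$ as natural numbers. This is exactly the comparison performed in the inductive step of Theorems \ref{'a;sdl6457ytr}--\ref{########6}, since every recursive appeal in their proofs is to an instance $\Theta'$ obtained from $\Theta$ by a chain of such operations, never to an unrelated instance whose $size$ happens to be smaller. In particular, the partial order on domain sets is never needed in its full generality; only its restriction to this family of related instances matters, and on that family it collapses to the strict ordering of natural numbers.

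Given these two ingredients, suppose one wishes to prove $\forall \Theta \, \phi(\Theta)$ with $\phi$ in the complexity class handled by $W^1_1$ (e.g.\ $\Sigma^{\mathscr{B}}_1$, as in the theorems above). Define the auxiliary formula
\begin{equation}
\psi(k) \;:=\; \forall \Theta \, \bigl( size(D_\Theta)=k \,\rightarrow\, \phi(\Theta) \bigr),
\end{equation}
which lies in the same complexity class as $\phi$ because $size$ is $\Sigma^{\mathscr{B}}_0$-definable. The inductive step required by the applications takes the form $\forall \Theta \, \bigl( (\forall \Theta' \text{ related to } \Theta,\, size(D_{\Theta'})<size(D_\Theta) \rightarrow \phi(\Theta')) \rightarrow \phi(\Theta) \bigr)$, and using the strict monotonicity this implies $\forall k \, \bigl( (\forall k'<k,\,\psi(k')) \rightarrow \psi(k)\bigr)$. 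Applying $\Sigma^{\mathscr{B}}_1$-MIN (derivable from $\Sigma^{\mathscr{B}}_1$-IND in $W^1_1$) to $\psi$ yields $\forall k\,\psi(k)$, hence $\forall \Theta \, \phi(\Theta)$.

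The only point that requires care, and which I expect to be the main obstacle, is ensuring that each use of the induction hypothesis in the proofs of Theorems \ref{'a;sdl6457ytr}--\ref{########6} really does invoke $\phi$ only on instances $\Theta'$ obtained from $\Theta$ by a minimal reduction or expanded covering, so that the strict inequality $size(D')<size(D)$ is available. This is a bookkeeping matter rather than a mathematical one, but it must be checked case by case in each theorem to confirm that the partial-order induction as used can always be rerouted through the number-valued $size$. Once this is verified, the lemma follows.
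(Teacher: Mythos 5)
Your proposal matches the paper's own argument: the paper likewise collapses the tuple $(k_l,\ldots,k_0)$ of domain-size counts (built via $\Sigma^{1,b}_1$-induction) into a single bounded integer via the pairing function, verifies strict decrease of this integer under minimal reductions and expanded coverings using the ordering property of $\langle\cdot,\cdot\rangle$, and then applies the Strong Induction Scheme in $W^1_1$ to a formula of the form $\phi(t)\wedge size(D)=t$, with the same caveat that unrelated instances are never compared. This is essentially identical to your route, so the proposal is correct.
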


The proof of all theorems goes simultaneously by induction on the size of the
domain sets. We assume that formulas T9.5, T9.6, and T9.7 hold in instances $\Theta$ with a domain set $D^{(0)}$ if $Size(D^{(0)})<Size(D^{(\bot)})$, and formulas T9.8 and T9.9 hold in instances $\Psi$ with a domain set $D^{(s)}$ if $Size(D^{(s)})<Size(D^{(\bot)})$. The induction step proves the formulas T9.5, T9.6, T9.7 in instances $\Theta$ with a domain set $D^{(0)}$ if $Size(D^{(0)})=Size(D^{(\bot)})$, and the formulas T9.8 and T9.9 in instances $\Psi$ with a domain set $D^{(s)}$ if $Size(D^{(s)})=Size(D^{(\bot)})$. Consider the following $\Sigma^{\mathscr{B}}_1$-formula $\phi$:
\begin{equation}
  \begin{gathered}
     \phi(t):= T.9.5(\Theta,D^{(1)}_{\Theta},\Lambda,X)\wedge T9.6(\Theta,D^{(1)}_{\Theta})\wedge T9.7(\Theta,D^{(1)}_{\Theta})  \wedge \\
     \wedge T9.8(\Psi,\Psi_{Str})\wedge T9.9(\Psi,\Psi_{Str},\Upsilon,X)\wedge \\
     \hspace {0pt}\wedge size(D^{(1)}_{\Theta})= t\wedge size(D^{(s)}_{\Psi})= t.
  \end{gathered}
\end{equation}
With the application of the Strong Induction Scheme,
\begin{equation}
  \forall x \big( (\forall t< x \phi(t))\rightarrow \phi(x)\big) \rightarrow \forall z\phi(z),
\end{equation}
we can formulate the following result.

\begin{theorem}
  Theory $W^1_1$ proves Theorems \ref{'a;sdl6457ytr}, \ref{===-15524}, \ref{'aa's;s;;dldkfjf}, \ref{---098} and \ref{########6}.
\end{theorem}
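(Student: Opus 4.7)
The plan is to invoke the Strong Induction Scheme of $W^1_1$ on the $\Sigma^{\mathscr{B}}_1$-formula $\phi(t)$ specified immediately before the theorem statement. Since the induction measure $size(D)$ is a number-valued function bounded by $(n(l+1)+1)^{2^{l+1}}$ and $\phi(t)$ lies in $\Sigma^{\mathscr{B}}_1$, the derivation of $\forall z\,\phi(z)$ from $\forall x\,[(\forall t<x\,\phi(t)) \to \phi(x)]$ is available via the standard reformulation of $\Sigma^{\mathscr{B}}_1$-IND as strong number induction. The five theorems will then follow by instantiating the corresponding conjunct of $\phi$ at $t=size(D)$ for the given instance.

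For the base case I would treat the minimal values of $size(D)$ separately: when every domain of $\Theta$ is a singleton (or, more generally, when no nontrivial one-of-four reduction or expanded covering of $\Theta$ exists), the antecedents of T9.5--T9.7 are falsified by a constant-size exhaustive check against the $\mathbb{A}$-Monster set, while T9.8 and T9.9 reduce to the claim that the specified constraint relation is critical with the parallelogram property, which is again a direct consequence of the $\mathbb{A}$-Monster tables. These verifications are provable in $V^0$ and a fortiori in $W^1_1$.

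For the inductive step, assuming $\forall t'<t\,\phi(t')$ and fixing an instance with $size(D)=t$, each of the five implications is dispatched by composing auxiliary results already formalized. T9.5 is obtained from Lemma \ref{()()()()(  *^ } and Corollary \ref{9899889((())))} by constructing, for every tuple $H \in \mathscr{R}^{x_0,\ldots,x_{n-1}}_{\Theta\setminus\Lambda^{(1)}}$, an inclusion-maximal covering relation missing $H$, with the parallelogram property delivered by T9.8 at the strictly smaller stage produced by the projection. T9.6 is obtained by iterating the compositions $T_1T_2$ and $T_1T_3$, each of which strictly decreases some variable's characteristic and passes to an expanded covering, invoking T9.5 at each smaller step; the witnessing class $\mathscr{Y}$ is produced by $\Sigma^{\mathscr{B}}_0$-3COMP. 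T9.7 follows from Theorems 9.2--9.4 together with Lemma \ref{;allskdleijf5}, applied to the one-of-four reductions supplied by the induction hypothesis. T9.8 and T9.9 both proceed by selecting a subconstraint, producing the relevant tree-coverings via T9.5 at smaller $size$, and concluding through the interaction lemmas for one-of-four subuniverses (Theorem \ref{alskdjhfyr654} and Lemma \ref{a;lsdkkjef} in particular) together with the rectangularity and essentiality machinery of Section~\ref{'a'a;sldfkjfjfuryt}. In every case the recursive call lands at $t'<t$ because either a proper minimal reduction or a nontrivial expanded covering is taken, both of which strictly decrease the pairing code $size(D)$ by the computation preceding the theorem.

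The main obstacle I anticipate is the uniform treatment of quantifier complexity across the induction step. Several predicates occurring inside $\phi(t)$ -- crucial instance, rectangularity, parallelogram property, and key-relation on a solution set -- have natural presentations at the $\Sigma^{1,b}_2$--$\Sigma^{1,b}_4$ level, and T9.5 and T9.6 require genuine third-sorted witnesses whose existence must be produced inside $W^1_1$. The delicate verification is that each of these third-order witnesses -- the classes $\mathscr{H}$ and $\mathscr{Y}$ indexing, respectively, the bad tuples and the transformation sequences -- can be obtained by $\Sigma^{\mathscr{B}}_0$-3COMP applied to formulas whose parameters are covered by the induction hypothesis, and that no step of the argument exceeds the $\Sigma^{\mathscr{B}}_1$-IND available in $W^1_1$. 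Once this bookkeeping is carried out -- with the $\mathbb{A}$-Monster set collapsing fixed-$\mathbb{A}$ quantifiers to bounded conjunctions and the functions $tilde{row}$, $redinst$, $weakinst$, $uni$, $dif$ and the transformation relations $T_1, T_2, T_3$ providing the requisite $\Sigma^{1,b}_0$ or $\Sigma^{1,b}_1$ packaging -- the induction step becomes a direct composition of the previously formalized lemmas with the induction hypothesis.
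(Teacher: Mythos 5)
Your proposal is correct and follows essentially the same route as the paper: the paper likewise packages the five theorem-formulas into the single $\Sigma^{\mathscr{B}}_1$-formula $\phi(t)$, orders instances by the number code $size(D)$, and applies the Strong Induction Scheme, with the induction step discharged by the previously formalized auxiliary lemmas (the paper explicitly declines to spell that step out, deferring to Zhuk's original arguments). Your additional detail on the base case and on which lemmas carry each of T9.5--T9.9 in the induction step is consistent with what the paper indicates, so there is nothing further to flag.
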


It follows immediately from Theorem \ref{'aa's;s;;dldkfjf} that $W^1_1$ proves three universal algebra axiom schemes.
\begin{theorem}
 For any fixed relational structure $\mathcal{A}$ which corresponds to an algebra with WNU operation and therefore leads to a $p$-time-solvable CSP, the theory $W_1^1$ proves universal algebra axiom schemes BA$_{\mathcal{A}}$-axioms, CR$_{\mathcal{A}}$-axioms, and PC$_{\mathcal{A}}$-axioms.
\end{theorem}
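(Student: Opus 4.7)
The plan is to reduce each of the three axiom schemes to Theorem~\ref{'aa's;s;;dldkfjf} (Theorem~9.7), which has already been established in $W^1_1$ earlier in this section. The common structure of the three schemes $BA_{\mathcal{A},B,D}$, $CR_{\mathcal{A},D,C}$, $PC_{\mathcal{A},D,E}$ is identical: under the hypothesis that a cycle-consistent irreducible instance $\Theta$ has a solution and that one of its domains $D_i$ admits a proper subuniverse of a prescribed type, one must exhibit a solution to $\Theta$ sending $x_i$ into that subuniverse. The bridge to Theorem~9.7 is the observation that each such subuniverse canonically determines a one-of-four reduction whose type is not linear.

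Starting from the hypotheses of $BA_{\mathcal{A},B,D}$, I would first define the raw candidate reduction $D^{(\bot)}$ by setting $D^{(\bot)}_i = B$ for every $i$ with $D_i = D$ (invoking convention~(\ref{ajdkjfhuei4}) that equal domains are reduced simultaneously) and $D^{(\bot)}_j = D_j$ otherwise. By the $BARed$ clause of the definitions, $D^{(\bot)}$ is then a binary-absorbing reduction. The only missing ingredient is $1$-consistency, and for this I would invoke the absorbing clause of Theorem~9.2 applied to the trivial strategy $D^{(0)} = D$: since $W^1_1$ has already proved Theorem~9.2, it yields a $1$-consistent absorbing reduction $D^{(1)}$ with $D^{(1)}_i \subseteq B$. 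A binary-absorbing reduction is a one-of-four reduction of non-linear type, so Theorem~9.7 applies and transfers the given homomorphism from $\mathcal{X}$ to $\ddot{\mathcal{A}}$ into a homomorphism from $\mathcal{X}$ to $\ddot{\mathcal{A}}^{(1)}$; since $D^{(1)}_i \subseteq B$, that homomorphism sends $x_i$ into $B$, which is the conclusion of the axiom scheme. The cases $CR_{\mathcal{A},D,C}$ and $PC_{\mathcal{A},D,E}$ are treated identically using the central and PC clauses of Theorem~9.2, respectively; the side hypothesis built into $PC_{\mathcal{A},D,E}$ that no $D_j$ has a non-trivial binary absorbing or central subuniverse is exactly what clause~3 of Theorem~9.2 demands in order to produce a PC reduction.

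The main obstacle I expect is bookkeeping of formula complexity rather than any new combinatorial content. The three schemes are each $\forall\Sigma^{1,b}_2$-sentences, Theorem~9.7 is itself $\Sigma^{1,b}_2$, and Theorem~9.2 sits one level above; all have been proved in $W^1_1$ earlier, so the modus-ponens chain above is available inside the theory provided that the reduction $D^{(1)}$ can be named by a string whose existence is justified at the correct level. This is where the $\mathbb{A}$-Monster set of Section~3.2 becomes essential: the quantification over the binary or ternary term witnessing absorption, and over PC congruences, is uniformly replaced by a bounded disjunction over a finite list given in advance, so that Theorem~9.2 supplies $D^{(1)}$ as an explicit string-valued witness through $\Sigma^{1,b}_1$-comprehension. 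Once this bookkeeping is discharged, the three axiom schemes follow by a short syllogism from Theorem~9.7, and together with the results of~\cite{gaysin2023proof} this completes the proof of Theorem~\ref{mainhhhdkruf}.
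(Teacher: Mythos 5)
Your proposal is correct and follows essentially the same route as the paper, which derives all three axiom schemes directly from Theorem~9.7; the paper states this derivation as immediate, and your explicit use of Theorem~9.2 to upgrade the raw subuniverse-induced reduction to a $1$-consistent one-of-four reduction (so that the hypothesis of Theorem~9.7 is actually met) is precisely the step the paper leaves implicit.
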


This, together with Theorem \ref{Translation[[[po]]]}, proves Theorem \ref{mainhhhdkruf}:
\begin{mythm}{1}[The main theorem]
\label{mainhhhdkruf}
For any relational structure $\mathcal{A}$ such that CSP($\mathcal{A}$) is in $P$:
\item [1.] The theory $W_1^1$ proves the soundness of Zhuk's algorithm. That is, the theory proves the formula $Reject_{\mathcal{A}}(\mathcal{X},W) $ $\implies \neg HOM(\mathcal{X},\mathcal{A})$.
\item [2.] There exists a $p$-time algorithm $F$ such that for any unsatisfiable instance $\mathcal{X}$, i.e. such that $\neg HOM(\mathcal{X},\mathcal{A})$, the output $F(\mathcal{X})$
of $F$ on $\mathcal{X}$ is a propositional proof of the proposition translation of
formula $\neg HOM(\mathcal{X},\mathcal{A})$ in propositional calculus $G$.

\end{mythm}

\section{Conclusion}\label{akkajshddf}
We have shown that Zhuk's algorithm, solving any tractable CSP($\mathcal{A}$) in polynomial time, may be augmented so that it also provides independent witnesses – propositional proofs – for negative answers. Witnesses of the non-existence of a solution, i.e. the non-existence of a homomorphism for relational structures, are proofs in the propositional proof system G. To achieve this, we proved the soundness of Zhuk's algorithm in the theory of bounded arithmetic $W^1_1$. More precisely, we based on our results in \cite{gaysin2023proof} and formalized the proofs of Theorems \ref{fhfhfhydj}, \ref{llldju67yr}, and \ref{fjfjhgd} in \cite{zhuk2020proof}. 

In Section \ref{'a'a;sldfkjfjfuryt} we first formalized in the third-sorted setting all the universal algebra notions used in the proof and then, in Section \ref{'a'tteyrghwgdfh}, showed the formalization of the proofs of theorems and lemmas themselves, concentrating on the key statements and arguments in \cite{zhuk2020proof}. We did not treat those statements whose formalization is straightforward by literally translating the original notions into bounded arithmetic language: the formalization of proofs would just exactly repeat the universal algebra reasoning of Zhuk's paper (although we have considered a few examples of those too). Also, we did not consider proofs that require nothing that thorough and tedious formalization of all tiny details in $V^1$, though they require a lot of imagination from a universal algebra point of view. Our goal was not to mechanically rewrite all the proofs in the language of the theory of bounded arithmetic. Instead, we wanted to clearly show the idea of formalization and treat notions and statements whose formalization needs some additional insight.

As far as we could, we tried to keep the formalization, even for exponentially large objects, at the second-sorted level, working with definitions of the objects rather than with the objects themselves. For this, we used some tricks and simplifications that were allowed by the fact that although some universal algebraic statements hold in general, we needed to treat only the special instances applied in \cite{zhuk2020proof}, i.e. related to the objects formed from bottom to top from the domains for variables and constraint relations. If we could stay in the second-order setup all the way, our formalization would
stay in theory $V^1$, which corresponds to the Extended Frege system. However, eventually, we still were forced to use third-order objects since even elementary (and seems to be unavoidable) from a universal algebra point of view reasoning about factor algebras when algebra is not a constant product requires exponential size.  

The most interesting open question is whether the formalization of the algorithm in a weaker theory of bounded arithmetic is possible and whether it can be done without changes in the level of Zhuk's proof of the CSP dichotomy itself. Bulatov \cite{8104069} uses different methods of universal algebra to prove the dichotomy, so the problem of formalizing Bulatov's algorithm in a theory of bounded arithmetic is an open problem of particular interest. Another question is to study some smaller tractability classes of CSP, such as CSPs with Mal'tsev polymorphisms, few subpowers, and so forth. 

The bounded formulas involved in the formalization are sometimes quite long.
It may be that a formalization that does not use formal arithmetic, but rather one of the
modern (computer-oriented) systems for formalization, namely proof assistants, such as
Lean or Isabelle, would be more suitable for this. However, the link between these
systems and propositional logic is currently missing. We think this could be another interesting avenue for future research.

\bigskip
\noindent
\textbf{Acknowledgments:}{ I would like to thank Jan Krajíček for helpful comments that resulted in many improvements to this paper. I thank Michael Kompatscher for a number of discussions on universal algebra.}

\bibliographystyle{plain}    

\bibliography{bibliography}

\end{document}